\definecolor{darkgreen}{rgb}{0.0, 0.27, 0.13}
\numberwithin{equation}{section}
\def\hangbox to #1 #2{\vskip3pt\hangindent #1\noindent \hbox to #1{#2}$\!\!$}
\theoremstyle{plain}
\newtheorem{theorem}{Theorem}[section]
\newtheorem{proposition}[theorem]{Proposition}
\newtheorem{corollary}[theorem]{Corollary}
\newtheorem{lemma}[theorem]{Lemma}
\newtheorem{conjecture}[theorem]{Conjecture}
\theoremstyle{definition}
\newtheorem{remark}[theorem]{Remark}
\newtheorem{definition}[theorem]{Definition}
\newtheorem{regularization bounds}[theorem]{Regularization bounds}
\DeclareSymbolFont{bbold}{U}{bbold}{m}{n}
\DeclareSymbolFontAlphabet{\mathbbold}{bbold}
\def\N{{\mathbb N}}
\DeclareMathAlphabet\mathbfcal{OMS}{cmsy}{b}{n}
\def\sfrac#1#2{\kern.1em\raise.5ex\hbox{$#1$}
	\kern-.1em/\kern-.05em\lower.25ex\hbox{$#2$}}
\newcommand{\fw}{\text{\fw}}
\newcommand{\dist}{{\rm dist}}
\DeclareMathOperator{\curl}{curl}
\title[Sharp well-posedness for the free boundary MHD equations]{Sharp well-posedness for the free boundary MHD equations}
\begin{document}

\author{Mihaela Ifrim}
\address{Department of Mathematics\\
University of Wisconsin - Madison
} \email{ifrim@wisc.edu}
\author{Ben Pineau}
\address{Courant Institute for Mathematical Sciences\\
New York University
} \email{brp305@nyu.edu}
\author{Daniel Tataru}
\address{Department of Mathematics\\
University of California at Berkeley
} \email{tataru@math.berkeley.edu}
\author{Mitchell~A.\ Taylor}
\address{Department of Mathematics\\
ETH Z\"urich, Ramistrasse 101, 8092 Z\"urich, Switzerland.
} \email{mitchell.taylor@math.ethz.ch}

 \begin{abstract}
In this article, we provide a definitive well-posedness theory for the free boundary problem in incompressible magnetohyrodynamics. 
Despite the clear physical interest in this system and the remarkable progress in the study of the free boundary Euler equations in recent decades, the low regularity well-posedness of the free boundary MHD equations has remained completely open. This is due, in large part, to the highly nonlinear wave-type coupling between the velocity, magnetic field and  free boundary, which has forced  previous works to impose  restrictive geometric constraints on the data. To address this problem, we introduce  a novel Eulerian approach and  an entirely new functional setting, which better captures the wave equation structure of the MHD equations and permits a complete  Hadamard well-posedness theory in low-regularity Sobolev spaces. In particular, we give the first proofs of existence, uniqueness and continuous dependence on the data at the sharp  $s>\frac{d}{2}+1$ Sobolev regularity, in addition to a blowup criterion for smooth solutions at the same low regularity scale. Moreover, we provide a completely new method for constructing smooth solutions which, to our knowledge, gives the first proof of existence (at any regularity) in our new functional setting. All of our results hold in arbitrary dimensions and in general, not necessarily simply connected, domains. By taking the magnetic field to be zero, they also recover the corresponding sharp well-posedness theorems for the free boundary Euler equations. The methodology and tools that we employ here can likely be fruitfully implemented in other free boundary models.
\end{abstract} 

 \subjclass{35Q35; 76B03; 76B15; 76W05}
 \keywords{Free boundary problems, Magnetohyrodynamics, Euler equations}

\maketitle

\setcounter{tocdepth}{1}
\tableofcontents

\section{Introduction}
In this article, we study the dynamics of an electrically conducting fluid droplet. At time $t$, our fluid occupies a compact, connected region $\overline{\Omega}_t\subseteq \mathbb{R}^d$ with $d\geq 2$, and its motion is governed by the incompressible, inviscid MHD equations:
\begin{equation}\label{Euler}
  \begin{cases}
    &\partial_tv+v\cdot \nabla v-B\cdot\nabla B+\nabla P=0,
    \\  
    &\partial_tB+v\cdot\nabla B-B\cdot \nabla v=0,
    \\
    &\nabla\cdot v=0,
    \\
    &\nabla\cdot B=0.
    \end{cases}
\end{equation}
Here, $v:\overline{\Omega}_t\to \mathbb{R}^d$ is the fluid velocity, $P:\overline{\Omega}_t\to \mathbb{R}$ is the total pressure, and $B:\overline{\Omega}_t\to \mathbb{R}^d$ is the magnetic field. The evolution of the free boundary $\Gamma_t:=\partial\Omega_t$ is coupled to the     interior dynamics by three natural boundary conditions. The first is the \emph{kinematic boundary condition}, 
\begin{equation}\label{bb}
    D_t:=\partial_t+v\cdot \nabla \ \ \text{is tangent to} \ \bigcup_t \{t\}\times\partial\Omega_t\subseteq\mathbb{R}^{d+1},
\end{equation} 
which states that the free hypersurface $\Gamma_t$  flows with normal velocity $v\cdot n_{\Gamma_t}$. The second is the  \emph{dynamic boundary condition}, which states that 
\begin{equation}\label{BC1}
\begin{split}
&P=0 \ \text{on} \ \Gamma_t.
\end{split}
\end{equation}
In the physical derivation of \eqref{Euler}, the total pressure $P$ arises as the combination $P:=p+\frac{1}{2}|B|^2$ of the fluid and magnetic pressures, and \eqref{BC1} represents the balance of forces at the fluid-vacuum interface in the absence of surface tension. 
As a final boundary condition, we require that 
\begin{equation}\label{Tangent B}
B\cdot n_{\Gamma_t}=0 \ \text{on} \ \Gamma_t,
\end{equation}
which says that the fluid is a perfect conductor. This closes the above system and ensures that the total energy
\begin{equation*}
    E:=\frac{1}{2}\int_{\Omega_t}|v|^2dx+\frac{1}{2}\int_{\Omega_t}|B|^2dx
\end{equation*}
is formally conserved. Throughout the article, we will refer to the system \eqref{Euler}-\eqref{Tangent B}  as the \emph{free boundary MHD equations}. 
\medskip

By taking the divergence of \eqref{Euler}, we obtain the following elliptic equation for the total pressure $P$:
\begin{equation}\label{Euler-pressure}
  \begin{cases}
    &\Delta P=\text{tr}(\nabla B)^2-\text{tr}(\nabla v)^2\ \ \text{in} \ \Omega_t,
    \\  
    &P=0 \ \ \text{on} \ \Gamma_t.
    \end{cases}
\end{equation}
Assuming sufficient regularity on $(v(t),B(t),\Gamma_t)$, the equation \eqref{Euler-pressure} uniquely determines the pressure from the velocity, magnetic field and domain at any given time $t$.
\medskip

When $B\equiv 0$, the free boundary MHD equations reduce to the free boundary Euler equations.  By a classical result of Ebin \cite{MR886344}, these latter equations are  ill-posed unless the normal derivative of the pressure points into the fluid. Therefore, in the well-posedness theory of the free boundary Euler equations, it is generally assumed that the pressure $p_0$ associated to the initial data satisfies $-\nabla p_0\cdot n_{\Gamma_0}>c_0$ on $\Gamma_0$ for some $c_0>0$. Note that for irrotational data on compact simply connected domains, such a $c_0$ can always be found,  by the strong maximum principle.
\medskip

For the free boundary MHD equations, the analogous assumption to avoid ill-posedness is that
\begin{equation}\label{TS}
    a:=-\nabla P\cdot n_{\Gamma_t}> c_0>0.
\end{equation}
Indeed, ill-posedness results when \eqref{TS} is violated can be found in \cite{MR4093862}. Therefore, we will always assume that our initial data satisfies the following:
\medskip

{\bf Taylor sign condition.} There is a $c_0>0$ such that $a_0:=-\nabla P_0\cdot n_{\Gamma_0}>c_0$ on $\Gamma_0$.  
\medskip

 Geometrically, enforcing $a_0>0$ ensures that the initial pressure $P_0$ is a non-degenerate defining function for the initial boundary hypersurface $\Gamma_0$, and thus can be used to describe the regularity of the boundary. As part of our well-posedness theorem below, we will prove that -- under minimal regularity requirements -- the  positivity of the Taylor coefficient persists  on some non-trivial time interval.
\medskip

Finally, we remark that for the incompressible Euler flow, if the initial 
data is irrotational (i.e.~$\omega_0=0$) then the solution 
remains irrotational at later times. Hence, it is meaningful and indeed interesting to study irrotational 
flows, or, as they are commonly referred to, \emph{water waves}. By contrast, in the incompressible MHD case the 
Lorentz force generates vorticity and the irrotationality 
condition does not propagate.
\subsection{ The structure of the MHD equations} In order to justify the setup for our main results as well as the function space framework in the upcoming subsections, it is instructive to begin with a heuristic description  of the 
leading part of the free boundary MHD equations, also comparing  it with the free boundary incompressible Euler 
equations.
\medskip

We begin our discussion with the boundaryless case, where 
at leading order the incompressible Euler equations may be seen as  a transport equation of the form
\begin{equation*}
D_t v = f,
\end{equation*}
where we view $f$ as a perturbative error. In contrast, by applying $D_t$ to the $v$ and $B$ equations in \eqref{Euler} and observing that $D_t$ and $\nabla_B := B \cdot \nabla$ commute, the coupled system structure of MHD naturally leads to a second order evolution,
\begin{equation}\label{wave equ intro}
D_t^2v-\nabla_B^2v=f_1,\hspace{5mm} D_t^2B-\nabla_B^2B=f_2,
\end{equation}
which is akin to a one dimensional, possibly degenerate,  wave equation relative to the distinguished direction of the magnetic field.
\medskip

Turning our attention now to  free boundary problems,
the leading-order description requires an additional ``good variable" 
in order to capture the motion of the free boundary. In the incompressible Euler case, a convenient geometric choice 
is exactly the Taylor coefficient, $a$, which at leading order 
solves a second-order time evolution
\begin{equation*}
D_t^2 a + a \mathcal{N}a = f.
\end{equation*}
Here, $\mathcal{N}$ represents the Dirichlet-to-Neumann map associated 
to the fluid domain, which should be thought of as a first order elliptic pseudodifferential operator on the free boundary
at fixed time.  
\medskip

Moving on to the MHD counterpart, the Taylor coefficient remains the good variable, but its evolution acquires a wave 
component in the $B$ direction. More specifically, the resulting equation can be written at leading order as
\begin{equation}\label{schematicDta}
D_t^2 a - \nabla_B^2 a +  a \mathcal{N} a = f.
\end{equation}
To complete our heuristic description of the equations we also need to consider the coupling between the interior and the boundary component. In the incompressible Euler case, 
at leading order this can be roughly interpreted in terms of the rotational/irrotational decomposition of the velocity field,
\[
v = v^{irr} + v^{rot} 
\]
where the two components satisfy
\[
\nabla \times v^{irr} = 0, \qquad v^{rot} \cdot n_\Gamma = 0.
\]
The rotational component is essentially described by the vorticity
$\omega = \nabla \times v$, which carries the transport equation in the interior evolution,
\[
D_t \omega = f. 
\]
The irrotational component, on the other hand, can be seen as part of the boundary evolution, via the coupling
\[
  \mathcal{N}(v  \cdot n_{\Gamma_t}) \approx   D_t a.
\]
\begin{remark}
The above decomposition is at this point crudely stated and should be taken with a grain of salt, as it is  not  entirely compatible with the regularity of the good variables $(a,\omega)$. This will be elaborated on later.
\end{remark}
On the other hand, for the free boundary MHD system we have an additional rotational component to contend with; namely, $\omega_B := \nabla \times B$, which is the electric current. 
Correspondingly, after a diagonalization, we have two transport equations, 
\[
(D_t \pm \nabla_B)(\omega \mp \omega_B) = f
\]
and two associated coupling conditions
\[
  \mathcal{N}((v \pm B) \cdot n_{\Gamma_t}) \approx   (D_t \mp \nabla_B) a.
\]
Given the above heuristic discussion, we are now ready to  describe the functional setting that we will use to study the dynamics of solutions to these equations.
\subsection{Scaling and function spaces}
A state for the free boundary MHD equations consists 
of a domain $\Omega$ together with a velocity field $v$ and a magnetic field $B$ on $\Omega$.
A  bounded connected domain $\Omega$ can be equally described by its
boundary $\Gamma$. Hence, in the sequel, by a state we mean a triple $(v,B,\Gamma)$. 
\medskip

To understand the correct functional setting for analyzing the free boundary MHD equations, it is imperative to look at the scaling of the equations:
\begin{equation*}
    \begin{split}
        v_\lambda(t,x)&=\lambda^{-\frac{1}{2}}v\left(\lambda^\frac{1}{2}t,\lambda x\right),
        \\
         B_\lambda(t,x)&=\lambda^{-\frac{1}{2}}B\left(\lambda^\frac{1}{2}t,\lambda x\right),
        \\
         P_\lambda(t,x)&=\lambda^{-1}P\left(\lambda^\frac{1}{2}t,\lambda x\right),
         \\
          a_\lambda(t,x)&=a\left(\lambda^\frac{1}{2}t,\lambda x\right),
         \\
          \left(\Gamma_{\lambda}\right)_t&=\{\lambda^{-1}x : x\in \Gamma_{\lambda^\frac{1}{2}t}\}.
    \end{split}
\end{equation*}
Just as in the incompressible Euler case, here we remark that in the boundaryless case the problem admits a two parameter family of scaling laws, but the presence of the free boundary eliminates one parameter by the additional requirement that the Taylor coefficient has  dimensionless scaling.
\medskip

The above scaling  suggests that one should place the velocity, magnetic field and domain at the same $H^s$-based Sobolev regularity.  On the other hand, one may check  that both the material derivative $D_t$ and the operator $\nabla_B:=B\cdot \nabla$ -- which play balanced roles in the equations -- scale like half derivatives. In the incompressible Euler case we only have the material derivative, and  the 
$H^{s-\frac12}$ regularity of $D_t(v,\Gamma)$ can be recovered dynamically from the $H^s$ regularity of the 
initial data. In the MHD case, however, it also becomes natural to consider the $H^{s-\frac12}$ regularity of $\nabla_B(v,B)$, with the difference being that this regularity should now be part of the initial data assumptions.  We remark that, taken together, the $H^{s-\frac12}$ regularity of $D_t(v,B)$ and $\nabla_B(v,B)$
comprise the energy associated to the wave operator 
$D_t^2-\nabla_B^2$ arising in the description of  the incompressible MHD equations in \eqref{wave equ intro}. Based on this discussion, it is very natural to incorporate the $\nabla_B$ regularity 
into our state space; this will play a fundamental  role in the low regularity analysis. 
\medskip

Concerning the potential choices for $s$, scaling 
provides the universal critical threshold $s_c = \frac{d+1}2$. However, this turns out to be far from the actual local well-posedness threshold. Indeed, even if $B\equiv 0$ (the incompressible Euler case) and without a free boundary, 
the results of \cite{MR3359050} show that local well-posedness in $H^s$ holds if and only if $s > \frac{d+2}2$,
one half-unit above scaling. Our recent results in \cite{Euler}
show that the same range is valid also with a free boundary. So, the best we could hope for in the case of MHD is also  
$s > \frac{d+2}2$, which motivates the definition below.
\begin{definition}[State space]\label{D state space} Let $s>\frac{d}{2}+1$. The
\emph{state space}  $\mathbf{H}^s$ is the set of all triples $(v,B,\Gamma)$ such that $\Gamma$ is the boundary of a bounded, connected domain $\Omega$ and such that the following properties are satisfied:
\begin{enumerate}
    \item (Regularity). $v,B\in H_{div}^s(\Omega)$ and $\Gamma\in H^s$, where $H_{div}^s(\Omega)$ denotes the space of divergence-free vector fields in $H^s(\Omega)$.
    \item (Taylor sign condition). $a:=-\nabla P\cdot n_{\Gamma}>c_0>0$, where $c_0$ may depend on the choice of $(v,B,\Gamma)$, and the pressure $P$ is obtained from $(v,B,\Gamma)$ by solving the  elliptic equation \eqref{Euler-pressure} associated to \eqref{Euler} and \eqref{BC1}.
    \item (Tangency of $B$).   $B\cdot n_\Gamma=0$ on $\Gamma.$
    \item \label{iiii}(Wave-type regularity condition).  $\nabla_Bv, \nabla_BB\in H^{s-\frac{1}{2}}(\Omega)$.
\end{enumerate}
\end{definition}
\begin{remark}
To our knowledge, our results are the first to incorporate the natural wave regularity condition (iv) into the functional setting for this problem. Therefore, the main results we describe below actually represent the first results (at any regularity) in this state space. All previous approaches have only incorporated properties (i)-(iii), and many of them impose further highly restrictive geometric constraints on the data and work in high regularity. While condition (iv) is entirely natural in view of the wave-type structure of the MHD equations, and at the same scaling level as (i), it is far from straightforward to propagate this in the requisite energy estimates and even more difficult to construct (even at high regularity) solutions actually satisfying this condition. In our opinion, the sharp results that we present below seem to be substantially out of reach of contemporary approaches.
\end{remark}
\begin{remark}
At first glance, it may seem like we are omitting a data condition of the form $\nabla_B \Gamma\in H^{s-\frac{1}{2}}$ (or more precisely, $\nabla_Ba\in H^{s-\frac{3}{2}}(\Gamma)$). It turns out that this is automatic in view of condition (iii), which one can interpret as saying that the free surface is more regular in the direction of $B$, since it directly leads to the formula $\nabla_Bn_\Gamma=-\nabla^\top B\cdot n_{\Gamma}$, as will be shown in \Cref{commutatorremark}. In fact, the $H^{s-\frac{1}{2}}(\Omega)$ regularity of $\nabla_B B$ will also ensure that we have the second-order bounds $\nabla_B^2\Gamma\in H^{s-1}$ (or more precisely, $\nabla_B^2a\in H^{s-2}(\Gamma)$). This enhanced regularity for the free surface will be crucial for enforcing several subtle energy cancellations which will be necessary for well-posedness at low regularity. By slight abuse of language, we will sometimes refer to this observation as a ``regularizing effect" although, really, it should be viewed as a property of the state itself. A precise quantitative description of this property will be given in \Cref{partitionofBa}. 
\end{remark}
\begin{remark}\label{Time zero propagate}
It is well-known that, for any incompressible Euler-like system, the pressure enforces the  divergence-free condition on the velocity. For the free boundary MHD equations, the divergence-free condition on $B$ and the tangency of $B$ on the boundary should be thought of as constraints on the initial data rather than as dynamical requirements, which is why we incorporate them into the state space. Indeed,  by taking the divergence of the second equation in \eqref{Euler}, it follows that $D_t(\nabla\cdot B)=0$. Hence, if $\nabla\cdot B_0 =0$ in $\Omega_0$, then $\nabla \cdot B=0$ in $\Omega_t$ for all $t$. On the other hand, propagating  the boundary condition \eqref{Tangent B} is slightly more subtle. To see that, recall from \cite{MR2388661} that for a one-parameter family of domains flowing with velocity $v$, we have $D_tn_{\Gamma_t}=-((\nabla v)^*(n_{\Gamma_t}))^\top$. Here, $(\nabla v)_{ij}=\partial_j v_i$, the operation $\cdot{}^\top$ denotes the projection onto the tangent space, and the operation $\cdot{}^*$ denotes the adjoint, so that $(\nabla v)^*_{ij}=\partial_iv_j.$ An elementary computation then shows that 
\begin{equation*}
    D_t(B\cdot n_{\Gamma_t})=B\cdot n_{\Gamma_t}\left[n_{\Gamma_t}\cdot \nabla v\cdot n_{\Gamma_t}\right].
\end{equation*}
Hence, as long as $v\in L^1([0,T];C^1)$, the magnetic boundary condition is propagated by the flow.   Note that, on a physical level, the condition $B\cdot n_{\Gamma_t}=0$ states that the fluid is a perfect conductor.
\end{remark}
For states $(v,B,\Gamma)$ as in \Cref{D state space}, we may quantify their regularity by
\[
\|(v,B,\Gamma)\|_{\mathbf{H}^s}^2:=\|\Gamma\|_{H^s}^2+\|v\|^2_{H^s(\Omega)}+\|B\|_{H^s(\Omega)}^2+\|\nabla_Bv\|_{H^{s-\frac{1}{2}}(\Omega)}^2+\|\nabla_BB\|_{H^{s-\frac{1}{2}}(\Omega)}^2.
\]
Notice, however, that  $\mathbf{H}^s$ is not  a linear space, so the above formula does not define a norm. 
Nevertheless, $\mathbf{H}^s$  does come equipped with a compatible topology, so may be informally viewed  as an infinite dimensional manifold. Using this structure, we may rigorously define the space $C([0,T];\mathbf{H}^s)$  of continuous functions with values in $\mathbf{H}^s$, as well as an appropriate notion of $\mathbf{H}^s$ continuity of the data-to-solution map $(v_0,B_0,\Gamma_0)\mapsto (v(t),B(t),\Gamma_t)$. The main goal of this article  is to study the following:
\medskip

\noindent
\textbf{Cauchy problem for the free boundary MHD equations}: \emph{ Given  an initial state $(v_0,B_0,\Gamma_0) \in \mathbf H^s$, find the unique solution $(v,B,\Gamma) \in 
C([0,T];\mathbf{H}^s)$ in some time interval $[0,T]$ and show that the data-to-solution map is continuous.}
\subsection{Historical remarks}
The free boundary MHD equations arise as a coupling of the free boundary Euler equations and the Maxwell equations. They have significant physical interest, and  model  a wide variety of electrically conducting fluids and plasma. Despite this, little is known about the mathematical foundations of these equations. 
Indeed, the first step towards nonlinear well-posedness did not occur until 2014, \cite{MR3187678}, where a priori estimates were established for $H^4$ initial data in three dimensions. However, instead of using the natural boundary condition \eqref{BC1}, the authors in \cite{MR3187678} assume that both the fluid pressure $p$ and the magnetic pressure $\frac{1}{2}|B|^2$ are identically constant on the boundary. To the best of our knowledge, there is no general local-existence result that preserves these boundary conditions.
\medskip

In \cite{MR4072383}, low regularity a priori estimates for the 3D free boundary MHD equations are  proven under the same restrictive boundary condition, assuming, in addition, that  the fluid domain has a smooth boundary and satisfies an  appropriate smallness condition.  In terms of regularity, this latter result places the initial velocity and magnetic field at the natural $H^{2.5+\delta}$ threshold but needs an extra half degree of regularity on the Lagrangian flow map $\eta$. As was originally pointed out in \cite[Section 5]{MR2388661}, the Lagrangian flow map for the free boundary Euler equations is, in general, only as smooth as the velocity -- a characterization of when it has $H^{3+\delta}$ regularity was recently given in \cite{aydin2024construction}.
 \medskip

Under the natural boundary condition \eqref{BC1}, existence and uniqueness of solutions to the free boundary MHD equations was  proven in Lagrangian coordinates in \cite{MR3980843}, in the specific case that  $\Omega_0=\mathbb{T}^2\times (0,1)$ and $v_0,B_0\in H^4(\Omega_0)$. Since the domain $\Omega$ is one of the dynamical variables of the problem, the assumption that $\Omega_0=\mathbb{T}^2\times (0,1)$ is quite restrictive.  Moreover, in general, the assumption of a flat initial domain somewhat obscures the precise manner in which the regularity of the boundary is tracked -- see the above discussion on the regularity of the Lagrangian flow map and  \cite[p.~4013]{disconzi2019lagrangian} for a more extensive discussion of this issue. In principle, one may be able to transfer  results on $\mathbb{T}^2\times (0,1)$ to more general domains by using a partition of unity and localizing to each coordinate patch, but the highly nonlinear and nonlocal character of the problem substantially increases the technical difficulty due to the need to obtain estimates for the transition maps.  
\medskip

Perhaps the only existing result for our problem on a general class of domains is due to Liu and Xin in \cite{liu2023free}.  However, in contrast to the present article, they primarily study the capillary problem (i.e.~with surface tension) and their approach follows more closely the global geometric method of Shatah and Zeng developed in \cite{MR2400608} to study the free boundary Euler equations with surface tension.  One relevant consequence of the analysis in \cite{liu2023free} is to obtain local existence (but not uniqueness nor continuous-dependence) on simply connected domains in three dimensions in a high regularity Sobolev regime by studying the zero-surface tension limit. The solution that they obtain satisfies properties (i)-(iii) in \Cref{D state space} but not (iv). It also seems  that their method needs  the initial interface to have $H^6$ regularity (in contrast to $H^{2.5+\delta}$ in our setting). Nevertheless, their result is substantial in that it applies to a broader range of physically relevant situations than the other aforementioned works. For instance, it applies to situations where the free interface does not have graph geometry, which is of great physical interest. We do remark, however, that the method of proof in  \cite{liu2023free} seems to crucially rely on the assumption that the domain is simply connected, as it uses the well-posedness of certain div-curl systems (which are known to be ill-posed without certain topological assumptions) in order  to recover the velocity and magnetic fields in their iteration scheme. We will completely avoid such assumptions in our work.
\medskip


 The objective of the present article is to provide a complete well-posedness theory for the free boundary MHD equations (along with several new, strong auxiliary results) at optimal regularity levels and for arbitrary  domains in general dimensions. To accomplish this, we introduce several novel techniques and also significantly extend the reach of the methods developed in our previous work  \cite{Euler}, which served as a proof of concept to establish several new sharp results for the free boundary Euler equations. One key novelty in \cite{Euler} was the introduction of a robust and fully Eulerian framework for studying free boundary problems in the incompressible setting. Although the free boundary MHD equations form a far more complicated model with a host of additional difficulties,  the basic framework developed in \cite{Euler} will serve as a powerful means for side-stepping many of the well-known technical difficulties encountered in free boundary problems. In particular, our scheme will efficiently address the intricate coupling between the elliptic estimates and  the regularity of the dynamic variables, while only  needing minimal  assumptions on the data and avoiding any specialized assumptions regarding the domain geometry. 
\medskip

We remark that the present article focuses exclusively on the dynamics of the incompressible free boundary MHD equations without surface tension. Although this is a fundamental, benchmark model of magnetohydrodynamics,  several other variants of the free boundary  MHD equations have attracted recent attention. We refer the reader to \cite{gu2021zero, gu2021local,hao2023motion,MR4462752, MR4250567} for the study of the incompressible  problem with surface tension, \cite{lindblad2021anisotropic,MR4444136,zhang2020local} for the compressible problem,  \cite{MR3614754,liu2023local,liu2023free,MR3745155,MR3981394,MR4101311} for additional information on incompressible plasma-vacuum interface problems,   and \cite{MR3151094,MR3503661,MR4201624} for compressible analogues. Although the proof that we present in this article is fine-tuned to the incompressible problem without surface tension, we believe that our general methodology will have a  wide range of applicability in the mathematical theory of magnetohydrodynamics. In particular, we hope that it will inspire the development of complete, refined well-posedness theories for  these other MHD models.

\subsection{Overview of the main results}\label{OOTP} 
Our primary objective  is to prove that the free boundary  MHD equations are well-posed in $\mathbf H^s$ 
for $s > \frac{d}2+1$. However, our well-posedness theory includes substantially more than just existence, uniqueness, and continuity of the data-to-solution map in $\mathbf{H}^s$. Therefore, we find it prudent to divide our results into multiple intrinsically interesting components.
\medskip

We begin by setting  notation. Let $\Omega_*$ be a bounded, connected domain with smooth boundary $\Gamma_*$. Given $\epsilon,\delta>0$,  consider the collar neighborhood $\Lambda_*:=\Lambda(\Gamma_*,\epsilon,\delta)$ consisting of all hypersurfaces $\Gamma$ which are  $\delta$-close to $\Gamma_*$ in the $C^{1,\epsilon}$ topology. As long as $\delta>0$ is sufficiently small,  elements of $\Lambda_*$ can be written as graphs over $\Gamma_*$. As a consequence,  Sobolev and H\"older norms  can be defined in a consistent fashion. To state our results, we will  assume  that a collar neighborhood $\Lambda_*$ has been fixed, and consider solutions with initial data $(v_0,B_0,\Gamma_0)$ satisfying $\Gamma_0 \in \Lambda_*$.  A more precise description of this functional setting will be given  in \Cref{AOMD}. 

\subsubsection{Enhanced uniqueness} 
We begin by stating our main uniqueness result, which requires the least in terms of regularity.  Here, of critical importance are the  control parameters 
\begin{equation}\label{ACONT}
A:=A_\epsilon:=\|(v,B)\|_{C^{\frac{1}{2}+\epsilon}_x(\Omega_t)}+\|\Gamma_t\|_{C_x^{1,\epsilon}}, \qquad \epsilon>0,
\end{equation}
which can be thought of as  almost scale-invariant pointwise quantities, and
\begin{equation}\label{BCONT}
A^{\frac{1}{2}}:=\|(v,B)\|_{W^{1,\infty}_x(\Omega_t)}+\|(D_t^+P,D_t^-P)\|_{W^{1,\infty}_x(\Omega_t)}+\|\Gamma_t\|_{C_x^{1,\frac{1}{2}}},
\end{equation}
which is at $\frac{1}{2}$ derivatives above scaling. Here, $D_t^\pm:=D_t\pm\nabla_B$.
We note that the homogeneous part of the $L^1_T$ norm of $A^{\frac{1}{2}}$ is invariant with respect to the natural scaling symmetry for the free boundary MHD equations. Our first main result states that uniqueness holds in the class where these  quantities remain finite.
\begin{theorem}[Uniqueness]\label{Uniqueness intro} Let $\epsilon, T>0$ and let $\Omega_0$ be a domain with boundary $\Gamma_0\in \Lambda_*$ of $C^{1,\frac{1}{2}}$ regularity. Then for every divergence-free initial data $v_0,B_0\in W^{1,\infty}(\Omega_0)$, the free boundary MHD equations with the Taylor sign condition admit at most one solution $(v,B,\Gamma_t)$  with $\Gamma_t\in \Lambda_*$  and
\begin{equation*}
\sup_{0\leq t\leq T}A_\epsilon(t)+\int_{0}^{T}A^{\frac{1}{2}}(t) \, dt<\infty. 
\end{equation*}
\end{theorem}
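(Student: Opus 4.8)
The plan is to prove uniqueness by a Gronwall argument on the difference of two solutions, carried out entirely in the Eulerian framework. Suppose $(v^1,B^1,\Gamma^1)$ and $(v^2,B^2,\Gamma^2)$ are two solutions with the same data and satisfying the stated control bounds. The first step is to fix a common reference frame: since both boundaries lie in the collar $\Lambda_*$, I would parametrize $\Gamma^1_t$ and $\Gamma^2_t$ as graphs over $\Gamma_*$ and, more importantly, transport the interior domains $\Omega^1_t,\Omega^2_t$ to a common domain via a regularized diffeomorphism (an extension/regularization of the near-boundary graph functions), so that the two velocity fields, magnetic fields and pressures can be compared as functions on the same set. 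This is the Eulerian analogue of what was done in \cite{Euler}; the difference variables are then $(\delta v,\delta B,\delta\Gamma)$ together with $\delta P$ and $\delta a$, and one keeps track of a low-regularity difference norm — morally an $L^2$-type norm of $(\delta v,\delta B)$ in the interior plus an $H^{1/2}$-type norm of $\delta\Gamma$ on the boundary, i.e.\ the energy at \emph{one derivative below} the difference of the solution regularities, which is the level at which the scale-invariant control $A^{1/2}$ closes the estimate.

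The second step is to derive the linearized/difference equations. Subtracting the two copies of \eqref{Euler} produces transport equations for $\delta v$ and $\delta B$ with right-hand sides that are bilinear in the differences and the (pointwise-controlled) coefficients, plus the gradient $\nabla\delta P$; subtracting the two elliptic problems \eqref{Euler-pressure}, together with the boundary conditions \eqref{BC1} and \eqref{Tangent B}, gives an elliptic equation for $\delta P$ whose source terms are, again, controlled by $A$ and $A^{1/2}$ acting on $(\delta v,\delta B,\delta\Gamma)$. The key structural point — inherited from the heuristic discussion around \eqref{schematicDta} and the coupling $\mathcal N((v\pm B)\cdot n_{\Gamma_t})\approx (D_t\mp\nabla_B)a$ — is that the boundary part of the difference energy must be measured through the Dirichlet-to-Neumann operator and propagated using the two transport operators $D_t^\pm=D_t\pm\nabla_B$. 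One differentiates the difference energy in time, integrates by parts, and the dangerous top-order terms (those that would need a full extra derivative on the differences) must cancel; the surviving terms are then bounded by $(A+A^{1/2})$ times the difference energy. Here the ``regularizing effect'' remarked on after Definition~\ref{D state space} — that $\Gamma$ is extra-regular in the $B$ direction because of tangency \eqref{Tangent B} — is what makes the $\nabla_B$ contributions in the boundary evolution benign rather than top-order.

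The third step is the Gronwall closure: having shown $\frac{d}{dt}\mathcal E_{\mathrm{diff}}(t)\lesssim \big(A_\epsilon(t)+A^{1/2}(t)\big)\,\mathcal E_{\mathrm{diff}}(t)$ with $\mathcal E_{\mathrm{diff}}(0)=0$ and $\int_0^T(A_\epsilon+A^{1/2})\,dt<\infty$, one concludes $\mathcal E_{\mathrm{diff}}\equiv 0$ on $[0,T]$, hence $\delta\Gamma\equiv 0$ and then $\delta v\equiv\delta B\equiv 0$. A subtlety is that the difference norm as set up need not \emph{a priori} be finite (the two solutions are only assumed to have pointwise regularity, not Sobolev regularity), so this step is really run for a mollified/truncated version of the difference and one passes to the limit; alternatively one works in a negative-index space in which both solutions automatically live.

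\textbf{Main obstacle.} The crux is the top-order cancellation in the boundary energy estimate: one must show that when differentiating the difference energy in time, the worst terms coupling $\delta P$ (equivalently $\delta a$) to the Dirichlet-to-Neumann operator and to $\nabla_B$ assemble into a perfect-derivative or antisymmetric structure, so that no term requires more regularity on $(\delta v,\delta B,\delta\Gamma)$ than is available. This is exactly where the wave structure $D_t^2-\nabla_B^2+a\mathcal N$ of \eqref{schematicDta}, the two-speed diagonalization $(D_t\pm\nabla_B)(\omega\mp\omega_B)$, and the extra $B$-directional regularity of the free surface all have to be used in a coordinated way; getting the elliptic (Dirichlet-to-Neumann) estimates for $\delta P$ to interface correctly with these transport structures at this minimal regularity, and with no assumption on the topology of $\Omega$, is the heart of the argument.
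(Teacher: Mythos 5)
Your proposal diverges from the paper in several structurally important ways, and at least two of these divergences are genuine gaps rather than alternative routes.

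First, the method for comparing the two solutions' domains is different and, as proposed, problematic. You suggest pulling both solutions back to a common domain via a regularized diffeomorphism. The paper (following \cite{Euler}) instead works directly on the \emph{intersection} domain $\tilde\Omega_t = \Omega_t \cap \Omega_{t,h}$, which in general is only Lipschitz, and measures the boundary discrepancy through the pressure difference $P - P_h$ on $\tilde\Gamma_t$ weighted by $a^{-1}$, rather than through $H^{1/2}$ of a surface displacement as you propose. The reason this matters is that the Taylor sign condition makes $P - P_h$ proportional to the surface separation on $\tilde\Gamma_t$, so the boundary part of the distance functional is genuinely $L^2$-level (matching the linearized energy $\int_{\Gamma}a s^2\,dS$); your $H^{1/2}$ boundary norm is a half-derivative too strong and would not close at the control level $A, A^{1/2}$. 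The diffeomorphism step also introduces its own commutator errors that the paper avoids by never changing coordinates.

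Second, and more seriously, you identify the ``crux'' as arranging a top-order cancellation between $\delta P$, the Dirichlet-to-Neumann operator, and $\nabla_B$, appealing to the wave structure $D_t^2-\nabla_B^2+a\mathcal N$ and the extra $B$-directional regularity of $\Gamma$. But the paper's actual observation is in the opposite direction: when the $\pm$ boundary contributions $J^+$ and $J^-$ are summed, \emph{the magnetic field cancels entirely} from the dangerous boundary term — only the symmetrized operator $D_t = \tfrac12(D_t^+ + D_t^-)$ hits the pressure, and the resulting expression makes no reference to $B$ except implicitly through $P$. This is why the paper emphasizes (right after the theorem statement) that the uniqueness norm is strictly weaker than the well-posedness norm and in particular does \emph{not} require the wave-type condition $\nabla_B v, \nabla_B B \in H^{s-1/2}$, and why the difference estimate reduces to the Euler case \cite[Eq.\ (4.11)]{Euler} ``rather seamlessly.'' The $\nabla_B$-regularizing effect on $\Gamma$ and the wave structure \eqref{schematicDta} are essential ingredients for the higher-order energy estimates in Section~\ref{HEB}, but they play no role in the uniqueness proof. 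Building the argument around them here would leave you estimating terms that the correct symmetrization simply removes, and at this regularity level those terms do not close.
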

\Cref{Uniqueness intro} is a considerable improvement in terms of regularity over all other known uniqueness results for this problem. Moreover, to our knowledge, \Cref{Uniqueness intro} is the first uniqueness result for the free boundary MHD equations at any regularity which holds for general initial data (i.e.~without any of the restrictions mentioned above, such as the initial domain being flat). We also emphasize that \Cref{Uniqueness intro}   applies in arbitrary dimensions and to domains with (from most physical perspectives) arbitrary geometries.
\medskip

There is one other remarkable and surprising feature about \Cref{Uniqueness intro}. From the Laplace equation for $D_t^\pm P$ (see \Cref{Dtpdef}), Sobolev embeddings and product estimates, it is straightforward to control (when $B$ is tangent to $\Gamma$) the parameter $A^{\frac{1}{2}}$ entirely in terms of $\|(v,B,\Gamma)\|_{H^s}$ for any $s>\frac{d}{2}+1$ (in fact, a better pointwise bound is also possible, but more delicate). In particular, the norm needed to ensure uniqueness of solutions is substantially weaker than the norm we will use to obtain local well-posedness, as the latter will also incorporate the wave-type regularity condition $(\nabla_B v,\nabla_B B)\in H^{s-\frac{1}{2}}(\Omega)$. 
\begin{remark}\label{slight improvement}
When $B\equiv 0$, \Cref{Uniqueness intro} recovers the main uniqueness theorem in \cite{Euler} for the free boundary Euler equations. Moreover, for the full free boundary MHD equations, \Cref{Uniqueness intro} (as well as \Cref{Diff es thm intro} below) remains valid even if one uses the slightly weaker control parameter obtained by replacing the second term in \eqref{BCONT} by $\|D_tP\|_{W^{1,\infty}_x(\Omega_t)}$. We will  remark on the structural properties of the MHD equations which allow for  this minor improvement in Sections~\ref{LR} and \ref{DF}.
\end{remark}
\subsubsection{Stability estimates}
While uniqueness is a foundational property in its own right, here
we view it as a corollary of a  far more powerful  stability theorem. To explain the setting, let $(v,B,\Gamma_t)$ and $(v_h,B_h,\Gamma_{t,h})$ be two solutions to the free boundary MHD equations  with corresponding domains $\Omega_t$ and $\Omega_{t,h}$. We intend to show that if $(v,B,\Gamma_t)$ and $(v_h,B_h,\Gamma_{t,h})$ are ``close" at time zero, then they remain close on a suitable time-scale. However, since the domains $\Omega_t$ and $\Omega_{t,h}$ are evolving in time, it is impossible to compare $(v,B,\Gamma_t)$ and $(v_h,B_h,\Gamma_{t,h})$ in a linear fashion. To resolve this issue, we construct a nonlinear functional  which measures the distance between solutions at the $L^2$ level and which is  propagated by the flow. 
\medskip

To avoid comparing solutions with entirely different domains, we harmlessly restrict our attention 
to solutions $(v,B,\Gamma_t)$ and $(v_h,B_h,\Gamma_{t,h})$ evolving in the same collar neighborhood $\Lambda_*$. For such solutions, we wish to define a nonlinear distance functional that  is propagated by the flow.  Although in our actual analysis we will work with a symmetrized version of the free boundary MHD equations, our distance functional will, in spirit, take the form
\begin{equation}\label{diff functional candidate1}
\begin{split}
   D((v,B,\Gamma),(v_h,B_h,\Gamma_h)):= \frac{1}{2}\int_{\tilde{\Omega}_t}|v-v_h|^2dx+\frac{1}{2}\int_{\tilde{\Omega}_t}|B-B_h|^2dx+\frac{1}{2}\int_{\tilde{\Gamma}_t}b|P-P_h|^2\, dS.
\end{split}
\end{equation}
Here, $P$ and $P_h$ are the  pressures, $\tilde{\Gamma}_t$ is the boundary of $\tilde{\Omega}_t:=\Omega_t\cap\Omega_{t,h}$ and $b$ is a well-chosen weight function. Heuristically, the first two terms on the right-hand side of \eqref{diff functional candidate1} measure the $L^2$ distance between $v$ and $v_h$ and $B$ and $B_h$, respectively. On the other hand, by the Taylor sign condition, the third term measures the distance between the free hypersurfaces.  The objective of \Cref{DF} is to  prove the following theorem.
\begin{theorem}[Stability]\label{Diff es thm intro}
 Let $0<\epsilon,\delta\ll 1$ and let $\Lambda_*=\Lambda(\Gamma_*,\epsilon,\delta)$ be a collar neighborhood. Suppose that $(v,B,\Gamma_t)$ and $(v_h,B_h,\Gamma_{t,h})$ are solutions to the free boundary MHD equations that evolve in the collar in a time interval $[0,T]$ and satisfy  $a$,$a_h>c_0>0$.  Then we have the estimate
\begin{equation*}\label{Stab est1}
\frac{d}{dt} D((v,B,\Gamma),(v_h,B_h,\Gamma_h)) \lesssim_{A,A_h} (A^{\frac{1}{2}}+A_h^{\frac{1}{2}}) D((v,B,\Gamma),(v_h,B_h\Gamma_h)),
\end{equation*}
where $A_h$ and $A_h^{\frac{1}{2}}$ are  the control parameters \eqref{ACONT} and \eqref{BCONT} corresponding to the solution $(v_h, B_h,\Gamma_{t,h})$.
\end{theorem}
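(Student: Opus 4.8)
\textbf{Proof proposal for \Cref{Diff es thm intro}.}

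The plan is to differentiate the functional $D$ in time along the two flows and estimate each of the resulting terms by $(A^{\frac{1}{2}}+A_h^{\frac{1}{2}})D$, up to the stated $A,A_h$-dependent constants. The first conceptual step is to set up the bookkeeping for the shared domain: since $\tilde\Omega_t=\Omega_t\cap\Omega_{t,h}$ is itself a moving domain whose boundary $\tilde\Gamma_t$ consists of pieces of $\Gamma_t$ and of $\Gamma_{t,h}$, I would use the transport/Reynolds identity for domains flowing with a velocity field, keeping careful track of which velocity ($v$ or $v_h$) transports each portion of $\partial\tilde\Omega_t$, and of the mismatch terms supported near $\tilde\Gamma_t$ where $P$ or $P_h$ need not vanish. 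It will be cleaner to first pass to the symmetrized formulation alluded to in the text — working with the diagonal variables $v\pm B$ and the operators $D_t^\pm=D_t\pm\nabla_B$ — so that the interior evolution genuinely looks like a pair of transport equations $(D_t\pm\nabla_B)(\omega\mp\omega_B)=f$ coupled to the boundary via $\mathcal N((v\pm B)\cdot n)\approx (D_t\mp\nabla_B)a$, exactly the structure described in the introduction. This is the natural setting in which the boundary term $\int_{\tilde\Gamma_t} b|P-P_h|^2\,dS$ pairs against the interior terms.

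Next I would compute $\frac{d}{dt}$ of each of the three pieces of $D$ separately. For the two interior integrals $\frac12\int_{\tilde\Omega_t}|v-v_h|^2$ and $\frac12\int_{\tilde\Omega_t}|B-B_h|^2$, I would subtract the MHD equations \eqref{Euler} for the two solutions, testing the difference equation for $v-v_h$ against $v-v_h$ (and similarly for $B-B_h$); the divergence-free conditions kill the transport contributions $\int (v\cdot\nabla)(v-v_h)\cdot(v-v_h)$ up to a boundary term, the coupling terms $-B\cdot\nabla B + B_h\cdot\nabla B_h$ and $B\cdot\nabla v - B_h\cdot\nabla v_h$ are handled by writing them as $\nabla_{B-B_h}$ acting on a solution plus $\nabla_{B_h}$ acting on a difference and integrating by parts (again using $\nabla\cdot B_h=0$, so the symmetric part cancels and only $\|v\|_{W^{1,\infty}}+\|B\|_{W^{1,\infty}}$-type factors, i.e. $A^{\frac12}$, survive), and the pressure term $\nabla(P-P_h)$ integrates by parts onto $\nabla\cdot(v-v_h)=0$ leaving only the boundary integral $\int_{\tilde\Gamma_t}(P-P_h)(v-v_h)\cdot n\,dS$. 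For the boundary piece $\frac12\int_{\tilde\Gamma_t}b|P-P_h|^2\,dS$, I would use the transport structure of $\tilde\Gamma_t$ and the surface transport identity, differentiate, and invoke the leading-order evolution $D_t^2 a-\nabla_B^2 a+a\mathcal N a = f$ from \eqref{schematicDta} applied to the difference, together with the relation $D_t^\pm a\approx\mathcal N((v\pm B)\cdot n)$ to convert $\frac{d}{dt}(P-P_h)$ on the boundary into the normal traces $(v-v_h)\cdot n$ and $(B-B_h)\cdot n$.

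The crucial cancellation is then between the boundary term produced by the pressure integration by parts in the interior estimate and the term produced by differentiating the weighted surface integral: with the correct choice of weight $b$ (essentially $b\sim 1/a$, made precise so that the Taylor sign condition $a,a_h>c_0$ guarantees $b\lesssim 1/c_0$ and $b$ is comparable to a positive constant), the two nearly cancel, and what remains is a genuinely lower-order expression controlled by $(A^{\frac12}+A_h^{\frac12})$ times $\int_{\tilde\Gamma_t}|P-P_h|^2 + \int_{\tilde\Omega_t}(|v-v_h|^2+|B-B_h|^2)$, i.e. by $(A^{\frac12}+A_h^{\frac12})D$. Along the way one must also bound the genuinely new ``mismatch'' contributions supported on $\tilde\Gamma_t\setminus(\Gamma_t\cap\Gamma_{t,h})$, where the two boundaries differ; these are estimated using the collar structure (both $\Gamma_t,\Gamma_{t,h}\in\Lambda_*$ are graphs over $\Gamma_*$, so their separation is itself controlled by $D$ via the Taylor sign condition applied to $P$ or $P_h$) together with trace inequalities, and they again produce only $A,A_h$-dependent constants times $(A^{\frac12}+A_h^{\frac12})D$. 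Finally, the elliptic equation for $P-P_h$ (the difference of \eqref{Euler-pressure}, with its domain mismatch handled as in \Cref{Euler}) is used to control any interior occurrences of $\nabla(P-P_h)$ or $P-P_h$ in terms of the $L^2$ norms of $v-v_h$, $B-B_h$ and the boundary data, closing the estimate.

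\textbf{Main obstacle.} The hard part is not any single integration by parts but the orchestration of the boundary cancellation in the presence of two different, time-dependent domains: one must simultaneously (a) choose the weight $b$ so that the pressure boundary terms from the two interior estimates cancel against $\frac{d}{dt}$ of the surface integral to leading order, (b) control the domain-mismatch region $\tilde\Gamma_t\triangle\partial\tilde\Omega_t$ where neither $P$ nor $P_h$ vanishes, using only the collar geometry and the Taylor sign condition rather than any high regularity, and (c) ensure every remaining term is genuinely of size $A^{\frac12}$ (one derivative, or a half-derivative-above-scaling quantity) and not of size $A$ in a way that would fail to close. Managing the interplay of these three requirements at the low regularity of $W^{1,\infty}$ data — in particular verifying that the symmetrization into $D_t^\pm$ variables does not cost derivatives one cannot afford — is where the real work lies, and it is exactly the place where the structural features of MHD noted in \Cref{slight improvement} (allowing $\|D_tP\|_{W^{1,\infty}}$ in place of $\|(D_t^+P,D_t^-P)\|_{W^{1,\infty}}$) must be exploited.
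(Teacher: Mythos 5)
Your high-level plan — differentiate $D$ along the flow, use the Reynolds/Leibniz rule on $\tilde\Omega_t$ and $\tilde\Gamma_t$, integrate the pressure term by parts onto the divergence-free condition, and look for a cancellation between the interior boundary term and $\frac{d}{dt}$ of the weighted surface integral — is the correct skeleton and does match the paper. Your choice $b\sim a^{-1}$ is also the right one. However there are two substantive gaps worth pointing out.

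First, the paper's difference functional is set up in the Els\"asser variables as $D=D^++D^-$, with each $D^\pm$ pairing $W^\pm-W^\pm_h$ against itself, because $D_t^\mp W^\pm = -\nabla P$ is a genuine pair of transport equations. The decisive structural observation, which your proposal does not identify, is what happens when the two pieces $J^\pm$ (the sum of the problematic interior boundary term and $\frac{d}{dt}$ of the surface integral) are added: the magnetic field drops out entirely, leaving
\[
J^++J^-=2\int_{\mathcal{A}} a^{-1}(P-P_h)(v-v_h)\cdot\nabla P\,dS+\frac{d}{dt}\int_{\mathcal{A}} a^{-1}|P-P_h|^2\,dS,
\]
so the hard cubic term becomes \emph{literally} the one from the Euler difference estimate, with $p-p_h$ replaced by $P-P_h$. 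Without noticing this cancellation, one is stuck with extra $B$-dependent boundary contributions that do not obviously close at the $L^2$ level, and the analysis from the Euler paper cannot simply be cited. Your framing — ``write $\nabla_{B-B_h}$ acting on a solution plus $\nabla_{B_h}$ acting on a difference and integrate by parts'' — handles the interior transport pieces correctly, but it does not by itself reveal this boundary-level cancellation, which is where the real work is.

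Second, your proposal invokes the second-order wave structure $D_t^2 a-\nabla_B^2 a+a\mathcal N a=f$ and the paradifferential relation $D_t^\pm a\approx\mathcal N((v\pm B)\cdot n)$ to differentiate the surface integral. These are higher-regularity tools used in the energy estimates of \Cref{HEB}, not in the $L^2$ difference estimate, and using them here would cost derivatives that are simply not available at $W^{1,\infty}$ regularity. The paper's surface-integral computation only needs the first-order boundary facts: $D_tP=0$ on $\Gamma_t$ (kinematic plus dynamic boundary condition), $D_t^h P_h=0$ on $\Gamma_{t,h}$, and the fundamental-theorem-of-calculus proportionality \eqref{FTC pressure average} between $P-P_h$ and the displacement $s_h$ on $\tilde\Gamma_t$. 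A further useful point you do not flag: when Leibniz is applied to $\frac{d}{dt}\int_{\mathcal{A}} a^{-1}|P-P_h|^2\,dS$, only the \emph{symmetrized} $D_t=\frac12(D_t^++D_t^-)$ falls on the pressure, which is exactly why the magnetic field drops out above and why the weakened control parameter of \Cref{slight improvement} suffices — you mention this improvement but do not locate the structural reason for it.
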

As an immediate corollary of  \Cref{Diff es thm intro}, we obtain \Cref{Uniqueness intro}. However, \Cref{Diff es thm intro}  will also prove to be useful for several other purposes. For example, we will use it in our proof of the continuity of the data-to-solution map as well as in our construction of rough solutions. 
\subsubsection{Well-posedness}
We now turn our attention to the well-posedness 
 problem for the free boundary MHD equations. Our main result proves  sharp well-posedness in $\mathbf H^s$.
\begin{theorem}[Hadamard local well-posedness]\label{MT}
    Fix $s>\frac{d}{2}+1$ and a collar $\Lambda_*$. For any $(v_0,B_0,\Gamma_0)$ in $\mathbf{H}^s$ 
    with $\Gamma_0 \in \Lambda_*$ there exists a time $T>0$,  depending only on $\|(v_0,B_0,\Gamma_0)\|_{\mathbf{H}^s}$ and the lower bound in the Taylor sign condition, for which there exists a unique  solution $(v(t),B(t),\Gamma_t)\in C([0,T];\mathbf{H}^s)$ to the free boundary MHD equations satisfying a proportional  uniform lower bound in the Taylor sign condition. Moreover, the data-to-solution map is continuous with respect to the $\mathbf{H}^s$ topology.
\end{theorem}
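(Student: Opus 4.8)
The plan is to establish Theorem~\ref{MT} by combining a priori energy estimates for regular solutions with the stability estimate of Theorem~\ref{Diff es thm intro} in a regularization-and-limiting scheme. First I would set up the symmetrized formulation of the free boundary MHD equations hinted at by the heuristic decomposition $(D_t\pm\nabla_B)(\omega\mp\omega_B)=f$ together with the boundary coupling $\mathcal{N}((v\pm B)\cdot n_{\Gamma_t})\approx(D_t\mp\nabla_B)a$, and identify the ``good variables'' $(a,\omega,\omega_B)$ whose regularity drives the estimates. The key preliminary is a clean theory of the linearized/paradifferential elliptic estimates on domains in $\Lambda_*$, so that the pressure $P$ (and $D_t^\pm P$) are controlled in terms of $\|(v,B,\Gamma)\|_{\mathbf{H}^s}$ with constants depending only on the collar; this is where the wave-type regularity condition (iv) and the ``regularizing effect'' $\nabla_B^2\Gamma\in H^{s-1}$ get used to close the estimates at the sharp exponent.

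The core analytic step is the propagation of the $\mathbf{H}^s$ norm: I would construct a coercive energy functional $E^s(t)\simeq\|(v,B,\Gamma)\|_{\mathbf{H}^s}^2$ adapted to the wave operator $D_t^2-\nabla_B^2$ on the boundary (so it naturally incorporates both $D_t^\pm a$ and the interior transport quantities $\omega\mp\omega_B$), and show $\frac{d}{dt}E^s\lesssim_{A}(1+A^{\frac12})E^s$, so that the lifespan $T$ depends only on $\|(v_0,B_0,\Gamma_0)\|_{\mathbf{H}^s}$ and $c_0$, with the Taylor sign condition and the tangency $B\cdot n_\Gamma=0$ propagated via the evolution equations recorded in Remark~\ref{Time zero propagate} and the schematic equation \eqref{schematicDta} for $a$. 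The subtle cancellations needed at low regularity — exactly those enabled by the extra $\nabla_B$-direction smoothness of $\Gamma$ — are the technical heart here; one also needs the commutation $[D_t,\nabla_B]$ being lower order and a careful treatment of the Dirichlet-to-Neumann map $\mathcal{N}$ in moving domains.

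Granting the a priori bound, I would then: (1) construct regular solutions by the new method advertised in the introduction (I expect this to be an iteration on a suitably regularized/frequency-truncated system for which one still has uniform $\mathbf{H}^s$ bounds from the above estimate, the novelty being to produce solutions that actually satisfy condition (iv)); (2) for rough data $(v_0,B_0,\Gamma_0)\in\mathbf{H}^s$, approximate by smooth data $(v_0^{(n)},B_0^{(n)},\Gamma_0^{(n)})\in\mathbf{H}^{s'}$, $s'>s$, obtain solutions on a common time interval via uniform $\mathbf{H}^s$ bounds, and pass to the limit using the $L^2$-type stability functional $D$ of Theorem~\ref{Diff es thm intro} together with interpolation (Bona--Smith argument) to upgrade weak convergence to $C([0,T];\mathbf{H}^s)$ convergence; (3) deduce uniqueness at the $\mathbf{H}^s$ level directly from Theorem~\ref{Uniqueness intro}, since $A$ and $\int_0^T A^{1/2}$ are controlled by $\|(v,B,\Gamma)\|_{C([0,T];\mathbf{H}^s)}$; (4) obtain continuous dependence by the same Bona--Smith-type scheme, comparing two rough solutions through their regularizations and using $D$ to quantify the distance.

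The main obstacle I anticipate is the a priori energy estimate at the sharp regularity $s>\frac d2+1$: one must simultaneously control the coupled interior wave/transport dynamics and the boundary wave equation \eqref{schematicDta}, handle the quasilinear and nonlocal dependence of the Dirichlet-to-Neumann operator on the (only $H^s$-regular) free surface, and extract precisely the cancellations that require the enhanced $\nabla_B$-regularity of $\Gamma$ recorded in condition (iv) — without this structure the estimate loses half a derivative. A secondary but genuine difficulty is the existence of regular solutions \emph{within} the state space $\mathbf{H}^s$, since standard constructions do not obviously preserve the wave-type condition (iv); resolving this is exactly the ``completely new method for constructing smooth solutions'' promised in the abstract, and I would expect it to rely on solving a parabolically or dispersively regularized version of the symmetrized system for which the $\nabla_B$-regularity is built in from the start.
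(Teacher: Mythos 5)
Your high-level architecture matches the paper's: sharp a~priori energy bounds in $\mathbf{H}^s$ built around the Els\"asser variables $W^\pm=v\pm B$, the Taylor coefficient $a$, and the vorticities $\omega^\pm=\nabla\times W^\pm$ (with a normal-form correction $\mathcal{G}^\pm$ to $D_t^\pm a$); construction of regular solutions; then a Bona--Smith-type limiting argument controlled by the difference functional $D$ and frequency envelopes. However there are two concrete gaps.

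First, your proposal to pass to the $\mathbf{H}^s$ limit ``using the $L^2$-type stability functional $D$ of Theorem~\ref{Diff es thm intro} together with interpolation'' does not close. The functional $D$ from~\eqref{diff functional candidate1} only controls the $L^2$ distance of $(v,B)$ and the surface; it carries \emph{no} information about the distance between $\nabla_{B_j}W_j$ and $\nabla_{B_{j+1}}W_{j+1}$, and interpolating $D$ against the high-norm bounds gives convergence of $(v_j,B_j,\Gamma_j)$ in the $H^s\times H^s\times H^s$ part of the norm only. To conclude convergence in $\mathbf{H}^s$ you must separately propagate a difference bound for $\nabla_{B}W$ in $H^{s-\frac{1}{2}}$. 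The paper does exactly this (\Cref{L^2regbounds}): it establishes a difference estimate of the form
\begin{equation*}
\|\nabla_{B_{l}}W_{l}-\nabla_{B_{l+1}}W_{l+1}\|_{H^{\sigma}(\tilde\Omega_l)}\lesssim 2^{-(s-\frac12-\sigma)l}c_l
\end{equation*}
for a carefully shrunk smooth subdomain $\tilde\Omega_l$, by deriving a transport-type equation for $U_l^\pm=\nabla_{B_l}(\tilde W_{l+1}^\pm - W_l^\pm)$ with a perturbative source and running a high-regularity distance-functional estimate. Without some replacement for this step your scheme only yields a $C([0,T];H^s)$ limit of $(v,B,\Gamma)$, not a $C([0,T];\mathbf{H}^s)$ limit, and in particular you cannot claim the solution stays in the state space.

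Second, your guess for the existence of regular solutions---``solving a parabolically or dispersively regularized version of the symmetrized system''---is not what the paper does, and indeed the introduction to \Cref{Existence section} explicitly discusses why such artificial-viscosity / zero-surface-tension-limit constructions are problematic here (they are indirect, and the only prior such result, Liu--Xin, produces solutions that do \emph{not} satisfy the wave-type condition (iv), requires simply connected domains, and loses many derivatives on the interface). The paper's method is instead a \emph{time-discrete} Euler-plus-transport iteration: a single time step of size $\epsilon$ consists of (1) a heat-flow regularization of $\Gamma$ together with a matched nonlocal gradient flow for $W^\pm$ designed so that $B\cdot n_\Gamma$ obeys a parabolic equation and stays zero, (2) a mild mollification at scale $\epsilon^{-3}$ to close a high-regularity bootstrap for $\omega^\pm$, and (3) a nonlinear elliptic regularization $\mathcal{L}_{B_\epsilon}=\mathrm{Id}+\epsilon^2\nabla_{B_\epsilon}^4$ on a frequency band, which makes $\nabla_{B_\epsilon}$ gain a half derivative (one factor of $\epsilon^{-1/2}$) exactly as the dynamics demand, followed by (4) the transported Euler step. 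The crucial point, which a naive parabolic regularization would miss, is the careful almost-orthogonality between the low-frequency part of $W$ and the high-frequency $\nabla_{B_\epsilon}$-regularized part, which is what gives the energy monotonicity $E^k\mapsto E^k+\mathcal O(\epsilon)$ per step while respecting both boundary conditions. You correctly identify this as the main missing ingredient, but the method you sketch is in the class the authors deliberately avoided.

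Two minor points: $[D_t,\nabla_B]=0$ exactly (not merely ``lower order''), which is used repeatedly; and the energy propagation of \Cref{Energy est. thm} is established only for integer $k>\frac{d}{2}+1$, with fractional $s$ recovered by interpolation against the difference estimates, rather than directly for $E^s$ as your proposal suggests.
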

\begin{remark}
The proof of \Cref{MT}  also  gives rise to a natural continuation criterion at low regularity, which we will explain in \Cref{HOEE INTRO DIS}.
\end{remark}
In the special case $B\equiv0$, \Cref{MT} recovers the sharp well-posedness theorem for the free boundary Euler equations in \cite{Euler}. For non-zero $B$,  this theorem proves that one can dynamically propagate the natural wave-type condition  $\nabla_Bv,\ \nabla_BB\in H^{s-\frac{1}{2}}$, which is the first result to do so in any Sobolev-type space (let alone at the sharp low-regularity scale). While such a condition is natural, establishing such bounds is far from trivial and will require us to construct a suitable nonlinear energy functional that is not only sufficiently coercive and conforming to the boundary conditions of the state space but also can be dynamically propagated at low regularity. In particular, our construction will require (among many other things) a normal form correction to the equation \eqref{schematicDta} to exploit  certain subtle energy cancellations in order to estimate various 
``perturbative" source terms at low regularity.  

\subsection{Outline of the paper}
The rest of the paper is divided into six sections and an appendix. To a large extent, sections \ref{DF}, \ref{HEB} and \ref{Existence section} can be read independently.
\subsubsection{The linearized equations} We begin in \Cref{LR} by noting a simple change of dependent variables that reformulates the free boundary MHD equations as a system of two free boundary Euler equations, but with skewed transport velocities. We then derive the linearization of our problem in Eulerian coordinates and establish $L^2$ bounds for solutions in terms of our control parameters $A$ and $A^\frac{1}{2}$.
Although the linearization does not play a direct role in our nonlinear analysis, it will serve to motivate many of the choices we make later on. 
\subsubsection{Notation and function spaces} In \Cref{AOMD}, we formally define the state space for our problem and set our notation. The notation that we use in this paper is entirely consistent with our previous work \cite{Euler}, and the material in \Cref{AOMD} is largely presented for the reader's convenience. Supplementing \Cref{AOMD} is \Cref{BEE}, which collects various tools developed in \cite{Euler} that will be needed for our analysis. This includes regularization operators, nonlinear inequalities, ``balanced" elliptic estimates, and function space theory. The so-called balanced elliptic estimates in \Cref{BEE} (many of which were developed in \cite{Euler}) 
will serve as a powerful tool for efficiently obtaining energy estimates at low regularity, where one of the major technical difficulties is in dealing with the dependence of the elliptic regularity on the geometry of the free surface.

\subsubsection{Difference estimates and uniqueness} \Cref{DF} is devoted to proving stability estimates and enhanced uniqueness for the free boundary MHD equations. The main challenge is to construct a nonlinear distance functional that  measures the distance between solutions at low regularity and is also dynamically propagated by the flow. Interestingly, for our specific choice of distance functional, the magnetic field cancels in the most problematic higher-order term in the difference estimates. For this reason, the analysis from \cite{Euler} carries over rather seamlessly, though the pressure now has to be treated slightly differently.  Apart from yielding enhanced uniqueness, the stability estimates that we prove in \Cref{DF}  will also be crucial for constructing rough solutions and proving the continuity of the data-to-solution map.
\medskip

Although omitted from this manuscript, we remark that the proof from  \cite{Euler} that the higher order term in the difference estimates can be estimated is far from trivial, as it requires one to carry out a subtle boundary layer analysis on the intersection of two domain states, which in general has only Lipschitz regularity. 
\medskip

\subsubsection{Higher order energy estimates}\label{HOEE INTRO DIS} In \Cref{HEB}, we prove energy estimates for the free boundary MHD equations within the $\mathbf{H}^k$ scale of spaces for integer $k>\frac{d}{2}+1$. Here, the analysis of the free boundary Euler and MHD equations significantly diverge. To roughly explain the main difficulties, we write
\begin{equation*}
M_{\sigma}:=\|(v,B,\Gamma)\|_{\mathbf{H}^{\sigma}},\hspace{5mm}\sigma\geq 1,
\end{equation*}
and we let $s$ be any real number (not necessarily an integer) with $s>\frac{d}{2}+1$. 
\medskip

The primary task of \Cref{HEB} is to construct and estimate  a family of  energy functionals $(v,B,\Gamma)\mapsto E^k(v,B,\Gamma)$ by identifying Alinhac style ``good variables” which solve the linearized equation to leading order. The major difficulty here is to identify  energy functionals  which are \emph{coercive}, in the sense that
\begin{equation*}
E^k(v,B,\Gamma)\approx_{M_{s-\frac{1}{2}}} \|(v,B,\Gamma)\|_{\mathbf{H}^k}^2,
\end{equation*}
but also satisfies the energy estimate
\begin{equation*}
\frac{d}{dt}E^k(v,B,\Gamma)\lesssim_{M_s} E^k(v,B,\Gamma).
\end{equation*}
Here, the implicit dependence on $M_{s-\frac{1}{2}}$ and $M_s$ is polynomial. Achieving these bounds at optimal regularity levels (i.e.~with implicit constants depending only on $M_{s-\frac{1}{2}}$ and $M_s$) is a tall order, which will require identifying and exploiting various hidden structures of the free boundary MHD equations. In particular, we will have to carry out a variant of a normal form correction to ensure that the above implicit constants depend only on low regularity norms. Moreover, we will have to take great care to understand  the dependence of the elliptic estimates on the free surface regularity. To this end, we will rely on the balanced elliptic estimates in \Cref{BEE} to deal with rather complicated expressions involving iterated applications of the Dirichlet-to-Neumann and   other geometric and elliptic operators.
\medskip

The outcome of \Cref{HEB} is a family of energy estimates which recover the sharp $s>\frac{d}{2}+1$ well-posedness threshold for the free boundary Euler equations in the special case $B\equiv 0$. For general $B\not\equiv 0$ in three dimensions, our estimates constitute a $\frac{3}{2}$-derivatives improvement in scale over all previous results. Moreover, unlike in previous works, our results apply in arbitrary dimensions and to domains with very general geometries.  We remark that although we only directly prove our  energy estimates for integer $k>\frac{d}{2}+1$, in \Cref{RS} we will remove this condition  by carefully interpolating against the difference estimates outlined above. 
\medskip

One immediate corollary of the above energy estimates (and the well-posedness established later in \Cref{RS}) is the following low-regularity continuation criterion which we roughly state as follows.
\begin{theorem}\label{continuationcrit}
Let $\frac{d}{2}+1<s\leq\sigma<\infty$. Moreover, let $(v,B,\Gamma)\in C([0,T);\mathbf{H}^{\sigma})$ be a solution to the free boundary  MHD equations. Then $(v,B,\Gamma)$ can be continued past time $T>0$ for as long as it stays in the collar and the following properties hold:
\begin{enumerate}[label=\alph*)]
\item\label{a2} (Uniform bound from below for the Taylor coefficient). There is a $c>0$ such that 
\[
a(t) \geq c > 0,\hspace{5mm} 0\leq t<T.
\]
\item (Low regularity bound). There holds
\[
\sup_{0\leq t<T}M_s(t)<\infty.
\]
\end{enumerate}
\end{theorem}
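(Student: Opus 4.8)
The plan is to deduce Theorem~\ref{continuationcrit} as a consequence of the three foundational results stated above: the higher-order energy estimates of \Cref{HEB}, the stability/uniqueness statements (\Cref{Diff es thm intro} and \Cref{Uniqueness intro}), and the local well-posedness in $\mathbf{H}^\sigma$ established in \Cref{MT} (together with its proof scheme in \Cref{RS}). The key observation is that a continuation criterion is really just the contrapositive of a lifespan bound: if the solution \emph{fails} to extend past $T$, then something must blow up, and the two hypotheses (a) and (b) assert precisely that the two possible culprits -- degeneracy of the Taylor coefficient and blowup of the low-regularity norm $M_s$ -- do not occur. So the substance of the proof is to show that, under (a) and (b), the higher-order norm $M_\sigma(t)$ stays bounded up to time $T$, and then to re-invoke local well-posedness from time $T$ (or from a time slightly before $T$) with a uniformly positive lower time of existence.

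\textbf{Step 1: A priori control of the $\mathbf{H}^k$ norms for integer $k$.} First I would take $k$ to be the smallest integer with $k > \frac{d}{2}+1$, say, and more generally run the argument for every integer $k$ with $\frac{d}{2}+1 < k \leq \sigma$ (the case of general real $\sigma$ is then handled by interpolation against the difference estimates, exactly as in \Cref{RS}). The energy estimate from \Cref{HEB} gives, for the coercive functional $E^k$,
\[
\frac{d}{dt} E^k(v,B,\Gamma) \lesssim_{M_s} E^k(v,B,\Gamma), \qquad E^k(v,B,\Gamma) \approx_{M_{s-\frac12}} \|(v,B,\Gamma)\|_{\mathbf{H}^k}^2,
\]
where the implicit constants are polynomial in the indicated low-regularity norms and, crucially, also depend on the lower bound $c$ for the Taylor coefficient (this dependence is built into the elliptic and coercivity estimates, since $a$ serves as a nondegenerate defining function for $\Gamma_t$). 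Under hypothesis (b) we have $\sup_{[0,T)} M_s < \infty$, and under hypothesis (a) we have $a(t) \geq c > 0$ on $[0,T)$; since $M_{s-\frac12} \leq M_s$, both the coercivity constants and the Gr\"onwall rate are uniformly bounded on $[0,T)$. Gr\"onwall's inequality then yields $\sup_{[0,T)} E^k < \infty$, hence $\sup_{[0,T)} M_k < \infty$, and by interpolation $\sup_{[0,T)} M_\sigma < \infty$. One should also check, en route, that the collar hypothesis is not violated in the limit $t \to T$ -- but this is part of the assumption ``for as long as it stays in the collar'', and in any case $\Gamma_t$ converges in $C^{1,\epsilon}$ as $t \uparrow T$ once $M_s$ is bounded, by the equation for $D_t$ and Sobolev embedding.

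\textbf{Step 2: Convergence to a limiting state at time $T$, and restart.} Having bounded $M_\sigma$ on $[0,T)$, I would show that $(v(t),B(t),\Gamma_t)$ converges as $t \uparrow T$ to a limiting state $(v(T),B(T),\Gamma_T) \in \mathbf{H}^\sigma$. The natural route is to use the stability estimate of \Cref{Diff es thm intro}: for $t, t' < T$ the distance functional $D$ between the time-$t$ and time-$t'$ states (transported to a common domain via the flow) is controlled by $\int_{t'}^{t} (A^{\frac12} + \cdots)$, which tends to $0$ as $t, t' \uparrow T$ because $A^{\frac12}$ is bounded by $M_s$ and hence integrable on $[0,T)$. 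This gives Cauchyness at the $L^2$ level; combined with the uniform $\mathbf{H}^\sigma$ bound from Step~1 and weak compactness plus interpolation, one upgrades to strong convergence in $\mathbf{H}^{\sigma'}$ for every $\sigma' < \sigma$, and weak-* convergence in $\mathbf{H}^\sigma$, so the limit lies in $\mathbf{H}^\sigma$. Moreover, by the uniform lower bound (a), the limiting Taylor coefficient $a(T) = -\nabla P(T)\cdot n_{\Gamma_T} \geq c > 0$, so the limiting state satisfies the Taylor sign condition. Now \Cref{MT} applies to the initial data $(v(T),B(T),\Gamma_T) \in \mathbf{H}^\sigma$: it produces a solution on $[T, T + \tau]$ with $\tau > 0$ depending only on $\|(v(T),B(T),\Gamma_T)\|_{\mathbf{H}^\sigma}$ (bounded by Step~1) and on $c$. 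Gluing this to the original solution at time $T$ -- and invoking uniqueness from \Cref{Uniqueness intro} to know the glued object is the unique solution -- continues $(v,B,\Gamma)$ past $T$, which is the desired conclusion (stated contrapositively as: it can be continued for as long as (a) and (b) hold).

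\textbf{The main obstacle} I anticipate is Step~2: extracting a genuine limiting state at the blow-up time and verifying that all four defining properties of $\mathbf{H}^\sigma$ -- in particular the wave-type regularity $\nabla_B v, \nabla_B B \in H^{\sigma - \frac12}$ -- pass to the limit, and simultaneously that the Taylor sign condition does not degenerate precisely at $t = T$. The wave-regularity component is delicate because $\nabla_B$ is itself a dynamical object (it depends on $B(t)$), so one cannot simply apply weak lower-semicontinuity of a fixed norm; one needs the uniform bound on $\|\nabla_B v\|_{H^{\sigma-\frac12}}$ from the energy functional $E^\sigma$ together with continuity of $B(t)$ in a strong enough topology to conclude $\nabla_{B(T)} v(T) \in H^{\sigma - \frac12}$. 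This is handled by the coercivity half of the \Cref{HEB} estimate, but it must be invoked with care. The restart via \Cref{MT} is then essentially mechanical, modulo the bookkeeping of constants -- all implicit constants in \Cref{HEB}, \Cref{Diff es thm intro} and \Cref{MT} depend on the same two quantities, $M_s$ (or $M_\sigma$) and the Taylor lower bound, which are exactly what hypotheses (a) and (b) keep under control.
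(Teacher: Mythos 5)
Your proposal is correct and follows exactly the route the paper intends: the paper states \Cref{continuationcrit} as ``one immediate corollary of the above energy estimates (and the well-posedness established later in \Cref{RS}),'' and your Steps~1 and~2 (Gr\"onwall via \Cref{Energy est. thm} to bound $M_\sigma$ on $[0,T)$, then extraction of a limiting $\mathbf{H}^\sigma$ state at $t=T$ and restart via \Cref{MT}, glued by uniqueness) are precisely that argument. You also correctly flag the two places where care is needed -- the fractional-$\sigma$ case via frequency envelopes as in \Cref{RS}, and the fact that $\nabla_B$ depends on the dynamical variable $B(t)$ so the wave-type regularity does not pass to the limit by naive weak lower semicontinuity -- which are the genuine technical points hidden behind the paper's ``immediate.''
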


One might ask if it is possible to further improve the energy estimates so as to only involve $L^{\infty}$ based norms of the natural variables $(v,B,\Gamma)$. More precisely, does an analogue of the famous Beale-Kato-Majda criterion (see \cite{MR763762} and \cite{MR1462753})  hold for the free boundary MHD equations? Proving such a result is likely to be possible by heavily optimizing the estimates established in \Cref{HEB}.  However, this would involve numerous lengthy and delicate calculations as well as a far more careful application of the balanced elliptic estimates. Since many of the basic questions involving the well-posedness of the free boundary MHD equations were open until now, we have chosen not to attempt such refinements in the present paper. Nevertheless, we conjecture that the following continuation criterion for the free boundary MHD equations is valid. 
\begin{conjecture}\label{Conj BKM}
Let $\frac{d}{2}+1<\sigma<\infty$. Moreover, let $(v,B,\Gamma)\in C([0,T);\mathbf{H}^{\sigma})$ be a solution to the free boundary  MHD equations. There exists $1\leq p\leq\infty$ such that $(v,B,\Gamma)$ can be continued past time $T>0$ for as long as it stays in the collar and the following properties hold:
\begin{enumerate}[label=\alph*)]
\item\label{a3} (Uniform bound from below for the Taylor coefficient). There is a $c>0$ such that 
\[
a(t) \geq c > 0,\hspace{5mm} 0\leq t<T.
\]
\item (Low regularity bound). There holds
\[
\|(v,B,\Gamma)\|_{L^{\infty}([0,T);\mathbf{C}^{\frac{1}{2}+\epsilon})}+\|(v,B,\Gamma)\|_{L^{p}([0,T);\mathbf{C}^{1})}<\infty,
\]
\end{enumerate}
where $\mathbf{C}^{\alpha}$ denotes the natural H\"older space analogue of $\mathbf{H}^{\alpha}$.
\end{conjecture}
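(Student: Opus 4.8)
A proof of \Cref{Conj BKM} would proceed exactly along the lines by which \Cref{continuationcrit} follows from \Cref{HEB} and the well-posedness theory of \Cref{RS}, the only difference being that the Sobolev bound $\sup_{[0,T)}M_s<\infty$ must be replaced throughout by the weaker pointwise bounds in hypothesis (b). Thus the plan is to upgrade the higher order energy estimates of \Cref{HEB} so that, for each integer $k>\frac{d}{2}+1$, one has both coercivity $E^k\approx_{A_\epsilon}\|(v,B,\Gamma)\|_{\mathbf{H}^k}^2$ and a differential inequality
\[
\frac{d}{dt}E^k\lesssim_{A_\epsilon}\bigl(1+A^{\frac{1}{2}}\bigr)\,E^k ,
\]
possibly with an extra factor $\log\bigl(2+E^k\bigr)$ as in the classical Beale--Kato--Majda argument, where now the implicit constants depend only on the control parameters $A_\epsilon,A^{\frac{1}{2}}$ of \eqref{ACONT}--\eqref{BCONT} -- equivalently, on the $\mathbf{C}^{\frac{1}{2}+\epsilon}$ and $\mathbf{C}^1$ norms of the state -- rather than on $M_{s-\frac{1}{2}},M_s$. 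Granting this, Gr\"onwall under the two hypotheses of \Cref{Conj BKM} gives $\sup_{[0,T)}\|(v,B,\Gamma)\|_{\mathbf{H}^k}<\infty$: the lower bound $a\ge c>0$ keeps the state-space structure nondegenerate, the $\mathbf{C}^1$ bound controls $a$ from above and in H\"older norm via the elliptic equation \eqref{Euler-pressure}, and the time-integrated $\mathbf{C}^1$ norm supplies the Gr\"onwall weight. The passage from integer $k$ to arbitrary $\sigma>\frac{d}{2}+1$ is then done by interpolating against the $L^2$ stability estimate of \Cref{Diff es thm intro}, exactly as in \Cref{RS}, and the solution is continued past $T$ by applying \Cref{MT} to the data $(v,B,\Gamma)(T^-)$.

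The technical heart is the sharpening of the energy estimate. First I would revisit every source and commutator term arising in $\frac{d}{dt}E^k$ in \Cref{HEB} and, via Littlewood--Paley and paraproduct decompositions together with Moser and Gagliardo--Nirenberg inequalities, arrange that every top-order derivative falls on a single factor bounded by $E^k$, with all remaining factors estimated in $C^{\frac{1}{2}+\epsilon}$- or $W^{1,\infty}$-based norms. Second, the elliptic ingredients -- the pressure $P$, the quantities $D_t^{\pm}P$, the Dirichlet--to--Neumann operator and its iterates, and the geometric operators on $\Gamma_t$ -- must be fed through the balanced elliptic estimates of \Cref{BEE} in their sharpest form, so that each elliptic gain of regularity depends only on the $C^{1,\epsilon}$ (respectively $C^{1,\frac{1}{2}}$) norm of the free surface. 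Third, the normal form correction built into $E^k$ must be carried through this bookkeeping, checking that the correction terms and their $D_t^{\pm}$ and $\nabla_B$ derivatives are themselves controlled by $A_\epsilon$ and $A^{\frac{1}{2}}$. Finally, a short continuity/bootstrap argument removes the residual $M_{s-\frac{1}{2}}$ dependence hidden in the coercivity constant, since that lower-order norm can itself be propagated by Gr\"onwall from the $\mathbf{C}^{\frac{1}{2}+\epsilon}$ and $\mathbf{C}^1$ control.

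The main obstacle is this last requirement pushed to its limit: guaranteeing that \emph{no} error term -- in particular none coming from the normal form correction or from iterated applications of the Dirichlet--to--Neumann and other elliptic/geometric operators -- covertly needs more than an $L^\infty$-based norm of the state. This is expected to force the commutator and paralinearization estimates into their sharpest possible form and, as with the difference estimates of \Cref{DF}, a delicate boundary-layer analysis near the only Lipschitz-regular free surface. A secondary difficulty is pinning down the optimal exponent $p$: the natural target is $p=1$, matching the scale invariance of the homogeneous part of $\|A^{\frac{1}{2}}\|_{L^1_T}$, but absent new input it is conceivable that only a weaker $p$, or an $L^1$--$\log$ refinement, is attainable, which is precisely why the statement is phrased with an unspecified $p$.
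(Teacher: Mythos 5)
The statement you were asked to address is explicitly a \emph{conjecture} in the paper, not a theorem, and the paper offers no proof of it -- only a paragraph of remarks immediately before and after the statement explaining why the authors believe it is likely true and what they think a proof would require. Your ``proposal'' is, appropriately, a proof plan rather than a proof; you even concede the main obstacle in your final paragraph. Your plan aligns well with the paper's own remarks: the authors likewise say a proof would require ``heavily optimizing the estimates established in'' the higher-energy section via ``a far more careful application of the balanced elliptic estimates,'' and that the argument is ``localized'' in the sense that it suffices to improve the control parameters in the energy estimate of Theorem~5.1 and then adapt the bootstrap argument from the Euler paper. You correctly identify both of these points, and you also correctly flag that the exponent $p$ is open -- the paper says $p<\infty$ seems attainable with the balanced elliptic estimates, while $p=1$ ``would require additional new ideas, if true.'' One small caution: you raise the possibility of a logarithmic loss (an $L^1$--$\log$ refinement) as a way to salvage $p=1$; the paper does not suggest this and instead hedges simply by leaving $p$ unspecified, so that speculation is yours rather than something supported by the text. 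Since neither you nor the paper actually carries out the energy-estimate reworking, there is no completed argument to compare; what you have written is an accurate restatement of the proof strategy the authors themselves outline and leave open.
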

The best result -- which is (almost) scale-invariant -- would correspond to the case $p=1$, which is exactly what we achieved in \cite{Euler} when $B\equiv 0$ (i.e.~for the free boundary Euler equations). We suspect that the estimates in \Cref{HEB} could be optimized using the balanced elliptic estimates to establish \Cref{Conj BKM} for some $p<\infty$, though we stress that this appears to be quite non-trivial in its own right. The case $p=1$ seems like it would require additional new ideas, if  true. We remark that \Cref{Conj BKM} is ``localized" in the sense that to prove it, it suffices to improve the control parameters in the energy estimates in \Cref{Energy est. thm} and then implement  an adaptation of the bootstrap argument  in \cite[Section 9]{Euler} which we used  to obtain the analogous sharp continuation criterion for the free boundary Euler equations.
\subsubsection{Construction of regular solutions} \Cref{Existence section} is devoted to the construction of regular solutions to the free boundary MHD equations in the state space $\mathbf{H}^s$. The question of existence of solutions to these equations on general domains was largely open, and the construction that we present is one of the central novelties of the  paper. On a high level, our overarching scheme utilizes a time discretization via an Euler type method together with a separate transport step to produce good approximate solutions. To avoid derivative loss, we also include a carefully designed  regularization of each iterate which respects the uniform energy bounds and the boundary conditions for the problem. Constructing this regularization is the main difficulty encountered in this section. 
\medskip

The strategy that we employ takes some mild inspiration from the time discretization approach carried out in the case of a compressible gas in \cite{ifrim2020compressible}. We stress, however, that aside from the basic logical structure of the argument (i.e.~the need for carrying out a regularization plus an Euler type iteration), the central difficulties here are entirely different. For instance, in our setting, the surface of a liquid carries a non-trivial energy,  so the geometry of the free boundary hypersurface $\Gamma$ plays a significant role in preserving the energy bounds through each iteration. Moreover, the matched regularity of the magnetic field and the free surface makes it a very delicate matter to understand what types of regularizations of the hypersurface $\Gamma$ are compatible with the boundary condition $B\cdot n_\Gamma=0$. Dealing with such issues will require several novel technical innovations; we refer the reader to the start of \Cref{Existence section} for a detailed outline of the existence scheme.  
\subsubsection{Rough solutions and continuous dependence}
In the last section of the paper, we construct rough solutions as strong limits of smooth solutions and prove the continuity of the data-to-solution map in $\mathbf{H}^s$. The construction of rough solutions is achieved by considering a family of dyadic regularizations of the initial data, which, by the results of \Cref{Existence section}, produce corresponding smooth solutions. From our energy estimates in \Cref{HEB}, we obtain control over the higher  $\mathbf{H}^k$ norms of these smooth solutions. On the other hand, from the difference estimates in \Cref{DF} (and also a milder but still non-trivial difference type bound for the variables $\nabla_Bv$ and $\nabla_B B$), we obtain control over the distance between consecutive solutions in a weaker topology. Consequently, we obtain rapid convergence in all  $\mathbf{H}^l$  spaces with $l < k$. Using interpolation, frequency envelopes, and similar arguments, we may then conclude strong convergence in  $\mathbf{H}^k$, prove local existence in fractional regularity $\mathbf{H}^s$ spaces, and deduce the continuity of the data-to-solution map. 
\medskip

We remark  that when compared with our previous article \cite{Euler}, there are two main additional difficulties in this stage of the argument.
The first stems from the need to construct and propagate a new distance functional incorporating the $\nabla_B(v,B)$ variables in order to ensure that the regularized solutions converge in the state space $\mathbf{H}^s$. We remark that although propagating the functional \eqref{diff functional candidate1} results in a powerful uniqueness theorem, it does not suffice for this latter purpose, as it does not control the distance between regularizations of  $\nabla_B(v,B)$. Although propagating distance bounds for these latter variables is non-trivial, the benefit we have in \Cref{RS} is that we may work with regularized states and stronger control parameters, which makes the analysis feasible.
\medskip

The second additional difficulty we encounter is in the construction of frequency envelopes. Due to conditions (i) and (iii) in \Cref{D state space}, whatever regularization operators we choose to employ must preserve both the divergence-free condition and the tangency of the magnetic field on the new, regularized domain. Constructing an appropriate dyadic family of regularization operators enforcing such conditions is somewhat delicate, and will be established in \Cref{SSRO}.
\subsection{Acknowledgments}
The first author was supported  by the  NSF CAREER grant DMS-1845037, by the Sloan Foundation, by the Miller Foundation and by a Simons Fellowship. The second author was supported by a fellowship in the Simons Society of Fellows. The latter three authors were further supported by the NSF grant DMS-2054975 as well as by a Simons Investigator grant from the Simons Foundation.  
Some of this work was carried out while all four authors were in residence at the Simons Laufer Mathematical Sciences Institute (formerly MSRI) in Berkeley, California, during the summer of 2023, 
participating in the program ``Mathematical problems in fluid dynamics, Part II", which
 was supported by the National Science Foundation under Grant No.~DMS-1928930.
\section{A reformulation of MHD and its linearization}\label{LR}
In this section, we  formally derive the linearization of our problem, working entirely in Eulerian coordinates. The outcome is the system of equations \eqref{DM2}, together with the linearized energy \eqref{new Linearized energy}, and the basic energy estimate \eqref{EE for DMlin2}. 
\medskip

To begin, we reformulate the free boundary MHD equations as a system of Euler-like equations  using the so-called \textit{Els\"asser variables}.  For this, we define $W^+:=v+B$ and $W^-:=v-B.$ The associated material derivatives are then 
\begin{equation}\label{New D_t}
    \begin{split}
        D_t^-&:=\partial_t+(v-B)\cdot\nabla,
        \\
        D_t^+&:=\partial_t+(v+B)\cdot\nabla.
    \end{split}
\end{equation}
 Note that both of the vector fields in \eqref{New D_t} are tangent to $\Gamma_t$ since $B\cdot n_{\Gamma_t}=0$ by \eqref{Tangent B} and $D_t$ is tangent to $\Gamma_t$ by \eqref{bb}. Re-writing \eqref{Euler} in terms of these new variables, we obtain 
\begin{equation}\label{pm equations}
    \begin{cases}
        D_t^{\pm}W^{\mp}&=-\nabla P,
        \\
        \nabla \cdot W^\pm&=0,
    \end{cases}
\end{equation}
in $\Omega_t$. The key benefit of using the variables $(W^+,W^-)$ is that the above equations are now, almost, in a standard   symmetric hyperbolic form. 
\medskip

The equations  \eqref{pm equations}  are supplemented with the Taylor sign condition \eqref{TS}, the boundary condition \eqref{BC1}, and the boundary conditions
\begin{equation}\label{bb2}
    D_t^\pm \  \text{are both tangent to} \ \bigcup_t \{t\}\times\partial\Omega_t\subseteq\mathbb{R}^{d+1}.
\end{equation} 
 The resulting system \eqref{BC1}-\eqref{TS}-\eqref{pm equations}-\eqref{bb2} is what we will analyze in this paper. Clearly,  it is both algebraically and analytically equivalent to the free boundary MHD equations.
\subsection{The linearized equations} To derive the linearized system, we take a one parameter family of solutions $(W^+_h,W^-_h,P_h)$ defined on domains $\Omega_{t,h}$, with $(W^+_0,W^-_0,P_0):=(W^+,W^-,P)$ and $\Omega_{t,0}:=\Omega_t$. We  define $w^\pm=\partial_hW_h^\pm |_{h=0}$ and $P_{lin}=\partial_hP_h|_{h=0}$.
\medskip

In $\Omega_t$, the linearized equations are obtained by  standard means: 
\begin{equation*}\label{Euler linearized}
  \begin{cases}
    &D_t^\pm w^\mp+\nabla P_{lin}=-w^\pm\cdot\nabla W^\mp,
\\
    &\nabla\cdot w^\pm=0.
    \end{cases}
\end{equation*}
However, this is not the full story, since we also need to linearize the   boundary conditions on the hypersurface $\Gamma_t$. For this, we denote by $\Gamma_{t,h}$ the free hypersurface at time $t$ for the solution $(W^+_h,W^-_h,P_h)$, so that $\Gamma_{t,0}:=\Gamma_{t}$. We then fix a one parameter family of diffeomorphisms $\phi_h(t):\Gamma_t\to \Gamma_{t,h}$ with $\phi_0(t)=Id_{\Gamma_t}$. The dynamic boundary condition \eqref{BC1} states that for every point $x\in \Gamma_t$,
$$P_h(t,\phi_h(t)(x))=0.$$
Differentiating in $h$ and evaluating at $h=0$ yields
$$P_{lin}|_{\Gamma_t}=-\nabla P|_{\Gamma_t}\cdot \psi(t),$$
where $\psi(t):=\frac{\partial \phi_h(t)}{\partial h}|_{h=0}$.
By \eqref{BC1}, $\nabla P|_{\Gamma_t}$ is normal to $\Gamma_{t}$. Thus,
\begin{equation}\label{Def of s}
P_{lin}|_{\Gamma_t}=-\nabla P|_{\Gamma_t}\cdot n_{\Gamma_t} \psi(t)\cdot n_{\Gamma_t}=:as.\end{equation}
Here, $s:=\psi(t)\cdot n_{\Gamma_t}$ does not depend on any choice of diffeomorphism, since the Taylor sign condition asserts that $a:=-\nabla P|_{\Gamma_t}\cdot n_{\Gamma_t}$ is strictly positive. We will call $s$ the \emph{normal displacement function}, and use it as one of our linearized variables.
\medskip


Next, we must find a suitable expression for the linearization of the kinematic boundary condition. Since $D_t^\pm$ are both tangent to $\Gamma_t$ by \eqref{bb2}, we may apply these vector fields to \eqref{BC1} to obtain
\begin{equation}\label{Mat pressure}
    D_t^\pm P=0 \ \ \ \text{on} \ \Gamma_t.
\end{equation}
Given the dynamic boundary condition and the Taylor sign condition, \eqref{Mat pressure} is equivalent to the kinematic boundary condition. To linearize \eqref{Mat pressure}, we let $\phi_h(t):\Gamma_t\to\Gamma_{t,h}$ be a diffeomorphism, as before. We then have for $x\in \Gamma_t,$
\begin{equation*}
    [(\partial_t+W^\pm_h\cdot \nabla)P_h](t,\phi_h(t)(x))=0.
\end{equation*}
Taking $h$ derivative and evaluating at $h=0$ we see that
\begin{equation}\label{Deriving 2.3}
    w^\pm\cdot\nabla P+(\partial_t+W^\pm \cdot\nabla)(P_{lin})+\nabla(D_t^\pm P)\cdot \psi=0 \ \ \text{on} \ \Gamma_t.
\end{equation}
 We now write $\nabla P|_{\Gamma_t}=\nabla P|_{\Gamma_t}\cdot n_{\Gamma_t}n_{\Gamma_t}=-an_{\Gamma_t}$, split $D_t^\pm(P_{lin})=aD_t^\pm s+sD_t^\pm a$, and use \eqref{Def of s} together with the fact that $\nabla(D_t^\pm P)$ is normal to $\Gamma_t$ by \eqref{Mat pressure}. This reduces \eqref{Deriving 2.3} to
\begin{equation}\label{Deriving 2.3 2}
    -aw^\pm\cdot n_{\Gamma_t}+aD_t^\pm s+sD_t^\pm a+s\nabla(D_t^\pm P)\cdot n_{\Gamma_t}=0.
\end{equation}
After division by $a$ and some algebraic manipulation, we arrive at the transport equations
\begin{equation}\label{Transport equation 1}
    D_t^\pm s-w^\pm\cdot n_{\Gamma_t}=s(n_{\Gamma_t}\cdot \nabla W^\pm)\cdot n_{\Gamma_t}\ \ \text{on} \ \Gamma_t.
\end{equation}
Indeed, the right-hand side of \eqref{Transport equation 1} follows by writing $sD_t^\pm a=-sD_t^\pm (\nabla P\cdot n_{\Gamma_t})=-sD_t^\pm(\nabla P)\cdot n_{\Gamma_t}$ and commuting the gradient with the material derivative in the last term of \eqref{Deriving 2.3 2}. The reason that we have $-sD_t^\pm (\nabla P\cdot n_{\Gamma_t})=-sD_t^\pm (\nabla P)\cdot n_{\Gamma_t}$ is because $\nabla P$ is normal to $\Gamma_t$, while $D_t^\pm n_{\Gamma_t}$ is tangent. The reason that $D_t^\pm n_{\Gamma_t}$ is tangent to $\Gamma_t$ is because $n_{\Gamma_t}$ is unit length.
\medskip

Putting everything together, the linearized system takes the form:
\begin{equation}\label{DM2}
  \begin{cases}
     &D_t^\pm w^\mp+\nabla P_{lin}=-w^\pm\cdot\nabla W^\mp\  \ \text{in} \ \Omega_t,
    \\
    &\nabla\cdot w^\pm=0 \  \ \text{in} \ \Omega_t,
    \\
    & D_t^\pm s-w^\pm\cdot n_{\Gamma_t}=s(n_{\Gamma_t}\cdot \nabla W^\pm)\cdot n_{\Gamma_t}\ \ \text{on} \ \Gamma_t,
    \\
    &P_{lin}|_{\Gamma_t}=as  \ \ \text{on} \ \Gamma_t.
    \end{cases}
\end{equation}
Here we need to clarify why we have two apparent equations for $s$. Subtracting them, we obtain
\begin{equation}\label{lin-constraint}
\nabla_B s -  b \cdot n_{\Gamma_t} = s(n_{\Gamma_t}\cdot \nabla B)\cdot n_{\Gamma_t}
\end{equation}
where $b = \frac12( w^+-w^-)$ is the linearization of $B$. 
This should be seen as a constraint on the space of  linearized states, which is nothing but the linearization of the tangency condition $B \cdot n_{\Gamma_t}=0$.
\medskip

 The natural energy associated to \eqref{DM2} is
\begin{equation}\label{new Linearized energy}
E_{lin}(w^\pm,s)(t)=\frac{1}{2}\int_{\Omega_t}|w^+|^2dx+\frac{1}{2}\int_{\Omega_t}|w^-|^2dx+\int_{\Gamma_t}as^2dS.
\end{equation}
Correspondingly, the linear system can be viewed as a linear evolution on the space of functions
\begin{equation*}
\mathbfcal H^0_{lin} = \{ (w^\pm,s) \in L^2(\Omega_t) \times L^2(\Gamma_t) :\ \nabla \cdot w^\pm = 0, \text{  \eqref{lin-constraint} holds}\}.
\end{equation*}
Here, we remark that the trace of $w^{\pm} \cdot n_{\Gamma_t}$ on $\Gamma_t$ is well defined as an $H^{-\frac{1}{2}}$ distribution
due to the divergence-free condition. 
\medskip

We have the following fundamental energy estimate for the linearized system \eqref{DM2} which will help to inform our choice of higher-order energy and distance functionals later on.
\begin{proposition}\label{linearizedenergy1}
Suppose that $(w^\pm,s)\in\mathbfcal{H}^0_{lin}$ is a (sufficiently regular) solution to the linearized equation \eqref{DM2} with
\begin{equation}\label{control parameter}
A^{\frac{1}{2}}_{lin}(t):=\|(W^+,W^-)\|_{W^{1,\infty}(\Omega_t)}+\|a^{-1}(D_t^+a, D_t^-a)\|_{L^{\infty}(\Gamma_t)}<\infty
\end{equation}
uniformly in time. Then there holds
\begin{equation}\label{EE for DMlin2}
  \left|  \frac{d}{dt}E_{lin}(w^\pm,s)(t)\right|\lesssim A^{\frac{1}{2}}_{lin}(t)E_{lin}(w^\pm,s)(t).
\end{equation}
\end{proposition}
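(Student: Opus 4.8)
The plan is to differentiate $E_{lin}$ directly in time, using the transport theorem to handle the moving domains and boundary, and then to organize the resulting terms so that every contribution is either a perfect cancellation or is bounded by $A^{\frac12}_{lin} E_{lin}$. Concretely, I would split $E_{lin} = \frac12\int_{\Omega_t}|w^+|^2 + \frac12\int_{\Omega_t}|w^-|^2 + \int_{\Gamma_t} a s^2\, dS$ and treat the three pieces separately, in each case writing $\frac{d}{dt}\int_{\Omega_t} f\, dx = \int_{\Omega_t}(\partial_t + \nabla\cdot(Vf))\, dx$ for the appropriate transport velocity. A key point is that $\Omega_t$ flows with velocity $v$ (equivalently, with either $W^+$ or $W^-$ modulo a tangential-to-$\Gamma_t$ field, since $B$ is tangent), so I can choose to transport the $|w^\mp|^2$ integral by $W^\pm$; this is exactly what makes the pairing with the equation $D_t^\pm w^\mp = -\nabla P_{lin} - w^\pm\cdot\nabla W^\mp$ clean.

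For the interior terms: $\frac{d}{dt}\frac12\int_{\Omega_t}|w^\mp|^2\, dx = \int_{\Omega_t} w^\mp\cdot D_t^\pm w^\mp\, dx + \frac12\int_{\Omega_t}(\nabla\cdot W^\pm)|w^\mp|^2\, dx$. The second term vanishes by $\nabla\cdot W^\pm = 0$. Substituting the equation, the first term becomes $-\int_{\Omega_t} w^\mp\cdot\nabla P_{lin}\, dx - \int_{\Omega_t} w^\mp\cdot(w^\pm\cdot\nabla W^\mp)\, dx$. The last integral is pointwise bounded by $\|W^\mp\|_{W^{1,\infty}}|w^+||w^-| \lesssim A^{\frac12}_{lin} E_{lin}$ after Cauchy--Schwarz. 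For the pressure term, integrate by parts: $-\int_{\Omega_t} w^\mp\cdot\nabla P_{lin} = \int_{\Omega_t}(\nabla\cdot w^\mp) P_{lin} - \int_{\Gamma_t}(w^\mp\cdot n_{\Gamma_t}) P_{lin}\, dS = -\int_{\Gamma_t}(w^\mp\cdot n_{\Gamma_t})\, a s\, dS$, using $\nabla\cdot w^\mp = 0$ and the boundary condition $P_{lin}|_{\Gamma_t} = as$. Summing over the two signs, the total interior contribution to $\frac{d}{dt}E_{lin}$ is $-\int_{\Gamma_t} a s\,(w^+ + w^-)\cdot n_{\Gamma_t}\, dS$ plus the harmless $A^{\frac12}_{lin} E_{lin}$ remainder.

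For the boundary term, I would use the surface transport formula together with $D_t^\pm$ being tangent to $\Gamma_t$; since $as^2$ is a scalar, $\frac{d}{dt}\int_{\Gamma_t} a s^2\, dS = \int_{\Gamma_t}\big(D_t^+(as^2) + (as^2)(\nabla\cdot_{\Gamma_t} W^+)\big) dS$ (and symmetrically with $D_t^-$; I would average the two representations). Expanding $D_t^\pm(as^2) = s^2 D_t^\pm a + 2as\, D_t^\pm s$ and inserting the transport equation $D_t^\pm s = w^\pm\cdot n_{\Gamma_t} + s(n_{\Gamma_t}\cdot\nabla W^\pm)\cdot n_{\Gamma_t}$, the $2as\, w^\pm\cdot n_{\Gamma_t}$ terms are precisely what is needed to cancel the $-\int_{\Gamma_t} as(w^+ + w^-)\cdot n_{\Gamma_t}$ left over from the interior step (averaging the $+$ and $-$ surface transport representations produces the factor that matches the sum). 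Everything else — the $s^2 D_t^\pm a$ term, the $2as^2(n_{\Gamma_t}\cdot\nabla W^\pm)\cdot n_{\Gamma_t}$ term, and the surface-divergence term $as^2\,\nabla\cdot_{\Gamma_t}W^\pm$ — is pointwise dominated by $\big(\|a^{-1}D_t^\pm a\|_{L^\infty} + \|W^\pm\|_{W^{1,\infty}}\big)\, a s^2 \lesssim A^{\frac12}_{lin}\, a s^2$, hence integrates to $\lesssim A^{\frac12}_{lin} E_{lin}$. I expect the main obstacle to be purely bookkeeping: making sure the transport-velocity choices on the interior integrals and the averaging of the two surface-transport representations are set up so that the boundary pressure flux $\int_{\Gamma_t} as(w^+ + w^-)\cdot n_{\Gamma_t}$ cancels exactly rather than leaving a half-cancelled residue, and verifying that the trace of $w^\pm\cdot n_{\Gamma_t}$ (a priori only $H^{-1/2}$) pairs legitimately against $as\in H^{1/2}$ — which is where the "sufficiently regular" hypothesis and the divergence-free structure are used. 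Once those pairings are justified, the estimate follows by collecting terms.
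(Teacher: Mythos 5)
Your argument is correct and is essentially the paper's proof spelled out: the paper transports each $\tfrac12\int_{\Omega_t}|w^\mp|^2$ by $W^\pm$ and the surface term by an average of the $(D_t^+,W^+)$ and $(D_t^-,W^-)$ Leibniz formulas, and the cancellation of the pressure flux $-\int_{\Gamma_t}as\,(w^++w^-)\cdot n_{\Gamma_t}\,dS$ against the $2as\,D_t^\pm s$ contribution, together with the four residual integrals you list (two convective, $\tfrac12\int s^2(D_t^++D_t^-)a$, and $\tfrac12\int as^2\,n\cdot\nabla(W^++W^-)\cdot n$), is exactly the display the paper leaves implicit under ``a simple computation.''
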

\begin{remark}
It is possible to prove a full well-posedness type theorem for the linearized equation in $\mathbfcal{H}_{lin}^0$ by establishing suitable energy estimates for the adjoint linearized system. However, we will not pursue this  here as we will only need the bound \eqref{EE for DMlin2} in our analysis later on.
\end{remark}
\begin{remark}\label{remark on control parameters}
It is common practice to use $A$ to denote the scale invariant control parameter and $B$ to denote the leading control parameter. However, to avoid confusion with the magnetic field, we will use $A^{\frac{1}{2}}$ for leading control parameters, where the superscript $\frac{1}{2}$ indicates that such quantities are one half derivatives above scale. We also remark that \eqref{EE for DMlin2} can be established with a weakening of  the control parameter $A^{\frac{1}{2}}_{lin}(t)$. More specifically, for the first term in \eqref{control parameter} it suffices to use the homogeneous $\dot{W}^{1,\infty}$ norm. Moreover, the second term may be replaced by $\|a^{-1}D_ta\|_{L^{\infty}(\Gamma_t)}$ as only the symmetrized operator $D^+_t+D_t^-=2D_t$ will fall on the Taylor coefficient $a$. The reason for using the expression in \eqref{control parameter} is simply to avoid introducing additional notation, as in the general linearized system below we will partially decouple the roles of the $\pm$ variables, which will make the less symmetrized control parameter \eqref{control parameter} appear.
\end{remark}

To control the energy in \Cref{linearizedenergy1}, we use the following Leibniz-type formulas.
\begin{proposition}\label{Leibniz}
\begin{enumerate}
\item Assume that the time-dependent domain $\Omega_t$ flows with Lipschitz velocity $v$. Then the time derivative of the time-dependent volume integral is given by
\begin{equation*}
    \frac{d}{dt}\int_{\Omega_t}f(t,x)dx=\int_{\Omega_t}D_tf+f\nabla\cdot vdx.
\end{equation*}
\item Assume that the time-dependent hypersurface $\Gamma_t$ flows with divergence-free velocity $v$. Then the time derivative of the time-dependent surface integral is given by
\begin{equation*}\label{Leibniz derivative under int general}
    \frac{d}{dt}\int_{\Gamma_t}g(t,x)dS=\int_{\Gamma_t}D_tg-g(n_{\Gamma_t}\cdot\nabla v)\cdot n_{\Gamma_t}dS.
\end{equation*}

\end{enumerate}

\end{proposition}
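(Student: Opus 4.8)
The two identities are the Reynolds transport theorem for volume integrals and for surface integrals, respectively, and the plan is to prove them in the classical way: pull the integral back to the reference configuration at time $0$ through the flow map of $v$, differentiate under the integral sign, and invoke Jacobi's formula for the relevant Jacobian. Let $X=X(t,\cdot)$ denote the flow of $v$, so $\partial_t X(t,y)=v(t,X(t,y))$ and $X(0,\cdot)=\Id$; the Lipschitz hypothesis guarantees that $X(t,\cdot)$ is an orientation-preserving bi-Lipschitz homeomorphism carrying $\Omega_0$ onto $\Omega_t$ (resp.\ $\Gamma_0$ onto $\Gamma_t$), differentiable in $y$ with invertible differential for a.e.\ $y$.

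For (i), the change of variables $x=X(t,y)$ gives $\int_{\Omega_t} f\,dx=\int_{\Omega_0} (f\circ X)\,J\,dy$, where $J:=\det(\partial_y X)>0$. Since $\partial_t(\partial_y X)=\partial_y(v\circ X)=(\nabla v\circ X)\,\partial_y X$, Jacobi's identity $\partial_t\det M=\det M\,\tr(M^{-1}\partial_t M)$ applied to $M=\partial_y X$, together with $\tr(\nabla v)=\nabla\cdot v$, yields the Liouville formula $\partial_t J=\big((\nabla\cdot v)\circ X\big)\,J$, valid for a.e.\ $(t,y)$. Differentiating under the integral and undoing the change of variables then gives the asserted identity in (i) at once. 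For (ii), I would work in a local parametrization $\omega\mapsto\phi_0(\omega)$ of $\Gamma_0$, set $\phi(t,\omega):=X(t,\phi_0(\omega))$, and write $dS=\sqrt{\det g(t)}\,d\omega$ for the induced metric $g_{ij}(t):=\partial_i\phi\cdot\partial_j\phi$. From $\partial_t\partial_i\phi=(\nabla v\circ\phi)\,\partial_i\phi$ one obtains $\partial_t g_{ij}=\partial_i\phi\cdot\big((\nabla v)+(\nabla v)^*\big)\partial_j\phi$. Since $\sum_{ij} g^{ij}\,\partial_i\phi\otimes\partial_j\phi=\Pi$, the orthogonal projection onto the tangent space of $\Gamma_t$, the standard identity $\partial_t\sqrt{\det g}=\tfrac12\sqrt{\det g}\,\tr(g^{-1}\partial_t g)$ collapses to $\partial_t\sqrt{\det g}=\sqrt{\det g}\,\tr(\Pi\nabla v)$ -- the symmetrization of $\nabla v$ costs nothing, because $\Pi$ is symmetric and $\tr(\Pi A)=0$ for any antisymmetric $A$. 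Differentiating $\int_{\Gamma_t} g\,dS=\int (g\circ\phi)\sqrt{\det g}\,d\omega$ therefore produces $\tfrac{d}{dt}\int_{\Gamma_t} g\,dS=\int_{\Gamma_t}\big(D_t g+g\,\tr(\Pi\nabla v)\big)\,dS$; and since $\Pi=\Id-n_{\Gamma_t}\otimes n_{\Gamma_t}$ makes $\tr(\Pi\nabla v)=\nabla\cdot v-(n_{\Gamma_t}\cdot\nabla v)\cdot n_{\Gamma_t}$, the divergence-free hypothesis reduces this to the asserted identity in (ii).

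The computations for (i) are textbook. The one step that demands genuine care is the first variation of the surface measure in (ii) -- the identity $\partial_t\sqrt{\det g}=\sqrt{\det g}\,\big(\nabla\cdot v-(n_{\Gamma_t}\cdot\nabla v)\cdot n_{\Gamma_t}\big)$ -- where one must correctly compute the variation of the induced metric and then observe that only the symmetric part of $\nabla v$ contributes, the antisymmetric part dropping out of $\tr(\Pi\,\cdot\,)$. This is the main obstacle, although it is classical. An alternative worth considering, if it fits the conventions used elsewhere in the paper, is to reduce (ii) to (i) by applying the volume formula to a thin collar $\{x:\dist(x,\Gamma_t)<\eta\}$ transported by an extension of $v$, and letting $\eta\downarrow0$.
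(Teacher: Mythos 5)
The paper states Proposition~\ref{Leibniz} without proof, treating it as a classical Reynolds-type transport theorem, so there is no paper proof to compare against. Your proof is correct, complete, and consistent with the paper's convention $(\nabla v)_{ij}=\partial_j v_i$: pulling back along the flow map of $v$, Jacobi's formula gives the Liouville identity $\partial_t J = J\,(\nabla\cdot v)\circ X$, which yields (i), and the first-variation computation for the induced metric gives $\partial_t\sqrt{\det g}=\sqrt{\det g}\,\tr(\Pi\nabla v)$ with $\tr(\Pi\nabla v)=\nabla\cdot v-(n_{\Gamma_t}\cdot\nabla v)\cdot n_{\Gamma_t}$, which yields (ii) under the divergence-free hypothesis. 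You correctly identify that the symmetrization of $\nabla v$ is harmless because $\Pi$ is symmetric, and your tangential-divergence expression $\tr(\Pi\nabla v)$ agrees with the paper's more general unsimplified form $\mathcal{D}\cdot v^{\top}-\kappa v^{\perp}$ in \eqref{Leibniz derivative under int general} via the standard decomposition $v=v^\top+v^\perp n_{\Gamma_t}$ and $\mathrm{div}_\Gamma\, n_{\Gamma_t}=-\kappa$. The only detail worth flagging is that for merely Lipschitz $v$ the flow map is bi-Lipschitz and the Jacobian is in $L^\infty$ with $\partial_t J=J(\nabla\cdot v)\circ X$ holding for a.e.\ $(t,y)$, so the differentiation under the integral and the change of variables should be justified a.e.; this is standard and does not affect the argument.
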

\begin{proof}[Proof of \Cref{linearizedenergy1}]
By the divergence theorem and the fact that $B$ is tangent to $\Gamma_t$,  we may replace the pair $(D_t,v)$ in \Cref{Leibniz} with either $(D_t^+,W^+)$ or $(D_t^-, W^-)$, whenever convenient. A simple computation  then shows that
\begin{equation*}\label{ELIN ESTIMATE2}
\begin{split}
    \frac{d}{dt}E_{lin}(w^\pm,s)(t)=&-\int_{\Omega_t} w^+\cdot (w^-\cdot\nabla W^+)dx-\int_{\Omega_t} w^-\cdot (w^+\cdot\nabla W^-)dx
    \\
    &+\frac{1}{2}\int_{\Gamma_t}s^2(D_t^++D_t^-)adS+\frac{1}{2}\int_{\Gamma_t}as^2n_{\Gamma_t}\cdot\nabla \left[W^++W^-\right]\cdot n_{\Gamma_t}dS,
    \end{split}
\end{equation*}
which implies \eqref{EE for DMlin2}.
\end{proof}
To prove estimates for our higher order energy functionals in \Cref{HEB}, we will need to work also with the \emph{generalized linear system},
\begin{equation}\label{DM gen}
  \begin{cases}
     &D_t^\pm w^\mp+\nabla P_{lin}^\mp=f^\pm\  \ \text{in} \ \Omega_t,
    \\
    &\nabla\cdot w^\pm=0 \  \ \text{in} \ \Omega_t,
    \\
    & D_t^\pm s^\pm-w^\pm\cdot n_{\Gamma_t}=g^\pm\ \ \text{on} \ \Gamma_t,
   \\
   &P_{lin}^\pm|_{\Gamma_t}=as^\pm  \ \ \text{on} \ \Gamma_t.
   \end{cases}
\end{equation}
The above formulation allows for arbitrary source terms $f^\pm$, $g^\pm$ and slightly more general variables $s^{\pm}$.
This eliminates the need for the constraint condition
\eqref{lin-constraint} and uncouples the pairs of variables $(w^+,s^+)$ and $(w^-,s^-)$. By contrast, these 
variables are coupled in \eqref{DM2}, though only in a weak, perturbative fashion.  
\medskip

The natural energy associated to \eqref{DM gen} is
\begin{equation*}\label{new Linearized energy2}
E_{glin}(w^\pm,s^\pm)(t)=\frac{1}{2}\int_{\Omega_t}|w^+|^2dx+\frac{1}{2}\int_{\Omega_t}|w^-|^2dx+\frac{1}{2}\int_{\Gamma_t}a|s^+|^2dS+\frac{1}{2}\int_{\Gamma_t}a|s^-|^2dS.
\end{equation*}
In analogy with \Cref{linearizedenergy1}, we  have the following estimates.
\begin{proposition}\label{linearizedenergy2}
Suppose $w^\pm\in L^2(\Omega_t)$ and $s^\pm\in L^2(\Gamma_t)$ give a (sufficiently regular) solution to the generalized linearized equation \eqref{DM gen} with $A^{\frac{1}{2}}_{lin}(t)<\infty$
uniformly in time. Then there holds
\begin{equation}\label{Linear EE MHD}
\begin{split}
 \frac{d}{dt}E_{glin}(w^\pm,s^\pm)(t)\lesssim  A^{\frac{1}{2}}_{glin}(t)E_{glin}(w^\pm,s^\pm)(t)&+\langle as^+,g^+\rangle_{L^2(\Gamma_t)}+\langle as^-,g^-\rangle_{L^2(\Gamma_t)}
 \\
 &+\langle w^+,f^-\rangle_{L^2(\Omega_t)}+\langle w^-,f^+\rangle_{L^2(\Omega_t)}.
 \end{split}
\end{equation}
\end{proposition}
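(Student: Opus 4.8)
\textbf{Proof proposal for \Cref{linearizedenergy2}.}

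The plan is to mimic closely the proof of \Cref{linearizedenergy1}, the only difference being that we now carry along the four inhomogeneous source terms $f^{\pm}$, $g^{\pm}$ and keep track of their contributions, rather than discarding them. Since the pairs $(w^+,s^+)$ and $(w^-,s^-)$ are fully decoupled in \eqref{DM gen}, each of the four integrals in $E_{glin}$ can be differentiated in time separately, and we use \Cref{Leibniz} with the transport pair $(D_t^{\pm},W^{\pm})$ for the integrals involving the $\pm$ superscript. This is legitimate precisely because both $W^+$ and $W^-$ are tangent to $\Gamma_t$ and divergence-free, so the hypotheses of \Cref{Leibniz} are met; in particular the boundary term in the surface Leibniz formula is the one displayed there.

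First I would differentiate the two interior integrals. Applying \Cref{Leibniz}(i) with $(D_t^+,W^+)$ to $\frac12\int_{\Omega_t}|w^+|^2$ and using $\nabla\cdot W^+=0$ gives $\int_{\Omega_t}w^+\cdot D_t^+w^+\,dx$; by the first equation of \eqref{DM gen} (the $+$ version: $D_t^+w^-+\nabla P_{lin}^- = f^+$ — note the skew pairing, $D_t^+$ hits $w^-$) one must be a little careful about which $w$ is transported by which derivative. Reading \eqref{DM gen}, $D_t^+w^-=f^+-\nabla P_{lin}^-$ and $D_t^-w^+=f^--\nabla P_{lin}^+$. Hence differentiating $\frac12\int|w^+|^2$ we should transport with $D_t^-$, i.e. use $(D_t^-,W^-)$, obtaining $\int_{\Omega_t}w^+\cdot D_t^-w^+\,dx=\langle w^+,f^-\rangle_{L^2(\Omega_t)}-\int_{\Omega_t}w^+\cdot\nabla P_{lin}^+\,dx$, and symmetrically $\frac{d}{dt}\,\frac12\int|w^-|^2=\langle w^-,f^+\rangle_{L^2(\Omega_t)}-\int_{\Omega_t}w^-\cdot\nabla P_{lin}^-\,dx$. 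The pressure terms are integrated by parts: $-\int_{\Omega_t}w^+\cdot\nabla P_{lin}^+\,dx=\int_{\Omega_t}P_{lin}^+\,\nabla\cdot w^+\,dx-\int_{\Gamma_t}P_{lin}^+\,(w^+\cdot n_{\Gamma_t})\,dS=-\int_{\Gamma_t}as^+(w^+\cdot n_{\Gamma_t})\,dS$, using $\nabla\cdot w^+=0$ and the boundary condition $P_{lin}^+|_{\Gamma_t}=as^+$; similarly for the $-$ term.

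Next I would differentiate the two boundary integrals. By \Cref{Leibniz}(ii) with $(D_t^+,W^+)$,
\[
\frac{d}{dt}\,\frac12\int_{\Gamma_t}a|s^+|^2\,dS=\int_{\Gamma_t}as^+\,D_t^+s^+\,dS+\frac12\int_{\Gamma_t}|s^+|^2\,D_t^+a\,dS-\frac12\int_{\Gamma_t}a|s^+|^2\,(n_{\Gamma_t}\cdot\nabla W^+)\cdot n_{\Gamma_t}\,dS,
\]
and using the third equation of \eqref{DM gen}, $D_t^+s^+=g^++w^+\cdot n_{\Gamma_t}$, the first term becomes $\langle as^+,g^+\rangle_{L^2(\Gamma_t)}+\int_{\Gamma_t}as^+(w^+\cdot n_{\Gamma_t})\,dS$. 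The term $\int_{\Gamma_t}as^+(w^+\cdot n_{\Gamma_t})\,dS$ cancels exactly against the corresponding boundary term produced by the interior pressure integration by parts above — this is the key energy cancellation, identical in spirit to the one in \Cref{linearizedenergy1}. The remaining two terms, $\frac12\int_{\Gamma_t}|s^+|^2 D_t^+a\,dS$ and $-\frac12\int_{\Gamma_t}a|s^+|^2(n_{\Gamma_t}\cdot\nabla W^+)\cdot n_{\Gamma_t}\,dS$, are bounded by $\|a^{-1}D_t^+a\|_{L^\infty(\Gamma_t)}\int_{\Gamma_t}a|s^+|^2\,dS$ and $\|W^+\|_{\dot W^{1,\infty}}\int_{\Gamma_t}a|s^+|^2\,dS$ respectively, each controlled by $A^{1/2}_{glin}(t)E_{glin}(t)$ where $A^{1/2}_{glin}=A^{1/2}_{lin}$ as in \eqref{control parameter}. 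Repeating verbatim for $s^-$ with $(D_t^-,W^-)$ and summing the four contributions yields \eqref{Linear EE MHD}.

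There is no genuine obstacle here; the statement is a routine adaptation of \Cref{linearizedenergy1}, and the single point requiring care is the bookkeeping of the skew pairing in \eqref{DM gen}, namely that $D_t^{\pm}$ is paired with $w^{\mp}$ and $s^{\pm}$, so that when differentiating $\int|w^+|^2$ one must transport with $D_t^-$ (so that $f^-$ appears against $w^+$, matching the right-hand side of \eqref{Linear EE MHD}), while when differentiating $\int a|s^+|^2$ one transports with $D_t^+$ (so that $g^+$ pairs with $as^+$). With these pairings chosen correctly, the dangerous $\int_{\Gamma_t}as^{\pm}(w^{\pm}\cdot n_{\Gamma_t})\,dS$ boundary terms cancel and only the claimed perturbative source pairings and the $A^{1/2}_{glin}$-controlled terms survive.
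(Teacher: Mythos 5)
Your proposal is correct and follows exactly the approach the paper intends: the paper's proof of this proposition simply states ``This is a straightforward computation that follows along similar lines to \Cref{linearizedenergy1}. We omit the details,'' and your argument supplies precisely those details, with the correct choices of transport pair $(D_t^{\mp},W^{\mp})$ for $\tfrac12\int|w^{\pm}|^2$ and $(D_t^{\pm},W^{\pm})$ for $\tfrac12\int a|s^{\pm}|^2$, yielding the cancellation of $\int_{\Gamma_t}as^{\pm}(w^{\pm}\cdot n_{\Gamma_t})\,dS$ and leaving exactly the claimed source pairings plus $A^{1/2}_{glin}$-controlled remainders.
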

\begin{proof}
This is a straightforward computation that follows along similar lines to \Cref{linearizedenergy1}. We omit the details.
\end{proof}
In \Cref{HEB}, we will construct Alinhac style good variables which solve \eqref{DM gen} for an appropriate choice of perturbative source terms $f^\pm$, $g^\pm$.

\section{Notation and function spaces}\label{AOMD} In this section, we recall the function space framework from \cite{Euler} and set notation. 
\subsection{Function spaces} Throughout the paper, $\Omega\subseteq \mathbb{R}^d$ will denote a bounded,  connected domain. We define $H^s(\Omega)$ as the set of all $f\in L^2(\Omega)$ such that
\begin{equation*}\label{Hs norm}
\|f\|_{H^s(\Omega)}:=\inf \left\{\|F\|_{H^s(\mathbb{R}^d)} :F\in H^s(\mathbb{R}^d), \ F|_{\Omega}=f \right\}
\end{equation*}
is finite. Here, $\|\cdot\|_{H^s(\mathbb{R}^d)}$ is defined in the standard way, via the Fourier transform. We let $H^s_0(\Omega)$ denote the closure of $C_0^\infty(\Omega)$ in $H^s(\Omega)$. Evidently, with these definitions, the constants in Sobolev embedding theorems are independent of $\Omega$. 
\medskip

The regularity of a bounded connected domain $\Omega$ is defined in terms of the regularity of  local coordinate parameterizations of $\partial\Omega$. More precisely, an $m$-dimensional manifold $\mathcal{M}\subseteq \mathbb{R}^d$ is said to be of class $C^{k,\alpha}$ or $H^s$, $s>\frac{d}{2}$, if,  locally in linear frames, $\mathcal{M}$ can be represented by graphs with the same regularity. 
\medskip

Suppose now that $s>\frac{d+1}{2}$ and  $\Omega$ has a boundary of class $H^{s}$. Given $r\in [-s,s]$,  the Sobolev space $H^r(\partial\Omega)$ consists of all functions $f:\partial\Omega\to \mathbb{R}$ whose coordinate representatives are locally in $H^r(\mathbb{R}^{d-1})$. For $s$, $r$ and $\Omega$ as above, it is  easy to see that $H^r(\partial\Omega)$ is a Banach space. Indeed, a norm can be chosen by selecting a covering of $\partial\Omega$ by a finite number of coordinate patches and an adapted partition of unity. Although such a norm is well-defined up to equivalence, the precise value of the norm is dependent on the choice of  local coordinates. Since we will be dealing with a  family of moving domains, we need to make sure that we define norms on their boundaries in a consistent fashion.
\subsection{Collar coordinates}\label{Collarcoords} Consider a bounded, connected reference domain $\Omega_*$ with a smooth boundary $\Gamma_*$ and define $H^s$ and $C^{k,\alpha}$ based norms on $\Gamma_*$ by selecting  local coordinates. Our objective will be to use this fixed set of coordinates on $\Gamma_*$ to define norms on a family of nearby hypersurfaces. To make this precise, we let $\delta>0$ and define $N(\Gamma_*,\delta)$ as the collection of all $C^1$ hypersurfaces $\Gamma$ for which there exists a $C^1$ diffeomorphism $\Phi_\Gamma: \Gamma_*\to \Gamma$ satisfying 
\begin{equation*}
\|\Phi_\Gamma -Id_{\Gamma_*}\|_{C^1(\Gamma_*)}<\delta.
\end{equation*}
If $\delta>0$ is sufficiently small, hypersurfaces $\Gamma\in N(\Gamma_*,\delta)$ may be viewed as graphs over $\Gamma_*$. Indeed,  one may select a smooth unit vector field $\nu:\Gamma_*\to\mathbb{S}^{d-1}$ which is suitably transversal to $\Gamma_*$ and then use an  implicit function theorem argument (see~\cite[Section 2.1]{MR2763036} for details) to prove the  existence of a $\delta>0$ such that the map 
 \begin{equation*}\label{diff}
     \varphi:\Gamma_*\times[-\delta,\delta]\to\mathbb{R}^d, \ \ \varphi(x,\mu)=x+\mu\nu(x)
 \end{equation*}
 is a $C^1$ diffeomorphism from its domain to a neighborhood of $\Gamma_*$. If  $\delta>0$ is small enough,  the above coordinate system pairs each hypersurface $\Gamma\in N(\Gamma_*, \delta)$ with a unique function $\eta_\Gamma:\Gamma_*\to\mathbb{R}$ such that 
 \begin{equation*}\label{PhiGamma}
    \Phi_\Gamma(x):=\varphi(x,\eta_\Gamma(x))=x+\eta_\Gamma(x)\nu(x)
 \end{equation*}
 is a diffeomorphism in $C^1(\Gamma_*,\Gamma\subseteq \mathbb{R}^d)$. Using the above framework, we may consistently define   Sobolev and H\"older norms on hypersurfaces close to $\Gamma_*$.
 \begin{definition}
Fix $0<\delta\ll 1.$ Given $s\geq 0$ and a hypersurface $\Gamma\in N(\Gamma_*,\delta)$ with associated map $\eta_\Gamma:\Gamma_*\to\mathbb{R}$ satisfying $\eta_\Gamma\in H^s(\Gamma_*)$  we  define the $H^s$ norm of $\Gamma$ by 
\begin{equation*}
\|\Gamma\|_{H^s}:=\|\eta_\Gamma\|_{H^s(\Gamma_*)}.
\end{equation*}
 \end{definition}
 In an analogous way, we define for $\alpha\in [0,1)$ and integers $k\geq 0$ the  norm $\|\Gamma\|_{C^{k,\alpha}}$.  Our analysis will take place in  the following control neighborhoods.
\begin{definition}
For $\delta>0$ small enough and $\alpha\in [0,1)$, we define the control neighborhood $\Lambda (\Gamma_*,\alpha,\delta)$ as the collection of all hypersurfaces $\Gamma\in N(\Gamma_*,\delta)$ which have an associated map $\eta_\Gamma:\Gamma_*\to\mathbb{R}$ satisfying
$$\|\eta_\Gamma\|_{C^{1,\alpha}(\Gamma_*)}<\delta.$$ 
\end{definition}
In practice, we will want our control neighborhood to be as weak as possible. We therefore commonly abbreviate $\Lambda_*:=\Lambda (\Gamma_*,\epsilon,\delta)$, where $0<\epsilon,\delta\ll 1$ are understood to be small but universal positive constants. As noted in \cite[Section 2.1]{MR2763036}, when $0<\delta\ll 1$ we may associate each $\Gamma\in \Lambda(\Gamma_*,\alpha,\delta)$ with a well-defined domain $\Omega$. 

\begin{remark}
Using the above functional setting, one may consistently define Sobolev norms for functions on $\partial\Omega$ and prove that the implicit constants in various fundamental estimates are uniformly bounded for domains in the collar. Precise details can be found in \Cref{BEE}.
\end{remark}

\subsection{The state space}\label{statespace} Given a collar neighborhood $\Lambda_*$ and $s>\frac{d}{2}+1$ the
\emph{state space}  $\mathbf{H}^s$ is the set of all triples $(v,B,\Gamma)$ such that $\Gamma\in \Lambda_*$ is the boundary of a bounded, connected domain $\Omega$ and such that the following conditions are satisfied:
\begin{enumerate}
    \item (Regularity). $v,B\in H_{div}^s(\Omega)$ and $\Gamma\in H^s$, where $H_{div}^s(\Omega)$ denotes the space of divergence-free vector fields in $H^s(\Omega)$.
    \item (Taylor sign condition). $a:=-\nabla P\cdot n_{\Gamma}>c_0>0$, where $c_0$ may depend on the choice of $(v,B,\Gamma)$, and the pressure $P$ is obtained from $(v,B,\Gamma)$ by solving the  elliptic equation \eqref{Euler-pressure} associated to \eqref{Euler} and \eqref{BC1}.
    \item (Tangency of $B$).   $B\cdot n_\Gamma=0$ on $\Gamma.$
    \item (Wave-type regularity condition).  $\nabla_Bv, \nabla_BB\in H^{s-\frac{1}{2}}(\Omega)$.
\end{enumerate}
For  data $(v_0,B_0,\Gamma_0)$ in the state space $\mathbf{H}^s$, our objective will be to construct local solutions $(v(t),B(t),\Gamma_t)$ to the free boundary MHD equations which evolve continuously in $\mathbf{H}^s$. However, in order to consider the continuity of solutions with values in $\mathbf{H}^s$ and the continuous dependence of solutions $(v(t),B(t),\Gamma_t)$ as functions of the initial data $(v_0,B_0,\Gamma_0)$, we must first define a suitable notion of topology on $\mathbf{H}^s$.
\medskip

 To measure the size of individual states $(v,B,\Gamma)\in \mathbf{H}^s$, we define 
 \begin{equation}\label{Size of states.}
 \|(v,B,\Gamma)\|_{\mathbf{H}^s}^2:=\|\Gamma\|_{H^s}^2+\|v\|^2_{H^s(\Omega)}+\|B\|_{H^s(\Omega)}^2+\|\nabla_Bv\|_{H^{s-\frac{1}{2}}(\Omega)}^2+\|\nabla_BB\|_{H^{s-\frac{1}{2}}(\Omega)}^2.
 \end{equation}
 However, since  $\mathbf{H}^s$ is not a linear space, \eqref{Size of states.} does not define a norm.  To remain consistent with the literature \cite{disconzi2020relativistic,Euler,ifrim2020compressible}, we  use \eqref{Size of states.} to define a convergence in $\mathbf{H}^s$.
\begin{definition}\label{Def of convergence} We say that a sequence $(v_n,B_n,\Gamma_n)\in \mathbf{H}^s$ converges to $(v,B,\Gamma)\in \mathbf{H}^s$ if 
\begin{enumerate}
\item (Uniform Taylor sign condition). For some $c_0>0$ independent of $n$, we have
\begin{equation*}
a_n, a>c_0>0.     
\end{equation*}
\item\label{def domain conver} (Domain convergence). $\Gamma_n\to \Gamma$ 
 in $H^s$. That is, $\eta_{\Gamma_n}\to \eta_{\Gamma}$ in $H^s(\Gamma_*)$ where $\eta_{\Gamma_n}$ and $\eta_\Gamma$ correspond to the collar coordinate representations of $\Gamma_n$ and $\Gamma$, respectively. 
 \item\label{def norm conver} (Norm convergence). For every $\epsilon>0$ there exist smooth divergence-free functions $\tilde{v},\tilde{B}$ and smooth functions $f,g$ defined on a neighborhood $\tilde{\Omega}$ of $\overline{\Omega}$ with   $$\|\tilde{v}\|_{H^s(\tilde\Omega)}+\|\tilde{B}\|_{H^s(\tilde\Omega)}+\|f\|_{H^{s-\frac{1}{2}}(\tilde\Omega)}+\|g\|_{H^{s-\frac{1}{2}}(\tilde\Omega)}<\infty$$ and satisfying 
 \begin{equation*}
\|v-\tilde{v}\|_{H^s(\Omega)}+\|B-\tilde{B}\|_{H^s(\Omega)}+\|\nabla_Bv-f\|_{H^{s-\frac{1}{2}}(\Omega)}+\|\nabla_BB-g\|_{H^{s-\frac{1}{2}}(\Omega)}\leq \epsilon
 \end{equation*}
 and
 \begin{equation*}
 \limsup_{n\to \infty}\left(\|v_n-\tilde{v}\|_{H^s(\Omega_n)}+\|B_n-\tilde{B}\|_{H^s(\Omega_n)}+\|\nabla_{B_n}v_n-f\|_{H^{s-\frac{1}{2}}(\Omega_n)}+\|\nabla_{B_n}B_n-g\|_{H^{s-\frac{1}{2}}(\Omega_n)}\right)\leq\epsilon.    
 \end{equation*}
 \end{enumerate}
With the above notion of convergence, the meaning of $C([0,T];\mathbf{H}^s)$ is now clear. We remark that the equicontinuity-type condition in  property (iii) above is natural to ensure that the $H^s$ mass of the sequence $(v_n,B_n)$ does not concentrate in thin layers near the boundary.
\begin{remark}
At certain points in the paper, it will be convenient to consider convergence more generally: Given $(f,\partial\Omega)$ and a sequence $(f_n,\partial\Omega_n)$ with $f:\Omega\to \mathbb{R}$ and $f_n:\Omega_n\to \mathbb{R}$, we  define the convergence $(f_n,\partial\Omega_n)\to (f,\partial\Omega)$ by requiring both the domain convergence in \eqref{def domain conver} and the convergence of $f_n$ to $f$ via intermediate functions $\tilde{f}$ as in \eqref{def norm conver}. 
\end{remark}


\end{definition}
\section{Difference estimates and enhanced uniqueness}\label{DF}
 The  objective of  this section is to establish a  Lipschitz bound  for the $L^2$
distance between  solutions to the free boundary MHD equations. The fundamental difficulty in achieving these  bounds is the need to compare states which live on different domains. To overcome this difficulty, we  construct a ``distance functional" which measures the distance between (functions on) different domains and is compatible with the MHD flow. Notably, our distance bounds propagate at the level  of the control parameters \eqref{ACONT} and \eqref{BCONT}, which require very little in terms of a priori regularity. This is what will allow us to establish uniqueness of solutions in a very large class. Moreover, as we shall see,  the distance bounds that we prove in this section will also serve as an essential ingredient in our construction of rough solutions  as well as in our proof of the continuity of the data-to-solution map. 
\subsection{Difference estimates}
To set the stage,  we begin by fixing a collar neighborhood $\Lambda_*:=\Lambda(\Gamma_*,\epsilon,\delta)$, where  $0<\epsilon,\delta\ll 1$. Given  states $W:=(W^\pm,\Gamma)$ and $W_h:=(W^\pm_h,\Gamma_h)$ with respective domains $\Omega$, $\Omega_h$, we let $\eta_{\Gamma}$ and $\eta_{\Gamma_{h}}$ be the corresponding representations of $\Gamma$ and $\Gamma_{h}$ as graphs over $\Gamma_*$. Following the linearized energy estimate, we aim to construct analogues of the linearized variables $w^\pm$ and $s$. However, as we shall see, this is not a completely straightforward task.
\medskip

We define $\widetilde{\Omega}:=\Omega\cap\Omega_{h}$ and represent the free boundary $\widetilde{\Gamma}$ for $\widetilde{\Omega}$ as a graph over $\Gamma_*$ via the function $\eta_{\widetilde{\Gamma}}=\eta_{\Gamma}\wedge \eta_{\Gamma_h}$. Note that although the graph parameterization $\eta_{\widetilde{\Gamma}}$ is well-defined,  $\widetilde{\Gamma}$ is only Lipschitz in general, so will not be in $\Lambda_*$. 
To measure the (signed) distance between $\Gamma$ and $\Gamma_h$, we define $s_h^*:\Gamma_*\to \mathbb{R}$ by
\begin{equation*}\label{distancefunction}
    s_h^*(x)=\eta_{\Gamma_h}(x)-\eta_{\Gamma}(x).
\end{equation*}
We then consider the variable $s_h:\widetilde{\Gamma}\to \mathbb{R}$ obtained by pushing  $s_h^*$ forward to the hypersurface $\widetilde{\Gamma}$. More precisely, for $x\in \widetilde{\Gamma},$ we define $s_h(x):=s_h^*(\pi(x))$, where $\pi$ denotes the canonical projection, mapping the image of $\Gamma_*\times [-\delta,\delta]$ under $\varphi$ back to $\Gamma_*$. We similarly extend $\nu$ to a vector field $X$ defined  on the image of $\varphi$ via $X(x)=\nu(\pi(x)).$ 
\medskip

Although the displacement function $s_h$ directly measures the distance between the hypersurfaces $\Gamma$ and $\Gamma_h$ in collar coordinates, it is not tailored to the MHD flow. Therefore, we will not use it as our analogue of the linearized variable $s$. Instead, we will measure the distance between $\Gamma$ and $\Gamma_h$ via the pressure difference $P-P_h$. To motivate this, recall that \eqref{BC1} and the Taylor sign condition ensure that  $P$ and $P_h$ are non-degenerate defining functions for $\Gamma$ and $\Gamma_{h}$ within a suitable collar neighborhood. Therefore, on $\widetilde{\Gamma}$, $P-P_h$ is proportional to the displacement function $s_h$. More precisely, letting $\overline{F}$ denote the average of $F$ along the flow $\varphi$ between the free hypersurfaces, the  fundamental theorem of calculus implies that for $x\in \widetilde{\Gamma}$,
\begin{equation}\label{FTC pressure average}
P_h(x)-P(x)=
\begin{cases}
&-\overline{\nabla P_h\cdot X}s_h(x) \hspace{10mm}\text{if $x\in \mathcal{A}$}:=\tilde{\Gamma}\cap\Gamma-\Gamma\cap\Gamma_{h},
\\
&-\overline{\nabla P\cdot X}s_h(x)\hspace{12mm}\text{if $x\in \mathcal{A}_{h}$}:=\tilde{\Gamma}\cap\Gamma_{h}-\Gamma\cap\Gamma_{h}.
\end{cases}
\end{equation}
Therefore, assuming the Taylor sign condition and the regularity $P,P_h\in C^{1,\epsilon}$, we have the relation $$|P-P_h|\approx |s_h| \hspace{3mm} \text{on}\hspace{2mm} \tilde{\Gamma},$$ within a tight enough collar neighborhood. The key point here is that although  $P-P_h$ and $s_h$ measure  distance equally well, the former has much more natural dynamics under the MHD flow.
\medskip

Motivated by the above, we define
\begin{equation}\label{diff functional candidate pm}
\begin{split}
D^{\pm}(W,W_h):=\frac{1}{2}\int_{\tilde{\Omega}}|W_h^\pm-W^\pm|^2\,dx+\frac{1}{2}\int_{\mathcal{A}}a^{-1}|P_h-P|^2\, dS+\frac{1}{2}\int_{\mathcal{A}_{h}}a_h^{-1}|P_h-P|^2\, dS   
\end{split}
\end{equation}
and
\begin{equation}\label{diff functional candidate}
D(W,W_h):=D^+(W,W_h)+D^-(W,W_h).    
\end{equation}
 Note that the latter two terms in \eqref{diff functional candidate pm} may be written as
\begin{equation*}\label{diff functional candidate2}
\begin{split}
\int_{\mathcal{A}}a^{-1}|P_h-P|^2\, dS+\int_{\mathcal{A}_{h}}a_h^{-1}|P_h-P|^2\, dS  =\int_{\tilde{\Gamma}}b|P-P_h|^2\, dS,
\end{split}
\end{equation*}
where the weight function
\begin{equation*}
b:=a^{-1}1_{\tilde{\Gamma}\cap\Gamma}+a_h^{-1}1_{\tilde{\Gamma}\cap\Gamma_h}
\end{equation*}
is chosen so that \eqref{diff functional candidate}  recovers the linearized energy \eqref{new Linearized energy} in the formal limit. Our objective is to use the above distance functional to propagate difference bounds for solutions to the free boundary MHD equations.  
\begin{theorem}[Difference Bounds]\label{Difference} 
 Let $0<\epsilon,\delta\ll 1$ and let $\Lambda_*=\Lambda(\Gamma_*,\epsilon,\delta)$ be a collar neighborhood. Suppose that $(W^\pm,\Gamma_t)$ and $(W^\pm_h,\Gamma_{t,h})$ are solutions to the free boundary MHD equations that evolve in the collar in a time interval $[0,T]$ and satisfy  $a$,$a_h>c_0>0$.  Then we have the estimate
\begin{equation*}
\frac{d}{dt}D(W,W_h)\lesssim_{A,A_h} (A^{\frac{1}{2}}+A^{\frac{1}{2}}_h)D(W,W_h)
\end{equation*}
where
\begin{equation*}
\begin{split}
A^{\frac{1}{2}}&:=\|(W^+,W^-)\|_{W^{1,\infty}(\Omega_t)}+\|\Gamma_t\|_{C^{1,\frac{1}{2}}}+\|(D_t^+,D_t^-)P\|_{W^{1,\infty}(\Omega_t)},
\\
A&:=\|(W^+,W^-)\|_{C^{\frac{1}{2}+\epsilon}(\Omega_t)}+\|\Gamma_t\|_{C^{1,\epsilon}},
\end{split}
\end{equation*}
 $A^{\frac{1}{2}}_h$ and $A_h$ are the analogous quantities corresponding to $W_h^\pm$, $P_h$, $D_t^hP_h$ and $\Gamma_{t,h}$, and we  have implicitly assumed that our solutions have regularity $A^{\frac{1}{2}},A^{\frac{1}{2}}_h\in L^1_T$ and $A,A_h\in L^\infty_T$.
\end{theorem}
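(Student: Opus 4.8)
The plan is to run the linearized energy argument of \Cref{linearizedenergy1} at the nonlinear level of two solutions living on different domains. The Els\"asser differences $w^\pm := W^\pm_h - W^\pm$, restricted to $\tilde\Omega := \Omega\cap\Omega_h$, will play the role of the linearized variables $w^\pm$, and the pressure difference $q := P_h - P$ on $\tilde\Gamma$ the role of the boundary variable $as$; the functional \eqref{diff functional candidate} is designed precisely so that in the formal limit it degenerates to the linearized energy \eqref{new Linearized energy}. Subtracting the two copies of \eqref{pm equations} and writing $W_h^\pm\cdot\nabla W_h^\mp - W^\pm\cdot\nabla W^\mp = W_h^\pm\cdot\nabla w^\mp + w^\pm\cdot\nabla W^\mp$ yields, in $\tilde\Omega$,
\begin{equation*}
(\partial_t + W^\pm_h\cdot\nabla)w^\mp + \nabla q = -w^\pm\cdot\nabla W^\mp, \qquad \nabla\cdot w^\pm = 0,
\end{equation*}
which is an instance of the generalized linearized system \eqref{DM gen} with source $f^\pm = -w^\pm\cdot\nabla W^\mp = O(A^{\frac{1}{2}}|w^\pm|)$.

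Before differentiating, one checks that $D(W,W_h)$ is coercive, i.e.\ comparable (with constants depending on $A$, $A_h$ and $c_0$) to a genuine weighted $L^2$ distance between the two states. Indeed, $P$ vanishes on $\Gamma_t\supseteq\mathcal A$ and $P_h$ vanishes on $\Gamma_{t,h}\supseteq\mathcal A_h$, so $q = P_h$ on $\mathcal A$ and $q = -P$ on $\mathcal A_h$; combined with the fundamental theorem of calculus identity \eqref{FTC pressure average} and the Taylor sign condition this gives $|q|\approx |s_h|$ on $\tilde\Gamma$, where $s_h$ is the signed displacement between $\Gamma_t$ and $\Gamma_{t,h}$. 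Thus the boundary part of $D$ genuinely measures the distance between the free interfaces, while the interior part measures the $L^2$ distance between the velocity and magnetic fields; in particular, once the differential inequality below is established, \Cref{Uniqueness intro} follows by Gr\"onwall.

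Next one differentiates $D(W,W_h)$ in time. For the interior terms $\tfrac12\int_{\tilde\Omega}|w^\pm|^2\,dx$ we use the volume transport formula of \Cref{Leibniz}, adapted to the Lipschitz domain $\tilde\Omega$, whose boundary splits as $\tilde\Gamma=\mathcal A\cup\mathcal A_h\cup(\Gamma_t\cap\Gamma_{t,h})$: on $\mathcal A$ it is carried by the $\Omega$-flow (hence tangent to each $D_t^\pm$, since $B\cdot n_{\Gamma_t}=0$), on $\mathcal A_h$ by the $\Omega_h$-flow, and the two pieces meet along the codimension-two Lipschitz corner $\Gamma_t\cap\Gamma_{t,h}$. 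Inserting the difference equations, using $\nabla\cdot w^\pm = \nabla\cdot W^\pm_h = 0$ and integrating by parts, the bulk part reduces to $-\int_{\tilde\Omega}w^\pm\cdot(w^\mp\cdot\nabla W^\pm)\,dx = O(A^{\frac{1}{2}}D)$, while the boundary produces the pairing $-\int_{\tilde\Gamma}q\,(w^++w^-)\cdot n\,dS$ together with transport errors that, after using both tangency conditions, collapse to integrals of the schematic cubic form $\int_{\tilde\Gamma}|w|^2\,(w\cdot n)\,dS$. For the boundary term $\int_{\tilde\Gamma}b|q|^2\,dS$ one differentiates via the surface transport formula of \Cref{Leibniz}, computing $D_t q$ on $\mathcal A$ and on $\mathcal A_h$ from \eqref{Mat pressure} and the vanishing of $D_t^\pm P$, $D_t^{\pm,h}P_h$ on the respective boundaries. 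Exactly as in \Cref{linearizedenergy1}, the pairing $-\int_{\tilde\Gamma}q(w^++w^-)\cdot n$ coming from the interior integration by parts cancels the term proportional to the relative normal velocity $(w^++w^-)\cdot n$ of the two interfaces inside $\frac{d}{dt}\int_{\tilde\Gamma}b|q|^2$; what survives after this cancellation are terms of the form $\int_{\tilde\Gamma}b\,q^2\,(D_t^\pm a,\ n\cdot\nabla W^\pm\cdot n,\ldots)$, controlled by $A^{\frac{1}{2}}+A^{\frac{1}{2}}_h$ because $\|(D_t^+,D_t^-)P\|_{W^{1,\infty}}$ dominates $\|a^{-1}D_t^\pm a\|_{L^\infty}$ and $\|\nabla P\|_{L^\infty}\lesssim_A 1$ (and, since only $D_t=\tfrac12(D_t^++D_t^-)$ falls on $a$ here, this is the source of the minor improvement noted in \Cref{slight improvement}).

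The main obstacle is concentrated at the Lipschitz corner. There the cubic ``transport error'' boundary integrals produced by the interior time-derivative and the remaining contributions produced by $\frac{d}{dt}\int_{\tilde\Gamma}b|q|^2$ individually involve traces of $w^\pm$ on $\tilde\Gamma$ that are \emph{not} controlled by $D$, so they must be reorganized against one another so that the dangerous ``higher-order'' pieces — morally the Dirichlet-to-Neumann/normal-derivative contributions of the pressure difference — cancel before any estimate is attempted; this is the delicate boundary-layer analysis on the intersection of two domain states carried out in \cite{Euler}, which in general has only Lipschitz regularity. What allows that analysis to transfer here essentially verbatim is the structural observation that, since $D$ is built symmetrically out of the Els\"asser pair and encodes the interface through the pressure difference $P_h-P$ rather than a normalized displacement, the magnetic-field contributions to this worst term cancel, leaving only pressure terms which are treated separately and more easily. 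Collecting the $\pm$ contributions then yields $\frac{d}{dt}D(W,W_h)\lesssim_{A,A_h}(A^{\frac{1}{2}}+A^{\frac{1}{2}}_h)\,D(W,W_h)$, and Gr\"onwall completes the proof.
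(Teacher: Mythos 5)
Your outline follows essentially the same route as the paper's proof: differencing the Els\"asser system on $\tilde\Omega=\Omega\cap\Omega_h$, differentiating $D$ with the volume and surface transport formulas, pairing the interior boundary term with the $D_t(P-P_h)$ contribution from $\tfrac{d}{dt}\int_{\tilde\Gamma}b|q|^2$, observing that summing over $\pm$ eliminates the magnetic field from the worst term so that the Lipschitz boundary-layer analysis of \cite{Euler} applies as is, and controlling everything else by $(A^{1/2}+A_h^{1/2})D$ using $\|P\|_{C^{1,\frac12}}\lesssim_A A^{1/2}$ and the dynamic boundary conditions. One small clarification worth flagging: the cubic ``transport error'' $\int_{\tilde\Gamma}|w^\pm|^2|w^\mp|\,dS$ produced by the moving-intersection transport formula is handled directly by the divergence theorem as in \eqref{trace1} and plays no role in the boundary-layer analysis; the term that actually requires that analysis is the genuinely cubic pressure/velocity pairing $\int_{\mathcal A}a^{-1}(P-P_h)(v-v_h)\cdot\nabla(P-P_h)\,dS$ of \eqref{last-cubic}, which only emerges after the interior/surface cancellation you describe.
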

\begin{remark}\label{remark on cp 2}
It will be clear from the proof below that the control parameter $A^{\frac{1}{2}}$ in \Cref{Difference} may be replaced with the control parameter  mentioned in \Cref{slight improvement}. 
\end{remark}
\begin{proof}
Clearly, it suffices to show that
\begin{equation*}
\frac{d}{dt}D^{\pm}(W,W_h)\lesssim_{A,A_h}(A^{\frac{1}{2}}+A_h^{\frac{1}{2}})D(W,W_h) 
\end{equation*}
for  $D^+$ and $D^-$ separately. However, some interesting cancellations will occur when we view these terms together.
\medskip

For  notational simplicity, we drop the $t$ subscript on the domains below. We also use $\lesssim_A$ as a shorthand for $\lesssim_{A,A_h}$. To estimate expressions involving the pressure in terms of the control parameters $A$ and $A^\frac{1}{2}$ above, we require the  bounds
\begin{equation}\label{pressurebounddiff}
\|P\|_{C^{1,\epsilon}(\Omega)}\lesssim_A 1,\hspace{5mm}\|P\|_{C^{1,\frac{1}{2}}(\Omega)}\lesssim_A A^\frac{1}{2}   ,
\end{equation}
as well as the analogous bounds for $P_h$. Proofs that these bounds hold will be presented later; see \Cref{Linfest} for details. 
\medskip

To proceed with the difference estimate, we recall the identity
\begin{equation}\label{dt-dist} 
\begin{split}
\frac{d}{dt}D^\pm(W,W_h)&=\frac{1}{2}\frac{d}{dt}\int_{\tilde{\Omega}}|W^\pm_h-W^\pm|^2\, dx+\frac{1}{2}\frac{d}{dt}\int_{\mathcal{A}}a^{-1}|P-P_h|^2\,dS+\frac{1}{2}\frac{d}{dt}\int_{\mathcal{A}_h}a_h^{-1}|P-P_h|^2\,dS.
\end{split}
\end{equation}
To compute the first term in \eqref{dt-dist}, we use \cite[Proposition 4.4]{Euler} with  velocity $W^\mp$  to  obtain the estimate
\begin{equation}\label{first term}
\begin{split}
\frac{1}{2}\frac{d}{dt}\int_{\tilde{\Omega}}|W^\pm_h-W^\pm|^2\, dx\leq \frac{1}{2}\int_{\tilde{\Omega}}D_t^\mp|W^\pm_h-W^\pm|^2\, dx +\frac{1}{2}\int_{\tilde{\Gamma}}|W^\pm_h-W^\pm|^2|W^\mp_h-W^\mp|\,dS.
\end{split}    
\end{equation}
Note that the latter term in  \eqref{first term} arises as a consequence of  working on the intersected domain $\widetilde{\Omega}$ and estimating the additional boundary weight in \cite[Proposition 4.4]{Euler} by  $|W^\mp_h-W^\mp|$. 
Since this new term is cubic in $|W^\pm_h-W^\pm|$ and $|W^\mp_h-W^\mp|$, it is straightforward to handle. Indeed, as $\Gamma,\Gamma_{h}\in\Lambda_*$, we may find a smooth vector field $X$ defined on $\mathbb{R}^d$ with $C^k$ bounds uniform in $\Lambda_*$ which is also uniformly transverse to $\tilde{\Gamma}$. By the  divergence theorem, we then have
\begin{equation}\label{trace1}
\begin{split}
\frac{1}{2}\int_{\tilde{\Gamma}}|W^\pm_h-W^\pm|^2|W^\mp_h-W^\mp|\,dS&\lesssim \int_{\tilde{\Gamma}}X\cdot n_{\tilde{\Gamma}}|W^\pm_h-W^\pm|^2|W^\mp_h-W^\mp|\,dS
\\
&\lesssim (A^\frac{1}{2}+A_h^\frac{1}{2})(\|W^\pm_h-W^\pm\|_{L_x^2(\tilde{\Omega})}^2+\|W^\mp_h-W^\mp\|_{L_x^2(\tilde{\Omega})}^2)
\\
&\lesssim (A^\frac{1}{2}+A_h^\frac{1}{2})D(W,W_h).
\end{split}
\end{equation}
To estimate the remaining term in \eqref{first term}, we recall the following equations for $W^\pm_h-W^\pm$ in $\tilde{\Omega}_t$:
\begin{equation*}\label{veqn}
\begin{cases}
&D_t^\mp(W^\pm_h-W^\pm)=-\nabla(P_h-P)-(W^\mp_h-W^\mp)\cdot\nabla W_h^\pm,
\\
&\nabla\cdot (W^\pm_h-W^\pm)=0.
\end{cases}
\end{equation*}
Using these equations, it follows that
\begin{equation}\label{interior1}
\begin{split}
\frac{1}{2}\int_{\tilde{\Omega}}D_t^\mp|W^\pm_h-&W^\pm|^2\,dx=\int_{\tilde{\Omega}}(W^\pm_h-W^\pm)\cdot D_t^\mp(W^\pm_h-W^\pm)\,dx
\\
&=-\int_{\tilde{\Gamma}}(P_h-P)(W^\pm_h-W^\pm)\cdot n_{\tilde{\Gamma}}\,dS-\int_{\tilde{\Omega}}(W^\pm_h-W^\pm)\cdot [(W^\mp_h-W^\mp)\cdot\nabla W_h^\pm]\,dx   
\\
&\leq -\int_{\tilde{\Gamma}}(P-P_h)(W^\pm-W^\pm_h)\cdot n_{\tilde{\Gamma}}\,dS+(A^\frac{1}{2}+A_h^\frac{1}{2})D(W,W_h).
\end{split}
\end{equation}
From the decomposition $\tilde{\Gamma}=\mathcal{A}\cup \mathcal{A}_h\cup (\Gamma\cap \Gamma_{h})$ and the fact that $P-P_h=0$ on $\Gamma\cap \Gamma_{h}$ by the dynamic boundary condition \eqref{BC1}, we may write
\begin{equation*}
\begin{split}
-\int_{\tilde{\Gamma}}(P-P_h)(W^\pm-W^\pm_h &)\cdot n_{\tilde{\Gamma}}\,dS=-\int_{\mathcal{A}}(P-P_h)(W^\pm-W_h^\pm)\cdot n_{\Gamma}\,dS-\int_{\mathcal{A}_h}(P-P_h)(W^\pm-W_h^\pm)\cdot n_{\Gamma_{h}}\,dS
\\
&=\int_{\mathcal{A}}a^{-1}(P-P_h)(W^\pm-W_h^\pm)\cdot\nabla P\,dS+\int_{\mathcal{A}_h}a_h^{-1}(P-P_h)(W^\pm-W_h^\pm)\cdot\nabla P_h \,dS.
\end{split}
\end{equation*}
We now define
\begin{equation*}
J^\pm:=\int_{\mathcal{A}}a^{-1}(P-P_h)(W^\pm-W_h^\pm)\cdot\nabla P\,dS+\frac{1}{2}\frac{d}{dt}\int_{\mathcal{A}}a^{-1}|P-P_h|^2\,dS,    
\end{equation*}
and
\begin{equation*}
J_{h}^\pm:=\int_{\mathcal{A}_h}a_h^{-1}(P-P_h)(W^\pm-W_h^\pm)\cdot\nabla P_h \,dS+\frac{1}{2}\frac{d}{dt}\int_{\mathcal{A}_h}a_h^{-1}|P-P_h|^2\,dS.    
\end{equation*}
Combining \eqref{trace1} with \eqref{interior1}, we  obtain
\begin{equation*}
\frac{d}{dt}D^\pm(W,W_h)\lesssim (A^\frac{1}{2}+A_h^\frac{1}{2})D(W,W_h)+J^\pm+J_{h}^\pm.    
\end{equation*}
It remains  to show that
\begin{equation*}
J^++J^-+J_h^++J_h^-\lesssim_A (A^\frac{1}{2}+A_h^\frac{1}{2})D(W,W_h). 
\end{equation*}
We will only show the details for $J:=J^++J^-$ as the treatment of $J_{h}^++J_h^-$ is virtually identical. 
\medskip

Notice that when $J^+$ and $J^-$ are combined we obtain the identity
\begin{equation}\label{B cancels out}
J=2\int_{\mathcal{A}}a^{-1}(P-P_h)(v-v_h)\cdot\nabla P\,dS+\frac{d}{dt}\int_{\mathcal{A}}a^{-1}|P-P_h|^2\,dS.   
\end{equation}
Interestingly,  \eqref{B cancels out} makes no reference to the magnetic field $B$, except implicitly through the pressure $P$. Therefore, to estimate the second term in \eqref{B cancels out}, it is best to use \Cref{Leibniz} with the pair $(D_t,v)$ rather than $(D_t^\pm, W^\pm)$. In this case, we have
\begin{equation}\label{Int on s_h>0}
\begin{split}
\frac{d}{dt}\int_{\mathcal{A}}a^{-1}|P-P_h|^2\,dS&=-\int_{\mathcal{A}}a^{-2}D_ta|P-P_h|^2\,dS-\int_{\mathcal{A}}a^{-1}|P-P_h|^2 [n_{\Gamma}\cdot\nabla v\cdot n_{\Gamma}]\,dS
\\
&+2\int_{\mathcal{A}}a^{-1}(P-P_h)D_t(P-P_h)\,dS.
\end{split}    
\end{equation}
The validity of the identity \eqref{Int on s_h>0} is justified by noting that $|P-P_h|^2$ vanishes to second order on $\Gamma\cap \Gamma_h$, so one can extend by zero to  write the integral on the left-hand side of \eqref{Int on s_h>0}  as an integral over $\Gamma$, apply standard identities there, and then return to an integral over $\mathcal{A}$. Note that in \eqref{Int on s_h>0} only the combination $D_t=\frac{D_t^++D_t^-}{2}$ falls on the pressure. This is consistent with the linearized estimates; see \Cref{remark on control parameters}.  It is also the structural reason why we may use the improved control parameter from \Cref{slight improvement} in this portion of our analysis.
\medskip

Inserting  \eqref{Int on s_h>0} into \eqref{B cancels out} and recalling that $D_tP=0$ on $\mathcal{A}$ by the kinematic and dynamic boundary conditions, we have 
\begin{equation*}
J\lesssim_A -2\int_{\mathcal{A}}a^{-1}(P-P_h)D_{t}^{h}P_hdS+2\int_{\mathcal{A}}a^{-1}(P-P_h)(v-v_h)\cdot\nabla (P-P_h)dS+(A^\frac{1}{2}+A_h^\frac{1}{2})D(W,W_h).
\end{equation*}
Here, we used the identity \eqref{Moving normal} and also $2D_t=D_t^++D_t^-$ to control $D_ta$. For our next estimate, we combine the fact that $D_{t}^hP_h=0$ on $\Gamma_{h}$ with \eqref{FTC pressure average}, the fundamental theorem of calculus, the Taylor sign condition and \eqref{pressurebounddiff} to obtain
\begin{equation*}
\begin{split}
    |D_{t}^hP_h|&\lesssim_A \|\nabla D_{t}^hP_h\|_{L^{\infty}}|s_h|\approx_A \|\nabla D_{t}^hP_h\|_{L^{\infty}}|P-P_h|\lesssim_A (A^\frac{1}{2}+A_h^\frac{1}{2})|P-P_h|.
\end{split}
\end{equation*}
As a consequence, we deduce a good bound on the first component of the estimate for $J$:
\begin{equation*}
\left|\int_{\mathcal{A}}a^{-1}(P-P_h)D_{t}^hP_hdS\right|\lesssim_A (A^\frac{1}{2}+A_h^\frac{1}{2})D(W,W_h).
\end{equation*}
The final task is to show that
\begin{equation}\label{last-cubic}
\begin{split}
\left| \int_{\mathcal{A}}a^{-1}(P-P_h)(v-v_h)\cdot\nabla (P-P_h)\, dS \right|
\lesssim_A (A^\frac{1}{2}+A_h^\frac{1}{2})D(W,W_h).
\end{split}
\end{equation}
 The estimate \eqref{last-cubic} is far from trivial. However, other than substituting $p-p_h$ with $P-P_h$, \eqref{last-cubic} has  exactly the same structure as the delicate cubic term in our previous work \cite[Equation (4.11)]{Euler}. Since the analysis from \cite{Euler} carries over rather directly,  we leave the verification of \eqref{last-cubic} to the reader.
\end{proof}
One of several consequences of the above difference bounds is the following uniqueness result. Note that, when written in terms of the variables $(v,B,\Gamma)$, \Cref{t:unique} proves \Cref{Uniqueness intro}.
\begin{theorem}[Uniqueness] \label{t:unique} Let $\epsilon>0$ and let $\Omega_0$ be a bounded domain with boundary $\Gamma_0\in\Lambda(\Gamma_*,\epsilon,\delta)$. Then for $\Gamma_0\in C^{1,\frac{1}{2}}$ and divergence-free $W_0^\pm\in W^{1,\infty}(\Omega_0)$ satisfying the Taylor sign condition, the free boundary MHD equations admit at most one solution $(W^\pm,\Gamma_t)$ on a time interval $[0,T]$ with  $\Gamma_t\in \Lambda(\Gamma_*,\epsilon,\delta)$ and 
\begin{equation*}
\sup_{0\leq t\leq T}\|(W^+,W^-)\|_{C^{\frac{1}{2}+\epsilon}_x(\Omega_t)}+\int_{0}^{T}\|(W^+,W^-)\|_{W^{1,\infty}_x(\Omega_t)}+\|D_tP\|_{W^{1,\infty}_x(\Omega_t)}+\|\Gamma_t\|_{C_x^{1,\frac{1}{2}}}\,dt<\infty.    
\end{equation*}
\end{theorem}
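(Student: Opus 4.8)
The plan is to derive \Cref{t:unique} as a soft corollary of the difference bound \Cref{Difference}, via a Gronwall argument applied to the distance functional $D$ of \eqref{diff functional candidate}--\eqref{diff functional candidate pm}. So suppose $(W^\pm,\Gamma_t)$ and $(W^\pm_h,\Gamma_{t,h})$ are two solutions on $[0,T]$, both lying in the collar $\Lambda_*$, both issuing from the common initial state $(W_0^\pm,\Gamma_0)$, and both obeying the hypothesized bounds. First I would note that $D$ vanishes at $t=0$: since the initial states agree we have $\Gamma_0=\Gamma_{0,h}$, hence $\tilde{\Omega}_0=\Omega_0$, the overlap sets $\mathcal{A}$ and $\mathcal{A}_h$ are empty at time zero, $W^\pm=W^\pm_h$ on $\Omega_0$, and the two initial pressures coincide, being uniquely determined from the common state by \eqref{Euler-pressure}; thus $D(W,W_h)(0)=0$.

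Next I would invoke \Cref{Difference}, using \Cref{remark on cp 2} so that the control parameter matches the one appearing in \Cref{t:unique} (with $\|D_tP\|_{W^{1,\infty}}$ in place of $\|(D_t^+,D_t^-)P\|_{W^{1,\infty}}$). The hypotheses are exactly those assumed here: the solutions stay in $\Lambda_*$, satisfy a uniform lower bound in the Taylor sign condition, and have $A,A_h\in L^\infty_T$ and $A^{1/2},A^{1/2}_h\in L^1_T$. The conclusion is $\frac{d}{dt}D(W,W_h)\le C\,(A^{1/2}+A^{1/2}_h)\,D(W,W_h)$ with a constant $C$ depending only on $\|A\|_{L^\infty_T}+\|A_h\|_{L^\infty_T}$, hence uniform in $t$. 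Since $A^{1/2}+A^{1/2}_h\in L^1([0,T])$, Gronwall's inequality combined with $D(W,W_h)(0)=0$ forces $D(W,W_h)\equiv 0$ on $[0,T]$.

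It then remains to unpack the vanishing of $D$ using its coercivity. Vanishing of the interior term gives $W^\pm=W^\pm_h$ a.e.\ on $\tilde{\Omega}_t=\Omega_t\cap\Omega_{t,h}$ for each $t$. For the boundary term, the weight $b=a^{-1}1_{\tilde{\Gamma}\cap\Gamma}+a_h^{-1}1_{\tilde{\Gamma}\cap\Gamma_h}$ is bounded below by a positive constant by the uniform Taylor sign condition, so $P=P_h$ on $\tilde{\Gamma}_t$; then \eqref{FTC pressure average}, the $C^{1,\epsilon}$ regularity of the pressures, and again the Taylor sign condition yield $|P-P_h|\approx|s_h|$ on $\tilde{\Gamma}_t$ within the collar, whence $s_h\equiv 0$, i.e.\ $\eta_{\Gamma_t}=\eta_{\Gamma_{t,h}}$ on $\Gamma_*$ and therefore $\Gamma_t=\Gamma_{t,h}$. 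Consequently $\Omega_t=\Omega_{t,h}=\tilde{\Omega}_t$ and $W^\pm=W^\pm_h$ on all of $\Omega_t$; recovering $v=\tfrac{1}{2}(W^++W^-)$ and $B=\tfrac{1}{2}(W^+-W^-)$ shows $(v,B,\Gamma_t)=(v_h,B_h,\Gamma_{t,h})$ for every $t\in[0,T]$. Restated in the $(v,B,\Gamma)$ variables, this is exactly \Cref{Uniqueness intro}.

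The genuine analytic difficulty is not in this deduction but in \Cref{Difference}, which it uses as a black box. Within the present argument, the one point that deserves care is the final coercivity step: one needs to know that the \emph{pressure} difference $P-P_h$ — rather than merely the geometric displacement $s_h$ — controls the actual distance between the two free interfaces, and this is precisely where the quantitative, uniform-lower-bound form of the Taylor sign condition is indispensable. Everything else is a routine Gronwall estimate together with standard bookkeeping on the intersected domain $\tilde{\Omega}_t$.
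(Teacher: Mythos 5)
Your proposal is correct and follows essentially the same route as the paper, which derives \Cref{t:unique} directly from \Cref{Difference} and \Cref{remark on cp 2} via Gronwall, then unwinds the vanishing of the distance functional using the Taylor sign condition and the pressure--displacement comparison \eqref{FTC pressure average}. The one point you gloss over slightly is the claim that the hypotheses of \Cref{Difference} ``are exactly those assumed here'': the uniqueness theorem as stated only posits the Taylor sign condition on the initial data $W_0^\pm$, while \Cref{Difference} requires the uniform lower bound $a,a_h>c_0>0$ on all of $[0,T]$; one should either read this uniform bound into the hypotheses (as the paper's functional framework implicitly does, solutions being understood to stay in the state space) or supply a short propagation argument for the Taylor coefficient, but this is a minor bookkeeping point rather than a genuine gap.
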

\begin{proof}
This follows immediately from \Cref{Difference}, \Cref{remark on cp 2} and the same reasoning as in \cite[Theorem 4.6]{Euler}.
\end{proof}
\section{Higher energy bounds}\label{HEB}
Let $k>\frac{d}{2}+1$ be an integer. The objective of this section is to establish control over the $\mathbf{H}^k$ norms of solutions $(v, B, \Gamma)$ to the free boundary MHD equations. We accomplish this by constructing a family of coercive energy functionals $(v, B, \Gamma)\mapsto E^k(v, B, \Gamma)$ which are propagated by the MHD flow. 
\begin{theorem}[Low regularity energy estimates]\label{Energy est. thm}
Let $s\in\mathbb{R}$ with $s>\frac{d}{2}+1$ and let $k> \frac{d}{2}+1$ be an integer. Fix a collar neighborhood $\Lambda(\Gamma_*, \epsilon,\delta)$ with $\delta>0$ sufficiently small. Then for $\Gamma$ restricted to $\Lambda_*$ there exists an energy functional $(v, B, \Gamma)\mapsto E^k(v, B, \Gamma)$ such that
\begin{enumerate}
\item (Energy coercivity).
\begin{equation}\label{Coercivity bound on integers}
E^k(v, B, \Gamma)\approx_{M_{s-\frac{1}{2}}} M_k^2.
\end{equation}
\item  (Energy propagation). If, in addition to the above, $(v, B, \Gamma)=(v(t), B(t), \Gamma_t)$ is a solution to the free boundary MHD equations, then $E^k(t):=E^k(v(t), B(t), \Gamma_t)$ satisfies 
\begin{equation*}
\frac{d}{dt}E^k\lesssim_{M_s} E^k.
\end{equation*}
\end{enumerate}
Here, $M_{\sigma}:=1+|\Omega|+\|(v, B,\Gamma)\|_{\mathbf{H}^{\sigma}}$ for $\sigma\geq s-\frac{1}{2}$. Moreover, the implicit constants in the above estimates are sub-polynomial in $M_{\sigma}$.

\end{theorem}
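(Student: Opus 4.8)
The plan is to construct the energy functional $E^k$ in the Els\"asser variables $W^\pm = v\pm B$, following the blueprint of the generalized linear system \eqref{DM gen} and its energy estimate \eqref{Linear EE MHD}, but now at the nonlinear level. The first step is to identify the correct ``good variables'' in the spirit of Alinhac. Differentiating the $W^\pm$ equations $k-1$ times in a way that respects the transport structure, the natural candidates will be combinations of $D_t^{\mp}$-derivatives of $W^\pm$, together with a boundary good variable built from the Taylor coefficient $a$, modeled on the schematic equation \eqref{schematicDta}: the wave operator $D_t^2 - \nabla_B^2 + a\mathcal N$ acts on $a$, so iterating it (or its $\pm$-split version $(D_t^\mp \mp \nabla_B)(D_t^\pm \pm \nabla_B) + a\mathcal N$) produces the higher-order surface variables $s^{\pm}_k$ whose leading part solves the generalized linearized system with controlled source terms $f^\pm_k$, $g^\pm_k$. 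Crucially, because of condition (iv) and the regularizing effect mentioned after Definition~\ref{D state space} — i.e.\ $\nabla_B\Gamma$ and $\nabla_B^2\Gamma$ are a half-derivative and a full derivative smoother, respectively — the $\nabla_B$-contributions in these good variables can be allocated to the correct regularity class rather than losing a derivative.

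The second step is the \emph{coercivity} statement \eqref{Coercivity bound on integers}. This is essentially an elliptic estimate: one must show that knowing the $L^2$ norms of the $k-1$-fold $D_t^\mp$- and $\nabla_B$-derivatives of $W^\pm$, plus the weighted boundary norm of $s^\pm_k$, recovers the full $\mathbf H^k$ norm $M_k^2$, with constants depending only on the low norm $M_{s-1/2}$. Here I would lean heavily on the balanced elliptic estimates collected from \cite{Euler} in \Cref{BEE}, which are designed exactly to handle iterated Dirichlet-to-Neumann operators and div-curl systems while tracking the free-surface regularity sharply. The div-curl part is standard (vorticity $\omega$ and current $\omega_B$ in the interior), the genuinely delicate part is inverting the boundary operator $a\mathcal N$ and relating $\|s^\pm_k\|$ to $\|\Gamma\|_{H^k}$ using the Taylor sign condition; the $\nabla_B^2\Gamma \in H^{k-1}$ bound is what makes the boundary contributions close at the $M_{s-1/2}$ level.

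The third and central step is \emph{energy propagation}. One computes $\frac{d}{dt}E^k$ using the two Leibniz formulas in \Cref{Leibniz} (with the pairs $(D_t^\pm, W^\pm)$, which are legitimate since $W^\pm$ are tangent to $\Gamma_t$), reducing everything to the generalized linearized energy identity \eqref{Linear EE MHD} applied to the good variables, plus commutator errors. The output is schematically $\frac{d}{dt}E^k \lesssim_{M_s} E^k + (\text{source pairings}) + (\text{commutators})$, and the whole game is to show the source pairings and commutators are bounded by $M_s \cdot E^k$ with \emph{no} dependence on $M_k$ beyond the linear factor $E^k \approx M_k^2$. The routine commutators (transport of $\omega$, $\omega_B$ along $D_t^\pm \pm \nabla_B$) are handled by the Moser-type nonlinear estimates in \Cref{BEE}. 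The main obstacle — and where a normal form correction is unavoidable — is a family of ``perturbative'' source terms in the $s^\pm_k$ equation that, naively estimated, would cost a norm like $M_{s+1/2}$ or $M_k$; one must add a bilinear correction to $E^k$ (a normal form) that cancels the worst resonant interaction, exploiting a hidden symmetry/cancellation between the $D_t^\pm$ and $\nabla_B$ structure in \eqref{schematicDta}. Exhibiting this cancellation, verifying that the correction term is itself controlled by $M_{s-1/2} E^k$ so it does not spoil coercivity, and checking that it is compatible with the boundary conditions of the state space, is the heart of the argument. Finally, one assembles the ODE inequality and integrates; the sub-polynomial dependence on $M_\sigma$ is tracked throughout by being careful that each nonlinear estimate uses only finitely many factors of the low norms.
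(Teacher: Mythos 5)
Your outline captures the right ingredients at a high level—Els\"asser variables, the generalized linear system \eqref{DM gen}, balanced elliptic estimates for coercivity, a normal form, and the $\nabla_B^2\Gamma$ regularizing effect—but two of your structural choices diverge from what actually closes the argument, and one of them would genuinely obstruct it.

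First, the good variables. You propose building the higher-order surface variable $s^\pm_k$ by iterating the wave-type operator $(D_t^\mp \mp\nabla_B)(D_t^\pm\pm\nabla_B)+a\mathcal N$ on $a$. The paper instead applies a \emph{single} material derivative to get $\mathcal G^\pm := D_t^\pm a - \nabla_n\Delta^{-1}D_t^\pm\Delta P$, and then boosts regularity purely spatially via powers of $\mathcal N$: the energy components are $\mathcal N^{k-1}a$, $\nabla\mathcal H\mathcal N^{k-2}\mathcal G^\pm$, $\nabla\mathcal H\mathcal N^{k-2}\nabla_B a$, and $a^{-1}\mathcal N^{k-2}\nabla_B\mathcal G^\pm$, together with the interior variables $\omega^\pm$ and $\nabla_B\omega^\pm$. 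This design is essential for part (i): the energy must be a functional of the state $(v,B,\Gamma)$ alone, so that coercivity \eqref{Coercivity bound on integers} is a statement about arbitrary states, not about solutions—see \Cref{remarkBdefcorrection}, which carefully interprets $D_t^\pm P$, $D_t^\pm a$, $\mathcal G^\pm$ as elliptic expressions in $(v,B,\Gamma)$ so that exactly one formal time derivative appears. Iterating the wave operator would require $D_t^m a$ for all $m\le k$, forcing a much deeper unwinding of the dynamics into elliptic data, and the coercivity direction (recovering $\|\Gamma\|_{H^k}$ and $\|W^\pm\|_{H^k}$ from $L^2$ norms of those iterates) would not go through the div-curl and DtN machinery in \Cref{BEE} the way $\mathcal N^{k-1}a$ and $\mathcal N^{k-2}\mathcal G^\pm$ do.

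Second, the normal form. You describe it as an \emph{additive} bilinear correction to $E^k$ cancelling a resonance via a hidden $D_t^\pm/\nabla_B$ symmetry. What the paper does is structurally different: the correction is absorbed into the \emph{definition} of the good variable—$\mathcal G^\pm$ replaces $D_t^\pm a$, removing $\nabla_n\Delta^{-1}D_t^\pm\Delta P$ from the source of the $a$-equation—precisely so that the term which would otherwise force $\nabla\nabla_B W\in L^\infty$ (and hence $s>\tfrac d2+\tfrac32$) never appears. The price is a residual pairing of the form $\langle\nabla\mathcal H\mathcal N^{k-2}\nabla_B a,\nabla\mathcal H\mathcal N^{k-2}\nabla_B(\mathcal G^\pm-D_t^\pm a)\rangle$, and the mechanism that controls it is not a symmetry cancellation but a double integration by parts (one in $\nabla_B$, one in $\mathcal N$) that moves the operator onto $a$ to produce $\mathcal N^{k-2}\nabla_B^2 a$, which is then bounded via \Cref{partitionofBa}—i.e.\ the regularizing effect of the tangency condition $B\cdot n_\Gamma=0$. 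You cite the regularizing effect as a separate ingredient for allocating $\nabla_B$ regularity in the good variables, but in the paper it is precisely what closes the normal-form residue, so the two steps are not independent. Without this shifting-of-derivatives step, your proposal would still face the derivative loss the normal form was meant to remove, just displaced into the correction term.
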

\begin{remark}
It is essential to note that  the first statement in \Cref{Energy est. thm} is valid for general states $(v,B,\Gamma)\in \mathbf{H}^k$.
For solutions $(v(t), B(t), \Gamma_t)$ to the free boundary MHD equations, \Cref{Energy est. thm} may be combined with Gr\"onwall's inequality to obtain the bound
\begin{equation*}\label{doubleexpbound}
\begin{split}
\|(v(t), B(t), \Gamma_t)\|^2_{\mathbf{H}^k}&\lesssim_{M_{s-\frac{1}{2}}}\exp\left(\int_{0}^{t}\mathcal{P}(M_s)d\tau\right)(1+\|(v_0, B_0, \Gamma_0)\|_{\mathbf{H}^k}^2)
\end{split}
\end{equation*}
for some polynomial $\mathcal{P}$. 
\end{remark}
\subsubsection{Notation} In the sequel, we will need to efficiently estimate many multilinear expressions. To make the notation more compact, we will  use $\mathcal{M}_n$ to denote a generic $n$-fold multilinear expression in its arguments. For instance, $\mathcal{M}_2(\nabla P,\nabla^2 P)$ will denote a bilinear expression in $\nabla P$ and $\nabla^2 P$. If the order of the multilinear expression $\mathcal{M}_n$ is not important, we will simply write $\mathcal{M}$ instead of $\mathcal{M}_n$. To further simplify the notation, we will  omit the $\pm$ superscript on $W^\pm$ when it is not important to keep track of. For instance, $\mathcal{M}_2(\nabla W,\nabla W)$ will denote a bilinear expression in any combination of $\nabla W^+$ and $\nabla W^-$. Moreover,  we will  write $(W^\pm,\Gamma)$ as a shorthand for $(W^+,W^-,\Gamma)$.
\medskip

In this section, we will refer to \Cref{BEE} quite frequently, so the reader may wish to consult \Cref{BEE} for additional notation and conventions. In particular, familiarity with the  dyadic regularization operators from \Cref{SSRO}    will be assumed.
\subsection{Constructing the energy functional}\label{CTEF}
In order to establish \Cref{Energy est. thm} we will need to control the $H^k$ norms of $v$, $B$ and $\Gamma$ as well as the $H^{k-\frac{1}{2}}$ norms of $\nabla_B v$ and $\nabla_B B$. Equivalently, we must   control  $W^\pm$ and $\Gamma$ in $H^k$ and $\nabla_{B}W^\pm$ in $H^{k-\frac{1}{2}}$. These latter ``diagonalized" variables are easier to work with, so we opt to phrase our estimates in this language. 
\medskip

To prove our desired  energy estimates, our strategy will be to construct Alinhac style \emph{good variables} which solve the linearized system to leading order. Our choice of good variables is as follows.
\begin{enumerate}[label=\roman*)]
    \item The ``vorticities"  $\omega^\pm:=\nabla\times W^\pm$.  If $V$ is a divergence-free vector field on $\Omega$ and $\omega:=\nabla\times V$  then we have the relation
\begin{equation*}
\Delta V_i=-\partial_j\omega_{ij}.
\end{equation*}
Therefore, $V$ is controlled by $\omega$ and a suitable boundary value. Alternatively, one may view  $V$ as a solution to a div-curl  system,  
 with a boundary condition which we will discuss in detail below.
 
\item The Taylor coefficient $a$. This variable is responsible for describing the boundary regularity. Indeed, as we will see later, we have the proportionality 
\[
\mathcal N a \approx a \kappa 
\]
where $\kappa$ denotes the mean curvature of $\Gamma$. Thus, the $H^k$ norm of $\Gamma$ is  comparable at leading order to the $H^{k-1}(\Gamma)$ norm of $a$, as long as the Taylor sign condition is satisfied. 
\item The variables $\mathcal{G}^\pm:=D_t^\pm a- \nabla_n\Delta^{-1}D_t^\pm\Delta P$. At leading order these variables provide information about $W^\pm$ via the approximate
paradifferential relation
\begin{equation*}
\mathcal{G}^\pm\approx\mathcal{N}T_nW^\pm.
\end{equation*}
We will use this relation to obtain the desired boundary condition for the div-curl system for $W^\pm$. 
\item The variables $\nabla_B\mathcal{G}^\pm$ and $\nabla_B\omega^\pm$. By similar heuristics, these variables  will be used to control $\nabla_BW^\pm$ in $H^{k-\frac{1}{2}}(\Omega)$. 
\end{enumerate}
The above discussion  suggests that at  the principal level we have the correspondences
\[
W^\pm \leftrightarrow (\omega^\pm, \mathcal{G}^\pm),\qquad \nabla_BW^\pm\leftrightarrow(\nabla_B\omega^\pm,\nabla_B\mathcal{G}^\pm), \qquad  \Gamma \leftrightarrow a,
\]
which will serve as the basis for our coercivity property. To obtain the heuristic identification $W^\pm \leftrightarrow (\omega^\pm, \mathcal{G}^\pm)$ we will view $W^\pm$ as solving a div-curl system.
To construct such a system, a natural first idea would be to use the rotational/irrotational decomposition  $W^\pm = W^\pm_{rot}+ W^\pm_{ir}$, where 
\[
\left\{
\begin{aligned}
&\curl W^\pm_{rot} = \omega^\pm, \\ & \nabla \cdot W^\pm_{rot} = 0, \\ & W^\pm_{rot} \cdot n_\Gamma = 0 \ \ \text{on } \Gamma ,  
\end{aligned}
\right.
\qquad 
\left\{
\begin{aligned}
& \curl W^\pm_{ir} = 0, \\ & \nabla \cdot W^\pm_{ir} = 0, \\ & W^\pm_{ir} \cdot n_\Gamma = W^\pm \cdot n_\Gamma \ \ \text{on } \Gamma . 
\end{aligned}
\right.
\]
However, this system cannot be directly used to analyze the free boundary MHD equations, as  $n_\Gamma$ has only $H^{k-1}(\Gamma)$ regularity whereas $W^\pm\in H^{k-\frac12}(\Gamma)$.  Instead, we will  associate the $\mathcal{G}^\pm$ variables with $\nabla^\top W^\pm \cdot n_\Gamma$, the normal component of the tangential derivatives of $W^\pm$   on the boundary; one may think of this as a proxy for the paraproduct $\nabla^\top (T_{n_\Gamma} W^\pm)$.  Then, we will design a div-curl system for $W^\pm$ which will be suitable for obtaining the $W^\pm$ part of the coercivity bound.
\medskip

We  now  discuss the dynamical problem, which is what truly dictates the choice of good variables.  Note that by taking the curl of \eqref{pm equations} we obtain the equations
\begin{equation}\label{vorteq}
D_t^\mp\omega^\pm_{ij}=\partial_jW_k^\mp\partial_kW_i^\pm-\partial_iW_k^\mp\partial_kW_j^\pm=\mathcal{M}_2(\nabla W,\nabla W).
\end{equation}
Based on this transport structure, it is natural to include  $\|\omega^\pm\|_{H^{k-1}(\Omega)}^2$ as part of the energy.
\medskip

In order to identify the other components of the energy, we must make several key observations. The first is that $\|(a,\mathcal{G}^\pm)\|_{H^{k-1}(\Gamma)\times H^{k-\frac{3}{2}}(\Gamma)}^2$ is controlled by the linearized energy $E_{lin}(w^\pm,s)$, where 
\begin{equation*}
\begin{cases}
&w^\pm:=\nabla\mathcal{H}\mathcal{N}^{k-2}\mathcal{G}^\pm,
\\
&s\hspace{1mm}:=\mathcal{N}^{k-1}a,
\end{cases}
\end{equation*}
solve the linearized system to leading order. This suggests including $E_{lin}(w^\pm,s)$ as  part of the energy $E^k(W^\pm,\Gamma)$. The next step is to identify the portion of the energy corresponding to the variables $\nabla_BW^\pm$. 
\medskip

Using \eqref{Euler}, it is straightforward to verify  that $\nabla_B$ commutes with $D_t^\pm$. Therefore, like $\omega^\pm$, the variables $\nabla_B\omega^\pm$ are approximately transported by  $W^\mp$. Hence, it is natural to include the terms $\|\nabla_B\omega^\pm\|_{H^{k-\frac{3}{2}}(\Omega)}^2$ in the energy $E^k(W^\pm,\Gamma)$. The final key observation is that the variables
\begin{equation*}
\begin{cases}
&w_B^\pm:=-\nabla\mathcal{H}\mathcal{N}^{k-2}\nabla_Ba,
\\
&s_B^\pm\hspace{1mm}:=a^{-1}\mathcal{N}^{k-2}\nabla_B\mathcal{G}^\mp,
\end{cases}
\end{equation*}
also solve the linearized system modulo perturbative error terms.  This yields the last piece of the energy; namely, $E_{lin}(w_B^\pm,s_B^\pm)$.
\medskip

Motivated by the above, we define our energy $E^k(W^+,W^-,\Gamma)$ by taking $E^k(W^+,W^-,\Gamma):=E^k_++E^k_-$ with
\begin{equation}\label{ENERGYE1}
E^k_{\pm}:=1+\|W^\pm\|_{L^2(\Omega)}^2+\|\omega^\pm\|_{H^{k-1}(\Omega)}^2+\|\nabla_B\omega^\pm\|_{H^{k-\frac{3}{2}}(\Omega)}^2+E_{lin}(w^\pm,s)+E_{lin}(w_B^\pm,s_B^\pm).
\end{equation}
In the sequel, we will sometimes refer to the  components of \eqref{ENERGYE1} involving $\omega^\pm$ and $\nabla_B\omega^\pm$ as the rotational part of the energy and the components involving $E_{lin}$ as the irrotational part of the energy. We will denote the former by $E^k_{r}$ and the latter by  $E^k_{i}$. More explicitly, we have
\begin{equation*}
\begin{split}
&E^k_r:=\sum_{\alpha\in\{+,-\}}\|\omega^\alpha\|_{H^{k-1}(\Omega)}^2+\|\nabla_B\omega^\alpha\|_{H^{k-\frac{3}{2}}(\Omega)}^2
\end{split}
\end{equation*}
and 
\begin{equation*}
\begin{split}
E^k_i:=&\sum_{\alpha\in\{+,-\}}\left(\|a^{\frac{1}{2}}\mathcal{N}^{k-1}a\|_{L^2(\Gamma)}^2+\|\nabla\mathcal{H}\mathcal{N}^{k-2}\mathcal{G}^\alpha\|_{L^2(\Omega)}^2\right)
\\
+&\sum_{\alpha\in\{+,-\}}\left(\|\nabla\mathcal{H}\mathcal{N}^{k-2}\nabla_Ba\|_{L^2(\Omega)}^2+\|a^{-\frac{1}{2}}\mathcal{N}^{k-2}\nabla_B\mathcal{G}^\alpha\|_{L^2(\Gamma)}^2\right).
\end{split}
\end{equation*}
\begin{remark}\label{remarkBdefcorrection} It is important that, a priori, the definition of the energy functional does not depend on the dynamics of the problem. 
Therefore, we need a way to interpret the expression \eqref{ENERGYE1} when $W^\pm$ and $\Gamma$ do not solve the free boundary MHD equations. To make this precise, we consider a bounded connected domain $\Omega$ with $(W^\pm,\Gamma)\in\mathbf{H}^k$. We define the pressure $P$ through the Laplace equation
\begin{equation*}
\Delta P=-\partial_iW^+_j\partial_jW^-_i 
\end{equation*}
and the boundary condition $P_{|\Gamma}=0$. We then define the Taylor term $a$ as
\begin{equation*}\label{adef}
a:=-n_{\Gamma}\cdot \nabla P_{|\Gamma}.
\end{equation*}
To define $D_t^\pm P$, $D_t^\pm a$ and $\mathcal{G}^\pm$, we begin by observing that for the dynamic problem we have $D_t^\pm P:=\Delta^{-1}F^\pm$ where
\begin{equation*}\label{Dtpdefold}
\begin{split}
F^\pm &:=\Delta W^\pm\cdot\nabla P+2\nabla W^\pm\cdot\nabla^2 P+D_t^\pm\Delta P
\end{split}
\end{equation*}
and where we define
\begin{equation}\label{correctiontermdef}
\begin{split}
D_t^\pm\Delta P&:=\partial_iW_k^\pm\partial_kW_j^+\partial_jW_i^-+\partial_iW_k^\pm\partial_kW_j^-\partial_jW_i^+-\partial_iD_t^\pm W_j^+\partial_jW_i^--\partial_iW_j^+\partial_jD_t^\pm W_i^-
\end{split}
\end{equation}
and
\begin{equation}\label{materialofW}
D_t^\pm W^\mp:=-\nabla P,\hspace{5mm} D_t^\pm W^\pm:=-\nabla P\pm 2\nabla_B W^\pm.
\end{equation}
These expressions are equivalent to the expansions of $D_t^\pm \Delta P$, $D_t^\pm W^\pm$ and $D_t^\mp W^\pm$ one would obtain for the dynamic problem. Using that $W^\pm$ is divergence-free, it is easy to see that we can write
\begin{equation*}
\Delta W^\pm\cdot\nabla P+2\nabla W^\pm\cdot\nabla^2 P=\nabla\cdot\mathcal{M}_2(\nabla W^\pm,\nabla P)
\end{equation*}
where $\mathcal{M}_2$ is an $\mathbb{R}^d$-valued bilinear expression. Therefore, we have
\begin{equation}\label{Dtpdef}
F^\pm=\nabla\cdot\mathcal{M}_2(\nabla W^\pm,\nabla P)+D_t^\pm\Delta P.
\end{equation}
We then simply declare $D_t^\pm P:=\Delta^{-1}F^\pm$ with $F^\pm$ as above. Having settled on a definition for  $D_t^\pm P$, we may   define $D_t^\pm \nabla P$ by
\begin{equation*}\label{DDpdef}
D_t^\pm \nabla P:=-\nabla W^\pm\cdot\nabla P+\nabla D_t^\pm P
\end{equation*}
and then $D_t^\pm a$ by
\begin{equation*}\label{Dadef}
D_t^\pm a:=-n_{\Gamma}\cdot D_t^\pm \nabla P_{|\Gamma}.
\end{equation*}
For our low regularity energy estimates, we will need to correct the variables $D_t^\pm a$ as follows. We first define the auxiliary variables
\begin{equation*}\label{Bdefcorrection}
\mathcal{B}^\pm:=D_t^\pm P-\Delta^{-1}D_t^\pm\Delta P=[D_t^\pm,\Delta^{-1}]\Delta P.
\end{equation*}
We then note importantly that $\mathcal{B}^\pm=\Delta^{-1}H^\pm$ where $H^\pm$ satisfies the simple identity
\begin{equation*}
H^\pm:=\nabla\cdot\mathcal{M}_2(\nabla W^\pm,\nabla P).
\end{equation*}
That is, $H^\pm$ agrees with $F^\pm$ up to correcting by $D_t^\pm \Delta P$ (which one can think of as a type of normal form correction).
We then define the preliminary corrected variables $\mathcal{A}^\pm$ by
\begin{equation*}\label{Adefcorrection}
\mathcal{A}^\pm:=D_t^\pm\nabla P-\nabla\Delta^{-1}D_t^\pm\Delta P,
\end{equation*}
so that $\mathcal{A}^\pm=\nabla\mathcal{B}^\pm-\nabla W^\pm\cdot \nabla P$. The good variables $\mathcal{G}^\pm$ may then be written as  
\begin{equation*}\label{Gcorrection}
\begin{split}
\mathcal{G^\pm}&:=-n_\Gamma\cdot\mathcal{A}^\pm=D_t^\pm a+\nabla_n\Delta^{-1}D_t^\pm\Delta P,
\end{split}
\end{equation*}
or more concretely,
\begin{equation}\label{Gdeftrue}
\begin{split}
\mathcal{G^\pm}&=\nabla_nW^\pm\cdot\nabla P-\nabla_n\Delta^{-1}(\Delta W^\pm\cdot\nabla P+2\nabla W^\pm\cdot\nabla^2 P)
\\
&=\nabla_nW^\pm\cdot\nabla P-\nabla_n\Delta^{-1}\nabla\cdot\mathcal{M}_2(\nabla W^\pm,\nabla P).
\end{split}
\end{equation}
With these interpretations, the energy functional \eqref{ENERGYE1} is well-defined, regardless of whether the state $(W^\pm,\Gamma)$ evolves in time.
\end{remark}
\subsection{A brief heuristic discussion on the normal form correction} One might ask why in the definition of $\mathcal{G}^\pm$ (and $\nabla_B\mathcal{G}^\pm)$ we perform the normal form correction to the variables $D_t^\pm a$ instead of working directly with $D_t^\pm a$ (or $\nabla_BD_t^\pm a)$. The basic reason for this is two-fold. First, $\mathcal{G}^\pm$ and $\nabla_B\mathcal{G}^\pm$ will better capture the leading part of the ``irrotational components" of $W^\pm$ and $\nabla_B W^\pm$, respectively. This leads to a simpler and more natural coercive energy functional for propagating the regularity of the dynamic quantities $W^\pm$ and $\Gamma$. The second reason is the crucial one, in that one cannot treat the error term $D_t^\pm\Delta P$ perturbatively in the energy estimates in the low regularity regime (although, it is lower order in the high regularity regime). To very briefly describe the issue, it turns out that by carrying out the analogue of the energy propagation estimates in \Cref{energyprop} below but with the variables $D_t^\pm a$, one would eventually (after expanding out all of the relevant terms) have to estimate  $\nabla_B\nabla_n\Delta^{-1}\mathcal{M}_2(\nabla\nabla_B W,\nabla\nabla_B W)=:\nabla_B f$ in $H^{k-2}(\Gamma)$. Since we only have $\nabla_B^2 W\in H^{k-\frac{3}{2}}(\Omega)$ rather than $H^{k-1}(\Omega)$, we would be forced to place $\nabla\nabla_B W$ in $L^{\infty}(\Omega)$. This would lead to the regularity restriction $s>\frac{d}{2}+\frac{3}{2}$,  since we would need to ensure that $H^{s-\frac{1}{2}}$ embeds into $C^1$. The point of using the good variables $\mathcal{G}^\pm$ is that they eliminate this term from the corresponding wave-type equation for $a$. The price to pay, however, is that in the energy propagation there will be an additional error term essentially of the form (see \Cref{RB2} for more details) 
\begin{equation*}
\langle\nabla\mathcal{H}\mathcal{N}^{k-2}\nabla_Ba, \nabla\mathcal{H}\mathcal{N}^{k-2}\nabla_B(\mathcal{G}^\pm-D_t^\pm a)\rangle_{L^2(\Omega)}.
\end{equation*}
At first glance, this expression appears to lose derivatives.  However, by carefully commuting the vector field $\nabla_B$ with $\nabla\mathcal{H}\mathcal{N}^{k-2}$, we can essentially integrate by parts twice to obtain 
\begin{equation*}
\langle\nabla\mathcal{H}\mathcal{N}^{k-2}\nabla_Ba, \nabla\mathcal{H}\mathcal{N}^{k-2}\nabla_B(\mathcal{G}^\pm-D_t^\pm a)\rangle_{L^2(\Omega)}\approx \langle \mathcal{N}^{k-2}\nabla_B^2a, \mathcal{N}^{k-1}(\mathcal{G}^\pm-D_t^\pm a)\rangle_{L^2(\Gamma)}.
\end{equation*}
At this point, we will be able to close our estimates by relying on the improved regularity of $a$ in the direction of $\nabla_B$, as alluded to in the introduction. Namely, in sharp contrast to not having $\nabla_B^2 W^\pm\in H^{k-1}(\Omega)$, we do have $\nabla_B^2a\in H^{k-2}(\Gamma)$. See \Cref{partitionofBa} for more details.  
\subsection{Coercivity of the energy functional}
We begin by establishing the coercivity part of \Cref{Energy est. thm}; namely, the relation
\begin{equation*}
E^k\approx_{M_{s-\frac{1}{2}}} M_k^2.
\end{equation*}
The proof will be split into several components. 
\subsubsection{\texorpdfstring{$L^{\infty}$}{} estimates for the pressure} Here we will establish some $L^{\infty}$ based estimates for $P$ in terms of the control parameters $\tilde{A}:=\|(W^+,W^-)\|_{C^{\frac{1}{2}+\epsilon}(\Omega)}+\|\Gamma\|_{C^{1,\epsilon}}$ and $\tilde{A}^{\frac{1}{2}}:=\|(W^+,W^-)\|_{W^{1,\infty}(\Omega)}+\|\Gamma\|_{C^{1,\frac{1}{2}}}$. Such estimates will, in particular, complete the proof of the difference bounds in \Cref{DF}. Moreover, they will imply pointwise estimates for the pressure in terms of the stronger control parameters $M_s$ and $M_{s-\frac{1}{2}}$, which will suffice for our energy estimates. 
\begin{lemma} \label{Linfest}
Given the assumptions of \Cref{Energy est. thm}, the following pointwise estimates for  $P$ hold.
\begin{enumerate}
\item ($C^{1,\epsilon}$ estimate for $P$).
\begin{equation*} 
\|P\|_{C^{1,\epsilon}(\Omega)}\lesssim_{\tilde{A}} 1,\hspace{5mm}\text{and}\hspace{5mm}\|P\|_{C^{1,\epsilon}(\Omega)}\lesssim_{M_{s-\frac{1}{2}}} 1.
\end{equation*}
\item ($C^{1,\frac{1}{2}}$ estimate for $P$). 
\begin{equation*}
\|P\|_{C^{1,\frac{1}{2}}(\Omega)}\lesssim_{\tilde{A}} \tilde{A}^{\frac{1}{2}},\hspace{5mm}\text{and}\hspace{5mm}\|P\|_{C^{1,\frac{1}{2}}(\Omega)}\lesssim_{M_{s-\frac{1}{2}}} M_s.
\end{equation*}
\end{enumerate}
\end{lemma}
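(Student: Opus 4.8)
The plan is to derive everything from the elliptic equation $\Delta P = \mathcal{M}_2(\nabla W^+, \nabla W^-)$ with $P|_\Gamma = 0$, by splitting the right-hand side into a divergence form piece plus a lower-order piece and invoking interior-plus-boundary elliptic regularity uniformly over the collar. First I would write $\Delta P = \nabla \cdot F$ with $F := \mathcal{M}_2(\nabla W^\pm, W^\mp)$ — this is legitimate because $W^\mp$ is divergence free, so $\partial_i(\partial_j W_k^\pm) W^\mp_k$-type terms can be rearranged — but actually the cleaner route, given the regularity at hand, is to keep $\Delta P = \mathcal{M}_2(\nabla W^+, \nabla W^-)$ and note that $\nabla W^\pm \in C^{-\frac12+\epsilon}$ when $W^\pm \in C^{\frac12+\epsilon}$, so the product lies in $C^{-1+\epsilon}$ (using that the sum of the regularities $(-\frac12+\epsilon) + (-\frac12+\epsilon) > -1$, so the paraproduct estimates apply). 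Then Schauder-type elliptic estimates on a domain with $C^{1,\epsilon}$ boundary give $P \in C^{1,\epsilon}(\Omega)$ with a bound depending only on $\|\Delta P\|_{C^{-1+\epsilon}}$, $\|\Gamma\|_{C^{1,\epsilon}}$, and $|\Omega|$; this yields the first inequality in (i). The second inequality in (i) then follows because Sobolev embedding gives $W^\pm \in H^s(\Omega) \hookrightarrow C^{\frac12+\epsilon}(\Omega)$ and $\Gamma \in H^s \hookrightarrow C^{1,\epsilon}$ for $s > \frac{d}{2}+1$ with $\epsilon$ small, so $\tilde A \lesssim_{M_{s-\frac12}} 1$ — here one must be a little careful that $H^{s-\frac12}$ already embeds into $C^{\frac12+\epsilon}$ for the $W^\pm$ bound, which it does since $s - \frac12 > \frac d2 + \frac12$.

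For part (ii) the argument is the same but one half-derivative higher: when $W^\pm \in W^{1,\infty} \subset C^{0,1}$, one has $\nabla W^\pm \in L^\infty$, hence $\Delta P = \mathcal{M}_2(\nabla W^+, \nabla W^-) \in L^\infty(\Omega)$, and elliptic regularity on a $C^{1,\frac12}$ domain gives $P \in C^{1,\frac12}(\Omega)$ with norm controlled by $\|\Delta P\|_{L^\infty} + \|\Gamma\|_{C^{1,\frac12}} \lesssim_{\tilde A} \tilde A^{\frac12}$ (the product $\|\nabla W^+\|_{L^\infty}\|\nabla W^-\|_{L^\infty}$ is at most $(\tilde A^{\frac12})^2$, but one factor is absorbed into the $\lesssim_{\tilde A}$ since $\tilde A^{\frac12} \le \tilde A^2$-type crude bounds suffice, or more precisely one keeps it linear by noting $\|\nabla W\|_{L^\infty} \le \tilde A^{\frac12}$ and the other factor is $\lesssim \tilde A$). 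The second inequality in (ii) follows by the same embedding bookkeeping: $\|\Gamma\|_{C^{1,\frac12}} \lesssim M_s$ and $\|(W^+,W^-)\|_{W^{1,\infty}} \lesssim M_s$ for $s > \frac d2 + 1$, while the controlling constant only sees $\tilde A \lesssim_{M_{s-\frac12}} 1$.

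The main technical point — and the one place I would be careful rather than routine — is that the elliptic estimates must be uniform over the collar neighborhood $\Lambda_*$: the Schauder/$C^{1,\alpha}$ and $L^\infty$-to-$C^{1,\frac12}$ bounds for the Dirichlet problem on $\Omega$ need constants depending only on $\|\Gamma\|_{C^{1,\alpha}}$ (equivalently on $\delta$ and the fixed reference surface $\Gamma_*$) and not on any higher regularity of $\Gamma$. This is exactly the kind of statement packaged in the balanced elliptic estimates of \Cref{BEE}, so I would cite those; the content is that since all $\Gamma \in \Lambda_*$ are uniform graphs over $\Gamma_*$ in $C^{1,\epsilon}$, one can flatten the boundary by a change of variables with uniformly bounded $C^{1,\epsilon}$ (resp. $C^{1,\frac12}$) norm and apply the standard flat-space Schauder estimate, absorbing the resulting variable-coefficient perturbation. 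Given that machinery, the proof is short; without it, establishing the uniformity would be the real work, but it is already available in \Cref{BEE}.
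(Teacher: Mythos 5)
Your overall strategy — elliptic regularity for the Dirichlet problem uniformly over the collar, combined with paraproducts and Sobolev embeddings — is the right one, and the deductions of the second inequalities from the first in each part via $\tilde A\lesssim_{M_{s-\frac12}}1$ and $\tilde A^{\frac12}\lesssim M_s$ are fine. However, both first inequalities contain genuine gaps.

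\textbf{Part (ii).} Your proposed bound on the source puts $\Delta P=\partial_iW^+_j\partial_jW^-_i$ in $L^\infty$ with norm $\|\nabla W^+\|_{L^\infty}\|\nabla W^-\|_{L^\infty}\le(\tilde A^{\frac12})^2$. This is \emph{quadratic} in $\tilde A^{\frac12}$, but the target is $\lesssim_{\tilde A}\tilde A^{\frac12}$, i.e.\ linear in $\tilde A^{\frac12}$ with a constant depending only on $\tilde A$. Your two attempts to fix this both fail: ``$\tilde A^{\frac12}\le\tilde A^2$-type crude bounds'' point in the wrong direction (you need $\tilde A^{\frac12}\gtrsim(\tilde A^{\frac12})^2$, which is false), and ``the other factor is $\lesssim\tilde A$'' is simply incorrect since $\|\nabla W\|_{L^\infty}$ is controlled by $\tilde A^{\frac12}$, not by $\tilde A$. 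The linearity is recovered only by a paradifferential decomposition of $\partial_iW^+_j\partial_jW^-_i$ exploiting the divergence-free structure: writing, e.g., the low-high piece as $\partial_j\bigl[(\partial_iP_{<k}W^+_j)P_kW^-_i\bigr]$ (the commutator term $\partial_j\partial_iP_{<k}W^+_j\cdot P_kW^-_i$ vanishes because $\nabla\cdot W^+=0$), one gets $\Delta P=\nabla\cdot g_1$ with
\[
\|g_1\|_{C^{\frac12}}\lesssim \sum_k 2^{k/2}\,\underbrace{\|\nabla P_{<k}W^+\|_{L^\infty}}_{\lesssim\,\tilde A^{\frac12}}\;\underbrace{\|P_kW^-\|_{L^\infty}}_{\lesssim\, 2^{-k(\frac12+\epsilon)}\tilde A}\lesssim_{\tilde A}\tilde A^{\frac12},
\]
which is then fed into the $C^{1,\alpha}$ estimate of \Cref{Gilbarg}. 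This is exactly the ``paradifferential bookkeeping'' of \Cref{SSRO} the paper invokes; your argument skips it, and the naive $L^\infty$ bound does not close.

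\textbf{Part (i).} Two issues. First, the paraproduct criterion is misstated: for $\Pi(f,g)$ to be well-defined one needs the sum of regularities to be strictly positive, not merely $>-1$; with $\nabla W^\pm\in C^{-\frac12+\epsilon}$ the sum is $-1+2\epsilon<0$, so the generic paraproduct calculus does not define the product. What rescues the expression is precisely the divergence-free structure, which lets one write $\partial_iW^+_j\partial_jW^-_i=\partial_i\partial_j(W^+_jW^-_i)$ and hence always cast the low-high, high-low and high-high pieces in divergence form $\nabla\cdot g_1$ without ever confronting an undefined product. Second, the paper's elliptic estimate \Cref{Gilbarg} is stated for sources of the form $\Delta P=\nabla\cdot g_1+g_2$ with $g_1\in C^\epsilon$, $g_2\in L^\infty$, not for $\|\Delta P\|_{C^{-1+\epsilon}(\Omega)}$; a ``$C^{-1+\epsilon}\to C^{1,\epsilon}$'' Schauder estimate on a $C^{1,\epsilon}$ domain is not what \Cref{BEE} provides, and negative Hölder spaces on a bounded domain require additional care. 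So again one must actually produce the decomposition $\nabla\cdot g_1$ via paraproducts. Once that is done, $\|g_1\|_{C^\epsilon}\lesssim_{\tilde A}1$ follows by the same type of dyadic accounting as above, and \Cref{Gilbarg} yields the claim.

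The general shape of your argument matches the paper's sketch (elliptic regularity plus paraproducts), but the specific paradifferential decomposition that uses $\nabla\cdot W^\pm=0$ is the key missing step, and without it the bound in part (ii) is quantitatively wrong.
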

\begin{proof}
Using the paradifferential bookkeeping devices developed in \Cref{SSRO}, the proof is entirely similar to \cite[Lemmas 7.5 and 7.9]{Euler}, so we leave the details to the reader.
\end{proof}
\subsubsection{Preliminary \texorpdfstring{$H^s$}{}  estimates}
We next establish some important preliminary $L^2$-based estimates for  the quantities that will  appear frequently in our analysis later on. Let us define for the rest of this section 
\begin{equation*}
\Lambda_{\sigma}:=\|(W^\pm,\Gamma)\|_{\mathbf{H}^{\sigma}}+\|P\|_{H^{\sigma+\frac{1}{2}}(\Omega)}+\|\nabla_B P\|_{H^{\sigma}(\Omega)}+\|\mathcal{A}^\pm\|_{H^{\sigma-1}(\Omega)}+\|\nabla_B\mathcal{A}^\pm\|_{H^{\sigma-\frac{3}{2}}(\Omega)},
\end{equation*}
where  $\mathcal{A}^\pm$ is defined as in \Cref{remarkBdefcorrection}. To ensure that the implicit constants in our coercivity estimates depend only on $M_{s-\frac{1}{2}}$, we will need the following lemma which will allow us to estimate $\Lambda_{s-\frac{1}{2}}$. 
\begin{lemma}\label{coerciveprelim}For $s>\frac{d}{2}+1$ we have
\begin{equation*}\label{coerciveprelim1}
\Lambda_{s-\frac{1}{2}}+\|\nabla_Bn_\Gamma\|_{H^{s-2}(\Gamma)}+\|\mathcal{B}^\pm\|_{H^{s-\frac{1}{2}}(\Omega)}+\|\nabla_B\mathcal{B}^\pm\|_{H^{s-1}(\Omega)}\lesssim_{M_{s-\frac{1}{2}}}1.
\end{equation*}
\end{lemma}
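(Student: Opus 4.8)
The plan is to control each quantity entering $\Lambda_{s-\frac12}$, together with the three additional terms $\|\nabla_Bn_\Gamma\|_{H^{s-2}(\Gamma)}$, $\|\mathcal{B}^\pm\|_{H^{s-\frac12}(\Omega)}$ and $\|\nabla_B\mathcal{B}^\pm\|_{H^{s-1}(\Omega)}$, by repeated use of the balanced elliptic estimates of \Cref{BEE} and the accompanying product, composition and trace estimates, at all times keeping the implicit constants at the $M_{s-\frac12}$ level. The first summand of $\Lambda_{s-\frac12}$ requires nothing: passing between $(v,B)$ and $W^\pm=v\pm B$ is a bounded change of variables, and unravelling condition \ref{iiii} of \Cref{D state space} at regularity $s-\frac12$ gives $\|(W^\pm,\Gamma)\|_{\mathbf{H}^{s-\frac12}}\lesssim M_{s-\frac12}$. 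It then remains to estimate $P$, $\nabla_BP$, $\mathcal{B}^\pm$, $\nabla_B\mathcal{B}^\pm$, $\mathcal{A}^\pm$, $\nabla_B\mathcal{A}^\pm$ and $\nabla_Bn_\Gamma$, in roughly this order, since each estimate feeds the next.

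First I would treat the pressure. Here $P$ solves the Dirichlet problem \eqref{Euler-pressure}, whose right-hand side can be written in divergence form as $\nabla\cdot\mathcal{M}_2(W^\pm,\nabla W^\mp)$ using $\nabla\cdot W^\pm=0$. Feeding this, together with the pointwise bound $\|P\|_{C^{1,\epsilon}(\Omega)}\lesssim_{M_{s-\frac12}}1$ from \Cref{Linfest}, into the balanced elliptic estimates yields $\|P\|_{H^{s}(\Omega)}\lesssim_{M_{s-\frac12}}1$. The essential point — and the reason the constant does not see $M_s$ — is that in the balanced estimates the top-order contribution of the (a priori only $H^s$) free surface $\Gamma$ enters only multiplied by low-regularity norms of $P$; morally this is the half-derivative gain coming from $P$ vanishing on $\Gamma$, reflected in the relation $\mathcal{N}a\approx a\kappa$. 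For $\nabla_BP$ I would use that, since $P|_\Gamma=0$ forces $\nabla P|_\Gamma$ to be normal and $B\cdot n_\Gamma=0$, one has $\nabla_BP|_\Gamma=0$; thus $\nabla_BP$ solves a homogeneous Dirichlet problem with source $\nabla_B\bigl(\nabla\cdot\mathcal{M}_2(W^\pm,\nabla W^\mp)\bigr)+[\Delta,\nabla_B]P$, which by Leibniz and the bound on $P$, together with $\nabla_BW^\pm\in H^{s-1}(\Omega)$ and $B\in H^{s-\frac12}(\Omega)$, lies in $H^{s-\frac52}(\Omega)$ with norm $\lesssim_{M_{s-\frac12}}1$, so the balanced estimate gives $\|\nabla_BP\|_{H^{s-\frac12}(\Omega)}\lesssim_{M_{s-\frac12}}1$.

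Next I would exploit the normal-form-corrected structure from \Cref{remarkBdefcorrection}: $\mathcal{B}^\pm=\Delta^{-1}H^\pm$ with $H^\pm=\nabla\cdot\mathcal{M}_2(\nabla W^\pm,\nabla P)$ (the subtraction of $D_t^\pm\Delta P$ from \eqref{Dtpdef} being precisely what leaves $H^\pm$ in divergence form), $\mathcal{B}^\pm|_\Gamma=0$, and $\mathcal{A}^\pm=\nabla\mathcal{B}^\pm-\nabla W^\pm\cdot\nabla P$. Since $\nabla P\in H^{s-1}(\Omega)\hookrightarrow L^\infty$ and $\nabla W^\pm\in H^{s-\frac32}(\Omega)$, the product $\mathcal{M}_2(\nabla W^\pm,\nabla P)$ lies in $H^{s-\frac32}(\Omega)$, so $H^\pm\in H^{s-\frac52}(\Omega)$ and the zero-data elliptic estimate gives $\|\mathcal{B}^\pm\|_{H^{s-\frac12}(\Omega)}\lesssim_{M_{s-\frac12}}1$, whence $\|\mathcal{A}^\pm\|_{H^{s-\frac32}(\Omega)}\lesssim_{M_{s-\frac12}}1$ by the algebraic identity and a product estimate. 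For the $\nabla_B$-versions, $\mathcal{B}^\pm|_\Gamma=0$ and tangency of $B$ again force $\nabla_B\mathcal{B}^\pm|_\Gamma=0$, so $\nabla_B\mathcal{B}^\pm$ solves a homogeneous Dirichlet problem with source $\nabla_BH^\pm+[\Delta,\nabla_B]\mathcal{B}^\pm$; commuting $\nabla_B$ past $\nabla\cdot$ and expanding $\nabla_B\mathcal{M}_2(\nabla W^\pm,\nabla P)$ by Leibniz, every term lies in $H^{s-3}(\Omega)$ by virtue of $\nabla_BW^\pm\in H^{s-1}(\Omega)$, $\nabla_BP\in H^{s-\frac12}(\Omega)$ and $\mathcal{B}^\pm\in H^{s-\frac12}(\Omega)$, whence $\|\nabla_B\mathcal{B}^\pm\|_{H^{s-1}(\Omega)}\lesssim_{M_{s-\frac12}}1$ and then $\|\nabla_B\mathcal{A}^\pm\|_{H^{s-2}(\Omega)}\lesssim_{M_{s-\frac12}}1$ by differentiating the identity for $\mathcal{A}^\pm$. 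Finally, the tangency condition $B\cdot n_\Gamma=0$ yields the identity $\nabla_Bn_\Gamma=-\nabla^\top B\cdot n_\Gamma$ (see \Cref{commutatorremark}), and the balanced product and trace estimates applied to the right-hand side — placing $B$ at the $\mathbf{H}^{s-\frac12}$ level and $n_\Gamma$ at its $C^{1,\epsilon}$/low-Sobolev collar regularity — give $\|\nabla_Bn_\Gamma\|_{H^{s-2}(\Gamma)}\lesssim_{M_{s-\frac12}}1$.

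The hard part will be exactly this regularity bookkeeping, making everything close at the $M_{s-\frac12}$ level rather than $M_s$: at this scale one only has $\nabla_BW^\pm\in H^{s-1}(\Omega)$ — half a derivative short of $H^{s-\frac12}$ — so naive commutation of $\nabla_B$ with the elliptic operators loses too much. The three devices that make it work are (i) the balanced elliptic estimates of \Cref{BEE}, which prevent the top-order part of $\Gamma$ from feeding into the pressure and related quantities at the wrong level (cf.\ \Cref{Linfest}); (ii) the normal-form correction built into $H^\pm$ and $\mathcal{A}^\pm$ (the removal of $D_t^\pm\Delta P$), which keeps the relevant source terms in divergence form and so amenable to the smoothing of $\Delta^{-1}$; and (iii) the boundary identities $\nabla_BP|_\Gamma=\nabla_B\mathcal{B}^\pm|_\Gamma=0$, which follow from $P|_\Gamma=\mathcal{B}^\pm|_\Gamma=0$ together with $B\cdot n_\Gamma=0$ and let the $\nabla_B$-differentiated quantities be recovered from homogeneous Dirichlet problems with no boundary contribution.
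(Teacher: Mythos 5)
Your proposal follows essentially the same route as the paper's proof: the same balanced elliptic estimates (\Cref{direst}, \Cref{Linfest}), the same use of the tangency condition to get the homogeneous boundary values $\nabla_BP|_\Gamma=\nabla_B\mathcal{B}^\pm|_\Gamma=0$, the same algebraic identities $\mathcal{A}^\pm=\nabla\mathcal{B}^\pm-\nabla W^\pm\cdot\nabla P$ and $\nabla_Bn_\Gamma=-\nabla^\top B\cdot n_\Gamma$, and the same order of estimates.

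There is one concrete gap, in the low-regularity/low-dimension corner. Several of your intermediate bounds live in Sobolev spaces of negative index once $s$ is close to the endpoint $\frac d2+1$: for $d=2$ one can have $s\in(2,\tfrac52)$ so that $s-\tfrac52<0$, and for $d=2,3$ one has $s<3$ so that $s-3<0$. You assert, for instance, that $\nabla_B(\partial_iW^+_j\partial_jW^-_i)+[\Delta,\nabla_B]P$ lies in $H^{s-\frac52}(\Omega)$ "by Leibniz," and that every term of $\Delta\nabla_B\mathcal{B}^\pm$ "lies in $H^{s-3}(\Omega)$." But the multilinear estimate \Cref{Multilinearest} is only stated for nonnegative target regularity, so neither claim follows directly from it when the index is negative. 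The paper handles these two spots differently: for $\nabla_BP$ it writes $\nabla_B(\partial_iW^+_j\partial_jW^-_i)=\mathcal{M}_2(\nabla\nabla_BW,\nabla W)+\mathcal{M}_3(\nabla B,\nabla W,\nabla W)$ and bounds it in $H^{s-\frac52}(\Omega)$ via the embedding $L^p(\Omega)\hookrightarrow H^{s-\frac52}(\Omega)$ with $p=\frac{2d}{d-2s+5}$, together with H\"older and Sobolev; for $\nabla_B\Delta\mathcal{B}^\pm$ (when $2<s<3$) it uses the identity $[\partial_i,\nabla_B]f=\partial_j(\partial_iB_jf)$ to keep the whole source in the form $\nabla\cdot(H^{s-2}\text{ function})$, so that the inner estimate is at nonnegative regularity, and then relies on the $H^{-1}\to H^1_0$ mapping of $\Delta^{-1}$ and interpolation. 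You gesture at the divergence-form trick for $\nabla_B\mathcal{B}^\pm$ but do not carry it out, and you do not address the negative-index issue for $\nabla_BP$ at all. A complete argument needs to fill these two corners; neither is conceptually hard, but neither follows from the machinery you invoked as stated.

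One smaller remark: you bound $\mathcal{B}^\pm$ by claiming $H^\pm\in H^{s-\frac52}(\Omega)$ and applying "the zero-data elliptic estimate"; for $s<\frac52$ this again goes through a negative-index space. The paper instead interpolates between the $H^{-1}\to H^1_0$ bound for $\Delta^{-1}$ (applied to the divergence-form source) and the $C^{\frac12}$ embedding of $H^{s-\frac12-\epsilon}$, which sidesteps the issue. Your argument would work if restated along those lines.
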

\begin{proof}
To control $\Lambda_{s-\frac{1}{2}}$, we  must estimate the latter four terms in its definition. 
\medskip

\noindent
\textbf{Control of $P$}. First, we have by \Cref{direst},
\begin{equation}\label{Pestlow}
\|P\|_{H^s(\Omega)}\lesssim_{M_{s-\frac{1}{2}}}\|\partial_iW_j^+\partial_jW_i^-\|_{H^{s-2}(\Omega)}+\|P\|_{C^1(\Omega)}.
\end{equation}
By \Cref{Linfest}, \Cref{Multilinearest} and the embedding $H^{s-\frac{1}{2}}(\Omega)\subset C^{\frac{1}{2}+\epsilon}(\Omega)$ we see that  $\|P\|_{H^s(\Omega)}\lesssim_{M_{s-\frac{1}{2}}}1$.
\medskip

\noindent
\textbf{Control of $\nabla_B n_\Gamma$}. We recall from \Cref{Movingsurfid} and \Cref{commutatorremark} that $\nabla_B n_\Gamma=-((\nabla B)^*(n_\Gamma))^{\top}$. Hence, from \Cref{boundaryest} and \Cref{baltrace} we have
\begin{equation*}
\|\nabla_B n_\Gamma\|_{H^{s-2}(\Gamma)}\lesssim_{M_{s-\frac{1}{2}}}\|\nabla B\|_{H^{s-\frac{3}{2}}(\Omega)}+\sup_{j>0}2^{-\frac{j}{2}}\|\nabla \Phi_{\leq j} B\|_{L^{\infty}(\Omega)}+\sup_{j>0}2^{j(s-1-2\epsilon)}\|\nabla\Phi_{\geq j} B\|_{H^{\frac{1}{2}+\epsilon}(\Omega)}\lesssim_{M_{s-\frac{1}{2}}}1.
\end{equation*}
Above, $\Phi_{\leq j}$ and $\Phi_{\geq j}$ denote the regularizing kernels outlined in \Cref{SSRO}. 
\medskip 

\noindent
\textbf{Control of $\nabla_B P$}. Since $\nabla_B P_{|\Gamma}=0$, we may again apply \Cref{direst} (or \Cref{direstlow}, depending on whether $s-\frac{5}{2}\geq 0$ or not) to estimate
\begin{equation}\label{BPestlow}
\|\nabla_B P\|_{H^{s-\frac{1}{2}}(\Omega)}\lesssim_{M_{s-\frac{1}{2}}}\|\Delta(\nabla_B P)\|_{H^{s-\frac{5}{2}}(\Omega)}+\|\nabla_BP\|_{C^{\frac{1}{2}}(\Omega)}.
\end{equation}
By Sobolev embedding, we have  $\|\nabla_B P\|_{C^{\frac{1}{2}}(\Omega)}\lesssim_{M_{s-\frac{1}{2}}}\|\nabla_B P\|_{H^{s-\frac{1}{2}-\epsilon}(\Omega)}$. Therefore, by interpolating and using the above bound for $P$, we conclude that
\begin{equation*}
\begin{split}
\|\nabla_B P\|_{H^{s-\frac{1}{2}}(\Omega)}&\lesssim_{M_{s-\frac{1}{2}}}\|\Delta(\nabla_B P)\|_{H^{s-\frac{5}{2}}(\Omega)}\lesssim \|[\Delta,\nabla_B]P\|_{H^{s-\frac{5}{2}}(\Omega)}+\|\nabla_B(\partial_iW_j^+\partial_jW_i^-)\|_{H^{s-\frac{5}{2}}(\Omega)}.
\end{split}
\end{equation*}
As $B$ is divergence-free, we may write $[\Delta,\nabla_B]P$ in the form $\nabla\cdot\mathcal{M}_2(\nabla B,\nabla P)$. Hence,
\begin{equation*}
\|\nabla_B P\|_{H^{s-\frac{1}{2}}(\Omega)}\lesssim_{M_{s-\frac{1}{2}}}\|\mathcal{M}_2(\nabla B\cdot\nabla P)\|_{H^{s-\frac{3}{2}}(\Omega)}+\|\nabla_B(\partial_iW_j^+\partial_jW_i^-)\|_{H^{s-\frac{5}{2}}(\Omega)}.
\end{equation*}
The first term is easily estimated using \Cref{Multilinearest} and the estimates for $P$ above. The latter term can be similarly estimated using \Cref{Multilinearest} if $s\geq \frac{5}{2}$. Otherwise, we observe that for $p=\frac{2d}{d-2s+5}$  we have the embedding $L^p(\Omega)\subset H^{s-\frac{5}{2}}(\Omega)$. Moreover, we may write $\nabla_B(\partial_i W_j^+\partial_jW_i^-)=\mathcal{M}_2(\nabla\nabla_B W,\nabla W)+\mathcal{M}_3(\nabla B, \nabla W,\nabla W)$. From this, H\"older's inequality and Sobolev embeddings, we see that
\begin{equation*}
\begin{split}
\|\nabla_B(\partial_i W_j^+\partial_jW_i^-)\|_{H^{s-\frac{5}{2}}(\Omega)}\lesssim_{M_{s-\frac{1}{2}}}& \|\nabla B\|_{L^{\frac{2d}{d-2s+3}}(\Omega)}\|\nabla W\|_{L^{2d}(\Omega)}\|\nabla W\|_{L^{2d}(\Omega)}
\\
+&\|\nabla\nabla_B W\|_{L^{\frac{2d}{d-2s+4}}(\Omega)}\|\nabla W\|_{L^{2d}(\Omega)}
\\
\lesssim_{M_{s-\frac{1}{2}}}& 1.
\end{split}
\end{equation*}
This gives the desired control of $\nabla_B P$. 
\medskip

\noindent
\textbf{Control of $\mathcal{A}^\pm$ and $\mathcal{B}^\pm$}. Now, we focus on estimating $\mathcal{A}^\pm$. We first recall  that $\mathcal{A}^\pm=\nabla\mathcal{B}^\pm-\nabla W^\pm\cdot\nabla P$ where
\begin{equation}\label{Bdef2}
\begin{split}
\mathcal{B}^\pm &=\Delta^{-1}\nabla\cdot\mathcal{M}_2(\nabla W^\pm,\nabla P).
\end{split}
\end{equation}
The term $\nabla W^\pm\cdot \nabla P$ is estimated as above. On the other hand, by \Cref{direst} we have
\begin{equation*}
\|\mathcal{B}^\pm\|_{H^{s-\frac{1}{2}}(\Omega)}\lesssim_{M_{s-\frac{1}{2}}}\|\Delta\mathcal{B}^\pm\|_{H^{s-\frac{5}{2}}(\Omega)}+\|\mathcal{B}^\pm\|_{C^{\frac{1}{2}}(\Omega)}.
\end{equation*}
Recall that there is an $\epsilon>0$ such that $H^{s-\frac{1}{2}-\epsilon}(\Omega)$ embeds into $C^{\frac{1}{2}}(\Omega)$. Interpolating between $H^{s-\frac{1}{2}}(\Omega)$ and $H^1(\Omega)$ and using the $H^{-1}\to H_0^1$ bound for $\Delta^{-1}$, we conclude that $\|\mathcal{B}^\pm\|_{H^{s-\frac{1}{2}}(\Omega)}\lesssim_{M_{s-\frac{1}{2}}}\|\Delta\mathcal{B}^\pm\|_{H^{s-\frac{5}{2}}(\Omega)}.$ Then using \eqref{Bdef2} and estimating similarly to the above, we observe that
\begin{equation}\label{Bestlow}
\|\mathcal{B}^\pm\|_{H^{s-\frac{1}{2}}(\Omega)}\lesssim_{M_{s-\frac{1}{2}}} 1,
\end{equation}
which gives the corresponding estimate for $\mathcal{A}^\pm$ and $\mathcal{B}^\pm$.
\medskip

\noindent
\textbf{Control of $\nabla_B\mathcal{A}^\pm$ and $\nabla_B \mathcal{B}^\pm$}. We begin by noting that
\begin{equation*}
\nabla_B\mathcal{A}^\pm=\nabla\nabla_B\mathcal{B}^\pm-\nabla B\cdot\nabla \mathcal{B}^\pm-\nabla_B(\nabla W^\pm\cdot \nabla P).
\end{equation*}
Using \Cref{Multilinearest} and \eqref{Bestlow} we obtain
\begin{equation*}
\|\nabla B\cdot\nabla\mathcal{B}^\pm\|_{H^{s-2}(\Omega)}\lesssim \|B\|_{C^{\frac{1}{2}}(\Omega)}\|\mathcal{B}^\pm\|_{H^{s-\frac{1}{2}}(\Omega)}+\|B\|_{H^{s-\frac{1}{2}}(\Omega)}\|\mathcal{B}^\pm\|_{C^{\frac{1}{2}}(\Omega)}\lesssim_{M_{s-\frac{1}{2}}} 1.
\end{equation*}
Moreover, by expanding $\nabla_B (\nabla W^\pm\cdot\nabla P)$ and using \Cref{Multilinearest}, \eqref{Pestlow} and \eqref{BPestlow}, we see that 
\begin{equation*}
\|\nabla_B(\nabla W^\pm\cdot \nabla P)\|_{H^{s-2}(\Omega)}\lesssim_{M_{s-\frac{1}{2}}}1.
\end{equation*}
To estimate $\nabla\nabla_B\mathcal{B}^\pm$, we note that $\nabla_B\mathcal{B}^\pm_{|\Gamma}=0$ since $B$ is tangent to $\Gamma$. Therefore, by \Cref{direst} and an argument similar to that in the proof of \eqref{Bestlow}, we have
\begin{equation*}
\|\nabla\nabla_B\mathcal{B}^\pm\|_{H^{s-2}(\Omega)}\lesssim \|\nabla_B\mathcal{B}^\pm\|_{H^{s-1}(\Omega)}\lesssim_{M_{s-\frac{1}{2}}}\|\Delta\nabla_B \mathcal{B}^\pm\|_{H^{s-3}(\Omega)}. 
\end{equation*}
Analogously to the estimate for $\nabla_B\nabla P$ above, we may expand $\Delta\nabla_B \mathcal{B}^\pm$ and use that $B$ is divergence-free to estimate
\begin{equation*}
\|\Delta\nabla_B \mathcal{B}^\pm\|_{H^{s-3}(\Omega)}\lesssim_{M_{s-\frac{1}{2}}}\|\nabla B\cdot\nabla\mathcal{B}^\pm\|_{H^{s-2}(\Omega)}+\|\nabla_B\Delta\mathcal{B}^\pm\|_{H^{s-3}(\Omega)}\lesssim_{M_{s-\frac{1}{2}}} 1+\|\nabla_B\Delta\mathcal{B}^\pm\|_{H^{s-3}(\Omega)}.
\end{equation*}
Obtaining control over the latter term in the above inequality is a mostly straightforward application of \Cref{Multilinearest} if $s\geq 3$. To allow for the case $2<s<3$, we need to carefully expand $\nabla_B\Delta \mathcal{B}^\pm$. Using that $B$ is divergence-free, we have the commutator identity $[\partial_i,\nabla_B]f=\partial_j(\partial_i B_j f)$. From this and straightforward manipulations, it is easy to see that we may write $\nabla_B\Delta \mathcal{B}^\pm$ in the form
\begin{equation*}
\begin{split}
\nabla_B\Delta\mathcal{B}^\pm&=\nabla\cdot[\nabla_B\mathcal{M}_2(\nabla W^\pm,\nabla P)]+\nabla\cdot\mathcal{M}_3(\nabla B,\nabla W^\pm,\nabla P).
\end{split}
\end{equation*}
Noting that $s-2\geq 0$ and using \Cref{Multilinearest}, it is straightforward to then estimate 
\begin{equation*}
\begin{split}
\|\nabla_B\Delta\mathcal{B}^\pm\|_{H^{s-3}(\Omega)}\lesssim_{M_{s-\frac{1}{2}}}1.
\end{split}
\end{equation*}
This gives the desired estimates for $\nabla_B\mathcal{A}^\pm$ and $\nabla_B\mathcal{B}^\pm$.
\end{proof}
\begin{corollary}\label{Lambdakest}
For $\sigma>\frac{d}{2}+1$ we have
\begin{equation*}
\Lambda_\sigma+\|\mathcal{B}^\pm\|_{H^{\sigma}(\Omega)}+\|\nabla_B\mathcal{B}^\pm\|_{H^{\sigma-\frac{1}{2}}(\Omega)}+\|\nabla_B n_\Gamma\|_{H^{\sigma-\frac{3}{2}}(\Gamma)}\lesssim_{M_{s-\frac{1}{2}}} M_\sigma.
\end{equation*}
Moreover, 
\begin{equation*}
\|D_t^\pm P\|_{H^{\sigma}(\Omega)}+\|\nabla_B D_t^\pm P\|_{H^{\sigma-\frac{1}{2}}(\Omega)}\lesssim_{M_s} M_{\sigma}.
\end{equation*}
\end{corollary}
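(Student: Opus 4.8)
The plan is to re-run the proof of \Cref{coerciphrelim} verbatim, reading every elliptic and trace estimate at regularity level $\sigma$ rather than $s-\frac12$, and tracking the high norm $M_\sigma$ linearly while all implicit constants are kept dependent only on the low norm $M_{s-\frac12}$. What makes this an induction rather than a genuinely new argument is that \Cref{coerciveprelim} already supplies the uniform control $\Lambda_{s-\frac12}+\|\nabla_Bn_\Gamma\|_{H^{s-2}(\Gamma)}+\|\mathcal{B}^\pm\|_{H^{s-\frac12}(\Omega)}+\|\nabla_B\mathcal{B}^\pm\|_{H^{s-1}(\Omega)}\lesssim_{M_{s-\frac12}}1$; in particular the low-regularity factors that appear below — $W^\pm\in C^{\frac12+\epsilon}$, $\Gamma\in C^{1,\epsilon}$, $P\in C^{1,\epsilon}$, $\nabla_BP\in C^{\frac12}$, $\nabla_BW^\pm\in H^{s-1}\subset L^\infty$, together with the balanced low norms of $\nabla W^\pm,\nabla B,\mathcal{B}^\pm,\mathcal{A}^\pm,\nabla_Bn_\Gamma$ built from the dyadic operators $\Phi_{\le j},\Phi_{\ge j}$ of \Cref{SSRO} (exactly those entering \Cref{coerciveprelim}) — are all bounded by $M_{s-\frac12}$, and this is precisely what keeps the constants off $M_\sigma$.

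Concretely, one treats the four remaining terms of $\Lambda_\sigma$ in turn. For $\|P\|_{H^{\sigma+\frac12}(\Omega)}$ one applies the balanced elliptic estimate \Cref{direst} to $\Delta P=-\partial_iW_j^+\partial_jW_i^-$, $P_{|\Gamma}=0$, exactly as in \eqref{Pestlow}, estimating the source in $H^{\sigma-\frac32}(\Omega)$ by the tame product estimate \Cref{Multilinearest} (one rough factor $\nabla W\in H^{\sigma-1}\lesssim M_\sigma$, the other in a balanced low norm $\lesssim M_{s-\frac12}$) and the lower-order term by \Cref{Linfest}, absorbing it through interpolation and the $H^{-1}\to H^1_0$ bound for $\Delta^{-1}$ as in \Cref{coerciveprelim}. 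For $\|\nabla_BP\|_{H^\sigma(\Omega)}$ one uses $\nabla_BP_{|\Gamma}=0$, writes $\Delta(\nabla_BP)=[\Delta,\nabla_B]P+\nabla_B\Delta P$ with the divergence-form commutator $[\Delta,\nabla_B]P=\nabla\cdot\mathcal{M}_2(\nabla B,\nabla P)$, and expands $\nabla_B(\partial_iW_j^+\partial_jW_i^-)=\mathcal{M}_2(\nabla\nabla_BW,\nabla W)+\mathcal{M}_3(\nabla B,\nabla W,\nabla W)$, as in \eqref{BPestlow}. For $\mathcal{B}^\pm,\mathcal{A}^\pm,\nabla_B\mathcal{B}^\pm,\nabla_B\mathcal{A}^\pm$ one argues from \eqref{Bdef2}, using $\Delta\mathcal{B}^\pm=\nabla\cdot\mathcal{M}_2(\nabla W^\pm,\nabla P)$, $\nabla_B\mathcal{B}^\pm_{|\Gamma}=0$ (since $B$ is tangent), the commutator $[\partial_i,\nabla_B]f=\partial_j(\partial_iB_jf)$, and $\nabla_B\Delta\mathcal{B}^\pm=\nabla\cdot[\nabla_B\mathcal{M}_2(\nabla W^\pm,\nabla P)]+\nabla\cdot\mathcal{M}_3(\nabla B,\nabla W^\pm,\nabla P)$, exactly as in \eqref{Bestlow}. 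For $\|\nabla_Bn_\Gamma\|_{H^{\sigma-\frac32}(\Gamma)}$ one uses $\nabla_Bn_\Gamma=-((\nabla B)^*(n_\Gamma))^\top$ from \Cref{Movingsurfid} and \Cref{commutatorremark} together with the balanced trace and product estimates \Cref{boundaryest}, \Cref{baltrace}. In each case the borderline range $\frac d2+1<\sigma<3$ is handled by the same Hölder and Sobolev embedding device used in \Cref{coerciveprelim} (replacing a plain product bound by $L^p(\Omega)\subset H^{\sigma-2}(\Omega)$ for a suitable $p$, combined with a careful expansion of the relevant $\nabla_B\Delta(\cdot)$).

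For the ``Moreover'' claim, recall from \Cref{remarkBdefcorrection} that $D_t^\pm P=\mathcal{B}^\pm+\Delta^{-1}D_t^\pm\Delta P$ with $\Delta^{-1}$ carrying zero Dirichlet data; the summand $\mathcal{B}^\pm$ is already $\lesssim_{M_{s-\frac12}}M_\sigma$ by the first part, so \Cref{direst} reduces everything to bounding $\|D_t^\pm\Delta P\|_{H^{\sigma-2}(\Omega)}$. Expanding via \eqref{correctiontermdef} and \eqref{materialofW} produces a cubic term $\mathcal{M}_3(\nabla W,\nabla W,\nabla W)$ and a bilinear term $\mathcal{M}_2(\nabla D_t^\pm W,\nabla W)$; substituting $\nabla D_t^\pm W^\pm=-\nabla^2P\pm2\nabla\nabla_BW^\pm$ and $\nabla\nabla_BW^\pm=\nabla_B\nabla W^\pm+(\nabla B)\cdot\nabla W^\pm$, the bilinear term is estimated exactly as in the first part (its rough factor, $\nabla^2P$ or $\nabla\nabla_BW^\pm\in H^{\sigma-\frac32}$, is $\lesssim M_\sigma$, while the remaining $\nabla W$ factor lies in a balanced space containing $H^{s-\frac32}$ and is $\lesssim M_{s-\frac12}$). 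The cubic term is the one that breaks the pattern: to land in $H^{\sigma-2}(\Omega)$ one is forced to place two of the three $\nabla W$ factors in $L^\infty(\Omega)$, and $\|\nabla W\|_{L^\infty(\Omega)}$ is controlled only by $\|W\|_{H^s(\Omega)}\lesssim M_s$ — one has $H^{s-1}\subset L^\infty$ but not $H^{s-\frac32}\subset L^\infty$ in the borderline range — which produces the weaker conclusion $\lesssim_{M_s}M_\sigma$. The bound for $\nabla_BD_t^\pm P$ in $H^{\sigma-\frac12}(\Omega)$ is obtained in the same way, using $\nabla_BD_t^\pm P_{|\Gamma}=0$, $\Delta(\nabla_BD_t^\pm P)=[\Delta,\nabla_B]D_t^\pm P+\nabla_B\Delta D_t^\pm P$, the first-part control of $\nabla_B\mathcal{B}^\pm$, and the expansion of $\nabla_BF^\pm$ with $F^\pm$ as in \eqref{Dtpdef}; the $\nabla_B$-differentiated cubic term again accounts for the appearance of $M_s$. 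I expect the only real difficulty to be bookkeeping: one must check at each step that the elliptic and trace constants from \Cref{direst}, \Cref{direstlow}, \Cref{boundaryest}, \Cref{baltrace} depend on the moving geometry only through the low-regularity quantities $\|\Gamma\|_{C^{1,\epsilon}}$ and $\|\Gamma\|_{H^{s-\frac12}}$ (the whole point of the balanced estimates in \Cref{BEE}), and that the multilinear expansions of $\nabla_B\Delta P$, $\nabla_B\Delta\mathcal{B}^\pm$, $D_t^\pm\Delta P$ and $\nabla_BD_t^\pm\Delta P$ are organized so the derivative count closes even for $\sigma$ near $\frac d2+1$; the single conceptual point is recognizing that the $\nabla W$ factors entering the cubic $\mathcal{M}_3(\nabla W,\nabla W,\nabla W)$ in $D_t^\pm\Delta P$ can only be absorbed into $M_s$, which is exactly what separates the two displayed inequalities.
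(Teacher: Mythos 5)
Your proposal is substantively the paper's proof: re-run \Cref{coerciveprelim} at regularity $\sigma$, track the high norm linearly, and keep the implicit constants at the low regularity level via the balanced estimates of \Cref{BEE}. The treatment of $\Lambda_\sigma$, $\mathcal{B}^\pm$, $\nabla_B\mathcal{B}^\pm$ and $\nabla_B n_\Gamma$ matches the paper line for line. For the ``Moreover'' part you use a slightly different route: you commute $\nabla_B$ into the Laplace equation for $\nabla_BD_t^\pm P$ (which is legitimate, using $\nabla_BD_t^\pm P_{|\Gamma}=0$ and the divergence-form commutator), whereas the paper instead feeds $\mathcal{C}^\pm$ through the algebra property \eqref{algebrapropproduct}, reducing to the pair of bounds $\|\mathcal{C}^\pm\|_{C^1(\Omega)}\lesssim_{M_s}1$ and $\|\mathcal{C}^\pm\|_{H^{\sigma+\frac12}(\Omega)}\lesssim_{M_s}M_\sigma$. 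Both work; the paper's route has the small advantage of avoiding any commutator expansion of $[\Delta,\nabla_B]D_t^\pm P$ and $\nabla_BF^\pm$.

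One imprecision in your diagnosis of where $M_s$ enters. You claim that to land $\mathcal{M}_3(\nabla W,\nabla W,\nabla W)$ in $H^{\sigma-2}$ one is ``forced to place two of the three $\nabla W$ factors in $L^\infty$,'' and that this is what separates the two inequalities. That is what a naive Moser product bound would require, but \Cref{Multilinearest}(ii) is sharper: taking $\alpha_1=\alpha_2=\frac12$ there gives $\|W\|_{H^\sigma}\|W\|_{C^{\frac12+\epsilon}(\Omega)}^2$, and $H^{s-\frac12}(\Omega)\hookrightarrow C^{\frac12+\epsilon}(\Omega)$ for all $s>\frac d2+1$, so the cubic at the $H^{\sigma-2}$ level is actually controlled with constants depending only on $M_{s-\frac12}$. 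In fact the paper's Remark following \Cref{Lambdakest} notes that $\|D_t^\pm P\|_{H^\sigma(\Omega)}\lesssim_{M_{s-\frac12}}M_\sigma$ does hold with a more careful argument. What genuinely produces the $M_s$ (in the paper's route) is the auxiliary lower-order bound $\|\mathcal{C}^\pm\|_{C^1(\Omega)}\lesssim 1$ (equivalently $\|\Delta\mathcal{C}^\pm\|_{H^{s-2}(\Omega)}\lesssim 1$, where the cubic now forces $\|W\|_{H^s}$ as one factor) and the half-derivative loss incurred in passing to $\nabla_BD_t^\pm P$ — not the cubic piece of the $H^{\sigma-2}$ estimate for $D_t^\pm\Delta P$ per se. This does not break your proof, since the statement being proved only asserts $\lesssim_{M_s}M_\sigma$, but the attribution you give for the jump from $M_{s-\frac12}$ to $M_s$ is not the accurate one.
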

\begin{proof}
From \Cref{direst}, Sobolev embeddings, \Cref{coerciveprelim} and \Cref{Multilinearest}, we have
\begin{equation*}
\|P\|_{H^{\sigma+\frac{1}{2}}(\Omega)}\lesssim_{M_{s-\frac{1}{2}}} \|\partial_iW_j^+\partial_jW^-_i\|_{H^{\sigma-\frac{3}{2}}(\Omega)}+\|\Gamma\|_{H^\sigma}\|P\|_{C^1(\Omega)}\lesssim_{M_{s-\frac{1}{2}}} M_\sigma.
\end{equation*}
Similarly,
\begin{equation*}
\|\nabla_B P\|_{H^\sigma (\Omega)}\lesssim_{M_{s-\frac{1}{2}}} M_\sigma.
\end{equation*}
Writing $\mathcal{A}^\pm=\nabla\mathcal{B}^\pm-\nabla W^\pm\cdot\nabla P$, we  obtain
\begin{equation*}
\|\mathcal{A}^\pm\|_{H^{\sigma-1}(\Omega)}\lesssim_{M_{s-\frac{1}{2}}} \|\mathcal{B}^\pm\|_{H^{\sigma}(\Omega)}+M_\sigma.
\end{equation*}
By using \Cref{direst}, Sobolev embedding, \Cref{coerciveprelim} and \Cref{Multilinearest}, it follows that
\begin{equation*}
\|\mathcal{B}^\pm\|_{H^\sigma(\Omega)}\lesssim_{M_{s-\frac{1}{2}}} \|\Delta\mathcal{B}^\pm\|_{H^{\sigma-2}(\Omega)}+\|\Gamma\|_{H^\sigma}\|\mathcal{B}^\pm\|_{C^{\frac{1}{2}}(\Omega)}\lesssim_{M_{s-\frac{1}{2}}} M_\sigma.
\end{equation*}
A similar but more involved  analysis gives
\begin{equation*}
\|\nabla_B n_\Gamma\|_{H^{\sigma-\frac{3}{2}}(\Gamma)}+\|\nabla_B\mathcal{A}^\pm\|_{H^{\sigma-\frac{3}{2}}(\Omega)}+\|\nabla_B\mathcal{B}^\pm\|_{H^{\sigma-\frac{1}{2}}(\Omega)}\lesssim_{M_{s-\frac{1}{2}}} M_\sigma.
\end{equation*}
It remains to estimate $D_t^\pm P$ and $\nabla_B D_t^\pm P$. We recall that, by definition, we have
\begin{equation*}
\mathcal{B}^\pm:=D_t^\pm P-\Delta^{-1}D_t^\pm\Delta P=:D_t^\pm P- \mathcal{C}^\pm.
\end{equation*}
From \eqref{materialofW} and \eqref{correctiontermdef}, we can write $\mathcal{C}^\pm$ in the form
\begin{equation*}\label{Cform}
\begin{split}
\mathcal{C}^\pm &=\Delta^{-1}\mathcal{M}_3(\nabla W,\nabla W,\nabla W)+\Delta^{-1}\mathcal{M}_2(\nabla^2 P,\nabla W)+\Delta^{-1}\mathcal{M}_2(\nabla W,\nabla\nabla_B W).
\end{split}
\end{equation*}
Thanks to the above estimates for $\mathcal{B}^\pm$ and $\nabla_B\mathcal{B}^\pm$, we  only need to estimate $\mathcal{C}^\pm$ and $\nabla_B \mathcal{C}^\pm$ in $H^{\sigma}(\Omega)$ and $H^{\sigma-\frac{1}{2}}(\Omega)$, respectively. Our first observation is that by the algebra property \eqref{algebrapropproduct}, we have
\begin{equation*}
\|\nabla_B\mathcal{C}^\pm\|_{H^{\sigma-\frac{1}{2}}(\Omega)}\lesssim_{M_{s}} \|B\|_{H^{\sigma-\frac{1}{2}}(\Omega)}\|\mathcal{C}^\pm\|_{C^1(\Omega)}+\|B\|_{L^{\infty}(\Omega)}\|\mathcal{C}^\pm\|_{H^{\sigma+\frac{1}{2}}(\Omega)}.
\end{equation*}
Therefore, to establish both bounds, it suffices to show that 
\begin{equation*}
\|\mathcal{C}^\pm\|_{C^1(\Omega)}\lesssim_{M_s}1,\hspace{5mm}\|\mathcal{C}^\pm\|_{H^{\sigma+\frac{1}{2}}(\Omega)}\lesssim_{M_{s}} M_{\sigma}.
\end{equation*}
First, we observe that by Sobolev embedding, \Cref{direst}, \Cref{Multilinearest} and the estimates for the pressure, we have the $C^1$ bound
\begin{equation*}
\|\mathcal{C}^\pm\|_{C^1(\Omega)}\lesssim_{M_{s-\frac{1}{2}}}\|\Delta \mathcal{C}^\pm\|_{H^{s-2}(\Omega)}\lesssim_{M_s} 1.
\end{equation*}
Using \Cref{direst} and \Cref{Multilinearest}, we conclude that
\begin{equation}\label{correctionsestimates}
\|\mathcal{C}^\pm\|_{H^{\sigma+\frac{1}{2}}(\Omega)}\lesssim_{M_s}\|\Delta\mathcal{C}^\pm\|_{H^{\sigma-\frac{3}{2}}(\Omega)}+\|\mathcal{C}^\pm\|_{C^1(\Omega)}\|\Gamma\|_{H^{\sigma}(\Omega)}\lesssim_{M_s} M_{\sigma}.
\end{equation}
This completes the proof.
\end{proof}
\begin{remark}
With a more careful analysis, the estimate for $D_t^\pm P$ can be improved to 
\[
\|D_t^\pm P\|_{H^{\sigma}(\Omega)}\lesssim_{M_{s-\frac{1}{2}}}M_{\sigma},
\]
but we will not actually need this in the sequel.
\end{remark}
\subsubsection{\texorpdfstring{$H^s$}{}  estimates for the surface energy variables}
Our next objective is to control the surface energy variables $(a,\mathcal{G}^\pm)$ in $H^{k-1}(\Gamma)\times H^{k-\frac{3}{2}}(\Gamma)$ and $(\nabla_Ba,\nabla_B\mathcal{G}^\pm)$ in $H^{k-\frac{3}{2}}(\Gamma)\times H^{k-2}(\Gamma)$ by the energy plus some lower order terms. 
\begin{proposition}\label{aest}
We have
\begin{equation}\label{aest11}
\|a\|_{H^{k-1}(\Gamma)}+\|\mathcal{G}^\pm\|_{H^{k-\frac{3}{2}}(\Gamma)}\lesssim_{M_{s-\frac{1}{2}}} (E^k)^{\frac{1}{2}}+\Lambda_{k-\epsilon}
\end{equation}
and
\begin{equation}\label{aest12}
\|\nabla_B a\|_{H^{k-\frac{3}{2}}(\Gamma)}+\|\nabla_B\mathcal{G}^\pm\|_{H^{k-2}(\Gamma)}\lesssim_{M_{s-\frac{1}{2}}} (E^k)^{\frac{1}{2}}+\Lambda_{k-\epsilon}.
\end{equation}
\end{proposition}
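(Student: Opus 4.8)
The plan is to extract, from each of the four quadratic forms making up the irrotational part $E^k_i$ of the energy, a coercive lower bound on the corresponding surface variable, modulo errors at the lower regularity level $\Lambda_{k-\epsilon}$. Two preliminary facts will be used throughout: first, by the Taylor sign condition and the $C^{1,\epsilon}$ bound for the pressure in \Cref{Linfest}, one has $c_0\le a$ and $\|a\|_{C^\epsilon(\Gamma)}\lesssim_{M_{s-\frac{1}{2}}}1$, so the weights $a^{\pm\frac{1}{2}}$ are bounded above and below by constants depending only on $c_0$ and $M_{s-\frac{1}{2}}$ and may be freely inserted into or removed from $L^2(\Gamma)$-type norms; second, \Cref{coerciveprelim} and \Cref{Lambdakest} control the geometry of $\Gamma$, as well as $\mathcal{A}^\pm$, $\nabla_Bn_\Gamma$ and $\nabla_B\mathcal{A}^\pm$, at the $s-\frac{1}{2}$-regularity level by $M_{s-\frac{1}{2}}$, which ensures that all implicit constants in the elliptic estimates below depend only on $M_{s-\frac{1}{2}}$.

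First I would treat $a$ and $\mathcal{G}^\pm$. Since $a^{\frac{1}{2}}\gtrsim c_0^{1/2}$, the term $\|a^{\frac{1}{2}}\mathcal{N}^{k-1}a\|_{L^2(\Gamma)}^2$ appearing in $E^k$ bounds $\|\mathcal{N}^{k-1}a\|_{L^2(\Gamma)}^2$ from below; iterating the balanced elliptic estimate for the Dirichlet-to-Neumann operator from \Cref{BEE} — which inverts one power of $\mathcal{N}$ at the cost of a lower-order term in which the top regularity of $\Gamma$ is never paired against the top regularity of $a$ — yields $\|a\|_{H^{k-1}(\Gamma)}\lesssim_{M_{s-\frac{1}{2}}}\|\mathcal{N}^{k-1}a\|_{L^2(\Gamma)}+\Lambda_{k-\epsilon}$, the zero mode of $a$ annihilated by $\mathcal{N}$ being absorbed into $\Lambda_{k-\epsilon}$ via $\|a\|_{C^\epsilon(\Gamma)}\lesssim_{M_{s-\frac{1}{2}}}1$. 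For $\mathcal{G}^\pm$, the harmonic-extension estimate $\|\nabla\mathcal{H}g\|_{L^2(\Omega)}\approx_{M_{s-\frac{1}{2}}}\|g\|_{\dot{H}^{\frac{1}{2}}(\Gamma)}$ applied with $g=\mathcal{N}^{k-2}\mathcal{G}^\pm$ shows that $\|\nabla\mathcal{H}\mathcal{N}^{k-2}\mathcal{G}^\pm\|_{L^2(\Omega)}^2$ bounds $\|\mathcal{N}^{k-2}\mathcal{G}^\pm\|_{\dot{H}^{\frac{1}{2}}(\Gamma)}^2$ from below, and inverting $\mathcal{N}^{k-2}$ again by the balanced estimates produces $\|\mathcal{G}^\pm\|_{H^{k-\frac{3}{2}}(\Gamma)}\lesssim_{M_{s-\frac{1}{2}}}(E^k)^{\frac{1}{2}}+\Lambda_{k-\epsilon}$, where the low-frequency part of $\mathcal{G}^\pm$ is controlled by writing $\mathcal{G}^\pm=-n_\Gamma\cdot\mathcal{A}^\pm$ and invoking the trace theorem together with $\|\mathcal{A}^\pm\|_{H^{k-1-\epsilon}(\Omega)}\le\Lambda_{k-\epsilon}$ and $\|n_\Gamma\|_{H^{s-\frac{3}{2}}(\Gamma)}\lesssim_{M_{s-\frac{1}{2}}}1$. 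This gives \eqref{aest11}.

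The estimate \eqref{aest12} would be proved along identical lines, starting instead from the energy terms $\|\nabla\mathcal{H}\mathcal{N}^{k-2}\nabla_Ba\|_{L^2(\Omega)}^2$ and $\|a^{-\frac{1}{2}}\mathcal{N}^{k-2}\nabla_B\mathcal{G}^\pm\|_{L^2(\Gamma)}^2$: the harmonic-extension estimate followed by the balanced inversion of $\mathcal{N}^{k-2}$ recovers $\|\nabla_Ba\|_{H^{k-\frac{3}{2}}(\Gamma)}$ modulo a low-order contribution handled via $\nabla_Ba=-\nabla_B\nabla P\cdot n_\Gamma-\nabla P\cdot\nabla_Bn_\Gamma$ on $\Gamma$ and the bounds for $\nabla_BP$, $\nabla_Bn_\Gamma$, $\nabla P$ in \Cref{coerciveprelim} and \Cref{Lambdakest}; likewise, since $a^{-\frac{1}{2}}\gtrsim_{M_{s-\frac{1}{2}}}1$, the second term bounds $\|\mathcal{N}^{k-2}\nabla_B\mathcal{G}^\pm\|_{L^2(\Gamma)}^2$, and inverting $\mathcal{N}^{k-2}$ yields $\|\nabla_B\mathcal{G}^\pm\|_{H^{k-2}(\Gamma)}$ up to errors absorbed into $\Lambda_{k-\epsilon}$ using $\nabla_B\mathcal{G}^\pm=-\nabla_Bn_\Gamma\cdot\mathcal{A}^\pm-n_\Gamma\cdot\nabla_B\mathcal{A}^\pm$ and the trace theorem.

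The hard part will be the balanced inversion of the iterated operator $\mathcal{N}^m$ while keeping all error terms at regularity $\Lambda_{k-\epsilon}$ rather than $\Lambda_k$: at each application of $\mathcal{N}^{-1}$ one must, via a frequency decomposition, arrange that high-frequency contributions of the free surface come with a gain making them lower order, while the low-frequency part of $\Gamma$ is handled with constants depending only on $M_{s-\frac{1}{2}}$; the harmonic-extension/div-curl estimate $\|\nabla\mathcal{H}\cdot\|_{L^2(\Omega)}\approx_{M_{s-\frac{1}{2}}}\|\cdot\|_{\dot{H}^{\frac{1}{2}}(\Gamma)}$ likewise requires the geometry of $\Gamma$ to be controlled only at the $s-\frac{1}{2}$ level. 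Both are precisely the content of the balanced elliptic estimates collected in \Cref{BEE}, so I would lean on those. Apart from the presence of the $\nabla_B$ variables and the normal-form correction built into $\mathcal{G}^\pm$ — which is why the low-order errors here are phrased through $\mathcal{A}^\pm$ and $\nabla_B\mathcal{A}^\pm$ rather than $D_t^\pm a$ directly — the argument runs parallel to the corresponding coercivity step in \cite{Euler}.
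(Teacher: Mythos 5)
Your plan matches the paper's proof in all essentials: both arguments invert the iterated Dirichlet-to-Neumann operator using the balanced ellipticity estimate (\Cref{ellipticity}), control the $\mathcal{H}^{1/2}(\Gamma)$-to-$\nabla\mathcal{H}$ passage via trace and a mean-zero/Poincar\'e step, and use the representations $\mathcal{G}^\pm=-n_\Gamma\cdot\mathcal{A}^\pm$, $\nabla_B\mathcal{G}^\pm=-\nabla_Bn_\Gamma\cdot\mathcal{A}^\pm-n_\Gamma\cdot\nabla_B\mathcal{A}^\pm$ together with frequency partitions and \Cref{coerciveprelim}, \Cref{Lambdakest} to keep all constants depending only on $M_{s-\frac{1}{2}}$.

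The only genuine divergence is in how the zero-order error $\|\nabla_B\mathcal{G}^\pm\|_{L^2(\Gamma)}$ produced by the ellipticity inversion is closed. The paper avoids touching $\nabla_B\mathcal{A}^\pm$ at this level: it notes that on $\Gamma$ one has $\nabla_B = B\cdot\nabla^\top$, so $\|\nabla_B\mathcal{G}^\pm\|_{L^2(\Gamma)}\lesssim_{M_{s-\frac{1}{2}}}\|\mathcal{G}^\pm\|_{H^1(\Gamma)}$ (via \Cref{baselineDN}, \Cref{baselineDN2}), and then, since $k-\frac{3}{2}\geq 1$, feeds in the bound for $\mathcal{G}^\pm$ already proved in \eqref{aest11}, absorbing the term into $(E^k)^{1/2}+\Lambda_{k-\epsilon}$. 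You propose instead to bound this term directly through the decomposition $-\nabla_Bn_\Gamma\cdot\mathcal{A}^\pm-n_\Gamma\cdot\nabla_B\mathcal{A}^\pm$ and the trace theorem, placing it in $\Lambda_{k-\epsilon}$. That also works (one traces $\nabla_B\mathcal{A}^\pm\in H^{k-\epsilon-\frac{3}{2}}(\Omega)$, which is above $H^{1/2}$, and uses $\nabla_Bn_\Gamma\in L^\infty(\Gamma)$ from the $\Lambda_{k-\epsilon}$ bounds), but the paper's route is slightly leaner — it sidesteps the marginal trace of $\nabla_B\mathcal{A}^\pm$ near $L^2(\Gamma)$, which would be delicate in $d=2$ if one tried to run it with only $M_{s-\frac{1}{2}}$-regularity rather than $\Lambda_{k-\epsilon}$. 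Either route is valid; the paper's sequencing (prove \eqref{aest11} first, then reuse it) is what lets it avoid the lower-order $\nabla_B\mathcal{A}^\pm$ trace entirely.
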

\begin{proof}
We begin with the first estimate. To control $a$ in $H^{k-1}(\Gamma)$, as in \cite{Euler}, we use the ellipticity estimate for the Dirichlet-to-Neumann operator from \Cref{ellipticity} to obtain
\begin{equation*}
\|a\|_{H^{k-1}(\Gamma)}\lesssim_{M_{s-\frac{1}{2}}} 
 \|a\|_{L^2(\Gamma)}+\|\mathcal{N}^{k-1}a\|_{L^2(\Gamma)}+\|\Gamma\|_{H^{k-\epsilon}}\|a\|_{C^{\epsilon}(\Gamma)}\lesssim_{M_{s-\frac{1}{2}}} (E^k)^{\frac{1}{2}}+\Lambda_{k-\epsilon}.
\end{equation*}
To control $\mathcal{G}^\pm$ in $H^{k-\frac{3}{2}}(\Gamma)$, we use \Cref{ellipticity}, Sobolev embedding, \Cref{coerciveprelim}, \Cref{Lambdakest} and \Cref{baltrace} to estimate
\begin{equation*}
\begin{split}
\|\mathcal{G}^\pm\|_{H^{k-\frac{3}{2}}(\Gamma)}&\lesssim_{M_{s-\frac{1}{2}}} \|\mathcal{N}^{k-2}\mathcal{G}^\pm\|_{H^{\frac{1}{2}}(\Gamma)}+\|\Gamma\|_{H^{k-\epsilon}}\sup_{j>0}2^{-j(\frac{1}{2}-\epsilon)}\|n_\Gamma\cdot \Phi_{\leq j}\mathcal{A}^\pm\|_{L^{\infty}(\Omega)}
\\
&+\sup_{j>0}2^{j(k-\frac{3}{2}-2\epsilon)}\|n_\Gamma\cdot \Phi_{\geq j}\mathcal{A}^\pm\|_{H^{\epsilon}(\Gamma)}+\Lambda_{k-\epsilon}
\\
&\lesssim_{M_{s-\frac{1}{2}}} \|\mathcal{N}^{k-2}\mathcal{G}^\pm\|_{H^{\frac{1}{2}}(\Gamma)}+\Lambda_{k-\epsilon}.
\end{split}
\end{equation*}
From the trace theorem,  we have
\begin{equation*}
\|\mathcal{N}^{k-2}\mathcal{G}^\pm\|_{H^{\frac{1}{2}}(\Gamma)}\lesssim_{M_{s-\frac{1}{2}}} \|\mathcal{H}\mathcal{N}^{k-2}\mathcal{G}^\pm\|_{H^1(\Omega)}.
\end{equation*}
Since $k\geq 3$ and
\begin{equation*}
\int_{\Gamma}\mathcal{N}^{k-2}\mathcal{G}^\pm\, dS=\int_{\Gamma}n_\Gamma\cdot\nabla \mathcal{H}\mathcal{N}^{k-3}\mathcal{G}^\pm\, dS=0,
\end{equation*}
we conclude by a Poincare type inequality that 
\begin{equation*}
\|\mathcal{H}\mathcal{N}^{k-2}\mathcal{G}^\pm\|_{H^1(\Omega)}\lesssim_{M_{s-\frac{1}{2}}} \|\nabla\mathcal{H}\mathcal{N}^{k-2}\mathcal{G}^\pm\|_{L^2(\Omega)}\lesssim_{M_{s-\frac{1}{2}}}(E^k)^{\frac{1}{2}}.
\end{equation*}
This gives \eqref{aest11}. Now, we move to \eqref{aest12}. First, using that $\nabla_B a=-n_\Gamma\cdot\nabla_B\nabla P$ and the partition $\nabla_B\nabla P=\Phi_{\leq j}\nabla_B\nabla P+\Phi_{\geq j}\nabla_B\nabla P$, we obtain from \Cref{ellipticity} in a similar way to the above that 
\begin{equation*}
\|\nabla_Ba\|_{H^{k-\frac{3}{2}}(\Gamma)}\lesssim_{M_{s-\frac{1}{2}}}\|\nabla\mathcal{H}\mathcal{N}^{k-2}\nabla_Ba\|_{L^2(\Omega)}+\Lambda_{k-\epsilon}\lesssim (E^k)^{\frac{1}{2}}+\Lambda_{k-\epsilon}.
\end{equation*}
To estimate $\nabla_B\mathcal{G}^\pm$, we write
\begin{equation*}
\nabla_B\mathcal{G}^\pm=-\nabla_B n_\Gamma\cdot\mathcal{A}^\pm-n_\Gamma\cdot\nabla_B\mathcal{A}^\pm.
\end{equation*}
From \Cref{Movingsurfid} and \Cref{commutatorremark}, we have $\nabla_B n_\Gamma=-((\nabla B)^*n_\Gamma)^{\top}=-(\nabla B)^*n_\Gamma+(n_\Gamma\cdot((\nabla B)^*n_\Gamma))n_\Gamma$. Therefore, we may expand $\nabla_B n_\Gamma\cdot\mathcal{A}^\pm$ as a sum of terms of the form $\mathcal{M}(n_\Gamma)\mathcal{M}_2(\nabla B,\mathcal{A}^\pm)$ where $\mathcal{M}(n_\Gamma)$ is some multilinear expression in $n_\Gamma$. Hence, we obtain from \Cref{ellipticity} and \Cref{boundaryest} the bounds
\begin{equation*}
\begin{split}
\|\nabla_B\mathcal{G}^\pm\|_{H^{k-2}(\Gamma)}\lesssim_{M_{s-\frac{1}{2}}} &\|\nabla_B\mathcal{G}^\pm\|_{L^2(\Gamma)}+\|\mathcal{N}^{k-2}\nabla_B\mathcal{G}^\pm\|_{L^2(\Gamma)}
\\
+&\|\Gamma\|_{H^{k-\epsilon}}\sup_{j>0}2^{-j(1-\epsilon)}\left[\|\Phi_{\leq j}\mathcal{M}_2\|_{L^{\infty}(\Omega)}+\|\Phi_{\leq j}(\nabla_B\mathcal{A}^\pm)\|_{L^\infty(\Omega)}\right]
\\
+&\sup_{j>0}2^{j(k-2-2\epsilon)}\left[\|\Phi_{\geq j}\mathcal{M}_2\|_{H^{\epsilon}(\Gamma)}+\|\Phi_{\geq j}(\nabla_B\mathcal{A}^\pm)\|_{H^\epsilon(\Gamma)}\right].
\end{split}
\end{equation*}
Using the trace theorem, the regularization properties of $\Phi_{\leq j}$, \Cref{coerciveprelim}, Sobolev embeddings and \Cref{Multilinearest}, the terms in the latter  two  lines may be estimated by $\Lambda_{k-\epsilon}$. Hence,
\begin{equation*}
\|\nabla_B\mathcal{G}^\pm\|_{H^{k-2}(\Gamma)}\lesssim_{M_{s-\frac{1}{2}}} \|\nabla_B\mathcal{G}^\pm\|_{L^2(\Gamma)}+\|\mathcal{N}^{k-2}\nabla_B\mathcal{G}^\pm\|_{L^2(\Gamma)}+\Lambda_{k-\epsilon}.
\end{equation*}
For the first term on the right, we use \Cref{baselineDN},  \Cref{baselineDN2} and the fact that $B$ is tangent to $\Gamma$ to estimate
\begin{equation*}
\|\nabla_B\mathcal{G}^\pm\|_{L^2(\Gamma)}\lesssim_{M_{s-\frac{1}{2}}}\|\nabla^{\top}\mathcal{G}^\pm\|_{L^2(\Gamma)}\lesssim_{M_{s-\frac{1}{2}}}\|\mathcal{G}^\pm\|_{H^1(\Gamma)}.
\end{equation*}
Since $k-\frac{3}{2}\geq 1$, we may use the $H^{k-\frac{3}{2}}(\Gamma)$ estimate for $\mathcal{G}^\pm$ from above to control the last term on the right by the energy. This concludes the proof.
\end{proof}
With our preliminary estimates in hand, let us proceed with the proof of the first (and harder) half of the coercivity estimate; namely,
\begin{equation}\label{coercivityboundmain}
\|(W^\pm,\Gamma)\|_{\mathbf{H}^k}\lesssim_{M_{s-\frac{1}{2}}} (E^k)^{\frac{1}{2}}.
\end{equation}
The crux of the proof of \eqref{coercivityboundmain} is establishing the  intermediate bound
\begin{equation*}\label{intermediatebound}
\Lambda_k\lesssim_{M_{s-\frac{1}{2}}} (E^k)^{\frac{1}{2}}+\Lambda_{k-\epsilon}
\end{equation*}
for some small $\epsilon>0$. The coercivity bound \eqref{coercivityboundmain}  then follows from a simple interpolation argument, which we will outline later.
\subsubsection{Control of the pressure and surface regularity} As a first step, we  control the pressure and surface regularity via
\begin{equation*}\label{surfcontrol}
\|P\|_{H^{k+\frac{1}{2}}(\Omega)}+\|\Gamma\|_{H^k}\lesssim_{M_{s-\frac{1}{2}}} (E^k)^{\frac{1}{2}}+\Lambda_{k-\epsilon}.
\end{equation*}
The proof of this is virtually identical  to the corresponding bound in Section 7.4 of \cite{Euler},  so we omit the proof for brevity. 
\subsubsection{Control of \texorpdfstring{$W^\pm$}{}  and \texorpdfstring{$\mathcal{A}^\pm$}{}} We next estimate $W^\pm$ in terms of $\mathcal{A}^\pm$. To achieve this, we  use the div-curl estimate in \Cref{Balanced div-curl} and the fact that $W^\pm$ is divergence-free to control 
\begin{equation*}\label{div-curl-est1}
\begin{split}
\|W^{\pm}\|_{H^{k}(\Omega)}&\lesssim_{M_{s-\frac{1}{2}}} \|\omega^\pm\|_{H^{k-1}(\Omega)}+\|\nabla^{\top}W^{\pm}\cdot n_\Gamma\|_{H^{k-\frac{3}{2}}(\Gamma)}+\|\Gamma\|_{H^{k-\epsilon}}\|W^\pm\|_{C^{\frac{1}{2}+\epsilon}(\Omega)}
\\
&\lesssim_{M_{s-\frac{1}{2}}} (E^k)^{\frac{1}{2}}+\Lambda_{k-\epsilon}+\|\nabla^{\top}W^\pm\cdot n_\Gamma\|_{H^{k-\frac{3}{2}}(\Gamma)}.
\end{split}
\end{equation*}
It remains to study the boundary term $\nabla^{\top}W^\pm\cdot n_\Gamma$. Similarly to \cite{Euler}, our starting point is  the commutator identity
\begin{equation*}
D_t^{\pm}\nabla P=-\nabla W^{\pm}\cdot\nabla P+\nabla D_t^{\pm}P.
\end{equation*}
As $P_{|\Gamma}=0$, we have $\nabla^{\top}D_t^{\pm}P=0$. Therefore, we obtain the identity
\begin{equation*}
\nabla^{\top}W^{\pm}\cdot n_\Gamma=a^{-1}(D_t^{\pm}\nabla P)^{\top}=a^{-1}(D_t^{\pm}\nabla P-\nabla \Delta^{-1}D_t^\pm\Delta P)^{\top}=a^{-1}(\mathcal{A}^\pm)^{\top},
\end{equation*}
where in the second equality we used that $\nabla \Delta^{-1}D_t^\pm\Delta P$ is normal to $\Gamma$. 
The next step is to  estimate $a^{-1}(\mathcal{A}^\pm)^{\top}$ in terms of $\mathcal{A}^\pm$. For this, we use the balanced product and trace estimates in \Cref{boundaryest} and \Cref{baltrace} as well as \Cref{coerciveprelim} to obtain
\begin{equation*}\label{div-curD_tp1}
\begin{split}
\|a^{-1}(\mathcal{A}^\pm)^{\top}\|_{H^{k-\frac{3}{2}}(\Gamma)}&\lesssim_{M_{s-\frac{1}{2}}} \|\mathcal{A}^\pm\|_{H^{k-1}(\Omega)}+(\|a^{-1}\|_{H^{k-1-\epsilon}(\Gamma)}+\|\Gamma\|_{H^{k-\epsilon}})\sup_{j>0}2^{-j(\frac{1}{2}-\epsilon)}\|\Phi_{<j}\mathcal{A}^\pm\|_{L^{\infty}(\Omega)}
\\
&+\sup_{j>0}2^{j(k-\frac{3}{2}-2\epsilon)}\|\Phi_{\geq j}\mathcal{A}^\pm\|_{H^{\frac{1}{2}+\epsilon}(\Omega)}\lesssim_{M_{s-\frac{1}{2}}} \|\mathcal{A}^\pm\|_{H^{k-1}(\Omega)}+\Lambda_{k-\epsilon}.
\end{split}
\end{equation*}
It now suffices to show that
\begin{equation*}
\|\mathcal{A}^\pm\|_{H^{k-1}(\Omega)}\lesssim_{M_{s-\frac{1}{2}}} (E^k)^{\frac{1}{2}}+\Lambda_{k-\epsilon}.    
\end{equation*}
For this we study an appropriate div-curl decomposition for $\mathcal{A}^\pm$. We have 
\begin{equation}\label{divcurlA}
\begin{cases}
&\nabla\cdot \mathcal{A}^\pm=\nabla^2 P\cdot\nabla W^\pm\hspace{2mm}\text{in $\Omega$},
\\
&\nabla\times \mathcal{A}^\pm=\nabla^2 P\cdot\nabla W^\pm-(\nabla W^\pm)^*\cdot\nabla^2 P\hspace{2mm}\text{in $\Omega$},
\\
&\mathcal{A}^\pm\cdot n_\Gamma=-\mathcal{G}^\pm\hspace{2mm}\text{on $\Gamma$}.
\end{cases}
\end{equation}
Hence, using \Cref{Balanced div-curl}, \Cref{Multilinearest} and the partition $\mathcal{A}^\pm=\Phi_{\leq j}\mathcal{A}^\pm+\Phi_{\geq j}\mathcal{A}^\pm$ as above, we obtain
\begin{equation*}
\begin{split}
\|\mathcal{A}^\pm\|_{H^{k-1}(\Omega)}&\lesssim_{M_{s-\frac{1}{2}}} \|\nabla^{\top}\mathcal{A}^\pm\cdot n_{\Gamma}\|_{H^{k-\frac{5}{2}}(\Gamma)}+\Lambda_{k-\epsilon}.
\end{split}
\end{equation*}
It remains to estimate the boundary term.  We compute
\begin{equation}\label{top1}
\nabla^{\top}\mathcal{A}^\pm\cdot n_\Gamma=-\nabla^{\top}\mathcal{G}^\pm-\mathcal{A}^\pm\cdot \nabla^{\top}n_\Gamma.
\end{equation}
By \Cref{boundaryest}, \Cref{tangradientbound} and  the decomposition $\mathcal{A}^\pm=\Phi_{\leq j}\mathcal{A}^\pm+\Phi_{\geq j}\mathcal{A}^\pm$, the first term on the right in \eqref{top1} may be controlled in a similar fashion to  the above. Indeed, we have
\begin{equation*}
\|\nabla^{\top}\mathcal{G}^\pm\|_{H^{k-\frac{5}{2}}(\Gamma)}\lesssim_{M_{s-\frac{1}{2}}} \|\mathcal{G}^\pm\|_{H^{k-\frac{3}{2}}(\Gamma)}+\Lambda_{k-\epsilon}\lesssim_{M_{s-\frac{1}{2}}} (E^k)^{\frac{1}{2}}+\Lambda_{k-\epsilon}.
\end{equation*}
Estimating the latter term is more tedious. Using that $\nabla_{\tau}n_\Gamma$ is tangent to $\Gamma$ for any tangent vector $\tau$ and the fact that $\mathcal{B}^\pm_{|\Gamma}=0$, we have 
\begin{equation*}
\mathcal{A}^\pm\cdot\nabla^{\top}n_{\Gamma}=-\nabla W^\pm\cdot\nabla P\cdot\nabla^{\top}n_{\Gamma}=-(\nabla W^\pm\cdot\nabla P\cdot\nabla\mathcal{H}n_{\Gamma})^{\top}=:-\mathcal{U}^{\top}.
\end{equation*}
Using the decomposition $\mathcal{U}=\Phi_{\leq j}\mathcal{U}+\Phi_{> j}\mathcal{U}$ together with \Cref{boundaryest} and \Cref{baltrace}, we have 
\begin{equation*}
\|\mathcal{U}^{\top}\|_{H^{k-\frac{5}{2}}(\Gamma)}\lesssim_{M_{s-\frac{1}{2}}} \|\Gamma\|_{H^{k-\epsilon}}\sup_{j>0}2^{-(\frac{3}{2}-\epsilon)j}\|\Phi_{\leq j}\mathcal{U}\|_{L^{\infty}(\Omega)}+\|\mathcal{U}\|_{H^{k-2}(\Omega)}.
\end{equation*}
If $d=2$, we can crudely Sobolev embed and use the regularization properties of $\Phi_{\leq j}$ to estimate 
\begin{equation*}
2^{-(\frac{3}{2}-\epsilon)j}\|\Phi_{\leq j}\mathcal{U}\|_{L^{\infty}(\Omega)}\lesssim_{M_{s-\frac{1}{2}}} \|\mathcal{U}\|_{L^1(\Omega)}\lesssim_{M_{s-\frac{1}{2}}} \|\nabla P\|_{H^{s-1}(\Omega)}\|\nabla W^\pm\|_{L^2(\Omega)}\|\nabla\mathcal{H}n_{\Gamma}\|_{L^2(\Omega)}\lesssim_{M_{s-\frac{1}{2}}} 1 ,
\end{equation*}
where we used that $s-\frac{1}{2}>\frac{3}{2}$. If $d\geq 3$,  we can Sobolev embed to obtain 
\begin{equation*}
2^{-(\frac{3}{2}-\epsilon)j}\|\Phi_{\leq j}\mathcal{U}\|_{L^{\infty}(\Omega)}\lesssim_{M_{s-\frac{1}{2}}} \|\mathcal{U}\|_{H^{s-\frac{5}{2}}(\Omega)}.
\end{equation*}
Since in this case we have $s-\frac{5}{2}\geq 0$, we can apply \Cref{Multilinearest}, Sobolev embeddings and \Cref{Hbounds} to obtain
\begin{equation*}
\|\mathcal{U}\|_{H^{s-\frac{5}{2}}(\Omega)}\lesssim_{M_{s-\frac{1}{2}}}\|W^\pm\|_{H^{s-\frac{1}{2}}(\Omega)}\|\nabla P\|_{H^{s-1}(\Omega)}\|\mathcal{H}n_\Gamma\|_{H^{s-1}(\Omega)}\lesssim_{M_{s-\frac{1}{2}}}1.
\end{equation*}
By \Cref{Multilinearest} and \Cref{Hbounds}, we also obtain
\begin{equation*}
\|\mathcal{U}\|_{H^{k-2}(\Omega)}\lesssim_{M_{s-\frac{1}{2}}} \Lambda_{k-\epsilon}.
\end{equation*}
Combining the above, we finally obtain
\begin{equation*}
\|\nabla^{\top}\mathcal{A}^\pm\cdot n_{\Gamma}\|_{H^{k-\frac{5}{2}}(\Gamma)}\lesssim_{M_{s-\frac{1}{2}}} (E^k)^{\frac{1}{2}}+\Lambda_{k-\epsilon}.
\end{equation*}
Consequently, we have the bound
\begin{equation*}
\|\mathcal{A}^\pm\|_{H^{k-1}(\Omega)}+\|W^\pm\|_{H^k(\Omega)}\lesssim_{M_{s-\frac{1}{2}}} (E^k)^{\frac{1}{2}}+\Lambda_{k-\epsilon}.
\end{equation*}
\subsubsection{Control of \texorpdfstring{$\nabla_B P$}{}}
To control $\nabla_B P$, we use \Cref{direst}, \Cref{coerciveprelim} and the fact that $\nabla_BP$ vanishes on $\Gamma$ to estimate
\begin{equation*}
\|\nabla_BP\|_{H^{k}(\Omega)}\lesssim_{M_{s-\frac{1}{2}}}\|\Delta\nabla_BP\|_{H^{k-2}(\Omega)}+\|\Gamma\|_{H^{k-\epsilon}}\|\nabla_B P\|_{C^{\frac{1}{2}+\epsilon}(\Omega)}\lesssim_{M_{s-\frac{1}{2}}} \|\Delta\nabla_BP\|_{H^{k-2}(\Omega)}+\Lambda_{k-\epsilon}.
\end{equation*}
Then using \Cref{Multilinearest}, \Cref{coerciveprelim} and expanding out $\Delta\nabla_B P$, we obtain
\begin{equation*}
\|\nabla_BP\|_{H^k(\Omega)}\lesssim_{M_{s-\frac{1}{2}}}\|W^\pm\|_{H^k(\Omega)}+\|P\|_{H^{k+\frac{1}{2}}(\Omega)}+\Lambda_{k-\epsilon}.
\end{equation*}
When combined with the above estimates for $W^\pm$ and $P$, this gives the desired bound
\begin{equation*}
\|\nabla_BP\|_{H^k(\Omega)}\lesssim_{M_{s-\frac{1}{2}}}(E^k)^{\frac{1}{2}}+\Lambda_{k-\epsilon}.
\end{equation*}
\subsubsection{Control of \texorpdfstring{$\nabla_B W^{\pm}$}{} and \texorpdfstring{$\nabla_B\mathcal{A}^\pm$}{}} It remains to estimate $\nabla_BW^\pm$ and $\nabla_B \mathcal{A}^\pm$. Our strategy is to proceed similarly to the estimate for $W^\pm$ by performing a div-curl decomposition. Using \Cref{Balanced div-curl}, we see that
\begin{equation*}
\begin{split}
\|\nabla_B W^{\pm}\|_{H^{k-\frac{1}{2}}(\Omega)}&\lesssim_{M_{s-\frac{1}{2}}} \|\nabla\times (\nabla_B W^\pm)\|_{H^{k-\frac{3}{2}}(\Omega)}+\|\nabla\cdot (\nabla_B W^\pm)\|_{H^{k-\frac{3}{2}}(\Omega)}
\\
&+\|\nabla^{\top}(\nabla_B W^{\pm})\cdot n_\Gamma\|_{H^{k-2}(\Gamma)}+\|\Gamma\|_{H^{k-\epsilon}}\|\nabla_BW^\pm\|_{C^{\epsilon}(\Omega)}.
\end{split}
\end{equation*}
By Sobolev embeddings, the last term on the right can be controlled by $\Lambda_{k-\epsilon}$. Moreover, using \Cref{Multilinearest} and the fact that $W^\pm$ is divergence-free, it is straightforward to obtain the bounds
\begin{equation*}
\|\nabla\times (\nabla_BW^\pm)\|_{H^{k-\frac{3}{2}}(\Omega)}+\|\nabla\cdot (\nabla_B W^\pm)\|_{H^{k-\frac{3}{2}}(\Omega)}\lesssim_{M_{s-\frac{1}{2}}}\|\nabla_B\omega^\pm\|_{H^{k-\frac{3}{2}}(\Omega)}+\Lambda_{k-\epsilon}.
\end{equation*}
Therefore,
\begin{equation*}
\|\nabla_B W^{\pm}\|_{H^{k-\frac{1}{2}}(\Omega)}\lesssim_{M_{s-\frac{1}{2}}} (E^k)^{\frac{1}{2}}+\|\nabla^{\top}(\nabla_BW^\pm)\cdot n_\Gamma\|_{H^{k-2}(\Gamma)}+\Lambda_{k-\epsilon}.
\end{equation*}
It remains to estimate the boundary data $\nabla^{\top}(\nabla_B W^{\pm})\cdot n_\Gamma  $ in terms of $\nabla_B\mathcal{G}^\pm$.
Since $D_t^\pm$ and $\nabla_B$ commute (this is a slight abuse of language since at this stage our variables do not yet come from solutions to the dynamic problem; instead, the fact that they commute follows directly from their definitions), we have
\begin{equation*}
\begin{split}
D_t^\pm\nabla_B\nabla P&=-\nabla_B(\nabla W^\pm\cdot\nabla P)+\nabla_B\nabla D_t^\pm P
\\
&= -\nabla\nabla_B W^\pm\cdot \nabla P-\nabla W^\pm\cdot\nabla_B\nabla P+\nabla\nabla_B D_t^\pm P-\nabla B\cdot\nabla D_t^\pm P+\nabla B\cdot\nabla W^\pm\cdot \nabla P.
\end{split}    
\end{equation*}
Combining this with the identity
\begin{equation*}
(D_t^\pm\nabla_B\nabla P)^{\top}=(\nabla_B\mathcal{A}^\pm)^{\top}\pm2\nabla^{\top}B\cdot\nabla\Delta^{-1}(\partial_i\nabla_B W^\pm_j\partial_j W_i^\mp)
\end{equation*}
and the definition of $\mathcal{B}^\pm$ we obtain the formula
\begin{equation}\label{bdotgradidentity1}
\nabla^{\top}(\nabla_BW^\pm)\cdot n_\Gamma= a^{-1}(\nabla_B\mathcal{A}^\pm)^{\top}+a^{-1}(\nabla B\cdot\nabla \mathcal{B}^\pm)^{\top}+a^{-1}(\nabla W^\pm\cdot \nabla_B\nabla P)^{\top}-a^{-1}(\nabla B\cdot\nabla W^\pm\cdot \nabla P)^{\top}.   
\end{equation}
We begin by estimating the latter three terms in \eqref{bdotgradidentity1}, which are essentially lower order. By \Cref{boundaryest} and \Cref{baltrace}, we have
\begin{equation*}
\begin{split}
\|a^{-1}(\nabla B\cdot \nabla\mathcal{B}^\pm)^\top\|_{H^{k-2}(\Gamma)}&\lesssim_{M_{s-\frac{1}{2}}} \|\nabla B\cdot \nabla\mathcal{B}^\pm\|_{H^{k-\frac{3}{2}}(\Omega)}+\|\Gamma\|_{H^{k-\epsilon}}\sup_{j>0}2^{-j(1-\epsilon)}\|\Phi_{<j}(\nabla B\cdot \nabla\mathcal{B}^\pm)\|_{L^{\infty}(\Omega)}
\\
&+\sup_{j>0}2^{j(k-2-\epsilon)}\|\Phi_{\geq j}(\nabla B\cdot \nabla\mathcal{B}^\pm)\|_{H^{\frac{1}{2}+\epsilon}(\Omega)}.
\end{split}
\end{equation*}
By Sobolev embeddings, \Cref{Multilinearest} and \Cref{coerciveprelim} we then conclude that
\begin{equation*}
\|a^{-1}(\nabla B\cdot \nabla\mathcal{B}^\pm)^\top\|_{H^{k-2}(\Gamma)}\lesssim_{M_{s-\frac{1}{2}}}\Lambda_{k-\epsilon}.
\end{equation*}
By a similar analysis, we obtain the same estimates for the last two terms in \eqref{bdotgradidentity1}, so it remains to control $\nabla_B\mathcal{A}^\pm$ in $H^{k-\frac{3}{2}}(\Omega)$ by the energy. By \Cref{Balanced div-curl}, we have
\begin{equation}\label{divcurlgradba}
\begin{split}
\|\nabla_B \mathcal{A}^\pm\|_{H^{k-\frac{3}{2}}(\Omega)}&\lesssim_{M_{s-\frac{1}{2}}} \|\nabla\cdot(\nabla_B\mathcal{A}^\pm)\|_{H^{k-\frac{5}{2}}(\Omega)}+\|\nabla\times\nabla_B\mathcal{A}^\pm\|_{H^{k-\frac{5}{2}}(\Omega)}+\|\nabla_B\mathcal{A}^\pm\cdot n_\Gamma\|_{H^{k-2}(\Gamma)}
\\
&+\|\Gamma\|_{H^{k-\epsilon}}2^{-j(1-\epsilon)}\|\Phi_{\leq j}\nabla_B\mathcal{A}^\pm\|_{L^{\infty}(\Omega)}+\sup_{j>0}2^{j(k-\frac{3}{2}-\epsilon)}\|\Phi_{\geq j}\nabla_B\mathcal{A}^\pm\|_{L^2(\Omega)}+\Lambda_{k-\epsilon}.
\end{split}
\end{equation}
From \Cref{coerciveprelim}, Sobolev embeddings and the regularization bounds for $\Phi_{\leq j}$, we can control the terms in the second line by $\Lambda_{k-\epsilon}$. Furthermore, using the div-curl decomposition for $\mathcal{A}^\pm$, \eqref{divcurlA}, \Cref{Multilinearest} and \Cref{coerciveprelim}, it is easy to estimate
\begin{equation*}
\|\nabla\cdot(\nabla_B\mathcal{A}^\pm)\|_{H^{k-\frac{5}{2}}(\Omega)}+\|\nabla\times(\nabla_B\mathcal{A}^\pm)\|_{H^{k-\frac{5}{2}}(\Omega)}\lesssim_{M_{s-\frac{1}{2}}}\Lambda_{k-\epsilon}.
\end{equation*}
We then notice that
\begin{equation*}
\|\nabla_B\mathcal{A}^\pm\cdot n_\Gamma\|_{H^{k-2}(\Gamma)}\lesssim \|\nabla_B\mathcal{G}^\pm\|_{H^{k-2}(\Gamma)}+\|\mathcal{A}^\pm\cdot\nabla_B n_\Gamma\|_{H^{k-2}(\Gamma)}.
\end{equation*}
Arguing as in the estimate for $\nabla_B \mathcal{G}^\pm$ in \Cref{aest} we obtain
\begin{equation}\label{crossterms}
\|\mathcal{A}^\pm\cdot\nabla_B n_\Gamma\|_{H^{k-2}(\Gamma)}\lesssim_{M_{s-\frac{1}{2}}}(E^k)^\frac{1}{2}+\Lambda_{k-\epsilon}.
\end{equation}
This gives 
\begin{equation*}
\|\nabla_B\mathcal{A}^\pm\|_{H^{k-\frac{3}{2}}(\Omega)}+\|\nabla_B W^\pm\|_{H^{k-\frac{1}{2}}(\Omega)}\lesssim_{M_{s-\frac{1}{2}}} (E^k)^{\frac{1}{2}}+\Lambda_{k-\epsilon}.
\end{equation*}
By combining all of the above estimates, we conclude that
\begin{equation*}
\Lambda_k\lesssim_{M_{s-\frac{1}{2}}}(E^k)^{\frac{1}{2}}+\Lambda_{k-\epsilon}.
\end{equation*}
Using interpolation and the definition of $\Lambda_k$, it follows that
\begin{equation*}
\Lambda_k\lesssim_{M_{s-\frac{1}{2}}} (E^k)^{\frac{1}{2}}+\|P\|_{H^1(\Omega)}+\|\mathcal{A}^\pm\|_{L^2(\Omega)}.
\end{equation*}
By the $H^{-1}\to H_0^1$ bound for $\Delta^{-1}$ and \Cref{Multilinearest} we have
\begin{equation*}
\|P\|_{H^1(\Omega)}+\|\mathcal{A}^\pm\|_{L^2(\Omega)}\lesssim_{M_{s-\frac{1}{2}}}\|(W^+,W^-)\|_{H^{k-\epsilon}(\Omega)}.
\end{equation*}
Interpolating again, we may finally conclude that
\begin{equation*}
\Lambda_k\lesssim_{M_{s-\frac{1}{2}}} (E^k)^{\frac{1}{2}}+\|(W^+,W^-)\|_{L^2(\Omega)}\lesssim_{M_{s-\frac{1}{2}}}(E^k)^{\frac{1}{2}}.
\end{equation*}
This completes the proof of \eqref{coercivityboundmain}. To complete the proof of the energy coercivity property in \Cref{Energy est. thm}, it remains to establish the easier bound
\begin{equation*}
(E^k)^{\frac{1}{2}}\lesssim_{M_{s-\frac{1}{2}}}M_k.
\end{equation*}
Clearly, the only issue is to control the irrotational energy. More precisely, we have to show that the expression
\begin{equation}\label{reversecoercive}
\begin{split}
\|\nabla\mathcal{H}\mathcal{N}^{k-2}\mathcal{G}^\pm\|_{L^2(\Omega)}&+\|a^{\frac{1}{2}}\mathcal{N}^{k-1}a\|_{L^2(\Gamma)}+\|\nabla\mathcal{H}\mathcal{N}^{k-2}\nabla_B a\|_{L^2(\Omega)}+\|a^{-\frac{1}{2}}\mathcal{N}^{k-2}\nabla_B\mathcal{G}^\pm\|_{L^2(\Gamma)}
\end{split}
\end{equation}
is $\lesssim_{M_{s-\frac{1}{2}}} M_k$. For the term $\|a^{\frac{1}{2}}\mathcal{N}^{k-1}a\|_{L^2(\Gamma)}$, we have from \Cref{baselineDN2}  and   \Cref{higherpowers},
\begin{equation*}
\|a^{\frac{1}{2}}\mathcal{N}^{k-1}a\|_{L^2(\Gamma)}\lesssim_{M_{s-\frac{1}{2}}} \|a\|_{H^{k-1}(\Gamma)}+\|a\|_{L^{\infty}(\Gamma)}\|\Gamma\|_{H^{k}}\lesssim_{M_{s-\frac{1}{2}}} \|a\|_{H^{k-1}(\Gamma)}+\|\Gamma\|_{H^{k}}.   
\end{equation*}
Then from \Cref{boundaryest}, \Cref{baltrace} and \Cref{Lambdakest}, we have
\begin{equation*}
\|a\|_{H^{k-1}(\Gamma)}\lesssim_{M_{s-\frac{1}{2}}} \|P\|_{H^{k+\frac{1}{2}}(\Omega)}+\|\Gamma\|_{H^{k}}\lesssim_{M_{s-\frac{1}{2}}} M_k.   
\end{equation*}
Using similar analysis, the $H^{\frac{1}{2}}\to H^1$ bound for $\mathcal{H}$ and the identity $\nabla_Ba=-n_\Gamma\cdot\nabla_B\nabla P$, we may estimate
\begin{equation*}
\begin{split}
\|\nabla&\mathcal{H}\mathcal{N}^{k-2}\nabla_Ba\|_{L^2(\Omega)}\lesssim_{M_{s-\frac{1}{2}}}\|\mathcal{N}^{k-2}\nabla_Ba\|_{H^{\frac{1}{2}}(\Gamma)}
\\
&\lesssim_{M_{s-\frac{1}{2}}}\|\nabla_Ba\|_{H^{k-\frac{3}{2}}(\Gamma)}+\|\Gamma\|_{H^k}\sup_{j>0}2^{-\frac{j}{2}}\|n_\Gamma\cdot\Phi_{\leq j}\nabla_B\nabla P\|_{L^{\infty}(\Gamma)}+\sup_{j>0}2^{j(k-\frac{3}{2}-\epsilon)}\|n_\Gamma\cdot\Phi_{\geq j}\nabla_B\nabla P\|_{H^{\epsilon}(\Gamma)}
\\
&\lesssim_{M_{s-\frac{1}{2}}}\|\nabla_B \nabla P\|_{H^{k-1}(\Omega)}+M_k\lesssim_{M_{s-\frac{1}{2}}} M_k.
\end{split}
\end{equation*}
To control $\nabla\mathcal{H}\mathcal{N}^{k-2}\mathcal{G}^\pm$, we can argue in a similar fashion by using the partition 
\begin{equation*}
\mathcal{G}^\pm=-\mathcal{A}^\pm\cdot n_\Gamma=-\Phi_{\leq j}\mathcal{A}^\pm\cdot n_\Gamma-\Phi_{\geq j}\mathcal{A}^\pm\cdot n_\Gamma
\end{equation*}
to obtain
\begin{equation*}
\|\nabla\mathcal{H}\mathcal{N}^{k-2}\mathcal{G}^\pm\|_{L^2(\Omega)}\lesssim_{M_{s-\frac{1}{2}}}M_k.
\end{equation*}
Likewise, to estimate $\mathcal{N}^{k-2}\nabla_B\mathcal{G}^\pm$ we can write $\nabla_B\mathcal{G}^\pm=-\nabla_B n_\Gamma\cdot\mathcal{A}^\pm-n_\Gamma\cdot\nabla_B\mathcal{A}^\pm$. The latter term is estimated similarly to the above by partitioning $\nabla_B\mathcal{A}^\pm=\Phi_{\leq j}(\nabla_B\mathcal{A}^\pm)+\Phi_{\geq j}(\nabla_B\mathcal{A}^\pm)$, so that
\begin{equation*}
\|\mathcal{N}^{k-2}(n_\Gamma\cdot\nabla_B\mathcal{A}^\pm)\|_{L^2(\Gamma)}\lesssim_{M_{s-\frac{1}{2}}}M_k.
\end{equation*}
We can use \Cref{Movingsurfid} to write $\nabla_B n_{\Gamma}=-\nabla^{\top} B\cdot n_{\Gamma}$, and thus, since $\mathcal{B}^\pm_{|\Gamma}=0$, there holds
\begin{equation*}
-\nabla_B n_{\Gamma}\cdot\mathcal{A}^\pm=-\nabla W^\pm\cdot\nabla P\cdot \nabla^{\top} B\cdot n_\Gamma.
\end{equation*}
Using \Cref{higherpowers}, \Cref{baselineDN2} and arguing somewhat similarly to the estimate for the latter term in \eqref{top1}, we obtain
\begin{equation*}
\|\mathcal{N}^{k-2}(\nabla_B n_{\Gamma}\cdot\mathcal{A}^\pm)\|_{L^2(\Gamma)}\lesssim_{M_{s-\frac{1}{2}}} M_k.
\end{equation*}
We omit the above straightforward (albeit slightly tedious) computation, which completes the estimation of each term in \eqref{reversecoercive} and thus the proof of part (i) of \Cref{Energy est. thm}. Next, we turn to part (ii), which is the energy propagation bound. 
\subsection{Proof of energy propagation}\label{energyprop}
Here we prove the second part of \Cref{Energy est. thm}. Using \eqref{vorteq} and the coercivity bound \eqref{Coercivity bound on integers} it is straightforward to verify the following energy estimate for the rotational component of the energy:
\begin{equation*}\label{vortprop}
\frac{d}{dt}E^k_r\lesssim_{M_{s}} E^k.
\end{equation*}
Hence, the main objective of the work will be to establish a propagation bound for the irrotational part of the energy. More specifically, we intend to show that
\begin{equation*}\label{irrEE}
\frac{d}{dt}E_{i}^k\lesssim_{M_{s}} E^k.
\end{equation*}
To achieve this propagation bound, we start by deriving a wave-type equation for $a$. This equation will govern, at leading order, the dynamics of the free surface as well as the ``irrotational" good variables $\mathcal{G}^\pm$.
\medskip

We begin our derivation with the simple commutator identity
\begin{equation*}
D_t^\pm\nabla P=-\nabla W^\pm\cdot\nabla P+\nabla D_t^\pm P
\end{equation*}
which gives
\begin{equation*}
\mathcal{A}^\pm=-\nabla W^\pm\cdot\nabla P+\nabla\mathcal{B}^\pm.
\end{equation*}
Applying $D_t^\mp$ and performing some elementary algebraic manipulations, we see that
\begin{equation*}
\begin{split}
D_t^\mp \mathcal{A}^\pm &=-\nabla D_t^\mp W^\pm\cdot\nabla P+D_t^\mp\nabla\mathcal{B}^\pm+\nabla W^\mp\cdot (\nabla W^\pm\cdot \nabla P)-\nabla W^\pm\cdot D_t^\mp\nabla P
\\
&=\frac{1}{2}\nabla|\nabla P|^2+D_t^\mp\nabla\mathcal{B}^\pm+\nabla W^\mp\cdot (\nabla W^\pm\cdot \nabla P)-\nabla W^\pm\cdot D_t^\mp\nabla P,
\end{split}
\end{equation*}
where in the last line, we have used \eqref{pm equations} to write $-\nabla D_t^\mp W^\pm\cdot\nabla P=\frac{1}{2}\nabla |\nabla P|^2$. As $\Delta P=-\partial_iW_j^+\partial_jW_i^-$ is lower order, it is natural to further split $\nabla |\nabla P|^2$  as
\begin{equation*}
\frac{1}{2}\nabla |\nabla P|^2=\frac{1}{2}\nabla \mathcal{H}|\nabla P|^2+\frac{1}{2}\nabla \Delta^{-1}\Delta |\nabla P|^2.
\end{equation*}
This yields the equation
\begin{equation}\label{aeqndom}
D_t^\mp \mathcal{A}^\pm-\frac{1}{2}\nabla \mathcal{H}|\nabla P|^2=\frac{1}{2}\nabla \Delta^{-1}\Delta |\nabla P|^2+D_t^\mp\nabla\mathcal{B}^\pm+\nabla W^\mp\cdot (\nabla W^\pm\cdot \nabla P)-\nabla W^\pm\cdot D_t^\mp\nabla P=:g.
\end{equation}
We will later see that $g$ may be thought of as a perturbative source term (although this will require a considerable amount of effort). In order to convert  \eqref{aeqndom} into an equation for the good variables, we take the normal component of the trace on $\Gamma_t$ to obtain
\begin{equation}\label{qeqdom2}
D_t^\mp\mathcal{A}^\pm\cdot n_{\Gamma_t}-\frac{1}{2}\mathcal{N}(a^2)=g\cdot n_{\Gamma_t},
\end{equation}
where we used the dynamic boundary condition $P_{|{\Gamma_t}}=0$ to write $|\nabla P_{|\Gamma_t}|^2=a^2$. Since $D_t^\mp $ is tangent to $\Gamma_t$ and  $\mathcal{G}^\pm=-n_{\Gamma_t}\cdot\mathcal{A}^\pm$, we have
\begin{equation}\label{qeqdom3}
\begin{split}
D_t^\mp \mathcal{G}^\pm &=-D_t^\mp \mathcal{A}^\pm \cdot n_{\Gamma_t}-\mathcal{A}^\pm\cdot D_t^\mp n_{\Gamma_t}.
\end{split}
\end{equation}
Since $D_t^\mp n_{\Gamma_t}$ is also tangent to the boundary, we have by definition of $\mathcal{A}^\pm $ the identities $\mathcal{A}^\pm\cdot D_t^\mp n_{\Gamma_t}=D_t^\mp n_{\Gamma_t}\cdot D_t^\pm (\nabla P)=-aD_t^\mp n_{\Gamma_t}\cdot D_t^\pm n_{\Gamma_t}.$ Therefore, 
\begin{equation*}
D_t^\mp\mathcal{G}^\pm=-D_t^\mp\mathcal{A}^\pm\cdot n_{\Gamma_t}+aD_t^\mp n_{\Gamma_t}\cdot D_t^\pm n_{\Gamma_t}.
\end{equation*}
Combining \eqref{qeqdom2} and \eqref{qeqdom3}, we arrive at the equations
\begin{equation*}\label{adynamics1}
D_t^\mp\mathcal{G}^\pm+\frac{1}{2}\mathcal{N}(a^2)=-g\cdot n_{\Gamma_t}+aD_t^\mp n_{\Gamma_t}\cdot D_t^\pm n_{\Gamma_t},
\end{equation*}
which can be further reduced using the Leibniz type formula for $\mathcal{N}$ from \eqref{DNLeibniz} to 
\begin{equation}\label{adynamics}
D_t^\mp \mathcal{G}^\pm+a\mathcal{N}a=f,
\end{equation}
where
\begin{equation*}
f:=-g\cdot n_{\Gamma_t}+aD_t^\mp n_{\Gamma_t}\cdot D_t^\pm n_{\Gamma_t}+n_{\Gamma_t}\cdot\nabla\Delta^{-1}(|\nabla\mathcal{H}a|^2).
\end{equation*}
Since $\nabla_B$ and $D_t^\mp$ commute, we also obtain the identity
\begin{equation*}\label{Badynamics}
D_t^\mp\nabla_B\mathcal{G}^\pm+a\mathcal{N}\nabla_Ba=\nabla_Bf-[\nabla_B,a]\mathcal{N}a-a[\nabla_B,\mathcal{N}]a=:f_{B}.
\end{equation*}
To propagate $(a,\mathcal{G}^\pm)$ in $H^{k-1}(\Gamma_t)\times   H^{k-\frac{3}{2}}(\Gamma_t)$ and ($\nabla_Ba,\nabla_B\mathcal{G}^\pm)$ in $H^{k-\frac{3}{2}}(\Gamma_t)\times H^{k-2}(\Gamma_t)$, our strategy will be to  identify  solutions to the linearized system \eqref{DM gen} with perturbative source terms and then invoke the linearized energy estimates. With this goal in mind, we focus first on  $(a,\mathcal{G}^\pm)$. We define the good variables
\begin{equation*}
\begin{split}
&w^\pm:=\nabla\mathcal{H} \mathcal{N}^{k-2}\mathcal{G}^\pm,
\\
&s^\pm:=\mathcal{N}^{k-1}a,
\\
&q^\pm:=\mathcal{H} (  a\mathcal{N}^{k-1}a).
\end{split}    
\end{equation*}
Note that we clearly have $\nabla\cdot w^\pm=0$. Moreover, it is easy to see that $q^\pm_{|{\Gamma_t}}=as^\pm$
 and  $w_{|{\Gamma_t}}^\pm\cdot n_{\Gamma_t}=\mathcal{N}^{k-1}\mathcal{G}^\pm.$ Hence, 
\begin{equation*}
\begin{split}
D_t^\pm s^\pm-w^\pm_{|{\Gamma_t}}\cdot n_{\Gamma_t}=[D_t^\pm,\mathcal{N}^{k-1}]a+\mathcal{N}^{k-1}(D_t^\pm a-\mathcal{G}^\pm)=:\mathcal{R}.
\end{split}
\end{equation*}
By using the equation \eqref{adynamics} for $a$  and the Leibniz formula for $\mathcal{N}$,  we see that
\begin{equation*}\label{Qdef}
\begin{split}
D_t^\mp w^\pm +\nabla q^\pm&=\mathcal{Q}\hspace{3mm}\text{in}\ \Omega_t,
\end{split}
\end{equation*}
where 
\begin{equation*}\label{def of Q}
\mathcal{Q}:=-\nabla W^\mp\cdot w^\pm+\nabla [D_t^\mp,\mathcal{H}](\mathcal{N}^{k-2}\mathcal{G}^\pm)+\nabla\mathcal{H}[D_t^\mp,\mathcal{N}^{k-2}]\mathcal{G}^\pm+\nabla\mathcal{H}\mathcal{N}^{k-2}f-\nabla\mathcal{H}[\mathcal{N}^{k-2},a]\mathcal{N}a.    
\end{equation*}
To summarize the above in a compact form, we have
\begin{equation*}
\begin{cases}
&D_t^\mp w^\pm+\nabla q^\pm=\mathcal{Q} \ \text{in} \ \Omega_t,
\\
&\nabla\cdot w^\pm=0 \ \text{in} \ \Omega_t,
\\
&D_t^\pm s^\pm-w^\pm\cdot n_{\Gamma_t}=\mathcal{R} \ \text{on} \ \Gamma_t,
\\
&q^\pm=as^\pm \ \text{on}\ \Gamma_t.
\end{cases}    
\end{equation*}
Next, we similarly phrase the estimate for $(\nabla_Ba,\nabla_B\mathcal{G}^\pm)$ in terms of the linearized equations. We define the good variables $(w_B^\pm, s_B^\pm, q_B^\pm)$ via
\begin{equation*}
\begin{split}
&w_B^\pm:=-\nabla\mathcal{H} \mathcal{N}^{k-2}\nabla_Ba,
\\
&s_B^\pm:=a^{-1}\mathcal{N}^{k-2}\nabla_B\mathcal{G}^\mp,
\\
&q_B^\pm:=\mathcal{H}\mathcal{N}^{k-2}\nabla_B\mathcal{G}^\mp,
\end{split}    
\end{equation*}
which similarly to the above satisfy the equations
\begin{equation*}
\begin{cases}
&D_t^\mp w_B^\pm+\nabla q_B^\pm=\mathcal{R}_{B,1}+\mathcal{R}_{B,2} \ \text{in} \ \Omega_t,
\\
&\nabla\cdot w_B^\pm=0 \ \text{in} \ \Omega_t,
\\
&D_t^\pm s_B^\pm-w_B^\pm\cdot n_{\Gamma_t}=\mathcal{Q}_B \ \text{on} \ \Gamma_t,
\\
&q_B^\pm=as_B^\pm \ \text{on}\ \Gamma_t,
\end{cases}    
\end{equation*}
where
\begin{equation*}
\begin{split}
\mathcal{Q}_B&:=-a^{-2}D_t^\pm a\mathcal{N}^{k-2}\nabla_B\mathcal{G}^\mp+a^{-1}[D_t^\pm,\mathcal{N}^{k-2}]\nabla_B\mathcal{G}^\mp+a^{-1}\mathcal{N}^{k-2}f_B-a^{-1}[\mathcal{N}^{k-2},a]\mathcal{N}\nabla_Ba
\end{split}
\end{equation*}
and
\begin{equation*}
\mathcal{R}_{B,1}:=-\nabla W^\mp\cdot w_B^\pm-\nabla[D_t^\mp,\mathcal{H}]\mathcal{N}^{k-2}\nabla_Ba-\nabla\mathcal{H}[D_t^\mp,\mathcal{N}^{k-2}]\nabla_Ba,\hspace{5mm}\mathcal{R}_{B,2}:=\nabla\mathcal{H}\mathcal{N}^{k-2}\nabla_B(\mathcal{G}^\mp-D_t^\mp a).
\end{equation*}
\begin{remark}
We briefly remark that the reason we write $\mathcal{Q}$ as the source term for the $w^\pm$ equation (which is posed on $\Omega_t$) but write $\mathcal{Q}_B$ instead of $\mathcal{R}_B$ as the source term for the $s_B^\pm$ equation (which is posed on $\Gamma_t$) is because the linearized variables $w^\pm$ and $s_B^\pm$ correspond to the good variables $\mathcal{G}^\pm$ and $\nabla_B\mathcal{G}^\pm$, respectively, and thus, the corresponding source terms will have a similar structure in the estimates below. There is an identical motivation for denoting the other source terms by $\mathcal{R}$, $\mathcal{R}_{B,1}$ and $\mathcal{R}_{B,2}$ as these come from the equations for the good variables $a$ and $\nabla_Ba$, respectively.
\end{remark}
The linearized energy estimate  \eqref{Linear EE MHD} together with \Cref{Lambdakest} and Cauchy-Schwarz immediately gives the preliminary bound 
\begin{equation}\label{preliminaryirrest}
\begin{split}
\frac{d}{dt}E_{i,\pm}^k&\lesssim_{M_{s}} E^k+(\|\mathcal{R}\|_{L^2(\Gamma_t)}+\|\mathcal{R}_{B,1}\|_{L^2(\Omega_t)})(E^k)^{\frac{1}{2}}+(\|\mathcal{Q}\|_{L^2(\Omega_t)}+\|\mathcal{Q}_B\|_{L^2(\Gamma_t)})(E^k)^{\frac{1}{2}}
\\
&-\langle \nabla\mathcal{H}\mathcal{N}^{k-2}\nabla_B a, \mathcal{R}_{B,2}\rangle_{L^2(\Omega_t)}. 
\end{split}
\end{equation}
We are left to control the remaining terms on the right-hand side of \eqref{preliminaryirrest}. This will be where the bulk of the work is concentrated. 
\subsubsection{Control of \texorpdfstring{$\mathcal{R}$}{} and \texorpdfstring{$\mathcal{R}_{B,1}$}{}} Our goal is to show that
\begin{equation*}\label{Rest}
\|\mathcal{R}\|_{L^2(\Gamma_t)}+\|\mathcal{R}_{B,1}\|_{L^2(\Omega_t)}\lesssim_{M_{s}} (E^k)^{\frac{1}{2}}.
\end{equation*}
We begin with $\mathcal{R}$. The analysis of this term is almost identical to the analogous term in \cite{Euler}, but we include the short proof as a convenience to the reader. One ingredient we need is the following commutator estimate which is a consequence of Proposition 7.14 in \cite{Euler}.
\begin{proposition}\label{lowregcomm}
Let $s>\frac{d}{2}+1$ and let $f\in H^1(\Gamma_t)$. Then 
\begin{equation*}
\|[\mathcal{N},D_t^\pm]f\|_{L^2(\Gamma_t)}\lesssim_{M_s} \|f\|_{H^1(\Gamma_t)}.
\end{equation*}
\end{proposition}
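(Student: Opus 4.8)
The plan is to reduce the estimate to an explicit operator identity for $[\mathcal{N},D_t^\pm]$ acting on smooth data, and then to bound the resulting terms one by one with the balanced elliptic and trace estimates of \Cref{BEE}. The argument is structurally the same as the one for the free boundary Euler equations in \cite[Proposition 7.14]{Euler}, the only inputs being that the skewed transport field $W^\pm=v\pm B$ is divergence-free, tangent to $\Gamma_t$, and of regularity $H^s$ with $s>\tfrac d2+1$.

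First I would record the behaviour of $D_t^\pm$ on the harmonic extension. Writing $\mathcal{N}f=n_{\Gamma_t}\cdot\nabla\mathcal{H}f|_{\Gamma_t}$, and using that $D_t^\pm$ is tangent to $\bigcup_t\{t\}\times\Gamma_t$ (so restriction to the boundary commutes with $D_t^\pm$) together with $\nabla\cdot W^\pm=0$ (so $[\Delta,D_t^\pm]g=\nabla\cdot\mathcal{M}_2(\nabla W^\pm,\nabla g)$), one gets
\begin{equation*}
D_t^\pm\mathcal{H}f=\mathcal{H}(D_t^\pm f)+\Delta^{-1}\big(\nabla\cdot\mathcal{M}_2(\nabla W^\pm,\nabla\mathcal{H}f)\big),
\end{equation*}
with $\Delta^{-1}$ taken with zero Dirichlet data. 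Combining this with $D_t^\pm\nabla g=\nabla D_t^\pm g-(\nabla W^\pm)\cdot\nabla g$ and with $D_t^\pm n_{\Gamma_t}=-((\nabla W^\pm)^*n_{\Gamma_t})^{\top}$ from \Cref{Movingsurfid}, one obtains, after straightforward rearrangement, an identity of the schematic form
\begin{equation*}
[\mathcal{N},D_t^\pm]f=(D_t^\pm n_{\Gamma_t})\cdot\nabla\mathcal{H}f\big|_{\Gamma_t}+n_{\Gamma_t}\cdot\big((\nabla W^\pm)\cdot\nabla\mathcal{H}f\big)\big|_{\Gamma_t}+n_{\Gamma_t}\cdot\nabla\Delta^{-1}\big(\nabla\cdot\mathcal{M}_2(\nabla W^\pm,\nabla\mathcal{H}f)\big)\big|_{\Gamma_t}.
\end{equation*}
The key preliminary observation is that for $f\in H^1(\Gamma_t)$ the \emph{full} gradient trace satisfies $\|\nabla\mathcal{H}f|_{\Gamma_t}\|_{L^2(\Gamma_t)}\lesssim_{M_s}\|f\|_{H^1(\Gamma_t)}$, since its tangential part is $\nabla^{\top}f$ while its normal part is $\mathcal{N}f$, controlled by the mapping property $\mathcal{N}\colon H^1(\Gamma_t)\to L^2(\Gamma_t)$ from \Cref{baselineDN}.

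With this in hand the first two terms are immediate: $\|D_t^\pm n_{\Gamma_t}\|_{L^\infty(\Gamma_t)}$ and $\|\nabla W^\pm\|_{L^\infty(\Omega_t)}$ are $\lesssim_{M_s}1$ because $s>\tfrac d2+1$ forces $W^\pm\in C^1$, so both terms are $\lesssim_{M_s}\|\nabla\mathcal{H}f|_{\Gamma_t}\|_{L^2(\Gamma_t)}\lesssim_{M_s}\|f\|_{H^1(\Gamma_t)}$. For the third, elliptic, term I would apply the trace theorem and the balanced elliptic estimate \Cref{direst} for $\Delta^{-1}$ with zero Dirichlet data, then a product estimate (\Cref{Multilinearest}, using $\nabla W^\pm\in L^\infty\cap H^{s-1}(\Omega_t)$ with $s-1>\tfrac d2$) and the harmonic extension bound $\mathcal{H}\colon H^1(\Gamma_t)\to H^{3/2}(\Omega_t)$ from \Cref{Hbounds}:
\begin{equation*}
\Big\|n_{\Gamma_t}\cdot\nabla\Delta^{-1}\big(\nabla\cdot\mathcal{M}_2(\nabla W^\pm,\nabla\mathcal{H}f)\big)\big|_{\Gamma_t}\Big\|_{L^2(\Gamma_t)}\lesssim_{M_s}\big\|\mathcal{M}_2(\nabla W^\pm,\nabla\mathcal{H}f)\big\|_{H^{1/2}(\Omega_t)}\lesssim_{M_s}\|\nabla\mathcal{H}f\|_{H^{1/2}(\Omega_t)}\lesssim_{M_s}\|f\|_{H^1(\Gamma_t)},
\end{equation*}
where a paradifferential splitting of $\nabla W^\pm$ into low and high frequencies (as in \Cref{SSRO}) may be needed to keep the constant genuinely $M_s$-dependent. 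The hard part is precisely this elliptic term: one must gain $3/2$ derivatives for $\Delta^{-1}(\nabla\cdot F)$ over $F$ with a constant depending only on the low-regularity quantity $M_s$ rather than on higher Sobolev norms of $\Gamma_t$ — this is borderline for the naive elliptic estimate and is exactly what the balanced estimates of \Cref{BEE} are designed to deliver. Finally, taking $B\equiv 0$ recovers \cite[Proposition 7.14]{Euler}, and for the alternative split $[\mathcal{N},D_t^\pm]=[\mathcal{N},D_t]\pm[\mathcal{N},\nabla_B]$ the second commutator is handled by the identical fixed-time computation, using that $B$ is divergence-free and tangent to $\Gamma_t$ and that $\nabla_B n_{\Gamma_t}=-\nabla^{\top}B\cdot n_{\Gamma_t}$ by \Cref{commutatorremark}.
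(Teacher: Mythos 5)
Your plan — derive the explicit operator identity for $[\mathcal{N},D_t^\pm]$ from the moving-surface identities in \Cref{Movingsurfid} and then estimate term by term with the elliptic toolbox of \Cref{BEE} — is the approach the paper implicitly takes (it points to \cite[Proposition~7.14]{Euler} and observes that the proof carries over verbatim to $D_t^\pm$). The treatment of the first two terms is correct, and the preliminary observation that $\nabla\mathcal{H}f|_{\Gamma_t}$ splits into $\nabla^{\top}f$ and $(\mathcal{N}f)\,n_{\Gamma_t}$, each in $L^2(\Gamma_t)$ by \Cref{baselineDN2}, is exactly right.

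The gap is in the elliptic term. The first inequality of your chain,
\begin{equation*}
\Big\|n_{\Gamma_t}\cdot\nabla\Delta^{-1}\big(\nabla\cdot F\big)\big|_{\Gamma_t}\Big\|_{L^2(\Gamma_t)}\lesssim_{M_s}\|F\|_{H^{1/2}(\Omega_t)},
\end{equation*}
is not a valid estimate for a generic $F\in H^{1/2}(\Omega_t)$; this is the borderline case of the Neumann trace for the zero-Dirichlet problem, and it genuinely fails. A half-space Fourier computation shows that after one integration by parts the Neumann trace of $\Delta^{-1}(\nabla\cdot F)$ contains the Dirichlet trace $F\cdot n_{\Gamma_t}|_{\Gamma_t}$, whose $L^2(\Gamma_t)$ norm is \emph{not} controlled by $\|F\|_{H^{1/2}(\Omega_t)}$. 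The correct endpoint statement is
\begin{equation*}
\Big\|n_{\Gamma_t}\cdot\nabla\Delta^{-1}\big(\nabla\cdot F\big)\big|_{\Gamma_t}\Big\|_{L^2(\Gamma_t)}\lesssim_{A}\|F\|_{H^{1/2}(\Omega_t)}+\|F\cdot n_{\Gamma_t}\|_{L^2(\Gamma_t)},
\end{equation*}
and you need the second term. None of the lemmas you invoke actually reach this exponent: \Cref{direst} requires $s\geq 2$, the balanced Neumann trace bound \Cref{L1bound} requires $s>0$ strictly, and the naive trace theorem from $H^{1/2}(\Omega_t)$ into $L^2(\Gamma_t)$ fails. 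So the remark that ``this is exactly what the balanced estimates of \Cref{BEE} are designed to deliver'' is not accurate at this endpoint. The gap is fortunately easy to close: since $\nabla\mathcal{H}f|_{\Gamma_t}\in L^2(\Gamma_t)$ by your own preliminary observation and $\nabla W^\pm\in C^{\epsilon}(\overline\Omega_t)$, one has $\|F\cdot n_{\Gamma_t}\|_{L^2(\Gamma_t)}\lesssim_{M_s}\|\nabla\mathcal{H}f|_{\Gamma_t}\|_{L^2(\Gamma_t)}\lesssim_{M_s}\|f\|_{H^1(\Gamma_t)}$, so the proposition is still true — but your argument, as written, omits this boundary contribution. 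You should either record the extra trace term explicitly, or carry out the dyadic decomposition of $\mathcal{H}f$ that you gesture at so that every dyadic block is treated by \Cref{baltrace} and \Cref{direst} at exponents strictly away from the endpoint.
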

\begin{remark}
Technically, the above proposition as stated in \cite{Euler} applies to the material derivative $D_t$, but the proof there applies almost verbatim to handle the case $D_t^\pm$. 
\end{remark}
Returning to  the estimate for $\mathcal{R}_1:=[D_t^\pm,\mathcal{N}^{k-1}]a$, we begin by writing
\begin{equation*}
[D_t^\pm,\mathcal{N}^{k-1}]a=[D_t^\pm,\mathcal{N}]\mathcal{N}^{k-2}a+\mathcal{N}[D_t^\pm,\mathcal{N}^{k-2}]a.
\end{equation*}
By \Cref{lowregcomm}, we have
\begin{equation*}
\|[D_t^\pm,\mathcal{N}]\mathcal{N}^{k-2}a\|_{L^2(\Gamma_t)}\lesssim_{M_s}\|\mathcal{N}^{k-2}a\|_{H^{1}(\Gamma_t)}.
\end{equation*}
Using \Cref{higherpowers}, \Cref{Linfest} and \Cref{aest}, we obtain the estimate
\begin{equation*}
\|\mathcal{N}^{k-2}a\|_{H^{1}(\Gamma_t)}\lesssim_{M_s} \|a\|_{H^{k-1}(\Gamma_t)}+\|\Gamma_t\|_{H^k}\|a\|_{L^{\infty}(\Gamma_t)}\lesssim_{M_s} (E^k)^{\frac{1}{2}}.
\end{equation*}
On the other hand, using \Cref{materialcom} and the coercivity bound, we may estimate
\begin{equation*}
\begin{split}
\|[D_t^\pm,\mathcal{N}^{k-2}]a\|_{H^1(\Gamma_t)}\lesssim_{M_s} \|a\|_{H^{k-1}(\Gamma_t)}+\|a\|_{C^{\frac{1}{2}}(\Gamma_t)}(\|\Gamma_t\|_{H^k}+\|W^\pm\|_{H^k(\Omega_t)})\lesssim_{M_s} (E^k)^{\frac{1}{2}}.    
\end{split}
\end{equation*}
Next, we turn to the estimate for $\mathcal{N}^{k-1}(D_t^\pm a-\mathcal{G}^\pm)$. We begin by recalling that
\begin{equation*}
D_t^\pm a-\mathcal{G}^\pm=-\nabla_n\Delta^{-1}D_t^\pm\Delta P=:-\nabla_n\mathcal{C}^\pm.
\end{equation*}
Hence, by \Cref{EEcorollary} and \Cref{direst} we have
\begin{equation}\label{correctionest}
\begin{split}
\|\mathcal{N}^{k-1}\nabla_n\mathcal{C}^\pm\|_{L^2(\Gamma_t)}&\lesssim_{M_{s}} \|\mathcal{N}^{k-2}\nabla_n\mathcal{C}^\pm\|_{H^1(\Gamma_t)}\lesssim_{M_s} \|\mathcal{C}^\pm\|_{H^{k+\frac{1}{2}}(\Omega_t)}+\|\Gamma_t\|_{H^k}\|\mathcal{C}^\pm\|_{C^1(\Omega_t)}.
\end{split}
\end{equation}
By Sobolev embedding, the estimate \eqref{correctionsestimates} from  
\Cref{Lambdakest} and the energy coercivity, we see that
\begin{equation*}
\|\mathcal{N}^{k-1}(D_t^\pm a-\mathcal{G}^\pm)\|_{L^2(\Gamma_t)}\lesssim_{M_s} (E^k)^{\frac{1}{2}}.
\end{equation*}
This concludes the estimate for $\mathcal{R}$. Now, we turn to $\mathcal{R}_{B,1}$. Clearly, we have
\begin{equation*}
\|\nabla W^\mp\cdot w_B^\pm\|_{L^2(\Omega_t)}\lesssim_{M_s} (E^k)^{\frac{1}{2}}.
\end{equation*}
To handle the second term in the definition of $\mathcal{R}_{B,1}$, we begin by recalling the simple commutator identity
\begin{equation*}\label{Hcomm}
[D_t^{\pm},\mathcal{H}]\psi=\Delta^{-1}\nabla\cdot\mathcal{B}(\nabla W^\pm,\nabla \mathcal{H}\psi)
\end{equation*}
 from \eqref{Sid}. Invoking the $H^{-1}\to H_0^1$ bound for $\Delta^{-1}$, we conclude that
\begin{equation*}
\|\nabla [D_t^\pm,\mathcal{H}]\mathcal{N}^{k-2}\nabla_B a\|_{L^2(\Omega_t)}\lesssim_{M_s}\|\nabla\mathcal{H}\mathcal{N}^{k-2}\nabla_B a\|_{L^2(\Omega_t)}\lesssim_{M_s} (E^k)^{\frac{1}{2}}.
\end{equation*}
To estimate the last term in the definition of $\mathcal{R}_{B,1}$, we use the $H^{\frac{1}{2}}\to H^1$ bound for $\mathcal{H}$ and \Cref{materialcom} to obtain 
\begin{equation*}
\begin{split}
\|\nabla\mathcal{H}[D_t^\pm,\mathcal{N}^{k-2}]\nabla_Ba\|_{L^2(\Omega_t)}\lesssim_{M_s}\|\nabla_B a\|_{H^{k-\frac{3}{2}}(\Gamma_t)}+\left(\|W^\pm\|_{H^k(\Omega_t)}+\|\Gamma_t\|_{H^k}\right)\|\nabla_Ba\|_{L^{\infty}(\Gamma_t)}.
\end{split}
\end{equation*}
Then writing $\nabla_B a=-n_{\Gamma_t}\cdot\nabla_B\nabla P$ and using \Cref{Lambdakest}, we see that $\|\nabla_Ba\|_{L^{\infty}(\Gamma_t)}\lesssim_{M_s} 1$. Combining this with \Cref{aest}, the energy coercivity estimate and Sobolev embedding, we have
\begin{equation*}
\|\mathcal{R}_{B,1}\|_{L^2(\Omega_t)}\lesssim_{M_s} (E^k)^{\frac{1}{2}}.
\end{equation*}
\subsubsection{Control of \texorpdfstring{$\mathcal{Q}$}{}} Next, we estimate $\mathcal{Q}$ and some terms in $\mathcal{Q}_{B}$. First, by applying similar arguments as in the estimates for $\mathcal{R}_1$ and $\mathcal{R}_{B,1}$, we may easily control the first three terms in the definition of $\mathcal{Q}$. More precisely, we have 
\begin{equation*}
\|\nabla W^\mp\cdot w^\pm\|_{L^2(\Omega_t)}+\|\nabla [D_t^\mp,\mathcal{H}]\mathcal{N}^{k-2}\mathcal{G}^\pm\|_{L^2(\Omega_t)}+\|\nabla\mathcal{H}[D_t^\mp,\mathcal{N}^{k-2}]\mathcal{G}^\pm\|_{L^2(\Omega_t)}\lesssim_{M_s} (E^k)^{\frac{1}{2}}.
\end{equation*}
Thanks to the bound $\|D_t^\pm a\|_{L^{\infty}(\Gamma_t)}\lesssim_{M_s}1$ and the definition of $E^k$, the first term in the definition of $\mathcal{Q}_B$ may be estimated immediately by
\begin{equation*}
\|a^{-2}D_t^\pm a\mathcal{N}^{k-2}\nabla_B\mathcal{G}^\mp\|_{L^2(\Gamma_t)}\lesssim_{M_s}(E^k)^{\frac{1}{2}}.
\end{equation*}
The second term $a^{-1}[D_t^\pm,\mathcal{N}^{k-2}]\nabla_B\mathcal{G}^\mp$ is a bit more delicate. As in the estimate for $[D^\pm,\mathcal{N}^{k-1}]a$, we can first bound 
\begin{equation}\label{commutatoresidual}
\|a^{-1}[D_t^\pm,\mathcal{N}^{k-2}]\nabla_B\mathcal{G}^\mp\|_{L^2(\Gamma_t)}\lesssim_{M_s} \|[D_t^\pm,\mathcal{N}^{k-3}]\nabla_B\mathcal{G}^\mp\|_{H^1(\Gamma_t)}+\|\mathcal{N}^{k-3}\nabla_B\mathcal{G}^\mp\|_{H^1(\Gamma_t)}.
\end{equation}
To estimate the first term in \eqref{commutatoresidual}, we  recall that we can write
\begin{equation*}
\nabla_B\mathcal{G}^\mp=-\nabla_B n_{\Gamma_t}\cdot\mathcal{A}^\mp-n_{\Gamma_t}\cdot\nabla_B\mathcal{A}^\mp.
\end{equation*}
Using the fact that $\|\nabla_B n_{\Gamma_t}\cdot\mathcal{A}^\mp\|_{L^{\infty}(\Gamma_t)}\lesssim_{M_s}1$, the bound \eqref{crossterms}, \Cref{materialcom} and the energy coercivity, we can estimate 
\begin{equation*}
\|[D_t^\pm,\mathcal{N}^{k-3}](\nabla_B n_{\Gamma_t}\cdot\mathcal{A}^\mp)\|_{H^1(\Gamma_t)}\lesssim_{M_s} \|\nabla_B n_{\Gamma_t}\cdot\mathcal{A}^\mp\|_{H^{k-2}(\Gamma_t)}+\|W^\pm\|_{H^k(\Omega_t)}+\|\Gamma_t\|_{H^k}\lesssim_{M_s}(E^k)^{\frac{1}{2}}.
\end{equation*}
On the other hand, using the partition $n_{\Gamma_t}\cdot\nabla_B\mathcal{A}^\mp=n_{\Gamma_t}\cdot\Phi_{\leq j}\nabla_B\mathcal{A}^\mp+n_{\Gamma_t}\cdot\Phi_{\geq j}\nabla_B\mathcal{A}^\mp$, \Cref{materialcom}, Sobolev embeddings and the properties of $\Phi_{\leq j}$, we have
\begin{equation*}
\begin{split}
\|[D_t^\pm,\mathcal{N}^{k-3}](n_{\Gamma_t}\cdot\nabla_B\mathcal{A}^\mp)\|_{H^1(\Gamma_t)}\lesssim_{M_s} &\|n_{\Gamma_t}\cdot\nabla_B\mathcal{A}^\mp\|_{H^{k-2}(\Gamma_t)}+(\|W^\pm\|_{H^k(\Omega_t)}+\|\Gamma_t\|_{H^k})\|\nabla_B\mathcal{A}^\mp\|_{H^{s-\frac{3}{2}}(\Omega_t)}
\\
+&\|\nabla_B\mathcal{A}^\mp\|_{H^{k-\frac{3}{2}}(\Omega_t)}.
\end{split}
\end{equation*}
As in the estimates following the div-curl analysis in \eqref{divcurlgradba} and the energy coercivity, we then obtain
\begin{equation*}
\|[D_t^\pm,\mathcal{N}^{k-3}](n_{\Gamma_t}\cdot\nabla_B\mathcal{A}^\mp)\|_{H^1(\Gamma_t)}\lesssim_{M_s} (E^k)^{\frac{1}{2}}.
\end{equation*}
Combining the above estimates and using a similar analysis to deal with $\mathcal{N}^{k-3}\nabla_B\mathcal{G}^\mp$ (except using \Cref{higherpowers} in place of \Cref{materialcom}), we obtain
\begin{equation*}
\|a^{-1}[D_t^\pm,\mathcal{N}^{k-2}]\nabla_B\mathcal{G}^\mp\|_{L^2(\Gamma_t)}\lesssim_{M_s} (E^k)^{\frac{1}{2}}
\end{equation*}
as desired.
\subsubsection{Control of \texorpdfstring{$\nabla\mathcal{H}\mathcal{N}^{k-2}f$}{} and \texorpdfstring{$\mathcal{N}^{k-2}f_B$}{}}
Next, we turn to the estimates for $\nabla\mathcal{H}\mathcal{N}^{k-2}f$ and $\mathcal{N}^{k-2}f_B$. We  recall that 
\begin{equation*}
f:=-g\cdot n_{\Gamma_t}+aD_t^\mp n_{\Gamma_t}\cdot D_t^\pm n_{\Gamma_t}+n_{\Gamma_t}\cdot\nabla\Delta^{-1}(|\nabla\mathcal{H}a|^2)
\end{equation*}
and
\begin{equation*}
f_B:=\nabla_B f-[\nabla_B, a]\mathcal{N}a-a[\nabla_B,\mathcal{N}]a,
\end{equation*}
where $g$ is defined as in \eqref{aeqndom}. We first dispense with the commutators in the definition of $f_B$. We have
\begin{equation*}
[\nabla_B,a]\mathcal{N}a=\nabla_Ba\mathcal{N}a.
\end{equation*}
Writing $\mathcal{N}a=n_{\Gamma_t}\cdot\nabla\Phi_{<j}\mathcal{H}a+n_{\Gamma_t}\cdot\nabla\Phi_{\geq j}\mathcal{H}a=:\mathcal{N}_{<j}a+\mathcal{N}_{\geq j}a$, we conclude from \Cref{baselineDN2}, \Cref{higherpowers}, \Cref{boundaryest}, the trace theorem and the bound $\|\nabla_Ba\|_{C^{\epsilon}(\Gamma_t)}\lesssim_{M_s}1$ that
\begin{equation*}
\begin{split}
\|\mathcal{N}^{k-2}(\nabla_Ba\mathcal{N}a)\|_{L^2(\Gamma_t)}\lesssim_{M_s}&\|\mathcal{N}a\|_{H^{k-2}(\Gamma_t)}+(\|\Gamma_t\|_{H^k}+\|\nabla_Ba\|_{H^{k-\frac{3}{2}}(\Gamma_t)})\sup_{j>0}2^{-\frac{j}{2}}\|\mathcal{N}_{<j}a\|_{L^{\infty}(\Gamma_t)}
\\
+&\sup_{j>0}2^{j(k-2-\epsilon)}\|\mathcal{N}_{\geq j}a\|_{H^{\epsilon}(\Gamma_t)}
\\
\lesssim_{M_s}&\|\mathcal{N}a\|_{H^{k-2}(\Gamma_t)}+\|\mathcal{H}a\|_{H^{k-\frac{1}{2}}(\Omega_t)}+(\|\Gamma_t\|_{H^k}+\|\nabla_Ba\|_{H^{k-\frac{3}{2}}(\Gamma_t)})\|\mathcal{H}a\|_{H^{s-\frac{1}{2}}(\Omega_t)}.
\end{split}
\end{equation*}
Using \Cref{DNpower1} and \Cref{aest}, it is easy to see that the first term on the right-hand side above is controlled by the energy. Using \Cref{Hbounds} and the bound $\|a\|_{L^{\infty}(\Gamma_t)}\lesssim_{M_s}1$, the same is true for the second term on the right. Moreover, by \Cref{Hbounds}, we have $\|\mathcal{H}a\|_{H^{s-\frac{1}{2}}(\Omega_t)}\lesssim_{M_s}1$. Therefore, from \Cref{aest} and the energy coercivity, we have
\begin{equation*}
\|\mathcal{N}^{k-2}(\nabla_Ba\mathcal{N}a)\|_{L^2(\Gamma_t)}\lesssim_{M_s}(E^k)^{\frac{1}{2}}.
\end{equation*}
To handle the other commutator, we recall from \Cref{Movingsurfid} and \Cref{commutatorremark} that we may write
\begin{equation}\label{NBcom}
[\nabla_B,\mathcal{N}]a=\nabla_B n_{\Gamma_t}\cdot\nabla\mathcal{H}a-n_{\Gamma_t}\cdot ((\nabla B)^*(\nabla\mathcal{H}a))+\nabla_n\Delta^{-1}\nabla\cdot \mathcal{M}_2(\nabla B,\nabla\mathcal{H}a).
\end{equation}
We consider the partition $[\nabla_B,\mathcal{N}]a=T_j^1+T_j^2$  of the above commutator where $T_j^1$ is defined by replacing all instances of the term $\mathcal{H}a$ in \eqref{NBcom} with $\Phi_{\leq j}\mathcal{H}a$. It is easy to verify using a similar analysis to the above that
\begin{equation*}
\|T_j^1\|_{L^{\infty}(\Gamma_t)}\lesssim_{M_s}2^{\frac{j}{2}},\hspace{5mm}\|T_j^2\|_{H^{\epsilon}(\Gamma_t)}\lesssim_{M_s}2^{-j(k-2-\epsilon)}(E^k)^{\frac{1}{2}}.
\end{equation*}
Therefore, from \Cref{baselineDN2}, \Cref{higherpowers}, \Cref{boundaryest} and the energy coercivity, we have
\begin{equation*}
\|\mathcal{N}^{k-2}(a[\nabla_B,\mathcal{N}]a)\|_{L^2(\Gamma_t)}\lesssim_{M_s} \|[\nabla_B,\mathcal{N}]a\|_{H^{k-2}(\Gamma_t)}+(E^k)^{\frac{1}{2}}.
\end{equation*}
Then, using \Cref{boundaryest}, the partition $\mathcal{H}a=\Phi_{\leq j}\mathcal{H}a+\Phi_{\geq j}\mathcal{H}a$ and arguing similarly to the above, we see that
\begin{equation*}
\|\nabla_B n_{\Gamma_t}\cdot\nabla\mathcal{H}a-n_{\Gamma_t}\cdot ((\nabla B)^*(\nabla\mathcal{H}a))\|_{H^{k-2}(\Gamma_t)}\lesssim_{M_s}(E^k)^{\frac{1}{2}}.
\end{equation*}
Moreover, from \Cref{L1bound}, Sobolev embedding, \Cref{direst} and \Cref{Multilinearest}, we  have
\begin{equation*}
\|\nabla_n\Delta^{-1}\nabla\cdot\mathcal{M}_2(\nabla B,\nabla\mathcal{H}a)\|_{H^{k-2}(\Gamma_t)}\lesssim_{M_s} (E^k)^{\frac{1}{2}}.
\end{equation*}
Combining everything gives
\begin{equation*}
\|\mathcal{N}^{k-2}(a[\nabla_B,\mathcal{N}]a)\|_{L^2(\Gamma_t)}\lesssim_{M_s}(E^k)^{\frac{1}{2}}.
\end{equation*}
Now, we turn to the estimates involving $f$ and $\nabla_Bf$. Using the identities 
\[
D_t^\pm n_{\Gamma_t}=-((\nabla W^\pm)^*n_{\Gamma_t})^{\top}=-(\nabla W^\pm)^*n_{\Gamma_t}+n_{\Gamma_t}(n_{\Gamma_t}\cdot (\nabla W^\pm)^*n_{\Gamma_t}), \qquad |\nabla\mathcal{H}a|^2=\frac{1}{2}\Delta |\mathcal{H}a|^2,
\]
we may reorganize $f$ as
\begin{equation}\label{fsimp}
f=\frac{1}{2}\nabla_n \Delta^{-1}\Delta (\mathcal{H}a)^2-\frac{1}{2}\nabla_n \Delta^{-1}\Delta |\nabla P|^2+\mathcal{M}-\nabla_n D_t^\mp \mathcal{B}^\pm,
\end{equation}
where $\mathcal{M}$ is multilinear in $n_{\Gamma_t}$, $\nabla P$, $\nabla W^\pm$, $\nabla\mathcal{B}^\pm$ and $\nabla D_t^\mp P$. Next, we estimate each term in $\nabla\mathcal{H}\mathcal{N}^{k-2}f$, with the  expression \eqref{fsimp} for $f$ substituted in. We begin with $F_1:=\nabla \mathcal{H}\mathcal{N}^{k-2}\nabla_n\Delta^{-1}\Delta(\mathcal{H}a)^2$ and $F_{B,1}:=\mathcal{N}^{k-2}\nabla_B\nabla_n\Delta^{-1}\Delta(\mathcal{H}a)^2$. We first note the pointwise bounds
\begin{equation}\label{F1bound}
\|\Delta^{-1}\Delta(\mathcal{H}a)^2\|_{C^{1}(\Omega_t)}\lesssim_{M_s}\|\Delta (\mathcal{H}a)^2\|_{H^{s-2}(\Omega_t)}\lesssim_{M_s}\|a\|_{C^{\frac{1}{2}}(\Gamma_t)}\|\mathcal{H}a\|_{H^{s-\frac{1}{2}}(\Omega_t)}\lesssim_{M_s}1,
\end{equation}
\begin{equation}\label{F1Bbound}
\|[\nabla_B,\nabla_n]\Delta^{-1}\Delta(\mathcal{H}a)^2\|_{L^{\infty}(\Gamma_t)}+\|[\nabla_B,\Delta^{-1}]\Delta(\mathcal{H}a)^2\|_{L^{\infty}(\Omega_t)}\lesssim_{M_s}1
\end{equation}
and
\begin{equation}\label{F1Bbound2}
\|\Delta^{-1}\nabla_B|\nabla\mathcal{H}a|^2\|_{L^{\infty}(\Omega_t)}\lesssim_{M_s}\|\nabla_B|\nabla\mathcal{H}a|^2\|_{H^{s-3}(\Omega_t)}\lesssim_{M_s}\||\nabla\mathcal{H}a|^2\|_{H^{s-2}(\Omega_t)}\lesssim_{M_s}1.
\end{equation}
From the $H^{\frac{1}{2}}\to H^1$ bound for $\mathcal{H}$, \Cref{EEcorollary}, \eqref{F1bound}, \Cref{direst}, \Cref{Multilinearest}, \Cref{Hbounds} and the energy coercivity, we have
\begin{equation*}
\begin{split}
\|F_1\|_{L^2(\Omega_t)}&\lesssim_{M_{s}} \|\Gamma_t\|_{H^k}\|\Delta^{-1}\Delta(\mathcal{H}a)^2\|_{C^{\frac{1}{2}}(\Omega_t)}+\||\nabla \mathcal{H}a|^2\|_{H^{k-2}(\Omega_t)}\lesssim_{M_s}(E^k)^{\frac{1}{2}}.
\end{split}
\end{equation*}
 Using \Cref{baselineDN2}, \Cref{higherpowers} and \eqref{F1bound}-\eqref{F1Bbound2} where relevant, a similar argument gives
\begin{equation*}
\|F_{B,1}\|_{L^2(\Gamma_t)}\lesssim_{M_s} (E^k)^{\frac{1}{2}}.
\end{equation*}
Next, we turn to the estimate for $F_2:=\nabla\mathcal{H}\mathcal{N}^{k-2}\nabla_n\Delta^{-1}\Delta|\nabla P|^2$ and $F_{B,2}:=\mathcal{N}^{k-2}\nabla_B\nabla_n\Delta^{-1}\Delta|\nabla P|^2$. Writing $\Delta |\nabla P|^2=2|\nabla ^2 P|^2-2\nabla P\cdot (\partial_iW_j^+\partial_jW_i^-)$, we obtain in an analogous fashion to the pointwise estimates above 
\begin{equation*}\label{F2estimate}
\|\Delta^{-1}\Delta |\nabla P|^2\|_{C^1(\Omega_t)}\lesssim_{M_s} 1
\end{equation*}
and
\begin{equation*}\label{F2Bestimate}
\|[\nabla_B,\nabla_n]\Delta^{-1}\Delta |\nabla P|^2\|_{L^{\infty}(\Gamma_t)}+\|[\nabla_B,\Delta^{-1}]\Delta |\nabla P|^2\|_{L^{\infty}(\Omega_t)}\lesssim_{M_s}1,\hspace{5mm}\|\Delta^{-1}\nabla_B\Delta |\nabla P|^2\|_{L^{\infty}(\Omega_t)}\lesssim_{M_s}1.
\end{equation*}
We conclude that
\begin{equation*}
\|F_2\|_{L^2(\Omega_t)}+\|F_{B,2}\|_{L^2(\Gamma_t)}\lesssim_{M_s}(E^k)^{\frac{1}{2}}.
\end{equation*}
Next, we estimate $\nabla\mathcal{H}\mathcal{N}^{k-2}\mathcal{M}$ and $\mathcal{N}^{k-2}\nabla_B \mathcal{M}$. By \Cref{higherpowers}, \Cref{baltrace}, \Cref{Lambdakest}, \Cref{boundaryest} and the energy coercivity, we have
\begin{equation*}
\|\nabla\mathcal{H}\mathcal{N}^{k-2}\mathcal{M}\|_{L^2(\Omega_t)}\lesssim_{M_s}\|\mathcal{N}^{k-2}\mathcal{M}\|_{H^{\frac{1}{2}}(\Gamma_t)}\lesssim_{M_s} \|\mathcal{M}\|_{H^{k-\frac{3}{2}}(\Gamma_t)}+\|\Gamma_t\|_{H^k}\|\mathcal{M}\|_{L^{\infty}(\Gamma_t)}\lesssim_{M_s}(E^k)^{\frac{1}{2}}.
\end{equation*}
To estimate $\mathcal{N}^{k-2}\nabla_B \mathcal{M}$, we observe that $\nabla_B\mathcal{M}=\mathcal{M}'\mathcal{T}+\mathcal{M}'$ where $\mathcal{M}'$ is a multilinear expression in the variables
$n_{\Gamma_t},\nabla P,\nabla B,\nabla W^\pm, \nabla\mathcal{B}^\pm, \nabla D_t^\pm P, \nabla_B n_{\Gamma_t}$ and $\nabla_B\nabla P$ and $\mathcal{T}$ is either $\nabla\nabla_B\mathcal{B}^\pm$, $\nabla\nabla_B D_t^\pm P$, $\nabla\nabla_BB$ or $\nabla\nabla_BW^\pm$. Using the partition $\mathcal{T}=\Phi_{\leq j}\mathcal{T}+\Phi_{\geq j}\mathcal{T}$ and the fact that $\|\mathcal{M}'\|_{C^{\epsilon}(\Gamma_t)}\lesssim_{M_s}1$, we can argue similarly to the above to obtain
\begin{equation*}
\begin{split}
\|\mathcal{N}^{k-2}\nabla_B \mathcal{M}\|_{L^2(\Gamma_t)}\lesssim_{M_s}&\|\mathcal{T}\|_{H^{k-\frac{3}{2}}(\Omega_t)}+(\|\mathcal{M}'\|_{H^{k-\frac{3}{2}}(\Gamma_t)}+\|\Gamma_t\|_{H^k})\sup_{j>0}2^{-\frac{j}{2}}\|\Phi_{\leq j}\mathcal{T}\|_{L^{\infty}(\Omega_t)}
\\
+&\sup_{j>0}2^{j(k-2-\epsilon)}\|\Phi_{\geq j}\mathcal{T}\|_{H^{\frac{1}{2}+\epsilon}(\Omega_t)}
\\
\lesssim_{M_s} &\|\mathcal{T}\|_{H^{k-\frac{3}{2}}(\Omega_t)}+(E^k)^{\frac{1}{2}}\|\mathcal{T}\|_{H^{s-\frac{3}{2}}(\Omega_t)}.
\end{split}
\end{equation*}
Using \Cref{Lambdakest}, this finally gives
\begin{equation*}
\|\mathcal{N}^{k-2}\nabla_B \mathcal{M}\|_{L^2(\Gamma_t)}\lesssim_{M_s}(E^k)^{\frac{1}{2}}.
\end{equation*}
Next, we control $\nabla_nD_t^\mp\mathcal{B}^\pm$ and $\nabla_B\nabla_n D_t^\mp\mathcal{B}^\pm$. We begin by recalling the decomposition
\begin{equation*}
\begin{split}
\Delta \mathcal{B}^\pm &=\Delta W^\pm\cdot\nabla P+2\nabla^2P\cdot\nabla W^\pm.
\end{split}
\end{equation*}
Using the equations \eqref{pm equations} and the Laplace equation for the pressure, we can write
\begin{equation*}\label{decompositionforBpm}
\begin{split}
\Delta D_t^\mp\mathcal{B}^\pm &=\mathcal{M}_2(\nabla^2 W^\mp,\nabla\mathcal{B}^\pm)+\mathcal{M}_2(\nabla W^\mp,\nabla^2\mathcal{B}^\pm)+ \mathcal{M}_2(\nabla^2 W^\pm,\nabla D_t^\mp P)+\mathcal{M}_2(\nabla W^\pm,\nabla^2 D_t^\mp P)
\\
&+\mathcal{M}_2(\nabla^2P, \nabla^2P)+\mathcal{M}_3(\nabla^2 W^\pm, \nabla W^\mp, \nabla P)+\mathcal{M}_3(\nabla^2 P, \nabla W,\nabla W).
\end{split}
\end{equation*}
Since $D_t^\mp\mathcal{B}^\pm=0$ on $\Gamma_t$, it is straightforward to verify using \Cref{direst}, \Cref{Multilinearest}, \Cref{Linfest} and the energy coercivity bound that 
\begin{equation*}
\|D_t^\mp\mathcal{B}^\pm\|_{C^1(\Omega_t)}\lesssim_{M_s}\|D_t^\mp\mathcal{B}^\pm\|_{H^s(\Omega_t)}\lesssim_{M_s}1,\hspace{5mm}\|D_t^\mp\mathcal{B}^\pm\|_{H^k(\Omega_t)}\lesssim_{M_s}(E^k)^{\frac{1}{2}}.
\end{equation*}
Consequently, from \Cref{EEcorollary} we have
\begin{equation*}
\|\nabla\mathcal{H}\mathcal{N}^{k-2}\nabla_nD_t^\mp\mathcal{B}^\pm\|_{L^2(\Omega_t)}\lesssim_{M_s} (E^k)^{\frac{1}{2}}.
\end{equation*}
From \Cref{baselineDN2}, \Cref{higherpowers}, \Cref{boundaryest} and the above estimates, we see that
\begin{equation*}
\|\mathcal{N}^{k-2}[\nabla_n,\nabla_B]D_t^\mp\mathcal{B}^\pm\|_{L^2(\Gamma_t)}\lesssim_{M_s}(E^k)^{\frac{1}{2}}.
\end{equation*}
Hence, it remains to estimate $\mathcal{N}^{k-2}\nabla_n\nabla_BD_t^\mp\mathcal{B}^\pm$. Using \Cref{Movingsurfid}, \Cref{commutatorremark}, \Cref{direst} and the above estimates for $D_t^\mp\mathcal{B}^\pm$, it is easy to verify that
\begin{equation*}
\|[\nabla_B,\Delta^{-1}]\Delta D_t^\mp\mathcal{B}^\pm\|_{H^{k-\frac{1}{2}}(\Omega_t)}\lesssim_{M_s}(E^k)^{\frac{1}{2}},\hspace{5mm}\|[\nabla_B,\Delta^{-1}]\Delta D_t^\mp\mathcal{B}^\pm\|_{L^{\infty}(\Omega_t)}\lesssim_{M_s}1
\end{equation*}
and, moreover,
\begin{equation*}
\|\nabla_BD_t^\mp\mathcal{B}^\pm\|_{L^{\infty}(\Omega_t)}\lesssim_{M_s}\|D_t^\mp\mathcal{B}^\pm\|_{C^1(\Omega_t)}\lesssim_{M_s}1.
\end{equation*}
By expanding $\nabla_B\Delta D_t^\mp\mathcal{B}^\pm$, we also easily obtain the bound 
\begin{equation*}
\begin{split}
\|\nabla_B\Delta D_t^\mp\mathcal{B}^\pm\|_{H^{k-\frac{5}{2}}(\Omega_t)}\lesssim_{M_s} (E^k)^{\frac{1}{2}}
\end{split}
\end{equation*}
as a consequence of straightforward algebraic manipulation, \Cref{Multilinearest} and \Cref{Lambdakest}. Hence, by \Cref{baselineDN2}, \Cref{higherpowers}, \Cref{direst} and the above estimates, we obtain
\begin{equation*}
\|\mathcal{N}^{k-2}\nabla_n \nabla_B D_t^\mp \mathcal{B}^\pm\|_{L^2(\Gamma_t)}\lesssim_{M_s}(E^k)^{\frac{1}{2}}.
\end{equation*}
We remark that above, we implicitly used the fact that $k\geq 3$ which allows us to avoid negative regularity Sobolev spaces in the above estimates. This concludes the estimates for $f$ and $\nabla_B f$.
\subsubsection{Control of \texorpdfstring{$\langle \nabla\mathcal{H}\mathcal{N}^{k-2}\nabla_B a, \mathcal{R}_{B,2}\rangle_{L^2(\Omega_t)}$}{}}\label{RB2} We recall that
\begin{equation*}
\mathcal{R}_{B,2}=\nabla\mathcal{H}\mathcal{N}^{k-2}\nabla_B(D_t^\pm a-\mathcal{G}^\pm)=-\nabla\mathcal{H}\mathcal{N}^{k-2}\nabla_B\nabla_n\Delta^{-1}D_t^\pm\Delta P=-\nabla\mathcal{H}\mathcal{N}^{k-2}\nabla_B\nabla_n\mathcal{C}^\pm.
\end{equation*}
To estimate $\langle \nabla\mathcal{H}\mathcal{N}^{k-2}\nabla_Ba,\mathcal{R}_{B,2}\rangle_{L^2(\Omega_t)}$, we will rely on the following estimate which captures the enhanced regularity of the free surface in the direction of the magnetic field. For our purposes, this will be conveniently quantified by measuring the regularity of the vector field $\nabla_B^2$ applied to both $a$ and $P$ below. We will abuse language slightly and  refer to this property as a ``regularizing effect". This is a slight misnomer since this enhanced regularity is really a priori hidden in the fixed-time boundary condition $B\cdot n_{\Gamma_t}=0$ and is not really a property of the flow itself, but rather of the data.
\begin{lemma}[Regularizing effect]\label{partitionofBa}
The following estimates hold:
\begin{equation*}
\|\nabla_B^2a\|_{H^{k-2}(\Gamma_t)}+\|\nabla_B^2P\|_{H^{k-\frac{1}{2}}(\Omega_t)}\lesssim_{M_s} (E^k)^{\frac{1}{2}}.
\end{equation*}
\end{lemma}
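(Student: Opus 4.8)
\textbf{Proof proposal for \Cref{partitionofBa}.}

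The plan is to bootstrap the ``regularizing effect'' from the geometric identities already recorded in \Cref{commutatorremark} and \Cref{Movingsurfid}, namely $\nabla_B n_\Gamma = -((\nabla B)^* n_\Gamma)^\top = -\nabla^\top B\cdot n_\Gamma$, together with the fact that $\nabla_B P$ vanishes on $\Gamma_t$ (since $P\equiv 0$ there and $B$ is tangent). The first observation is that $\nabla_B^2 P$ also vanishes on $\Gamma_t$: indeed $\nabla_B(\nabla_B P)$ is $\nabla_B$ applied to a function vanishing on $\Gamma_t$ along a vector field tangent to $\Gamma_t$, so it vanishes on $\Gamma_t$. This means we may estimate $\nabla_B^2 P$ via the elliptic estimate \Cref{direst} from its Laplacian: I would write $\Delta \nabla_B^2 P = \nabla_B^2 \Delta P + [\Delta,\nabla_B^2]P$, expand $[\Delta,\nabla_B]f = \nabla\cdot\mathcal{M}_2(\nabla B,\nabla f)$ (using $\nabla\cdot B = 0$) twice to control the commutator, and use $\Delta P = -\partial_i W_j^+\partial_j W_i^-$ to expand $\nabla_B^2\Delta P$ into multilinear terms in $\nabla^2\nabla_B W$, $\nabla\nabla_B W$, $\nabla\nabla_B B$, $\nabla W$, $\nabla B$. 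Every factor appearing is controlled in $H^{k-\frac32}(\Omega_t)$ or better by $(E^k)^{1/2}$ via the coercivity bound \eqref{coercivityboundmain} and \Cref{Lambdakest}, and the low-regularity factors sit in $C^{1/2+\epsilon}$ by Sobolev embedding and \Cref{coerciveprelim}; applying \Cref{Multilinearest} and \Cref{direst} (with the $C^{1/2}$ boundary term absorbed by interpolation as in the proof of \Cref{coerciveprelim}) yields $\|\nabla_B^2 P\|_{H^{k-1/2}(\Omega_t)}\lesssim_{M_s}(E^k)^{1/2}$. The care needed when $2<s<3$ or $k=3$ is handled exactly as in \Cref{coerciveprelim} and \Cref{Lambdakest}, by placing the lowest-regularity factor in a Lebesgue space via Sobolev embedding rather than in $H^{\sigma-\frac52}(\Omega_t)$.

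For the surface quantity $\nabla_B^2 a$, I would start from $a = -n_\Gamma\cdot\nabla P_{|\Gamma}$ and differentiate twice with $\nabla_B$, using the Leibniz rule and the identity for $\nabla_B n_\Gamma$ above. This gives
\begin{equation*}
\nabla_B^2 a = -n_\Gamma\cdot\nabla_B^2\nabla P - 2\nabla_B n_\Gamma\cdot\nabla_B\nabla P - \nabla_B^2 n_\Gamma\cdot\nabla P \quad\text{on }\Gamma_t,
\end{equation*}
and $\nabla_B^2 n_\Gamma$ is itself expressible, via a second application of \Cref{Movingsurfid}, as a multilinear expression in $n_\Gamma$, $\nabla B$, $\nabla\nabla_B B$ (the term $\nabla^\top\nabla_B B\cdot n_\Gamma$ is the top-order piece). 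The term $\nabla_B^2\nabla P$ is handled by commuting: $\nabla_B^2\nabla P = \nabla\nabla_B^2 P - 2\nabla B\cdot\nabla\nabla_B P + (\text{lower order in }\nabla^2 B, \nabla P)$, so its trace-normal component is controlled by $\|\nabla_B^2 P\|_{H^{k-1/2}(\Omega_t)}$ — just established — plus lower-order multilinear terms, via the balanced trace estimate \Cref{baltrace} and \Cref{boundaryest} combined with the dyadic partition $\nabla_B^2 P = \Phi_{\leq j}(\cdots)+\Phi_{\geq j}(\cdots)$ exactly as in \Cref{aest}. The remaining two terms in the display for $\nabla_B^2 a$ involve $\nabla_B\nabla P$ in $H^{k-1/2}(\Omega_t)$ (controlled by $M_k\lesssim_{M_{s-\frac12}}(E^k)^{1/2}$ via \Cref{Lambdakest}) paired against $\nabla_B n_\Gamma\in H^{k-3/2}(\Gamma_t)$, and $\nabla P$ paired against $\nabla_B^2 n_\Gamma$; in the latter the top-order factor $\nabla^\top\nabla_B B\cdot n_\Gamma$ lives only in $H^{k-2}(\Gamma_t)$, which matches the target regularity exactly and is bounded by $(E^k)^{1/2}$ via the balanced boundary estimates and the div-curl control of $\nabla_B B$ obtained in the coercivity section. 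Collecting these via \Cref{boundaryest}, \Cref{baltrace}, \Cref{Multilinearest} and the coercivity bound gives $\|\nabla_B^2 a\|_{H^{k-2}(\Gamma_t)}\lesssim_{M_s}(E^k)^{1/2}$.

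The main obstacle, as in the rest of \Cref{HEB}, is keeping every estimate at the sharp regularity $s>\frac d2+1$: the quantity $\nabla_B^2 W$ is only controlled in $H^{k-3/2}(\Omega_t)$ (not $H^{k-1}(\Omega_t)$), so in expanding $\Delta\nabla_B^2 P$ one must never be forced to put $\nabla\nabla_B W$ in $L^\infty$, which would require $H^{s-1/2}\hookrightarrow C^1$ and cost a half-derivative. This is precisely the difficulty that motivated introducing the corrected variables $\mathcal{G}^\pm$ in the first place (see the heuristic discussion preceding \Cref{partitionofBa}), and the resolution here is the same in spirit: the specific multilinear structure of $\Delta\nabla_B^2 P$ — inherited from $\Delta P$ being a null-form-like bilinear expression in $\nabla W^+,\nabla W^-$ — always pairs a potentially dangerous high-derivative factor $\nabla^2\nabla_B W$ (in $H^{k-5/2}$) with a low-regularity factor that can be taken in $L^\infty$ or a good Lebesgue space using only $s>\frac d2+1$, while the ``balanced'' factor $\nabla\nabla_B W\in H^{k-3/2}$ is paired with another $H^{s-3/2}$-bounded low-regularity factor. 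Verifying that this dichotomy holds term-by-term, with particular attention to the endpoint cases $d\ge 3$, $2<s<\frac52$ and $k=3$, is the one genuinely delicate step; the geometric algebra identities and the balanced elliptic/trace estimates make the rest routine.
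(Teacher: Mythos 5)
Your route is genuinely different from the paper's and it contains a gap as written. The paper never forms $\Delta\nabla_B^2 P$ at all. Instead it writes $\nabla_B a=-n_{\Gamma_t}\cdot\nabla_B\nabla P$, substitutes the Poisson representation $\nabla_B P=\Delta^{-1}(\Delta B\cdot\nabla P+2\nabla B\cdot\nabla^2 P)-\Delta^{-1}\nabla_B(\partial_i W^+_j\partial_j W^-_i)$ to get the decomposition $\nabla_B a=\mathcal{T}_1+\mathcal{T}_2+\mathcal{T}_3$, and then applies a second $\nabla_B$ to each $\mathcal{T}_i$. The crucial point is in $\nabla_B\mathcal{T}_3$: the outer $\nabla_B$ is left outside $\Delta^{-1}$, so the quantity $\nabla_B\Delta^{-1}\nabla_B(\partial_i W^+_j\partial_j W^-_i)$ is estimated by treating the outer $\nabla_B$ as an order-one derivative (via \Cref{direst} and \eqref{algebrapropproduct}) — at most one $\nabla_B$ ever lands on the bilinear form. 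The estimate for $\nabla_B^2 P$ is then just a simpler variant of the same computation. No expression of the form $\nabla_B^2 W$ is ever produced.

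Your approach, by contrast, produces $\nabla_B^2\Delta P=-\nabla_B^2(\partial_i W^+_j\partial_j W^-_i)$, and this does contain the term $\partial_i\nabla_B^2 W^+_j\,\partial_j W^-_i$ (plus its symmetric counterpart): the bilinear form is symmetric, so both copies of $\nabla_B$ can land on one factor. That term carries $\nabla\nabla_B^2 W$, not the $\nabla^2\nabla_B W$ you list, and your justification — that the ``null-form'' structure always pairs $\nabla^2\nabla_B W$ with a factor in $L^\infty$ — is not correct. This is precisely the issue the authors flag in the heuristic discussion of the normal-form correction: at the sharp regularity one has $\nabla_B W\in H^{k-1/2}$, and $\nabla\nabla_B W\notin L^\infty$, so $\nabla_B^2 W\in H^{k-3/2}$ is not a consequence of the algebra property. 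Your proposed factor list and the appeal to \Cref{Multilinearest} therefore do not close this term. The gap is fillable: $\|\nabla_B^2 W\|_{H^{k-3/2}}\lesssim_{M_s}(E^k)^{1/2}$ does hold, but it requires a genuine low-high/high-low/high-high paraproduct decomposition of $B\cdot\nabla\nabla_B W$ using $\nabla\nabla_B W\in C^{-1/2+\epsilon}$ as the low-regularity input — an auxiliary lemma not among the stated coercivity bounds and not in your write-up. Once that lemma is established, your plan works and gives a reasonable alternative proof (your surface formula for $\nabla_B^2 a$ is also slightly redundant — since $\nabla_B n_\Gamma\cdot\nabla P=0$ on $\Gamma_t$, the $\nabla_B^2 n_\Gamma\cdot\nabla P$ term is just $-\nabla_B n_\Gamma\cdot\nabla_B\nabla P$ — but that is harmless). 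The paper's route is shorter precisely because keeping the outer $\nabla_B$ outside $\Delta^{-1}$ makes the auxiliary lemma unnecessary.
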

\begin{remark}
Using the identity $n_{\Gamma_t}=-a^{-1}\nabla P$ (or more directly, the identity $\nabla_B^2 n_{\Gamma_t}=-\nabla_B (\nabla^{\top} B\cdot n_{\Gamma_t})$) and adapting the  proof below, one can also establish the bound
\begin{equation*}
\|\nabla_B^2 n_{\Gamma_t}\|_{H^{k-2}(\Gamma_t)}\lesssim_{M_s} (E^k)^{\frac{1}{2}}\lesssim_{M_s} M_k
\end{equation*}
which can be thought of as a more direct manifestation of the regularizing effect on the free surface. However, we do not need such a bound in our analysis below, so we omit the proof.
\end{remark}
\begin{proof}
We begin with the harder estimate, which is  $\nabla_B^2a$. We first observe the identity $\nabla_Ba=-n_{\Gamma_t}\cdot\nabla_B\nabla P$ and use the definition of $P$ (which can be thought of as implicitly measuring the regularity of the free surface) to write
\begin{equation*}
\begin{split}
-n_{\Gamma_t}\cdot\nabla_B\nabla P&=n_{\Gamma_t}\cdot\nabla B\cdot \nabla P-\nabla_n\Delta^{-1}(\Delta B\cdot\nabla P+2\nabla B\cdot \nabla^2 P)+\nabla_n\Delta^{-1}\nabla_B(\partial_iW_j^+\partial_jW_i^-)
\\
&=:\mathcal{T}_1+\mathcal{T}_2+\mathcal{T}_3.
\end{split}
\end{equation*}
We begin by estimating $\nabla_B\mathcal{T}_1$. Using \Cref{boundaryest}, \Cref{baltrace} and similar analysis to earlier in the section, we see that
\begin{equation*}
\begin{split}
\|\nabla_B n_{\Gamma_t}\cdot\nabla B\cdot\nabla P\|_{H^{k-2}(\Gamma_t)}\lesssim_{M_s}& \|\nabla_B n_{\Gamma_t}\|_{L^{\infty}(\Gamma_t)}(E^k)^{\frac{1}{2}}+\|\nabla_B n_{\Gamma_t}\|_{H^{k-2}(\Gamma_t)}\lesssim_{M_s}(E^k)^{\frac{1}{2}}.
\end{split}
\end{equation*}
Moreover, we have
\begin{equation*}
\begin{split}
\|n_{\Gamma_t}\cdot\nabla_B (\nabla B\cdot\nabla P)\|_{H^{k-2}(\Gamma_t)}\lesssim_{M_s}& \|\nabla_B (\nabla B\cdot\nabla P)\|_{H^{k-\frac{3}{2}}(\Omega_t)}
\\
+&\|\Gamma_t\|_{H^{k-\frac{1}{2}}}\sup_{j>0}2^{-\frac{j}{2}}\|\Phi_{\leq j}\nabla_B (\nabla B\cdot\nabla P)\|_{L^{\infty}(\Omega_t)}
\\
+&\sup_{j>0}2^{j(k-2-\epsilon)}\|\Phi_{\geq j}(\nabla_B (\nabla B\cdot\nabla P))\|_{H^{\frac{1}{2}+\epsilon}(\Omega_t)},
\end{split}
\end{equation*}
which yields
\begin{equation*}
\begin{split}
\|n_{\Gamma_t}\cdot\nabla_B (\nabla B\cdot\nabla P)\|_{H^{k-2}(\Gamma_t)}&\lesssim_{M_s}\|\nabla_B (\nabla B\cdot\nabla P)\|_{H^{k-\frac{3}{2}}(\Omega_t)}+\|\Gamma_t\|_{H^{k-\frac{1}{2}}}\lesssim_{M_s} (E^k)^{\frac{1}{2}}.
\end{split}
\end{equation*}
Thus, we have
\begin{equation*}
\|\nabla_B \mathcal{T}_1\|_{H^{k-2}(\Gamma_t)}\lesssim_{M_s} (E^k)^{\frac{1}{2}}.
\end{equation*}
To estimate the second term, let us first define $F:=\Delta B\cdot\nabla P+2\nabla B\cdot \nabla^2 P$. Then, as in previous analysis, using that $B$ is divergence-free, we note that we can write
\begin{equation*}
F=\nabla\cdot\mathcal{M}_2(\nabla B,\nabla P),\hspace{5mm}[\Delta^{-1},\nabla_B]F=\Delta^{-1}[\nabla\cdot\mathcal{M}_2(\nabla B,\nabla \Delta^{-1}F)].
\end{equation*}
By Sobolev embeddings, \Cref{direst} and \Cref{Multilinearest}, we therefore have
\begin{equation}\label{Flinfest}
\|\Delta^{-1}F\|_{C^{\frac{1}{2}}(\Omega_t)}+\|[\Delta^{-1},\nabla_B]F\|_{L^{\infty}(\Omega_t)}\lesssim_{M_s} 1.
\end{equation}
Moreover, we have 
\begin{equation*}
-\nabla_B\mathcal{T}_2=[\nabla_B,\nabla_n] \Delta^{-1}F-\nabla_n[\Delta^{-1},\nabla_B]F+\nabla_n\Delta^{-1}\nabla_B F.
\end{equation*}
Using \Cref{boundaryest}, \Cref{baltrace}, \Cref{L1bound}, \Cref{Multilinearest}, \eqref{Flinfest} and the energy coercivity, we may control the first two terms on the right by $(E^k)^{\frac{1}{2}}$. To control the last term, since $B$ is divergence-free, we may write $\nabla_B F=\nabla\cdot \nabla_B\mathcal{M}_2(\nabla B,\nabla P)+\nabla\cdot\mathcal{M}_3(\nabla B,\nabla B,\nabla P)$. Therefore, we have by \Cref{Multilinearest},
\begin{equation*}
\|\nabla_B F\|_{H^{s-\frac{5}{2}}(\Omega_t)}\lesssim_{M_s}\|\nabla_B\mathcal{M}_2(\nabla B,\nabla P)\|_{H^{s-\frac{3}{2}}(\Omega_t)}+\|\mathcal{M}_3(\nabla B,\nabla B,\nabla P)\|_{H^{s-\frac{3}{2}}(\Omega_t)}\lesssim_{M_s}1.
\end{equation*}
Similarly, using \Cref{Multilinearest} and  the energy coercivity, we have
\begin{equation*}
\|\nabla_BF\|_{H^{k-\frac{5}{2}}(\Omega_t)}\lesssim_{M_s} (E^k)^{\frac{1}{2}}.
\end{equation*}
It follows from \Cref{L1bound}, Sobolev embeddings and \Cref{direst} that
\begin{equation*}
\|\nabla_B\mathcal{T}_2\|_{H^{k-2}(\Gamma_t)}\lesssim_{M_s} \|\nabla_B F\|_{H^{k-\frac{5}{2}}(\Omega_t)}+\|\Gamma_t\|_{H^k}\|\Delta^{-1}\nabla_B F\|_{L^{\infty}(\Omega_t)}\lesssim_{M_s} (E^k)^{\frac{1}{2}}.
\end{equation*}
Finally, to estimate $\nabla_B\mathcal{T}_3$, we write
\begin{equation*}
\nabla_B\mathcal{T}_3=-n_{\Gamma_t}\cdot\nabla B\cdot\nabla\Delta^{-1}\nabla_B(\partial_iW_j^+\partial_jW_i^-)+\nabla_n\nabla_B\Delta^{-1}\nabla_B(\partial_iW_j^+\partial_jW_i^-)=:\mathcal{T}_3^1+\mathcal{T}_3^2.
\end{equation*}
Using \Cref{boundaryest}, Sobolev embeddings, \Cref{baltrace} and \Cref{direst}, we find that
\begin{equation*}
\|\mathcal{T}_3^1\|_{H^{k-2}(\Gamma_t)}\lesssim_{M_s} (E^k)^{\frac{1}{2}}.
\end{equation*}
On the other hand, by \Cref{L1bound}, we have
\begin{equation*}
\|\nabla_n\nabla_B\Delta^{-1}\nabla_B(\partial_iW_j^+\partial_jW_i^-)\|_{H^{k-2}(\Gamma_t)}\lesssim_{M_s} \|\nabla_B\Delta^{-1}\nabla_B(\partial_iW_j^+\partial_jW_i^-)\|_{H^{k-\frac{1}{2}}(\Omega_t)}+\|\Gamma_t\|_{H^k}.
\end{equation*}
Using the algebra property \eqref{algebrapropproduct} and then \Cref{direst}, we can  crudely estimate (i.e.~treating $\nabla_B$ as a derivative of order one)
\begin{equation*}
\|\nabla_B\Delta^{-1}\nabla_B(\partial_iW_j^+\partial_jW_i^-)\|_{H^{k-\frac{1}{2}}(\Omega_t)}\lesssim_{M_s} \|B\|_{H^{k-\frac{1}{2}}(\Omega_t)}+\|\Gamma_t\|_{H^k}+\|\nabla_B(\partial_iW_j^+\partial_jW_i^-)\|_{H^{k-\frac{3}{2}}(\Omega_t)}.
\end{equation*}
Using \Cref{Multilinearest} to estimate the last term on the right, as well as the energy coercivity, we have
\begin{equation*}
\|\mathcal{T}_3^2\|_{H^{k-2}(\Gamma_t)}\lesssim_M (E^k)^{\frac{1}{2}}.
\end{equation*}
Combining everything above, we obtain the desired bound for $\nabla_B^2a$. The estimate for $\nabla_B^2P$ is simpler. One can proceed by expanding
\begin{equation*}
\nabla_BP=\Delta^{-1}(\Delta B\cdot\nabla P+2\nabla B\cdot\nabla^2 P)-\Delta^{-1}\nabla_B(\partial_iW_j^+\partial_jW_i^-)
\end{equation*}
and then estimating each term on the right in a similar (in fact, more straightforward) manner to the estimates for $\mathcal{T}_2$ and $\mathcal{T}_3$ above. This completes the proof of the lemma. 
\end{proof}
A straightforward corollary of the above analysis is the following bound.
\begin{corollary}\label{partitionofBa2}
There holds
\begin{equation*}
\|\mathcal{N}^{k-2}\nabla_B^2a\|_{L^2(\Gamma_t)}\lesssim_{M_s} (E^k)^{\frac{1}{2}}.
\end{equation*}
\end{corollary}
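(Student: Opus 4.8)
The plan is to deduce this directly from \Cref{partitionofBa} and the balanced mapping properties of the iterated Dirichlet--Neumann operator. Since $\mathcal{N}^{k-2}$ is an operator of order $k-2$, it should map $H^{k-2}(\Gamma_t)$ boundedly into $L^2(\Gamma_t)$; the only subtlety is that the implicit constant a priori sees the $H^k$ regularity of $\Gamma_t$, which has to be traded against the energy via the coercivity bound \eqref{coercivityboundmain}.

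Concretely, I would apply the balanced iterated Dirichlet--Neumann estimate furnished by \Cref{higherpowers}, \Cref{baselineDN2} and \Cref{boundaryest} to the boundary function $\nabla_B^2 a$ with the power $k-2$. Splitting $\nabla_B^2 a$ into low and high frequency pieces (through its harmonic extension, exactly as in the proof of \Cref{aest}), this yields a bound of the schematic form
\begin{equation*}
\|\mathcal{N}^{k-2}\nabla_B^2 a\|_{L^2(\Gamma_t)}\lesssim_{M_{s-\frac{1}{2}}}\|\nabla_B^2 a\|_{H^{k-2}(\Gamma_t)}+\|\Gamma_t\|_{H^{k}}\,\|\nabla_B^2 a\|_{C^{\epsilon}(\Gamma_t)}
\end{equation*}
for some small $\epsilon>0$. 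The first term on the right is $\lesssim_{M_s}(E^k)^{\frac12}$ by \Cref{partitionofBa}. In the second term, $\|\Gamma_t\|_{H^k}\lesssim M_k\lesssim_{M_{s-\frac12}}(E^k)^{\frac12}$ by the coercivity bound \eqref{coercivityboundmain} (equivalently \eqref{Coercivity bound on integers}), so it remains only to record the pointwise bound $\|\nabla_B^2 a\|_{C^{\epsilon}(\Gamma_t)}\lesssim_{M_s}1$. This is obtained by running the decomposition $\nabla_B a=\mathcal{T}_1+\mathcal{T}_2+\mathcal{T}_3$ from the proof of \Cref{partitionofBa} and estimating each $\nabla_B\mathcal{T}_i$ in $H^{s-\frac32}(\Gamma_t)$ rather than in $H^{k-2}(\Gamma_t)$: since $s>\frac d2+1$ we have $H^{s-\frac32}(\Gamma_t)\hookrightarrow C^{\epsilon}(\Gamma_t)$ for small $\epsilon$, and every term appearing there (and its $\nabla_B$-derivative) is controlled in $H^{s-\frac32}(\Gamma_t)$ purely by $M_s$ using \Cref{Multilinearest}, \Cref{direst}, \Cref{L1bound}, \Cref{Lambdakest} and Sobolev embeddings --- indeed the requisite pointwise bounds (compare \eqref{Flinfest} and its companions) already appear in that proof. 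Combining the three estimates gives the claim.

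The main --- and essentially only --- point requiring care is the bookkeeping in the balanced Dirichlet--Neumann estimate: one must check that the lower-order error terms it produces are all of the form (an $M_s$-level quantity)$\,\times\,(E^k)^{\frac12}$, with no uncompensated factor of $M_k$. This is guaranteed here because the only top-order norm entering the error is $\|\Gamma_t\|_{H^k}$, which coercivity \eqref{coercivityboundmain} converts into $(E^k)^{\frac12}$, while the remaining factor is pointwise and hence controlled by $M_s$ alone. No new ideas beyond those used in the proof of \Cref{partitionofBa} are needed, which is why the statement is a straightforward corollary.
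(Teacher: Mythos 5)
Your overall architecture matches the paper's: apply the balanced iterated Dirichlet--Neumann estimates, feed in the high-regularity input $\|\nabla_B^2 a\|_{H^{k-2}(\Gamma_t)}\lesssim_{M_s}(E^k)^{1/2}$ from \Cref{partitionofBa}, and convert the $\|\Gamma_t\|_{H^k}$ factor into $(E^k)^{1/2}$ via coercivity. The genuine gap is in the low-frequency input you propose. You partition $\nabla_B^2 a$ as a black box and therefore need the pointwise bound $\|\nabla_B^2 a\|_{C^\epsilon(\Gamma_t)}\lesssim_{M_s}1$ (or equivalently $\|\nabla_B^2 a\|_{H^{s-\frac32}(\Gamma_t)}\lesssim_{M_s}1$). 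That bound is a full-strength low-regularity analogue of the ``regularizing effect'' of \Cref{partitionofBa}, and it does not follow from the material available. Concretely, writing $\nabla_B^2 a=-\nabla_B n_{\Gamma_t}\cdot\nabla_B\nabla P-n_{\Gamma_t}\cdot\nabla_B^2\nabla P$, the second term requires a $C^\epsilon$ bound on $\nabla_B^2\nabla P$, but the best $M_s$-level control available there is $\nabla_B^2\nabla P\in H^{s-2}(\Omega_t)$ (coming from $\|\nabla_B P\|_{H^s(\Omega_t)}\lesssim_{M_s}1$ in \Cref{Lambdakest}), and $H^{s-2}(\Omega_t)\not\hookrightarrow L^\infty$ for $s$ near $\frac d2+1$ (one would need $s>\frac d2+2$). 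Re-running the $\mathcal{T}_1+\mathcal{T}_2+\mathcal{T}_3$ decomposition at the lower regularity, as you suggest, does not close either: estimating $\nabla_n\nabla_B\Delta^{-1}\nabla_B(\partial_iW_j^+\partial_jW_i^-)$ in $H^{s-\frac32}(\Gamma_t)$ via \Cref{L1bound}/\Cref{direst} forces you through $\|\Delta^{-1}\nabla_B(\cdot)\|_{H^{s+1}(\Omega_t)}$, whose balanced error term carries a factor $\|\Gamma_t\|_{H^{s+\frac12}}$ that is $\lesssim M_{s+\frac12}$ but not $\lesssim_{M_s}1$. Interpolating the $H^{k-2}(\Gamma_t)$ bound of \Cref{partitionofBa} against a weak $M_s$ bound also does not help, since it leaves a positive power of $M_k$ that defeats the final $\|\Gamma_t\|_{H^k}\lesssim (E^k)^{1/2}$ step.

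The paper sidesteps this precisely by choosing a different partition: it leaves $\nabla_B n_{\Gamma_t}\cdot\nabla_B\nabla P$ entirely in the low-frequency piece (for which the $L^\infty$ bounds $\|\nabla_B n_{\Gamma_t}\|_{L^\infty}\|\nabla_B\nabla P\|_{L^\infty}\lesssim_{M_s}1$ are straightforward, via Sobolev embedding from $\nabla_B P\in H^s$ and $\nabla B\in H^{s-1}$), and applies $\Phi_{<j}/\Phi_{\geq j}$ only to the interior quantity $\nabla_B^2\nabla P$. The regularization gain of $\Phi_{<j}$ then lets one trade the $L^\infty$ norm of the low-frequency piece for the \emph{Sobolev} norm $\|\nabla_B^2\nabla P\|_{H^{s-2}(\Omega_t)}$, which is accessible at $M_s$ level. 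This is not a cosmetic variant of your partition: it is the step that makes the low-frequency error controllable without the pointwise bound on $\nabla_B^2 a$ you would need. To repair your write-up, replace the black-box split of $\nabla_B^2 a$ by the decomposition $-\nabla_B^2 a=(\nabla_B n_{\Gamma_t}\cdot\nabla_B\nabla P+n_{\Gamma_t}\cdot\Phi_{<j}\nabla_B^2\nabla P)+n_{\Gamma_t}\cdot\Phi_{\geq j}\nabla_B^2\nabla P$ and pair the low-frequency piece with $\|\nabla_B^2\nabla P\|_{H^{s-2}(\Omega_t)}\lesssim_{M_s}1$ rather than a $C^\epsilon$ bound on $\nabla_B^2 a$.
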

\begin{proof}
First, we observe that by combining the estimate for $\nabla_B^2 P$ in \Cref{partitionofBa} with \Cref{Multilinearest}, we obtain
\begin{equation}\label{BsquaredofP}
\|\nabla_B^2\nabla P\|_{H^{k-\frac{3}{2}}(\Omega_t)}\lesssim_{M_s} (E^k)^{\frac{1}{2}}.
\end{equation}
By partitioning $-\nabla_B^2 a=(\nabla_B n_{\Gamma_t}\cdot\nabla_B\nabla P+n_{\Gamma_t}\cdot\Phi_{< j}\nabla_B^2\nabla P)+n_{\Gamma_t}\cdot\Phi_{\geq j}\nabla_B^2\nabla P$ and applying \Cref{baselineDN2}, \Cref{higherpowers} and using the regularization bounds for $\Phi_{\leq j}$, we have
\begin{equation*}
\begin{split}
\|\mathcal{N}^{k-2}\nabla_B^2a\|_{L^2(\Gamma_t)} \lesssim_{M_s}&\|\nabla_B^2a\|_{H^{k-2}(\Gamma_t)}+\|\Gamma_t\|_{H^k}(\|\nabla_Bn_{\Gamma_t}\|_{L^{\infty}(\Gamma_t)}\|\nabla_B\nabla P\|_{L^{\infty}(\Omega_t)}+\|\nabla_B^2\nabla P\|_{H^{s-2}(\Omega_t)})
\\
+&\|\nabla_B^2\nabla P\|_{H^{k-\frac{3}{2}}(\Omega_t)}.
\end{split}
\end{equation*}
From \Cref{partitionofBa} and \eqref{BsquaredofP}, we conclude the desired bound.
\end{proof}
Now, let us return to the estimate for  $\langle\nabla\mathcal{H}\mathcal{N}^{k-2}\nabla_Ba,\mathcal{R}_{B,2}\rangle_{L^2(\Omega_t)}$. To simplify notation slightly, let us write $\mathcal{D}=\nabla_n\mathcal{C}^\pm$. We note the  preliminary bounds
\begin{equation}\label{Destimates}
\|\mathcal{N}^{k-1}\mathcal{D}\|_{L^2(\Gamma_t)}+\|\mathcal{D}\|_{H^{k-1}(\Gamma_t)}\lesssim_{M_s}(E^k)^{\frac{1}{2}},\hspace{5mm}\|\mathcal{D}\|_{L^{\infty}(\Gamma_t)}\lesssim_{M_s}1,
\end{equation}
which follow from \eqref{correctionsestimates}, \Cref{EEcorollary} and \Cref{L1bound}. We write
\begin{equation*}
\mathcal{R}_{B,2}=\nabla\mathcal{H}[\mathcal{N}^{k-2},\nabla_B]\mathcal{D}+\nabla\mathcal{H}\nabla_B\mathcal{N}^{k-2}\mathcal{D}.
\end{equation*}
We  estimate the former term using the $H^{\frac{1}{2}}\to H^1$ bound for $\mathcal{H}$, \Cref{commutatorremark} and \eqref{Destimates},
\begin{equation*}
\|\nabla\mathcal{H}[\mathcal{N}^{k-2},\nabla_B]\mathcal{D}\|_{L^2(\Omega_t)}\lesssim_{M_s} \|\mathcal{D}\|_{H^{k-\frac{3}{2}}(\Gamma_t)}+(\|\Gamma_t\|_{H^k}+\|B\|_{H^k(\Omega_t)})\|\mathcal{D}\|_{L^{\infty}(\Gamma_t)}\lesssim_{M_s} (E^k)^{\frac{1}{2}}.
\end{equation*}
It remains to estimate $\langle\nabla\mathcal{H}\mathcal{N}^{k-2}\nabla_Ba,\nabla\mathcal{H}\nabla_B\mathcal{N}^{k-2}\mathcal{D}\rangle_{L^2(\Omega_t)}$. 
By applying \Cref{commutatorremark}, we first observe  that
\begin{equation*}
\|[\nabla_B,\nabla\mathcal{H}]\mathcal{N}^{k-2}\mathcal{D}\|_{L^2(\Omega_t)}+\|[\nabla_B,\nabla\mathcal{H}\mathcal{N}^{k-2}]\nabla_B a\|_{L^2(\Omega_t)}\lesssim_{M_s} (E^k)^{\frac{1}{2}}.
\end{equation*}
Using the above and skew-adjointness of $\nabla_B$ on $L^2(\Omega_t)$, we can integrate by parts $\nabla_B$ onto $\nabla_Ba$ and then integrate by parts again to move one factor of $\mathcal{N}$ to the other term to obtain 
\begin{equation*}
\begin{split}
\langle\nabla\mathcal{H}\mathcal{N}^{k-2}\nabla_B a,\nabla\mathcal{H}\nabla_B\mathcal{N}^{k-2}\mathcal{D}\rangle_{L^2(\Omega_t)}&\leq -\langle\nabla\mathcal{H}\mathcal{N}^{k-2}\nabla_B^2 a,\nabla\mathcal{H}\mathcal{N}^{k-2}\mathcal{D}\rangle_{L^2(\Omega_t)}+C(M_s)E^k
\\
&=-\langle\mathcal{N}^{k-2}\nabla_B^2 a,\mathcal{N}^{k-1}\mathcal{D}\rangle_{L^2(\Gamma_t)}+C(M_s)E^k.
\end{split}
\end{equation*}
Making use of Cauchy-Schwarz, \Cref{partitionofBa2}, \eqref{Destimates} and the energy coercivity, we then obtain
\begin{equation*}
\langle\nabla\mathcal{H}\mathcal{N}^{k-2}\nabla_B a,\nabla\mathcal{H}\nabla_B\mathcal{N}^{k-2}\mathcal{D}\rangle_{L^2(\Omega_t)}\lesssim_{M_s} E^k+\|\mathcal{N}^{k-1}\mathcal{D}\|_{L^2(\Gamma_t)}\|\mathcal{N}^{k-2}\nabla_B^2a\|_{L^2(\Gamma_t)}\lesssim_{M_s}E^k.
\end{equation*}
Combining everything  gives the desired bound
\begin{equation*}
\langle\nabla\mathcal{H}\mathcal{N}^{k-2}\nabla_Ba,\mathcal{R}_{B,2}\rangle_{L^2(\Omega_t)}\lesssim_{M_s}E^k.
\end{equation*}
\subsubsection{Estimates for \texorpdfstring{$\nabla\mathcal{H}[\mathcal{N}^{k-2},a]\mathcal{N}a$}{}  and  \texorpdfstring{$[\mathcal{N}^{k-2},a]\mathcal{N}\nabla_Ba$}{}} To conclude the energy propagation bound in \Cref{Energy est. thm}, it remains to establish the  commutator estimates 
\begin{equation}\label{Leibnizcommutator}
\|[\mathcal{N}^{k-2},a]\mathcal{N}a\|_{H^{\frac{1}{2}}(\Gamma_t)}\lesssim_{M_s}(E^k)^{\frac{1}{2}},\hspace{5mm}\|[\mathcal{N}^{k-2},a]\mathcal{N}\nabla_B a\|_{L^2(\Gamma_t)}\lesssim_{M_s}(E^k)^{\frac{1}{2}}.
\end{equation} 
We will show the details for the first estimate in \eqref{Leibnizcommutator} and then remark on the minor differences required to prove the second estimate. 
We mention that a much more refined version of the first estimate was established in \cite{Euler}. This was needed to prove a sharp, pointwise continuation criterion for the free boundary Euler equations. Here, we will not need anything this precise, so we opt to provide a simpler proof which is sufficient for our purposes.
\medskip

As in \cite[Lemma 7.12]{Euler}, we have the commutator identity
\begin{equation}\label{higherorderleibniz}
[\mathcal{N}^{k-2},a]\mathcal{N}a=\sum_{n+m=k-3}\mathcal{N}^n(\mathcal{N}a\mathcal{N}^{m+1}a)-2\mathcal{N}^n\nabla_n\Delta^{-1}(\nabla\mathcal{H}a\cdot\nabla\mathcal{H}\mathcal{N}^{m+1}a)
\end{equation}
where $m$ and $n$ are integers ranging from $0$ to $k-3$. We begin by estimating the latter summand. To simplify notation, we set $F:=\nabla\mathcal{H}a\cdot\nabla G$ where $G:=\mathcal{H}\mathcal{N}^{m+1}a$. We also define $\mathcal{N}_{<j}:=n_\Gamma\cdot\nabla\Phi_{\leq j}\mathcal{H}$ and $\mathcal{N}_{\geq j}:=n_\Gamma\cdot\nabla\Phi_{\geq j}\mathcal{H}$. By \Cref{EEcorollary}, Sobolev embeddings and \Cref{direst}, we have
\begin{equation*}
\begin{split}
\|\mathcal{N}^n\nabla_n\Delta^{-1}F\|_{H^{\frac{1}{2}}(\Gamma_t)}&\lesssim \|F\|_{H^n(\Omega_t)}+\|\Gamma_t\|_{H^k}\sup_{j>0}2^{-j(m+1)}\|\nabla\mathcal{H}a\cdot\nabla G_j^1\|_{H^{s-2}(\Omega_t)}
\\
&+\sup_{j>0}2^{j(n+1-\epsilon)}\|\nabla\cdot (\nabla\mathcal{H}a G_j^2)\|_{H^{-1}(\Omega_t)},
\end{split}
\end{equation*}
where we define the sequence of partitions of $G$ by
\begin{equation*}
G_j^1=\mathcal{H}\mathcal{N}^{m+1}_{<j}a,\hspace{5mm}G_j^2=\sum_{0\leq i\leq m}\mathcal{H}\mathcal{N}_{<j}^i\mathcal{N}_{\geq j}\mathcal{N}^{m-i}a.
\end{equation*}
To estimate $\nabla\mathcal{H}a\cdot\nabla G_j^1$ we use \Cref{Multilinearest}, Sobolev embeddings and \Cref{Hbounds} to obtain
\begin{equation*}
\begin{split}
\|\nabla\mathcal{H}a\cdot\nabla G_j^1\|_{H^{s-2}(\Omega_t)}&\lesssim_{M_s} \|\mathcal{H}a\|_{C^{\frac{1}{2}}(\Omega_t)}\|G_j^1\|_{H^{s-\frac{1}{2}}(\Omega_t)}+\|\mathcal{H}a\|_{H^{s-\frac{1}{2}}(\Omega_t)}\|G_j^1\|_{C^{\frac{1}{2}}(\Omega_t)}
\\
&\lesssim_{M_s}\|a\|_{H^{s-1}(\Gamma_t)}\|G_j^1\|_{H^{s-\frac{1}{2}}(\Omega_t)}.
\end{split}
\end{equation*}
We also clearly have $\|a\|_{H^{s-1}(\Gamma_t)}\lesssim_{M_s}1$. On the other hand, using \Cref{boundaryest}, \Cref{baltrace}, Sobolev embeddings, the regularization bounds for $\Phi_{\leq j}$ and \Cref{Hbounds} we have
\begin{equation*}
\|G_j^1\|_{H^{s-\frac{1}{2}}(\Omega_t)}\lesssim_{M_s}\|\Phi_{\leq j}\mathcal{H}\mathcal{N}_{<j}^ma\|_{H^{s+\frac{1}{2}}(\Omega_t)}\lesssim_{M_s} 2^j\|\mathcal{N}^m_{<j}a\|_{H^{s-1}(\Gamma_t)}. 
\end{equation*}
Iterating, we find that
\begin{equation*}
\|G_j^1\|_{H^{s-\frac{1}{2}}(\Omega_t)}\lesssim_{M_s} 2^{j(m+1)}\|a\|_{H^{s-1}(\Gamma_t)}\lesssim_{M_s}2^{j(m+1)}.
\end{equation*}
To estimate $\nabla\cdot (\nabla\mathcal{H}a G_j^2)$, we use H\"older's inequality and Sobolev embeddings to obtain
\begin{equation*}
\|\nabla\cdot (\nabla\mathcal{H}a G_j^2)\|_{H^{-1}(\Omega_t)}\lesssim_{M_s}\|\nabla\mathcal{H}aG_j^2\|_{L^2(\Omega_t)}\lesssim_{M_s}\|G_j^2\|_{H^{\frac{1}{2}}(\Omega_t)}.
\end{equation*}
From the $H^{\epsilon}\to H^{\frac{1}{2}+\epsilon}$ bound for $\mathcal{H}$, the regularization bounds for $\Phi_{\leq j}$, the trace inequality and the bound $\|n_{\Gamma_t}\|_{C^{\epsilon}(\Gamma_t)}\lesssim_{M_s}1$, we see that 
\begin{equation*}
\|G_j^2\|_{H^{\frac{1}{2}}(\Omega_t)}\lesssim_{M_s} 2^{-j(n+1-\epsilon)}\sum_{0\leq i\leq m}\|\mathcal{H}\mathcal{N}^{m-i}a\|_{H^{n+i+\frac{5}{2}}(\Omega_t)}.
\end{equation*}
Using \Cref{higherpowers}, \Cref{Linfest} and the energy coercivity, we have
\begin{equation*}
2^{j(n+1-\epsilon)}\|G_j^2\|_{H^{\frac{1}{2}}(\Omega_t)}\lesssim_{M_s} \|a\|_{H^{k-1}(\Gamma_t)}+\|\Gamma_t\|_{H^k}\|a\|_{C^{\epsilon}(\Gamma_t)}\lesssim_{M_s}(E^k)^{\frac{1}{2}}.
\end{equation*}
To estimate $F$, using the convention in \Cref{SSRO}, we perform a bilinear frequency decomposition, 
\begin{equation*}
F=\nabla(\mathcal{H}a)^l\cdot \nabla G^{\leq l}+\nabla(\mathcal{H}a)^{\leq l}\cdot \nabla G^l.
\end{equation*}
Estimating the latter term is straightforward. Expanding out $n$ derivatives applied to this term,  using the regularization bounds for $\Phi_{\leq l}$, and by summing in $l$, we have
\begin{equation*}
\|\nabla (\mathcal{H}a)^{\leq l}\cdot \nabla G^{l}\|_{H^n(\Omega_t)}\lesssim_{M_s} \|\mathcal{H}a\|_{C^{\frac{1}{2}+\epsilon}(\Omega_t)}\|G\|_{H^{n+\frac{3}{2}}(\Omega_t)}.
\end{equation*}
Then, using \Cref{higherpowers}, we see that
\begin{equation*}
\|\mathcal{H}a\|_{C^{\frac{1}{2}+\epsilon}(\Omega_t)}\|G\|_{H^{n+\frac{3}{2}}(\Omega_t)}\lesssim_{M_s} (E^k)^{\frac{1}{2}}.
\end{equation*}
To estimate $\nabla (\mathcal{H}a)^l\cdot \nabla G^{\leq l}$, we decompose
\begin{equation*}
\nabla (\mathcal{H}a)^l\cdot \nabla G^{\leq l}=\nabla (\mathcal{H}a)^l\cdot \nabla (G_l^1)^{\leq l}+\nabla (\mathcal{H}a)^l\cdot \nabla (G_l^2)^{\leq l},
\end{equation*}
where $G_l^1$ and $G_l^2$ are as above. Using the previously established bounds for $G_l^1$ and $G_l^2$, we have
\begin{equation*}
\|\nabla (\mathcal{H}a)^l\cdot \nabla (G_l^1)^{\leq l}\|_{H^n(\Omega_t)}\lesssim_{M_s} 2^{-l(m+\frac{3}{2})}\|G_l^1\|_{C^1(\Omega_t)}\|\mathcal{H}a\|_{H^{k-\frac{1}{2}}(\Omega_t)}\lesssim_{M_s} (E^k)^{\frac{1}{2}}.
\end{equation*}
On the other hand, arguing similarly to the estimate for $G_j^2$, it is easy to see that 
\begin{equation*}
\|\nabla (\mathcal{H}a)^l\cdot \nabla (G_l^2)^{\leq l}\|_{H^n(\Omega_t)}\lesssim_{M_s} 2^{l(n+\frac{1}{2}-\epsilon)}\|G_l^2\|_{H^1(\Omega_t)}\|\mathcal{H}a\|_{C^{\frac{1}{2}+\epsilon}(\Omega_t)}\lesssim_{M_s}(E^k)^{\frac{1}{2}}.
\end{equation*}
This gives us control over the second summand in \eqref{higherorderleibniz}. To deal with the other summand, we  use the partition $\mathcal{N}a\mathcal{N}^{m+1}a=H_j^1+H_j^2$ defined by taking $H_j^1:=\Phi_{\leq j}(\mathcal{H}\mathcal{N}a\mathcal{H}\mathcal{N}_{<j}^{m+1}a)$.
Indeed, it is straightforward to verify that
\begin{equation*}
\|H_j^1\|_{L^{\infty}(\Omega_t)}\lesssim_{M_s} 2^{j(m+1)},\hspace{5mm}2^{j(n+\frac{1}{2}-\epsilon)}\|H_j^2\|_{H^{\frac{1}{2}+\epsilon}(\Omega_t)}\lesssim_{M_s}(E^k)^{\frac{1}{2}}.
\end{equation*}
Therefore, by \Cref{higherpowers}, \Cref{baltrace}, \Cref{Multilinearest} and similar arguments as above, we obtain 
\begin{equation*}
\|\mathcal{N}^n(\mathcal{N}a\mathcal{N}^{m+1}a)\|_{H^{\frac{1}{2}}(\Gamma_t)}\lesssim_{M_s}(E^k)^{\frac{1}{2}},
\end{equation*}
which ultimately gives 
\begin{equation*}
\|\nabla\mathcal{H}[\mathcal{N}^{k-2},a]\mathcal{N}a\|_{L^2(\Omega_t)}\lesssim_{M_s}\|[\mathcal{N}^{k-2},a]\mathcal{N}a\|_{H^{\frac{1}{2}}(\Gamma_t)}\lesssim_{M_s}(E^k)^{\frac{1}{2}}.
\end{equation*}
By completely analogous reasoning (i.e.~essentially by replacing $\mathcal{N}^{m+1}a$ with $\mathcal{N}^{m+1}\nabla_Ba$ in the expansion \eqref{higherorderleibniz}), we have 
\begin{equation*}
\|[\mathcal{N}^{k-2},a]\mathcal{N}\nabla_Ba\|_{L^2(\Gamma_t)}\lesssim_{M_s}(E^k)^{\frac{1}{2}}.
\end{equation*}
This finally concludes the proof of \Cref{Energy est. thm}.
\section{Construction of regular solutions}\label{Existence section} 
In this section, we provide a novel method for constructing smooth solutions to the free boundary MHD equations. In contrast to previous works which tend to assume that the initial domain is $\mathbb{T}^2\times (0,1)$, our construction requires no supplemental constraints on the initial data. By carefully combining the results in this section with the difference bounds in \Cref{DF} and the energy estimates in \Cref{HEB}, we will see in \Cref{RS} that solutions in the low regularity regime may be obtained as unique limits of the regular solutions that we construct here. 
\medskip

Most contemporary approaches to constructing solutions to free boundary fluid equations involve a combination of one or more of the following approaches: (i) Fixing the domain using Lagrangian coordinates, (ii) Nash Moser iteration and (iii) Constructing solutions to the corresponding problem with surface tension and then taking the zero surface tension limit. Another approach that was implemented in the case of the free boundary Euler equations in the setting of a laterally infinite ocean with graph geometry can be found in \cite{MR4263411}. The article \cite{MR4263411} relies on a paradifferential reduction akin to the methods employed by Alazard-Burq-Zuily \cite{MR3260858} in the setting of water waves, as well as a subtle iteration scheme. Although conceivably possible, none of these approaches are straightforward at all to implement in the general setting that we consider in this paper. They either suffer from being indirect (in the case of considering the zero surface tension limit), require a sophisticated functional setup,  or are much better adapted to situations where the free surface can be parameterized by a single coordinate patch (for instance, when making use of Lagrangian coordinates or the paradifferential method).  For these reasons, we propose a direct, geometric approach, implemented fully within the Eulerian coordinates. As we will see, our scheme is robust, can be utilized in domains with very general geometries and  has the potential to be  adapted to a wide class of free boundary problems. In particular, we do not need to assume that our domains are simply connected, which stands in stark contrast to  all previous works on the free boundary MHD equations.
\medskip

Our overarching strategy will be to construct approximate solutions to the free boundary MHD equations by discretizing the problem in time on a small enough time-scale $\epsilon>0$. The most crude implementation of this scheme would be to try to view the MHD equations as an ODE and use an Euler type iteration. This approach, unsurprisingly, completely fails. However, it is instructive to outline the main obstructions to using this strategy. A na\"ive implementation of this method would be to define an approximate solution $(v(\epsilon), B(\epsilon),\Gamma(\epsilon))$ at time $\epsilon$  in terms of the variables $W^\pm(\epsilon)$  by taking
\begin{equation}\label{naivetransport}
\begin{cases}
&W^\pm(\epsilon)=W_0^\pm+\epsilon(-v_0\cdot\nabla W^\pm_0-\nabla P_0\pm\nabla_{B_0}W_0^\pm),
\\
&\Gamma(\epsilon)=(I+\epsilon v_0)\Gamma_0.
\end{cases}
\end{equation}
Of course (barring the technical issue of $W^\pm(\epsilon)$ and $W_0^\pm$ not sharing the same domain), the term $\epsilon (v_0\cdot\nabla W_0^\pm)$ leads to a full derivative loss when directly estimating the requisite Sobolev norm of the new iterate $W^\pm(\epsilon)$. It turns out that one can partially ameliorate this derivative loss by building the transport structure of the equations into each iteration. However, one is still left with a $\frac{1}{2}$ derivative loss coming from the pressure gradient (where the free surface regularity enters at leading order) and from the term $\nabla_{B_0}W^\pm_0$. Although not a priori obvious, the motivation for this latter $\frac{1}{2}$ derivative loss comes from the dynamic problem,  where one heuristically interprets $\nabla_{B_0}$ as a half derivative. 
\medskip


Our  goal will be to retain the simplicity of the Euler method (together with the implicit transport step mentioned above),  but ameliorate the remaining derivative loss by performing a suitable regularization of the data prior to carrying out each iteration. Roughly speaking, we will split the time step into two main pieces:
\begin{enumerate}
    \item Regularization.
    \item Euler plus transport.
\end{enumerate}
To ensure that the regularity of the data is preserved over a unit time-scale (or $\approx \epsilon^{-1}$ many iterations), the regularization step needs to be performed very carefully. In particular, one has to design a regularization that preserves the natural boundary conditions of the problem -- which is especially difficult in the case of the magnetic boundary condition $B\cdot n_\Gamma=0$ -- but also preserves at leading order the size of the energy functional constructed in \Cref{HEB}, which  serves as our quantitative measure of the regularity of each iterate. A detailed synopsis of the regularization procedure will be presented in \Cref{Outline existence}. However, to give a brief summary, our strategy will be to take a modular approach and decouple the regularization process into two main steps. In the first step, we will simultaneously regularize the free surface (at the same scale as our discretization) and the irrotational components of the variables $W^\pm_{\epsilon}$ (which are measured by the good variables $\mathcal{G}^\pm$ from \Cref{HEB}), taking care to ensure that the dynamic and magnetic boundary conditions are preserved along the way. In the second main step  we perform an additional  regularization of $W^\pm_0$ in the direction of the vector field $\nabla_{B_\epsilon}$. This serves to ameliorate the derivative loss coming from the term $\epsilon \nabla_{B_0}W^\pm_0$ described in the above na\"ive iteration. Note that, a priori, the term $\nabla_{B_0}W^\pm_0$ seems to lead to a full derivative loss in the requisite estimates, as, quantitatively, it should be of size $\delta^{-1}$ at top order if $W^\pm_0$ is regularized at some scale $\delta>0$. The purpose of our secondary regularization is to reduce this to a $\frac{1}{2}$ derivative loss, i.e.,~we will ensure that $\nabla_{B_0}$ scales like a factor of $\delta^{-\frac{1}{2}}$ when applied to $W^\pm_0$. This is consistent with how the vector field $\nabla_{B}$ scales in the dynamic problem. 
\medskip

We believe that the first step in the above approach is quite robust and should be applicable to constructing solutions to other free boundary problems. In particular, this first step is entirely sufficient for constructing solutions to the free boundary Euler equations (i.e., when  the  magnetic field is zero) and gives a new and relatively simple proof in that setting (as most of the difficulties we will encounter are due to the magnetic field). The second step is, of course, more specialized to the problem at hand. In these two steps, it should be noted that we essentially avoid regularizing the variables $\omega_0^\pm$   (except in the direction of $\nabla_B$) and instead directly propagate their regularity through each iteration. This is motivated by the fact that the variables $\omega_0^\pm$ are essentially transported by the velocity, up to the term $\nabla_{B_0}\omega_0^\pm$, which is formally orthogonal to $\omega_0^\pm$ in $L^2(\Omega_0)$.
\medskip

The iteration strategy that we employ in this section takes some very rough inspiration from the time discretization approach carried out in the case of a compressible gas in \cite{ifrim2020compressible}. We stress, however, that aside from the broad structure of the argument (i.e.~regularization plus Euler iteration), the main difficulties here are completely different. One obvious reason for this is that the surface of a liquid carries a non-trivial energy, and so the geometry of the free boundary hypersurface $\Gamma_0$ plays a leading role when propagating the energy bounds through each iteration. Moreover, given the matched regularity of the magnetic field and the free surface, it turns out to be an extremely delicate task to understand how the tangency condition $B\cdot n_\Gamma=0$ is affected when regularizing the hypersurface. As we will see below,  properly understanding these issues  will require a host of subtle technical innovations.  
\subsection{Basic setup and simplifications}\label{basicsimp}
We start by fixing a smooth reference hypersurface $\Gamma_*$ and a collar neighborhood $\Lambda_*:=\Lambda(\Gamma_*,\epsilon_0,\delta_0).$ Here, $\epsilon_0$ and $\delta_0$ are small but fixed positive constants.  Given $k\gg\frac{d}{2}+1$  and an initial state $(v_0, B_0, \Gamma_0)\in\mathbf{H}^k$, our objective is to construct a solution $(v(t),B(t),\Gamma_t)\in\mathbf{H}^k$ to the free boundary MHD equations on a time interval $[0,T]$ whose length depends solely on the size of $\|(v_0, B_0, \Gamma_0)\|_{\mathbf{H}^k}$, the lower bound in the Taylor sign condition and the collar neighborhood $\Lambda_*$. 
\medskip

Fix $M>0$. Given a small time step $\epsilon>0$ and a suitable triple of initial data $(v_0,B_0,\Gamma_0)\in\mathbf{H}^k$ with $\|(v_0,B_0,\Gamma_0)\|_{\mathbf{H}^k}\leq M$, our goal will be to construct a sequence $(v_{\epsilon}(j\epsilon),B_{\epsilon}(j\epsilon),\Gamma_{\epsilon}(j\epsilon))\in\mathbf{H}^k$ with the following properties:
\begin{enumerate}
\item (Norm bound). There is a uniform constant $c_0>0$ depending only on $\Lambda_*$, $M$ and the lower bound in the Taylor sign condition such that if $j$ is an integer with $0\leq j\leq c_0\epsilon^{-1}$, then 
\begin{equation*}
\|(v_{\epsilon}(j\epsilon),B_{\epsilon}(j\epsilon),\Gamma_{\epsilon}(j\epsilon))\|_{\mathbf{H}^k}\leq C(M),
\end{equation*}
where $C(M)>0$ is some constant depending on $M$. 
\item (Approximate solution).  We have
\begin{equation*}
\begin{cases}
&v_{\epsilon}((j+1)\epsilon)=v_{\epsilon}(j\epsilon)-\epsilon[v_{\epsilon}(j\epsilon)\cdot\nabla v_{\epsilon}(j\epsilon)-B_{\epsilon}(j\epsilon)\cdot\nabla B_{\epsilon}(j\epsilon)+\nabla P_{\epsilon}(j\epsilon)]+\mathcal{O}_{C^{3}}(\epsilon^2),
\\
&B_{\epsilon}((j+1)\epsilon)=B_{\epsilon}(j\epsilon)-\epsilon[v_{\epsilon}(j\epsilon)\cdot\nabla B_{\epsilon}(j\epsilon)-B_{\epsilon}(j\epsilon)\cdot\nabla v_{\epsilon}(j\epsilon)]+\mathcal{O}_{C^3}(\epsilon^2),
\\
&\nabla\cdot B_{\epsilon}((j+1)\epsilon)=\nabla\cdot v_{\epsilon}((j+1)\epsilon)=0,
\\
 &B_{\epsilon}((j+1)\epsilon)\cdot n_{\epsilon}((j+1)\epsilon)=0,
\\
&\Omega_{\epsilon}((j+1)\epsilon)=(I+\epsilon v_{\epsilon}(j\epsilon))(\Omega_{\epsilon}(j\epsilon))+\mathcal{O}_{C^3}(\epsilon^2).
\end{cases}
\end{equation*}
\end{enumerate}
By virtue of the size of $k$ and the above properties, the Taylor sign condition will be propagated over $\approx\epsilon^{-1}$ many iterations (with an implicit constant depending on the initial lower bound for $a_0$ and on $M$). We will therefore suppress the lower bound in the Taylor sign condition from our notation in the sequel. An important feature of the above iteration scheme is that it suffices to only carry out a single step. More specifically, we have the following theorem.
\begin{theorem}\label{onestepiteration}
 Let $k$ be a sufficiently large integer and let $M > 0$. Consider an initial data $(v_0, B_0,\Gamma_0)\in\mathbf{H}^k$
 so that $\|(v_0, B_0, \Gamma_0)\|_{\mathbf{H}^k}\leq M$ and $\omega_0^{\pm}$ and $\Gamma_0$ satisfy the bootstrap hypothesis
\begin{equation}\label{inductiveregbound}
\begin{split}
\|\omega_0^\pm\|_{H^k(\Omega_0)}\leq K(M)\epsilon^{-\frac{3}{2}},\hspace{5mm}\|\Gamma_0\|_{H^{k+\frac{1}{2}+\gamma}}\leq K(M)\epsilon^{-\frac{1}{2}-\gamma}
\end{split}
\end{equation}
for some fixed $0<\gamma\ll 1$ and sufficiently large positive constant $K(M)>0$. Under these hypotheses, there exists a one step iterate $(v_0,B_0,\Gamma_0)\mapsto (v_1,B_1,\Gamma_1)$ with the following properties:
\begin{enumerate}
\item (Energy monotonicity).
\begin{equation}\label{EMBITT}
E^{k}(v_1, B_1,\Gamma_1)\leq E^{k}(v_0, B_0,\Gamma_0)+C(M)\epsilon.
\end{equation}
\item (Good pointwise approximation). 
\begin{equation}
\label{approx-sln}
\begin{cases}
&v_1=v_0-\epsilon (v_0\cdot\nabla v_0-B_0\cdot\nabla B_0+\nabla P_0)+\mathcal{O}_{C^3}(\epsilon^2),\hspace{5mm}\text{on}\hspace{2mm}\Omega_1\cap\Omega_0,
\\
&B_1=B_0-\epsilon(v_0\cdot \nabla B_0 -B_0\cdot\nabla v_0)+\mathcal{O}_{C^3}(\epsilon^2),\hspace{5mm}\text{on}\hspace{2mm}\Omega_1\cap\Omega_0,
\\
&\nabla\cdot v_1=\nabla\cdot B_1=0,\hspace{5mm}\text{on}\hspace{2mm}\Omega_1,
\\
&B_1\cdot n_1=0,\hspace{5mm}\text{on}\hspace{2mm}\Gamma_1,
\\
&\Omega_{1}=(I+\epsilon v_{0})(\Omega_{0})+\mathcal{O}_{C^3}(\epsilon^2).
\end{cases}
\end{equation}
\item (Updated bootstrap). There holds
\begin{equation}\label{regboundprop}
\begin{split}
\|\omega_1^\pm\|_{H^k(\Omega_1)}\leq (1+C(M)\epsilon)K(M)\epsilon^{-\frac{3}{2}},\hspace{5mm}\|\Gamma_1\|_{H^{k+\frac{1}{2}+\gamma}}\leq K(M)\epsilon^{-\frac{1}{2}-\gamma}
\end{split}
\end{equation}
with the same constant $K(M)$ as above.
\end{enumerate}
\end{theorem}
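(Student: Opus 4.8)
The plan is to realize the one-step map as a composition of three operations -- a double regularization (surface plus irrotational part, then a secondary $\nabla_B$-regularization), followed by the na\"ive Euler-plus-transport update -- and then to verify the three conclusions \eqref{EMBITT}, \eqref{approx-sln}, \eqref{regboundprop} in turn. First I would set up the regularization of the free surface at the discretization scale: replace $\Gamma_0$ by a hypersurface $\Gamma_\epsilon$ obtained by mollifying $\eta_{\Gamma_0}$ at frequency $\lesssim \epsilon^{-1/2}$ in collar coordinates, chosen so that $\|\Gamma_\epsilon\|_{H^{k+\frac12+\gamma}}$ still obeys the bootstrap bound in \eqref{inductiveregbound} while $\|\Gamma_\epsilon - \Gamma_0\|_{H^k} = \mathcal{O}(\epsilon)$. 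Simultaneously one regularizes the ``irrotational component'' of $W_0^\pm$, measured through the good variables $\mathcal{G}^\pm$ from \Cref{CTEF}, at the matching scale, keeping $\omega_0^\pm$ essentially untouched (since the vorticities are propagated, not regularized). The delicate point already here is to enforce the \emph{two} boundary conditions $P_\epsilon = 0$ on $\Gamma_\epsilon$ and $B_\epsilon\cdot n_{\Gamma_\epsilon} = 0$ on $\Gamma_\epsilon$ after this modification; the first is handled by re-solving the elliptic problem \eqref{Euler-pressure} on the new domain, and the second requires an explicit correction of $B_0$ built from the formula $\nabla_B n_\Gamma = -\nabla^\top B\cdot n_\Gamma$ (as in \Cref{commutatorremark}), so that the tangency is restored at the cost of an $\mathcal{O}(\epsilon)$ perturbation controlled by $M$.

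Next I would perform the secondary regularization: smooth $W^\pm$ in the direction of the already-regularized field $B_\epsilon$, i.e.\ apply a mollifier adapted to the flow of $\nabla_{B_\epsilon}$ at scale $\delta \sim \epsilon$, but arranged so that $\nabla_{B_\epsilon}$ costs only a factor $\delta^{-1/2}$ rather than $\delta^{-1}$ when it hits $W^\pm$ -- this is exactly the mechanism that matches the heuristic ``$\nabla_B$ is a half-derivative'' from the introduction, and it is what permits the bound $\|\omega_1^\pm\|_{H^k}\leq K(M)\epsilon^{-3/2}$ rather than $\epsilon^{-2}$ to be maintained. Then the actual update: define $W_0^\pm$-transported iterates via an implicit transport step (to avoid the derivative loss in $\epsilon\,v_0\cdot\nabla W_0^\pm$) combined with the explicit Euler terms $-\epsilon\nabla P_\epsilon \pm 2\epsilon\nabla_{B_\epsilon}W_0^\pm$, push the domain forward by $I+\epsilon v_0$, re-project onto divergence-free fields, and re-solve for the pressure on $\Omega_1$. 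With all scales tied to $\epsilon$, the pointwise approximation \eqref{approx-sln} is then a matter of Taylor expansion plus the bounds $\|\Gamma_\epsilon-\Gamma_0\|$, $\|W_\epsilon^\pm - W_0^\pm\|$ being $\mathcal{O}(\epsilon)$ in $C^3$, using Sobolev embedding from the high-$k$ norms.

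The energy monotonicity \eqref{EMBITT} is where I would spend the bulk of the effort and is the main obstacle. One writes $E^k(v_1,B_1,\Gamma_1) - E^k(v_0,B_0,\Gamma_0)$ as the sum of (a) the change from the regularization steps and (b) the change from the Euler-plus-transport step. For (b), the point is that the discrete update is, to $\mathcal{O}(\epsilon^2)$, an Euler step for the ODE $\frac{d}{dt}E^k \lesssim_{M_s} E^k$ from \Cref{Energy est. thm}, so $E^k$ grows by at most $C(M)\epsilon$; here one must re-run the coercivity and propagation machinery of \Cref{HEB} at the discrete level, checking that every commutator and elliptic estimate used there survives the replacement of $D_t^\pm$ by a finite difference -- in particular the normal-form-corrected good variables $\mathcal{G}^\pm$ and the ``regularizing effect'' $\nabla_B^2 a\in H^{k-2}$ of \Cref{partitionofBa} must be reproduced for the iterates. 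For (a), the subtlety is that mollifying $\Gamma_0$ changes the surface energy $\int a\,(\mathcal{N}^{k-1}a)^2$, and mollifying the irrotational part changes the $E_{lin}$ pieces; these must be shown to \emph{decrease} $E^k$ up to $\mathcal{O}(\epsilon)$, which is where the careful matching of the regularization frequency to $\epsilon^{-1/2}$ (so that the commutator of the mollifier with $\mathcal{N}$, $\mathcal{H}$ and $\nabla_B$ is $\mathcal{O}(\epsilon)$ in the relevant norms) is essential. Finally \eqref{regboundprop} follows by bookkeeping: the surface bound is preserved because the regularization caps $\|\Gamma_1\|_{H^{k+\frac12+\gamma}}$ at $K(M)\epsilon^{-1/2-\gamma}$ by construction and the transport step only perturbs it by $\mathcal{O}(\epsilon)$ times that, while the vorticity bound uses \eqref{vorteq} (the transport equation $D_t^\mp\omega^\pm = \mathcal{M}_2(\nabla W,\nabla W)$) discretized, together with the $\delta^{-1/2}$ gain from the secondary regularization to absorb the $\nabla_B\omega^\pm$ contribution, giving the multiplicative factor $(1+C(M)\epsilon)$.
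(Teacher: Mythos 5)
Your high-level blueprint -- regularize the surface and the irrotational component, perform a secondary $\nabla_B$-regularization, then do an Euler-plus-transport step -- matches the paper's Steps 1, 3, and 4 (Propositions~\ref{domainregularizationprop}, \ref{rotregEmon}, \ref{Final transport +Euler}). But the core mechanism that makes the energy monotonicity \eqref{EMBITT} work is missing. You assert that the regularization ``must be shown to decrease $E^k$ up to $\mathcal{O}(\epsilon)$'' by ``careful matching of the regularization frequency to $\epsilon^{-1/2}$''; this is unjustified, and the scale is also wrong. The paper derives the scale $\epsilon^{-1}$ (not $\epsilon^{-1/2}$) from the requirement that the $\mathcal{O}(\epsilon^2\|D_t W^\pm\|^2_{H^k})$ errors in the discrete energy identity be $\mathcal{O}(\epsilon)$. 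More importantly, a one-shot frequency cutoff of $\eta_\Gamma$ does not obviously decrease the highly nonlinear functional $E^k$, which involves the pressure, normal, and Dirichlet-to-Neumann operator of the regularized domain. The paper instead realizes the regularization as a \emph{parabolic flow} $\Gamma_\delta$ in $\delta\in[0,\epsilon]$, obtains the differential identities in Lemma~\ref{othersurfaceenergy} expressing $\frac{d}{d\delta}(\mathcal{N}a)_*$ as $2\delta a_*\Delta_{\Gamma_*}\kappa_*$ plus perturbative errors, and integrates to produce an explicit dissipation term $c_1\mathcal{J}^2(\delta)$ in Proposition~\ref{E1estimate} that absorbs all quadratic error contributions. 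Without this dissipative structure the energy estimate does not close. Similarly, your proposal to ``restore tangency via an explicit correction built from $\nabla_B n_\Gamma = -\nabla^\top B\cdot n_\Gamma$'' is too naive: the paper co-evolves the magnetic field with the domain flow by solving the PDE system \eqref{parabolicregularizationconstruction} with a specially designed Neumann trace so that $\tilde B_\delta\cdot n_{\Gamma_\delta}$ obeys a heat equation in $\delta$ and decays smoothly from its initial zero value, and only then applies the rotational projection; the energy bounds for $B_\epsilon$ rest on this co-evolution, not on an algebraic fix at $\delta=\epsilon$.

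Two further concrete issues. First, you omit Step 2 (the ``mild'' regularization at scale $\epsilon^{-3}$, Proposition~\ref{mildreg}). This is not cosmetic: the paper needs the crude high-regularity bounds $\|W_\epsilon^\pm\|_{H^{k+1+\alpha}}\lesssim_M\epsilon^{-3/2-3\alpha}$ to interpolate against the improved irrotational bound $\|\nabla^\top v_\epsilon\cdot n_\epsilon\|_{H^{k-1/2}}\lesssim_M\epsilon^{-1}$ when closing the bootstrap for $\Gamma_1$ via Lemma~\ref{normalapprox}; your bookkeeping argument for $\|\Gamma_1\|_{H^{k+1/2+\gamma}}$ would not go through without them. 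Second, the Euler terms should carry coefficient $1$, not $2$: from $D_t^\mp W^\pm = -\nabla P$ and the transport by $v$ one gets $W^\pm(x+\epsilon v, t+\epsilon)\approx W^\pm-\epsilon\nabla P\pm\epsilon\nabla_B W^\pm$, which is exactly the paper's $\tilde W_1^\pm(x_1)=W^\pm_\epsilon\pm\epsilon\nabla_{B_\epsilon}\tilde W_\epsilon^\pm-\epsilon\nabla P_\epsilon$; your factor of $2$ appears to come from misapplying \eqref{materialofW}, which is the $D_t^\pm W^\pm$ equation, not the relevant $D_t^\mp W^\pm$ transport. Moreover, the paper is careful to place the $\nabla_{B_\epsilon}$ on the \emph{uncorrected} $\tilde W_\epsilon^\pm$ (which enjoys the $\epsilon^{-1/2}$ regularization bound) but to use the tangency-corrected $W_\epsilon^\pm$ elsewhere, and this distinction is exactly what permits the ``almost orthogonality'' cancellation in Lemma~\ref{orthogonality} needed to estimate the first-order error terms in the energy update.
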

The rotational bound in \eqref{regboundprop} ensures that the constant in the high regularity bootstrap \eqref{inductiveregbound} for $\omega_1^\pm$ only grows by an amount comparable to $\epsilon$ times the initial bound, which implies that on a unit time-scale (or $\approx_M \epsilon^{-1}$ many iterations), $\omega^\pm$  retains its quantitative regularity. The reason we must track this bound through each iteration is because we  do not directly regularize (except in a very mild way) the rotational parts of $W^\pm$ in our analysis. Heuristically, we can get away with this because, in the dynamic problem,  $\omega^\pm$ is approximately transported by the vector field $W^\mp$. Hence, there should be no severe loss of regularity when carrying out the Euler plus transport iteration. We remark, however, that the situation is more complicated in practice, as we will need to use the velocity field $v$ to transport the free surface rather than the variables $W^\mp$ (which would define two different transported domains). 
\begin{remark}
The reader may wonder at this point if it would be simpler to regularize the full variable $W^\pm_0$ in each iteration rather than just its irrotational part (to avoid needing to precisely track the higher regularity $H^k$ bounds for $\omega_0^\pm$).  Attempting this introduces a whole host of technical issues which seem very difficult (if at all possible) to overcome. Such a regularization has to preserve the boundary conditions and the size of the energy functional. This is even further complicated by the fact that the rotational part of the energy functional involves fractional order Sobolev norms on $\Omega$ which are somewhat awkward to work with in a bounded domain (in contrast to the boundary $\Gamma$ where we have ellipticity of $\mathcal{N}$ and where the dynamics of the irrotational part of $W^\pm$ essentially live).
\end{remark}
The bootstrap assumption on the free surface in \eqref{inductiveregbound} is  for technical convenience to avoid certain logarithmic divergences in the forthcoming estimates. This part of the bootstrap will be  easy to close, as we will directly regularize the free surface in each iteration. The energy monotonicity property \eqref{EMBITT}, along with the energy coercivity bound from \Cref{Energy est. thm}, will ensure that the resulting sequence $(v_{\epsilon}(j\epsilon),B_{\epsilon}(j\epsilon),\Gamma_{\epsilon}(j\epsilon))$ of approximate solutions that we construct remains uniformly bounded in $\mathbf{H}^k$ for $j\ll_M\epsilon^{-1}$. The second property  in \Cref{onestepiteration} will ensure that $(v_{\epsilon}(j\epsilon),B_{\epsilon}(j\epsilon),\Gamma_{\epsilon}(j\epsilon))$ converges in a weaker topology to a solution of the equation. 
\medskip

We remark that the assumption on the initial iterate \eqref{inductiveregbound}  is harmless in practice. To achieve this, we can use the cruder regularization in \Cref{envbounds} to replace the first iterate in the resulting sequence $(v_{\epsilon}(j\epsilon),B_{\epsilon}(j\epsilon),\Gamma_{\epsilon}(j\epsilon))$ with a suitable $\epsilon^{-1}$ scale regularization, so that the base case is satisfied. Importantly, such a regularization is only performed a single time -- on the initial iterate -- as it is merely bounded on $\mathbf H^k$.  In contrast, we require the much stricter energy monotonicity bound \eqref{EMBITT} for all other iterations, which will be crucial for propagating the $\mathbf{H}^k$ bounds over unit time-scales.  
\begin{remark}
In \eqref{EMBITT}, it is imperative that the constant be of the form $1+ C(M)\epsilon$, as we will run our iteration over $\approx_M \epsilon^{-1}$ steps. Note that the definition of the $H^k$ norm involves both integer and half-integer Sobolev spaces. Therefore, we must be careful to properly define fractional Sobolev spaces so that interpolation arguments keep the form of the good constant in \eqref{EMBITT}. Hence, for this section, we define $H^s(\Omega)$ for fractional $s$ via direct interpolation of integer order spaces as in \eqref{Interpolation on domains}. As mentioned below \eqref{Interpolation on domains}, this definition is uniformly equivalent to our previous definition, for domains with boundary in the collar.  
\end{remark}
\subsection{Outline of the argument}\label{Outline existence} We now briefly overview the section. The first step is selecting a suitable regularization scale. To help motivate this, we begin by recalling that for an exact solution to the dynamic problem, the estimates in \Cref{HEB} give us the a priori bound
\begin{equation*}
E^k(v(t),B(t),\Gamma(t))=E^k(v(0),B(0),\Gamma(0))+\mathcal{O}_M(t)
\end{equation*}
on some time interval depending on the data size. Our basic strategy in this section will be to carry out a discrete version of this energy bound for our approximate solutions. Phrased in terms of a single iterate (with time step $\epsilon$), we want to show that
\begin{equation*}
E^k(v_1,B_1,\Gamma_1)=E^k(v_0,B_0,\Gamma_0)+\mathcal{O}_M(\epsilon).
\end{equation*}
By carefully expanding the difference of the two energies $E^k(v_1,B_1,\Gamma_1)$ and $E^k(v_0,B_0,\Gamma_0)$, we will be able to exhibit the same leading cancellation observed in \Cref{HEB}. However, unlike for exact solutions, in this setting we will also have to estimate additional quadratic error terms, which very schematically corresponds to estimating error terms akin to  $\epsilon^2\|D_tW^\pm\|_{H^k(\Omega_t)}^2$. For such error terms to be of size $\mathcal{O}_M(\epsilon)$, we need our regularization scale to be such that $D_t$ scales like $\epsilon^{-\frac{1}{2}}$. Since in the exact dynamic problem $D_t$ scales like a spatial derivative of order $\frac{1}{2}$, this suggests that the scale we perform our regularization at should be $\epsilon^{-1}$.
\medskip

Our regularization will consist of three steps, for which we now provide a schematic overview. The first step will be to regularize the free surface, along with the irrotational variables $\mathcal{N}W^\pm\cdot n_\Gamma$, which correspond to the good variables $\mathcal{G}^\pm$. Regularizing these variables in tandem is natural due to the coupling between the variables $\mathcal{G}^\pm$ and $a$ which we observed in the dynamic problem in \Cref{HEB}. To construct the regularized free surface $\Gamma_{\epsilon}$ from $\Gamma_0$, we will define a smooth, one-parameter family of hypersurfaces $(\Gamma_{\delta})_{0\leq\delta\leq\epsilon}$ by solving a suitable heat-type equation (in the parameter $\delta$) on the reference hypersurface $\Gamma_*$. To regularize $W^\pm$, we will construct a corresponding one-parameter family $(W^\pm_{\delta})_{0\leq\delta\leq\epsilon}$ by partially regularizing the trace of $W^\pm_0$. There are two critical properties that our regularization operators will satisfy. First, they will not create significant energy growth. Secondly, they will preserve (at leading order) the magnetic boundary condition. To achieve this, we construct $W^\pm_{\delta}$ by defining an appropriate  non-local gradient flow in the parameter $\delta$. This flow will be carefully crafted to ensure (among other things) that the boundary term $B_{\delta}\cdot n_{\Gamma_\delta}$ obeys, at leading order, a suitable parabolic equation. This will allow us to propagate the boundary condition $B_0\cdot n_{\Gamma_0}=0$ from $\delta=0$ to $\delta=\epsilon$. 
\medskip

The second step in our scheme is very simple. Its purpose is to crudely regularize the full variables $W^\pm_{0}$ at the scale $\epsilon^{-3}$. This is performed purely for technical reasons in order to close the bootstrap for $\Gamma_1$ in \eqref{regboundprop} at the end of each iteration. This step will also provide us with some crude quantitative high regularity bounds for $W^\pm_{\epsilon}$, which will help us to carry out the final, third step of our regularization. Such a construction will be carried out using an essentially standard mollification.

\medskip
The third and final step in our regularization scheme is to regularize the variables $W_0^\pm$ in the direction of the vector field $\nabla_B$. To be able to estimate the resulting error terms in the Euler step of the iteration, we will need $\nabla_B$ to scale like $\epsilon^{-\frac{1}{2}}$. We will achieve this by solving a suitable nonlinear elliptic equation on the fixed regularized domain $\Omega_{\epsilon}$. We will postpone providing a precise description of the requisite construction here. However, the na\"ive model case to consider is the following second-order elliptic equation
\begin{equation*}
W^\pm_{\epsilon}-\epsilon\nabla_{B_{\epsilon}}^2W^\pm_{\epsilon}=W^\pm_0.
\end{equation*}
We will not work with this particular equation since (among other things) the solution will cause the property \eqref{approx-sln} to fail. However, to understand the spirit of our argument, this model provides a good first approximation. As in the first step of the regularization scheme, it is a delicate matter to understand how such an elliptic equation affects the energy and the boundary conditions. This is where most of the work will be concentrated.
\medskip 

The last step in our construction is to carry out the Euler plus transport iteration to define the new variable $(v_1, B_1, \Gamma_1)$. The main difficulty in this section is to ensure that the energy only grows by a factor of $1+C(M)\epsilon$. We will  guarantee this by performing a discrete version of the energy estimate in \Cref{HEB}. Since we are working with approximate solutions, this will not be entirely trivial, but we now have the massive benefit of being able to allow for implicit constants to be of size $C(M)$. To exhibit the energy cancellations observed in \Cref{HEB}, we carefully relate the good variables $a$, $\mathcal{G}^\pm$, and $\omega^\pm$ for the new iterate to the corresponding good variables for the regularized data. 
\medskip

After carrying out the above steps, we will then straightforwardly construct $\mathbf{H}^k$ solutions to the free boundary MHD equations by applying our iteration  $\mathcal{O}_M(\epsilon^{-1})$ many times and sending the parameter $\epsilon\to 0$. At the end of this section, in \Cref{existencecommutators},  we also collect and prove some of the various commutator estimates we will use in our analysis. 
\subsection{Step 1: Free surface and irrotational regularization}\label{DRS}
We begin with the first regularization step. For this, we have the following proposition. 
\begin{proposition}\label{domainregularizationprop} 
 Let $(v_0, B_0,\Gamma_0)\in\mathbf{H}^k$ be as in \Cref{onestepiteration}. There exists a domain $\Omega_{\epsilon}$ contained in $\Omega_0$ with boundary $\Gamma_{\epsilon}\in\Lambda_*$ and divergence-free functions $v_{\epsilon}$ and $B_{\epsilon}$ on $\Omega_\epsilon$ such that $B_{\epsilon}$ is tangent to $\Gamma_{\epsilon}$ and 
 \begin{enumerate}
\item (Energy monotonicity).
\begin{equation}\label{Emonsurf}
E^{k}(v_{\epsilon},B_{\epsilon},\Gamma_{\epsilon})\leq (1+C(M)\epsilon)E^{k}(v_0, B_0,\Gamma_0).
\end{equation}
\item (Good pointwise approximation).
\begin{equation}\label{PP approx}
\eta_{\epsilon}=\eta_{0}+\mathcal{O}_{C^3}(\epsilon^2)\hspace{2mm}on\hspace{2mm}\Gamma_*,\hspace{5mm} (v_{\epsilon},B_{\epsilon})=(v_0,B_0)+\mathcal{O}_{C^3}(\epsilon^2)\hspace{2mm}on\hspace{2mm}\Omega_\epsilon.
\end{equation}
\item (Domain regularization bound). For every $\alpha\geq 0$, there holds,
\begin{equation}\label{surfbound-par}
\|\Gamma_{\epsilon}\|_{H^{k+\alpha}}\lesssim_{M,\alpha} \epsilon^{-\alpha}.
\end{equation}
\item (Regularization bounds). We have the following irrotational regularization bounds (in two equivalent flavors) 
\begin{equation*}
\|\mathcal{N}W^\pm_{\epsilon}\cdot n_{\Gamma_\epsilon}\|_{H^{k-\frac{1}{2}}(\Gamma_{\epsilon})}+\|\nabla^{\top}W^\pm_{\epsilon}\cdot n_{\Gamma_\epsilon}\|_{H^{k-\frac{1}{2}}(\Gamma_{\epsilon})}\lesssim_M \epsilon^{-1}.
\end{equation*}
\item (Bootstrap propagation). If $K(M)$  in \eqref{inductiveregbound}  is large enough then 
\begin{equation*}
\|\omega_{\epsilon}^\pm\|_{H^k(\Omega_\epsilon)}\leq (1+C(M)\epsilon)K(M)\epsilon^{-\frac{3}{2}},\hspace{5mm}\|\Gamma_{\epsilon}\|_{H^{k+\frac{1}{2}+\gamma}}\leq \frac{1}{2}K(M)\epsilon^{-\frac{1}{2}-\gamma}.
\end{equation*}
That is, the constant for $\Gamma_{\epsilon}$ has improved by a factor of $2$ and the constant for $\omega_{\epsilon}^\pm$ has only grown by a factor of $(1+C(M)\epsilon)$.
 \end{enumerate}
\end{proposition}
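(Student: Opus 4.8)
The plan is to construct the regularized state $(v_\epsilon, B_\epsilon, \Gamma_\epsilon)$ as the time-$\epsilon$ endpoint of a single parabolic-type flow in an artificial parameter $\delta \in [0,\epsilon]$, set up so that the free surface and the Els\"asser variables are regularized simultaneously while the constraints (divergence-free, tangency of $B$, and the dynamic relation $P|_\Gamma = 0$) are preserved along the flow. Concretely, I would parameterize all hypersurfaces $\Gamma_\delta$ as graphs $\eta_\delta$ over the reference $\Gamma_*$, and let $\eta_\delta$ solve a heat-type equation $\partial_\delta \eta_\delta = -\mathcal{L}\eta_\delta + (\text{l.o.t.})$ where $\mathcal{L}$ is (a regularized version of) the Dirichlet-to-Neumann operator $a\mathcal{N}$ transported to $\Gamma_*$; the positivity of $a$ (Taylor sign condition) is what makes this parabolic. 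The natural scaling $\partial_\delta \sim \mathcal{L} \sim$ one derivative then forces the time-$\epsilon$ endpoint to be regularized at frequency $\sim \epsilon^{-1}$, which is exactly \eqref{surfbound-par}, and the smoothing estimates for the flow give the higher-regularity bounds \eqref{surfbound-par} for all $\alpha \ge 0$ by standard parabolic gain-of-derivatives, iterated.

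\textbf{Carrying out the steps.} First I would build the surface flow $\eta_\delta$ and establish the a priori parabolic smoothing and short-time bounds, using the balanced elliptic estimates of \Cref{BEE} and \Cref{Lambdakest} to control the geometry-dependent coefficients uniformly over the collar; the Duhamel/energy argument here is where $\|\Gamma_\epsilon\|_{H^{k+\alpha}} \lesssim_{M,\alpha} \epsilon^{-\alpha}$ comes from, and the $C^3$-closeness $\eta_\epsilon = \eta_0 + \mathcal{O}_{C^3}(\epsilon^2)$ is Taylor expansion in $\delta$ together with a bound on $\partial_\delta^2 \eta_\delta$. Second, I would define the companion flow for $W^\pm_\delta$: take $W^\pm_\delta$ to be the harmonic-plus-rotational reconstruction on $\Omega_\delta$ of a partially regularized boundary datum, where the irrotational boundary data $\nabla^\top W^\pm_\delta \cdot n_{\Gamma_\delta}$ (equivalently $\mathcal{G}^\pm_\delta$) is driven by a parabolic flow coupled to $\eta_\delta$, while $\omega^\pm_\delta$ is left essentially unregularized (only acted on mildly by the change of domain and a harmless regularization in the $\nabla_B$ direction is deferred to Step 3). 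The delicate point is to choose the flow for $W^\pm_\delta$ so that $g_\delta := B_\delta \cdot n_{\Gamma_\delta}$ satisfies, to leading order, $\partial_\delta g_\delta = -c\, \mathcal{N} g_\delta + \mathcal{O}(g_\delta)$ for some $c>0$, a scalar parabolic equation on $\Gamma_\delta$; since $g_0 = 0$ by hypothesis, uniqueness for this equation forces $g_\delta \equiv 0$, giving tangency of $B_\epsilon$ exactly (or to the required order, with a projection correction that is $\mathcal{O}_{C^3}(\epsilon^2)$). I would derive the formula $\partial_\delta g_\delta$ using the moving-surface identities recalled in \Cref{Time zero propagate} (namely $D_t n_\Gamma = -((\nabla v)^* n_\Gamma)^\top$ with $D_t$ replaced by $\partial_\delta + (\text{flow velocity})\cdot\nabla$) and the identity $\nabla_B n_\Gamma = -\nabla^\top B \cdot n_\Gamma$ from \Cref{commutatorremark}. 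Third, the irrotational regularization bound $\|\mathcal{N}W^\pm_\epsilon \cdot n_{\Gamma_\epsilon}\|_{H^{k-1/2}} \lesssim_M \epsilon^{-1}$ follows from the same parabolic smoothing applied to the $\mathcal{G}^\pm_\delta$ flow, and the equivalence of the two displayed flavors is the identity $\nabla^\top W^\pm \cdot n_\Gamma = a^{-1}(\mathcal{A}^\pm)^\top$ established in \Cref{CTEF} together with \Cref{boundaryest}.

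\textbf{Energy monotonicity and the bootstrap.} For part (i), I would compute $\frac{d}{d\delta} E^k(v_\delta, B_\delta, \Gamma_\delta)$ and show it is $\le C(M) E^k$, so that Gr\"onwall over $\delta \in [0,\epsilon]$ yields \eqref{Emonsurf}. The key structural input is that the parabolic flows were \emph{designed} to be (negative) gradient-type for the leading part of $E^k$: the surface flow dissipates $\|a^{1/2}\mathcal{N}^{k-1}a\|_{L^2}^2$ and $\|\nabla\mathcal{H}\mathcal{N}^{k-2}\mathcal{G}^\pm\|_{L^2}^2$, so those terms are \emph{decreasing} up to lower-order coupling errors controlled by \Cref{Lambdakest} and the coercivity bound \eqref{Coercivity bound on integers}; the rotational part $E^k_r$ changes only through the change of domain and the mild $\nabla_B$-regularization, contributing at most $C(M)\epsilon$ relative growth. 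This is exactly the place where I expect \textbf{the main obstacle}: reconciling the need to dissipate the irrotational surface energy with the requirement that $B_\delta \cdot n_{\Gamma_\delta}$ stay zero and that the rotational energy not blow up — the coupling terms between the $\eta_\delta$ flow and the $W^\pm_\delta$ flow are genuinely borderline at low regularity, and handling them requires invoking the regularizing effect $\nabla_B^2 a \in H^{k-2}$ (\Cref{partitionofBa}) and a normal-form-type rearrangement analogous to the one used in \Cref{RB2}, now in the parabolic-parameter variable. Finally, part (v): the surface bootstrap improves by a factor $2$ because the parabolic flow is a genuine smoother, so $\|\Gamma_\epsilon\|_{H^{k+1/2+\gamma}} \lesssim_M \epsilon^{-1/2-\gamma}$ with a constant independent of $K(M)$, hence $\le \frac12 K(M)\epsilon^{-1/2-\gamma}$ once $K(M)$ is chosen large; the $\omega^\pm$ bootstrap grows by $(1+C(M)\epsilon)$ because $\omega^\pm_\delta$ is only pushed forward under an $\mathcal{O}_{C^3}(\epsilon^2)$ diffeomorphism and mildly mollified in the $\nabla_B$ direction, each contributing a $1 + \mathcal{O}(\epsilon)$ multiplicative factor to the $H^k(\Omega_\epsilon)$ norm.
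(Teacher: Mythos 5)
Your overall parabolic-regularization strategy is on the right track, and the intuition that the flow should be a dissipative gradient-type flow for the irrotational energy is correct. There are, however, three genuine divergences from what actually works, and the second one is a gap.

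First, the surface flow. You propose $\partial_\delta\eta_\delta = -\mathcal L\eta_\delta + \text{l.o.t.}$ with $\mathcal L$ a regularized $a\mathcal N$. The paper's proof instead uses the \emph{linear, state-independent} heat semigroup on the reference hypersurface: $\tilde\eta_\delta = e^{\delta^2\Delta_{\Gamma_*}}\eta_0$, shifted by $-C\delta^2$ to ensure $\Omega_\delta\subset\Omega_0$. Using $\Delta_{\Gamma_*}$ rather than $a\mathcal N$ avoids a nonlinear parabolic well-posedness problem and makes the regularization bound \eqref{surfbound-par} immediate from semigroup smoothing. Moreover, the specific choice $e^{\delta^2\Delta_{\Gamma_*}}$ (rather than $e^{\delta\Delta_{\Gamma_*}}$) is what makes $\partial_\delta\eta_\delta = 2\delta\Delta_{\Gamma_*}\eta_\delta$ vanish to first order in $\delta$ at $\delta=0$, giving the quadratic error $\mathcal O_{C^3}(\epsilon^2)$ in \eqref{PP approx}; your flow would need an ad hoc argument to recover this. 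Your route is not obviously wrong, but it incurs substantial extra work with no visible benefit.

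Second, and this is the gap: your tangency argument does not close. You want $g_\delta := B_\delta\cdot n_{\Gamma_\delta}$ to solve a homogeneous parabolic equation so that $g_0=0$ forces $g_\delta\equiv0$. But the evolution of the normal trace under any reasonable companion flow for $W^\pm_\delta$ produces a genuine \emph{inhomogeneous} source (from the commutator of the flow with the geometry, e.g.\ terms like $\nabla^\top X\cdot\nabla^\top n$ and $\nabla_n\Delta^{-1}(\partial_iV_j\partial_jX_i)$ in the paper's Proposition~\ref{Bconstruction}); these are small, of order $\delta$, but not proportional to $g_\delta$, so uniqueness gives only $g_\delta=\mathcal O(\delta^2)$ in weak norms, not $g_\delta\equiv0$. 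The paper therefore does \emph{not} try to preserve tangency along the flow. It constructs an auxiliary $\tilde B_\delta$ and then \emph{projects}: $B_\delta := \tilde B_\delta^{rot} = \tilde B_\delta - \nabla\mathcal H\mathcal N^{-1}(\tilde B_\delta\cdot n_{\Gamma_\delta})$. Tangency is then exact by construction, and the whole point of designing the Neumann data in Proposition~\ref{Bconstruction} as $2\delta\Delta_{\Gamma_*}(X_{\delta,*}\cdot n_{\delta,*})$ is to make the correction $\tilde B_\delta^{ir}$ small in strong norms — not to kill $g_\delta$ identically.

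Third, the energy bookkeeping in part (i) is more delicate than ``rotational part contributes at most $C(M)\epsilon$.'' In the paper's Proposition~\ref{E1estimate}, the rotational energy $E^k_r$ can \emph{grow} by a quantity $c_2\mathcal J^2(\delta)$ where $\mathcal J(\delta)$ measures the accumulated parabolic dissipation; this growth is not absorbed into the $\mathcal O(\epsilon)$ slack. Instead one must exhibit that the irrotational energy $E^k_i$ \emph{dissipates} by $c_1\mathcal J^2(\delta)$ with $c_1\gg c_2$, and the two terms cancel. Without tracking the explicit dissipation quantity $\mathcal J$, a crude Gr\"onwall estimate would not close. Your phrasing suggests the coupling errors are uniformly lower order; in fact they are borderline and require the $c_1\gg c_2$ hierarchy.

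Your parts (iii) and (v) arguments are essentially correct, and the observation that the $\nabla_B$ regularization is deferred to a later step matches the paper's structure.
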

\begin{remark}
In a loose sense, property (iv) states that the irrotational component of $W^\pm_{\epsilon}$ is regularized at scale $\epsilon^{-1}$ when measured in $H^{k+1}(\Omega_\epsilon)$. This is one ingredient that will be needed to prove the necessary energy monotonicity bound in the transport step later on. 
\end{remark}
To regularize $\Gamma_0$, we begin by defining a family of preliminary parabolic regularizations of its parameterization $\eta_0$ in collar coordinates,
\begin{equation*}
\tilde{\eta}_{\delta}=e^{\delta^2\Delta_{\Gamma_*}}\eta_0,
\end{equation*}
for each $\delta\in (0,\epsilon]$. Here, $\Delta_{\Gamma_*}$ is the Laplace-Beltrami operator for $\Gamma_*$. The reason for using the operator $e^{\delta^2\Delta_{\Gamma_*}}$ instead of $e^{-\delta|D|}$ is to ensure that when $k$ is large enough, we have $\|\partial_{\delta}\tilde{\eta}_{\delta}\|_{H^{k-2}(\Gamma_*)}\lesssim_M \delta$. In particular, this ensures that we have $\tilde{\eta}_{\delta}=\eta_0+\mathcal{O}_{C^3}(\delta^2)$. Moreover, we have the regularization bound $\|\tilde{\eta}_{\delta}\|_{H^{k+\alpha}(\Gamma_*)}\lesssim_{M,\alpha}\delta^{-\alpha}.$ The hypersurface defined by $\tilde{\eta}_{\epsilon}$ will not quite be what we use for our regularized domain. Instead, we slightly correct $\tilde{\eta}_{\delta}$ by defining through the collar parameterization
\begin{equation*}
\eta_{\delta}=\tilde{\eta}_{\delta}-C\delta^2,
\end{equation*}
where $C$ is some positive constant depending on $M$ only, imposed to ensure that the domain  $\Omega_{\delta}$ associated to $\Gamma_{\delta}$ is contained in $\Omega_0$. We then define our regularized surface $\Gamma_{\epsilon}$ (and correspondingly the domain $\Omega_{\epsilon}$) using the function $\eta_{\epsilon}$. Clearly, $\Gamma_{\epsilon}$ satisfies properties (ii)-(iii) in \Cref{domainregularizationprop} and also (v) if $K(M)$ is large enough. Before proceeding to the proof of energy monotonicity, we note that the above construction gives rise to a flow velocity $V_{\delta}$ in the parameter $\delta$ for each $\delta\in (0,\epsilon]$ for the family of hypersurfaces $\Gamma_{\delta}$ by composing $\partial_{\delta}\eta_{\delta}\nu$ with the inverse of the collar coordinate parameterization $x\mapsto x+\eta_{\delta}(x)\nu(x)$. We may assume that $V_{\delta}$ is defined on $\Omega_{\delta}$ by defining
\begin{equation*}
V_{\delta}:=\mathcal{H}_*(\nu\partial_{\delta}\eta_{\delta})\circ\Phi_{\delta}^{-1}
\end{equation*}
where $\mathcal{H}_*$ is the harmonic extension operator for the reference domain $\Omega_*$ and $\Phi_{\delta}=id_{\Omega_*}+\mathcal{H}_*(\nu\eta_{\delta})$, which is a diffeomorphism from $\Omega_*$ to $\Omega_{\delta}$ with $\|\Phi_{\delta}\|_{H^{k+\frac{1}{2}}(\Omega_*)}\lesssim_M 1$. For simplicity, we write $\Psi_{\delta}:=\Phi_{\delta}^{-1}$, which satisifes a similar bound. As a consequence, we observe that for $1\leq s\leq k+\frac{1}{2}$, we have
\begin{equation*}
\|V_{\delta}\|_{H^s(\Omega_{\delta})}\lesssim_M \|V_{\delta}\|_{H^{s-\frac{1}{2}}(\Gamma_{\delta})}.
\end{equation*}
We use $D_{\delta}:=\partial_{\delta}+V_{\delta}\cdot\nabla$ to denote the associated material derivative, which will be tangent to the family of hypersurfaces $\Gamma_{\delta}$. For a function $f$ defined on $\Gamma_{\delta}$ or $\Omega_{\delta}$, we will write $f_*$ as shorthand for its pullback $f\circ\Phi_{\delta}$ to the reference hypersurface or domain, respectively. We will also write $f^*$ as shorthand for $f\circ\Phi_{\delta}^{-1}$ for a function $f$ defined on $\Gamma_*$ or $\Omega_*$.
\medskip

To define $v_{\epsilon}$ and $B_{\epsilon}$, we need the following proposition, which we state for a general vector field $X$.
\begin{proposition}\label{Bconstruction} 
Let $k>\frac{d}{2}+1$ be a large integer. Assume that $\Gamma_0$ is in $H^{k+\frac12}$. 
Let $X_0$ be a divergence-free $H^k$ vector field on $\Omega_0$. Then for $\delta\in [0,\epsilon]$, there exists a unique divergence-free solution $X:=X_{\delta}\in C([0,\epsilon] ; H^k(\Omega_\delta))$ 
for the evolution
\begin{equation}\label{parabolicregularizationconstruction}
\begin{cases}
&D_{\delta}X_{\delta}=\nabla\phi_{\delta}+\nabla\Delta^{-1}(\partial_iV_{\delta,j}\partial_jX_{\delta,i})
\hspace{5mm}\text{on}\hspace{2mm}\Omega_{\delta},
\\
&\Delta\phi_{\delta} = 0\hspace{5mm}\text{on}\hspace{2mm}\Omega_{\delta},
\\
&(\nabla_n\phi_{\delta})_*=2\delta \Delta_{\Gamma_*}X_{\delta,*}\cdot n_{\delta,*} - \frac{2\delta}{|\Gamma_{\delta}|}\int_{\Gamma_{\delta}}  (\Delta_{\Gamma_*}X_{\delta,*}\cdot n_{\delta,*})^* dS  \hspace{5mm}\text{on}\hspace{2mm}\Gamma_{*}.
\end{cases}
\end{equation}
Moreover, we have the  energy estimate 
\begin{equation}\label{X-delta}
\sup_{0\leq\delta\leq\epsilon}\|X_{\delta}\|_{H^k(\Omega_{\delta})}^2 \lesssim C(\|\Gamma_0\|_{H^{k+\frac{1}{2}}})\|X_0\|_{H^k(\Omega_0)}^2.
\end{equation}
\end{proposition}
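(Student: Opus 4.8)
\textbf{Proof proposal for Proposition \ref{Bconstruction}.}

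The plan is to treat \eqref{parabolicregularizationconstruction} as a linear, non-autonomous, divergence-free evolution equation in the parameter $\delta$, posed on the moving family of domains $\Omega_\delta$, and to establish existence, uniqueness, and the energy bound \eqref{X-delta} by a combination of a Galerkin/parabolic approximation argument and a direct energy estimate. First I would pull everything back to the fixed reference domain $\Omega_*$ via the diffeomorphisms $\Phi_\delta$, so that the unknown becomes $X_{\delta,*} := X_\delta\circ\Phi_\delta$, a time-dependent vector field on the fixed domain $\Omega_*$; the divergence-free condition $\nabla\cdot X_\delta = 0$ becomes a $\delta$-dependent constraint of the form $\nabla\cdot(J_\delta X_{\delta,*}) = 0$ for a smooth invertible matrix field $J_\delta$ built from $D\Phi_\delta$, and the material derivative $D_\delta$ becomes simply $\partial_\delta$ plus lower-order terms. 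In this formulation \eqref{parabolicregularizationconstruction} reads, schematically, $\partial_\delta X_{\delta,*} = \mathbb{P}_\delta\big(\text{lower order}\big) + (\text{the }\phi_\delta\text{ term})$, where $\mathbb{P}_\delta$ is the (pulled-back) Leray projection adapted to the $J_\delta$-weighted divergence constraint; the key structural point is that the harmonic potential $\phi_\delta$ is chosen precisely so that its Neumann data is the parabolic term $2\delta\,\Delta_{\Gamma_*}X_{\delta,*}\cdot n_{\delta,*}$ (corrected by its mean so the compatibility condition $\int_{\Gamma_\delta}\nabla_n\phi_\delta = 0$ for the Neumann problem holds), which is what injects genuine parabolic smoothing at the boundary.

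The core of the argument is the energy estimate. Taking the $L^2(\Omega_\delta)$ inner product of the first equation in \eqref{parabolicregularizationconstruction} with $X_\delta$, applying the Leibniz formula from \Cref{Leibniz} to handle the fact that the domain is moving, and integrating by parts on the $\nabla\phi_\delta$ term (using $\nabla\cdot X_\delta = 0$) to convert it into a boundary integral $\int_{\Gamma_\delta}\phi_\delta\, X_\delta\cdot n_\delta\, dS$, one sees that the $L^2$ norm is controlled by $C(\|\Gamma_0\|_{H^{k+1/2}})$ alone, since all the vector-field coefficients $V_\delta$, $\nabla V_\delta$ are bounded in terms of $\|\Gamma_0\|_{H^{k+1/2}}$ and the remaining terms are at most quadratic. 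To propagate the full $H^k$ bound one differentiates $k$ times (or applies $\Delta^{k/2}$-type operators, working on the reference domain where this is unambiguous), commutes the derivatives through, and again performs the $L^2$ pairing. The crucial cancellation is that when the top-order derivative lands on the $\nabla\phi_\delta$ term and one integrates by parts to the boundary, the resulting boundary term pairs $\Delta_{\Gamma_*}X_{\delta,*}$ against the top-order tangential derivative of $X_{\delta,*}$, and after one more integration by parts on $\Gamma_*$ this produces a \emph{good} (negative definite) term $-2\delta\|\nabla^\top\nabla^{k-1}X_{\delta,*}\|_{L^2(\Gamma_*)}^2$ plus lower-order errors; this good term absorbs the worst commutator contributions, while the mean-correction term in the Neumann data contributes only a harmless rank-one perturbation. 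Gr\"onwall in $\delta$ on $[0,\epsilon]$ then yields \eqref{X-delta}, with a constant that, after tracking, depends only on $\|\Gamma_0\|_{H^{k+1/2}}$ since $\epsilon\le 1$ and all the $\delta$-uniform coefficient bounds are in terms of that norm.

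Existence is obtained by a standard approximation scheme: replace $2\delta\Delta_{\Gamma_*}$ by $2\delta\Delta_{\Gamma_*}(1 - \sigma\Delta_{\Gamma_*})^{-1}$ (or Galerkin-truncate the boundary operator) to get a bounded, genuinely parabolic regularized problem for which existence and uniqueness are classical, derive the energy bound \eqref{X-delta} \emph{uniformly in $\sigma$} by exactly the argument above (the good boundary term survives the regularization), and pass to the limit $\sigma\to 0$ using weak-$*$ compactness in $L^\infty_\delta H^k$ together with an Aubin--Lions-type compactness argument to upgrade to strong convergence in, say, $C_\delta H^{k-1}$, which suffices to pass to the limit in the (at most quadratic) nonlinearities and to recover $X\in C([0,\epsilon];H^k(\Omega_\delta))$; preservation of the divergence-free condition and of the $H^k$ regularity follows since these are built into the scheme. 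Uniqueness follows by running the $L^2$ energy estimate on the difference of two solutions, which satisfies the same equation with zero initial data. The main obstacle I anticipate is the top-order boundary analysis in the energy estimate: one must verify that the mismatch between the ambient Laplace-Beltrami operator $\Delta_{\Gamma_*}$ appearing in the Neumann data and the geometry of the actual moving surface $\Gamma_\delta$ (entering through $n_{\delta,*}$ and through the harmonic extension) only produces lower-order commutators that can be absorbed by the good term, and that the mean-zero correction — needed for solvability of the Neumann problem — does not destroy the parabolic gain. Carefully bookkeeping these terms, using the elliptic and trace estimates collected in \Cref{BEE} together with the fact that $\Gamma_0\in H^{k+1/2}$ gives enough regularity for the surface coefficients, is where the real work lies.
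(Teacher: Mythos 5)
Your proposal identifies the right structural ingredients (a parabolic boundary smoothing, a Galerkin-type existence scheme, Gr\"onwall), but the direct $H^k$ energy estimate on $X_\delta$ itself contains a genuine gap that the paper's argument is specifically designed to avoid: the parabolic gain in this problem lives only on the \emph{scalar} boundary trace $f_\delta := X_\delta\cdot n_\delta$, not on the full vector $X_\delta$. Your claimed good term $-2\delta\|\nabla^\top\nabla^{k-1}X_{\delta,*}\|_{L^2(\Gamma_*)}^2$ would give coercivity for all components of $X_\delta$ on the boundary, but there is no mechanism in \eqref{parabolicregularizationconstruction} that dissipates the \emph{tangential} part of $X_\delta$; only the normal component couples to $\phi_\delta$. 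Moreover the Neumann data is $2\delta\,\Delta_{\Gamma_*}X_{\delta,*}\cdot n_{\delta,*}$, \emph{not} $2\delta\,\Delta_{\Gamma_*}(X_{\delta,*}\cdot n_{\delta,*})$ — the Laplace--Beltrami operator acts on $X$ alone, and then the result is contracted with the normal. To convert this into a genuine parabolic operator on a scalar one needs the crucial observation (Lemma \ref{epsderivativeofnormal}) that $n_{\delta,*}$ itself satisfies the heat-type equation $\frac{d}{d\delta}n_{\delta,*} = 2\delta\Delta_{\Gamma_*}n_{\delta,*} + R_\delta^1$, so that by the product rule $\frac{d}{d\delta}f_{\delta,*} - 2\delta\Delta_{\Gamma_*}f_{\delta,*} = -4\delta\,\nabla^\top X_{\delta,*}\cdot\nabla^\top n_{\delta,*} + \ldots$, with a balanced (first-order times first-order) source. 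Your sketch does not perform this recombination, and without it the integration by parts does not produce a sign-definite term — instead it leaves an uncontrolled pairing of $\Delta_{\Gamma_*}X_{\delta,*}$ against $n_{\delta,*}$, where $n_{\delta,*}\in H^{k-\frac12}$ sits at exactly the same regularity as $X_\delta$'s trace, so the ``commutators'' you hope to absorb are in fact top order.

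What the paper does instead is decouple the div-curl structure: the vorticity $\omega_\delta=\nabla\times X_\delta$ satisfies a clean transport equation with no derivative loss (it never sees the boundary parabolic term), the scalar $f_\delta$ satisfies the genuine parabolic equation above, and the full $H^k$ norm of $X_\delta$ is recovered \emph{elliptically} from $(\omega_\delta, f_\delta)$ via the balanced div-curl estimate, with a constant depending only on $\|\Gamma_0\|_{H^{k+\frac12}}$. This decoupling is precisely what isolates the component that gains smoothing from the component that merely transports. For the existence step the paper also makes a further technical concession — it temporarily strengthens the hypothesis to $\Gamma_0\in H^{k+\frac32}$ to run the $l$-uniform bound on the spectrally truncated system, and only afterward removes this by approximating in the data and reproving the estimate at the limit; your proposal does not address why the $\sigma$-uniform bound would close at the borderline regularity $\Gamma_0\in H^{k+\frac12}$ directly. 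In short: your route would need to be rebuilt around the $(\omega_\delta, f_\delta)$ decomposition and the heat equation for $n_\delta$ before the claimed good term can actually be produced.
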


We remark that the assumption that $\Gamma_0$ is in $H^{k+\frac12}$ is somewhat excessive, making this a soft result. However, it  has the merit that it simplifies the proof and it suffices for
our applications. A stronger result, which will exploit the dissipative effect of the Neumann data and which only assumes that $\Gamma_0$  is in $H^{k}$ is also valid, and is a consequence of the sharper energy estimates
proved later in this section. The softer bound with dependence on $\Gamma_0$ in $H^{k+\frac{1}{2}}$ (which thanks to \eqref{inductiveregbound} depends only on $\epsilon$ and not $\delta$) will be used to set up the necessary bootstrap when proving the improved estimate later. One important, but simple, ingredient for the proof that we will also use extensively in the sequel is the following lemma, which asserts that the normal $n_{\delta}$ and mean curvature $\kappa_{\delta}$ satisfy heat-type equations (up to lower order terms).
\begin{lemma}\label{epsderivativeofnormal} There exist smooth functions $R_{\delta}^1$ and $R_{\delta}^2$ defined on $\Gamma_{*}$ such that
\begin{equation*}
\frac{d}{d\delta} n_{\delta,*}=2\delta \Delta_{\Gamma_*}n_{\delta,*}+R_{\delta}^1,\hspace{5mm}\frac{d}{d\delta}\kappa_{\delta,*}=2\delta\Delta_{\Gamma_*}\kappa_{\delta,*}+R_{\delta}^2,
\end{equation*}
with
\begin{equation*}
\|R_{\delta}^2\|_{H^s(\Gamma_*)}\lesssim_M \delta(1+\|\Gamma_{\delta}\|_{H^{s+2}}),\hspace{5mm}\|R_{\delta}^2\|_{H^s(\Gamma_*)}\lesssim_M \delta(1+\|\Gamma_{\delta}\|_{H^{s+3}}),\hspace{5mm}s\geq 0.
\end{equation*}
\end{lemma}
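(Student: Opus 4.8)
\textbf{Proof strategy for \Cref{epsderivativeofnormal}.} The plan is to compute the $\delta$-derivative of the normal $n_{\delta}$ and the mean curvature $\kappa_{\delta}$ directly from the collar parameterization, pulling everything back to the fixed reference hypersurface $\Gamma_*$, and then to isolate the principal part of these $\delta$-derivatives as a heat operator acting on $n_{\delta,*}$ and $\kappa_{\delta,*}$. The starting point is the observation that the surfaces $\Gamma_\delta$ are obtained from $\Gamma_*$ via the map $\Phi_\delta(x)=x+\eta_\delta(x)\nu(x)$, with $\eta_\delta = \tilde\eta_\delta - C\delta^2$ and $\tilde\eta_\delta = e^{\delta^2\Delta_{\Gamma_*}}\eta_0$. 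Differentiating the heat semigroup gives $\partial_\delta\tilde\eta_\delta = 2\delta\,\Delta_{\Gamma_*}\tilde\eta_\delta$, so $\partial_\delta\eta_\delta = 2\delta(\Delta_{\Gamma_*}\eta_\delta + \Delta_{\Gamma_*}(C\delta^2) - C) = 2\delta\,\Delta_{\Gamma_*}\eta_\delta - 2C\delta$, i.e. $\partial_\delta\eta_\delta = 2\delta\,\Delta_{\Gamma_*}\eta_\delta + \mathcal{O}_M(\delta)$ in any $H^s(\Gamma_*)$ norm (with the remainder gaining two derivatives on $\eta_\delta$, hence the $\|\Gamma_\delta\|_{H^{s+2}}$ and $\|\Gamma_\delta\|_{H^{s+3}}$ weights later). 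The key algebraic point is that $n_{\delta,*}$ and $\kappa_{\delta,*}$ are, respectively, a first-order and a second-order nonlinear differential expression in $\eta_\delta$ (and the fixed geometry of $\Gamma_*$), so the chain rule produces $\partial_\delta n_{\delta,*} = L_\delta^{(1)}(\partial_\delta\eta_\delta)$ and $\partial_\delta\kappa_{\delta,*} = L_\delta^{(2)}(\partial_\delta\eta_\delta)$ where $L_\delta^{(1)}$ is a first-order linear operator with coefficients depending on $\eta_\delta$ and its derivatives, and $L_\delta^{(2)}$ is second-order.

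The next step is to substitute $\partial_\delta\eta_\delta = 2\delta\,\Delta_{\Gamma_*}\eta_\delta + \mathcal{O}_M(\delta)$ and commute: I would write $L_\delta^{(1)}\Delta_{\Gamma_*}\eta_\delta = \Delta_{\Gamma_*}(L_\delta^{(1)}\eta_\delta) + [\text{commutator}]$ and similarly for $L_\delta^{(2)}$, then recognize $L_\delta^{(1)}\eta_\delta$ as $n_{\delta,*}$ up to lower-order terms (and $L_\delta^{(2)}\eta_\delta$ as $\kappa_{\delta,*}$ up to lower order). More precisely, since $n_\delta$ is built from $\partial(\mathrm{id}+\eta_\delta\nu)$ after normalization and $\kappa_\delta$ involves the second fundamental form, one obtains $\partial_\delta n_{\delta,*} = 2\delta\,\Delta_{\Gamma_*}n_{\delta,*} + R_\delta^1$ and $\partial_\delta\kappa_{\delta,*} = 2\delta\,\Delta_{\Gamma_*}\kappa_{\delta,*} + R_\delta^2$, where $R_\delta^1, R_\delta^2$ collect (a) the $\mathcal{O}_M(\delta)$ error from the $-2C\delta$ correction, (b) the commutators between $\Delta_{\Gamma_*}$ and the $\eta_\delta$-dependent coefficients of $L_\delta^{(1)}, L_\delta^{(2)}$, and (c) the lower-order terms in the nonlinear expressions for $n_\delta, \kappa_\delta$ that were dropped when extracting the leading Laplacian. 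Each of these is $2\delta$ times a quasilinear expression in $\eta_\delta$ of order at most $s+2$ (for $n_\delta$) or $s+3$ (for $\kappa_\delta$), so by the product and composition estimates in \Cref{BEE} (Moser-type inequalities with $\Gamma_\delta\in\Lambda_*$) one gets the stated bounds $\|R_\delta^1\|_{H^s(\Gamma_*)}\lesssim_M\delta(1+\|\Gamma_\delta\|_{H^{s+2}})$ and $\|R_\delta^2\|_{H^s(\Gamma_*)}\lesssim_M\delta(1+\|\Gamma_\delta\|_{H^{s+3}})$.

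For the curvature identity it is cleaner to avoid differentiating the full nonlinear curvature formula and instead use the geometric evolution equation for a hypersurface moving with a prescribed normal speed: if $\Gamma_\delta$ flows with normal velocity $f_\delta := \partial_\delta\eta_\delta\,(\nu\cdot n_\delta)$, then the mean curvature satisfies the classical transport--type identity $\partial_\delta\kappa_\delta = -\Delta_{\Gamma_\delta}f_\delta - |A_\delta|^2 f_\delta$ (second fundamental form $A_\delta$), pulled back to $\Gamma_*$. Since $f_\delta = 2\delta\,\Delta_{\Gamma_*}\eta_\delta\cdot(\nu\cdot n_\delta) + \mathcal{O}_M(\delta)$ and, to leading order, $\Delta_{\Gamma_*}\eta_\delta$ relates to $\kappa_\delta$ via $\kappa_\delta = \Delta_{\Gamma_*}\eta_\delta + (\text{lower order in }\eta_\delta)$ (the linearization of the mean curvature operator around $\Gamma_*$, smoothed by the small parameter and the $C^{1,\epsilon}$ control from $\Lambda_*$), one recovers $\partial_\delta\kappa_{\delta,*} = 2\delta\,\Delta_{\Gamma_*}\kappa_{\delta,*} + R_\delta^2$ after converting $\Delta_{\Gamma_\delta}$ to $\Delta_{\Gamma_*}$ (another error term absorbed into $R_\delta^2$, controlled since $\Gamma_\delta$ is uniformly regular in the collar). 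An analogous, simpler computation using $\partial_\delta n_\delta = -\nabla^\top_{\Gamma_\delta} f_\delta$ (the standard evolution of the unit normal under normal-speed flow) gives the $n_\delta$ identity with the one-derivative-cheaper remainder.

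The main obstacle here is purely bookkeeping: one must carefully track which error terms gain two versus three derivatives on $\eta_\delta$ so that the asymmetry between the $R_\delta^1$ and $R_\delta^2$ bounds comes out correctly, and one must be careful that the conversion between $\Delta_{\Gamma_\delta}$ and $\Delta_{\Gamma_*}$ (and between intrinsic and pulled-back quantities) does not secretly cost more regularity than advertised. Since $\Gamma_\delta\in\Lambda_*$ with uniform $C^{1,\epsilon}$ bounds, and since every coefficient appearing is a smooth function of $\eta_\delta$ and the fixed data of $\Gamma_*$, the relevant Moser and commutator estimates are exactly those assembled in \Cref{BEE}; there is no analytic subtlety beyond organizing them, so I would present this lemma with the geometric evolution equations as the backbone and relegate the commutator/Moser estimates to a one-line citation of the function-space toolbox.
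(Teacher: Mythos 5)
Your primary approach (direct chain rule in collar coordinates) is precisely what the paper means by ``straightforward computation in collar coordinates''; this is corroborated by the proof of \Cref{gradBV} immediately below in the source, where the authors explicitly write $\kappa_{\delta,*}=L_\delta\eta_\delta$ for the nonlinear second-order operator $L_\delta$ and then commute $L_\delta$ with $\partial_\delta$ using the heat equation $\partial_\delta\eta_\delta = 2\delta\Delta_{\Gamma_*}\eta_\delta - 2C\delta$. Your accounting of the derivative counts is correct: since $n_{\delta,*}=N(x,\eta_\delta,\nabla\eta_\delta)$ and $\kappa_{\delta,*}=K(x,\eta_\delta,\nabla\eta_\delta,\nabla^2\eta_\delta)$, the substitution leaves the top-order term cancelling against $2\delta\Delta_{\Gamma_*}n_{\delta,*}$ (resp.\ $2\delta\Delta_{\Gamma_*}\kappa_{\delta,*}$), with remainders of order at most $2$ (resp.\ $3$) in $\eta_\delta$, giving exactly the $\|\Gamma_\delta\|_{H^{s+2}}$ versus $\|\Gamma_\delta\|_{H^{s+3}}$ asymmetry. (Incidentally, the statement in the paper has a typo---the first displayed bound should read $R_\delta^1$, not $R_\delta^2$---which you correctly interpret.)

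Your proposed alternative via the geometric evolution equations ($\partial_t H = -\Delta_\Sigma f - |A|^2 f$, $\partial_t n = -\nabla^\top f$) is a legitimate and arguably cleaner route, but it does not appear to be the paper's. One point that you should make explicit, rather than bury in the phrase ``pulled back to $\Gamma_*$'': the collar parameterization moves points along $\nu$, not along $n_\delta$, so $\partial_\delta\kappa_{\delta,*}$ is the material derivative for a flow with a nontrivial tangential component, and one acquires an additional transport term $(\partial_\delta\Phi_\delta)^\top\cdot\nabla^\top\kappa_\delta$ beyond the normal-speed evolution law. This term is of order at most $3$ in $\eta_\delta$ and carries a factor $\delta$, so it does land in $R_\delta^2$ with the right bound---but it is a distinct error source from the $\Delta_{\Gamma_\delta}\to\Delta_{\Gamma_*}$ conversion you mention, and a reader would want to see it called out. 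There is also a small sign slip: the linearization of mean curvature around $\Gamma_*$ gives $\kappa_\delta = -\Delta_{\Gamma_*}\eta_\delta + (\text{lower order})$ (compare the $L_\delta$ formula in the proof of \Cref{gradBV}), not $+\Delta_{\Gamma_*}\eta_\delta$; the sign is what makes $f_\delta \approx -2\delta\kappa_\delta$ and hence the cancellation come out with the correct overall sign in $\partial_\delta\kappa_{\delta,*}=2\delta\Delta_{\Gamma_*}\kappa_{\delta,*}+R_\delta^2$. Neither issue breaks the argument, but both should be stated if you present this version.
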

The proof is a straightforward computation in collar coordinates. We omit the details.
\medskip

Now, we establish \Cref{Bconstruction}.
\begin{proof}
The difficulty here lies in the fact that the regularities of $\Gamma_0$ and of $X_0$ 
are closely matched. However, the above equation is a linear evolution for $X_\delta$, with $\Gamma_\delta$ already given. We take advantage of this fact in order to decouple the two regularities in the existence and uniqueness parts, relegating the coupled regularities to the stage of proving energy estimates. So, to start with, let us generously assume that 
\begin{equation}\label{better-Gamma}
\Gamma_0 \in  H^{k+\frac32}. 
\end{equation}
Motivated by the natural div-curl structure of the system, one can prove energy estimates for the evolution \eqref{parabolicregularizationconstruction} by establishing energy estimates for the pair
\[
(\omega_\delta := \nabla \times X_\delta, f_\delta := X_\delta \cdot n_\delta) \in H^{k-1}(\Omega_\delta) \times H^{k-\frac12}(\Gamma_\delta).
\]
Thanks to \eqref{epsderivativeofnormal} and the product rule, we have the equations
\begin{equation}\label{transp-par}
\begin{cases}
&D_{\delta}\omega_{\delta, ij}= \partial_iV_{\delta,j}\partial_jX_{\delta,i}-
\partial_jV_{\delta,i}\partial_iX_{\delta,j}
,
\\
&\frac{d}{d\delta}f_{\delta,*} - 2\delta \Delta_{\Gamma_*} f_{\delta,*}= -  4\delta \nabla^\top X_{\delta,*} \cdot \nabla^\top n_{\delta,*} - \frac{2\delta}{|\Gamma_{\delta}|}\int_{\Gamma_{\delta}}  (\Delta_{\Gamma_*}X_{\delta,*}\cdot n_{\delta,*})^* dS
\\
& \qquad \qquad \qquad \qquad \ \ \ +(\nabla_n\Delta^{-1}(\partial_iV_{\delta,j}\partial_jX_{\delta,i}))_*+R_{\delta},
\end{cases}
\end{equation}
where $X_\delta$  can be estimated (using \Cref{Balanced div-curl}) in an elliptic fashion  by $\omega_\delta$ and $f_\delta$ via a div-curl system, with bounds
\begin{equation}\label{div-curl-ell}
\| X_\delta\|_{H^k(\Omega_\delta)} \leq C(\|\Gamma_0\|_{H^{k+\frac{1}{2}}})(\|\omega_\delta \|_{H^{k-1}(\Omega_\delta)}+ \|f_\delta \|_{H^{k-\frac12}(\Gamma_\delta)}
+ \| X_\delta\|_{L^2(\Omega_\delta)})
\end{equation}
and
where $R_{\delta}$ satisfies
\begin{equation*}\label{remainderforgradientflow}
\int_{0}^{\delta}\|R_{\tau}\|_{H^{k-\frac{1}{2}}(\Gamma_{\tau})}d\tau\lesssim_M \delta^{\frac{1}{2}}\sup_{0\leq\tau\leq\delta}\|X_{\tau}\|_{H^{k-\frac{1}{2}}(\Gamma_{\tau})}.
\end{equation*}
Here we carefully note that the implicit constant in \eqref{div-curl-ell}  depends only on the $H^{k+\frac12}$ norm of $\Gamma_0$.  We also remark  that the full unique solvability of the div-curl system requires a finite dimensional set of topological constraints, see \cite{Mitrea} and references therein. We emphasize that we are not assuming that our domain is simply-connected, so we cannot directly appeal to such results to obtain solutions. 
\medskip

We observe that \eqref{transp-par} is a coupled transport/parabolic system. 
The transport part in the moving domain $\Omega_\delta$ advected by $V_\delta$ is classically well-posed in $H^{k-1}$, while the parabolic part is well-posed 
in $H^{k-\frac12}$. However, in order to view the source terms on the right-hand side as perturbative, we would have to dynamically propagate the constraints mentioned above.
To bypass this difficulty, we will use the system \eqref{transp-par} only for the purpose 
of proving energy estimates, but not for existence. Instead, for existence we use a mild 
frequency truncation of the Neumann trace of $\phi_\delta$, and consider the regularized system
\begin{equation}\label{parabolicregularizationconstructionj}
\begin{cases}
&D_{\delta}X^l_{\delta}=\nabla\phi_{\delta}^l+\nabla\Delta^{-1}(\partial_iV_{\delta,j}\partial_j X_{\delta,i}^l)
\hspace{5mm}\text{on}\hspace{2mm}\Omega_{\delta},
\\
&\Delta\phi_{\delta}^l = 0\hspace{5mm}\text{on}\hspace{2mm}\Omega_{\delta},
\\
&(\nabla_n\phi_{\delta}^l)_*=2\delta  P_{<l}(\Delta_{\Gamma_*} X_{\delta,*}^l\cdot n_{\delta,*}) - \frac{2\delta}{|\Gamma_{\delta}|}\int_{\Gamma_{\delta}}  (P_{<l}(\Delta_{\Gamma_*}X^l_{\delta,*}\cdot n_{\delta,*}))^* dS  \hspace{5mm}\text{on}\hspace{2mm}\Gamma_{*},
\end{cases}
\end{equation}
where $P_{<l}$ stands for a smooth  spectral projector  localized at frequencies $\lesssim 2^l$ associated to $\Delta_{\Gamma_*}$, with symbol
\begin{equation}\label{Pl}
p_{<l}(\lambda) = (1+ 2^{-2l} \lambda^2)^{-1}.
\end{equation}
With the spectral truncation added, the map $X_\delta^l \to \nabla \phi_\delta^l$ is bounded in $H^k(\Omega_\delta)$. Therefore,  the evolution 
\eqref{parabolicregularizationconstructionj}
can be seen as a transport equation with a perturbative source term, which is easily solvable in $H^k$. 
\medskip

Now, we return to the original system \eqref{parabolicregularizationconstruction}, for which we would like to obtain solutions 
$X_\delta$ as limits of $X_\delta^l$  as $l \to \infty$ on a subsequence. The difficulty is that, a priori, the implicit  constants in the 
$H^k$ solvability for \eqref{parabolicregularizationconstructionj}
depend on $l$. To avoid this, we need better bounds for the regularized system, which are 
independent of $l$; this is where the improved boundary regularity in \eqref{better-Gamma} 
plays a key role. We will rely on the div-curl bound \eqref{div-curl-ell} and
consider the counterpart of the system \eqref{transp-par}, namely 
\begin{equation}\label{transp-par-l}
\begin{cases}
&D_{\delta}\omega_{\delta, ij}^l= \partial_iV_{\delta,j}\partial_jX^l_{\delta,i}-
\partial_jV_{\delta,i}\partial_iX^l_{\delta,j}
,
\\
& \frac{d}{d\delta} f_{\delta,*}^l - 2\delta \Delta_{\Gamma_*} P_{<l} f_{\delta,*}^l
= - 4\delta  P_{<l}(\nabla^\top X_{\delta,*}^l\cdot \nabla^\top n_{\delta,*})+ 2\delta P_{\geq l}(  X_{\delta,*}^l \cdot \Delta_{\Gamma_*} n_{\delta,*})
\\ & \qquad \qquad \qquad \qquad \qquad
- \frac{2\delta}{|\Gamma_{\delta}|}\int_{\Gamma_{\delta}}  (P_{<l}(\Delta_{\Gamma_*}X^l_{\delta,*}\cdot n_{\delta,*}))^* dS
+ (\nabla_n\Delta^{-1}(\partial_iV_{\delta,j}\partial_jX^l_{\delta,i}))_*.
\end{cases}
\end{equation}
Here we can use \eqref{Pl} and \eqref{better-Gamma} in order to estimate the source terms in the second equation perturbatively and 
still obtain a short time transport/parabolic bound of the form 
\begin{equation*}
\begin{split}
\|\omega^l_\delta \|_{L^\infty H^{k-1}(\Omega_\delta)}
+ \| f^l_{\delta,*} \|_{L^\infty H^{k-\frac12}(\Gamma_*)}
&+ \| \delta^\frac12 P_{<l}^\frac12 f^l_{\delta,*} \|_{L^2 H^{k+\frac12}(\Gamma_*)} 
\\
&\lesssim C(\|\Gamma_0\|_{H^{k+\frac{3}{2}}})(\| X_0\|_{H^{k}(\Omega_0)} + \delta^\frac12 \| X_\delta^l \|_{L^\infty H^k(\Omega_\delta)}),
\end{split}
\end{equation*}
which is coupled with a direct transport bound for $X^l_\delta$ in a weaker topology,
\[
\| X^l_\delta\|_{L^\infty H^{k-2}(\Omega_\delta)} \lesssim 
\| X_0\|_{H^{k-2}(\Omega_0)} + \delta \| X^l_\delta\|_{L^\infty H^{k-2}(\Omega_\delta)},
\]
both with implicit constants independent of $l$.
Combining these two estimates with \eqref{div-curl-ell} for small enough $\delta$ (depending on $\|\Gamma_0\|_{H^{k+\frac{3}{2}}}$) we arrive at 
\begin{equation}\label{X_delta-l}
\| X_\delta^{l}\|_{L^\infty H^k(\Omega_\delta)} + \| X^l_\delta \cdot n_\delta \|_{L^\infty H^{k-\frac12}(\Gamma_\delta)}+
\| \delta^\frac12 P_{<l}^\frac12 (X^l_\delta \cdot n_\delta)_* \|_{L^2 H^{k+\frac12}(\Gamma_*)}
 \lesssim 
C(\|\Gamma_0\|_{H^{k+\frac{3}{2}}})\| X_0\|_{H^{k}(\Omega_0)},
\end{equation}
with implicit constants independent of $l$. Iterating this procedure, we can extend this bound to hold for $\delta\in [0,\epsilon]$ (with possibly enlarged implicit constants, but still not depending on $l$).  This uniform bound allows us to use Arzela-Ascoli to pass to the limit on a subsequence as $l \to \infty$ in order to obtain a solution $X_\delta$ for the original system \eqref{parabolicregularizationconstruction}
which satisfies 
\begin{equation}\label{X_delta-a}
\| X_{\delta}\|_{L^\infty H^k(\Omega_\delta)}  + \|  X_{\delta} \cdot n_\delta \|_{L^\infty H^{k-\frac12}(\Gamma_\delta)}+
\| \delta^\frac12  ( X_{\delta} \cdot n_\delta) \|_{L^2 H^{k+\frac12}(\Gamma_\delta)}
 \lesssim 
C(\|\Gamma_0\|_{H^{k+\frac{3}{2}}})\| X_0\|_{H^{k}(\Omega_0)}.
\end{equation}
The proof of this estimate, by passing to a weak limit in  \eqref{X_delta-l}, uses the $H^{k+\frac32}$ regularity of $\Gamma_0$ and the bound $\|n_{\delta}\|_{H^{k+\frac{3}{2}}(\Gamma_\delta)}\lesssim_M \delta^{-1}\|\Gamma_0\|_{H^{k+\frac{3}{2}}}$ in order to bound the second term on the right in the second equation in \eqref{transp-par-l}. However, at this point we can reprove this bound directly
via the system \eqref{transp-par}, corresponding to $l = \infty$, where this term no longer appears. This shows that \eqref{X_delta-a} 
holds for all $H^k$ solutions to \eqref{parabolicregularizationconstruction},
with an implicit constant which only depends on 
the $H^{k+\frac12}$ regularity of $\Gamma_0$. That is, we can show that
\begin{equation*}
\|  X_{\delta}\|_{L^\infty H^k(\Omega_\delta)}  + \|  X_{\delta} \cdot n_\delta \|_{L^\infty H^{k-\frac12}(\Gamma_\delta)}+
\| \delta^\frac12  ( X_{\delta} \cdot n_\delta) \|_{L^2 H^{k+\frac12}(\Gamma_\delta)}
 \lesssim 
C(\|\Gamma_0\|_{H^{k+\frac{1}{2}}})\| X_0\|_{H^{k}(\Omega_0)}.
\end{equation*}
In particular, this implies the uniqueness of this solution. 
\medskip

We remark that \eqref{X_delta-a} directly implies 
\eqref{X-delta}. Thus, we have proved that, if $\Gamma_0$ is more regular as in \eqref{better-Gamma}, then our problem has a unique solution 
$X_\delta$ in $H^k(\Omega_\delta)$ with $X_\delta \cdot n_\delta$ in $H^{k-\frac12}(\Gamma_\delta)$. It now remains to extend our result to the case when $\Gamma_\delta$ has only $H^{k+\frac12}$ regularity.
 The analysis above already shows that we have unique solvability for $X_0$ in, say, $H^{k-1}$. We only need to improve its regularity 
and show that \eqref{X-delta} holds.
For this, we approximate $\Gamma_0$ with a sequence of regular data $X_{0j}$. For these data we have solutions $X_{\delta j}$ in $H^k$.  We then use the fact that the bound \eqref{X-delta}  holds uniformly for $X_{0j}$; 
 the solution $X_{\delta}$ can thus be obtained as a weak limit of $X_{\delta j}$ on a subsequence, still with the bound \eqref{X-delta} satisfied.
\end{proof}
\begin{remark}
The above construction  achieves two important goals. First, it is designed so that we  have (owing to the definition of $\eta_{\delta}$ above  and of $(v_\delta,\tilde{B}_\delta)$ below) the approximate relation $$\frac{d}{d\delta}(\tilde{B}_{\delta,*}\cdot n_{\delta,*})\approx 2\delta \Delta_{\Gamma_*}(\tilde{B}_{\delta,*}\cdot n_{\delta,*}).$$ This will allow us to propagate (with small error) the tangency condition for the magnetic field $B_{\delta}$ from $\delta=0$ to $\delta=\epsilon$. Secondly, the above regularization will also effectively regularize the irrotational components of $v$ and $B$, which can be measured by the variables $\mathcal{N}W^\pm_{\delta}\cdot n_{\Gamma_\delta}$.
\end{remark}
\textbf{Defining $B_{\epsilon}$ and $v_{\epsilon}$}. For $\delta\in (0,\epsilon]$, we define the one-parameter family $v_{\delta}$ by \Cref{Bconstruction}. Defining $B_{\epsilon}$ is a bit more delicate as we need to enforce the tangency condition $B_{\epsilon}\cdot n_{\Gamma_\epsilon}=0$. In line with the above constructions for $\Gamma_\epsilon$ and $v_\epsilon$, we will define a one-parameter family $B_{\delta}$ of magnetic fields associated to each domain $\Omega_{\delta}$, where $\delta\in [0,\epsilon]$.
\medskip

We begin by defining $\tilde{B}_{\delta}$ by using \Cref{Bconstruction}. We then let 
\begin{equation*}
B_{\delta}:=\tilde{B}_{\delta}^{rot}:=\tilde{B}_{\delta}-\nabla\mathcal{H}\mathcal{N}^{-1}(\tilde{B}_{\delta}\cdot n_{\Gamma_\delta}).
\end{equation*}
It is easy to see that -- with the above definitions for $v_\delta$ and $B_\delta$ -- the second condition in \eqref{PP approx} follows from the fundamental theorem of calculus, \Cref{Bconstruction} and the fact that $B_0\cdot n_{\Gamma_0}=0$. 
\medskip

Given the family $(v_\delta, B_\delta, \Gamma_\delta)$ as above, we define the associated quantities $P_\delta$, $D_t^\pm P_\delta$, $a_\delta$ and $D_t^\pm a_\delta$ on $\Omega_{\delta}$ and $\Gamma_{\delta}$ by using the relevant Poisson equations, as in \Cref{CTEF}. We will use the notation $\mathcal{N}_{\delta}$ to refer to the Dirichlet-to-Neumann operator for $\Gamma_{\delta}$. For notational convenience,  we will drop the $\delta$ subscript  whenever unambiguous. That is, we will write $\mathcal{N}$ as shorthand for $\mathcal{N}_{\delta}$, $\Gamma$ as shorthand for $\Gamma_{\delta}$, and so-forth.  
By slight abuse, we also introduce the following notation:
\begin{equation*}
\|f\|_{\mathbf{H}^s(\Omega_{\delta})}:=\|(f,\nabla_{B_\delta}f)\|_{H^{s}(\Omega_{\delta})\times H^{s-\frac{1}{2}}(\Omega_{\delta})},\hspace{5mm}\|f\|_{\mathbf{H}^s(\Gamma_\delta)}:=\|(f,\nabla_{B_\delta}f)\|_{H^{s}(\Gamma_{\delta})\times H^{s-\frac{1}{2}}(\Gamma_{\delta})},\hspace{5mm}s\geq\frac{1}{2},
\end{equation*}
and also for each $p\geq 1$,
\begin{equation*}
\|f\|_{L^p_{\delta}\mathbf{H}^s(\Omega_\delta)}:=\left(\int_{0}^{\delta}\|f(\tau)\|_{\mathbf{H}^s(\Omega_\tau)}^pd\tau\right)^{\frac{1}{p}},\hspace{5mm}\|f\|_{L^p_{\delta}\mathbf{H}^s(\Gamma_\delta)}:=\left(\int_{0}^{\delta}\|f(\tau)\|_{\mathbf{H}^s(\Gamma_\tau)}^pd\tau\right)^{\frac{1}{p}},
\end{equation*}
with the obvious modifications for $p=\infty$.
\medskip

\textbf{Energy monotonicity}.
Now, we aim to prove the energy monotonicity bound \eqref{Emonsurf}. To efficiently organize the required estimates, we split each of the main components of the energy $E^k_\pm$ into an irrotational component and a rotational component. That is, we define $E^k_\pm:=1+\|W^\pm\|_{L^2(\Omega)}^2+ E_{r}^k+E^k_i$ where
\begin{equation*}
\begin{split}
E_r^k&:=\|\omega^\pm\|_{H^{k-1}(\Omega)}^2+\|\nabla_B\omega^\pm\|_{H^{k-\frac{3}{2}}(\Omega)}^2,
\\
E_i^k&:=2\|a^{\frac{1}{2}}\mathcal{N}^{k-1}a\|_{L^2(\Gamma)}^2+\|\nabla\mathcal{H}\mathcal{N}^{k-2}\nabla_Ba\|_{L^2(\Omega)}^2+\|\nabla\mathcal{H}\mathcal{N}^{k-2}\mathcal{G}^\pm\|_{L^2(\Omega)}^2+\|a^{-\frac{1}{2}}\mathcal{N}^{k-2}\nabla_B\mathcal{G}^\pm\|_{L^2(\Gamma)}^2.
\end{split}
\end{equation*}
In order to measure the parabolic smoothing effect that the above regularization has on both the surface and on $v$ and $B$, we define the quantity
\begin{equation*}
\mathcal{J}(\delta):=\left(\int_{0}^{\delta}\tau\|\kappa_{\tau}\|_{\mathbf{H}^{k-1}(\Gamma_{\tau})}^2+\tau\|\mathcal{N}_{\tau}W^\pm_{\tau}\cdot n_\tau\|_{\mathbf{H}^{k-\frac{1}{2}}(\Gamma_{\tau})}^2d\tau\right)^{\frac{1}{2}}.
\end{equation*}
The required energy monotonicity bound \eqref{Emonsurf} is an immediate consequence of the following proposition, which is where the bulk of our efforts will be situated.
\begin{proposition}\label{E1estimate}
There exist positive constants $c_1$ and $c_2$ with $c_1\gg c_2$ such that for every $0<\delta\leq \epsilon$ there holds:
\begin{enumerate}
\item (Rotational energy bound).
\begin{equation}\label{rotregbounds}
E^k_r(\delta)\leq E^k_r(0)+c_2\mathcal{J}^2(\delta)+C(M)\delta.
\end{equation}
\item (Propagation of vorticity bootstrap). We have
\begin{equation}\label{bootstrapprop}
\|\omega^\pm_{\delta}\|_{H^{k}(\Omega_\delta)}\leq (1+C(M)\delta)\delta^{-\frac{3}{2}}K(M),
\end{equation}
where $K(M)$ is the same constant as in \eqref{inductiveregbound}. 
\item (Irrotational energy bound).
\begin{equation}\label{parabolicemonbounds}
E^k_i(\delta)+c_1\mathcal{J}^2(\delta)\leq E^k_i(0)+C(M)\delta.
\end{equation}
\end{enumerate}
\end{proposition}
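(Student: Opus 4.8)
\textbf{Proof proposal for Proposition \ref{E1estimate}.}

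The plan is to treat the three estimates in the order \eqref{bootstrapprop}, then \eqref{rotregbounds}, then \eqref{parabolicemonbounds}, running a single bootstrap in the parameter $\delta$ in which we assume all three conclusions hold with slightly weaker constants on $[0,\delta_0]$ and improve them. The entire analysis is a ``time''-differentiated (in $\delta$) energy estimate along the flow $D_\delta = \partial_\delta + V_\delta\cdot\nabla$, where the flow velocity $V_\delta$ and all derived quantities are controlled by \Cref{epsderivativeofnormal} and \Cref{Bconstruction}. The key structural input is that, by construction, the normal $n_{\delta,*}$, the mean curvature $\kappa_{\delta,*}$, the irrotational traces $\mathcal G^\pm_{\delta}$ and (crucially) the boundary quantity $\tilde B_{\delta,*}\cdot n_{\delta,*}$ all satisfy, modulo lower-order terms, a heat equation $\frac{d}{d\delta}u = 2\delta\,\Delta_{\Gamma_*}u + R_\delta$ with $\|R_\delta\|\lesssim_M \delta(1+\|\Gamma_\delta\|)$; this is what produces the gain $\mathcal J(\delta)$ and what forces the weights $\tau$ in the definition of $\mathcal J$.

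First I would establish \eqref{bootstrapprop}. From the vorticity transport equation in \eqref{transp-par}, $D_\delta\omega^\pm_{\delta,ij} = \mathcal M_2(\nabla V_\delta, \nabla W^\pm_\delta)$, an $H^k(\Omega_\delta)$ energy estimate gives $\frac{d}{d\delta}\|\omega^\pm_\delta\|_{H^k(\Omega_\delta)}^2 \lesssim \|\nabla V_\delta\|_{L^\infty}\|\omega^\pm_\delta\|_{H^k}^2 + (\text{cross terms involving }\|V_\delta\|_{H^k}\|W^\pm_\delta\|_{H^{k+1}})$. Using the bound $\|V_\delta\|_{H^k(\Omega_\delta)}\lesssim_M\|V_\delta\|_{H^{k-1/2}(\Gamma_\delta)}\lesssim_M \delta\|\Gamma_\delta\|_{H^{k+1/2}}\lesssim_M \delta\cdot\delta^{-1/2} = \delta^{1/2}$ (from the parabolic smoothing of $\eta_\delta$ and the bootstrap \eqref{inductiveregbound}), the div-curl decomposition $\|W^\pm_\delta\|_{H^{k+1}}\lesssim \|\omega^\pm_\delta\|_{H^k} + \|\nabla^\top W^\pm_\delta\cdot n\|_{H^{k-1/2}} + \cdots$, and the irrotational regularization bound $\|\nabla^\top W^\pm_\epsilon\cdot n\|_{H^{k-1/2}}\lesssim_M\epsilon^{-1}$, one sees the cross terms are of size $\lesssim_M \delta^{1/2}\cdot\delta^{-3/2}\cdot\delta^{-1} = \delta^{-2}$, hence $\frac{d}{d\delta}\|\omega^\pm_\delta\|_{H^k}^2 \lesssim_M \|\omega^\pm_\delta\|_{H^k}^2 + \delta^{-2}$; integrating from $0$ with initial data $\lesssim K(M)^2\delta_0^{-3}$ and choosing $K(M)$ large yields \eqref{bootstrapprop}. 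Next, \eqref{rotregbounds}: differentiating $E^k_r$ and using the same transport equations for $\omega^\pm$ and $\nabla_B\omega^\pm$ (noting $\nabla_{B_\delta}$ and $D_\delta$ commute up to controllable commutators), the only non-trivially-bounded contribution comes from the cross term where a full extra derivative lands on $W^\pm$, which by the div-curl estimate is bounded by the trace term $\|\mathcal N_\delta W^\pm_\delta\cdot n_\delta\|_{\mathbf H^{k-1/2}(\Gamma_\delta)}$; pairing against the $\delta$-weight and integrating produces precisely $c_2\mathcal J^2(\delta)$, with all genuinely lower-order terms absorbed into $C(M)\delta$.

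The heart of the argument, and the main obstacle, is \eqref{parabolicemonbounds}. Here I would differentiate each of the four pieces of $E^k_i$ in $\delta$. The dangerous terms are those in which $\frac{d}{d\delta}$ hits $a_\delta$ (equivalently $\mathcal N\mathcal H$-type operators built on $\Gamma_\delta$) or $\mathcal G^\pm_\delta$, producing expressions like $\langle a^{1/2}\mathcal N^{k-1}a,\ a^{1/2}\mathcal N^{k-1}D_\delta a\rangle_{L^2(\Gamma)}$. Using the heat equation for $\kappa_{\delta,*}$ from \Cref{epsderivativeofnormal} together with the proportionality $\mathcal N a\approx a\kappa$ from \Cref{CTEF}, one rewrites $D_\delta a \approx$ (heat operator applied to $a$) $+$ lower order, so that the leading term becomes $-2\delta\|\,|\nabla_{\Gamma_*}|\,a^{1/2}\mathcal N^{k-1}a\|_{L^2}^2 + \cdots$, i.e.\ a good negative (dissipative) term of size $\approx -\delta\|\kappa\|_{\mathbf H^{k-1}}^2$; the analogous computation on the $\mathcal G^\pm$ and $\nabla_B\mathcal G^\pm$ pieces produces $\approx -\delta\|\mathcal N W^\pm\cdot n\|_{\mathbf H^{k-1/2}}^2$. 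Summing these negative contributions gives $-c_1\mathcal J^2(\delta)$ on the left after integration. One then has to check that: (i) the commutators between $\mathcal N_\delta$, $\mathcal H_\delta$, $\nabla_{B_\delta}$ and $\Delta_{\Gamma_*}$ — estimated via the balanced elliptic and commutator tools of \Cref{BEE} — are genuinely lower order, contributing only $C(M)\delta + c_2\mathcal J^2(\delta)$ with $c_2\ll c_1$; (ii) the lower-order remainders $R^1_\delta, R^2_\delta$ and the source terms in \eqref{parabolicregularizationconstruction} (the Neumann data $\nabla_n\phi_\delta$ and the $\nabla\Delta^{-1}(\partial V\partial X)$ term) are controlled, again using \Cref{Bconstruction} and the fact that $\|\Gamma_\delta\|_{H^{k+\alpha}}\lesssim_{M,\alpha}\delta^{-\alpha}$; and (iii) crucially, that the regularizing effect for $a$ in the $\nabla_B$ direction (\Cref{partitionofBa}, \Cref{partitionofBa2}) is preserved under the regularization so that the term $\langle\nabla\mathcal H\mathcal N^{k-2}\nabla_B a,\ \partial_\delta(\cdots)\rangle$ — the analogue of the delicate term handled in \Cref{RB2} — does not lose derivatives. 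Collecting everything, $\frac{d}{d\delta}E^k_i(\delta) + c_1\,\delta\|\kappa_\delta\|_{\mathbf H^{k-1}}^2 + c_1\,\delta\|\mathcal N_\delta W^\pm_\delta\cdot n_\delta\|_{\mathbf H^{k-1/2}}^2 \le c_2\,\delta(\cdots) + C(M)$, and integrating over $[0,\delta]$ with $c_1\gg c_2$ gives \eqref{parabolicemonbounds}. The main difficulty throughout is bookkeeping the matched regularity of $B$ and $\Gamma$: every time a derivative lands on $\Gamma_\delta$ at top order one must either pay it off against the parabolic gain $\mathcal J$ or exhibit a cancellation coming from the good-variable structure, exactly as in \Cref{HEB}, but now with the extra $\delta$-dependent weights to track.
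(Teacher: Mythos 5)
Your outline for parts \eqref{rotregbounds} and \eqref{parabolicemonbounds} captures the right global strategy (a $\delta$-bootstrap, the heat-type structure for $n_{\delta,*}$, $\kappa_{\delta,*}$, $\tilde B_{\delta,*}\cdot n_{\delta,*}$ producing the dissipative term $-c_1\mathcal J^2(\delta)$, and the need to estimate commutators of $\nabla_{B_\delta}$ with the elliptic operators), but there is a genuine gap in the argument you give for \eqref{bootstrapprop}, and the omission of the curvature bounds for $V_\delta$ leaves the rest incomplete.

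Concretely, for \eqref{bootstrapprop} you propose a direct Gronwall estimate of the form $\frac{d}{d\delta}\|\omega^\pm_\delta\|_{H^k}^2\lesssim_M\|\omega^\pm_\delta\|_{H^k}^2+\delta^{-2}$. This differential inequality does not integrate: $\int_0^\delta\tau^{-2}\,d\tau=\infty$, so no finite conclusion can be drawn from initial data, and the statement ``integrating from $0$ with initial data $\lesssim K(M)^2\delta_0^{-3}$'' is not meaningful. The paper avoids this precisely by \emph{not} differentiating the energy. Instead it estimates the difference $\|\omega^\pm_\delta-\tilde\omega_0^\pm\|_{H^k(\Omega_\delta)}$, where $\tilde\omega_0^\pm:=\omega_{0,*}^\pm\circ\Phi_\delta^{-1}$ is the pushforward of the initial vorticity (whose $H^k$ norm is comparable to $\|\omega_0^\pm\|_{H^k(\Omega_0)}$ by change of variables), via the fundamental theorem of calculus along the flow. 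The crucial quantitative ingredient is the bound $\|V_\tau\|_{H^{k+1}(\Omega_\tau)}\lesssim_M\tau\|\Gamma_\tau\|_{H^{k+\frac52}}\lesssim_M\tau^{\gamma-1}\epsilon^{-\frac12-\gamma}$, which uses the second half of the bootstrap hypothesis \eqref{inductiveregbound}, namely $\|\Gamma_0\|_{H^{k+\frac12+\gamma}}\leq K(M)\epsilon^{-\frac12-\gamma}$. The exponent $\gamma-1>-1$ is what makes $\int_0^\delta\|V_\tau\|_{H^{k+1}}\,d\tau\lesssim_{M,\gamma}\epsilon^{-\frac12}$ finite and keeps the bootstrap constant $K(M)$ fixed; you never invoke the $\gamma$-loss and so your estimate cannot close. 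Your bound $\|V_\delta\|_{H^k}\lesssim_M\delta^{1/2}$ is correct, but the top-order cross term in the $H^k$ vorticity estimate sees $\|V_\delta\|_{H^{k+1}}$, which scales like $\delta^{\gamma-1}\epsilon^{-\frac12-\gamma}$, a qualitatively rougher quantity.

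Beyond this, you are implicitly relying on, but do not establish, the key technical lemma (stated in the paper as ``Curvature bounds I and II'') that $\|V_\delta\|_{L^2_\delta H^{k-\frac12}(\Gamma_\delta)}$ and $\|\nabla_{B_\delta}V_\delta\|_{L^2_\delta H^{k-1}(\Gamma_\delta)}$ are bounded by $\delta+\mathcal J(\delta)$; it is only through this $L^2_\delta$-in-$\delta$ bound, combined with Cauchy--Schwarz, that the error in \eqref{rotregbounds} is controlled by $\delta^{1/2}(\delta+\mathcal J(\delta))$ and hence absorbed into $c_2\mathcal J^2(\delta)+C(M)\delta$. Without that lemma (which itself requires a nontrivial parabolic regularity argument for $\nabla_{B_\delta}\kappa_\delta$ in the style of the identity $\frac{d}{d\delta}(\nabla_{B_\delta}\kappa_\delta)_*-2\delta\Delta_{\Gamma_*}(\nabla_{B_\delta}\kappa_\delta)_*=R'_\delta$), the claims in (i) and (iii) do not close quantitatively. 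Your part (iii) otherwise follows the paper's approach: the decompositions $\frac{d}{d\delta}(\mathcal Na)_*=2\delta a_*\Delta_{\Gamma_*}\kappa_*+R_*$, $\mathcal Na=a\kappa+Q$, the analogues for $\nabla_Ba$ and $\mathcal G^\pm$, and the use of $\|\nabla_B^2a\|_{H^{k-2}(\Gamma)}$ for the delicate term. But without \eqref{bootstrapprop} and Lemma gradBV in hand, the remainder terms $R,Q$ are not shown to have the required size, and the chain of implications breaks.
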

\begin{remark}\label{equivalenceofboundarycond}
 By pigeonholing (and possibly replacing $\epsilon$ with some $\epsilon'\approx \epsilon$), it is easy to see that \Cref{E1estimate}  implies property (iv) in \Cref{domainregularizationprop} for the boundary term $\mathcal{N}_{\epsilon}W^\pm_{\epsilon}\cdot n_{\epsilon}$. To estimate the other boundary term $\nabla^{\top} W^\pm_{\epsilon}\cdot n_{\epsilon}$, we may simply use the pointwise (in $\delta$) bound 
 \begin{equation}\label{pointwisedeltabound}
 \begin{split}
\|\nabla^{\top}W_\delta^\pm\cdot n_\delta\|_{H^{k-\frac{1}{2}}(\Gamma_\delta)}&\lesssim_M \delta^{-\frac{1}{2}}+\|\mathcal{N}\nabla^{\top}W_\delta^\pm\cdot n_\delta\|_{H^{k-\frac{3}{2}}(\Gamma_\delta)}
\\
&\lesssim_M \delta^{-\frac{1}{2}}+\|[\nabla^{\top},\mathcal{N}]W_\delta^\pm\|_{H^{k-\frac{3}{2}}(\Gamma_\delta)}+\|\mathcal{N}W_\delta^\pm\cdot n_\delta\|_{H^{k-\frac{1}{2}}(\Gamma_\delta)}
\\
&\lesssim_M \delta^{-\frac{1}{2}}+\|\mathcal{N}W_\delta^\pm\cdot n_\delta\|_{H^{k-\frac{1}{2}}(\Gamma_{\delta})}
\end{split}
 \end{equation}
where throughout, we used the bound $\|\Gamma_{\delta}\|_{H^{k+\frac{1}{2}}}\lesssim_M \delta^{-\frac{1}{2}}$. Note that, in the first line, we used the ellipticity of $\mathcal{N}$ and the Leibniz rule for $\mathcal{N}$. In the second line, we used \Cref{tangradientbound}, and in the third line we used \Cref{materialcom} and \Cref{commutatorremark}. We will use \eqref{pointwisedeltabound} (or simple variations of it when using different Sobolev indices) in the analysis below.
 \end{remark}
 \begin{proof}
It will be convenient to make the following bootstrap hypotheses
 \begin{equation}\label{bootstrap h-J}
 \|W^\pm_{\delta}\|_{\mathbf{H}^{k}(\Omega_\delta)}+\mathcal{J}(\delta)\leq C'(M),\hspace{5mm} \|\omega_{\delta}^\pm\|_{H^{k}(\Omega_{\delta})}\leq 2K(M)\delta^{-\frac{3}{2}}
 \end{equation}
 for some sufficiently large constant $C'(M)$ and where $K(M)$ is as in \eqref{inductiveregbound}. Both of these constants will be automatically improved by the bounds  \eqref{rotregbounds}-\eqref{parabolicemonbounds} and the energy coercivity bound in \Cref{Energy est. thm}.
 \medskip
 
  We begin by establishing the following technical lemma which relates the velocity function $V_{\delta}$ to the mean curvature.
\begin{lemma}\label{gradBV}
The following bounds hold for $0<\delta\leq \epsilon$.
\begin{enumerate}
\item (Curvature bound I).
\begin{equation}\label{curvbound1}
\|V_{\delta}\|_{L^2_{\delta}H^{k}(\Omega_{\delta})}+\|V_{\delta}\|_{L^2_{\delta}H^{k-\frac{1}{2}}(\Gamma_{\delta})}\lesssim_M \delta+\mathcal{J}(\delta).
\end{equation}
\item (Curvature bound II).
\begin{equation}\label{curvbound2}
\|\nabla_{B_{\delta}}V_{\delta}\|_{L^2_{\delta}H^{k-\frac{1}{2}}(\Omega_{\delta})}+\|\nabla_{B_{\delta}}V_{\delta}\|_{L^2_{\delta}H^{k-1}(\Gamma_{\delta})}\lesssim_M \delta+\mathcal{J}(\delta).
\end{equation}
\end{enumerate}
\end{lemma}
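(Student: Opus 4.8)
\textbf{Proof strategy for \Cref{gradBV}.} The plan is to reduce both estimates to the smoothing information already packaged in $\mathcal{J}(\delta)$, via the defining relations $V_\delta = \mathcal{H}_*(\nu\partial_\delta\eta_\delta)\circ\Psi_\delta$ and $\partial_\delta\eta_\delta = \partial_\delta\tilde\eta_\delta - 2C\delta$. First I would pass from the collar parameterization to the intrinsic geometry: since $\tilde\eta_\delta = e^{\delta^2\Delta_{\Gamma_*}}\eta_0$ solves the heat flow, $\partial_\delta\tilde\eta_\delta = 2\delta\,\Delta_{\Gamma_*}\tilde\eta_\delta$, and one has the standard identity relating $\Delta_{\Gamma_*}\eta_\delta$ (a second-order expression in the collar variable) to the mean curvature $\kappa_\delta$ modulo lower-order terms whose coefficients are bounded by $C(M)$ using $\|\Gamma_\delta\|_{H^{k+\frac12}}\lesssim_M \delta^{-\frac12}$ from \eqref{surfbound-par}/\eqref{inductiveregbound}. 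Concretely, $\partial_\delta\eta_\delta = 2\delta\,(\kappa_{\delta})_* + (\text{l.o.t.}) - 2C\delta$, so in $H^{k-1}(\Gamma_*)$ we get the pointwise-in-$\delta$ bound $\|\partial_\delta\eta_\delta\|_{H^{k-1}(\Gamma_*)} \lesssim_M \delta\big(1 + \|\kappa_\delta\|_{H^{k-1}(\Gamma_\delta)}\big)$. Squaring, integrating $\tau\,d\tau$ against the extra $\tau$ from the definition of $\mathcal{J}$, and using the elliptic bound $\|V_\delta\|_{H^s(\Omega_\delta)}\lesssim_M \|V_\delta\|_{H^{s-\frac12}(\Gamma_\delta)}$ (valid for $1\le s\le k+\frac12$ as recorded above \Cref{Bconstruction}) together with the uniform $H^{k+\frac12}$ bound on $\Phi_\delta,\Psi_\delta$, yields $\|V_\delta\|_{L^2_\delta H^{k}(\Omega_\delta)} + \|V_\delta\|_{L^2_\delta H^{k-\frac12}(\Gamma_\delta)} \lesssim_M \delta + \big(\int_0^\delta \tau\|\kappa_\tau\|_{H^{k-1}(\Gamma_\tau)}^2 d\tau\big)^{1/2} \lesssim_M \delta + \mathcal{J}(\delta)$, which is \eqref{curvbound1}.

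For \eqref{curvbound2} I would commute $\nabla_{B_\delta}$ through the construction. Since $B_\delta$ is tangent to $\Gamma_\delta$, $\nabla_{B_\delta}$ acts as a genuine tangential vector field on the boundary, and on $\Omega_\delta$ one has $\nabla_{B_\delta}\mathcal{H}_* \psi = \mathcal{H}_*(\nabla_{B_\delta}^{top}\psi) + [\nabla_{B_\delta},\mathcal{H}_*]\psi$ where the commutator is lower order and controlled by \Cref{commutatorremark}-type estimates with constant $C(M)$ (here we use $\|B_\delta\|_{\mathbf{H}^k(\Omega_\delta)}\lesssim_M 1$ from the bootstrap \eqref{bootstrap h-J} and the energy coercivity). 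Thus $\nabla_{B_\delta}V_\delta$ is, modulo $C(M)(\delta+\mathcal{J}(\delta))$ errors, the harmonic extension of $\nabla_{B_\delta}^{top}(\nu\partial_\delta\eta_\delta)$, i.e. of $2\delta\,\nabla_{B_\delta}(\kappa_\delta)_* + (\text{l.o.t.})$, plus a term from $\nabla_{B_\delta}$ hitting $\nu$ and the transition maps which is again absorbed into $C(M)\delta$. Then the same squaring-and-integrating argument, now pairing against the $\tau\|\kappa_\tau\|_{\mathbf{H}^{k-1}(\Gamma_\tau)}^2$ piece of $\mathcal{J}^2$ (note $\|\cdot\|_{\mathbf{H}^{k-1}}$ controls $\|\nabla_B\,\cdot\,\|_{H^{k-\frac32}}$, and one gains back half a derivative from the elliptic/trace estimate $\|\nabla_{B_\delta}V_\delta\|_{H^{k-\frac12}(\Omega_\delta)}\lesssim_M \|\nabla_{B_\delta}V_\delta\|_{H^{k-1}(\Gamma_\delta)}$), gives \eqref{curvbound2}.

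The main obstacle I anticipate is \textbf{the regularity matching}: $\nabla_{B_\delta}$ hitting $\kappa_\delta$ or $n_\delta$ threatens to cost a full derivative, since $\|\kappa_\delta\|_{H^{k+\frac12}}$ and $\|n_\delta\|_{H^{k+\frac12}}$ are only $\lesssim_M \delta^{-1}$, whereas we need the $\delta^{-\frac12}$ scaling consistent with $\nabla_B$ behaving like a half-derivative. This is precisely the point where one must invoke the heat-equation structure from \Cref{epsderivativeofnormal} — writing $\nabla_{B_\delta}\kappa_\delta = \nabla_{B_\delta}(\text{smoothed data})$ and using that the $\mathcal{J}$-functional was defined with the weight $\tau$ exactly so that $\|\nabla_{B_\tau}\kappa_\tau\|_{H^{k-\frac32}(\Gamma_\tau)}$ is $L^2_\tau(\tau\,d\tau)$-integrable. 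Care is also needed in keeping all implicit constants of the form $C(M)$ rather than letting the bootstrap constants $C'(M)$ or $K(M)$ enter multiplicatively in a way that blocks closing \eqref{bootstrap h-J}; this is handled by always estimating the genuinely top-order terms via $\mathcal{J}$ and relegating everything else to lower-order expressions whose $C(M)$-bounds follow from \Cref{Lambdakest}, \Cref{coerciveprelim}, and the energy coercivity of \Cref{Energy est. thm}.
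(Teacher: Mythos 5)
Your strategy is directionally right — reduce $V_\delta$ to the mean curvature via the heat-flow identity $\partial_\delta\tilde\eta_\delta = 2\delta\Delta_{\Gamma_*}\tilde\eta_\delta$, then pair the resulting $\delta\kappa_\delta$ expression against $\mathcal{J}$. But both parts have a half-derivative deficit that you have not actually closed, and in part (ii) you attribute the gain to the wrong mechanism.

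For \eqref{curvbound1}: you derive $\|\partial_\delta\eta_\delta\|_{H^{k-1}(\Gamma_*)}\lesssim_M \delta(1+\|\kappa_\delta\|_{H^{k-1}(\Gamma_\delta)})$, but what you need is the $H^{k-\frac12}(\Gamma_*)$ norm, since $V_\delta|_{\Gamma_\delta}\approx(\nu\partial_\delta\eta_\delta)\circ\Phi_\delta^{-1}$ and the target is $\|V_\delta\|_{L^2_\delta H^{k-\frac12}(\Gamma_\delta)}$. The elliptic bound $\|V_\delta\|_{H^k(\Omega_\delta)}\lesssim_M\|V_\delta\|_{H^{k-\frac12}(\Gamma_\delta)}$ you cite goes from boundary to interior and does nothing to help with this. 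The paper closes the gap with a parabolic smoothing estimate in the $\delta$-variable — roughly $\|\Gamma_\delta\|_{H^{k+\frac32}}\lesssim_M\delta^{-\frac12}\|\Gamma_{\delta/2}\|_{H^{k+1}}$ — so that the surplus half derivative is traded for a factor $\delta^{-\frac12}$, which the extra $\tau$-weight in $\mathcal{J}^2$ then absorbs after rescaling $\tau\to\tau/2$. That rescaling step, and the fact that you are forced to land on $\kappa_{\delta/2}$ rather than $\kappa_\delta$, needs to appear explicitly.

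For \eqref{curvbound2} the gap is more serious. You acknowledge that $\nabla_{B_\delta}$ hitting $\kappa_\delta$ threatens to cost a full derivative, and you correctly locate the fix in the heat-equation structure — but then you claim "one gains back half a derivative from the elliptic/trace estimate $\|\nabla_{B_\delta}V_\delta\|_{H^{k-\frac12}(\Omega_\delta)}\lesssim_M\|\nabla_{B_\delta}V_\delta\|_{H^{k-1}(\Gamma_\delta)}$." That is a category error: the gain needed is not interior-vs-boundary but an extra half derivative \emph{on the boundary}, from $\|\nabla_{B_\tau}\kappa_\tau\|_{H^{k-\frac32}(\Gamma_\tau)}$ (which is what $\int\tau\|\kappa_\tau\|^2_{\mathbf{H}^{k-1}}d\tau$ in $\mathcal{J}^2$ controls) up to $\|\nabla_{B_\delta}\kappa_\delta\|_{H^{k-1}(\Gamma_\delta)}$ (which is what the target estimate for $\nabla_{B_\delta}V_\delta$ requires after the reduction $\nabla_{B_\delta}V_\delta\approx 2\delta\nabla_{B_\delta}\kappa_\delta$). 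That half derivative is obtained by proving that $\nabla_{B_\delta}\kappa_\delta$ itself satisfies a heat-type equation and establishing a dedicated parabolic regularity bound, namely $\|\nabla_{B_\delta}\kappa_\delta\|_{H^{k-1}(\Gamma_\delta)}\lesssim_M\delta^{-\frac12}\bigl(1+\|\kappa_{\delta/2}\|_{\mathbf{H}^{k-1}(\Gamma_{\delta/2})}\bigr)$. Establishing this requires (a) commuting $\nabla_{B_\delta}$ through the heat equation $\frac{d}{d\delta}\kappa_{\delta,*}-2\delta\Delta_{\Gamma_*}\kappa_{\delta,*}=R_{\delta,*}$ of Lemma~\ref{epsderivativeofnormal}, (b) controlling the resulting commutator source terms by weighted $L^2_\delta$ bounds on $D_\tau B_\tau$ in $H^{k-2+\alpha}$ and $B_\tau$ in $H^{k+\alpha}$, which themselves need a div-curl analysis exploiting the specific construction of $B_\delta$ from Proposition~\ref{Bconstruction} and the tangency $B_\tau\cdot n_\tau=0$, and (c) a Gr\"onwall-type energy estimate for the resulting parabolic equation followed by averaging and interpolation between the $\alpha=0$ and $\alpha=1$ cases. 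None of that machinery appears in your sketch, and "generic commutator bounds with constant $C(M)$" will not produce the $\delta^{-\frac12}$ scaling — a naive commutator estimate costs a full power $\delta^{-1}$, which the weight in $\mathcal{J}$ cannot absorb.
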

\begin{proof}
We begin with the much easier bound \eqref{curvbound1}. By definition of $V_{\delta}$ and the bound $\|\Gamma_{\delta}\|_{H^k}\lesssim_M 1$, we have $\|V_{\delta}\|_{H^k(\Omega_{\delta})}\lesssim_M \|V_{\delta}\|_{H^{k-\frac{1}{2}}(\Gamma_{\delta})}$. Therefore, it suffices to estimate the latter term on the left-hand side of \eqref{curvbound1}. By definition of $V_{\delta}$ and  \Cref{curvaturebound} as well as the parabolic estimate $\|\Gamma_{\delta}\|_{H^{k+\frac{3}{2}}}\lesssim_M \delta^{-\frac{1}{2}}\|\Gamma_{\frac{\delta}{2}}\|_{H^{k+1}}$, we have
\begin{equation*}
\|V_{\delta}\|_{H^{k-\frac{1}{2}}(\Gamma_{\delta})}\lesssim_M \delta^{\frac{1}{2}}\left(1+\|\kappa_{\frac{\delta}{2}}\|_{H^{k-1}(\Gamma_{\frac{\delta}{2}})}\right).
\end{equation*}
This easily gives \eqref{curvbound1} by integrating in $\delta$ and rescaling. Next, we prove the second estimate \eqref{curvbound2}. The main ingredient will be the parabolic regularity bound
\begin{equation}\label{curvbound3}
\|\nabla_{B_{\delta}}\kappa_{\delta}\|_{H^{k-1}(\Gamma_{\delta})}\lesssim_M \delta^{-\frac{1}{2}}(1+\|\kappa_{\frac{\delta}{2}}\|_{\mathbf{H}^{k-1}(\Gamma_{\frac{\delta}{2}})}),\hspace{5mm}0<\delta\leq\epsilon.
\end{equation}
To prove this, we will need the following energy bound for $\alpha=0,1$ and every $\frac{\delta}{2}\leq\delta'\leq\delta$:
\begin{equation}\label{preliminaryheatbound}
\begin{split}
I_{\alpha,\delta'}\lesssim_M \delta^{-\alpha}+\delta^{1-2\alpha}\|\kappa_{\frac{\delta}{2}}\|_{H^{k-1}(\Gamma_{\frac{\delta}{2}})}^2+\|\nabla_{B_{\delta'}}\kappa_{\delta'}\|_{H^{k-\frac{3}{2}+\alpha}(\Gamma_{\delta'})}^2,
\end{split}
\end{equation}
where 
\begin{equation*}
I_{\alpha,\delta'}:=\|\nabla_{B_{\delta}}\kappa_{\delta}\|_{H^{k-\frac{3}{2}+\alpha}(\Gamma_\delta)}^2+\int_{\delta'}^{\delta}\tau\|\nabla_{B_\tau}\kappa_\tau\|_{H^{k-\frac{1}{2}+\alpha}(\Gamma_\tau)}^2d\tau.
\end{equation*}
Our starting point is to recall that from \Cref{epsderivativeofnormal}, we have the  heat equation for $\kappa_{\delta}$,
\begin{equation}\label{heatforkappa}
\frac{d}{d\delta}\kappa_{\delta,*}-2\delta\Delta_{\Gamma_*}\kappa_{\delta,*}=R_{\delta,*},
\end{equation}
where we can bound using \Cref{epsderivativeofnormal}, the parabolic bound for $\Gamma_{\delta}$ and \Cref{curvaturebound},
\begin{equation}\label{Rtaubound}
\|R_{\tau,*}\|_{H^{k-\frac{3}{2}+\alpha}(\Gamma_*)}\lesssim_M \tau+ \tau^{\frac{1}{2}-\alpha}\|\kappa_{\frac{\tau}{2}}\|_{H^{k-1}(\Gamma_{\frac{\tau}{2}})},\hspace{5mm}0\leq\tau\leq\delta.
\end{equation}
Our next aim will be to commute \eqref{heatforkappa} with $\nabla_{B_{\delta}}$ and to obtain energy estimates for the corresponding heat equation for $\nabla_{B_{\delta}}\kappa_{\delta}$. To estimate the requisite error terms, we will need the bounds
\begin{equation}\label{crudeDBbounds}
\int_{\delta'}^{\delta}\tau^{-1}\left(\|D_{\tau}B_{\tau}\|_{H^{k-2+\alpha}(\Omega_\tau)}^2+\tau^2\|B_{\tau}\|_{H^{k+\alpha}(\Omega_\tau)}^2\right)d\tau\lesssim_M \delta^{-\alpha}.
\end{equation}
We will focus on the case $\alpha=1$ as the case $\alpha=0$ is simpler and follows similar reasoning. We begin by estimating $B_{\tau}$. For this, we use the div-curl estimate in \Cref{Balanced div-curl} to obtain
\begin{equation*}
\|B_{\tau}\|_{H^{k+1}(\Omega_{\tau})}\lesssim_M 1+\|\Gamma_{\tau}\|_{H^{k+\frac{1}{2}}}+\|\nabla\times B_{\tau}\|_{H^{k}(\Omega_{\tau})}+\|\nabla^{\top}B_{\tau}\cdot n_\tau\|_{H^{k-\frac{1}{2}}(\Gamma_{\tau})}.
\end{equation*}
By   \eqref{pointwisedeltabound}, \eqref{bootstrap h-J} and the bound $\|\Gamma_{\tau}\|_{H^{k+\frac{1}{2}}}\lesssim_M \tau^{-\frac{1}{2}}$, we obtain the required bound for the component involving the integral of $\tau\|B_{\tau}\|_{H^{k+1}(\Omega_\tau)}^2$. On the other hand, by \Cref{Balanced div-curl} and the obvious bound $\|D_{\tau}B_{\tau}\|_{L^2(\Omega_{\tau})}\lesssim_M \tau$, we have
\begin{equation*}
\|D_{\tau}B_{\tau}\|_{H^{k-1}(\Omega_{\tau})}\lesssim_M \tau +\|\nabla\times D_{\tau}B_{\tau}\|_{H^{k-2}(\Omega_{\tau})}+\|\nabla\cdot D_{\tau}B_{\tau}\|_{H^{k-2}(\Omega_{\tau})}+\|D_{\tau}B_{\tau}\cdot n_\tau\|_{H^{k-\frac{3}{2}}(\Gamma_{\tau})}.
\end{equation*}
Using the definition of $B_{\tau}$ and the parabolic bounds for $\Gamma_{\tau}$, it is easy to estimate the second and third term on the right by $\tau^{\frac{1}{2}}$. For the last term, we have using the boundary condition $B_{\tau}\cdot n_\tau=0$ and \Cref{Movingsurfid}, the identity $D_{\tau}B_{\tau}\cdot n_\tau=B_{\tau}\cdot\nabla^{\top}V_{\tau}\cdot n_\tau$ which can be estimated in $H^{k-\frac{3}{2}}(\Gamma_{\tau})$ (using the parabolic estimate for $\Gamma_{\tau}$ and \Cref{curvaturebound}) by $\tau^{\frac{1}{2}}\|\kappa_{\frac{\tau}{2}}\|_{H^{k-1}(\Gamma_{\frac{\tau}{2}})}$. By rescaling and the bootstrap hypothesis \eqref{bootstrap h-J}, this gives \eqref{crudeDBbounds}. We can now commute the heat equation \eqref{heatforkappa} for the curvature with $\nabla_{B_{\delta}}$ to obtain
\begin{equation*}
\frac{d}{d\delta}(\nabla_{B_{\delta}}\kappa_{\delta})_*-2\delta\Delta_{\Gamma_*}(\nabla_{B_\delta}\kappa_{\delta})_*=R'_{\delta,*}
\end{equation*}
where by \eqref{Rtaubound} and \eqref{crudeDBbounds}, there holds
\begin{equation*}
\int_{\delta'}^{\delta}\tau^{-1}\|R'_{\tau,*}\|_{H^{k-\frac{5}{2}+\alpha}(\Gamma_*)}^2d\tau\lesssim_M \delta^{-\alpha}+\delta^{1-2\alpha}\|\kappa_{\frac{\delta}{2}}\|_{H^{k-1}(\Gamma_{\frac{\delta}{2}})}^2.
\end{equation*}
From this and a simple energy estimate, we deduce \eqref{preliminaryheatbound}. To obtain \eqref{curvbound3}, we first observe that by averaging in the $\alpha=0$ estimate and directly using the $\alpha=1$ estimate, we have the parabolic regularity bound
\begin{equation*}
\|\nabla_{B_{\delta}}\kappa_{\delta}\|_{H^{k-\frac{1}{2}}(\Gamma_\delta)}\lesssim_M \delta^{-1}(1+\|\kappa_{\frac{\delta}{2}}\|_{\mathbf{H}^{k-1}(\Gamma_{\frac{\delta}{2}})}).
\end{equation*}
Therefore, interpolating and combining the above estimate with the $\alpha=0$ bound we have
\begin{equation*}
\begin{split}
\|\nabla_{B_{\delta}}\kappa_{\delta}\|_{H^{k-1}(\Gamma_\delta)}^2&\lesssim_M \|\nabla_{B_{\delta}}\kappa_{\delta}\|_{H^{k-\frac{1}{2}}(\Gamma_\delta)}\|\nabla_{B_{\delta}}\kappa_{\delta}\|_{H^{k-\frac{3}{2}}(\Gamma_\delta)}
\\
&\lesssim_M \delta^{-1}(1+\|\kappa_{\frac{\delta}{2}}\|_{\mathbf{H}^{k-1}(\Gamma_{\frac{\delta}{2}})})(1+\delta^{\frac{1}{2}}\|\kappa_{\frac{\delta}{2}}\|_{H^{k-1}(\Gamma_{\frac{\delta}{2}})}+\|\nabla_{B_{\frac{\delta}{2}}}\kappa_{\frac{\delta}{2}}\|_{H^{k-\frac{3}{2}}(\Gamma_{\frac{\delta}{2}})})
\\
&\lesssim_M \delta^{-1}(1+\|\kappa_{\frac{\delta}{2}}\|_{\mathbf{H}^{k-1}(\Gamma_{\frac{\delta}{2}})})^2
\end{split}
\end{equation*}
as desired. Now, we turn to \eqref{curvbound2}. Our first step is to reduce matters to controlling the latter term on the left-hand side of \eqref{curvbound2}. For this, we compute using the chain rule,
\begin{equation}\label{chainruleboundV}
\nabla_{B_{\delta}}V_{\delta}=\nabla_{B_{\delta}}\Psi_{\delta}\cdot (\nabla\mathcal{H}_*(\nu\partial_{\delta}\eta_{\delta}))^*,\hspace{5mm}x\in \overline{\Omega}_{\delta}. 
\end{equation}
Therefore, by elliptic regularity and a change of variables, there holds
\begin{equation*}
\begin{split}
\|\nabla_{B_{\delta}}V_{\delta}\|_{L_{\delta}^2H^{k-\frac{1}{2}}(\Omega_{\delta})}&\lesssim \|(\nabla_{B_{\delta}}\Psi_{\delta})_*\cdot\nabla\mathcal{H}_*(\nu\partial_{\delta}\eta_{\delta})\|_{L^2_{\delta}H^{k-\frac{1}{2}}(\Omega_*)}
\\
&\lesssim \|\Delta_{\Gamma_*}((\nabla_{B_{\delta}}\Psi_{\delta})_*\cdot\nabla\mathcal{H}_*(\nu\partial_{\delta}\eta_{\delta}))\|_{L^2_{\delta}H^{k-\frac{5}{2}}(\Omega_*)}+\|(\nabla_{B_{\delta}}\Psi_{\delta})_*\cdot\nabla\mathcal{H}_*(\nu\partial_{\delta}\eta_{\delta})\|_{L^2_{\delta}H^{k-1}(\Gamma_*)},
\end{split}
\end{equation*}
where the implicit constants depend on $M$. It is straightforward from the definition of $\partial_{\delta}\eta_{\delta}$ and \Cref{curvaturebound} to estimate the first term in the second line above by the right-hand side of \eqref{curvbound2}. For the second term, we can use the fact that $\Psi_{\delta}$ is a diffeomorphism from $\Gamma_*$ to $\Gamma_{\delta}$ and \eqref{chainruleboundV} to obtain
\begin{equation*}
\|(\nabla_{B_{\delta}}\Psi_{\delta})_*\cdot\nabla\mathcal{H}_*(\nu\partial_{\delta}\eta_{\delta})\|_{L^2_{\delta}H^{k-1}(\Gamma_*)}\lesssim_M \|\nabla_{B_{\delta}}V_{\delta}\|_{L_{\delta}^2H^{k-1}(\Gamma_{\delta})}.
\end{equation*}
Thus, it remains to estimate $\|\nabla_{B_{\delta}}V_{\delta}\|_{L_{\delta}^2H^{k-1}(\Gamma_{\delta})}$. Thanks to \eqref{curvbound3}, it suffices to prove that
\begin{equation}\label{curv2auxil}
\|\nabla_{B_{\delta}}V_{\delta}\|_{H^{k-1}(\Gamma_{\delta})}\lesssim_M \delta(1+\|\kappa_{\delta}\|_{H^{k-1}(\Gamma_{\delta})}+\|\nabla_{B_{\delta}}\kappa_{\delta}\|_{H^{k-1}(\Gamma_{\delta})}).
\end{equation}
To elucidate the key points in the computation, we first consider the case where $\Gamma_*$ is the hyperplane $\{x_d=0\}$ and the collar parameterization of $\eta_{\delta}$ is a (compactly supported) graph satisfying the heat equation $\partial_{\delta}\eta_{\delta}=2\delta\Delta_{\Gamma_*}\eta_{\delta}$. In this setting, the mean curvature $\kappa_{\delta}$ is related to $\eta_{\delta}$ by the  elliptic equation,
\begin{equation*}
\kappa_{\delta,*}=-\partial_j\left(\frac{\partial_j\eta_{\delta}}{\sqrt{1+|\nabla \eta_{\delta}|^2}}\right)=-\frac{\Delta\eta_{\delta}}{(1+|\nabla\eta_{\delta}|^2)^{\frac{1}{2}}}+\frac{\partial_i\eta_{\delta}\partial_j\eta_{\delta}\partial_i\partial_j\eta_{\delta}}{(1+|\nabla\eta_{\delta}|^2)^{\frac{3}{2}}}=:L_{\delta}\eta_{\delta}.
\end{equation*}
By commuting the elliptic operator $L_{\delta}$ above with $\partial_{\delta}$ and using the heat equation for $\eta_{\delta}$ as well as the identity $\partial_{\delta}\eta_{\delta}=V_{\delta,*}\cdot\nu$, we obtain
\begin{equation*}
\begin{split}
L_{\delta}(V_{\delta,*}\cdot\nu)=L_{\delta}(\partial_{\delta}\eta_{\delta})=\frac{d}{d\delta}\kappa_{\delta,*}+R_{\delta}
\end{split}
\end{equation*}
where 
\begin{equation*}
\|R_{\delta}\|_{H^{k-2}(\Gamma_*)}\lesssim_M \delta\|\Gamma_{\delta}\|_{H^{k+1}}\lesssim_M\delta(1+\|\kappa_{\delta}\|_{H^{k-1}(\Gamma_{\delta})}).
\end{equation*}
Commuting with $\nabla_{B_\delta}$ and carrying out straightforward estimates, we find that
\begin{equation*}
\frac{d}{d\delta}(\nabla_{B_{\delta}}\kappa_{\delta})_{*}=L_{\delta}((\nabla_{B_{\delta}}V_{\delta})_*\cdot\nu)+R'_{\delta},
\end{equation*}
where
\begin{equation*}
\|R_{\delta}'\|_{H^{k-3}(\Gamma_*)}\lesssim_M \delta(1+\|\kappa_{\delta}\|_{H^{k-1}(\Gamma_{\delta})}).
\end{equation*}
In a tight enough collar neighborhood, one may use standard elliptic estimates, the heat equation for $\nabla_{B_\delta}\kappa_{\delta}$ and the definition of $V_{\delta}$ to see that
\begin{equation*}
\|(\nabla_{B_{\delta}} V_{\delta})_*\cdot\nu\|_{H^{k-1}(\Gamma_{*})}\lesssim_M \delta (1+\|\kappa_{\delta}\|_{H^{k-1}(\Gamma_{\delta})}+\|\nabla_{B_{\delta}}\kappa_{\delta}\|_{H^{k-1}(\Gamma_{\delta})}),
\end{equation*}
which by definition of $V_{\delta}$ and $\nu$ suffices for \eqref{curv2auxil}. The same bound holds in a  (tight enough) collar neighborhood over a general reference hypersurface $\Gamma_*$ by performing a similar computation to the above, using the local coordinates on $\Gamma_*$.
\end{proof}
Now, we return to the proof of \Cref{E1estimate}. 
\medskip

We begin with the proof of \eqref{rotregbounds}, which is the easiest part. Let us define $\tilde{\omega}_0^\pm:=\omega_0^\pm\circ (\Phi_{0}\circ\Phi_{\delta}^{-1})=\omega_{0,*}^\pm\circ\Phi_{\delta}^{-1}$.
We  note the identity $D_{\delta}\omega^\pm_{\delta}=-[\nabla\times, V_{\delta}\cdot\nabla]W^\pm_{\delta}$. Therefore, from the bound $\|W^\pm_{\delta}\|_{H^{k}(\Omega_\delta)}\lesssim_M 1$, Cauchy-Schwarz and \Cref{gradBV}, we have
\begin{equation*}\label{differentialvort}
\|\omega^\pm_{\delta}-\tilde{\omega}_0^\pm\|_{H^{k-1}(\Omega_{\delta})}\lesssim_M \int_{0}^{\delta}\|D_{\tau}\omega^\pm_{\tau}\|_{H^{k-1}(\Omega_\tau)}d\tau\lesssim_M \int_{0}^{\delta}\|V_{\tau}\|_{H^{k-\frac{1}{2}}(\Gamma_\tau)}d\tau\lesssim_M \delta^{\frac{1}{2}}(\delta+ \mathcal{J}(\delta)).
\end{equation*}
By Cauchy-Schwarz and a change of variables, we obtain 
\begin{equation*}\label{firstE1bound}
\|\omega^\pm_{\delta}\|_{H^{k-1}(\Omega_\delta)}^2\leq \|\omega^\pm_0\|_{H^{k-1}(\Omega_0)}^2+c_2\mathcal{J}^2(\delta)+C(M)\delta.
\end{equation*}
Before we turn to estimating the component of $E^k_r$ which involves $\nabla_{B_{\delta}}\omega^\pm_{\delta}$, we will  establish the bound \eqref{bootstrapprop}. Similarly to the above, we have by Sobolev product estimates and the weak bound $\|V_\tau\|_{H^{k-2}(\Omega_\tau)}\lesssim_M\tau$ (which follows from the definition of $V_\tau$),
\begin{equation*}
\|\omega_{\delta}^\pm-\tilde{\omega}_0^\pm\|_{H^{k}(\Omega_\delta)}\lesssim_M \int_{0}^{\delta}\|V_{\tau}\|_{H^{k+1}(\Omega_{\tau})}+\tau\|W_{\tau}^\pm\|_{H^{k+1}(\Omega_\tau)}d\tau.
\end{equation*}
Using the parabolic gain for the irrotational part of $W^\pm_{\tau}$ from \eqref{bootstrap h-J} we obtain by a div-curl analysis (and \eqref{pointwisedeltabound}), 
\begin{equation*}
\|\omega_{\delta}^\pm-\tilde{\omega}_0^\pm\|_{H^{k}(\Omega_\delta)}\lesssim_M \int_{0}^{\delta}\|V_{\tau}\|_{H^{k+1}(\Omega_{\tau})}+\tau\|\omega_{\tau}^\pm\|_{H^{k}(\Omega_\tau)}d\tau+1.
\end{equation*}
Using the bootstrap hypothesis on $\Gamma_0$ from \eqref{inductiveregbound} and the parabolic regularization bound \eqref{surfbound-par} for $\Gamma_{\tau}$ we have
\begin{equation*}
\|V_{\tau}\|_{H^{k+1}(\Omega_{\tau})}\lesssim_M \tau\|\Gamma_{\tau}\|_{H^{k+\frac{5}{2}}}\lesssim_M\tau^{\gamma-1}\|\Gamma_0\|_{H^{k+\frac{1}{2}+\gamma}}\lesssim_M \tau^{\gamma-1}\epsilon^{-\frac{1}{2}-\gamma}.
\end{equation*}
We therefore conclude that
\begin{equation*}
\|\omega_{\delta}^\pm-\tilde{\omega}_0^\pm\|_{H^{k}(\Omega_\delta)}\lesssim_M \int_{0}^{\delta}\epsilon^{-\frac{1}{2}-\gamma}\tau^{\gamma-1}+\tau\|\omega_{\tau}^\pm\|_{H^{k}(\Omega_\tau)}d\tau\lesssim_{M,\gamma} \epsilon^{-\frac{1}{2}}+\delta\sup_{0\leq \tau\leq \delta}\|\omega_{\tau}^\pm\|_{H^{k}(\Omega_\tau)},
\end{equation*}
which gives 
\begin{equation*}
\|\omega_{\delta}^\pm\|_{H^{k}(\Omega_\delta)}\leq \|\omega_0^\pm\|_{H^{k}(\Omega_0)}+C(M)\epsilon^{-\frac{1}{2}}
\end{equation*}
and establishes \eqref{bootstrapprop}. We observe that as a consequence of  \eqref{bootstrapprop}, the bootstrap \eqref{bootstrap h-J}, \Cref{Balanced div-curl} and \eqref{pointwisedeltabound}, we  also  have the crude bound
\begin{equation}\label{fullbootstrap}
\int_{0}^{\delta}\tau\|W_{\tau}^\pm\|_{H^{k+\frac{1}{2}}(\Omega_\tau)}d\tau\lesssim_M\delta,
\end{equation}
which we will use below. Now, to estimate the remaining component of $E^k_r$, we average as above to obtain
\begin{equation*}
\|\nabla_{B_{\delta}}\omega^\pm_{\delta}\|_{H^{k-\frac{3}{2}}(\Omega_\delta)}\leq \|\nabla_{B_{0}}\omega_0^\pm\|_{H^{k-\frac{3}{2}}(\Omega_0)}+C(M)\int_{0}^{\delta}\|D_{\tau}(\nabla_{B_{\tau}}\omega^\pm_{\tau})\|_{H^{k-\frac{3}{2}}(\Omega_{\tau})}d\tau.
\end{equation*}
A simple computation gives
\begin{equation*}
D_{\tau}(\nabla_{B_{\tau}}\omega^\pm_{\tau})=-\nabla_{B_{\tau}}([\nabla\times, V_{\tau}\cdot\nabla]W_{\tau}^\pm)+[D_{\tau},\nabla_{B_{\tau}}]\omega^\pm_{\tau}.
\end{equation*}
Straightforward algebraic manipulations and Sobolev product estimates then yield
\begin{equation}\label{fiveterms}
\begin{split}
\|D_{\tau}(\nabla_{B_{\tau}}\omega^\pm_{\tau})\|_{H^{k-\frac{3}{2}}(\Omega_\tau)}\lesssim_M &\tau\|\nabla_{B_{\tau}}W^\pm_{\tau}\|_{H^{k-\frac{1}{2}}(\Omega_\tau)}+\|V_{\tau}\|_{H^{k-\frac{1}{2}}(\Omega_{\tau})}+\|\nabla_{B_{\tau}}V_{\tau}\|_{H^{k-\frac{1}{2}}(\Omega_{\tau})}
\\
&+\|D_{\tau}B_{\tau}\|_{H^{k-\frac{3}{2}}(\Omega_\tau)}+\tau\|\omega_{\tau}^\pm\|_{H^{k-\frac{1}{2}}(\Omega_\tau)}.
\end{split}
\end{equation}
The first two terms on the right-hand side of \eqref{fiveterms} are of size $\mathcal{O}_M(\tau)$ and $\mathcal{O}_M(1)$, respectively. By \Cref{gradBV} and Cauchy-Schwarz, we can estimate the $\delta$-integral of the third term by 
\begin{equation*}
\int_{0}^{\delta}\|\nabla_{B_{\tau}}V_{\tau}\|_{H^{k-\frac{1}{2}}(\Omega_{\tau})}d\tau\lesssim_M \delta^{\frac{1}{2}}(\delta+\mathcal{J}(\delta))\leq C(M)\delta+c_2\mathcal{J}^2(\delta)
\end{equation*}
where $c_2$ is an appropriately small constant. To estimate the fourth term we observe that, by definition, we have
\begin{equation*}
\|D_{\tau}B_{\tau}\|_{H^{k-\frac{3}{2}}(\Omega_\tau)}\leq \|D_{\tau}\tilde{B}_{\tau}\|_{H^{k-\frac{3}{2}}(\Omega_\tau)}+\|D_{\tau}\tilde{B}^{ir}_{\tau}\|_{H^{k-\frac{3}{2}}(\Omega_\tau)}.
\end{equation*}
Using the definition of $D_{\tau}\tilde{B}_\tau$ and elliptic regularity we may bound
\begin{equation*}
\|D_{\tau}\tilde{B}_\tau\|_{H^{k-\frac{3}{2}}(\Omega_\tau)}\lesssim_M 1+\tau\|B_{\tau}\|_{H^{k+\frac{1}{2}}(\Omega_\tau)}+\|V_{\tau}\|_{H^{k-\frac{3}{2}}(\Omega_\tau)}.
\end{equation*}
We next recall that $\|V_{\tau}\|_{H^{k-\frac{3}{2}}(\Omega_\tau)}\lesssim_M 1$. In light of \eqref{fullbootstrap}, this implies that 
\begin{equation*}
\int_{0}^{\delta}\|D_{\tau}\tilde{B}_{\tau}\|_{H^{k-\frac{3}{2}}(\Omega_\tau)}d\tau\lesssim_M\delta.
\end{equation*}
 On the other hand, using \Cref{Balanced div-curl}, $\tilde{B}_{\tau}\cdot n_{\tau}=\mathcal{O}_{H^{k-3}}(\tau^2)$ and the parabolic bounds \eqref{surfbound-par} for $\Gamma_{\tau}$, we have
\begin{equation*}
\|D_{\tau}\tilde{B}^{ir}_\tau\|_{H^{k-\frac{3}{2}}(\Omega_\tau)}\lesssim_M 1+\|D_{\tau}\tilde{B}^{ir}_\tau\cdot n_{\tau}\|_{H^{k-2}(\Gamma_{\tau})}\lesssim_M 1+\|D_{\tau}(\tilde{B}_{\tau}\cdot n_{\tau})\|_{H^{k-2}(\Gamma_{\tau})}\lesssim_M 1+\|D_\tau\tilde{B}_{\tau}\|_{H^{k-\frac{3}{2}}(\Omega_{\tau})}.
\end{equation*}
From this  we conclude that 
\begin{equation*}
\int_{0}^{\delta}\|D_{\tau}B_{\tau}\|_{H^{k-\frac{3}{2}}(\Omega_\tau)}d\tau\lesssim_M \delta.
\end{equation*}
Finally, the last term in \eqref{fiveterms} is of size $\mathcal{O}_M(1)$ thanks to \eqref{bootstrapprop}. Combining everything and applying Cauchy-Schwarz, we  obtain
\begin{equation*}
\|\nabla_{B_{\delta}}\omega^\pm_{\delta}\|_{H^{k-\frac{3}{2}}(\Omega_\delta)}\leq \|\nabla_{B_{0}}\omega_0^\pm\|_{H^{k-\frac{3}{2}}(\Omega_0)}+C(M)\delta^{\frac{1}{2}}\mathcal{J}(\delta)+C(M)\delta.
\end{equation*}
Therefore, we have
\begin{equation*}
\|\nabla_{B_{\delta}}\omega^\pm_{\delta}\|_{H^{k-\frac{3}{2}}(\Omega_\delta)}^2\leq \|\nabla_{B_{0}}\omega_0^\pm\|_{H^{k-\frac{3}{2}}(\Omega_0)}^2+c_2\mathcal{J}^2(\delta)+C(M)\delta,
\end{equation*}
which completes the proof of \eqref{rotregbounds}. \end{proof}
Next, we turn to the proof of \eqref{parabolicemonbounds}. We will need the following estimates for $D_{\delta}W^\pm_{\delta}$.
\begin{lemma}\label{Ddeltabounds} For $c\ll 1$ we have
\begin{equation}\label{DdeltaWbound}
\|D_{\delta}W^\pm_\delta\|_{L_{\delta}^1\mathbf{H}^{k-1}(\Omega_\delta)}\lesssim_M \delta+\delta^{\frac{1}{2}}\mathcal{J}(\delta),\hspace{5mm}\|\delta^{-\frac{1}{2}}D_{\delta}W^\pm_\delta\|_{L_{\delta}^2\mathbf{H}^{k-\frac{3}{2}}(\Omega_\delta)}\lesssim_M \delta^{\frac{1}{2}}+c\mathcal{J}(\delta).
\end{equation}
\end{lemma}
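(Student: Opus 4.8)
\textbf{Proof plan for \Cref{Ddeltabounds}.}
The starting point is the equation for $W^\pm_\delta$ from the gradient flow in \Cref{Bconstruction}, applied to $X_\delta = v_\delta$ and $X_\delta = \tilde{B}_\delta$, together with the correction $B_\delta = \tilde{B}_\delta^{rot}$. From \eqref{parabolicregularizationconstruction} we read off that $D_\delta v_\delta = \nabla \phi_\delta^v + \nabla\Delta^{-1}(\partial_i V_{\delta,j}\partial_j v_{\delta,i})$ and similarly for $\tilde{B}_\delta$, where the Neumann data $(\nabla_n\phi_\delta^v)_*$ is $2\delta$ times a second-order (in the surface) expression in $v_{\delta,*}$ minus its average. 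The first, and main, task is to control $\nabla\phi_\delta$ in the relevant spaces. By standard elliptic theory for the Neumann problem on $\Omega_\delta$ (with boundary in the collar, so the implicit constants are uniform), $\|\nabla\phi_\delta\|_{H^{k-1}(\Omega_\delta)} \lesssim_M \|(\nabla_n\phi_\delta)_*\|_{H^{k-\frac{3}{2}}(\Gamma_*)} \lesssim_M \delta\|W^\pm_\delta\|_{H^{k+\frac12}(\Gamma_\delta)}$, and, crucially, the parabolic gain is that $\delta\|W^\pm_\delta\|_{H^{k+\frac12}(\Gamma_\delta)}$ is, after integration in $\delta$, controlled by $\mathcal{J}(\delta)$ (this is exactly the $\tau\|\mathcal{N}_\tau W^\pm_\tau\cdot n_\tau\|^2_{\mathbf{H}^{k-\frac12}}$ term in $\mathcal J$, combined with a div-curl estimate \Cref{Balanced div-curl} to pass from the boundary trace of the irrotational part to the full $H^{k+\frac12}(\Gamma_\delta)$ bound, using also \eqref{pointwisedeltabound} and the rotational bootstrap for $\omega_\delta^\pm$ from \eqref{bootstrapprop}). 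The lower-order commutator term $\nabla\Delta^{-1}(\partial_i V_{\delta,j}\partial_j v_{\delta,i})$ is handled by the $H^{-1}\to H^1_0$ bound for $\Delta^{-1}$ together with \Cref{Multilinearest}: it is bounded in $H^{k-1}(\Omega_\delta)$ by $\lesssim_M \|V_\delta\|_{H^{k-\frac12}(\Omega_\delta)}$ (crudely treating one derivative), and then \Cref{gradBV}, specifically \eqref{curvbound1}, gives $\|V_\delta\|_{L^2_\delta H^{k-\frac12}} \lesssim_M \delta + \mathcal{J}(\delta)$, whence its $L^1_\delta$ norm is $\lesssim_M \delta^{\frac12}(\delta+\mathcal{J}(\delta))$.

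Next I would assemble the $H^{k-1}(\Omega_\delta)$ part of \eqref{DdeltaWbound}. Combining the above, $\|D_\delta v_\delta\|_{H^{k-1}(\Omega_\delta)} \lesssim_M \delta\|W^\pm_\delta\|_{H^{k+\frac12}(\Gamma_\delta)} + \|V_\delta\|_{H^{k-\frac12}(\Omega_\delta)}$ pointwise in $\delta$; integrating in $\delta$ and applying Cauchy--Schwarz to both terms gives $\|D_\delta v_\delta\|_{L^1_\delta H^{k-1}} \lesssim_M \delta^{\frac12}\mathcal{J}(\delta) + \delta^{\frac12}(\delta + \mathcal{J}(\delta)) \lesssim_M \delta + \delta^{\frac12}\mathcal{J}(\delta)$. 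For $\tilde{B}_\delta$ the same estimate holds. To pass to $B_\delta = \tilde{B}_\delta - \nabla\mathcal{H}\mathcal{N}^{-1}(\tilde{B}_\delta\cdot n_{\Gamma_\delta})$, one uses $D_\delta(\nabla\mathcal{H}\mathcal{N}^{-1}(\tilde{B}_\delta\cdot n_\delta))$: since $\tilde{B}_\delta\cdot n_\delta = \mathcal{O}_{H^{k-3}}(\delta^2)$ by construction and \Cref{Bconstruction}, and since $D_\delta(\tilde B_\delta\cdot n_\delta) = D_\delta\tilde B_\delta \cdot n_\delta + \tilde B_\delta\cdot D_\delta n_\delta$ with $D_\delta n_\delta$ controlled by $V_\delta$ via \Cref{Movingsurfid}, this correction contributes at most $\lesssim_M \delta + \delta^{\frac12}\mathcal J(\delta)$ after integration; commutators between $D_\delta$ and $\nabla\mathcal{H}\mathcal{N}^{-1}$ are handled by the commutator lemmas in \Cref{existencecommutators} and \Cref{Bconstruction}. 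The $\nabla_{B_\delta}$-weighted component of $\mathbf{H}^{k-1}$ follows the same pattern: one commutes $\nabla_{B_\delta}$ through the flow equation, picking up $[\nabla_{B_\delta}, D_\delta]$ (controlled by $\nabla_{B_\delta}V_\delta$, hence by \eqref{curvbound2}, which again yields $\delta + \mathcal{J}(\delta)$), applies $\nabla_{B_\delta}$ to the elliptic/commutator source terms using \Cref{Multilinearest} and the algebra property, and uses the $H^{k-2}(\Omega_\delta)$ bound $\|\nabla_{B_\delta}W^\pm_\delta\|_{H^{k-1}}\lesssim_M 1$ from \eqref{bootstrap h-J} and \eqref{surfbound-par} to absorb lower-order contributions.

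The second estimate in \eqref{DdeltaWbound}, the $\delta^{-\frac12}$-weighted $L^2_\delta \mathbf{H}^{k-\frac32}$ bound, is obtained by the same decomposition but keeping the full strength of the parabolic smoothing rather than crudely integrating. Schematically, $\|\delta^{-\frac12} D_\delta W^\pm_\delta\|_{H^{k-\frac32}(\Omega_\delta)} \lesssim_M \delta^{\frac12}\|W^\pm_\delta\|_{H^{k+\frac12}(\Gamma_\delta)} + \delta^{-\frac12}\|V_\delta\|_{H^{k-\frac32}(\Omega_\delta)}$, using that elliptic regularity gains one derivative on $\nabla\phi_\delta$ and that $\Delta^{-1}$ applied to the commutator term gains two. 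The first term, when squared and integrated in $\delta$, is precisely $\lesssim_M \mathcal{J}^2(\delta)$ (with a small constant $c$, after relating the $H^{k+\frac12}(\Gamma_\delta)$ trace to the energy-level quantities in $\mathcal{J}$ via a div-curl estimate and absorbing the rotational and surface pieces into $\mathcal O_M(\delta)$); the second, using $\|V_\delta\|_{H^{k-\frac32}(\Omega_\delta)}\lesssim_M \delta^{\frac12}(\cdots)$ from the parabolic bound on $\Gamma_\delta$ together with \Cref{curvaturebound}, contributes $\mathcal{O}_M(\delta)$. This yields $\|\delta^{-\frac12}D_\delta W^\pm_\delta\|_{L^2_\delta\mathbf{H}^{k-\frac32}} \lesssim_M \delta^{\frac12} + c\,\mathcal{J}(\delta)$ as claimed. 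I expect the main obstacle to be the first step: carefully tracking the parabolic gain so that the top-order boundary contribution $\delta\|W^\pm_\delta\|_{H^{k+\frac12}(\Gamma_\delta)}$ is genuinely absorbed into $\mathcal{J}(\delta)$ (rather than producing a spurious $\delta^{-1/2}$ loss), which requires the sharp interplay between the Neumann data in \eqref{parabolicregularizationconstruction}, the div-curl estimate \Cref{Balanced div-curl}, and the smoothing estimates for the surface and irrotational variables packaged in $\mathcal{J}$. Once this is in place the rest is bookkeeping with the multilinear and commutator estimates already available.
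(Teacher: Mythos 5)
Your plan has a genuine gap, and it occurs at the step you yourself flag as "the main obstacle." The estimate
\[
\|(\nabla_n\phi_\delta)_*\|_{H^{k-\frac32}(\Gamma_*)} \lesssim_M \delta\|W^\pm_\delta\|_{H^{k+\frac12}(\Gamma_\delta)}
\]
is correct as a crude upper bound, but it is too lossy to close the argument. The boundary trace norm $\|W^\pm_\delta\|_{H^{k+\frac12}(\Gamma_\delta)}$ is dominated by the \emph{rotational} part of $W^\pm_\delta$: combining the trace theorem with the bootstrap \eqref{bootstrapprop} and \Cref{Balanced div-curl} only gives $\|W^\pm_\delta\|_{H^{k+\frac12}(\Gamma_\delta)}\lesssim_M\|W^\pm_\delta\|_{H^{k+1}(\Omega_\delta)}\lesssim_M\delta^{-\frac32}$, so $\delta\|W^\pm_\delta\|_{H^{k+\frac12}(\Gamma_\delta)}\lesssim_M\delta^{-\frac12}$. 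This is not summable against the $\delta^{-\frac12}$-weight in the second bound of \eqref{DdeltaWbound}, and after $L^1_\delta$ integration it produces only $\delta^{\frac12}$ rather than the claimed $\delta$. That $\delta$-versus-$\delta^{\frac12}$ distinction is not cosmetic: it is exactly what is needed in \eqref{parabolicemonbounds} so that the energy growth per step is $C(M)\epsilon$ and survives iteration over $\approx\epsilon^{-1}$ steps. With a $\delta^{\frac12}$ growth the scheme diverges.

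The paper avoids this loss by never bounding the Neumann data through the full trace norm. Instead it sets up a div-curl system for $D_\delta B_\delta$ (respectively $D_\delta v_\delta$) and reduces to the boundary quantity $D_\delta B_\delta\cdot n_\delta$, which after the key cancellation
\[
2\delta\Delta_{\Gamma_*}\tilde B_{\delta,*}\cdot n_{\delta,*} - \bigl(D_\delta\nabla\mathcal H\mathcal N^{-1}(\tilde B_\delta\cdot n_\delta)\cdot n_\delta\bigr)_* = 2\delta\Delta_{\Gamma_*}B_{\delta,*}\cdot n_{\delta,*} + \mathcal O_{H^{k-2+\alpha}}(\delta^{\frac12})
\]
(exploiting the heat equation for $\tilde B_\delta\cdot n_\delta$ from \Cref{epsderivativeofnormal}) leaves precisely $\delta\Delta_{\Gamma_*}B_{\delta,*}\cdot n_{\delta,*}$, a \emph{commutator} term because $B_\delta\cdot n_\delta = 0$. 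That commutator is then passed through $\mathcal N^2$ and \Cref{coordinatescommutator} to give $\delta\|\mathcal N B_\delta\cdot n_\delta\|_{H^{k-1+\alpha}(\Gamma_\delta)}$, which is exactly the irrotational, parabolically-smoothed quantity that $\mathcal J(\delta)$ controls. Your plan replaces this mechanism by (i) the crude bound above, and (ii) a smallness argument for the correction based on $\tilde B_\delta\cdot n_\delta = \mathcal O_{H^{k-3}}(\delta^2)$. Neither works: (i) does not see the tangency cancellation, so the rotational content of $W^\pm_\delta$ enters at full strength; and (ii) is a low-norm bound that cannot be bootstrapped to the top-order $H^{k-\frac32}(\Gamma_*)$ control on $\Delta_{\Gamma_*}\tilde B^{ir}_{\delta,*}\cdot n_{\delta,*}$ that the argument requires. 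The fix is to explicitly carry out the div-curl reduction and the subtraction of $D_\delta\tilde B_\delta\cdot n_\delta$ against $D_\delta\tilde B^{ir}_\delta\cdot n_\delta$ before any norm is taken, as the paper does.
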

\begin{proof}
It suffices to prove the same bounds separately for $D_{\delta}B_\delta$  and $D_{\delta}v_\delta$. We will show the details for $D_{\delta}B_\delta$ as the latter is similar (in fact, a bit simpler). We begin by establishing the $L_{\delta}^1H^{k-1}$ and $L_{\delta}^2H^{k-\frac{3}{2}}$ bounds. From the regularization bounds \eqref{surfbound-par} for $\Gamma_{\delta}$ and the definition of $D_{\delta}B_\delta$, it is straightforward to estimate for $\alpha\in \{0,\frac{1}{2}\}$,
\begin{equation*}
\|\nabla\times D_{\delta}B_\delta\|_{H^{k-\frac{5}{2}+\alpha}(\Omega_\delta)}+\|\nabla\cdot D_{\delta}B_\delta\|_{H^{k-\frac{5}{2}+\alpha}(\Omega_\delta)}\lesssim_M \delta^{1-\alpha}.
\end{equation*}
Therefore, by the div-curl estimate in \Cref{Balanced div-curl}, we have  
\begin{equation*}
\|D_{\delta}B_\delta\|_{H^{k-\frac{3}{2}+\alpha}(\Omega_\delta)}\lesssim_M \delta^{1-\alpha}+\|D_{\delta}B_\delta\cdot n_\delta\|_{H^{k-2+\alpha}(\Gamma_\delta)}.
\end{equation*}
By definition of $D_{\delta}B_\delta$, the relation $B_\delta:=\tilde{B}_\delta-\tilde{B}^{ir}_\delta$ and the regularization bounds \eqref{surfbound-par} for $\Gamma_{\delta}$, it is easy to see that
\begin{equation*}\label{normal cancellation}
\|D_{\delta}B_\delta\cdot n_\delta\|_{H^{k-2+\alpha}(\Gamma_\delta)}\lesssim_M \delta^{1-\alpha}+\|2\delta\Delta_{\Gamma_*}\tilde{B}_{\delta,*}\cdot n_{\delta,*}-(D_{\delta}\nabla\mathcal{H}\mathcal{N}^{-1}(\tilde{B}_\delta\cdot n_\delta)\cdot n_\delta)_*\|_{H^{k-2+\alpha}(\Gamma_*)}.
\end{equation*}
Next, we analyze the term $D_{\delta}\nabla\mathcal{H}\mathcal{N}^{-1}(\tilde{B}_\delta\cdot n_\delta)\cdot n_\delta$ in the above estimate. For $\alpha\in \{0,\frac{1}{2}\}$, we have by \Cref{epsderivativeofnormal}, the bound $\|\tilde{B}_{\delta}\|_{H^{k}(\Omega_{\delta})}\lesssim_M 1$  and the regularization estimates for $\Gamma_{\delta}$,
\begin{equation*}
\begin{cases}
&\partial_{\delta}(\tilde{B}_{\delta}\cdot n_{\delta})_*= 2\delta\Delta_{\Gamma_*}(\tilde{B}_{\delta}\cdot n_{\delta})_*+R_{\delta},\hspace{5mm}\text{on}\hspace{2mm}\Gamma_{*},
\\
&(\tilde{B}_0\cdot n_0)_*=0,\hspace{5mm}\text{on}\hspace{2mm}\Gamma_{*},
\end{cases}
\end{equation*}
with $\|R_{\delta}\|_{H^{k-2+\alpha}(\Gamma_*)}\lesssim_M \delta^{1-\alpha}$. 
Consequently,
\begin{equation*}\label{heatbound}
\|(\partial_{\delta}-2\delta\Delta_{\Gamma_*})(\tilde{B}_{\delta}\cdot n_{\delta})_*\|_{H^{k-2+\alpha}(\Gamma_*)}\lesssim_M \delta^{1-\alpha}.
\end{equation*}
From the fundamental theorem of calculus and the fact that $\tilde{B}_0\cdot n_0=0$ we also have stronger bounds for $\tilde{B}_{\delta}\cdot n_{\delta}$ in weaker topologies. Specifically,  we have
\begin{equation*}\label{weakbound}
\|\tilde{B}_{\delta}\cdot n_{\delta}\|_{H^{k-3}(\Gamma_\delta)}\lesssim_M \delta^2.
\end{equation*}
Using the above estimates our goal will be to show that
\begin{equation*}\label{goal}
\|D_{\delta}B_\delta\cdot n_\delta\|_{H^{k-2+\alpha}(\Gamma_\delta)}\lesssim_M \delta^{\frac{1}{2}}+\delta\|\mathcal{N}B_\delta\cdot n_\delta\|_{H^{k-1+\alpha}(\Gamma_\delta)}.
\end{equation*}
Indeed, from the above estimates, the condition $\alpha\leq\frac{1}{2}$, the bounds for $V_{\delta}$ and the identity $D_{\delta}n_{\delta}=-\nabla^{\top}V_{\delta}\cdot n_\delta$, we have
\begin{equation*}
\begin{split}
(D_{\delta}\nabla\mathcal{H}\mathcal{N}^{-1}(\tilde{B}_\delta\cdot n_\delta)\cdot n_\delta)_*&=\frac{d}{d\delta}(\tilde{B}_{\delta,*}\cdot n_{\delta,*})+\mathcal{O}_{H^{k-2+\alpha}}(\delta^{\frac{1}{2}})=2\delta\Delta_{\Gamma_*}(\tilde{B}_\delta\cdot n_\delta)_*+\mathcal{O}_{H^{k-2+\alpha}}(\delta^{\frac{1}{2}})
\\
&=2\delta\Delta_{\Gamma_*}(\tilde{B}_\delta^{ir}\cdot n_\delta)_*+\mathcal{O}_{H^{k-2+\alpha}}(\delta^{\frac{1}{2}})
\\
&=2\delta\Delta_{\Gamma_*}\tilde{B}^{ir}_{\delta,*}\cdot n_{\delta,*}+\mathcal{O}_{H^{k-2+\alpha}}(\delta^{\frac{1}{2}}).
\end{split}
\end{equation*}
It follows that
\begin{equation*}
\begin{split}
\|D_{\delta}B_\delta\cdot n_\delta\|_{H^{k-2+\alpha}(\Gamma_\delta)}\lesssim_M &\, \delta^{\frac{1}{2}}+\delta\|\Delta_{\Gamma_*}\tilde{B}_{\delta,*}\cdot n_{\delta,*}-\Delta_{\Gamma_*}\tilde{B}^{ir}_{\delta,*}\cdot n_{\delta,*}\|_{H^{k-2+\alpha}(\Gamma_*)}
\\
=&\, \delta^{\frac{1}{2}}+\delta\|\Delta_{\Gamma_*}B_{\delta,*}\cdot n_{\delta,*}\|_{H^{k-2+\alpha}(\Gamma_*)}.
\end{split}
\end{equation*}
It then suffices to show that
\begin{equation*}\label{collartoDN}
\delta\|\Delta_{\Gamma_*}B_{\delta,*}\cdot n_{\delta,*}\|_{H^{k-2+\alpha}(\Gamma_*)}\lesssim_M \delta^{\frac{1}{2}}+\delta\|\mathcal{N}B_\delta\cdot n_\delta\|_{H^{k-1+\alpha}(\Gamma_\delta)}.
\end{equation*}
By \Cref{ellipticity} and the Leibniz rule for $\mathcal{N}$, we have
\begin{equation*}
\delta\|\Delta_{\Gamma_*}B_{\delta,*}\cdot n_{\delta,*}\|_{H^{k-2+\alpha}(\Gamma_*)}\lesssim_M\delta^{\frac{1}{2}}+\delta\|\mathcal{N}^2(\Delta_{\Gamma_*} B_{\delta,*})^*\cdot n_{\delta}\|_{H^{k-4+\alpha}(\Gamma_{\delta})}.
\end{equation*}
From \Cref{coordinatescommutator}, the identities in \Cref{Movingsurfid} and \Cref{higherpowers} we  conclude that
\begin{equation}\label{doubleDNcom}
\begin{split}
\delta\|\mathcal{N}^2(\Delta_{\Gamma_*} B_{\delta,*})^*\cdot n_{\delta}\|_{H^{k-4+\alpha}(\Gamma_{\delta})}&\lesssim_M \delta^{\frac{1}{2}}+\delta\|\Delta_{\Gamma_*}(\mathcal{N}^2 B_{\delta})_*\cdot n_{\delta,*}\|_{H^{k-4+\alpha}(\Gamma_*)}
\\
&\lesssim_M \delta^{\frac{1}{2}}+\delta\|\mathcal{N}B_\delta\cdot n_\delta\|_{H^{k-1+\alpha}(\Gamma_\delta)},
\end{split}
\end{equation}
as desired. Next, we establish the analogous estimates for $\nabla_{B_\delta} D_{\delta}W_\delta^\pm$ in $L_{\delta}^1H^{k-\frac{3}{2}}$ and $L_{\delta}^2H^{k-2}$. To conveniently track error terms in our analysis we will write $R_B:=R_{B}(\delta)$ to denote a generic remainder term on $\Omega_{\delta}$ or $\Gamma_\delta$ such that
\begin{equation*}
\|R_B\|_{L_{\delta}^1H^{k-\frac{3}{2}}(\Omega_\delta)}\lesssim_M \delta+\delta^{\frac{1}{2}}\mathcal{J}(\delta),\hspace{5mm}\|\delta^{-\frac{1}{2}}R_B\|_{L_{\delta}^2H^{k-2}(\Omega_\delta)}\lesssim_M \delta^{\frac{1}{2}}+c\mathcal{J}(\delta)
\end{equation*}
if $R_B$ is defined on $\Omega_\delta$, or
\begin{equation*}
\|R_B\|_{L_{\delta}^1H^{k-2}(\Gamma_\delta)}\lesssim_M \delta+\delta^{\frac{1}{2}}\mathcal{J}(\delta),\hspace{5mm}\|\delta^{-\frac{1}{2}}R_B\|_{L_{\delta}^2H^{k-\frac{5}{2}}(\Gamma_\delta)}\lesssim_M \delta^{\frac{1}{2}}+c\mathcal{J}(\delta)
\end{equation*}
 if $R_B$ is defined on $\Gamma_{\delta}$. Similarly to before, we observe the estimates
\begin{equation*}
\|\nabla\cdot\nabla_{B_\delta}D_{\delta}B_{\delta}\|_{H^{k-3+\alpha}(\Omega_\delta)}+\|\nabla\times\nabla_{B_\delta}D_{\delta}B_{\delta}\|_{H^{k-3+\alpha}(\Omega_\delta)}\lesssim_M \delta+\|D_{\delta}B_{\delta}\|_{H^{k-2+\alpha}(\Omega_\delta)}+\|\nabla_{B_\delta}V_{\delta}\|_{H^{k-2+\alpha}(\Omega_\delta)}.
\end{equation*}
Using the above bounds for $D_{\delta}B_{\delta}$  and the crude bound
\begin{equation*}
\|\nabla_{B_\delta}V_{\delta}\|_{H^{k-2+\alpha}(\Omega_\delta)}\lesssim_M \|V_{\delta}\|_{H^{k-\frac{3}{2}+\alpha}(\Gamma_{\delta})}\lesssim_M \delta^{\frac{1}{2}-\alpha}
\end{equation*}
it is easy to see that
\begin{equation*}
\nabla\cdot\nabla_{B_\delta}D_{\delta}B_{\delta}=R_B,\hspace{5mm}\nabla\times\nabla_{B_\delta} D_{\delta}B_{\delta}=R_B.
\end{equation*}
Hence, by the div-curl estimate in \Cref{Balanced div-curl}, we have reduced matters to obtaining a suitable approximation for $\nabla_{B_\delta}D_{\delta}B_{\delta}\cdot n_\delta$ in $H^{k-\frac{5}{2}+\alpha}(\Gamma_\delta)$. Our  objective will be to show that
\begin{equation}\label{DBDdB}
\nabla_{B_\delta}D_{\delta}B_{\delta}\cdot n_\delta=2\delta\nabla_{B_\delta}(\Delta_{\Gamma_*}B_{\delta,*}\cdot n_{\delta,*})^*+R_B.
\end{equation}
We begin as above by observing that
\begin{equation*}
\nabla_{B_\delta}D_{\delta}B_{\delta}\cdot n_\delta=2\delta\nabla_{B_\delta}(\Delta_{\Gamma_*}\tilde{B}_{\delta,*}\cdot n_{\delta,*})^*-\nabla_{B_\delta}D_{\delta}\tilde{B}_\delta^{ir}\cdot n_\delta+R_B.
\end{equation*}
Arguing as in the estimate for $D_{\delta}B_{\delta}\cdot n_\delta$ we may use the smallness of $\tilde{B}_{\delta}\cdot n_{\delta}$ in $H^{k-3}(\Gamma_\delta)$ to obtain
\begin{equation*}
\nabla_{B_\delta}D_{\delta}\tilde{B}_\delta^{ir}\cdot n_\delta=\nabla_{B_\delta}D_{\delta}(\tilde{B}_\delta\cdot n_\delta)+R_B.
\end{equation*}
Then, using the heat equation for $\tilde{B}_\delta\cdot n_\delta$ we may compute that
\begin{equation*}
\nabla_{B_\delta}D_{\delta}(\tilde{B}_\delta\cdot n_\delta)=2\delta\nabla_{B_\delta}(\Delta_{\Gamma_*}(\tilde{B}_{\delta,*}\cdot n_{\delta,*}))^*+R_B=2\delta\nabla_{B_\delta}(\Delta_{\Gamma_*}\tilde{B}^{ir}_{\delta,*}\cdot n_{\delta,*})^*+R_B,
\end{equation*}
which yields \eqref{DBDdB}. Utilizing the ellipticity of $\mathcal{N}$, \Cref{Movingsurfid} and performing some commutator estimates similar to \eqref{doubleDNcom}, we get 
\begin{equation*}
\|\delta\nabla_{B_\delta}(\Delta_{\Gamma_*}B_{\delta,*}\cdot n_{\delta,*})^*\|_{H^{k-\frac{5}{2}+\alpha}(\Gamma_\delta)}\lesssim_M \delta^{\frac{1}{2}-\alpha}+\delta\|\mathcal{N}B_\delta\cdot n_\delta\|_{\mathbf{H}^{k-1+\alpha}(\Gamma_\delta)},
\end{equation*}
which is sufficient to complete the proof.
\end{proof}
Now, we turn to the estimate for $E^k_{i}$. We begin by focusing on the first two terms; namely,
\begin{equation*}
\|a^{\frac{1}{2}}\mathcal{N}^{k-1}a\|_{L^2(\Gamma)}^2+\|\nabla\mathcal{H}\mathcal{N}^{k-2}\nabla_Ba\|_{L^2(\Omega)}^2=:E^k_{a}+E^k_{\nabla_B a}.
\end{equation*}
\medskip
To exploit the parabolic regularizing effect on the surface $\Gamma_{\delta}$, we will need the following lemma which extracts the leading parts of the good variables $\mathcal{N}a$ and $D_{\delta}\mathcal{N}a$. We remark that, in the text below, we will very often drop the $\delta$ subscript, in order to declutter notation.
\begin{lemma}\label{othersurfaceenergy}
The following identities hold: 
\begin{equation}\label{without_grad_B}
\frac{d}{d\delta}(\mathcal{N}a)_*=2\delta a_*\Delta_{\Gamma_*}\kappa_*+R_*,\hspace{5mm}\mathcal{N}a=a\kappa+Q,
\end{equation}
where for some $0<c\ll c_2$ sufficiently small, there holds
\begin{equation*}\label{adecompbounds}
\|\delta^{-\frac{1}{2}}R\|_{L^2_{\delta}\mathbf{H}^{k-3}(\Gamma_\delta)}\lesssim_M \delta^{\frac{1}{2}}+c\mathcal{J}(\delta),\hspace{5mm}\|R\|_{L^1_{\delta}\mathbf{H}^{k-\frac{5}{2}}(\Gamma_\delta)}\lesssim_M \delta+\delta^{\frac{1}{2}}\mathcal{J}(\delta),\hspace{5mm}\|Q\|_{L^{\infty}_{\delta}\mathbf{H}^{k-\frac{3}{2}}(\Gamma_\delta)}\lesssim_M 1.
\end{equation*}
\end{lemma}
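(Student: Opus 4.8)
\textbf{Proof plan for \Cref{othersurfaceenergy}.} The strategy is to first establish the static decomposition $\mathcal{N}a = a\kappa + Q$ and then differentiate it in $\delta$, carefully tracking which terms contribute to the leading parabolic term $2\delta a_*\Delta_{\Gamma_*}\kappa_*$ and which can be absorbed into the remainder $R$. For the static identity, I would start from the Leibniz-type formula for the Dirichlet--to--Neumann operator (the formula \eqref{DNLeibniz} already used in \Cref{HEB}) applied to the harmonic extension of $a$, together with the relation between $\mathcal{N}$ applied to a defining function and the mean curvature. Concretely, since $P$ is (up to a harmonic correction) a defining function for $\Gamma$ with $\nabla_n P = -a$, one expands $\mathcal{N}a = n_\Gamma \cdot \nabla \mathcal{H} a$ and uses the geometric identity $\mathcal{N}(\nabla_n f) \approx \kappa \nabla_n f + (\text{tangential derivatives})$, isolating the principal term $a\kappa$. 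The error $Q$ collects the commutator between $\mathcal{H}$ and multiplication, curvature-times-lower-order-tangential-derivative terms, and the contribution of $\Delta P$; these are bounded in $\mathbf{H}^{k-\frac{3}{2}}(\Gamma_\delta)$ uniformly in $\delta$ using \Cref{coerciveprelim}, \Cref{Lambdakest}, the balanced trace and product estimates in \Cref{BEE}, and the parabolic regularization bound \eqref{surfbound-par} (which keeps all $\Gamma_\delta$-dependent constants controlled by $M$ since the $H^{k}$ norm of $\Gamma_\delta$ is $\lesssim_M 1$).

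For the dynamical identity, I would differentiate $\mathcal{N}_\delta a_\delta = a_\delta \kappa_\delta + Q_\delta$ in $\delta$, pull everything back to the fixed reference surface $\Gamma_*$, and invoke \Cref{epsderivativeofnormal}, which gives $\frac{d}{d\delta}\kappa_{\delta,*} = 2\delta \Delta_{\Gamma_*}\kappa_{\delta,*} + R_\delta^2$ with $R_\delta^2$ small. Multiplying by $a_*$ produces the claimed leading term $2\delta a_* \Delta_{\Gamma_*}\kappa_*$. The remaining pieces are: (i) $\frac{d}{d\delta}(a_\delta)_* \cdot \kappa_{\delta,*}$, which is controlled because $D_\delta a_\delta$ — equivalently $\frac{d}{d\delta}a_{\delta,*}$ up to the transport term $V_\delta \cdot \nabla a_\delta$ — is estimated via the Poisson equation for $D_\delta P_\delta$ together with \Cref{Ddeltabounds} and \Cref{gradBV}, and the $L^2_\delta$/$L^1_\delta$ smallness of $V_\delta$; (ii) $\frac{d}{d\delta}(Q_\delta)_*$, where one differentiates each of the terms making up $Q$ and observes that every resulting term either carries an explicit factor of $\delta$ (from the $\delta$-derivative hitting a curvature or normal via \Cref{epsderivativeofnormal}) or a factor of $D_\delta W_\delta^\pm$, $D_\delta B_\delta$, $V_\delta$, or $\nabla_{B_\delta}V_\delta$, all of which are handled by \Cref{Ddeltabounds} and \Cref{gradBV}; and (iii) the commutators $a_\delta[\frac{d}{d\delta}, \text{(pullback)}]$ which arise from changing domains, absorbed into $R$ using the diffeomorphism bounds $\|\Phi_\delta\|_{H^{k+\frac12}(\Omega_*)} \lesssim_M 1$. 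The bounds on $R$ in the $\mathbf{H}^{k-3}(\Gamma_\delta)$ and $\mathbf{H}^{k-\frac52}(\Gamma_\delta)$ norms (the former with a $\delta^{-\frac12}$ weight in $L^2_\delta$, the latter in $L^1_\delta$) then follow by collecting these estimates, with the $c\mathcal{J}(\delta)$ contributions coming precisely from the $\delta^{-\frac12}D_\delta W_\delta^\pm$ and $\nabla_{B_\delta}V_\delta$ terms whose $L^2_\delta$ norms are bounded by $\delta^{\frac12} + c\mathcal{J}(\delta)$.

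The $\nabla_{B_\delta}$-analogues of both identities are obtained by commuting $\nabla_{B_\delta}$ through the static and dynamic relations. Here the main new ingredient is that $\nabla_{B_\delta}$ commutes with $D_\delta$ only up to a commutator involving $\nabla_{B_\delta}V_\delta$, which is exactly the quantity controlled in \eqref{curvbound2} of \Cref{gradBV}; together with the identity $\nabla_B n_\Gamma = -\nabla^\top B \cdot n_\Gamma$ from \Cref{commutatorremark} and \Cref{Movingsurfid}, and the regularizing-effect type bounds for $\nabla_{B_\delta}\kappa_\delta$ established in \eqref{curvbound3}, one checks that the $\nabla_{B_\delta}$-differentiated remainder satisfies the same class of estimates. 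I expect the main obstacle to be the bookkeeping in step (ii): showing that after differentiating $Q$ in $\delta$, \emph{no} term survives at top order without an accompanying small factor — i.e., verifying that the potentially dangerous contributions (those where $\frac{d}{d\delta}$ lands on a high-regularity factor like $\Delta_{\Gamma_*}\kappa$ without producing a $\delta$) are precisely the ones captured by the leading term $2\delta a_*\Delta_{\Gamma_*}\kappa_*$, and that everything else genuinely gains either a power of $\delta$ or a material/magnetic derivative of $W_\delta^\pm$. This is where the precise structure of $Q$ from the static decomposition, and the interplay with \Cref{epsderivativeofnormal}, \Cref{Ddeltabounds}, and \Cref{gradBV}, must all be used in concert; the rest is a (lengthy but routine) application of the balanced elliptic and trace estimates from \Cref{BEE}.
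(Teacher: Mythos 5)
The proposal follows essentially the same route as the paper: establish the static decomposition $\mathcal{N}a = a\kappa + Q$, differentiate in $\delta$, extract the leading parabolic term $2\delta a_*\Delta_{\Gamma_*}\kappa_*$ via \Cref{epsderivativeofnormal}, and absorb the remaining contributions into $R$ using \Cref{Ddeltabounds} and \Cref{gradBV}. You correctly identify the key structural point — that the dynamical identity is a consequence of the static one once the remainder $Q$ is shown to have a small material derivative $D_\delta Q$ — and the role of the heat-type equation for $\kappa$.

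One step is stated imprecisely: the identity "$\mathcal{N}(\nabla_n f) \approx \kappa\nabla_n f + (\text{tangential derivatives})$" is not quite what is used, and is not correct as written. The curvature appears through the \emph{second normal derivative} of an extension, not through the composed Dirichlet-to-Neumann operator: for harmonic $u$ with $u|_\Gamma = 0$ one has $\nabla_n^2 u|_\Gamma = \kappa\,\nabla_n u$, but $\nabla_n^2 u|_\Gamma \neq \mathcal{N}(\nabla_n u|_\Gamma)$ since $\nabla_n u$ is not itself harmonic. The paper's actual chain is: use the Laplace--Beltrami formula $\Delta P|_\Gamma = \Delta_\Gamma P - \kappa\,\nabla_n P + D^2P(n,n)$ together with $\Delta_\Gamma P = 0$ and $a = -\nabla_n P$ to obtain $a\kappa = -n_i n_j\partial_i\partial_j P + \Delta P$; then express $-n_i n_j\partial_i\partial_j P$ via the Laplace equation for $P$ and the boundary value $\partial_j P|_\Gamma = -a n_j$ as $n_j\mathcal{N}(n_j a)$ plus a correction coming from $\Delta^{-1}\partial_j\Delta P$; finally invoke the Leibniz rule \eqref{DNLeibniz} for $\mathcal{N}$ and the fact $\mathcal{N}(n_jn_j) = 0$ to extract $\mathcal{N}a$. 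Once the static identity is in place in this form, the rest of your plan (differentiation, bookkeeping of $D_\delta$ and $\nabla_{B_\delta}$ hits via \Cref{Ddeltabounds}, \Cref{gradBV}, \Cref{epsderivativeofnormal}) matches the paper's argument.
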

\begin{proof}
Thanks to Lemmas~\ref{epsderivativeofnormal} and \ref{Ddeltabounds}, the relation \eqref{without_grad_B} will be proved if we can establish the identity $\mathcal{N}a=a\kappa+\tilde{R}$, where $\tilde{R}$ satisfies
\begin{equation*}\label{Rbounds}
\|\tilde{R}\|_{L^{\infty}_{\delta}\mathbf{H}^{k-\frac{3}{2}}(\Gamma_\delta)}\lesssim_M 1,\hspace{5mm}\|\delta^{-\frac{1}{2}}D_{\delta}\tilde{R}\|_{L^2_{\delta}\mathbf{H}^{k-3}(\Gamma_\delta)}\lesssim_M \delta^{\frac{1}{2}}+c\mathcal{J}(\delta),\hspace{5mm}\|D_{\delta}\tilde{R}\|_{L^1_{\delta}\mathbf{H}^{k-\frac{5}{2}}(\Gamma_\delta)}\lesssim_M \delta+\delta^{\frac{1}{2}}\mathcal{J}(\delta).
\end{equation*}
The basic strategy of the proof follows a similar line of reasoning as \cite[Equation (8.15)]{Euler}, although here the estimates are a bit more complicated. We begin by relating $\mathcal{N} a$ to the mean curvature. First, we recall that $\Delta_{\Gamma}P=0$ as well as the general formula 
\begin{equation*}
\Delta P|_{\Gamma}=\Delta_{\Gamma}P-\kappa n_\Gamma\cdot\nabla P+D^2P(n_\Gamma,n_\Gamma).
\end{equation*}
Using the above and the relation $a=-n_\Gamma\cdot\nabla P$ we observe that 
\begin{equation*}
\begin{split}
a\kappa &=-n_in_j\partial_i\partial_jP+\Delta P
\\
&=-n_in_j\partial_i\partial_jP-\partial_iW^+_{j}\partial_jW^-_{i}
\\
&=-n_in_j\partial_i\partial_jP+\tilde{R},
\end{split}
\end{equation*}
where in the last line, we used \eqref{DdeltaWbound} to check the remainder property for $D_{\delta}\tilde{R}$. We now further expand using the Laplace equation for $P$,
\begin{equation*}
\begin{split}
-n_in_j\partial_i\partial_jP &=n_j\mathcal{N}(n_ja)+n_j\nabla_n\Delta^{-1}\partial_j(\partial_i W^+_{k}\partial_kW^-_{i})
\\
&=n_j\mathcal{N}(n_ja)+\tilde{R}.
\end{split}
\end{equation*}
Next, we write
\begin{equation*}
\begin{split}
n_j\mathcal{N}(n_ja)&=\mathcal{N}a+an_j\mathcal{N}n_j-2 n_j\nabla_n \Delta^{-1}(\nabla\mathcal{H} n_j\cdot\nabla\mathcal{H}a)
\\
&=\mathcal{N}a+a n_j\mathcal{N}n_j+\tilde{R}
\\
&=\mathcal{N}a+\tilde{R},
\end{split}
\end{equation*}
where in the first line we used the Leibniz rule \eqref{DNLeibniz} for $\mathcal{N}$ and in the third line we used the Leibniz rule again and the fact that $\mathcal{N}(n_jn_j)=0$.
\end{proof}
A simple corollary of \Cref{othersurfaceenergy} is the following.
\begin{corollary}\label{adecompwithB}
We have
\begin{equation}\label{with_grad_b}
\frac{d}{d\delta}(\mathcal{N}\nabla_Ba)_*=2\delta a_*\Delta_{\Gamma_*}(\nabla_B\kappa)_*+R_*,\hspace{5mm}\mathcal{N}\nabla_Ba=a\nabla_B\kappa+Q,
\end{equation}
where for some $0<c\ll c_2$ sufficiently small, 
\begin{equation*}
\|\delta^{-\frac{1}{2}}R\|_{L_{\delta}^2H^{k-\frac{7}{2}}(\Gamma_\delta)}\lesssim_M \delta^{\frac{1}{2}}+c\mathcal{J}(\delta),\hspace{5mm}\|R\|_{L_{\delta}^1H^{k-3}(\Gamma_\delta)}\lesssim_M \delta+\delta^{\frac{1}{2}}\mathcal{J}(\delta),\hspace{5mm}\|Q\|_{L_{\delta}^{\infty}H^{k-2}(\Gamma_\delta)}\lesssim_M 1.
\end{equation*}
\end{corollary}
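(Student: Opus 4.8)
\textbf{Proof proposal for \Cref{adecompwithB}.} The plan is to derive both identities in \eqref{with_grad_b} by applying the vector field $\nabla_{B_\delta}$ to the corresponding identities in \eqref{without_grad_B} of \Cref{othersurfaceenergy}, and then to check that all the commutator and error terms that arise are absorbed into the remainder classes $R$ and $Q$. Concretely, for the second identity I would start from $\mathcal{N}a = a\kappa + Q$ with $\|Q\|_{L^\infty_\delta \mathbf{H}^{k-\frac{3}{2}}(\Gamma_\delta)}\lesssim_M 1$ and write
\[
\mathcal{N}\nabla_B a = \nabla_B(\mathcal{N}a) + [\mathcal{N},\nabla_B]a = \nabla_B(a\kappa) + \nabla_B Q + [\mathcal{N},\nabla_B]a = a\nabla_B\kappa + (\nabla_B a)\kappa + \nabla_B Q + [\mathcal{N},\nabla_B]a,
\]
so that the new $Q$ in \eqref{with_grad_b} is $(\nabla_B a)\kappa + \nabla_B Q + [\mathcal{N},\nabla_B]a$. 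Each piece is bounded in $L^\infty_\delta H^{k-2}(\Gamma_\delta)$: the first by the product estimate together with $\|\nabla_B a\|_{H^{k-\frac{3}{2}}(\Gamma_\delta)}\lesssim_M 1$ and $\|\kappa\|_{H^{k-1}(\Gamma_\delta)}\lesssim_M 1$ (which follow from \Cref{coerciveprelim}/\Cref{Lambdakest} and \Cref{curvaturebound} applied to the regularized states, using the uniform collar bounds $\|\Gamma_\delta\|_{H^k}\lesssim_M 1$); the second by the loss-of-half-derivative product bound, since differentiating the order-$(k-\frac{3}{2})$ quantity $Q$ by the Lipschitz field $B_\delta$ costs one derivative but we only ask for $H^{k-2}$; and the third by \Cref{commutatorremark}, which expresses $[\mathcal{N},\nabla_B]a$ as a sum of terms of the shape $\nabla^\top B\cdot\nabla\mathcal{H}a$ and $\nabla_n\Delta^{-1}\nabla\cdot\mathcal{M}_2(\nabla B,\nabla\mathcal{H}a)$, all of which live in $H^{k-2}(\Gamma_\delta)$ by \Cref{boundaryest}, \Cref{L1bound}, \Cref{Hbounds} and \Cref{Multilinearest}. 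This gives the $Q$-bound in \eqref{with_grad_b}.

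For the first (evolution) identity I would differentiate $\frac{d}{d\delta}(\mathcal{N}a)_* = 2\delta a_*\Delta_{\Gamma_*}\kappa_* + R_*$ by $\nabla_{B_\delta}$ and use that $\partial_\delta$ and $\nabla_{B_\delta}$ commute only up to the transport/flow commutator. Writing things on $\Gamma_*$ via the collar pullback, $\frac{d}{d\delta}(\mathcal{N}\nabla_B a)_* = \frac{d}{d\delta}\big(\nabla_B(\mathcal{N}a) + [\mathcal{N},\nabla_B]a\big)_*$, and the leading term $\nabla_B\frac{d}{d\delta}(\mathcal{N}a)$ produces $2\delta\nabla_{B_\delta}(a_*\Delta_{\Gamma_*}\kappa_*)^\ast = 2\delta a_*\Delta_{\Gamma_*}(\nabla_B\kappa)_* + (\text{commutators})$. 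The commutators split into (i) $2\delta\,(\nabla_B a)_*\Delta_{\Gamma_*}\kappa_*$, (ii) $2\delta\, a_*[\nabla_{B_\delta},\Delta_{\Gamma_*}]\kappa_*$ pulled back appropriately, and (iii) $\nabla_{B_\delta}R$ plus the $\delta$-derivative of $[\mathcal{N},\nabla_B]a$ and the flow-commutator $[\partial_\delta + V_\delta\cdot\nabla,\nabla_{B_\delta}]$ acting on $\mathcal{N}a$. Term (i) is $\lesssim_M \delta(1+\|\kappa\|_{H^{k-\frac{5}{2}}(\Gamma_\delta)})$ pointwise in $\delta$, so its $L^1_\delta H^{k-3}$ and $\delta^{-1/2}L^2_\delta H^{k-\frac{7}{2}}$ norms are $\lesssim_M \delta$; term (ii) involves $[\nabla_{B_\delta},\Delta_{\Gamma_*}]$, which is a second-order boundary operator whose coefficients are controlled by $\nabla_{\Gamma_*}^2 B_{\delta,*}$ and hence, multiplied by $\delta$, contributes a term of size $\lesssim_M \delta\|B_\delta\|_{H^{k+\frac{1}{2}}(\Omega_\delta)}\lesssim_M \delta^{1/2}$ in $H^{k-\frac{7}{2}}$ after using the parabolic bound $\|\Gamma_\delta\|_{H^{k+\frac{1}{2}}}\lesssim_M\delta^{-1/2}$ on the normal and \eqref{pointwisedeltabound}-type estimates; and for (iii) the flow-commutator $[\partial_\delta+V_\delta\cdot\nabla,\nabla_{B_\delta}]$ equals $\nabla_{D_\delta B_\delta} - \nabla_{\nabla_{B_\delta}V_\delta}$ (schematically), so acting on $\mathcal{N}a$ it is estimated using \Cref{Ddeltabounds} for $D_\delta B_\delta$ and \Cref{gradBV} (curvature bound II) for $\nabla_{B_\delta}V_\delta$; these are precisely the inputs whose $L^1_\delta$ and $L^2_\delta$ norms are $\lesssim_M \delta + \delta^{1/2}\mathcal{J}(\delta)$ and $\lesssim_M \delta^{1/2} + c\mathcal{J}(\delta)$, which is exactly the remainder budget we are allowed. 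Finally $\nabla_{B_\delta}R$ and $\frac{d}{d\delta}[\mathcal{N},\nabla_B]a$ are handled by differentiating the already-established bounds for $R$ in \Cref{othersurfaceenergy} and for the commutator structure in \Cref{commutatorremark}, again paying one derivative against the half-derivative surplus between the stated Sobolev indices.

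The main obstacle, and the step deserving the most care, is tracking the $\delta$-weights and the interplay between the regularization-scale bound $\|\Gamma_\delta\|_{H^{k+\alpha}}\lesssim_M\delta^{-\alpha}$ and the smallness factors $\delta$ coming from the heat flow: one must verify that every error term carries enough powers of $\delta$ (or enough smallness $c$ against $\mathcal{J}(\delta)$) to land in the prescribed classes, especially in term (ii) where a genuine two-derivative commutator $[\nabla_{B_\delta},\Delta_{\Gamma_*}]$ is multiplied only by $\delta$ and must be balanced against $\|B_\delta\|_{H^{k+1/2}}\lesssim_M\delta^{-1/2}$. Since this is identical in spirit to the weighting bookkeeping already carried out in the proof of \Cref{othersurfaceenergy} and in \Cref{Ddeltabounds}, and since all the constituent product, trace, commutator and elliptic estimates (\Cref{boundaryest}, \Cref{baltrace}, \Cref{L1bound}, \Cref{Multilinearest}, \Cref{ellipticity}, \Cref{higherpowers}, \Cref{commutatorremark}, \Cref{Movingsurfid}) have been assembled, the argument reduces to a routine but careful expansion; we therefore omit the remaining details.
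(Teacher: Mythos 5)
Your proposal is correct and takes essentially the same route as the paper: apply $\nabla_{B_\delta}$ to the two identities in \eqref{without_grad_B}, and verify that the commutators $[\nabla_B,\mathcal{N}]a$, $[\nabla_B,D_\delta]\mathcal{N}a$, and $[\nabla_{B_\delta},\Delta_{\Gamma_*}]\kappa_*$ (together with the derivative of the $R,Q$ remainders) fall into the error classes $R,Q$ of the corollary.

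One point deserves to be made explicit, because as written your justification for the $\nabla_B Q$ and $\nabla_B R$ terms is ambiguous and, taken literally, does not close. You say ``differentiating the order-$(k-\tfrac{3}{2})$ quantity $Q$ by the Lipschitz field $B_\delta$ costs one derivative but we only ask for $H^{k-2}$.'' Naive differentiation by $\nabla_{B_\delta}$ sends $H^{k-3/2}(\Gamma_\delta)$ to $H^{k-5/2}(\Gamma_\delta)$, which is a \emph{weaker} space than the required $H^{k-2}(\Gamma_\delta)$, so this reasoning alone would leave a half-derivative gap. The reason the bound is actually available is that the estimates for $R$ and $Q$ in \Cref{othersurfaceenergy} are stated in the bold $\mathbf{H}^\sigma(\Gamma_\delta)$ norms defined at the start of Step~1, i.e.~$\|f\|_{\mathbf{H}^\sigma(\Gamma_\delta)}=\|(f,\nabla_{B_\delta}f)\|_{H^\sigma\times H^{\sigma-\frac12}}$. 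In particular $\|Q\|_{L^\infty_\delta\mathbf{H}^{k-\frac32}}\lesssim_M 1$ already encodes $\|\nabla_{B_\delta}Q\|_{L^\infty_\delta H^{k-2}}\lesssim_M 1$, and similarly $\|\delta^{-\frac12}R\|_{L^2_\delta\mathbf{H}^{k-3}}$ and $\|R\|_{L^1_\delta\mathbf{H}^{k-\frac52}}$ encode precisely the $H^{k-\frac72}$ and $H^{k-3}$ control of $\nabla_{B_\delta}R$ that your remainder budget demands. This built-in half-derivative gain for $\nabla_{B_\delta}$ is not a ``product bound'' but a structural feature of the statement of \Cref{othersurfaceenergy}; it should be invoked explicitly, since without it the main terms $\nabla_{B_\delta}R$, $\nabla_{B_\delta}Q$ do not land in the stated spaces. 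With this clarified, the remaining commutator bookkeeping you describe — $(\nabla_Ba)\kappa$ and $[\mathcal{N},\nabla_B]a$ via \Cref{Movingsurfid}/\Cref{commutatorremark}, the flow commutator $[D_\delta,\nabla_{B_\delta}]=\nabla_{D_\delta B_\delta-\nabla_{B_\delta}V_\delta}$ estimated via \Cref{Ddeltabounds} and \Cref{gradBV}, and the Laplace--Beltrami commutator controlled by the parabolic gain on $\Gamma_\delta$ — matches the paper's argument.
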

\begin{proof}
In light of \Cref{othersurfaceenergy}, to prove \eqref{with_grad_b} it  suffices to apply $\nabla_B$ to \eqref{without_grad_B} and show that the commutators produce acceptable errors.  We begin by proving that
\begin{equation*}
[\nabla_B,\mathcal{N}]a=Q,\hspace{5mm}[\nabla_B,D_{\delta}]\mathcal{N}a+D_{\delta}[\nabla_B,\mathcal{N}]a=R,
\end{equation*}
where $R$ and $Q$ denote error terms as specified in \Cref{adecompwithB}. Thanks to \Cref{Movingsurfid}, we have the bounds $\|[\nabla_B,\mathcal{N}]a\|_{H^{k-2}(\Gamma_\delta)}\lesssim_M \|a\|_{H^{k-1}(\Gamma_\delta)}\lesssim_M 1$. Using that $\|D_{\delta}B\|_{H^{k-3}(\Gamma_{\delta})}\lesssim_M \delta$, $\|V_{\delta}\|_{H^{k-2}(\Gamma_\delta)}\lesssim_M \delta$ and $\|\mathcal{N}a\|_{H^{k-2}(\Gamma_\delta)}\lesssim_M 1$, it is straightforward to verify the required bound for $[\nabla_B,D_{\delta}]\mathcal{N}a$. For the remaining term, we use the identities in \Cref{Movingsurfid} to expand
\begin{equation*}
[\nabla_B,\mathcal{N}]a=\nabla_Bn\cdot \nabla\mathcal{H}a-n\cdot ((\nabla B)^*(\nabla\mathcal{H}a))+\nabla_n\Delta^{-1}(2\nabla B\cdot\nabla^2\mathcal{H}a+\Delta B\cdot\nabla\mathcal{H}a).
\end{equation*}
Then,  using  the identities in \Cref{Movingsurfid} and \Cref{Ddeltabounds} once again, it may be verified through straightforward (but slightly tedious) computations that $D_{\delta}[\nabla_B,\mathcal{N}]a$ satisfies the required estimates. 
\medskip

To finish the proof, we must commute $\nabla_B$ through the first term on the right-hand side of \eqref{with_grad_b}, and, in particular, show that the commutator with the Laplace-Beltrami operator can be absorbed into $R$. This is an exercise in local coordinates, and is therefore left to the reader.
\end{proof}
Now, we have enough information to establish the requisite parabolic-type energy estimate for $E^k_a$ and $E^k_{\nabla_Ba}$. We begin with an estimate for the former term. First, using \Cref{Movingsurfid} and the relevant bounds in \Cref{BEE}, we have
\begin{equation*}
\|D_{\delta}a\|_{L^{\infty}(\Gamma_\delta)}\lesssim_M 1,\hspace{5mm} \|[\mathcal{N}^{k-2},D_{\delta}]\mathcal{N}a\|_{L^2(\Gamma_\delta)}\lesssim_M 1.
\end{equation*}
Therefore, we conclude from \Cref{othersurfaceenergy} and \eqref{Leibniz on moving hypersurfaces general} that
\begin{equation}\label{aEE}
\begin{split}
\frac{d}{d\delta}E^k_a&\lesssim_M 1+\delta \langle a\mathcal{N}^{k-2}(a_*\Delta_{\Gamma_*}\kappa_*)^*,\mathcal{N}^{k-2}(a\kappa)\rangle+\delta\langle a\mathcal{N}^{k-2}(a\Delta_{\Gamma_*}\kappa_*)^*,\mathcal{N}^{k-2}Q\rangle
\\
&+\langle a\mathcal{N}^{k-2}R,\mathcal{N}^{k-2}(a\kappa)\rangle+\langle a\mathcal{N}^{k-2}R,\mathcal{N}^{k-2}Q\rangle.
\end{split}
\end{equation}
Using the Taylor sign condition, commuting $a$ with $\mathcal{N}^{k-2}$ (using the Leibniz rule for $\mathcal{N}$) and then  commuting $\Delta_{\Gamma_*}$ with $\mathcal{N}^{k-2}$ using \Cref{coordinatescommutator} and integration by parts, we have 
\begin{equation*}
\delta\langle a\mathcal{N}^{k-2}(a_*\Delta_{\Gamma_*}\kappa_*)^*,\mathcal{N}^{k-2}(a\kappa)\rangle\lesssim_M 1-\delta\|(-\Delta_{\Gamma_*})^{\frac{1}{2}}(\mathcal{N}^{k-2}\kappa_{\delta})_*\|_{L^2(\Gamma_*)}^2\lesssim_M 1-\delta\|\kappa_{\delta}\|_{H^{k-1}(\Gamma_{\delta})}^2.
\end{equation*}
By self-adjointness and ellipticity of $\mathcal{N}$ as well as the parabolic regularization bounds for $\Gamma_{\delta}$ in \Cref{epsderivativeofnormal} and \Cref{curvaturebound}, we may estimate
\begin{equation*}
\delta\langle a\mathcal{N}^{k-2}(a_*\Delta_{\Gamma_*}\kappa_*)^*,\mathcal{N}^{k-2}Q\rangle\lesssim_M\delta\|Q\|_{H^{k-\frac{3}{2}}(\Gamma_\delta)}\|\kappa_{\delta}\|_{H^{k-\frac{1}{2}}(\Gamma_\delta)}\lesssim_M 1+\delta^{\frac{1}{2}}\|\kappa_{\frac{1}{2}\delta}\|_{H^{k-1}(\Gamma_{\frac{1}{2}\delta})}.
\end{equation*}
By Cauchy-Schwarz, self-adjointness of $\mathcal{N}$ and \Cref{higherpowers} we moreover have
\begin{equation*}
\begin{split}
\langle a\mathcal{N}^{k-2}R,\mathcal{N}^{k-2}(a\kappa)\rangle+\langle a\mathcal{N}^{k-2}R,\mathcal{N}^{k-2}Q\rangle &\lesssim_M \|R\|_{H^{k-3}(\Gamma_\delta)}\|\kappa\|_{H^{k-1}(\Gamma_\delta)}+\|R\|_{H^{k-\frac{5}{2}}(\Gamma_\delta)}\|Q\|_{H^{k-\frac{3}{2}}(\Gamma_\delta)}.
\end{split}
\end{equation*}
Hence, by integrating from $0$ to $\delta$, rescaling, and applying Cauchy-Schwarz, we obtain the parabolic estimate,
\begin{equation}\label{aEE22}
E^k_a(\delta)+c_1\int_{0}^{\delta}\tau\|\kappa_{\tau}\|_{H^{k-1}(\Gamma_\tau)}^2d\tau\leq E^k_a(0)+c_2\mathcal{J}^2(\delta)+C(M)\delta,
\end{equation}
where $c_1$ and $c_2$ are constants with $c_2\ll c_1$ as in the statement of \Cref{E1estimate}.
\medskip

Next, we prove the analogous estimate for $E^k_{\nabla_Ba}$. Let $R$ and $Q$ denote error terms as in \Cref{adecompwithB}. Similarly to the above, we have the commutator estimate $\|[\nabla\mathcal{H}\mathcal{N}^{k-2},D_{\delta}]\mathcal{N}\nabla_B a\|_{L^2(\Gamma_\delta)}\lesssim_M 1$. Moreover, using self-adjointness of $\mathcal{N}$, we have the identity
\begin{equation*}
\langle\nabla\mathcal{H}f,\nabla\mathcal{H}g\rangle_{L^2(\Omega)}=\langle\mathcal{N}^{\frac{1}{2}}f,\mathcal{N}^{\frac{1}{2}}g\rangle_{L^2(\Gamma)}.
\end{equation*}
Hence, similarly to the computation for $E_{\nabla_Ba}^k$, we have
\begin{equation*}
\frac{d}{d\delta}E^k_{\nabla_Ba}=\frac{d}{d\delta}\|\nabla\mathcal{H}\mathcal{N}^{k-2}\nabla_Ba\|_{L^2(\Omega_\delta)}^2\lesssim_M 1+ \langle \mathcal{N}^{k-\frac{3}{2}}\nabla_Ba,\mathcal{N}^{k-\frac{5}{2}} D_{\delta}\mathcal{N}\nabla_Ba\rangle.
\end{equation*}
Using  \Cref{gradBV}, \Cref{adecompwithB}, \Cref{coordinatescommutator}, \Cref{Leibnizcom} and following a similar line of reasoning to the above estimate for $E^k_a$, we obtain
\begin{equation}\label{BaEE}
\begin{split}
E^k_{\nabla_Ba}(\delta)+c_1\int_{0}^{\delta}\tau\|\nabla_{B_{\tau}}\kappa_{\tau}\|_{H^{k-\frac{3}{2}}(\Gamma_\tau)}^2d\tau\leq E^k_{\nabla_Ba}(0)+c_2\mathcal{J}^2(\delta)+C(M)\delta.
\end{split}
\end{equation}
Next, we turn to the estimates for the remaining components of $E^k_i$ given by 
\begin{equation*}
\|\nabla\mathcal{H}\mathcal{N}^{k-2}\mathcal{G}^\pm\|_{L^2(\Omega)}^2+\|a^{-\frac{1}{2}}\mathcal{N}^{k-2}\nabla_B\mathcal{G}^\pm\|_{L^2(\Gamma)}^2=:E^k_{\mathcal{G}^\pm}+E^k_{\nabla_B\mathcal{G}^\pm}.
\end{equation*}
We begin by establishing a suitable analogue of \Cref{othersurfaceenergy} but for the good variables $\mathcal{G}^\pm$. 
\begin{lemma}\label{Bbounds1}
There exist functions $R$, $Q$ and $Q_B$ such that
\begin{equation}\label{Gdecomp}
\frac{d}{d\delta}(\mathcal{N}\mathcal{G}^\pm)_*=-2\delta a_*\Delta_{\Gamma_*}(\mathcal{N}(\mathcal{N}W^\pm\cdot n))_*+R_*,\hspace{5mm} \mathcal{G}^\pm=-a\mathcal{N}W^\pm\cdot n+Q
\end{equation}
and
\begin{equation}\label{Bbounds2}
\nabla_B\mathcal{G}^\pm=-a\nabla_B(\mathcal{N}W^\pm\cdot n)+Q_B
\end{equation}
where for some $0<c\ll c_2$ there holds
\begin{equation*}
\|\delta^{-\frac{1}{2}}R\|_{L_{\delta}^2\mathbf{H}^{k-\frac{7}{2}}(\Gamma_\delta)}\lesssim_M \delta^{\frac{1}{2}}+c\mathcal{J}(\delta),\hspace{5mm}\|\delta^{\frac{1}{2}}(Q,Q_B)\|_{L_{\delta}^{2}H^{k-\frac{1}{2}}(\Gamma_\delta)\times L_{\delta}^{2}H^{k-1}(\Gamma_\delta)}\lesssim_M \delta^{\frac{1}{2}}+c\mathcal{J}(\delta).
\end{equation*}
\end{lemma}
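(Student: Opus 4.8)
\textbf{Plan of proof for \Cref{Bbounds1}.} The strategy is to mirror the proof of \Cref{othersurfaceenergy} and \Cref{adecompwithB}, but now applied to the good variables $\mathcal{G}^\pm$ instead of $a$. The starting point is the definition $\mathcal{G}^\pm = -n\cdot\mathcal{A}^\pm$ with $\mathcal{A}^\pm = -\nabla W^\pm\cdot\nabla P + \nabla\mathcal{B}^\pm$, together with the paradifferential heuristic from \Cref{CTEF} that $\mathcal{G}^\pm \approx \mathcal{N}T_n W^\pm$, i.e.~that at leading order $\mathcal{G}^\pm$ behaves like $-a\,\mathcal{N}W^\pm\cdot n$. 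First I would establish the \emph{static} identity $\mathcal{G}^\pm = -a\,\mathcal{N}W^\pm\cdot n + Q$. Using $\nabla^\top W^\pm\cdot n = a^{-1}(\mathcal{A}^\pm)^\top$ from \Cref{HEB} and the Leibniz/trace rules for $\mathcal{N}$ from \Cref{BEE}, one expands $\mathcal{N}W^\pm\cdot n$ in terms of $\nabla^\top W^\pm\cdot n$, the curvature, and lower-order harmonic-extension corrections; the mismatch between $\mathcal{G}^\pm$ and $-a\,\mathcal{N}W^\pm\cdot n$ then collects into a remainder $Q$ involving $\nabla^2 P$, $\nabla\mathcal{B}^\pm$, $\nabla_n\Delta^{-1}(\cdots)$ and curvature terms. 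The bound $\|\delta^{\frac12}Q\|_{L^2_\delta H^{k-\frac12}(\Gamma_\delta)}\lesssim_M \delta^{\frac12}+c\mathcal{J}(\delta)$ follows from the definition of $\mathcal{J}$ (which carries the $\tau\|\mathcal{N}_\tau W^\pm_\tau\cdot n_\tau\|^2_{\mathbf{H}^{k-1/2}(\Gamma_\tau)}$ weight), the elliptic/trace estimates in \Cref{BEE}, \Cref{Lambdakest}, and \Cref{Multilinearest}, exactly as the analogous $Q$'s were handled in \Cref{othersurfaceenergy}.

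The \emph{dynamic} identity for $\frac{d}{d\delta}(\mathcal{N}\mathcal{G}^\pm)_*$ is the harder part. I would differentiate the relation $\mathcal{N}\mathcal{G}^\pm = -\mathcal{N}(a\,\mathcal{N}W^\pm\cdot n) + \mathcal{N}Q$ in $\delta$, commute $D_\delta$ past $\mathcal{N}$ and $\mathcal{N}^{k-2}$ using \Cref{materialcom} and \Cref{lowregcomm} (which, as remarked, apply verbatim to $D_\delta$ with $V_\delta$ in place of the fluid velocity), and then feed in the parabolic heat equations for $n_{\delta,*}$ and $\kappa_{\delta,*}$ from \Cref{epsderivativeofnormal}, the $D_\delta W^\pm_\delta$ bounds from \Cref{Ddeltabounds}, and the curvature–velocity bounds from \Cref{gradBV}. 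The key structural point is that the only non-perturbative contribution is the one where the two $\delta$-derivatives (one from the heat flow of $n$, one effectively from $W^\pm$) combine to produce the Laplace--Beltrami factor; all other terms are products of an $\mathcal{O}_M(\delta)$ factor (coming from $V_\delta$, $D_\delta n_\delta$, or $R^i_\delta$) with something controlled either by $\mathcal{J}(\delta)$ or by the uniform bound $\|W^\pm_\delta\|_{\mathbf{H}^k(\Omega_\delta)}\lesssim_M 1$ from the bootstrap \eqref{bootstrap h-J}. Tracking which terms land in $R$ (with the $L^2_\delta \mathbf{H}^{k-7/2}$ and $L^1_\delta$ weights) versus which are genuinely leading order is the bookkeeping heart of the argument, and I expect this to be the main obstacle — the same kind of delicate separation of leading parabolic term from error that appears in \Cref{Ddeltabounds} and \Cref{adecompwithB}, now complicated by the extra $\nabla_n\Delta^{-1}$ structure inside $\mathcal{G}^\pm$.

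Finally, the $\nabla_B$-differentiated identity \eqref{Bbounds2} follows by applying $\nabla_B$ to the static identity $\mathcal{G}^\pm = -a\,\mathcal{N}W^\pm\cdot n + Q$ and showing the commutators are admissible errors. Here I would use \Cref{commutatorremark} and \Cref{Movingsurfid} to expand $[\nabla_B,\mathcal{N}]$ and $\nabla_B n_\Gamma = -\nabla^\top B\cdot n_\Gamma$, the regularizing-effect bound \Cref{partitionofBa} for $\nabla_B^2 a$ (and its analogue for $\nabla_B$ of the harmonic-extension corrections), and the fact that $B_\delta$ is tangent to $\Gamma_\delta$ so that $\nabla_B\mathcal{B}^\pm_\delta$ vanishes on the boundary to leading order. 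The new remainder $Q_B$ then collects $\nabla_B Q$ plus commutator terms; its bound $\|\delta^{\frac12}Q_B\|_{L^2_\delta H^{k-1}(\Gamma_\delta)}\lesssim_M\delta^{\frac12}+c\mathcal{J}(\delta)$ is obtained from the $Q$ bound, \Cref{Multilinearest}, and the observation that $\nabla_B$ costs at most half a derivative when measured against the energy (consistent with \Cref{partitionofBa}). Since the manuscript explicitly flags the Laplace--Beltrami commutator computation as "an exercise in local coordinates" in the analogous \Cref{adecompwithB}, I would likewise reduce that piece to a local-coordinate computation and omit the routine details, focusing the written proof on the extraction of the leading parabolic term and the verification of the remainder weights.
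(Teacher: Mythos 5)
Your plan for the \emph{dynamic} identity has a structural gap. You propose to obtain $\frac{d}{d\delta}(\mathcal{N}\mathcal{G}^\pm)_*$ by differentiating the static decomposition $\mathcal{G}^\pm = -a\,\mathcal{N}W^\pm\cdot n + Q$ in $\delta$. That produces a term $\frac{d}{d\delta}(\mathcal{N}Q)_*$ which you then need to absorb into $R$. But the lemma provides only a bound on $Q$ itself, namely $\|\delta^{\frac12}Q\|_{L^2_\delta H^{k-\frac12}}\lesssim_M\delta^{\frac12}+c\mathcal{J}(\delta)$, and this gives no control whatsoever on $D_\delta Q$. Compare \Cref{othersurfaceenergy}: there the proof deliberately introduces a static remainder $\tilde R$ that is controlled \emph{together with} $D_\delta\tilde R$ in both the $L^1_\delta$ and weighted $L^2_\delta$ senses, precisely so that differentiating the static identity is legitimate. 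No such $D_\delta Q$ bound is asserted (or available) here, and $Q$ is built from paraproduct/high-frequency pieces for which $D_\delta$ would cost an extra factor of the full top-order norm of $D_\delta W^\pm$ that the bound on $Q$ does not see. The paper's proof avoids this entirely: it derives the dynamic identity directly from the definition $\mathcal{G}^\pm=\nabla_nW^\pm\cdot\nabla P-\nabla_n\Delta^{-1}(\Delta W^\pm\cdot\nabla P+2\nabla W^\pm\cdot\nabla^2P)$, applying $D_\delta$, treating $D_\delta V$, $D_\delta n$, $D_\delta P$ as perturbative via \Cref{Ddeltabounds}, and extracting the leading part $-a\mathcal{N}^2(D_\delta W^\pm\cdot n)$ using the harmonic-projection identity $\mathcal{H}f=f-\Delta^{-1}\Delta f$. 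Only then does it plug in the parabolic approximation $D_\delta W^\pm\cdot n\approx 2\delta(\Delta_{\Gamma_*}W^\pm_*\cdot n_*)^*$ from \Cref{Ddeltabounds} and commute $\mathcal{N}^2$ with $\Delta_{\Gamma_*}$ via \Cref{fractionalpowerapprox} and \Cref{coordinatescommutator}. The static decomposition is a \emph{separate} statement, proved afterwards (via the crude paraproduct $fg=f^lg^{\leq l}+f^{\leq l}g^l$ and the Leibniz rule for $\mathcal{N}$), and is only consumed by the $\nabla_B$ part of the lemma, not by the dynamic identity.

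Two smaller observations. First, for the static decomposition you anchor on the relation $\nabla^\top W^\pm\cdot n = a^{-1}(\mathcal{A}^\pm)^\top$, but recall $\mathcal{G}^\pm=-n\cdot\mathcal{A}^\pm$ is the \emph{normal} component of $\mathcal{A}^\pm$, while $(\mathcal{A}^\pm)^\top$ is the tangential one; you would still need to relate $\nabla^\top W^\pm\cdot n$ to $\mathcal{N}W^\pm\cdot n$ (these differ by curvature terms), and the paper's paraproduct route goes more directly to the identity $\mathcal{G}^\pm = \mathcal{N}W^\pm\cdot\nabla P + Q = -a\,\mathcal{N}W^\pm\cdot n+Q$ without that detour. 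Second, your plan for \eqref{Bbounds2} is aligned with the paper's: they also apply $\nabla_B$ to the paraproduct-derived static identity and control the commutators via \Cref{technicalprop} (which delivers $\nabla_B n = Q_B$ and $[\nabla_B,\nabla_n\Delta^{-1}]F = Q_B$), using $(\nabla_B\nabla P)|_{\Gamma_\delta}=Q_B$. Your invocation of \Cref{partitionofBa} is misplaced for this part — that lemma controls $\nabla_B^2 a$ and $\nabla_B^2 P$ for the energy-propagation argument, not the $\mathcal{G}^\pm$ decomposition, and the relevant tool here is \Cref{technicalprop} together with the $\|\mathcal{N}B\cdot n\|_{H^{k-1}(\Gamma_\delta)}\lesssim_M\|\mathcal{N}B\cdot n\|_{H^{k-\frac12}(\Gamma_\delta)}^{\frac12}$ interpolation that feeds into $\mathcal{J}(\delta)$.
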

\begin{proof}
We start with the differential identity. We first recall that
\begin{equation*}
\mathcal{G}^\pm=\nabla_nW^\pm\cdot\nabla P-\nabla_n\Delta^{-1}(\Delta W^\pm\cdot\nabla P+2\nabla W^\pm\cdot \nabla^2 P).
\end{equation*}
We note that by definition we have $P=-\Delta^{-1}(\partial_iW_j^+\partial_jW_i^-)$. Therefore, using the elliptic estimates in \Cref{BEE}, the commutator identities in \Cref{Movingsurfid} and the regularization bounds for $\Gamma_\delta$ we may collect the estimates 
\begin{equation*}
\|V\|_{\mathbf{H}^{k-\frac{3}{2}}(\Gamma_{\delta})}+\|D_{\delta} n\|_{\mathbf{H}^{k-\frac{5}{2}}(\Gamma_{\delta})}+\|D_{\delta}P\|_{\mathbf{H}^{k-1}(\Omega_{\delta})}\lesssim_M\delta^{\frac{1}{2}}+\|D_{\delta}W^\pm\|_{H^{k-2}(\Omega_{\delta})}.
\end{equation*}
From these estimates, \Cref{Ddeltabounds}, the identities in  \Cref{Movingsurfid} and the identity $\mathcal{H}f=f-\Delta^{-1}\Delta f$ we obtain
\begin{equation*}
\begin{split}
D_{\delta}\mathcal{N}\mathcal{G}^\pm &=\mathcal{N}(\nabla_nD_{\delta}W^\pm\cdot\nabla P-\nabla_n\Delta^{-1}(\Delta D_{\delta}W^\pm\cdot\nabla P))+R
\\
&=\mathcal{N}\nabla_n(D_{\delta}W^\pm\cdot\nabla P-\Delta^{-1}\Delta( D_{\delta}W^\pm\cdot\nabla P))+R
\\
&=-a\mathcal{N}^2(D_{\delta}W^\pm\cdot n)+R.
\end{split}
\end{equation*}
Arguing as in the proof of \Cref{Ddeltabounds} and using the commutator identity for $[\nabla_B,\mathcal{N}]$ from \Cref{Movingsurfid}, we see that
\begin{equation*}
\|\mathcal{N}^2(D_{\delta}W^\pm\cdot n-2\delta(\Delta_{\Gamma_*}B_*\cdot n_*)^*)\|_{\mathbf{H}^{k-\frac{7}{2}}(\Gamma_\delta)}\lesssim_M \|D_{\delta}W^\pm\cdot n-2\delta(\Delta_{\Gamma_*}B_*\cdot n_*)^*\|_{\mathbf{H}^{k-\frac{3}{2}}(\Gamma_\delta)}\lesssim_M \delta^{\frac{1}{2}}.
\end{equation*}
Therefore, from \Cref{fractionalpowerapprox} and arguing as in \Cref{Ddeltabounds}, we have
\begin{equation*}
\begin{split}
\frac{d}{d\delta}(\mathcal{N}\mathcal{G}^\pm)_*&=-2a_*\delta(\mathcal{N}^2(\Delta_{\Gamma_*}W^\pm_*\cdot n_*)^*)_*+R_*
\\
&=-2a_*\delta\Delta_{\Gamma_*}(\mathcal{N}^2W^\pm\cdot n)_*+R_*
\\
&=-2a_*\delta\Delta_{\Gamma_*}(\mathcal{N}(\mathcal{N}W^\pm\cdot n))_*+R_*,
\end{split}
\end{equation*}
which establishes the differential identity in \eqref{Gdecomp}. Now, we turn to the second decomposition. Using the crude paraproduct expansion $fg=f^lg^{\leq l}+f^{\leq l}g^l$ outlined in \eqref{bilpara}  and writing $W$ as shorthand for $W^\pm$, we observe the bounds
\begin{equation*}
\|\nabla W^{\leq l}\cdot \nabla P^{l}\|_{H^{k}(\Omega_{\delta})}\lesssim_M 1+\|P\|_{H^{k+1}(\Omega_{\delta})}\lesssim_M \|\Gamma_{\delta}\|_{H^{k+\frac{1}{2}}}+1\lesssim_M \delta^{-\frac{1}{2}},
\end{equation*}
where in the second inequality we used \Cref{direst} and in the third we used the parabolic regularization bound \eqref{surfbound-par} for $\Gamma_{\delta}$.  Similarly, one may check that 
\begin{equation*}
\|\nabla_n\Delta^{-1}(\Delta W^{\leq l}\cdot\nabla P^l+\nabla W\cdot\nabla ^2 P)\|_{H^{k-\frac{1}{2}}(\Gamma_{\delta})}\lesssim_M \delta^{-\frac{1}{2}},\hspace{5mm}\|\nabla W^{\leq l}\cdot\nabla^2 P^l\|_{H^{k-1}(\Omega_{\delta})}\lesssim_M \delta^{-\frac{1}{2}}.
\end{equation*}
Consequently, we have
\begin{equation*}
\mathcal{G}^\pm=\mathcal{N}(\nabla_nW^l\cdot\nabla P^{\leq l}-\nabla_n\Delta^{-1}(\Delta W^l\cdot\nabla P^{\leq l}))+Q=-\mathcal{N}(W^l\cdot\nabla P^{\leq l})+Q.
\end{equation*}
From the Leibniz rule for $\mathcal{N}$ and a similar analysis to the above we may write
\begin{equation*}
\begin{split}
\mathcal{N}(W^l\cdot\nabla P^{\leq l})&= \mathcal{N}W^l\cdot\nabla P^{\leq l}+W^l\cdot\mathcal{N}\nabla P^{\leq l}-2\nabla_n\Delta^{-1}(\nabla\mathcal{H}W^l\cdot\nabla \mathcal{H}\nabla P^{\leq l})
\\
&=\mathcal{N}W^l\cdot\nabla P^{\leq l}+Q
\\
&=\mathcal{N}W\cdot\nabla P-\mathcal{N}W^{\leq l}\cdot\nabla P^{l}+Q
\\
&=\mathcal{N}W\cdot\nabla P+Q,
\end{split}
\end{equation*}
which from the identity $\nabla P=-an$ yields the desired decomposition. Now, we turn to the final decomposition \eqref{Bbounds2}. A vital ingredient for the proof of \eqref{Bbounds2} is the following technical proposition which will allow us to control certain commutators involving $\nabla_B$ in terms of $\mathcal{J}(\delta)$.
\begin{proposition}\label{technicalprop}
Let $F\in L_{\delta}^{\infty}H^{k-\frac{3}{2}}(\Omega_\delta)$ with $\|F\|_{L_{\delta}^{\infty}H^{k-\frac{3}{2}}(\Omega_\delta)}\lesssim_M 1$. For ${Q}_B$ as above, there holds
\begin{equation*}
\nabla_Bn=Q_B,\hspace{5mm}[\nabla_B,\nabla_n\Delta^{-1}]F=Q_B.
\end{equation*}
\end{proposition}
\begin{proof}
We first prove the bound for $\nabla_B n$, which is simpler. We know from the identities in \Cref{Movingsurfid} and \Cref{ellipticity} that
\begin{equation*}
\begin{split}
\|\nabla_Bn\|_{H^{k-1}(\Gamma_\delta)}=\|\nabla^{\top}B\cdot n\|_{H^{k-1}(\Gamma_\delta)}&\lesssim_M 1+\|\mathcal{N}\nabla^{\top}B\cdot n\|_{H^{k-2}(\Gamma_\delta)}
\\
&\lesssim_M 1+\|[\nabla^{\top},\mathcal{N}]B\|_{H^{k-2}(\Gamma_\delta)}+\|\mathcal{N}B\cdot n\|_{H^{k-1}(\Gamma_\delta)}.
\end{split}
\end{equation*}
By \Cref{commutatorremark}, we may bound $\|[\nabla^{\top},\mathcal{N}]B\|_{H^{k-2}(\Gamma_\delta)}\lesssim_M \|B\|_{H^{k-1}(\Gamma_\delta)}\lesssim_M 1$. Moreover, by interpolation, we have $\|\mathcal{N}B\cdot n\|_{H^{k-1}(\Gamma_\delta)}\lesssim_M \|\mathcal{N}B\cdot n\|_{H^{k-\frac{1}{2}}(\Gamma_\delta)}^{\frac{1}{2}}$. Therefore, by Cauchy-Schwarz, we obtain $\nabla_Bn=Q_B$, as needed. For the latter bound, we observe by commuting $\nabla_B$ with $\Delta^{-1}$ and $\nabla_n$ that
\begin{equation*}
[\nabla_B,\nabla_n\Delta^{-1}]F=-\nabla_nB\cdot\nabla\Delta^{-1}F+\nabla_n\Delta^{-1}(\Delta B\cdot \nabla \Delta^{-1}F+2\nabla B\cdot\nabla^2 \Delta^{-1}F).
\end{equation*}
Using the hypothesis for $F$ and arguing as in the proof of \eqref{Bbounds1}, we have
\begin{equation*}
[\nabla_B,\nabla_n\Delta^{-1}]F=\mathcal{N}B\cdot\nabla\Delta^{-1}F+Q_B.
\end{equation*}
Since $\nabla\Delta^{-1}F$ is normal to $\Gamma_\delta$, we deduce
\begin{equation*}
\|[\nabla_B,\nabla_n\Delta^{-1}]F\|_{H^{k-1}(\Gamma_\delta)}\lesssim_M \|Q_B\|_{H^{k-1}(\Gamma_\delta)}+\|\mathcal{N}B\cdot n\|_{H^{k-1}(\Gamma_\delta)}\lesssim_M \|Q_B\|_{H^{k-1}(\Gamma_\delta)}+\|\mathcal{N}B\cdot n\|_{H^{k-\frac{1}{2}}(\Gamma_\delta)}^{\frac{1}{2}}.
\end{equation*}
The desired estimate then follows similarly to the bound for $\nabla_B n$.
\end{proof}
Now, we return to complete the proof of \eqref{Bbounds2}.  We first note that an important immediate corollary of  \Cref{technicalprop} is that
\begin{equation*}
(\nabla_B\nabla P)_{|\Gamma_{\delta}}=Q_B.
\end{equation*}
Arguing as in the proof of \eqref{Gdecomp}, we  obtain 
\begin{equation*}
\begin{split}
\nabla_B\mathcal{G}^\pm &=\nabla_B(\nabla_nW^l\cdot\nabla P^{\leq l}-\nabla_n\Delta^{-1}(\Delta W^l\cdot\nabla P^{\leq l}))+Q_B
\\
&=-\nabla_B\mathcal{N}(W^l\cdot\nabla P^{\leq l})+Q_B
\\
&=-\nabla_B(\mathcal{N}W^l\cdot\nabla P^{\leq l})+Q_B
\\
&=-\nabla_B(\mathcal{N}W\cdot\nabla P)+Q_B,
\end{split}
\end{equation*}
which concludes the proof of \Cref{Bbounds1}.
\end{proof}
Now, we turn to the estimates for $E^k_{\mathcal{G}^\pm}$ and $E^k_{\nabla_B\mathcal{G}^\pm}$. To simplify notation, we write $\mathcal{F}^\pm:=\mathcal{N}(\mathcal{N}W^\pm\cdot n)$. For the first energy component we may use \Cref{Movingsurfid} and the bound $\|V_{\delta}\|_{H^{k-\frac{3}{2}}(\Omega_\delta)}\lesssim_M \delta^{\frac{1}{2}}\leq 1$ to obtain $\|[D_{\delta},\nabla\mathcal{H}\mathcal{N}^{k-3}]\mathcal{N}\mathcal{G}^\pm\|_{L^2(\Omega_\delta)}\lesssim_M 1$. Therefore, we have
\begin{equation*}
\begin{split}
\frac{d}{d\delta}E^k_{\mathcal{G}^\pm}&=\frac{d}{d\delta}\|\nabla\mathcal{H}\mathcal{N}^{k-2}\mathcal{G}^\pm\|_{L^2(\Omega_\delta)}^2\lesssim_M 1+\langle \mathcal{N}^{k-\frac{3}{2}}\mathcal{G}^\pm,\mathcal{N}^{k-\frac{5}{2}}D_{\delta}\mathcal{N}\mathcal{G}^\pm\rangle_{L^2(\Gamma_{\delta})},
\end{split}
\end{equation*}
which thanks to \Cref{Bbounds1}, \Cref{Leibnizcom} and self-adjointness of $\mathcal{N}$ yields
\begin{equation*}
\begin{split}
\frac{d}{d\delta}E^k_{\mathcal{G}^\pm}\lesssim_M 1+\delta^{\frac{1}{2}}\|\mathcal{F}^\pm\|_{H^{k-\frac{3}{2}}(\Gamma_\delta)}+\delta &\langle a^2\mathcal{N}^{k-\frac{5}{2}}(\Delta_{\Gamma_*}\mathcal{F}^\pm_*)^*,\mathcal{N}^{k-\frac{5}{2}}\mathcal{F}^\pm\rangle-\delta\langle a\mathcal{N}^{k-\frac{5}{2}}(\Delta_{\Gamma_*}\mathcal{F}^\pm_*)^*, \mathcal{N}^{k-\frac{3}{2}}Q\rangle
\\
-&\langle a\mathcal{N}^{k-\frac{5}{2}}\mathcal{F}^\pm,\mathcal{N}^{k-\frac{5}{2}}R\rangle+\langle \mathcal{N}^{k-\frac{5}{2}}R,\mathcal{N}^{k-\frac{3}{2}}Q\rangle.
\end{split}
\end{equation*}
By \Cref{coordinatescommutator}, integration by parts, \Cref{higherpowers},  self-adjointness of $\mathcal{N}$ and Cauchy-Schwarz, we  conclude that
\begin{equation*}
\frac{d}{d\delta}E^k_{\mathcal{G}^\pm}\lesssim_M 1-\delta\|\mathcal{F}^\pm\|_{H^{k-\frac{3}{2}}(\Gamma_\delta)}^2+\delta\|Q\|_{H^{k-\frac{1}{2}}(\Gamma_\delta)}^2+\delta^{-1}\|R\|_{H^{k-\frac{7}{2}}(\Gamma_\delta)}^2.
\end{equation*}
By \Cref{ellipticity}, we also have $\|\mathcal{N}W^\pm\cdot n\|_{H^{k-\frac{1}{2}}(\Gamma_\delta)}\lesssim_M 1+\|\mathcal{F}^\pm\|_{H^{k-\frac{3}{2}}(\Gamma_\delta)}$. Therefore, from \eqref{Gdecomp} and integrating the above from $0$ to $\delta$, we obtain
\begin{equation}\label{Geneset}
E^k_{\mathcal{G}^\pm}(\delta)+c_1\int_{0}^{\delta}\tau\|\mathcal{N}W_\tau^\pm\cdot n_\tau\|_{H^{k-\frac{1}{2}}(\Gamma_\tau)}^2d\tau\leq E^k_{\mathcal{G}^\pm}(0)+c_2\mathcal{J}^2(\delta)+C(M)\delta.
\end{equation}
Finally, we turn to estimating $E^k_{\nabla_B\mathcal{G}^\pm}$. Using the identities in \Cref{Movingsurfid}, the bounds for $V_{\delta}$ and the definition of $D_{\delta}B$, we have
\begin{equation*}
\|D_{\delta}\mathcal{N}\nabla_B\mathcal{G}^\pm-\nabla_BD_{\delta}\mathcal{N}\mathcal{G}^\pm\|_{H^{k-4}(\Gamma_\delta)}\lesssim_M\delta.
\end{equation*}
Moreover, using the notation for $\mathcal{F}^\pm$ from earlier we see that
\begin{equation*}
\|\nabla_B(\Delta_{\Gamma_*}\mathcal{F}^\pm_*)^*-(\Delta_{\Gamma_*}(\nabla_B\mathcal{F}^\pm)_*)^*\|_{H^{k-4}(\Gamma_\delta)}\lesssim_M \|\mathcal{N}W^\pm\cdot n\|_{H^{k-1}(\Gamma_\delta)}\lesssim_M \|\mathcal{N}W^\pm\cdot n\|_{H^{k-\frac{1}{2}}(\Gamma_\delta)}^{\frac{1}{2}}.
\end{equation*}
Therefore, thanks to \Cref{Bbounds1}, there exist functions $R_B$ and $Q_B$ such that
\begin{equation*}
\frac{d}{d\delta}(\mathcal{N}\nabla_B\mathcal{G}^\pm)_*=-2\delta a_*\Delta_{\Gamma_*}(\nabla_B\mathcal{F}^\pm)_*+(R_B)_*,\hspace{5mm}\nabla_B\mathcal{G}^\pm=-a\nabla_B(\mathcal{N}W^\pm\cdot n)+Q_B
\end{equation*}
where
\begin{equation*}
\|\delta^{-\frac{1}{2}}R_B\|_{L^2_{\delta}H^{k-4}(\Gamma_\delta)}+\|\delta^{\frac{1}{2}}Q_B\|_{L_{\delta}^2H^{k-1}(\Gamma_\delta)}\lesssim_M \delta^{\frac{1}{2}}+c_2\mathcal{J}(\delta).
\end{equation*}
Consequently, arguing as in the estimate for $E^k_{\mathcal{G}^\pm}$, we have
\begin{equation*}
\begin{split}
\frac{d}{d\delta}E^k_{\nabla_B\mathcal{G}^\pm}&=\frac{d}{d\delta}\|a^{-\frac{1}{2}}\mathcal{N}^{k-2}\nabla_B\mathcal{G}^\pm\|_{L^2(\Gamma_\delta)}^2\lesssim_M 1-\delta\|\nabla_B\mathcal{F}^\pm\|_{H^{k-2}(\Gamma_\delta)}^2+\delta\|Q_B\|_{H^{k-1}(\Gamma_\delta)}^2+\delta^{-1}\|R_B\|_{H^{k-4}(\Gamma_\delta)}^2.
\end{split}
\end{equation*}
As $\|[\nabla_B,\mathcal{N}]\|_{H^{k-1}\to H^{k-2}}\lesssim_M 1$, there holds
\begin{equation*}
\begin{split}
\|\nabla_B(\mathcal{N}W^\pm\cdot n)\|_{H^{k-1}(\Gamma_\delta)}&\lesssim_M 1+ \|\mathcal{N}\nabla_B(\mathcal{N}W^\pm\cdot n)\|_{H^{k-2}(\Gamma_\delta)}
\\
&\lesssim_M 1+\|\nabla_B\mathcal{F}^\pm\|_{H^{k-2}(\Gamma_\delta)}+\|\mathcal{N}W^\pm\cdot n\|_{H^{k-1}(\Gamma_\delta)}
\\
&\lesssim_M 1+\|\nabla_B\mathcal{F}^\pm\|_{H^{k-2}(\Gamma_\delta)}+\|\mathcal{N}W^\pm\cdot n\|_{H^{k-\frac{1}{2}}(\Gamma_\delta)}^{\frac{1}{2}}.
\end{split}
\end{equation*}
Hence, integrating the differential inequality for $E^k_{\nabla_B\mathcal{G}^\pm}$ and using Cauchy-Schwarz, we obtain
\begin{equation}\label{DBGest}
E^k_{\nabla_B\mathcal{G}^\pm}(\delta)+c_1\int_{0}^{\delta}\tau\|\nabla_{B_\tau}(\mathcal{N}W_{\tau}^\pm\cdot n_\tau)\|_{H^{k-1}(\Gamma_\tau)}^2d\tau\leq E^k_{\nabla_B\mathcal{G}^\pm}(0)+c_2\mathcal{J}^2(\delta)+C(M)\delta.
\end{equation}
Combining \eqref{aEE22}, \eqref{BaEE}, \eqref{Geneset} and \eqref{DBGest}   we finally conclude the proof of \eqref{parabolicemonbounds} and thus the proof of \Cref{E1estimate}.
\subsection{Step 2: A mild regularization} To close the bootstrap \eqref{inductiveregbound} at the end of our iteration, we will need a very mild regularization for the full velocity and magnetic fields which will allow us to estimate  norms slightly beyond $H^{k+1}$ in the transport step of the argument below. 
\begin{proposition}\label{mildreg}
Let $(v,B,\Gamma)\in \mathbf{H}^k$ be a state with $\|(v,B,\Gamma)\|_{\mathbf{H}^k}\leq C(M)$ and $\|\Gamma\|_{H^{k+\alpha}}\lesssim_{M,\alpha}\epsilon^{-\alpha}$ for $\alpha>0$. Assume moreover that the pair $(v,B)$ satisfies the regularization bounds
\begin{equation}\label{importantbounds}
\|W^\pm\|_{H^{k+1}(\Omega)}\lesssim_M\epsilon^{-\frac{3}{2}},\hspace{5mm}\|\mathcal{N}W^\pm\cdot n_\Gamma\|_{{H}^{k-\frac{1}{2}}(\Gamma)}\lesssim_M\epsilon^{-1},\hspace{5mm}\|\omega^\pm\|_{H^k(\Omega)}\leq K(M)\epsilon^{-\frac{3}{2}}.
\end{equation}
Then there exists a regularized state $(v_{\epsilon},B_{\epsilon},\Gamma)$ (with the same domain) such that $(v_{\epsilon}, B_{\epsilon})$ satisfies the first two bounds in \eqref{importantbounds} as well as the following properties:
\begin{enumerate}
\item (Good pointwise approximation).
\begin{equation*}
W^\pm_{\epsilon}=W^\pm+\mathcal{O}_{C^3(\Omega)}(\epsilon^2).
\end{equation*}
\item (Higher order regularization bounds).
\begin{equation*}
\|W^\pm_{\epsilon}\|_{H^{k+1+\alpha}(\Omega)}\lesssim_{M,\alpha} \epsilon^{-\frac{3}{2}-3\alpha},\hspace{5mm} \alpha\geq 0.
\end{equation*}
\item (Propagation of vorticity bound).
\begin{equation*}
\|\omega_{\epsilon}^\pm\|_{H^k(\Omega)}\leq K(M)\epsilon^{-\frac{3}{2}}(1+C(M)\epsilon).
\end{equation*}
\item (Energy monotonicity).
\begin{equation*}
E^k(v_{\epsilon},B_{\epsilon},\Gamma)\leq E^k(v,B,\Gamma)+C(M)\epsilon.
\end{equation*}
\end{enumerate}
\end{proposition}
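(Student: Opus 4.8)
The plan is to build the regularization as a short--time, constraint--preserving parabolic flow, in the spirit of \Cref{domainregularizationprop} and \Cref{E1estimate}, but with the substantial simplification that the domain $\Omega$ is now \emph{fixed}. Let $A_v$ and $A_B$ denote the self--adjoint nonnegative ``Hodge Laplacians'' on $\Omega$, namely $-\Delta$ composed with the Leray--type projections $\mathbb{P}_v$ onto divergence--free fields and $\mathbb{P}_B$ onto divergence--free fields that are tangent to $\Gamma$ (with the natural boundary conditions). Set $v_\delta := e^{-\delta A_v}v$ and $B_\delta := e^{-\delta A_B}B$ for $\delta\in[0,\epsilon^6]$, and define $(v_\epsilon,B_\epsilon):=(v_{\epsilon^6},B_{\epsilon^6})$. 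Since $\mathbb{P}_v$ and $\mathbb{P}_B$ project onto the constraint spaces, these semigroups preserve them, so $v_\epsilon,B_\epsilon$ are automatically divergence--free on $\Omega$ and $B_\epsilon\cdot n_\Gamma=0$ on $\Gamma$; in contrast to the moving--domain setting of \Cref{DRS}, no further correction is needed to restore the boundary conditions. The time window $[0,\epsilon^6]$ corresponds to a spatial smoothing scale $\epsilon^3$, which is exactly what is needed for the $\epsilon^{-3\alpha}$ gain in property (ii).

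Properties (i)--(iii), together with the persistence of the first two bounds in \eqref{importantbounds}, then follow from standard heat--semigroup and div--curl estimates (the latter via \Cref{BEE}). For (i), write $W_\epsilon^\pm-W^\pm=-\int_0^{\epsilon^6}A\,e^{-\delta A}W^\pm\,d\delta$; since $k$ is large and $\|(v,B,\Gamma)\|_{\mathbf{H}^k}\le C(M)$, the integrand has $C^3$--norm $\lesssim_M\|W^\pm\|_{H^{5+\frac{d}{2}+\epsilon}}\lesssim_M 1$, so $W_\epsilon^\pm-W^\pm=\mathcal{O}_{C^3}(\epsilon^6)=\mathcal{O}_{C^3}(\epsilon^2)$. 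For (ii), the smoothing estimate $\|e^{-tA}f\|_{H^{s+\alpha}}\lesssim_M t^{-\alpha/2}\|f\|_{H^s}$ with $t=\epsilon^6$ and $s=k+1$ gives $\|W_\epsilon^\pm\|_{H^{k+1+\alpha}}\lesssim_{M,\alpha}\epsilon^{-3\alpha}\|W^\pm\|_{H^{k+1}}\lesssim_{M,\alpha}\epsilon^{-\frac{3}{2}-3\alpha}$, while boundedness of the flow on $H^{k+1}(\Omega)$ and on $H^{k-\frac{1}{2}}(\Gamma)$ keeps the first two bounds in \eqref{importantbounds} up to harmless additive $\mathcal{O}_M(1)$ corrections. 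For (iii), $\omega_\delta^\pm=\nabla\times W_\delta^\pm$ solves $\partial_\delta\omega_\delta^\pm=-A'\omega_\delta^\pm$ with $A'$ a nonnegative self--adjoint operator equal to $-\Delta$ plus lower--order terms in $\Gamma$, so $\frac{d}{d\delta}\|\omega_\delta^\pm\|_{H^k(\Omega)}^2\le C(M)\|\omega_\delta^\pm\|_{H^k(\Omega)}^2$ and Gr\"onwall over $[0,\epsilon^6]$ gives $\|\omega_\epsilon^\pm\|_{H^k(\Omega)}\le(1+C(M)\epsilon^6)^{1/2}K(M)\epsilon^{-\frac{3}{2}}\le K(M)\epsilon^{-\frac{3}{2}}(1+C(M)\epsilon)$.

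The core of the proof is the energy monotonicity (iv), which I would establish by essentially repeating the argument of \Cref{E1estimate}: differentiate $E^k(v_\delta,B_\delta,\Gamma)$ in $\delta$, isolate the leading dissipative contributions --- coming from $-A$, from $-A'$, and from the induced heat--type equations for the surface variables $a$, $\mathcal{G}^\pm$, $\nabla_B a$, $\nabla_B\mathcal{G}^\pm$ derived exactly as in \Cref{othersurfaceenergy}--\Cref{Bbounds1} --- all of which carry the favorable sign, and bound the remaining errors. Because the domain is frozen the material derivative is simply $\partial_\delta$, so none of the moving--surface or curvature--flow error terms of \Cref{E1estimate} arise; the surviving errors are of two kinds: (a) terms quadratically small in the regularization, coming from the change of pressure through $\Delta(P_\delta-P)=\mathcal{M}_2(\nabla(W_\delta^\pm-W^\pm),\nabla W^\pm)$, and (b) commutator errors of the form $[\nabla_{B_\delta},A]$, $[\nabla_{B_\delta},\mathcal{N}]$ and $[\partial_\delta,\nabla_{B_\delta}]$ acting on $\omega_\delta^\pm$ and on the surface variables. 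The main obstacle I anticipate is controlling the type--(b) errors in the $\nabla_B$--weighted components of the energy ($\|\nabla_{B_\delta}\omega_\delta^\pm\|_{H^{k-\frac{3}{2}}(\Omega)}^2$ and $\|a^{-\frac{1}{2}}\mathcal{N}^{k-2}\nabla_B\mathcal{G}^\pm_\delta\|_{L^2(\Gamma)}^2$), where the matched regularity of $B$ and $\Gamma$ is felt: here one uses that $\|W_\delta^\pm\|_{H^{k+1}}$ and $\|B_\delta\|_{H^{k+1}}$ remain $\lesssim_M\epsilon^{-\frac{3}{2}}$ throughout the flow --- so, for instance, $\|\partial_\delta B_\delta\|_{H^{k-\frac{3}{2}}}\lesssim_M\epsilon^{-\frac{3}{4}}$ \emph{uniformly} in $\delta$ rather than blowing up like $\delta^{-1/2}$ --- together with the ``regularizing effect'' bounds of \Cref{partitionofBa}, so that the total error is bounded by $C(M)\epsilon^{-c}$ for a fixed $c<5$; integrating against $d\delta$ over the tiny interval $[0,\epsilon^6]$ then yields a net error $\le C(M)\epsilon^{6-c}\le C(M)\epsilon$, which is (iv). The choice of the window $[0,\epsilon^6]$ is dictated precisely by the competition between absorbing these polynomially--large error factors and retaining the smoothing required for (ii).
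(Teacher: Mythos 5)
Your proposal takes a genuinely different route from the paper, and unfortunately it also contains a gap at its very foundation. The paper's construction of $(v_\epsilon,B_\epsilon)$ is entirely elementary: shift the domain inward by $C\epsilon^3$ via $y(x)=x-C\epsilon^3\nu(x)$, convolve against a standard mollifier at scale $\epsilon^{-3}$, and then apply the rotational projection, leaving the irrotational part of $v$ untouched:
\begin{equation*}
B_{\epsilon}:=(\chi_{\epsilon^{-3}}*\tilde{B})^{rot},\qquad v_{\epsilon}:=v^{ir}+(\chi_{\epsilon^{-3}}*\tilde{v})^{rot}.
\end{equation*}
This design makes $W_\epsilon^\pm-W^\pm$ purely rotational, hence tangent to $\Gamma$, which is exactly the structure needed to preserve $\|\mathcal{N}W_\epsilon^\pm\cdot n_\Gamma\|_{H^{k-\frac{1}{2}}(\Gamma)}\lesssim_M\epsilon^{-1}$ via the Leibniz rule for $\mathcal{N}$. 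The energy monotonicity (iv) then follows because $f\mapsto\chi_{\epsilon^{-3}}*\tilde f$ is a $(1+C(M)\epsilon)$--near--isometry on every $H^s$, and crucially because it commutes with $\nabla_B$, with $\mathcal{N}$, and with the operators defining the good variables up to errors of size $\mathcal{O}(\epsilon)$ in the relevant norms; since each energy component is a squared norm, a near--isometry can only increase them by a multiplicative $1+C(M)\epsilon$. There is no parabolic energy estimate in the paper's proof at all.

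The concrete problems with your parabolic approach are the following. First, the operator $A_v=-\mathbb{P}_v\Delta$ with $\mathbb{P}_v$ the projection onto \emph{merely} divergence--free fields (not required to be tangent) is not a standard object and is not obviously self-adjoint or nonnegative: $\langle A_v u,w\rangle=\langle\nabla u,\nabla w\rangle-\int_\Gamma(\nabla_nu)\cdot w$ is not symmetric, and the boundary integral has no sign. The phrase ``with the natural boundary conditions'' hides the issue --- the standard Stokes/Leray operator requires $v\cdot n_\Gamma=0$, which $v$ does not satisfy here, and imposing it would destroy property (i). You could repair this by acting with a genuine Stokes heat flow on the rotational part $v^{rot}$ only and leaving $v^{ir}$ fixed, exactly mirroring the structure the paper uses, but as written the semigroup $e^{-\delta A_v}$ is not justified. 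Second, even granting the existence of the flow, the claimed dissipativity of the $\|\omega_\delta^\pm\|_{H^{k-1}(\Omega)}^2$ and $\|\nabla_{B_\delta}\omega_\delta^\pm\|_{H^{k-\frac{3}{2}}(\Omega)}^2$ energy components along the heat equation $\partial_\delta\omega_\delta=\Delta\omega_\delta$ is not automatic: commuting $\Delta$ through the $H^{k-1}(\Omega)$ inner product produces boundary integrals of order $2k-1$ which are not sign-definite and are not addressed. Third, the assertion that the total error in (iv) is ``bounded by $C(M)\epsilon^{-c}$ for a fixed $c<5$'' is exactly the heart of the matter and is stated rather than proved; it would require re-deriving the full parabolic machinery of Proposition~\ref{E1estimate} in the fixed-domain setting, including the surface-variable heat equations of Lemmas~\ref{othersurfaceenergy}--\ref{Bbounds1}, all for a flow whose boundary behavior has not been pinned down. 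In short, your route trades the paper's one-paragraph mollification argument for a substantial and incompletely specified parabolic estimate, and the key structural observations that make the paper's argument close (tangency of $W_\epsilon-W$, commutation of the mollifier with $\nabla_B$ and $\mathcal{N}$ up to $\mathcal{O}(\epsilon)$, and the quadratic sensitivity of $E^k$ to $H^k$-size perturbations) are precisely what is missing.
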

\begin{proof} We regularize $v$ and $B$ in a na\"ive way as follows: First, we define for some $C>0$ sufficiently large, the map
\begin{equation*}
y(x):=x-C\epsilon^3\nu(x),\hspace{5mm}x\in\Omega.
\end{equation*}
 Here, $\nu$ is a smooth, unit vector field on $\mathbb{R}^d$ which is uniformly transverse to hypersurfaces in $\Lambda_*$ (see \Cref{Collarcoords}). The above map is a diffeomorphism from $\Omega$ to a domain contained in $\Omega$ whose boundary is at distance $\approx C\epsilon^3$ away from $\Gamma$. For a function $f$ defined on $\Omega$, we let $\tilde{f}(x):=f(y)$ and define the regularizations
\begin{equation*}
B_{\epsilon}:=(\chi_{\epsilon^{-3}}*\tilde{B})^{rot},\hspace{5mm}v_{\epsilon}:=v^{ir}+(\chi_{\epsilon^{-3}}*\tilde{v})^{rot},
\end{equation*}
where $\chi_{\epsilon^{-3}}(x)=\epsilon^{-3d}\chi(\epsilon^{-3}x)$ is a standard mollifier (see \cite[Appendix C]{MR2597943}) at scale $\epsilon^{-3}$. The definitions of $\tilde{v}$ and $\tilde{B}$ ensure that  (if $C$ is large enough),  $v_{\epsilon}$ and $B_{\epsilon}$ are well-defined on $\Omega$. Using the standard mollifier properties and interpolating the obvious bounds for integer $s$, the map $f\mapsto \chi_{\epsilon^{-3}}*\tilde{f}$ is easily seen to have $H^s\to H^s$ norm at most $(1+C(M)\epsilon)$ for every $s\geq 0$. This, along with the elliptic estimates in \Cref{BEE}, can be used to establish properties (i)-(iii).
\medskip  

To establish property (iv), we note that given the bounds $\|v\|_{H^{k+1}(\Omega)}, \|B\|_{H^{k+1}(\Omega)}\lesssim_M \epsilon^{-\frac{3}{2}}$, we have $(B_{\epsilon},v_{\epsilon})=(B,v)+\mathcal{O}_{H^k(\Omega)}(\epsilon)$. The energy monotonicity bound follows in a straightforward fashion. 
\medskip

We are left to establish the bound $\|\mathcal{N}W^\pm_{\epsilon}\cdot n_\Gamma\|_{H^{k-\frac{1}{2}}(\Gamma)}\lesssim_M\epsilon^{-1}$. It suffices to show that
\begin{equation*}
\|\mathcal{N}(W^\pm_{\epsilon}-W^\pm)\cdot n_\Gamma\|_{H^{k-\frac{1}{2}}(\Gamma)}\lesssim_M 1.
\end{equation*}
For this, we use the fact that $W_{\epsilon}^\pm-W^\pm$ is rotational (i.e.~tangent to $\Gamma$) and the Leibniz rule for $\mathcal{N}$ to estimate
\begin{equation*}
\|\mathcal{N}(W^\pm_{\epsilon}-W^\pm)\cdot n_\Gamma\|_{H^{k-\frac{1}{2}}(\Gamma)}\lesssim_M \|W_{\epsilon}^\pm-W^\pm\|_{H^{k}(\Omega)}+\|W_{\epsilon}^\pm-W^\pm\|_{H^{k-3}(\Omega)}\|\Gamma\|_{H^{k+\frac{3}{2}}}\lesssim_M 1,
\end{equation*}
as required.
\end{proof}
\subsection{Step 3: Regularization in the direction of the magnetic field}
In the transport step of our argument we will need to estimate error terms involving our good variables differentiated in the direction of $B$. For such error terms to be treated perturbatively, we will need to regularize, in a suitable sense, the velocity and magnetic fields from the previous step in the direction of $B$. We will seek  a bound morally (but not exactly) of the form
\begin{equation*}
\|\nabla_{B_{\epsilon}}W^\pm_{\epsilon}\|_{\mathbf{H}^k(\Omega)}\lesssim_M \epsilon^{-\frac{1}{2}}\|W^\pm\|_{\mathbf{H}^k(\Omega)}.
\end{equation*}
Note that the factor of $\epsilon^{-\frac{1}{2}}$ is consistent with the fact that $\nabla_{B_{\epsilon}}$ scales like a $\frac{1}{2}$-derivative when applied to solutions to the free boundary MHD equations. In an ideal world, one might try to regularize a function $u$ in this way by solving the equation
\begin{equation*}
u_{\epsilon}-\epsilon\nabla_{B_\epsilon}^2u_{\epsilon}=u.
\end{equation*}
However, two technical issues arise when trying to execute this approach. The first is that the error between $u_{\epsilon}$ and $u$ in weaker topologies (such as $C^3$) will only be of size $\epsilon$. The second is that, in practice, the magnetic field will have limited regularity, which makes propagating high regularity bounds for $u_{\epsilon}$ quite challenging. To address the first issue, we will replace the $-\epsilon\nabla_{B_{\epsilon}}^2$ term in the regularizing equation by $\epsilon^2\nabla_{B_{\epsilon}}^4$. This will still give us the desired regularization bounds, but will also ensure that the regularized data has an error of size at most $\mathcal{O}(\epsilon^2)$ in $C^3$ if $k$ is large enough. To address the second issue, we will opt to regularize only a high-frequency band of the function $u$. More precisely, we will decompose $u:=u^l+u^h$ into low and high-frequency parts given by
\begin{equation*}
u^l:=\Phi_{\leq\epsilon^{-\frac{1}{8}}}u,\hspace{5mm} u^h:=u-u_l.
\end{equation*}
We will then define the regularized data $u_{\epsilon}$ via the  equation $u_{\epsilon}:=u^l+u_{\epsilon}^h$ where
\begin{equation*}
u_{\epsilon}^h+\epsilon^2\nabla_{B_\epsilon}^4u_{\epsilon}^h=u^h.
\end{equation*}
If $B$ and $u$ are regular enough, (say, in $H^k$ with norm of size $M$) this definition ensures that in sufficiently weak topologies (relative to $H^k$) the function $u_{\epsilon}^h$ will be rapidly decaying in the parameter $\epsilon$. This will allow us to crudely interpret the variable coefficient term $\epsilon^2\nabla_{B_\epsilon}^4u_{\epsilon}^h$ in a paradifferential fashion as a low-high paraproduct. For instance, when  estimating the part of this expression (in $H^k$, say) where the magnetic field is differentiated a large number of times relative to $u_{\epsilon}^h$,  we will be able to use  (if $k$ is large enough) that $u_{\epsilon}^h$ gains a certain number of factors of $\epsilon$ in a weaker topology to compensate for the imbalance. The following proposition outlines the general construction that we will need.
\begin{proposition}\label{magfieldreg}
Let $\epsilon>0$ be sufficiently small and let $\Gamma$ be parabolically regularized at scale $\epsilon^{-1}$ as in \eqref{surfbound-par}. Let $u$ be a smooth function defined on $\Omega$ and let $X$ be a smooth rotational vector field on $\Omega$ with $\|X\|_{H^{k+\alpha}(\Omega)}\lesssim_M \epsilon^{-3\alpha}$ for $0\leq\alpha\leq 2$ (for instance, $B_{\epsilon}$ as defined in \Cref{mildreg}). For each such $X$, we define the elliptic-type operator $\mathcal{L}_X:=Id+\epsilon^2\nabla_X^4$.
\begin{enumerate}
\item There exists a unique smooth solution $u_{\epsilon}:=\mathcal{L}^{-1}_Xu$ to the equation $\mathcal{L}_{X}u_{\epsilon}=u$ which satisfies the energy estimate
\begin{equation}\label{genrotregbound}
\|u_{\epsilon}\|_{H^{s}(\Omega)}^2+\sum_{2\leq j\leq 4}\epsilon^j\|\nabla_{X}^ju_{\epsilon}\|_{H^s(\Omega)}^2\leq (1+C(M)\epsilon)\|u\|_{H^s(\Omega)}^2,\hspace{5mm}0\leq s\leq k.
\end{equation}
Moreover, if $k$ is large enough, then the high-frequency regularization $u_{\epsilon}^h:=\mathcal{L}_X^{-1}u^h$ satisfies the higher regularity bound
\begin{equation}\label{cruderrotregbound}
\sum_{0\leq j\leq 4}\epsilon^{j}\|\nabla_{X}^ju_{\epsilon}^h\|_{H^{s}(\Omega)}^2\lesssim_M \|u\|_{H^{s}(\Omega)}^2,\hspace{5mm}0\leq s\leq k+2.
\end{equation}
\item Let $u$ and $Z$ be smooth vector fields satisfying $\|u\|_{H^k(\Omega)}+\|Z\|_{H^k(\Omega)}\lesssim_M 1$. There exists a unique, smooth vector field $\tilde{X}_{\epsilon}$ which satisfies the nonlinear equation
\begin{equation*}
\mathcal{L}_{Y_{\epsilon}}\tilde{X}_{\epsilon}=u^h
\end{equation*}
where $Y_{\epsilon}:=(\tilde{X}_{\epsilon}+Z)^{rot}$.
\end{enumerate}
\end{proposition}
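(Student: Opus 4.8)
\textbf{Proof proposal for Proposition~\ref{magfieldreg}(ii).}
The plan is to set up a fixed-point argument for the map $\tilde{X}_{\epsilon} \mapsto \mathcal{L}_{Y_\epsilon}^{-1} u^h$, where $Y_\epsilon = (\tilde{X}_\epsilon + Z)^{rot}$, and where the linear solvability and estimates for $\mathcal{L}_{Y_\epsilon}^{-1}$ are furnished by part (i). To apply part (i) we need the coefficient field $Y_\epsilon$ to obey the hypotheses imposed there, namely that it is a rotational (tangent to $\Gamma$) vector field with $\|Y_\epsilon\|_{H^{k+\alpha}(\Omega)} \lesssim_M \epsilon^{-3\alpha}$ for $0 \le \alpha \le 2$. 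Thus the first step is to identify the correct function space for the iteration: I would work in the closed ball $\mathcal{B}_R := \{ \tilde{X} \in H^{k+2}(\Omega) : \tilde{X}\ \text{rotational},\ \|\tilde{X}\|_{H^k(\Omega)} \le R,\ \|\tilde{X}\|_{H^{k+\alpha}(\Omega)} \le R\epsilon^{-3\alpha} \text{ for } 0\le\alpha\le 2 \}$ for a constant $R = R(M)$ to be chosen, equipped with the (weaker) $H^k$ metric so that it is a complete metric space. Note that the rotational projection $(\cdot)^{rot}$ preserves tangency automatically and, by the elliptic estimates in \Cref{BEE} together with the parabolic bound $\|\Gamma\|_{H^{k+\alpha}} \lesssim_{M,\alpha} \epsilon^{-\alpha}$ on the (already regularized) surface, it is bounded on the relevant Sobolev scales with operator norm $1 + C(M)\epsilon$, so $Y_\epsilon = (\tilde{X}_\epsilon + Z)^{rot}$ inherits the ball's bounds up to harmless constants once we use $\|Z\|_{H^k(\Omega)} \lesssim_M 1$ and, for the higher norms, a crude bound on $Z$ (which we may assume, or absorb, since only $\tilde X_\epsilon$ carries the $\epsilon^{-3\alpha}$ growth and $Z$ can be taken smooth in the application).

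The second step is the self-mapping property. Given $\tilde{X} \in \mathcal{B}_R$, form $Y_\epsilon$ as above; it satisfies the hypotheses of part (i) with $M$ replaced by $C(M)R$. Hence $\mathcal{L}_{Y_\epsilon}^{-1}$ is well-defined and, applying \eqref{genrotregbound} with $s = k$ and \eqref{cruderrotregbound} with $s = k, k+1, k+2$ to the high-frequency datum $u^h$ (which obeys $\|u^h\|_{H^{k+\alpha}(\Omega)} \lesssim \epsilon^{-\alpha/8}\|u\|_{H^k(\Omega)} \lesssim_M \epsilon^{-\alpha/8}$ by the frequency-localization bounds from \Cref{SSRO}), we obtain
\begin{equation*}
\|\mathcal{L}_{Y_\epsilon}^{-1} u^h\|_{H^k(\Omega)} \le (1+C(M)\epsilon)\|u^h\|_{H^k(\Omega)} \lesssim_M 1, \qquad \|\mathcal{L}_{Y_\epsilon}^{-1}u^h\|_{H^{k+\alpha}(\Omega)} \lesssim_M \epsilon^{-\alpha/8} \le \epsilon^{-3\alpha}
\end{equation*}
for $0 \le \alpha \le 2$. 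Choosing $R = R(M)$ larger than the implicit constants, the map sends $\mathcal{B}_R$ into itself; rotationality of the image is immediate since $\mathcal{L}_{Y_\epsilon}^{-1}$ preserves the span of rotational fields (the operator $\mathrm{Id} + \epsilon^2 \nabla_{Y_\epsilon}^4$ commutes with the flow structure and $u^h$ is rotational, or one simply verifies the trace of the solution on $\Gamma$ vanishes as in the boundary-condition propagation arguments used earlier in the section).

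The third and genuinely delicate step is the contraction estimate in the $H^k$ metric. Given $\tilde{X}_1, \tilde{X}_2 \in \mathcal{B}_R$ with associated coefficients $Y_1, Y_2$ and outputs $v_i = \mathcal{L}_{Y_i}^{-1} u^h$, the difference $v_1 - v_2$ solves $\mathcal{L}_{Y_1}(v_1 - v_2) = (\mathcal{L}_{Y_2} - \mathcal{L}_{Y_1}) v_2 = \epsilon^2 (\nabla_{Y_2}^4 - \nabla_{Y_1}^4) v_2$, and the right-hand side is a sum of terms with four derivative-vector-field slots, each slot carrying either $Y_1$, $Y_2$, or the difference $Y_1 - Y_2$, acting on $v_2$. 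The linear bound \eqref{genrotregbound} applied to $\mathcal{L}_{Y_1}^{-1}$ controls $\|v_1 - v_2\|_{H^k(\Omega)}$ by $\|\epsilon^2(\nabla_{Y_2}^4 - \nabla_{Y_1}^4)v_2\|_{H^k(\Omega)}$. The point is that each of the four $\epsilon$-weights $\epsilon^{1/2}$ can be paired, via \eqref{genrotregbound}, with one $\nabla_{Y_2}$ falling on $v_2$, so that the worst term is schematically $\epsilon^2 \cdot \|Y_1-Y_2\|_{L^\infty} \cdot \|\nabla_{Y_2}^3 v_2\|_{H^k} \lesssim \epsilon^{1/2}\|Y_1 - Y_2\|_{H^k(\Omega)} \|v_2\|_{H^k(\Omega)}$ after distributing $\epsilon$-powers and using Sobolev embedding with $k > d/2 + 1$; terms where derivatives instead land on the difference $Y_1-Y_2$ or are distributed unevenly are handled by the same power-counting together with the paraproduct/$\Cref{Multilinearest}$ estimates, always spending the excess $H^{k+\alpha}$-regularity of $v_2$ (bounded by $\epsilon^{-\alpha/8}$ from \eqref{cruderrotregbound}) against powers of $\epsilon$, of which there is a large surplus. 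Since $\|Y_1 - Y_2\|_{H^k(\Omega)} \lesssim_M \|\tilde{X}_1 - \tilde{X}_2\|_{H^k(\Omega)}$, we arrive at a contraction constant $C(M)\epsilon^{1/2} < 1$ for $\epsilon$ small. Banach's fixed point theorem then yields a unique $\tilde{X}_\epsilon \in \mathcal{B}_R$; smoothness follows by bootstrapping the equation (once $\tilde X_\epsilon$ is known to lie in $H^{k+2}$, elliptic regularity for $\mathcal{L}_{Y_\epsilon}$ with smooth $Y_\epsilon$, $u^h$ promotes it to $C^\infty$). I expect the main obstacle to be precisely the bookkeeping in this contraction step: making sure that in every one of the finitely many terms produced by expanding $\nabla_{Y_2}^4 - \nabla_{Y_1}^4$ and moving derivatives around, the four available half-powers of $\epsilon$ strictly outweigh the negative powers of $\epsilon$ generated by the higher Sobolev norms of $v_2$ and of the coefficient fields — this is where the choice of the high-frequency cutoff at scale $\epsilon^{-1/8}$ (which makes $\|u^h\|_{H^{k+\alpha}}$ grow only like $\epsilon^{-\alpha/8}$, a growth far slower than $\epsilon$-gains can absorb) and the requirement that $k$ be large enough both enter crucially.
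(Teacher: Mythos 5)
Your fixed-point architecture is broadly in the spirit of what is needed, but two features of your setup are problematic, and one of them is a genuine gap that the paper goes to some trouble to avoid.

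\textbf{The rotationality constraint on the ball is unjustified and, as far as I can see, false.} You work in a ball $\mathcal{B}_R$ of \emph{rotational} vector fields and assert that the map $\tilde X \mapsto \mathcal{L}_{(\tilde X + Z)^{rot}}^{-1}u^h$ sends $\mathcal{B}_R$ to itself because ``$\mathcal{L}_{Y_\epsilon}^{-1}$ preserves the span of rotational fields.'' This is not the case. Even when $Y$ is tangent to $\Gamma$, the operator $\nabla_Y$ does \emph{not} map tangent vector fields to tangent vector fields: if $v \cdot n_\Gamma = 0$, then $\nabla_Y v \cdot n_\Gamma = -\,v\cdot\nabla_Y n_\Gamma = v\cdot(\nabla^\top Y\cdot n_\Gamma)$, which is generically nonzero. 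Moreover, $u^h = u - \Phi_{\leq \epsilon^{-1/8}}u$ itself need not satisfy $u^h\cdot n_\Gamma = 0$, since the regularization $\Phi_{\leq j}$ does not preserve tangency. So the raw output $\mathcal{L}_{Y_\epsilon}^{-1}u^h$ leaves your ball, and your fallback (``one simply verifies the trace vanishes'') is not available. Note that the proposition never asks $\tilde X_\epsilon$ itself to be rotational — only the \emph{coefficient} $Y_\epsilon = (\tilde X_\epsilon + Z)^{rot}$ must be, and that is automatic from the projection. The paper is deliberate about this distinction: it carries two sequences, the raw iterates $\tilde X^n$ and their rotational projections $X^n := (\tilde X^n)^{rot}$, builds $Y^n$ from $X^n$, and then must do real work — the $H^k(\Omega)$ bound for $X^{n+1}$ via $\tilde X^{n+1}\cdot n_\Gamma = -\epsilon^2\nabla_{Y^n}^4\tilde X^{n+1}\cdot n_\Gamma + u^h\cdot n_\Gamma$ plus the balanced elliptic estimates, and the div-curl argument \eqref{diffbounditeration2} relating $\|\nabla_{Y^{n-1}}^j(X^n - X^{n-1})\|$ to $\|\nabla_{Y^{n-1}}^j(\tilde X^n - \tilde X^{n-1})\|$ using $\nabla\times X^n = \nabla\times\tilde X^n$ — precisely to control the gap between $\tilde X^n$ and $X^n$. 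Your proposal collapses these into one object and thereby skips a nontrivial part of the argument.

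\textbf{The contraction topology.} You contract in the $H^k$ metric and openly flag the $\epsilon$-power bookkeeping in $\|\epsilon^2(\nabla_{Y_2}^4 - \nabla_{Y_1}^4)v_2\|_{H^k}$ as ``the main obstacle'' without resolving it; the terms where several $\nabla_Y$ derivatives fall on the difference field or on the coefficient, paired against the $H^{k+\alpha}$ growth $\epsilon^{-3\alpha}$ of the coefficients, make this a genuinely delicate power count, not a routine one. The paper sidesteps it entirely by contracting only in $H^2(\Omega)$ (see the remark immediately after \eqref{iterationdifferencebound}): the bootstrap bounds hold at the $H^k$ level, but the Cauchy-sequence estimate is performed in the much weaker $H^2$, which keeps all Sobolev indices comfortably below $k$ and avoids both negative-regularity spaces and the growth of higher-order coefficient norms. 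If you keep your approach, you would need to actually execute the $H^k$ contraction estimate, which is at best much harder than the paper's route and may require additional assumptions on $u$ (e.g., $u\in H^{k+2}$ for \eqref{cruderrotregbound} to apply to $u^h$ at the top regularity) beyond what the statement gives you.

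In short: drop the rotationality requirement on the ball (it is neither needed nor available), separately control $X^n := (\tilde X^n)^{rot}$ as the paper does, and downgrade the contraction metric to $H^2$; with those three changes your skeleton converges to the paper's argument.
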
 
The first and second parts of \Cref{magfieldreg} will be used to construct the regularized velocity and magnetic fields, respectively.
\begin{proof}
We begin with (i). To establish \eqref{genrotregbound}, it suffices to prove the claim for integers $0\leq s=m\leq k$. The general bound follows from interpolation, using the map from $H^m(\Omega)\to (H^m(\Omega))^4$ given by 
\begin{equation*}
u\mapsto (u_{\epsilon},\epsilon\nabla_X^2u_{\epsilon},\epsilon^{\frac{3}{2}}\nabla_X^3u_{\epsilon},\epsilon^{2}\nabla_X^4u_{\epsilon}),
\end{equation*}
with $l^2$ norm on the product space.  Moreover, given the estimate \eqref{genrotregbound}, existence and uniqueness follows from a standard duality argument since the adjoint equation and the original equation are the same (as the assumptions on $X$ ensure that $\nabla_X$ is skew-adjoint). Now, we turn to the energy estimate. To simplify notation, we use $\partial^m$  to denote a differential operator of the form $\partial^{\alpha}$ where $\alpha$ is a multi-index of order $m$. Our starting point is the identity
\begin{equation}\label{L2id}
\begin{split}
\|\partial^mu_{\epsilon}\|_{L^2(\Omega)}^2&=\|\partial^mu\|_{L^2(\Omega)}^2+2\langle \partial^mu_{\epsilon},\partial^m(u_{\epsilon}-u)\rangle-\|\partial^m(u_{\epsilon}-u)\|_{L^2(\Omega)}^2
\\
&=\|\partial^mu\|_{L^2(\Omega)}^2-2\epsilon^2\langle \partial^m u_{\epsilon},\partial^m\nabla_{X}^4u_{\epsilon}\rangle-\epsilon^4\|\partial^m\nabla_{X}^4u_{\epsilon}\|_{L^2(\Omega)}^2.
\end{split}
\end{equation}
Next, we observe that since $0\leq m\leq k$, we have the bound
\begin{equation*}
\|[\nabla_{X},\partial^m]g\|_{L^2(\Omega)}\leq C(M)\|g\|_{H^m(\Omega)}
\end{equation*}
for every $g\in H^m(\Omega)$. Therefore, since $\nabla_{X}$ is skew-adjoint, we have by a simple application of Cauchy-Schwarz,
\begin{equation*}\label{rotrege1}
-2\epsilon^2\langle \partial^m u_{\epsilon},\partial^m\nabla_{X}^4u_{\epsilon}\rangle\leq 2\epsilon^2\langle \partial^m \nabla_{X}u_{\epsilon},\partial^m\nabla_{X}^3u_{\epsilon}\rangle+c\epsilon^3\|\nabla_{X}^3u_{\epsilon}\|_{H^m(\Omega)}^2+C(M)\epsilon\|u_{\epsilon}\|_{H^m(\Omega)}^2,
\end{equation*}
for some $0<c\ll 1$ sufficiently small. Integrating by parts and applying Cauchy Schwarz again we see that
\begin{equation*}
2\epsilon^2\langle \partial^m \nabla_{X}u_{\epsilon},\partial^m\nabla_{X}^3u_{\epsilon}\rangle\leq -\frac{3}{2}\epsilon^2\|\partial^m\nabla_{X}^2u_{\epsilon}\|_{L^2(\Omega)}^2+C(M)\epsilon^2(\|\nabla_{X}u_{\epsilon}\|_{H^m(\Omega)}^2+\|u_{\epsilon}\|^2_{H^{m}(\Omega)}).
\end{equation*}
Moreover, we have
\begin{equation*}\label{bint1}
\epsilon^2\|\partial^m\nabla_{X} u_{\epsilon}\|_{L^2(\Omega)}^2\lesssim_M \epsilon^3\|\nabla_{X}^2u_{\epsilon}\|_{H^m(\Omega)}^2+\epsilon\|u_{\epsilon}\|_{H^m(\Omega)}^2
\end{equation*}
and
\begin{equation*}
\epsilon^3\|\partial^m\nabla_{X}^3u_{\epsilon}\|_{L^2(\Omega)}^2\leq \frac{1}{2}\epsilon^4\|\nabla_{X}^4u_{\epsilon}\|_{H^m(\Omega)}^2+\frac{1}{2}\epsilon^2\|\nabla_{X}^2u_{\epsilon}\|_{H^m(\Omega)}^2+C(M)\epsilon^3\|\nabla_{X}^2u_{\epsilon}\|^2_{H^m(\Omega)}+C(M)\epsilon\|u_{\epsilon}\|_{H^{m}(\Omega)}^2.
\end{equation*}
Combining the above estimates completes the proof of \eqref{genrotregbound}. To obtain \eqref{cruderrotregbound}, we proceed in the same fashion as above, except now we rely on the fact that $u_{\epsilon}^h$ gains factors of $\epsilon$ in weaker topologies to estimate the commutators that appear in the above argument. As an example, when carrying out the estimate in $H^{k+2}(\Omega)$, we  have to estimate the commutator $[\partial^{k+2},\nabla_X]u_{\epsilon}^h$ in $L^2(\Omega)$. By Sobolev product estimates, we have
\begin{equation*}
\|[\nabla_X,\partial^{k+2}]u_{\epsilon}^h\|_{L^2(\Omega)}\lesssim_M \|X\|_{H^{k+2}(\Omega)}\|u_{\epsilon}^h\|_{H^{k-100}(\Omega)}+\|u_{\epsilon}^h\|_{H^{k+2}(\Omega)}.
\end{equation*}
Using the bounds $\|X\|_{H^{k+2}(\Omega)}\lesssim_M\epsilon^{-6}$ and $\|u_{\epsilon}^h\|_{H^{k-100}(\Omega)}\lesssim_M \epsilon^6\|u\|_{H^{k+2}(\Omega)}$ (the latter of which follows from \eqref{genrotregbound}) we  have
\begin{equation*}
\|[\nabla_X,\partial^{k+2}]u_{\epsilon}^h\|_{L^2(\Omega)}\lesssim_M \|u\|_{H^{k+2}(\Omega)}+\|u_{\epsilon}^h\|_{H^{k+2}(\Omega)}.
\end{equation*}
Using this strategy and following a similar line of reasoning as in \eqref{genrotregbound} we obtain the bound
\begin{equation*}
\|u_{\epsilon}^h\|_{H^{k+2}(\Omega)}^2+\sum_{2\leq j\leq 4}\epsilon^j\|\nabla_{X}^ju_{\epsilon}^h\|_{H^{k+2}(\Omega)}^2\lesssim_M \|u\|_{H^{k+2}(\Omega)}^2.
\end{equation*}
Using that $\nabla_{X}$ is skew-adjoint and that $u_{\epsilon}^h$ is at high-frequency, we can interpolate between the bounds for $\nabla_{X}^2u_{\epsilon}^h$ and $u_{\epsilon}^h$ to obtain
\begin{equation*}
\sum_{0\leq j\leq 4}\epsilon^j\|\nabla_{X}^ju_{\epsilon}^h\|_{H^{k+2}(\Omega)}^2\lesssim_M \|u\|_{H^{k+2}(\Omega)}^2.
\end{equation*}
Our desired bound follows from carrying out this procedure for each integer $0\leq m\leq k+2$ and then interpolating. We omit the straightforward modifications.
\medskip

Now, we establish (ii). We define the initialization $\tilde{X}^0:=u^h$, $Y^0:=(u^h+Z)^{rot}$. For each non-negative integer $n\geq 0$, we define inductively $X^{n+1}:=(\tilde{X}^{n+1})^{rot}$ and $Y^{n+1}:=X^{n+1}+Z^{rot}$ where $\tilde{X}^{n+1}$ is the unique $H^k$ solution to the equation
\begin{equation*}
\mathcal{L}_{Y^{n}}\tilde{X}^{n+1}=u^h.
\end{equation*}
Such a function is well-defined under the inductive hypotheses (where we use the convention $X^{-1}=0$ and $Y^{-1}=0$) and we have
\begin{equation}\label{iterationschemeinduction}
\|X^n\|_{H^k(\Omega)}\leq C_b(M),\hspace{5mm} \|\tilde{X}^n\|_{H^k(\Omega)}^2+\sum_{2\leq j\leq 4}\epsilon^j\|\nabla_{Y^{n-1}}^{j}\tilde{X}^n\|_{H^k(\Omega)}^2\leq C_b'(M)\|u\|_{H^k(\Omega)}^2,
\end{equation}
where $C_b(M)\gg C_b'(M)$ is some sufficiently large (but fixed) bootstrap parameter to be chosen. We aim to show that $\tilde{X}^n$ converges in $H^s(\Omega)$ for every $0\leq s<k$ with limit $\tilde{X}_{\epsilon}$ in $H^k(\Omega)$. By interpolation, it suffices to establish \eqref{iterationschemeinduction} for $X^{n+1}$ and $\tilde{X}^{n+1}$, respectively, as well as the weak Lipschitz type bound
\begin{equation}\label{iterationdifferencebound}
\sum_{0\leq j\leq 4}\epsilon^j\|\nabla_{Y^n}^j(\tilde{X}^{n+1}-\tilde{X}^n)\|_{H^2(\Omega)}^2\leq \frac{1}{2}\sum_{0\leq j\leq 4}\epsilon^j\|\nabla_{Y^{n-1}}^j(\tilde{X}^{n}-\tilde{X}^{n-1})\|_{H^2(\Omega)}^2,\hspace{5mm}n\geq 1.
\end{equation}
\begin{remark}
We perform the above estimate  in $H^2(\Omega)$ rather than $L^2(\Omega)$ or $H^1(\Omega)$ as it will be simpler from a technical standpoint to perform some of the elliptic estimates that will manifest in our analysis below (in particular, we will avoid dealing with any negative regularity Sobolev spaces).
\end{remark}
We begin with \eqref{iterationschemeinduction}. We observe that the bound for $\tilde{X}^{n+1}$ follows immediately from (i) if $\epsilon$ is sufficiently small and $C_b'(M)$ is large enough relative to the $H^k\to H^k$ norm of $\Phi_{\leq\epsilon^{-\frac{1}{8}}}$. We now focus on  proving the $H^k(\Omega)$ bound for $X^{n+1}$. We remark that in this part of the argument, implicit constants may depend on the $H^k(\Omega)$ bound for $u$ and the constants coming from elliptic regularity estimates, but will not depend on the constant $C_b(M)$ above. 
\medskip

We observe that  the above definitions imply that
\begin{equation*}
\tilde{X}^{n+1}\cdot n_\Gamma=-\epsilon^2\nabla_{Y^n}^4\tilde{X}^{n+1}\cdot n_\Gamma+u^h\cdot n_\Gamma.
\end{equation*}
Since $u^h$ is at high-frequency, we have $\|u^h\cdot n_\Gamma\|_{H^{k-\frac{1}{2}}(\Gamma)}\lesssim_M 1$. Moreover, using  $\|\Gamma\|_{H^{k+\frac{1}{2}}}\lesssim_M\epsilon^{-\frac{1}{2}}$ and  the $H^k(\Omega)$ bound for $\nabla_{Y^n}^4\tilde{X}^{n+1}$, we conclude that
\begin{equation*}
\|\tilde{X}^{n+1}\cdot n_\Gamma\|_{H^{k-\frac{1}{2}}(\Gamma)}\lesssim_M 1+\epsilon^2\|\nabla_{Y^n}^4\tilde{X}^{n+1}\|_{H^k(\Omega)}\lesssim_M 1.
\end{equation*}
A similar computation using that $u^h=\mathcal{O}_{H^{k-10}(\Omega)}(\epsilon)$ yields the weak bound
\begin{equation*}
\|\nabla\cdot \tilde{X}^{n+1}\|_{H^{k-20}(\Omega)}\lesssim_M \epsilon.
\end{equation*}
Consequently, the balanced elliptic estimates from \Cref{BEE} imply that $(\tilde{X}^{n+1})^{ir}:=\nabla\mathcal{H}\mathcal{N}^{-1}((\tilde{X}^{n+1}-\nabla\Delta^{-1}\nabla\cdot\tilde{X}^{n+1})\cdot n_\Gamma)$ satisfies the bound
\begin{equation*}
\|(\tilde{X}^{n+1})^{ir}\|_{H^k(\Omega)}\lesssim_M 1.
\end{equation*}
If $C_b(M)$ in the bootstrap hypothesis is initially chosen to be large enough (to control all of the implicit constants in the elliptic and trace type estimates above) we obtain, say,
\begin{equation*}
\|(\tilde{X}^{n+1})^{ir}\|_{H^k(\Omega)}^2\leq\frac{1}{2}C_b(M).
\end{equation*}
Combining this with the bound for $\tilde{X}^{n+1}$ and taking $C_b(M)$ large enough closes the bootstrap for $X^{n+1}$. It therefore remains to establish \eqref{iterationdifferencebound}. 
Expanding $\mathcal{L}_{Y^n}(\tilde{X}^{n+1}-\tilde{X}^n)$ gives the identity
\begin{equation*}
\begin{split}
\mathcal{L}_{Y^{n}}(\tilde{X}^{n+1}-\tilde{X}^n)=\epsilon^2(\nabla_{Y^n}^4-\nabla_{Y^{n-1}}^4)\tilde{X}^{n}=\sum_{j=0}^3\epsilon^2\nabla_{Y^{n-1}}^j\nabla_{X^n-X^{n-1}}\nabla_{Y^{n}}^{3-j}\tilde{X}^n.
\end{split}
\end{equation*}
Applying the estimate from (i) and interpolating to control $\epsilon\|\nabla_{X^n}(\tilde{X}^{n+1}-\tilde{X}^n)\|_{H^2(\Omega)}^2$ we obtain the bound
\begin{equation*}\label{diffbounditeration}
\sum_{0\leq j\leq 4}\epsilon^j\|\nabla_{Y^n}(\tilde{X}^{n+1}-\tilde{X}^n)\|_{H^2(\Omega)}^2\lesssim_M \epsilon^4\sup_{0\leq j\leq 3}\|\nabla_{Y^{n-1}}^j(X^{n}-X^{n-1})\|_{H^2(\Omega)}^2.
\end{equation*}
If $\epsilon>0$ is small enough (relative to $M$) we will be able to conclude the desired estimate if we can show that for each $0\leq j\leq 3$,
\begin{equation}\label{diffbounditeration2}
\|\nabla_{Y^{n-1}}^j(X^{n}-X^{n-1})\|_{H^2(\Omega)} \lesssim_M\sup_{0\leq j\leq 3}\|\nabla_{Y^{n-1}}^j(\tilde{X}^{n}-\tilde{X}^{n-1})\|_{H^2(\Omega)}=:\mathcal{A}_n.
\end{equation}
If $j=0$, this follows in a straightforward manner from the estimates in \Cref{BEE}. If $j\geq 1$, we will rely on a div-curl estimate. We observe that since $X^{n+1}-X^n$ is divergence-free and tangent to $\Gamma$, we have the reduction estimate
\begin{equation*}
\|\nabla\cdot \nabla_{Y^{n-1}}^j(X^n-X^{n-1})\|_{H^1(\Omega)}
\\
+\|\nabla_{Y^{n-1}}^j(X^{n}-X^{n-1})\cdot n_\Gamma\|_{H^{\frac{3}{2}}(\Gamma)}\lesssim_M \sup_{0\leq l\leq j-1}\|\nabla_{Y^{n-1}}^l(X^n-X^{n-1})\|_{H^2(\Omega)}.
\end{equation*}
Moreover, since $\nabla\times (X^{n}-X^{n-1})=\nabla\times (\tilde{X}^{n}-\tilde{X}^{n-1})$ we have 
\begin{equation*}
\|\nabla\times (X^{n}-X^{n-1})\|_{H^1(\Omega)}\lesssim_M \mathcal{A}_n+\sup_{0\leq l\leq j-1}\|\nabla_{Y^{n-1}}^l(X^n-X^{n-1})\|_{H^2(\Omega)}.
\end{equation*}
Therefore, by \Cref{Balanced div-curl} and the estimate for $j=0$, we obtain \eqref{diffbounditeration2}.
\end{proof}
\medskip

\textbf{Energy monotonicity and the regularized variables}. Let $(v,B,\Gamma)$ be a state satisfying the bounds in the conclusion of \Cref{mildreg}. Our objective here will be to regularize $v$ as well as the curl of $B$ in the direction of the magnetic field on the fixed, regularized domain $\Omega$, while retaining the regularization bounds for $(v, B,\Gamma)$. To do this, we define the regularizations $v_{\epsilon}$ and $B_{\epsilon}$ using \Cref{magfieldreg}:
\begin{equation*}\label{rotregdef}
B_{\epsilon}:=(B^l+\mathcal{L}^{-1}_{B_{\epsilon}}B^h)^{rot},\hspace{5mm}v_{\epsilon}:=(v^l+\mathcal{L}^{-1}_{B_{\epsilon}}v^h)^{div}=:w_{\epsilon}^{div},
\end{equation*}
where the superscript $div$ denotes projection onto divergence-free functions and, once again, we  generically write $u^l:=\Phi_{\leq \epsilon^{-\frac{1}{8}}}u$ and $u^h:=u-u^l$. We  set the convention that by $u_{\epsilon}^h$ we mean $\mathcal{L}_{B_{\epsilon}}^{-1}u^h$; we define the corresponding rotational good variables  $\omega^\pm_{\epsilon}:=\nabla\times W_{\epsilon}^\pm$ as well as the variables $a_{\epsilon}$, 
 $D_t^\pm a_\epsilon$, $\mathcal{G}^\pm_{\epsilon}$ and so-forth in the usual manner. We also use $\zeta_{\epsilon}$ as a shorthand for $\nabla\times W_{\epsilon}^{h,\pm}$. With the above notation and conventions, we have $\omega_{\epsilon}^\pm=\nabla\times W^l+\zeta_{\epsilon}$. We may think of $\zeta_{\epsilon}$ as the high-frequency part of $\omega_{\epsilon}$. We  define the uncorrected magnetic field and the corresponding uncorrected variables $\tilde{W}^\pm_{\epsilon}$ by
 \begin{equation*}
\tilde{B}_{\epsilon}:=B^l+\mathcal{L}^{-1}_{B_{\epsilon}}B^h,\hspace{5mm}\tilde{W}^\pm_{\epsilon}:=v_{\epsilon}\pm\tilde{B}_{\epsilon}.
 \end{equation*}
 \begin{remark}
 Although  $\tilde{B}_{\epsilon}$ will have good regularization bounds in the direction of the magnetic field $B_{\epsilon}$,  $B_{\epsilon}$ itself will not, due to the dependence of the rotational projection on the regularity of the free surface.  This is why  we carefully distinguish between the above variables. Importantly,  since $\nabla\times B_{\epsilon}=\nabla\times\tilde{B}_{\epsilon}$, the curl of $B_{\epsilon}$ will inherit the improved regularity of $\tilde{B}_{\epsilon}$. 
\end{remark}

 For the remainder of this section, we take $\mathcal{L}^{-1}$ to mean $\mathcal{L}^{-1}_{B_{\epsilon}}$ and also drop the $\pm$ superscripts from expressions involving the variables $W^\pm$. The following proposition will help us to understand the affect of the above regularizations on the energy as well as to  quantify the errors incurred in weaker topologies.
\begin{proposition}\label{rotregEmon} 
 Given $(v,B,\Gamma)\in\mathbf{H}^k$ from above, we have the following estimates:
 \begin{enumerate}
 \item (Good pointwise approximation).
\begin{equation*}\label{approxdomain}
(v_{\epsilon},B_{\epsilon})=(v_{\epsilon},\tilde{B}_{\epsilon})+\mathcal{O}_{C^3(\Omega)}(\epsilon^2)=(v,B)+\mathcal{O}_{C^3(\Omega)}(\epsilon^2).
\end{equation*}
\item (Regularization bounds). Let $0\leq s\leq k$. For the uncorrected variables $\tilde{W}^\pm_{\epsilon}$, there holds
\begin{equation}\label{gradbregbounds}
\|\nabla_{B_{\epsilon}}\tilde{W}_{\epsilon}^\pm\|_{\mathbf{H}^s(\Omega)}\lesssim_M \epsilon^{-\frac{1}{2}}\|W^\pm\|_{\mathbf{H}^s(\Omega)}.
\end{equation}
Moreover, we have the regularization bounds (the first and third being retained from the previous step)
\begin{equation}\label{retainedbounds}
\|W_{\epsilon}^\pm\|_{H^{k+1+\alpha}(\Omega)}\lesssim_{M,\alpha}\epsilon^{-\frac{3}{2}-3\alpha},\hspace{5mm}\|\nabla_{B_{\epsilon}}\tilde{W}^\pm_{\epsilon}\|_{H^{k+1}(\Omega)}\lesssim_M\epsilon^{-2},\hspace{5mm}\|\mathcal{N}W^\pm_{\epsilon}\cdot n_\Gamma\|_{H^{k-\frac{1}{2}}(\Gamma)}\lesssim_M \epsilon^{-1},
\end{equation}
where $0\leq\alpha\leq 1$.
\item (Rotational energy bounds I). Let $\epsilon>0$ be sufficiently small, $\delta\in [0,\epsilon]$ and  $1\leq s\leq k$. There is a universal constant $C>0$ such that the following estimate holds:
\begin{equation}\label{genrotregbound1}
\|\omega_{\epsilon}\pm\delta\nabla_{B_{\epsilon}}\omega_{\epsilon}\|_{H^{s}(\Omega)}^2+C\sum_{2\leq j\leq 4}\|\nabla_{B_{\epsilon}}^j\zeta_{\epsilon}\|_{H^s(\Omega)}^2\leq (1+C(M)\epsilon)\|\omega\|_{H^{s}(\Omega)}^2+C(M)\epsilon\|W\|_{H^{s+1}(\Omega)}^2.
\end{equation}
\item (Rotational energy bounds II). Let $\epsilon>0$ be sufficiently small, $\delta\in [0,\epsilon]$ and  $1\leq s\leq k-1$. The following estimate holds:
\begin{equation}\label{genrotregbound2}
\begin{split}
\|\nabla_{B_{\epsilon}}\omega_{\epsilon}\pm\delta\nabla_{B_{\epsilon}}^2\omega_{\epsilon}\|_{H^{s}(\Omega)}^2+C\sum_{2\leq j\leq 4}\|\nabla_{B_{\epsilon}}^{j+1}\zeta_{\epsilon}\|_{H^s(\Omega)}^2&\leq (1+C(M)\epsilon)\|\nabla_{B}\omega\|_{H^{s}(\Omega)}^2+C(M)\epsilon\|W\|_{\mathbf{H}^{s+\frac{3}{2}}(\Omega)}^2.
\end{split}
\end{equation}
\item (Energy monotonicity).
\begin{equation*}\label{Emonsurf2}
E^{k}(v_{\epsilon},B_{\epsilon},\Gamma)\leq E^{k}(v, B ,\Gamma)+C(M)\epsilon.
\end{equation*}
 \end{enumerate}
\end{proposition}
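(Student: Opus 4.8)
\textbf{Plan of proof for Proposition~\ref{rotregEmon}, part (v) (energy monotonicity).}
The goal is to compare $E^k(v_\epsilon,B_\epsilon,\Gamma)$ with $E^k(v,B,\Gamma)$, where $E^k=E^k_++E^k_-$ decomposes into a rotational part $E^k_r$ and an irrotational part $E^k_i$ as in \eqref{ENERGYE1}. The strategy is to treat the two parts separately and show each changes by at most $C(M)\epsilon$. The rotational part of the energy involves $\|\omega^\pm\|_{H^{k-1}(\Omega)}^2$ and $\|\nabla_B\omega^\pm\|_{H^{k-\frac32}(\Omega)}^2$; these are exactly the quantities controlled by the rotational energy bounds (iii) and (iv) of the same proposition. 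Applying \eqref{genrotregbound1} with $s=k-1$ and $\delta=0$, and \eqref{genrotregbound2} with $s=k-\frac32$ and $\delta=0$, gives
\begin{equation*}
\|\omega_\epsilon\|_{H^{k-1}(\Omega)}^2+\|\nabla_{B_\epsilon}\omega_\epsilon\|_{H^{k-\frac32}(\Omega)}^2\le (1+C(M)\epsilon)\bigl(\|\omega\|_{H^{k-1}(\Omega)}^2+\|\nabla_B\omega\|_{H^{k-\frac32}(\Omega)}^2\bigr)+C(M)\epsilon\bigl(\|W\|_{H^k(\Omega)}^2+\|W\|_{\mathbf H^{k}(\Omega)}^2\bigr),
\end{equation*}
and since $(v,B,\Gamma)\in\mathbf H^k$ with $\|(v,B,\Gamma)\|_{\mathbf H^k}\le C(M)$ by hypothesis, the last bracket is $\lesssim_M 1$; the coercivity bound in \Cref{Energy est. thm} turns the $(1+C(M)\epsilon)$ multiplicative factor into an additive $C(M)\epsilon$. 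This handles $E^k_r$. (The $L^2$ terms $\|W^\pm_\epsilon\|_{L^2(\Omega)}^2$ are trivially controlled by the pointwise approximation in (i).)

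For the irrotational part $E^k_i$, I would argue that the regularization in the magnetic-field direction barely perturbs the surface energy variables. Since the domain $\Omega$ and its boundary $\Gamma$ are \emph{unchanged} by this third regularization step, the terms $\|a_\epsilon^{\frac12}\mathcal N^{k-1}a_\epsilon\|_{L^2(\Gamma)}^2$ and $\|\nabla\mathcal H\mathcal N^{k-2}\nabla_{B_\epsilon}a_\epsilon\|_{L^2(\Omega)}^2$, together with $\|\nabla\mathcal H\mathcal N^{k-2}\mathcal G^\pm_\epsilon\|_{L^2(\Omega)}^2$ and $\|a_\epsilon^{-\frac12}\mathcal N^{k-2}\nabla_{B_\epsilon}\mathcal G^\pm_\epsilon\|_{L^2(\Gamma)}^2$, need only be compared through the pointwise closeness of $(v_\epsilon,B_\epsilon)$ to $(v,B)$. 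From (i), $(v_\epsilon,B_\epsilon)=(v,B)+\mathcal O_{C^3(\Omega)}(\epsilon^2)$, and the higher-regularity retained bounds \eqref{retainedbounds} control the $H^k$-level differences: one checks, using the Laplace equation for $P$, the elliptic estimates from \Cref{BEE}, and \Cref{Linfest}, that $P_\epsilon=P+\mathcal O_{H^k(\Omega)}(\epsilon)$ and $a_\epsilon=a+\mathcal O_{H^{k-1}(\Gamma)}(\epsilon)$, hence $\mathcal G^\pm_\epsilon=\mathcal G^\pm+\mathcal O(\epsilon)$ and $\nabla_{B_\epsilon}\mathcal G^\pm_\epsilon=\nabla_B\mathcal G^\pm+\mathcal O(\epsilon)$ in the relevant norms. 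Feeding these differences into the bilinear structure $\|x_\epsilon\|^2-\|x\|^2=\langle x_\epsilon-x,x_\epsilon+x\rangle$ and using that $\|x_\epsilon+x\|\lesssim_M (E^k)^{1/2}\lesssim_M 1$ by coercivity, each irrotational energy term changes by at most $C(M)\epsilon$. One subtlety: the bound on $\nabla_{B_\epsilon}\mathcal G^\pm_\epsilon$ requires the regularization bound \eqref{gradbregbounds} for $\nabla_{B_\epsilon}\tilde W^\pm_\epsilon$ together with the matched-regularity observations from \Cref{coerciveprelim}; the correction passing from $\tilde B_\epsilon$ to $B_\epsilon$ (which only affects the irrotational, boundary-determined component) is handled exactly as in the analysis of $\nabla_B n_\Gamma$ there.

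The main obstacle is not the bilinear bookkeeping but ensuring that the $C(M)\epsilon$-level errors in the \emph{rotational} energy do not get amplified by the repeated application of the iteration over $\approx_M\epsilon^{-1}$ steps — but that concern is downstream (it is why \eqref{genrotregbound1}--\eqref{genrotregbound2} are stated with the sharp $(1+C(M)\epsilon)$ constant and a clean $C(M)\epsilon\|W\|_{H^{s+1}}^2$ remainder), and here it suffices to invoke those bounds as given. Within the proof of (v) itself, the only genuinely delicate point is the estimate for the term $\|\nabla\mathcal H\mathcal N^{k-2}\nabla_{B_\epsilon}\mathcal G^\pm_\epsilon\|_{L^2(\Omega)}^2$: one must verify that differentiating $\mathcal G^\pm_\epsilon$ in the \emph{new} vector field $\nabla_{B_\epsilon}$ rather than $\nabla_B$ introduces only an $\mathcal O(\epsilon)$ discrepancy, which follows by writing $\nabla_{B_\epsilon}-\nabla_B=\nabla_{B_\epsilon-B}$, using $B_\epsilon-B=\mathcal O_{H^k(\Omega)}(\epsilon)$, and applying the product estimates in \Cref{BEE}. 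Assembling the rotational estimate from (iii)--(iv) with the irrotational perturbative estimate yields
\begin{equation*}
E^k(v_\epsilon,B_\epsilon,\Gamma)\le E^k(v,B,\Gamma)+C(M)\epsilon,
\end{equation*}
which is the claim. The whole argument is short precisely because the hard analytic content has already been isolated in parts (i)--(iv) of the proposition.
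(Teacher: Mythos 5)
The main gap is in the irrotational part of your argument. You claim that $P_\epsilon=P+\mathcal{O}_{H^k(\Omega)}(\epsilon)$, $a_\epsilon=a+\mathcal{O}_{H^{k-1}(\Gamma)}(\epsilon)$, and similarly for $\mathcal{G}^\pm_\epsilon$. This is false. From the defining equation $W_\epsilon^\pm-W^\pm=-\epsilon^2(\nabla_{B_\epsilon}^4 W_\epsilon^h)^{rot}-\epsilon^2(\nabla_{B_\epsilon}^4 v_\epsilon^h)^{ir}$ together with the regularization bound \eqref{genrotregbound} (which only gives $\epsilon^4\|\nabla_{B_\epsilon}^4 u_\epsilon^h\|_{H^k(\Omega)}^2\lesssim_M\|u\|_{H^k(\Omega)}^2$), one obtains $\|W_\epsilon^\pm-W^\pm\|_{H^k(\Omega)}\lesssim_M 1$, not $\mathcal{O}(\epsilon)$. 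The $C^3$ closeness in part (i) is an $\mathcal{O}(\epsilon^2)$ bound in a low-regularity norm; it does not propagate to $H^k$-level smallness. Consequently the "bilinear bookkeeping" $\|x_\epsilon\|^2-\|x\|^2=\langle x_\epsilon-x,x_\epsilon+x\rangle$ produces an error of size $\mathcal{O}(1)$, not $\mathcal{O}(\epsilon)$, and the argument collapses.

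The paper's proof of (v) is structurally different: it proves the \emph{strengthened} inequality $E^k(v_\epsilon,B_\epsilon,\Gamma)+C\mathcal{J}_\epsilon\le E^k(v,B,\Gamma)+C(M)\epsilon$, where $\mathcal{J}_\epsilon:=\sum_{2\le j\le 4}\epsilon^j(\|\nabla_{B_\epsilon}^j v_\epsilon^h\|_{\mathbf{H}^k(\Omega)}^2+\|\nabla_{B_\epsilon}^j\zeta_\epsilon^\pm\|_{\mathbf{H}^{k-1}(\Omega)}^2)$ is the parabolic dissipation produced by the fourth-order regularization. All of the "not perturbative" errors — including the very differences $\|W_\epsilon-W\|$, $\|P_\epsilon-P\|$, $\|a_\epsilon-a\|$ that you want to call $\mathcal{O}(\epsilon)$ — are only controlled by $R_\epsilon:=c\mathcal{J}_\epsilon+C(M)\epsilon$ with $c\ll C$, and the whole proof hinges on producing enough negative dissipation to absorb the $c\mathcal{J}_\epsilon$. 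Concretely, the irrotational comparison is done via $\|x_\epsilon\|^2=\|x\|^2+2\langle x_\epsilon,x_\epsilon-x\rangle-\|x_\epsilon-x\|^2$: the crucial negative term $-\|x_\epsilon-x\|^2$ generates the boundary dissipation $-\epsilon^4\|Q_4^h\|_{H^{k-3/2}(\Gamma)}^2$ and, after integrating $\nabla_{B_\epsilon}$ factors by parts, $-\epsilon^2\|Q_2^h\|_{H^{k-3/2}(\Gamma)}^2$. The cross term $2\langle x_\epsilon,x_\epsilon-x\rangle$, which you would estimate directly, is \emph{not} of size $\mathcal{O}(\epsilon)$; the paper controls it via an almost-orthogonality argument — the high-frequency difference $Q_4^h$ pairs with the low-frequency part $Q_0^l$, and repeatedly moving $\nabla_{B_\epsilon}$ across the inner product reduces it to $\langle Q_2^l,Q_2^h\rangle$, which is small by the frequency separation built into the low-high split $\Phi_{\le\epsilon^{-1/8}}$. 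Finally, by \Cref{Balanced div-curl} and the Taylor sign condition, $\mathcal{J}_\epsilon$ itself is bounded by the rotational dissipation $\epsilon^j\|\nabla_{B_\epsilon}^j\zeta_\epsilon\|^2$ from (iii)--(iv) plus the boundary dissipation $\epsilon^j\|Q_j^h\|^2$ from the irrotational estimate, which is what allows absorption.

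A secondary issue: in your treatment of the rotational part you invoke (iii) and (iv) with $\delta=0$ and \emph{discard} the terms $C\sum_{2\le j\le 4}\epsilon^j\|\nabla_{B_\epsilon}^j\zeta_\epsilon\|_{H^s(\Omega)}^2$. These are not merely decorative — they supply the rotational half of the dissipation needed to absorb $\mathcal{J}_\epsilon$. Without both halves of the dissipation the inequality simply cannot be closed; the structural reason is that the regularization smooths at a scale commensurate with the error budget, so there is no slack unless you track the dissipation ledger precisely.
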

\begin{proof}
If $k$ is large enough, property (i) is clear from Sobolev embeddings and the definition of $B_{\epsilon}$ and $v_{\epsilon}$. Next, we move to property (ii). Thanks to the first part of \Cref{magfieldreg} and the fact that $\mathcal{L}^{-1}$ commutes with $\nabla_{B_{\epsilon}}$, we have 
\begin{equation*}
\|\nabla_{B_{\epsilon}}\tilde{B}_{\epsilon}\|_{\mathbf{H}^s(\Omega)}\lesssim_M\epsilon^{-\frac{1}{2}}\|(B,\nabla_{B_{\epsilon}}B)\|_{H^s(\Omega)\times H^{s-\frac{1}{2}}(\Omega)}\lesssim_M \epsilon^{-\frac{1}{2}}\|B\|_{\mathbf{H}^s(\Omega)}
\end{equation*}
where we used product estimates, the high regularity bound $\|B\|_{H^{k+\frac{1}{2}}(\Omega)}\lesssim_M \epsilon^{-\frac{3}{4}}$ and the error bound $\|B_{\epsilon}-B\|_{H^{k-4}(\Omega)}\lesssim_M \epsilon^2$ to replace $\nabla_{B_{\epsilon}}$ with $\nabla_B$ in the last inequality.  We  also have, by definition of $v_{\epsilon}$,
\begin{equation*}
\|\nabla_{B_{\epsilon}}v_{\epsilon}\|_{\mathbf{H}^s(\Omega)}\lesssim_M \epsilon^{-\frac{1}{2}}\|v\|_{\mathbf{H}^s(\Omega)}+\|\nabla_{B_{\epsilon}}\nabla\Delta^{-1}\nabla\cdot (v^l+\mathcal{L}^{-1}v^h)\|_{\mathbf{H}^s(\Omega)}.
\end{equation*}
Since $v$ is divergence-free, it is easy to see that $\nabla\cdot (v^l+\mathcal{L}^{-1}v^h)=\mathcal{O}_{H^{k-100}(\Omega)}(\epsilon^{10})$, where the choice of topology here is somewhat arbitrary. Combining this bound with  the commutator identities in \Cref{Movingsurfid}, the bound $\|\Gamma\|_{H^{k+\frac{1}{2}}}\lesssim_M\epsilon^{-\frac{1}{2}}$, the balanced elliptic estimates in \Cref{BEE} and the surface regularization bounds, we have
\begin{equation*}
\|\nabla_{B_{\epsilon}}v_{\epsilon}\|_{\mathbf{H}^s(\Omega)}\lesssim_M \epsilon^{-\frac{1}{2}}\|v\|_{\mathbf{H}^s(\Omega)},
\end{equation*}
which yields \eqref{gradbregbounds}. Now, we prove the regularization bounds in \eqref{retainedbounds}. First, from the elliptic estimates in \Cref{BEE} and the regularization bounds for $W^l$ as well as \eqref{cruderrotregbound}, we have
\begin{equation*}
\|W_{\epsilon}\|_{H^{k+1+\alpha}(\Omega)}\lesssim_M \|\Gamma\|_{H^{k+\frac{3}{2}+\alpha}}+\|W^l+W_{\epsilon}^h\|_{H^{k+1+\alpha}(\Omega)}\lesssim_M \epsilon^{-\frac{3}{2}-\alpha}+\|W_{\epsilon}^h\|_{H^{k+1+\alpha}(\Omega)}\lesssim_M\epsilon^{-\frac{3}{2}-3\alpha}
\end{equation*}
for $0\leq\alpha\leq 1$. To prove the second bound, one proceeds in an almost identical fashion to \eqref{gradbregbounds} except we use the second part of \Cref{magfieldreg} and in carrying out the various commutator and elliptic estimates, we make use of the bounds $\|\Gamma\|_{H^{k+\frac{3}{2}}}\lesssim_M\epsilon^{-\frac{3}{2}}$, $\|B_{\epsilon}\|_{H^{k+2}(\Omega)}\lesssim_M\epsilon^{-\frac{9}{2}}$ and $\|B_{\epsilon}\|_{H^{k+1}(\Omega)}\lesssim_M\epsilon^{-\frac{3}{2}}$ instead of the weaker bounds used in \eqref{gradbregbounds}.
\medskip

Now, we move to the other, more difficult, bound in \eqref{retainedbounds}. We first note that since $B_{\epsilon}$ is tangent to $\Gamma$, we obtain easily from the Leibniz rule for $\mathcal{N}$, the regularization bounds for $\Gamma$ and the weak bound $\|B_{\epsilon}-B\|_{H^{k-5}(\Omega)}\lesssim_M\epsilon^2$,
\begin{equation*}
\|\mathcal{N}(B_{\epsilon}-B)\cdot n_\Gamma\|_{H^{k-\frac{1}{2}}(\Gamma)}\lesssim_M 1+\|B_{\epsilon}-B\|_{H^k(\Omega)}\lesssim_M 1.
\end{equation*}
Therefore, it suffices to show that
\begin{equation*}\label{virrotbound}
\|\mathcal{N}v_{\epsilon}\cdot n_\Gamma\|_{H^{k-\frac{1}{2}}(\Gamma)}\lesssim_M \epsilon^{-1}.
\end{equation*}
Using the surface regularization bounds, the above will follow if we can establish that
\begin{equation}\label{twowbounds}
\|w_{\epsilon}-v_{\epsilon}\|_{H^{k+1}(\Omega)}\lesssim_M\epsilon^{-1},\hspace{5mm}\|\mathcal{N}w_{\epsilon}\cdot n_\Gamma\|_{H^{k-\frac{1}{2}}(\Gamma)}\lesssim_M \epsilon^{-1}.
\end{equation}
For the first bound, we observe from the balanced elliptic estimates and the  weak $\mathcal{O}_{H^{k-5}(\Omega)}(\epsilon^2)$ error bound for $v_{\epsilon}-w_{\epsilon}$ that
\begin{equation*}
\|w_{\epsilon}-v_{\epsilon}\|_{H^{k+1}(\Omega)}\lesssim_M 1+\|\nabla\cdot w_{\epsilon}\|_{H^{k}(\Omega)}\lesssim_M\epsilon^{-1}+\|\nabla\cdot v_{\epsilon}^h\|_{H^k(\Omega)}.
\end{equation*}
To estimate $\nabla\cdot v_{\epsilon}^h$, we observe that since $v$ is divergence-free, we have the equation
\begin{equation*}
\nabla\cdot v_{\epsilon}^h+\epsilon^2\nabla_{B_{\epsilon}}^4(\nabla\cdot v_{\epsilon}^h)=-[\nabla\cdot,\Phi_{\leq\epsilon^{-\frac{1}{8}}}]v-\epsilon^2[\nabla\cdot, \nabla_{B_{\epsilon}}^4]v_{\epsilon}^h.
\end{equation*}
The first commutator on the right-hand side is essentially localized at frequency $\approx \epsilon^{-\frac{1}{8}}$. Moreover, as $v_{\epsilon}^h$ is at high frequency, we can estimate, by expanding the second commutator,
\begin{equation*}
\epsilon^2\|[\nabla\cdot,\nabla_{B_{\epsilon}}^4]v_{\epsilon}^h\|_{H^{k}(\Omega)}\lesssim_M 1+\sup_{0\leq j\leq 3}\epsilon^2\|\nabla_{B_{\epsilon}}^jv_{\epsilon}^h\|_{H^k(\Omega)}\lesssim_M 1+\epsilon^{\frac{1}{2}}\|v\|_{H^{k+1}(\Omega)}\lesssim_M\epsilon^{-1}.
\end{equation*}
Consequently, we have
\begin{equation*}
\|\nabla\cdot v_{\epsilon}^h\|_{H^{k}(\Omega)}\lesssim_M\epsilon^{-1}.
\end{equation*}
This establishes the first bound in \eqref{twowbounds}. To establish the second bound, we observe that it suffices to prove the bound with $w_{\epsilon}$ replaced by $v_{\epsilon}^h$ in light of the regularization bounds for $\Gamma$ and $v^l$. Arguing similarly to the above and using the commutator identities in \Cref{Movingsurfid} we can write
\begin{equation*}
\mathcal{N}v_{\epsilon}^h\cdot n_\Gamma+\epsilon^2\nabla_{B_{\epsilon}}^4(\mathcal{N}v_{\epsilon}^h\cdot n_\Gamma)=\mathcal{N}v^h\cdot n_\Gamma+R_{\epsilon}
\end{equation*}
where $R_{\epsilon}$ is an error term satisfying
\begin{equation*}
\|R_{\epsilon}\|_{H^{k-\frac{1}{2}}(\Gamma)}\lesssim_M \epsilon^{-1}.
\end{equation*}
An energy estimate akin to the one used to prove \eqref{genrotregbound} yields
\begin{equation*}
\|\mathcal{N}v_{\epsilon}^h\cdot n_\Gamma\|_{H^{k-\frac{1}{2}}(\Gamma)}\lesssim_M\epsilon^{-1}
\end{equation*}
as desired. 
\medskip 

We now focus on establishing properties (iii) and (iv). We will show the full details for (iii) and then outline the main differences for establishing (iv). Our first aim is to reformulate the estimate (iii) in terms of a bound where we can interpolate between integer-based Sobolev spaces. This is slightly tricky since the right-hand side of the above estimate involves both $\omega$ and $W$ with significantly different weights in the parameter $\epsilon$. Before proceeding, we introduce the following notation for various commutator expressions that will appear in the analysis below:
\begin{equation*}
\mathcal{C}^1:=\epsilon^2\mathcal{L}^{-1}[\nabla_{B_{\epsilon}}^4,\nabla\times]\mathcal{L}^{-1}\Phi_{\geq\epsilon^{-\frac{1}{8}}},\hspace{5mm}\mathcal{C}^2:=\epsilon^2\mathcal{L}^{-1}\nabla_{B_{\epsilon}}^4[\nabla\times,\Phi_{\leq\epsilon^{-\frac{1}{8}}}].
\end{equation*}
We  observe the identities
\begin{equation}\label{vortcommute}
\omega_{\epsilon}=\omega^l+\omega_{\epsilon}^h+\mathcal{C}^1W+\mathcal{C}^2W,\hspace{5mm}\zeta_{\epsilon}=\omega_{\epsilon}^h-[\nabla\times, \Phi_{\leq\epsilon^{-\frac{1}{8}}}]W+\mathcal{C}^1W+\mathcal{C}^2W.
\end{equation}
We also collect the simple bounds 
\begin{equation*}
\epsilon^{\frac{j}{2}}\|\nabla_{B_{\epsilon}}^j\mathcal{C}^1\|_{H^{s+1}(\Omega)\to H^s(\Omega)}\lesssim_M \epsilon^{\frac{1}{2}},\hspace{5mm}\epsilon^{\frac{j}{2}}\|\nabla_{B_{\epsilon}}^j\mathcal{C}^2\|_{H^{s+1}(\Omega)\to H^s(\Omega)}\lesssim_M\epsilon^{\frac{3}{2}},\hspace{5mm}1\leq s\leq k,\hspace{5mm} 0\leq j\leq 4,
\end{equation*}
which use the fact that $W_{\epsilon}^h$ and the commutator in the latter term are ``localized" to ``frequency" essentially $\lesssim_M\epsilon^{-\frac{1}{8}}$ and $\approx \epsilon^{-\frac{1}{8}}$ respectively. For $2\leq j\leq 4$, it is easy to see  from these estimates that we have
\begin{equation*}\label{differentvorticities}
\sum_{2\leq j\leq 4}\epsilon^j\|\nabla_{B_{\epsilon}}^j\zeta_{\epsilon}\|_{H^s(\Omega)}^2\leq 2\sum_{2\leq j\leq 4}\epsilon^j\|\nabla_{B_{\epsilon}}^j\omega_{\epsilon}^h\|_{H^s(\Omega)}^2+C(M)\epsilon\|W\|_{H^{s+1}(\Omega)}^2. 
\end{equation*}
Therefore, it suffices to establish \eqref{genrotregbound2} with $\omega_{\epsilon}^h$ in place of $\zeta_{\epsilon}$ in the second term on the left-hand side.
Motivated by the above, let us define the linear maps given by 
\begin{equation*}
\mathcal{T}:H^{s+1}(\Omega)\to H^{s}(\Omega),\hspace{5mm}f\mapsto c\epsilon^{-\frac{1}{2}}(\mathcal{C}^1+\mathcal{C}^2)f,\hspace{5mm}\mathcal{S}:H^s(\Omega)\to H^s(\Omega),\hspace{5mm}u\mapsto u_{\epsilon}:=u^l+u_{\epsilon}^h.
\end{equation*}
Here, $c>0$ is some small $M$ and $k$ dependent parameter chosen so that for every $1\leq s\leq k$, there holds
\begin{equation}\label{Tbound}
\|\mathcal{T}\|_{H^{s+1}(\Omega)\to H^{s}(\Omega)}\leq\frac{1}{100}.
\end{equation}
By interpolation, it suffices therefore to show the general bound
\begin{equation}\label{interpolationsetup2}
\|(u_{\epsilon}+\mathcal{T}f)\pm\delta\nabla_{B_{\epsilon}}(u_{\epsilon}+\mathcal{T}f)\|_{H^{s}(\Omega)}^2 
+ \frac{1}{2}\sum_{2\leq j\leq 4}\epsilon^j \| \nabla_{B_{\epsilon}}^ju_\epsilon^h\|_{H^{s}(\Omega)}^2 
\leq (1+C\epsilon) (\|u\|_{H^{s}(\Omega)}^2 
+  \|f\|_{H^{s+1}(\Omega)}^2)
\end{equation}
when $s=m$ is a non-negative integer. In light of the above discussion, we remark that the bound \eqref{genrotregbound1} will follow by taking $f=c^{-1}\epsilon^{\frac{1}{2}}W$ and $u=\omega$. We remark further that it suffices to prove \eqref{interpolationsetup2} in the case $\delta=0$ since we have for $w_{\epsilon}:=(u_{\epsilon}+\mathcal{T}f)$,
\begin{equation*}
\|w_{\epsilon}+\delta\nabla_{B_{\epsilon}}w_{\epsilon}\|_{H^m(\Omega)}^2\leq \|w_{\epsilon}\|_{H^m(\Omega)}^2+2\delta \langle w_{\epsilon},\nabla_{B_{\epsilon}}w_{\epsilon}\rangle_{H^m(\Omega)}+\delta^2C(M)(\|\nabla_{B_{\epsilon}}u_{\epsilon}\|_{H^m(\Omega)}^2+\|\nabla_{B_{\epsilon}}\mathcal{T}f\|_{H^m(\Omega)}^2),
\end{equation*}
and the second and third terms on the right can (by skew-adjointness of $\nabla_{B_{\epsilon}}$, the $\delta=0$ case and the above bounds) be estimated by $C(M)\epsilon(\|u\|_{H^m(\Omega)}^2+\|f\|_{H^{m+1}(\Omega)}^2)$.
Now, we turn to \eqref{interpolationsetup2} in the case $\delta=0$. We begin with the expansion
\begin{equation*}\label{L2id2}
\begin{split}
\|w_{\epsilon}\|_{H^m(\Omega)}^2&=\|u\|_{H^m(\Omega)}^2+2\langle w_{\epsilon},w_{\epsilon}-u\rangle_{H^m(\Omega)}-\|w_{\epsilon}-u\|_{H^m(\Omega)}^2.
\end{split}
\end{equation*}
Using the equation for $u_{\epsilon}$, the last term can be expanded as
\begin{equation*}
\begin{split}
-\|w_{\epsilon}-u\|_{H^m(\Omega)}^2=-\epsilon^4\|\nabla_{B_{\epsilon}}^4u^h_{\epsilon}\|_{H^m(\Omega)}^2-\|\mathcal{T}f\|_{H^m(\Omega)}^2+2\epsilon^2\langle\nabla_{B_{\epsilon}}^4u_{\epsilon}^h,\mathcal{T}f\rangle_{H^m(\Omega)}.
\end{split}
\end{equation*}
By Cauchy-Schwarz and \eqref{Tbound}, we obtain
\begin{equation*}\label{interpolationbound1}
-\|w_{\epsilon}-u\|_{H^m(\Omega)}^2\leq -\epsilon^4\frac{1}{2}\|\nabla_{B_{\epsilon}}^4u^h_{\epsilon}\|_{H^m(\Omega)}^2+\frac{1}{10}\|f\|_{H^{m+1}(\Omega)}^2.
\end{equation*}
Next, we expand the second term in \eqref{L2id}. We have
\begin{equation*}\label{twotermsinterpolation}
\begin{split}
2\langle w_{\epsilon},w_{\epsilon}-u\rangle_{H^m(\Omega)}&=-2\epsilon^2\langle u^l,\nabla_{B_{\epsilon}}^4u^h_{\epsilon}\rangle_{H^m(\Omega)}-2\epsilon^2\langle u^h_{\epsilon},\nabla_{B_{\epsilon}}^4u_{\epsilon}^h\rangle_{H^m(\Omega)}+2\langle u^l,\mathcal{T}f\rangle_{H^m(\Omega)}+2\langle u_{\epsilon}^h,\mathcal{T}f\rangle_{H^m(\Omega)}
\\
&-2\epsilon^2\langle \nabla_{B_{\epsilon}}^4u_{\epsilon}^h,\mathcal{T}f\rangle_{H^m(\Omega)}+2\|\mathcal{T}f\|_{H^m(\Omega)}^2=:I_1+I_2+I_3+I_4+I_5+I_6.
\end{split}
\end{equation*}
We now estimate the terms $I_1,\dots,I_6$.
\medskip

\noindent
\textbf{$I_1$ Estimate}. For this, we simply commute $\nabla_{B_{\epsilon}}^4$  and integrate $\nabla_{B_{\epsilon}}^4$ by parts onto the low frequency term $u^l$ to estimate
\begin{equation*}
I_1\lesssim_M \epsilon^2\|u^l\|_{H^{m+4}(\Omega)}\|u_{\epsilon}^h\|_{H^m(\Omega)}+\epsilon\|u\|_{H^m(\Omega)}^2\lesssim_M \epsilon\|u\|_{H^m(\Omega)}^2,
\end{equation*}
where we used that $u_{\epsilon}^h$ is at high frequency to compensate for commutator errors.
\medskip

\noindent
\textbf{$I_2$ Estimate}. We commute two factors of $\nabla_{B_{\epsilon}}^2$ and integrate by parts to obtain
\begin{equation*}
I_2\leq -2\epsilon^2\|\nabla_{B_{\epsilon}}^2u^h_{\epsilon}\|_{H^m(\Omega)}^2+C(M)\epsilon\|u\|_{H^m(\Omega)}^2.
\end{equation*}
To estimate $I_3$ and $I_4$, we need to rely more heavily on the explicit structure of the term $\mathcal{T}f$. By expanding the first commutator in the definition of $\mathcal{T}$ and using the bounds for $\mathcal{L}^{-1}f^h$, we observe that there exist functions $g_{\epsilon}^1$ and $g_{\epsilon}^2$ such that (if $c$ is small enough) 
\begin{equation*}
\mathcal{T}f=\epsilon^{\frac{3}{2}}\nabla_{B_{\epsilon}}^3g_{\epsilon}^1+g_{\epsilon}^2,\hspace{5mm} \|g_{\epsilon}^1\|_{H^m(\Omega)}\leq \frac{1}{4}\|f\|_{H^{m+1}(\Omega)},\hspace{5mm}\|g_{\epsilon}^2\|_{H^m(\Omega)}\lesssim_M\epsilon^{\frac{1}{2}} \|f\|_{H^{m+1}(\Omega)}.
\end{equation*}
\textbf{$I_3$ Estimate}. Using the above decomposition, we can integrate three factors of $\nabla_{B_{\epsilon}}^3$ similarly to the $I_1$ estimate to obtain
\begin{equation*}
I_3\lesssim_M \epsilon^{\frac{3}{2}}\|u^l\|_{H^{m+3}(\Omega)}\|f\|_{H^{m+1}(\Omega)}+\epsilon^{\frac{1}{2}}\|u^l\|_{H^m(\Omega)}\|f\|_{H^{s+1}(\Omega)}\leq C(M)\epsilon\|u\|_{H^m(\Omega)}^2+\frac{1}{10}\|f\|_{H^{m+1}(\Omega)}^2.
\end{equation*}
\textbf{$I_4$ Estimate}. Using the above decomposition again and integrating by parts, we have
\begin{equation*}
\begin{split}
I_4&\leq \frac{\epsilon^{\frac{3}{2}}}{2}\|\nabla_{B_{\epsilon}}^3u^h_{\epsilon}\|_{H^m(\Omega)}\|f\|_{H^{m+1}(\Omega)}+\frac{1}{4}\|f\|_{H^{m+1}(\Omega)}^2+C(M)\epsilon\|u\|_{H^m(\Omega)}^2
\\
&\leq \frac{\epsilon^3}{4}\|\nabla_{B_{\epsilon}}^3u^h_{\epsilon}\|_{H^m(\Omega)}^2+\frac{1}{2}\|f\|_{H^{m+1}(\Omega)}^2+C(M)\epsilon\|u\|_{H^m(\Omega)}^2.
\end{split}
\end{equation*}
Integrating by parts and using Cauchy-Schwarz, we may then estimate 
\begin{equation*}
\frac{\epsilon^3}{4}\|\nabla_{B_{\epsilon}}^3u^h_{\epsilon}\|_{H^m(\Omega)}^2\leq \frac{\epsilon^4}{8}\|\nabla_{B_{\epsilon}}^4u^h_{\epsilon}\|_{H^m(\Omega)}^2+ \frac{\epsilon^2}{8}\|\nabla_{B_{\epsilon}}^2u^h_{\epsilon}\|_{H^m(\Omega)}^2+C(M)\epsilon\|u\|_{H^m(\Omega)}^2.
\end{equation*}
\textbf{$I_5$ and $I_6$ estimates}. Here, we simply use Cauchy-Schwarz and the operator bound for $\mathcal{T}$ to estimate
\begin{equation*}
I_5+I_6\leq \epsilon^4\frac{1}{4}\|\nabla_{B_{\epsilon}}^4u_{\epsilon}^h\|_{H^m(\Omega)}^2+\frac{1}{4}\|f\|_{H^{m+1}(\Omega)}^2.
\end{equation*}
Combining all of the above estimates yields the bound \eqref{interpolationsetup2} and thus \eqref{genrotregbound1} (with, say, $C=\frac{1}{100}$). Next, we move on to \eqref{genrotregbound2}. Using \eqref{vortcommute} and arguing similarly to the above, it suffices to establish the required bound with $\omega_{\epsilon}^h$ on the left-hand side in place of $\zeta_{\epsilon}$. We now describe how to set up the requisite interpolation argument. To begin, we introduce the notation
\begin{equation*}
\mathcal{C}^1_B:=[\mathcal{C}^1,\nabla_{B_{\epsilon}}]-\epsilon^2\nabla_{B_{\epsilon}}^4\mathcal{L}^{-1}[\nabla_{B_{\epsilon}},\Phi_{\leq\epsilon^{-\frac{1}{8}}}]\nabla\times,\hspace{5mm}\mathcal{C}^2_B:=\nabla_{B_{\epsilon}}\mathcal{C}^2.
\end{equation*}
Applying $\nabla_{B_{\epsilon}}$ to \eqref{vortcommute} and using the above notation, we  obtain
\begin{equation*}
\nabla_{B_{\epsilon}}\omega_{\epsilon}=(\nabla_{B_{\epsilon}}\omega)^l+\mathcal{L}^{-1}(\nabla_{B_{\epsilon}}\omega)^h+\mathcal{C}_1\nabla_{B_{\epsilon}}W+(\mathcal{C}^1_B+\mathcal{C}^2_B)W.
\end{equation*}
Analogously to before, we collect the estimates which can be verified through straightforward (but slightly tedious) computation: 
\begin{equation*}
\|\mathcal{C}_B^1\|_{H^{s+\frac{3}{2}}(\Omega)\to H^s(\Omega)}\lesssim_M \epsilon^{\frac{1}{2}},\hspace{5mm}\|\mathcal{C}_B^2\|_{H^{s+\frac{3}{2}}(\Omega)\to H^s(\Omega)}\lesssim_M \epsilon^{\frac{3}{2}},\hspace{5mm} 1\leq s\leq k-1.
\end{equation*}
We then define the map $\mathcal{T}_B: H^{s+1}(\Omega)\times H^{s+\frac{3}{2}}(\Omega)\to H^{s}(\Omega)$ by
\begin{equation*}
\mathbf{f}:=(f_1,f_2)\mapsto c\epsilon^{-\frac{1}{2}}(\mathcal{C}^1f_1+(\mathcal{C}^1_B+\mathcal{C}^2_B)f_2),
\end{equation*}
where $0<c\ll 1$ is chosen as before so that $\|\mathcal{T}_{B}\|\leq\frac{1}{100}$. As in the proof of \eqref{genrotregbound1}, we can reduce matters to establishing the estimate
\begin{equation*}
\|u_{\epsilon}+\mathcal{T}_B\mathbf{f}\|_{H^s(\Omega)}^2+\frac{1}{2}\sum_{2\leq j\leq 4}\epsilon^j\|\nabla_{B_{\epsilon}}^ju_{\epsilon}^h\|_{H^s(\Omega)}^2\leq (1+C(M)\epsilon)(\|u\|_{H^s(\Omega)}^2+\|\mathbf{f}\|_{H^{s+1}(\Omega)\times H^{s+\frac{3}{2}}(\Omega)}^2)
\end{equation*}
for integer $s=m$. The proof of this bound proceeds similarly  to the analogous bound  \eqref{genrotregbound1} by observing a decomposition for $\mathcal{T}_B$ similar to that of $\mathcal{T}$ above and then performing the analogous energy-type estimate. We omit the details of these straightforward (albeit somewhat tedious) modifications.
\medskip

Now, we establish the energy monotonicity bound (v). Let us write
\begin{equation*}
\mathcal{J}_{\epsilon}:=\sum_{2\leq j\leq 4}\epsilon^j(\|\nabla_{B_\epsilon}^jv_{\epsilon}^h\|_{\mathbf{H}^k(\Omega)}^2+\|\nabla_{B_\epsilon}^j\zeta_{\epsilon}^\pm\|_{\mathbf{H}^{k-1}(\Omega)}^2).
\end{equation*}
We will prove the stronger bound
\begin{equation*}
E^k(v_{\epsilon},B_{\epsilon},\Gamma)+C\mathcal{J}_{\epsilon}\leq E^k(v,B,\Gamma)+C(M)\epsilon,
\end{equation*}
for some constant $C>0$. Below, we write
\begin{equation*}
R_{\epsilon}:=c\mathcal{J}_{\epsilon}+C(M)\epsilon,
\end{equation*}
where $0<c\ll C$ is a sufficiently small positive constant. From properties (iii)-(iv) in \Cref{rotregEmon} we have
\begin{equation*}\label{vorticityemon}
\begin{split}
\|\omega_{\epsilon}^\pm\|_{\mathbf{H}^{k-1}(\Omega)}^2+C\sum_{2\leq j\leq 4}\epsilon^j\|\nabla_{B_{\epsilon}}^j\zeta_{\epsilon}^\pm\|_{\mathbf{H}^{k-1}(\Omega)}^2&\leq (1+C(M)\epsilon)\|\omega^\pm\|_{\mathbf{H}^{k-1}(\Omega)}^2+R_{\epsilon}.
\end{split}
\end{equation*}
To estimate perturbative errors in the surface components of the energy, we will need the  bound
\begin{equation}\label{Wdiffbound}
\|W^\pm_{\epsilon}-W^\pm\|_{H^{k-\frac{1}{2}}(\Omega)}\leq R_{\epsilon}.
\end{equation}
 To prove this bound, we use the div-curl estimate in \Cref{Balanced div-curl}, the bounds for $W_{\epsilon}^h$ and the identity $W^\pm_{\epsilon}-W^\pm=-\epsilon^2(\nabla_{B_{\epsilon}}^4W^h_{\epsilon})^{rot}-\epsilon^2(\nabla_{B_{\epsilon}}^4v_{\epsilon}^h)^{ir}$ to estimate 
\begin{equation*}
\|W^\pm_{\epsilon}-W^\pm\|_{H^{k-\frac{1}{2}}(\Omega)}\lesssim_M \epsilon^2\|\nabla\times\nabla_{B_{\epsilon}}^4W_{\epsilon}^h\|_{H^{k-\frac{3}{2}}(\Omega)}+\epsilon^2\|\nabla_{B_{\epsilon}}^4v_{\epsilon}^h\|_{H^{k-\frac{1}{2}}(\Omega)}+\epsilon.
\end{equation*}
The desired bound then follows by commuting $\nabla\times$ with $\nabla_{B_{\epsilon}}^4$ in the first term on the right-hand side above (using that $W_{\epsilon}^h$ is at high frequency to compensate for errors coming from these commutations) and from Cauchy-Schwarz. An immediate consequence of the above bound and elliptic regularity is the estimate
\begin{equation*}\label{PdiffboundR}
\begin{split}
\|a_{\epsilon}-a\|_{H^{k-1}(\Gamma)}+\|P_{\epsilon}-P\|_{H^{k+\frac{1}{2}}(\Omega)}\leq R_{\epsilon}.
\end{split}
\end{equation*}
From this we deduce that
\begin{equation*}\label{firstaestimate}
\int_{\Gamma}a_{\epsilon}|\mathcal{N}^{k-1}a_{\epsilon}|^2dS\leq \int_{\Gamma}a|\mathcal{N}^{k-1}a|^2dS+R_{\epsilon}.
\end{equation*}
Using the above,  self-adjointness of $\mathcal{N}$ and \Cref{Movingsurfid}, we also obtain
\begin{equation*}
\begin{split}
\int_{\Gamma}\mathcal{N}^{k-\frac{3}{2}}\nabla_{B_{\epsilon}}a_{\epsilon}\mathcal{N}^{k-\frac{3}{2}}\nabla_{B_{\epsilon}}(a_{\epsilon}-a)dS&\leq \int_{\Gamma}\mathcal{N}^{k-2}\nabla_{B_{\epsilon}}a_{\epsilon}\nabla_{B_{\epsilon}}\mathcal{N}^{k-1}(a_{\epsilon}-a)dS+R_{\epsilon}
\\
&\leq -\int_{\Gamma}\mathcal{N}^{k-2}\nabla_{B_{\epsilon}}^2a_{\epsilon}\mathcal{N}^{k-1}(a_{\epsilon}-a)dS+R_{\epsilon}.
\end{split}
\end{equation*}
We then further estimate
\begin{equation*}
-\int_{\Gamma}\mathcal{N}^{k-2}\nabla_{B_{\epsilon}}^2a_{\epsilon}\mathcal{N}^{k-1}(a_{\epsilon}-a)dS\lesssim_M \|\nabla_{B_{\epsilon}}^2a_{\epsilon}\|_{H^{k-2}(\Gamma)}\|a_{\epsilon}-a\|_{H^{k-1}(\Gamma)}.
\end{equation*}
By \Cref{partitionofBa}, we have $\|\nabla_{B_{\epsilon}}^2a_{\epsilon}\|_{H^{k-2}(\Gamma)}\lesssim_M 1$. Consequently, from the above analysis and \eqref{Wdiffbound}, we obtain
\begin{equation*}\label{secondaestimate}
\int_{\Omega}|\nabla\mathcal{H}\mathcal{N}^{k-2}\nabla_{B_{\epsilon}}a_{\epsilon}|^2dx\leq \int_{\Omega}|\nabla\mathcal{H}\mathcal{N}^{k-2}\nabla_{B}a|^2dx+R_{\epsilon}.
\end{equation*}
Next, we turn to the energy bounds for $\mathcal{G}_\epsilon^\pm$ and $\nabla_{B_{\epsilon}}\mathcal{G}_\epsilon^\pm$.
  We will need the following lemma to deal with some low-frequency error terms in our analysis.
\begin{lemma}\label{Gdecomp2}
Let $j\in\{0,1,2\}$. There holds
\begin{equation}\label{firstboundGdecomp2}
\|\nabla_{B_{\epsilon}}^j(\mathcal{G}_{\epsilon}^\pm-\mathcal{N}W^\pm_{\epsilon}\cdot \nabla P_{\epsilon})\|_{H^{k-1-\frac{j}{2}}(\Gamma)}\lesssim_M 1.
\end{equation}
Moreover, we have
\begin{equation}\label{errboundG}
\|\mathcal{G}_{\epsilon}^\pm-\mathcal{G}^\pm-\mathcal{N}(v_{\epsilon}-v)\cdot \nabla P_{\epsilon}\|_{H^{k-1}(\Gamma)}\leq R_{\epsilon}.
\end{equation}
\end{lemma}
\begin{proof}
We begin with \eqref{firstboundGdecomp2}. The cases $j=0$ and $j=1$ follows a similar line of reasoning to the proof of the decomposition for $\mathcal{G}_{\epsilon}^\pm$ and $\nabla_{B_{\epsilon}}\mathcal{G}_{\epsilon}^\pm$ in \Cref{Bbounds1} (just with different numerology because the norms are different), so we omit the details. To handle the case $j=2$, we begin by writing
\begin{equation*}
\mathcal{G}^\pm=\nabla_nW^\pm\cdot \nabla P-\nabla_n\Delta^{-1}(\Delta W^\pm\cdot \nabla P+2\nabla W^\pm\cdot \nabla^2 P).
\end{equation*}
Next, we recall from \Cref{partitionofBa} that we have the enhanced regularity bound 
\begin{equation*}
\|\nabla_{B_{\epsilon}}^2P_\epsilon\|_{H^{k-\frac{1}{2}}(\Omega)}\lesssim_M 1.
\end{equation*}
Moreover, from the identity $n_\Gamma=-a_\epsilon^{-1}\nabla P_\epsilon$, it is also a straightforward application of \Cref{partitionofBa} and Sobolev product estimates to obtain
\begin{equation*}
\|\nabla_{B_{\epsilon}}^2n_\Gamma\|_{H^{k-2}(\Gamma)}\lesssim_M 1.
\end{equation*}
Combining this with the bound $\|\nabla_{B_{\epsilon}}^2W_{\epsilon}^\pm\|_{H^{k-\frac{3}{2}}(\Omega)}+\|\nabla_{B_{\epsilon}}B_{\epsilon}\|_{H^{k-\frac{1}{2}}(\Omega)}\lesssim_M 1$, we have
\begin{equation*}
\begin{split}
\nabla_{B_{\epsilon}}^2\mathcal{G}_{\epsilon}^\pm&=\nabla_n\nabla_{B_{\epsilon}}^2W^\pm_{\epsilon}\cdot\nabla P_{\epsilon}-\nabla_n\Delta^{-1}(\Delta \nabla_{B_{\epsilon}}^2W_{\epsilon}^\pm\cdot\nabla P_{\epsilon})+\mathcal{O}_{H^{k-2}(\Gamma)}(1)=\mathcal{N}\nabla_{B_{\epsilon}}^2W_{\epsilon}^\pm\cdot \nabla P_{\epsilon}+\mathcal{O}_{H^{k-2}(\Gamma)}(1)
\\
&=\nabla_{B_{\epsilon}}^2(\mathcal{N}W_{\epsilon}^\pm\cdot \nabla P_{\epsilon})+\mathcal{O}_{H^{k-2}(\Gamma)}(1),
\end{split}
\end{equation*}
as deisred. For the bound \eqref{errboundG}, we observe that in light of \eqref{Wdiffbound} and the definition of $\mathcal{G}^\pm$  we have the inequality
\begin{equation*}
\|\mathcal{G}^\pm_{\epsilon}-\mathcal{G}^\pm-(\nabla_n(W_{\epsilon}-W)\cdot\nabla P_\epsilon-\nabla_n\Delta^{-1}(\Delta (W_{\epsilon}-W)\cdot\nabla P_{\epsilon})\|_{H^{k-1}(\Gamma)}\leq R_{\epsilon}.
\end{equation*}
Using the bounds $\|W_{\epsilon}-W\|_{H^{k-5}(\Omega)}\lesssim_M\epsilon^2$,  $\|P_{\epsilon}\|_{H^{k+1}(\Omega)}\lesssim_M \epsilon^{-\frac{1}{2}}$  (which follows from the regularization bounds for the surface), $\|W^\pm_{\epsilon}\|_{H^{k+\frac{1}{2}}(\Omega)}\lesssim_M\epsilon^{-1}$ and the identity $\nabla_n=\mathcal{N}+\nabla_n\Delta^{-1}\Delta$, we have
\begin{equation*}
\|\mathcal{G}^\pm_{\epsilon}-\mathcal{G}^\pm-\mathcal{N}(W_{\epsilon}-W)\cdot\nabla P_{\epsilon}\|_{H^{k-1}(\Gamma)}\leq R_{\epsilon}.
\end{equation*}
Finally, we use that $W_{\epsilon}^\pm-W^\pm=\mp\epsilon^2(\nabla_{B_{\epsilon}}^4\tilde{B}_{\epsilon}^h)^{rot}+v_{\epsilon}-v$ and the Leibniz rule for $\mathcal{N}$ (leveraging the irrotationality $(\nabla_{B_{\epsilon}}^4\tilde{B}_{\epsilon}^h)^{rot}$) to estimate
\begin{equation*}
\epsilon^2\|\mathcal{N}(\nabla_{B_{\epsilon}}^4\tilde{B}_{\epsilon}^h)^{rot}\cdot\nabla P_\epsilon\|_{H^{k-1}(\Gamma)}\lesssim_M \epsilon+\epsilon^2\|(\nabla_{B_{\epsilon}}^4\tilde{B}_{\epsilon}^h)^{rot}\|_{H^{k-\frac{1}{2}}(\Omega)}\leq R_{\epsilon},
\end{equation*}
where the last inequality follows analogously to the proof of \eqref{Wdiffbound}.
\end{proof}
Next, we recall that $v_{\epsilon}=w_{\epsilon}^{div}$, where $w_{\epsilon}=v^l+v_{\epsilon}^h$. We will show that -- up to an error of size $R_{\epsilon}$ -- all instances of $v_{\epsilon}$ above can be replaced by $w_{\epsilon}$. To accomplish this, we begin by writing the equation for $\nabla\cdot w_{\epsilon}$. We compute that
\begin{equation}\label{divergenceqnforweps}
\nabla\cdot w_{\epsilon}=(1-\mathcal{L}^{-1})[\nabla\cdot,\Phi_{\leq\epsilon^{-\frac{1}{8}}}]v+\epsilon^2\mathcal{L}^{-1}[\nabla_{B_{\epsilon}}^4,\nabla\cdot]v_{\epsilon}^h,
\end{equation}
which (by writing $1-\mathcal{L}^{-1}=\epsilon^2\mathcal{L}^{-1}\nabla_{B_{\epsilon}}^4$) gives the estimates $\|\nabla\cdot w_{\epsilon}\|_{\mathbf{H}^{k-1}(\Omega)}\leq R_{\epsilon}$, $\|\nabla\cdot w_{\epsilon}\|_{\mathbf{H}^{k-5}(\Omega)}\lesssim_M\epsilon^2$, and thus, by the balanced elliptic estimates in \Cref{BEE},
\begin{equation}\label{vtowapprox}
\|v_{\epsilon}-w_{\epsilon}\|_{\mathbf{H}^{k}(\Omega)}=\|\nabla\Delta^{-1}\nabla\cdot w_{\epsilon}\|_{\mathbf{H}^{k}(\Omega)}\leq R_{\epsilon}.
\end{equation}
\begin{remark}\label{divergenceremarkB}
By inspecting the equation \eqref{divergenceqnforweps} and writing out the analogous equation for $\nabla\cdot \tilde{B}_{\epsilon}$ (which simply amounts to replacing $v$ with $B$ above), we also have  
\begin{equation*}
\|\nabla\Delta^{-1}\nabla\cdot w_{\epsilon}\|_{\mathbf{H}^{k}(\Omega)}+\|\nabla\Delta^{-1}\nabla\cdot \tilde{B}_{\epsilon}\|_{\mathbf{H}^{k}(\Omega)}\lesssim_M \epsilon^{\frac{1}{2}}
\end{equation*}
which we will need to use in the next section, but not for the remainder of the proof here.
\end{remark}
Next, for efficient bookkeeping, we define for $0\leq j\leq 5$,
\begin{equation*}
Q^l_j:=\mathcal{N}\nabla_{B_{\epsilon}^l}^jv^l\cdot\nabla P_{\epsilon},\hspace{5mm}Q^h_j:=\mathcal{N}\nabla_{B_{\epsilon}}^jv_{\epsilon}^h\cdot\nabla P_{\epsilon},
\end{equation*}
 where we write $B_{\epsilon}^l:=\Phi_{\leq \epsilon^{-\frac{1}{8}}}B_{\epsilon}$. Below, we will use  the following bounds repeatedly:
 \begin{equation}\label{frequentuse}
\epsilon^{\frac{j+1}{2}}(\|Q_j^h\|_{H^{k-\frac{3}{2}}(\Gamma)}+\|Q_{j+1}^h\|_{H^{k-2}(\Gamma)})\leq C(M)\epsilon^{\frac{j+1}{2}}\|\nabla_{B_{\epsilon}}^jv_{\epsilon}^h\|_{\mathbf{H}^{k-\frac{3}{2}}(\Omega)}\leq  C(M)\epsilon+R_{\epsilon},\hspace{5mm}2\leq j\leq 4,
 \end{equation}
where the last inequality follows from Cauchy-Schwarz. Now, we are ready to estimate the energy components $E^k_{\mathcal{G}_{\epsilon}^\pm}$  and $E^k_{\nabla_{B_{\epsilon}}\mathcal{G}_{\epsilon}^\pm}$. For the first component, our starting point is the elementary expansion
\begin{equation*}
E^k_{\mathcal{G}^\pm_{\epsilon}}=E^k_{\mathcal{G}^\pm}+\sum_{\alpha\in\{+,-\}}2\langle\nabla\mathcal{H}\mathcal{N}^{k-2}\mathcal{G}_{\epsilon}^\alpha,\nabla\mathcal{H}\mathcal{N}^{k-2}(\mathcal{G}^\alpha_{\epsilon}-\mathcal{G}^\alpha)\rangle_{L^2(\Omega)}-\|\nabla\mathcal{H}\mathcal{N}^{k-2}(\mathcal{G}^\alpha_{\epsilon}-\mathcal{G}^\alpha)\|_{L^2(\Omega)}^2.
\end{equation*}
By Poincare's inequality (noting that $\mathcal{N}^{k-2}(\mathcal{G}_{\epsilon}^\pm-\mathcal{G}^\pm)$ has mean zero on $\Gamma$) and ellipticity of $\mathcal{N}$, we may estimate
\begin{equation*}
\|\mathcal{G}^\pm_{\epsilon}-\mathcal{G}^\pm\|_{H^{k-\frac{3}{2}}(\Gamma)}^2\leq C(M)\|\nabla\mathcal{H}\mathcal{N}^{k-2}(\mathcal{G}_{\epsilon}^\pm-\mathcal{G}^\pm)\|_{L^2(\Omega)}^2+R_{\epsilon}.
\end{equation*}
Hence, by \Cref{Gdecomp2}, \eqref{vtowapprox} and the equation $w_{\epsilon}$, we have
\begin{equation*}
\|Q_4^h\|_{H^{k-\frac{3}{2}}(\Gamma)}^2\leq C(M)\|\nabla\mathcal{H}\mathcal{N}^{k-2}(\mathcal{G}_{\epsilon}^\pm-\mathcal{G}^\pm)\|_{L^2(\Omega)}^2+R_{\epsilon}.
\end{equation*}
By again using \Cref{Gdecomp2}, \eqref{vtowapprox} and the definition of $w_{\epsilon}$, it follows  by integration by parts that
\begin{equation}\label{annoyingcrossterms}
\begin{split}
\sum_{\alpha\in\{+,-\}}2\langle\nabla\mathcal{H}&\mathcal{N}^{k-2}\mathcal{G}_{\epsilon}^\alpha,\nabla\mathcal{H}\mathcal{N}^{k-2}(\mathcal{G}^\alpha_{\epsilon}-\mathcal{G}^\alpha)\rangle_{L^2(\Omega)}\leq-4\epsilon^2\langle\nabla\mathcal{H}\mathcal{N}^{k-2}(Q_0^l+Q_0^h),\nabla\mathcal{H}\mathcal{N}^{k-2}Q_4^h\rangle_{L^2(\Omega)}
\\
&+C(M)\sum_{\alpha\in\{+,-\}}\|\mathcal{N}^{k-1}(\mathcal{G}^{\alpha}-\mathcal{N}W_{\epsilon}^{\alpha})\|_{L^2(\Gamma)}\|\mathcal{N}^{k-2}(\mathcal{N}(v_{\epsilon}-v)\cdot\nabla P_{\epsilon})\|_{L^2(\Gamma)}+ R_{\epsilon}.
\end{split}
\end{equation}
Thanks to \eqref{Wdiffbound}, we may estimate
\begin{equation*}
\|\mathcal{N}^{k-2}(\mathcal{N}(v_{\epsilon}-v)\cdot\nabla P_{\epsilon})\|_{L^2(\Gamma)}\lesssim_{M}\|v_{\epsilon}-v\|_{H^{k-\frac{1}{2}}(\Omega)}\leq R_{\epsilon}.
\end{equation*}
Hence, by combining the above estimates, we have
\begin{equation}\label{Ek energy existence}
\begin{split}
E^k_{\mathcal{G}_{\epsilon}^\pm}&\leq E^k_{\mathcal{G}^\pm}-4\epsilon^2\langle\nabla\mathcal{H}\mathcal{N}^{k-2}Q_0^l,\nabla\mathcal{H}\mathcal{N}^{k-2}Q_4^h\rangle_{L^2(\Omega)}-4\epsilon^2\langle\nabla\mathcal{H}\mathcal{N}^{k-2}Q_0^h,\nabla\mathcal{H}\mathcal{N}^{k-2}Q_4^h\rangle_{L^2(\Omega)}
\\
&-C(M)\epsilon^4\|Q_4^h\|_{H^{k-\frac{3}{2}}(\Gamma)}^2+R_{\epsilon}.
\end{split}
\end{equation}
Notice next that by commuting two factors of $\nabla_{B_{\epsilon}}$ with $\mathcal{N}$ (observing that such commutations contribute errors of type $R_{\epsilon}$) and integrating by parts, we may bound
\begin{equation*}
\begin{split}
-4\epsilon^2\langle\nabla\mathcal{H}\mathcal{N}^{k-2}Q_0^h,\nabla\mathcal{H}\mathcal{N}^{k-2}Q_4^h\rangle_{L^2(\Omega)}\leq -C(M)\epsilon^2\|Q_2^h\|_{H^{k-\frac{3}{2}}(\Gamma)}^2+R_{\epsilon}.
\end{split}
\end{equation*}
Our aim will be to show that the remaining term $-4\epsilon^2\langle\nabla\mathcal{H}\mathcal{N}^{k-2}Q_0^l,\nabla\mathcal{H}\mathcal{N}^{k-2}Q_4^h\rangle_{L^2(\Omega)}$ in \eqref{Ek energy existence} can be controlled by $R_{\epsilon}$. The key idea here is that $Q_0^l$ and $Q_4^h$ should be ``almost" orthogonal, since the leading part of $Q_0^l$ involves the low-frequency factor $v^l$. Nevertheless, exploiting this is somewhat delicate. The strategy will be to shift a suitable number of factors of $\nabla_{B_{\epsilon}}$ onto the low-frequency term $v^l$. By commuting a factor of $\nabla_{B_{\epsilon}}$ onto $Q_0^l$, we obtain
\begin{equation*}
-4\epsilon^2\langle\nabla\mathcal{H}\mathcal{N}^{k-2}Q_0^l,\nabla\mathcal{H}\mathcal{N}^{k-2}Q_4^h\rangle_{L^2(\Omega)}\leq 4\epsilon^2\langle\nabla\mathcal{H}\mathcal{N}^{k-2}Q_1^l,\nabla\mathcal{H}\mathcal{N}^{k-2}Q_3^h\rangle_{L^2(\Omega)}+R_{\epsilon}
\end{equation*}
where we used the estimate
\begin{equation*}
\nabla_{B_{\epsilon}}Q_0^l=Q_1^l+\mathcal{O}_{H^{k-\frac{3}{2}}(\Gamma)}(1),
\end{equation*}
which follows from the commutator identities in \Cref{Movingsurfid} and  the straightforward estimate
\begin{equation*}
\|\nabla_{B_{\epsilon}^l-B_{\epsilon}} v^l\|_{H^k(\Omega)}\lesssim_M \|B_{\epsilon}-B\|_{H^k(\Omega)}+\|B_{\epsilon}-B\|_{H^{k-10}(\Omega)}\|v^l\|_{H^{k+1}(\Omega)}\lesssim_M 1.
\end{equation*}
Iterating this process and carrying out similar estimates, we find that
\begin{equation*}
\begin{split}
-4\epsilon^2\langle\nabla\mathcal{H}\mathcal{N}^{k-2}Q_0^l,\nabla\mathcal{H}\mathcal{N}^{k-2}Q_4^h\rangle_{L^2(\Omega)}&\leq -4\epsilon^2\langle\nabla\mathcal{H}\mathcal{N}^{k-2}Q_2^l,\nabla\mathcal{H}\mathcal{N}^{k-2}Q_2^h\rangle_{L^2(\Omega)}+R_{\epsilon}
\\
&\lesssim_M \epsilon^2\|\nabla_{B_{\epsilon}^l}^2v^l\|_{H^k(\Omega)}\|\nabla_{B_{\epsilon}}^2v_{\epsilon}^h\|_{H^k(\Omega)}+R_\epsilon
\\
&\leq R_{\epsilon},
\end{split}
\end{equation*}
where we used the bound $\|\nabla_{B_{\epsilon}^l}^2v^l\|_{H^k(\Omega)}\lesssim_M \epsilon^{-\frac{1}{2}}$. Consequently, we finally have
\begin{equation}\label{mainEkgest}
E^k_{\mathcal{G}_{\epsilon}^\pm}\leq E^k_{\mathcal{G}^\pm}-C(M)\epsilon^2\|Q_2^h\|_{H^{k-\frac{3}{2}}(\Gamma)}^2-C(M)\epsilon^4\|Q_4^h\|_{H^{k-\frac{3}{2}}(\Gamma)}^2+R_{\epsilon}.
\end{equation}
This will be the essential part of our estimate for $E^k_{\mathcal{G}^\pm}$. The analogous estimate for $E^k_{\nabla_{B_{\epsilon}}\mathcal{G}^\pm}$ is mostly similar. Our aim is to obtain the estimate
\begin{equation}\label{DBenergyestimate}
\begin{split}
E_{\nabla_{B_{\epsilon}}\mathcal{G}_{\epsilon}^\pm}^k&\leq E_{\nabla_{B}\mathcal{G}^\pm}^k-C(M)\epsilon^2\|Q_3^h\|_{H^{k-2}(\Gamma)}^2-C(M)\epsilon^4\|Q_5^h\|_{H^{k-2}(\Gamma)}^2+R_{\epsilon}.
\end{split}
\end{equation}
This follows along a very similar line of reasoning to the estimate for $E^k_{\mathcal{G}_{\epsilon}^\pm}$ except for one significant difference. The analogue of \eqref{annoyingcrossterms} is the following:
\begin{equation*}
\begin{split}
\sum_{\alpha\in\{+,-\}}2\langle a^{-1}_{\epsilon}\mathcal{N}^{k-2}&\nabla_{B_{\epsilon}}\mathcal{G}_{\epsilon}^\alpha,\mathcal{N}^{k-2}\nabla_{B_{\epsilon}}(\mathcal{G}^\alpha_{\epsilon}-\mathcal{G}^\alpha)\rangle_{L^2(\Gamma)}\leq-4\epsilon^2\langle a_{\epsilon}^{-1}\mathcal{N}^{k-2}\nabla_{B_{\epsilon}}(Q_0^l+Q_0^h),\mathcal{N}^{k-2}\nabla_{B_{\epsilon}}Q_4^h\rangle_{L^2(\Gamma)}
\\
&+C(M)\sum_{\alpha\in\{+,-\}}\|\mathcal{N}^{k-2}\nabla_{B_{\epsilon}}^2(\mathcal{G}^{\alpha}-\mathcal{N}W_{\epsilon}^{\alpha})\|_{L^2(\Gamma)}\|\mathcal{N}^{k-2}(\mathcal{N}(v_{\epsilon}-v)\cdot\nabla P_{\epsilon})\|_{L^2(\Gamma)}+ R_{\epsilon},
\end{split}
\end{equation*}
where the term in the second line arises by using \eqref{surfintbyparts} to integrate a factor of $\nabla_{B_{\epsilon}}$ by parts on $\Gamma$ (in contrast to the corresponding term in \eqref{annoyingcrossterms} where we instead integrated $\nabla$ by parts). Similarly to the analysis for $E^k_{\mathcal{G}_{\epsilon}^\pm}$, we  have
\begin{equation*}
\begin{split}
-4\epsilon^2\langle a_{\epsilon}^{-1}\mathcal{N}^{k-2}\nabla_{B_{\epsilon}}Q_0^l,\mathcal{N}^{k-2}\nabla_{B_{\epsilon}}Q_4^h\rangle_{L^2(\Gamma)}&\leq 4\epsilon^2\langle a_{\epsilon}^{-1}\mathcal{N}^{k-2}Q_2^l,\mathcal{N}^{k-2}Q_4^h\rangle_{L^2(\Gamma)}+R_{\epsilon}
\\
&\leq -4\epsilon^2\langle a_{\epsilon}^{-1}\mathcal{N}^{k-2}Q_3^l,\mathcal{N}^{k-2}Q_3^h\rangle_{L^2(\Gamma)}+R_{\epsilon}
\\
&\lesssim_M \epsilon^2\|\nabla_{B_{\epsilon}^l}^3v^l\|_{H^{k-\frac{1}{2}}(\Omega)}\|\nabla_{B_{\epsilon}}^3v_{\epsilon}^h\|_{H^{k-\frac{1}{2}}(\Omega)}+R_\epsilon
\leq R_{\epsilon}.
\end{split}
\end{equation*}
By integrating by parts two factors of $\nabla_{B_{\epsilon}}$, we also have
\begin{equation*}
-4\epsilon^2\langle a_{\epsilon}^{-1}\mathcal{N}^{k-2}\nabla_{B_{\epsilon}}Q_0^h,\mathcal{N}^{k-2}\nabla_{B_{\epsilon}}Q_4^h\rangle_{L^2(\Gamma)}\leq -4\epsilon^2\langle \mathcal{N}^{k-2}Q_3^h,\mathcal{N}^{k-2}Q_3^h\rangle_{L^2(\Gamma)}+R_{\epsilon}.
\end{equation*}
Then, arguing as in the $E^k_{\mathcal{G}_{\epsilon}^\pm}$ estimate to handle the remaining terms, we obtain \eqref{DBenergyestimate}. To finally conclude the energy monotonicity bound, we observe that by \Cref{Balanced div-curl}, the Taylor sign condition and the Leibniz rule for $\mathcal{N}$, we can estimate
\begin{equation*}
\mathcal{J}_{\epsilon}\leq C(M)\sum_{2\leq j\leq 4}\epsilon^j(\|\nabla_{B_{\epsilon}}^j\zeta_{\epsilon}\|_{\mathbf{H}^{k-1}(\Omega)}^2+\|Q_j^h\|_{H^{k-\frac{3}{2}}(\Gamma)}^2+\|Q_{j+1}^h\|_{H^{k-2}(\Gamma)}^2)+R_{\epsilon}.
\end{equation*}
Combining this with \eqref{mainEkgest} and \eqref{DBenergyestimate}, we obtain
\begin{equation*}
E^k(v_{\epsilon},B_{\epsilon},\Gamma_{\epsilon})+C\mathcal{J}_{\epsilon}\leq E^k(v,B,\Gamma)+R_{\epsilon},
\end{equation*}
which gives the energy monotonicity bound (v).
\end{proof}
\subsection{Step 4: Euler plus transport iteration}\label{EPTI} Given a small time step $0<\epsilon\ll 1$ and an initial data $(v_0,B_0,\Gamma_0)\in \mathbf{H}^k$ satisfying the hypotheses of \Cref{onestepiteration}, we construct a regularized data $(v_{\epsilon}, B_{\epsilon},\Gamma_{\epsilon})$ as follows: First, we input the initial state $(v_0,B_0,\Gamma_0)$ into \Cref{domainregularizationprop} to produce a partially regularized state $(\tilde{v}_{\epsilon}, \tilde{B}_{\epsilon},\tilde{\Gamma}_{\epsilon})$. Then, we input the  state $(\tilde{v}_{\epsilon}, \tilde{B}_{\epsilon},\tilde{\Gamma}_{\epsilon})$ into \Cref{mildreg}, which  produces a new state with the same regularization properties as $(\tilde{v}_{\epsilon}, \tilde{B}_{\epsilon},\tilde{\Gamma}_{\epsilon})$ but also  certain  high regularity bounds. Finally, we insert the outcome of  \Cref{mildreg}   into \Cref{rotregEmon} to obtain the fully regularized state $(v_{\epsilon}, B_{\epsilon},\Gamma_{\epsilon})$. Clearly, $(v_{\epsilon}, B_{\epsilon},\Gamma_{\epsilon})$ obeys the energy monotonicity bound and stays within distance $\mathcal{O}_{C^3}(\epsilon^2)$ of $(v_0,B_0,\Gamma_0)$. The regularization properties of $(v_{\epsilon}, B_{\epsilon},\Gamma_{\epsilon})$ are summarized in the following proposition.
\begin{proposition}\label{Reg bound list}
The following properties hold for the regularized data $(v_\epsilon,B_\epsilon,\Gamma_\epsilon)$:
\begin{enumerate}
\item (Surface regularization). For each $\alpha\geq 0$, we have 
\begin{equation*}
\|\Gamma_{\epsilon}\|_{H^{k+\alpha}}\lesssim_{M,\alpha} \epsilon^{-\alpha}.
\end{equation*}
\item (Rotational and irrotational bounds). The velocity and magnetic fields satisfy the bounds:
\begin{equation*}
\|\omega_{\epsilon}^\pm\|_{H^{k}(\Omega_\epsilon)}\leq (1+C(M)\epsilon)K(M)\epsilon^{-\frac{3}{2}},\hspace{5mm}\|\mathcal{N}_{\epsilon}W^\pm_{\epsilon}\cdot n_{\epsilon}\|_{H^{k-\frac{1}{2}}(\Gamma_\epsilon)}\lesssim_M\epsilon^{-1}.
\end{equation*}
\item (Higher regularity bounds). For each $0\leq \alpha\leq 1$, we have the higher regularity bounds
\begin{equation*}
\|W_\epsilon^\pm\|_{H^{k+1+\alpha}(\Omega_\epsilon)}\lesssim_M\epsilon^{-\frac{3}{2}-3\alpha}.
\end{equation*}
\item (Regularization in the direction of the magnetic field). We have the bounds
\begin{equation*}
\|\nabla_{B_{\epsilon}}\tilde{W}_{\epsilon}^\pm\|_{\mathbf{H}^k(\Omega_\epsilon)}\lesssim_M\epsilon^{-\frac{1}{2}},\hspace{5mm}\|\nabla_{B_{\epsilon}}\tilde{W}^\pm_{\epsilon}\|_{H^{k+1}(\Omega_\epsilon)}\lesssim_M\epsilon^{-2}.
\end{equation*}
\end{enumerate}
\end{proposition}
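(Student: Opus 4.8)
The plan is to assemble the statement directly from the three regularization steps already established, verifying that the quantitative bounds transfer cleanly through each application. First I would recall that the regularized state $(v_\epsilon,B_\epsilon,\Gamma_\epsilon)$ is constructed by composing the three operations: apply \Cref{domainregularizationprop} to $(v_0,B_0,\Gamma_0)$ to obtain $(\tilde v_\epsilon,\tilde B_\epsilon,\tilde\Gamma_\epsilon)$; feed this into \Cref{mildreg}; then feed the output into the construction preceding \Cref{rotregEmon}. Since the surface is only regularized in the first step and is left unchanged by the second and third steps, property (i) is immediate from property (iii) of \Cref{domainregularizationprop}, i.e.\ from \eqref{surfbound-par}. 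The first bound in property (ii), the vorticity bound, is tracked through all three steps: the bootstrap propagation in \Cref{domainregularizationprop}(v) gives $\|\omega_\epsilon^\pm\|_{H^k}\le(1+C(M)\epsilon)K(M)\epsilon^{-3/2}$ after step one, \Cref{mildreg}(iii) preserves this with at most the same multiplicative growth, and \eqref{genrotregbound1} in \Cref{rotregEmon}(iii) (with $s=k$, $\delta=0$, and absorbing the $C(M)\epsilon\|W\|_{H^{k+1}}^2$ term using the $H^{k+1}$ regularization bound from \Cref{mildreg}(ii)) again preserves it. One only needs to check that the three factors $(1+C(M)\epsilon)$ multiply to $(1+C'(M)\epsilon)$, which is trivial for $\epsilon$ small.

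Next I would handle the irrotational bound $\|\mathcal{N}_\epsilon W_\epsilon^\pm\cdot n_\epsilon\|_{H^{k-1/2}(\Gamma_\epsilon)}\lesssim_M\epsilon^{-1}$, which is precisely the content of \Cref{domainregularizationprop}(iv) after step one, is preserved in \Cref{mildreg} (the final displayed bound in the proof of \Cref{mildreg}), and is preserved again in \Cref{rotregEmon}(ii), namely the third inequality in \eqref{retainedbounds}. Property (iii), the higher-regularity bounds $\|W_\epsilon^\pm\|_{H^{k+1+\alpha}(\Omega_\epsilon)}\lesssim_{M,\alpha}\epsilon^{-3/2-3\alpha}$ for $0\le\alpha\le1$, is introduced in \Cref{mildreg}(ii) (there stated with exponent $\epsilon^{-3/2-3\alpha}$) and is retained through the final step by the first inequality in \eqref{retainedbounds} of \Cref{rotregEmon}(ii). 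Property (iv), the two $\nabla_{B_\epsilon}$-regularization bounds, is exactly \eqref{gradbregbounds} at $s=k$ (using $\|W^\pm\|_{\mathbf{H}^k(\Omega)}\lesssim_M 1$, which holds since the state entering step three has $\mathbf{H}^k$ norm $\lesssim_M 1$) together with the second inequality in \eqref{retainedbounds}, both from \Cref{rotregEmon}(ii).

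The only genuine bookkeeping care is in checking that the hypotheses required to invoke each proposition are met by the output of the previous step: \Cref{mildreg} requires the bounds \eqref{importantbounds}, which are supplied by \Cref{domainregularizationprop}(iv)--(v) and the fact that the $H^{k+1}$ bound $\|W^\pm\|_{H^{k+1}}\lesssim_M\epsilon^{-3/2}$ follows from the div-curl estimate in \Cref{Balanced div-curl} together with the vorticity and irrotational bounds (via \eqref{pointwisedeltabound}); and \Cref{rotregEmon} requires its input state to satisfy the conclusions of \Cref{mildreg}, which is exactly how the composition is arranged. I expect no serious obstacle here — the proposition is a summary collecting bounds already proved — so the proof is essentially a one-paragraph citation: state the composition, verify the three sets of hypotheses chain together, and read off (i)--(iv) from the cited items, noting only that the finitely many multiplicative $(1+C(M)\epsilon)$ factors combine harmlessly. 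The mildest subtlety, if any, is confirming that the $H^{k+1}$-type error terms appearing on the right-hand sides of \eqref{genrotregbound1}--\eqref{genrotregbound2} are indeed of size $\lesssim_M\epsilon^{-1}$ rather than worse, so that they are absorbed into the stated $\epsilon^{-1}$ and $\epsilon^{-3/2}$ budgets; this is guaranteed by \Cref{mildreg}(ii).
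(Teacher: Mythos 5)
Your proposal is correct, and it matches the paper's (implicit) treatment: the paper states Proposition~\ref{Reg bound list} immediately after describing the three-step composition $(v_0,B_0,\Gamma_0)\mapsto(\tilde v_\epsilon,\tilde B_\epsilon,\tilde\Gamma_\epsilon)\mapsto\cdots\mapsto(v_\epsilon,B_\epsilon,\Gamma_\epsilon)$ and gives no separate proof, treating it exactly as a catalogue read off from Propositions~\ref{domainregularizationprop}, \ref{mildreg}, and \ref{rotregEmon}, with the same hypothesis-chaining you verify. The only point worth underlining explicitly (which you note correctly) is that the $H^{k+1}$ bound in \eqref{importantbounds} is not stated in \Cref{domainregularizationprop} itself but is recovered from the vorticity and irrotational bounds via the div-curl estimate \Cref{Balanced div-curl} and \eqref{pointwisedeltabound}, and that the absorption of the $C(M)\epsilon\|W\|_{H^{k+1}}^2$ term in \eqref{genrotregbound1} uses that $K(M)$ is large.
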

\begin{remark}
We remark importantly that in statement (iv) of \Cref{Reg bound list}, the improved regularity is witnessed by   $\tilde{W}_{\epsilon}^\pm$ rather than $W^\pm_\epsilon$.
\end{remark}
The objective of this subsection is to construct the iterate $(v_1, B_1,\Gamma_1)$ from the regularized data $(v_{\epsilon}, B_{\epsilon},\Gamma_{\epsilon})$. In essence, we will produce the iterate $(v_1, B_1,\Gamma_1)$  by flowing the regularized data $(v_{\epsilon}, B_{\epsilon},\Gamma_{\epsilon})$ along a discrete version of the free boundary MHD evolution.  A na\"ive  Euler type iteration (phrased in terms of the $W^\pm$ variables) would suggest an iteration akin to the following:
\begin{equation*}
\begin{split}
W^\pm_{1}&:=W^\pm_{\epsilon}-\epsilon (\nabla_{v_{\epsilon}} W^\pm_{\epsilon}\mp\nabla_{B_{\epsilon}}W^\pm_{\epsilon}+\nabla P_{\epsilon}) ,
\end{split} 
\end{equation*}
which is to be supplemented with the domain transport
\begin{equation*}
x_1(x):=x+\epsilon v_{\epsilon}(x).    
\end{equation*}
Unfortunately, this na\"ive scheme loses a full derivative in each iteration. Therefore, it is better to perform the above two steps in tandem. This will halve the derivative loss and allow us to uncover a discrete version of the energy cancellation seen in \Cref{HEB}. The  regularization bounds in \Cref{Reg bound list} will then be used to control any remaining errors. Such errors now only ``lose" half a derivative, which makes estimating the various quadratic error terms appearing in our analysis below much easier. To carry out this procedure, we have the following proposition. 
\begin{proposition}\label{Final transport +Euler}
Given $(v_{\epsilon}, B_{\epsilon},\Gamma_{\epsilon})$ as in  the previous step, there exists an iteration $(v_{\epsilon}, B_{\epsilon}, \Gamma_{\epsilon})\mapsto (v_1, B_1, \Gamma_1)$ 
such that the following properties hold:
\begin{enumerate}
    \item (Approximate solution). 
\begin{equation*}
\begin{cases}
&W^\pm_1=W_{\epsilon}-\epsilon (\nabla_{v_{\epsilon}} W^\pm_{\epsilon}\mp \nabla_{B_{\epsilon}}W^\pm_{\epsilon}+\nabla P_{\epsilon})+\mathcal{O}_{C^3}(\epsilon^2),\hspace{5mm}\text{on}\hspace{2mm}\Omega_1\cap\Omega_0,
\\
&\nabla\cdot W^\pm_1=0,\hspace{5mm}\text{on}\hspace{2mm}\Omega_1,
\\
&B_1\cdot n_1=0,\hspace{5mm}\text{on}\hspace{2mm}\Gamma_1,
\\
&\Omega_{1}=(I+\epsilon v_{\epsilon})(\Omega_{\epsilon}).
\end{cases}
\end{equation*}
\item (Energy monotonicity bound).
\begin{equation*}
E^{k}(v_1,B_1,\Gamma_1)\leq (1+C(M)\epsilon)E^{k}(v_{\epsilon}, B_{\epsilon},\Gamma_{\epsilon}).
\end{equation*}
\item (Propagated regularization bounds for $(v_1,B_1,\Gamma_1)$). For $0<\gamma\ll 1$ as in \eqref{inductiveregbound} there holds
\begin{equation*}
\|\omega_1^\pm\|_{H^k(\Omega_1)}\leq (1+C(M)\epsilon)K(M)\epsilon^{-\frac{3}{2}},\hspace{5mm}\|\Gamma_1\|_{H^{k+\frac{1}{2}+\gamma}}\leq \frac{1}{2}K(M)\epsilon^{-\frac{1}{2}-\gamma}.
\end{equation*}
\end{enumerate}
\end{proposition}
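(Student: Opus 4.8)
The plan is to produce $(v_1,B_1,\Gamma_1)$ by flowing the regularized state $(v_\epsilon,B_\epsilon,\Gamma_\epsilon)$ one step along a discrete free boundary MHD evolution, in a way that keeps the result in $\mathbf{H}^k$. First I would transport the domain by $x_1(x):=x+\epsilon v_\epsilon(x)$, which for $\epsilon$ small is a diffeomorphism of $\Omega_\epsilon$ onto a domain $\Omega_1$ with $\Gamma_1=(I+\epsilon v_\epsilon)(\Gamma_\epsilon)\in\Lambda_*$ (the regularized state being uniformly bounded in $\mathbf{H}^k$). On $\Omega_1$ I would then construct $W^\pm_1=v_1\pm B_1$ by a div-curl recovery, in the spirit of \Cref{CTEF}: I prescribe the curls
\[
\omega^\pm_1:=(\omega^\pm_\epsilon\pm\epsilon\,\nabla_{B_\epsilon}\omega^\pm_\epsilon)\circ x_1^{-1},
\]
which is the one-step update of the vorticity transport \eqref{vorteq}, together with boundary data on $\Gamma_1$ encoding the one-step update of the good variables $\mathcal{G}^\pm$ dictated by \eqref{adynamics} (so that $\nabla^\top W^\pm_1\cdot n_1$ matches the discretely evolved $\mathcal{G}^\pm_1$ at leading order), and recover $W^\pm_1$ via the balanced div-curl estimate of \Cref{Balanced div-curl}; this yields $\nabla\cdot W^\pm_1=0$ and $B_1\cdot n_1=0$ by construction. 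Property (i) is then checked a posteriori by comparing $W^\pm_1$ with the na\"ive Euler step $W^\pm_\epsilon-\epsilon(\nabla_{v_\epsilon}W^\pm_\epsilon\mp\nabla_{B_\epsilon}W^\pm_\epsilon+\nabla P_\epsilon)$: the latter is consistent with the MHD equations to order $\epsilon^2$, its divergence is $\mathcal{O}(\epsilon^2)$ because $P_\epsilon$ solves the elliptic pressure equation on $\Omega_\epsilon$ exactly as the true pressure enforces incompressibility, and its magnetic normal trace is $\mathcal{O}(\epsilon^2)$ by the discrete form of the identity $D_t(B\cdot n_\Gamma)=(B\cdot n_\Gamma)(n_\Gamma\cdot\nabla v\cdot n_\Gamma)$ from \Cref{Time zero propagate} together with $B_\epsilon\cdot n_\epsilon=0$; hence the discrepancy is $\mathcal{O}_{C^3}(\epsilon^2)$ on the overlap $\Omega_1\cap\Omega_0$.

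The core of the argument is the energy monotonicity bound (ii), which must reproduce at the discrete level the cancellations of \Cref{energyprop}. I would first express the good variables of $(v_1,B_1,\Gamma_1)$ in terms of those of the regularized data up to $\mathcal{O}(\epsilon^2)$: besides the relation for $\omega^\pm_1$ above, the wave-type dynamics of $a$ (equation \eqref{adynamics} for $\mathcal{G}^\pm$ together with $D_t^\pm a\approx\mathcal{G}^\pm$) give $a_1=a_\epsilon+\tfrac{\epsilon}{2}(\mathcal{G}^+_\epsilon+\mathcal{G}^-_\epsilon)+\mathcal{O}(\epsilon^2)$, with companion one-step updates of $\mathcal{G}^\pm_1$, $\nabla_{B_1}a_1$ and $\nabla_{B_1}\mathcal{G}^\pm_1$ prescribed by \eqref{adynamics} and its $\nabla_B$-commuted counterpart. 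Substituting these into $E^k(v_1,B_1,\Gamma_1)-E^k(v_\epsilon,B_\epsilon,\Gamma_\epsilon)$ and expanding, the terms linear in $\epsilon$ reorganize exactly as in the proof of \Cref{Energy est. thm}: the pressure, curvature, irrotational and $\nabla_B$-irrotational contributions cancel up to $C(M)\epsilon\,E^k$, because the discrete good variables solve the generalized linearized system \eqref{DM gen} to leading order with precisely the perturbative source terms $\mathcal{R}$, $\mathcal{Q}$, $f$, $f_B$ bounded in \Cref{energyprop} --- and here constants of size $C(M)$ are permitted throughout, so the bookkeeping is considerably softer. The remainders quadratic in $\epsilon$ are schematically $\epsilon^2\|D_t^\pm W^\pm_\epsilon\|_{H^k(\Omega_\epsilon)}^2$ and $\epsilon^2\|\nabla_{B_\epsilon}^2W^\pm_\epsilon\|_{H^{k-1}(\Omega_\epsilon)}^2$, which are $\mathcal{O}_M(\epsilon)$ because both $D_t^\pm$ and $\nabla_{B_\epsilon}$ scale like $\epsilon^{-1/2}$ on the regularized data by \Cref{Reg bound list}(iii)--(iv); and, exactly as in \Cref{HEB}, one must work with the normal-form-corrected $\mathcal{G}^\pm_1$ rather than $D_t^\pm a_1$, disposing of the cross term $\langle\nabla\mathcal{H}\mathcal{N}^{k-2}\nabla_{B_1}a_1,\nabla\mathcal{H}\mathcal{N}^{k-2}\nabla_{B_1}(\mathcal{G}^\pm_1-D_t^\pm a_1)\rangle$ by the regularizing effect \Cref{partitionofBa}.

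The propagated bounds (iii) are comparatively routine. For the vorticity, $\omega^\pm_1=(\omega^\pm_\epsilon\pm\epsilon\,\nabla_{B_\epsilon}\omega^\pm_\epsilon)\circ x_1^{-1}+\mathcal{O}(\epsilon^2)$; a change of variables through $x_1$ together with the bound $\|\nabla_{B_\epsilon}\omega^\pm_\epsilon\|_{H^{k-1}(\Omega_\epsilon)}\lesssim_M\epsilon^{-1/2}$ implied by \Cref{Reg bound list}(iv) gives $\|\omega^\pm_1\|_{H^k(\Omega_1)}\le(1+C(M)\epsilon)\|\omega^\pm_\epsilon\|_{H^k(\Omega_\epsilon)}+C(M)\epsilon^{-1/2}$, and since $\|\omega^\pm_\epsilon\|_{H^k(\Omega_\epsilon)}\le(1+C(M)\epsilon)K(M)\epsilon^{-3/2}$ by \Cref{Reg bound list}(ii) and $\epsilon^{-1/2}=\epsilon\cdot\epsilon^{-3/2}$, the extra term is absorbed once $K(M)$ is taken large. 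For the surface, $\Gamma_1=(I+\epsilon v_\epsilon)(\Gamma_\epsilon)$, and a collar-coordinate change-of-variables estimate --- using the high-regularity control on $v_\epsilon$ from \Cref{Reg bound list}(iii) and the surface regularization bounds --- degrades $\|\Gamma_\epsilon\|_{H^{k+1/2+\gamma}}$ only mildly under the one-step transport; since Step~1 had already improved this norm by a factor $2$ (by \Cref{domainregularizationprop}(v)), the bound (iii) for $\Gamma_1$ closes for $\epsilon$ small. The one genuinely hard step is the discrete energy estimate of the preceding paragraph --- arranging the $\mathcal{O}(\epsilon)$ expansion so that the surface, irrotational, and $\nabla_B$-irrotational pieces cancel simultaneously against the pressure and curvature contributions, just as in \Cref{energyprop} --- and that is where essentially all of the work is concentrated.
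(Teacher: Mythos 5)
Your construction of $W^\pm_1$ by div-curl recovery has a circularity problem that the paper's explicit construction avoids. You propose to prescribe the boundary data on $\Gamma_1$ as ``the one-step update of the good variables $\mathcal{G}^\pm$ dictated by \eqref{adynamics}.'' But $\mathcal{G}^\pm_1$ is defined through the pressure $P_1$, which is itself determined by $(W^+_1,W^-_1,\Gamma_1)$ via the elliptic equation $\Delta P_1 = -\partial_i W^+_{1,j}\partial_j W^-_{1,i}$. So you cannot prescribe $\mathcal{G}^\pm_1$ independently and then solve for $W^\pm_1$: the boundary data depends on the unknown. Moreover \Cref{Balanced div-curl} gives only estimates, not solvability, and the paper explicitly disclaims existence for that system (a nontrivial topological constraint intervenes in non-simply-connected domains). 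The paper's route is a closed-form explicit formula, $\tilde{W}^\pm_1(x_1) = W^\pm_\epsilon \pm \epsilon\nabla_{B_\epsilon}\tilde{W}^\pm_\epsilon - \epsilon\nabla P_\epsilon$, followed by an explicit divergence-free projection for $v_1$ and a divergence-free plus rotational projection for $B_1$; this bypasses solvability issues entirely. Your claim that $B_1\cdot n_1=0$ ``by construction'' also does not follow: the div-curl boundary data in your setup encodes a tangential--normal trace of $\nabla W^\pm_1$, not $W^\pm_1\cdot n_1$, so the tangency of $B_1=\tfrac12(W^+_1-W^-_1)$ is not controlled. In the paper this is one of the delicate points: \Cref{normalapprox} is used to show that the uncorrected $\tilde B_1$ is \emph{exactly} tangent to $\Gamma_1$ (not merely to $\mathcal{O}(\epsilon^2)$), and the rotational projection then preserves tangency.

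Two further gaps in the energy estimate. First, your quadratic-error heuristic $\epsilon^2\|\nabla_{B_\epsilon}^2 W^\pm_\epsilon\|_{H^{k-1}}^2 = \mathcal{O}_M(\epsilon)$ implicitly assumes that $\nabla_{B_\epsilon}$ gains a factor $\epsilon^{-1/2}$ when applied to $W^\pm_\epsilon$. By \Cref{Reg bound list}(iv) this is only true for the \emph{uncorrected} $\tilde W^\pm_\epsilon$, not for $W^\pm_\epsilon$ itself, because the rotational projection entering $B_\epsilon$ involves the free surface, whose regularity spoils the gain. This is exactly why the paper's iteration puts $\tilde W^\pm_\epsilon$ (not $W^\pm_\epsilon$) inside the $\nabla_{B_\epsilon}$ term of $\tilde{W}^\pm_1$, and why the good-variable relations (\Cref{goodvarrel}(v)--(vi)) are stated with $\tilde{\mathcal{G}}^\pm_\epsilon$ rather than $\mathcal{G}^\pm_\epsilon$. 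Second, once this asymmetry between corrected and uncorrected variables is introduced, a genuine first-order-in-$\epsilon$ cross term of the schematic form $\epsilon\,\langle\mathcal{N}^{k-3/2}\mathcal{G}^\pm_\epsilon,\mathcal{N}^{k-3/2}\nabla_{B_\epsilon}\tilde{\mathcal{G}}^\pm_\epsilon\rangle$ appears in the energy difference. It is not one of the perturbative source terms $\mathcal{R},\mathcal{Q},f,f_B$ from \Cref{energyprop}, and it is not $\mathcal{O}(\epsilon^2)$; bounding it requires the dedicated almost-orthogonality argument of \Cref{orthogonality}, which exploits the explicit paraproduct form of the third regularization step. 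Your proposal treats the discrete energy identity as though it reduces cleanly to the continuous one, but these two points --- the corrected/uncorrected distinction and the orthogonality cross term --- together with the change-of-variable lemma comparing $\mathcal{N}_1$ with $\mathcal{N}_\epsilon$ (\Cref{changeofvar}, \Cref{changeofvar2}), are precisely where the discrete estimate diverges from the continuous one and where most of the work lives.
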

We define the change of coordinates $x_1(x) := x + \epsilon v_{\epsilon}(x)$
and the iterated domain $\Omega_1$ by
\begin{equation*}
\Omega_1:=(I+\epsilon v_{\epsilon})\Omega_{\epsilon}.    
\end{equation*}
We remark that the latter bound in (iii) is required to close the bootstrap \eqref{inductiveregbound} from the parabolic regularization step.
\medskip

\textbf{Uncorrected variables}. We begin by defining the uncorrected iteration variables
\begin{equation*}\label{Iterated v_1}
\tilde{v}_1(x_1):=v_{\epsilon}-\epsilon(\nabla P_{\epsilon}-\nabla_{B_{\epsilon}} \tilde{B}_{\epsilon})
\end{equation*}
and
\begin{equation*}
\tilde{B}_1(x_1):=B_{\epsilon}+\epsilon \nabla_{B_{\epsilon}}v_{\epsilon}.    
\end{equation*}
We note that $\tilde{v}_1$ and $\tilde{B}_1$ are not divergence-free, so we will need to perform suitable corrections. 
\begin{remark}
On the right-hand side of the definition of $\tilde{v}_1$ we use $\nabla_{B_{\epsilon}}\tilde{B}_{\epsilon}$ as opposed to $\nabla_{B_{\epsilon}}B_{\epsilon}$. Such terms agree up to an error of size $\epsilon^2$ in the $C^3$ topology, but in $H^k$, only the term $\nabla_{B_{\epsilon}}\tilde{B}_{\epsilon}$ enjoys the $\epsilon^{-\frac{1}{2}}$ regularization bound. This will nonetheless turn out to be sufficient for proving a suitable energy monotonicity bound in this stage of the argument.
\end{remark}

Before we define the appropriate corrections, we will take a detour to first show that $\tilde{B}_1\cdot n_1=0$ and also close the bootstrap for $\Gamma_1$. For this, we need the following lemma.
\begin{lemma}\label{normalapprox} 
The following identity holds:
\begin{equation*}
n_1(x_1)-n_\epsilon(x)=-\epsilon\nabla^{\top}v_{\epsilon}\cdot n_\epsilon-\epsilon\nabla^{\top}v_{\epsilon}\cdot (n_1(x_1)-n_\epsilon)-\frac{1}{2}n_\epsilon|n_1(x_1)-n_\epsilon
|^2.
\end{equation*}
\end{lemma}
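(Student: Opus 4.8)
The plan is to exploit that the map $\Phi(x):=x+\epsilon v_{\epsilon}(x)$ restricts to a diffeomorphism $\Gamma_{\epsilon}\to\Gamma_{1}$ — which is essentially how $\Gamma_{1}$ (and $\Omega_1$) was defined — compute how $\Phi$ transports tangent vectors, and then decompose $n_{1}(x_{1})$ into its $\Gamma_{\epsilon}$-tangential and $\Gamma_{\epsilon}$-normal parts relative to $n_{\epsilon}$. No estimates are needed; the identity is purely geometric and algebraic.

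First I would record that, since $v_{\epsilon}$ is bounded in a fixed norm and $\epsilon$ is small, $\Phi$ is an orientation-preserving diffeomorphism from a neighbourhood of $\overline{\Omega}_{\epsilon}$ onto a neighbourhood of $\overline{\Omega}_{1}$, with Jacobian $D\Phi=\mathrm{Id}+\epsilon\nabla v_{\epsilon}$ (in the convention $(\nabla v)_{ij}=\partial_{j}v_{i}$ of Remark~\ref{Time zero propagate}), and that $\Phi(\Gamma_{\epsilon})=\Gamma_{1}$. Consequently, for $x\in\Gamma_{\epsilon}$ and any tangent vector $\tau\in T_{x}\Gamma_{\epsilon}$, differentiating a curve lying in $\Gamma_{\epsilon}$ shows that the pushforward $D\Phi(x)\tau=\tau+\epsilon\nabla_{\tau}v_{\epsilon}$ is tangent to $\Gamma_{1}$ at $x_{1}=\Phi(x)$. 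Writing $m:=n_{1}(x_{1})$, orthogonality of $m$ to this pushed-forward tangent vector gives, for every tangent $\tau$,
\[
m\cdot\tau=-\epsilon\, m\cdot\nabla_{\tau}v_{\epsilon}=-\epsilon\,\tau\cdot\big((\nabla v_{\epsilon})^{*}m\big).
\]
Since the right-hand side depends only on the tangential part of $(\nabla v_{\epsilon})^{*}m$, this identifies the $\Gamma_{\epsilon}$-tangential component of $m$: $(\mathrm{Id}-n_{\epsilon}\otimes n_{\epsilon})m=-\epsilon\,((\nabla v_{\epsilon})^{*}m)^{\top}=-\epsilon\,\nabla^{\top}v_{\epsilon}\cdot m$, using the paper's shorthand for the tangential field $((\nabla v_{\epsilon})^{*}m)^{\top}$ (cf.\ the companion formula $D_{t}n=-((\nabla v)^{*}n)^{\top}$).

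For the normal component I would use only that $m$ and $n_{\epsilon}$ are unit vectors, so $m\cdot n_{\epsilon}=\tfrac12\big(|m|^{2}+|n_{\epsilon}|^{2}-|m-n_{\epsilon}|^{2}\big)=1-\tfrac12|m-n_{\epsilon}|^{2}$. Adding the normal part $(m\cdot n_{\epsilon})n_{\epsilon}=n_{\epsilon}-\tfrac12|m-n_{\epsilon}|^{2}n_{\epsilon}$ to the tangential part $-\epsilon\,\nabla^{\top}v_{\epsilon}\cdot m$ yields $m=n_{\epsilon}-\tfrac12|m-n_{\epsilon}|^{2}n_{\epsilon}-\epsilon\,\nabla^{\top}v_{\epsilon}\cdot m$, and splitting $\nabla^{\top}v_{\epsilon}\cdot m=\nabla^{\top}v_{\epsilon}\cdot n_{\epsilon}+\nabla^{\top}v_{\epsilon}\cdot(m-n_{\epsilon})$ by linearity in the second slot reproduces exactly the stated identity for $n_{1}(x_{1})-n_{\epsilon}(x)$.

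The only points requiring care are bookkeeping ones: verifying that $D\Phi(x)\tau$ is genuinely tangent to $\Gamma_{1}$ (for which one must use that $\Phi$ maps $\Gamma_{\epsilon}$ into, and onto, $\Gamma_{1}$), and correctly translating $m\cdot\nabla_{\tau}v_{\epsilon}$ into the paper's $\nabla^{\top}v_{\epsilon}\cdot m$ notation. I do not anticipate a substantive obstacle beyond getting these conventions aligned.
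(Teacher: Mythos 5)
Your proof is correct, but it takes a genuinely different route from the paper's. The paper's argument writes $n_1(x_1)=-a_1^{-1}(x_1)(\nabla P_1)(x_1)$ and $n_\epsilon=-a_\epsilon^{-1}\nabla P_\epsilon$, i.e.\ it exploits that the pressure is a non-degenerate defining function of the surface, and then massages the resulting expression: it pulls back $\nabla P_1$ through $\Phi=I+\epsilon v_\epsilon$, observes that $\nabla\bigl(P_1(x_1)-P_\epsilon(x)\bigr)$ is normal to $\Gamma_\epsilon$ because the composite function $P_1\circ\Phi-P_\epsilon$ vanishes on $\Gamma_\epsilon$ (the dynamic boundary condition on both surfaces), and carefully tracks the contribution of the Taylor-coefficient difference $a_1(x_1)-a_\epsilon$, which produces exactly the normal correction $-\tfrac12 n_\epsilon|n_1(x_1)-n_\epsilon|^2$. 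Your argument is purely differential-geometric: you identify the $\Gamma_\epsilon$-tangential part of $n_1(x_1)$ by testing against pushed-forward tangent vectors $D\Phi\,\tau=\tau+\epsilon\nabla_\tau v_\epsilon$, and the $\Gamma_\epsilon$-normal part from the two unit-length conditions. It makes no reference to the pressure, the Taylor coefficient, or the boundary conditions; it relies only on $\Phi(\Gamma_\epsilon)=\Gamma_1$ and the explicit form of $D\Phi$. In that sense your argument is more elementary and more general — it applies to any diffeomorphism of the form $I+\epsilon v$ between hypersurfaces, with no Taylor-sign hypothesis — whereas the paper's argument is written the way it is because the same manipulations (pressure pullbacks, defining-function algebra, $a_1(x_1)-a_\epsilon$) reappear immediately afterwards in the proof of \Cref{goodvarrel}, so the pressure-based computation here doubles as setup for what follows. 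The paper itself flags that "the gradient of any non-degenerate defining function will also work," and your proof can be read as the limiting case where one dispenses with a defining function altogether.

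One small point you should make explicit: you need not only that $D\Phi$ sends tangents of $\Gamma_\epsilon$ \emph{into} tangents of $\Gamma_1$ but that it sends them \emph{onto} $T_{x_1}\Gamma_1$, so that $m\cdot\tau'=0$ for every $\tau'\in T_{x_1}\Gamma_1$ is captured by testing only against pushed-forward $\tau$'s; this follows because $\Phi|_{\Gamma_\epsilon}$ is a diffeomorphism onto $\Gamma_1$, so its differential is a tangent-space isomorphism. With that observation supplied the argument is complete.
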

We remark that $-\nabla^{\top}v_{\epsilon}\cdot n_\epsilon$ is nothing more than the expression for $D_tn_\epsilon$ (see \Cref{Movingsurfid}).
\begin{proof}
It is convenient to write the normal in terms of $\nabla P_{\epsilon}$ and the Taylor term (although the gradient of any non-degenerate defining function will also work). By doing this, we have 
\begin{equation*}
\begin{split}
n_1(x_1)-n_\epsilon(x)&=a_\epsilon^{-1}\nabla P_\epsilon-a_1^{-1}(x_1)(\nabla P_1)(x_1)
\\
&=-a_1^{-1}(x_1)((\nabla P_1)(x_1)-\nabla P_\epsilon(x))+(a_\epsilon^{-1}-a^{-1}_1(x_1))\nabla P_\epsilon
\\
&=-\epsilon \nabla v_{\epsilon}\cdot n_1(x_1)-a_1^{-1}(x_1)\nabla (P_1(x_1)-P_\epsilon(x))-a_1^{-1}(x_1)(a_1(x_1)-a_\epsilon)n_\epsilon.
\end{split}
\end{equation*}
We then compute that
\begin{equation*}
\begin{split}
-a_1^{-1}(x_1)(a_1(x_1)-a_\epsilon)=&a_1^{-1}(x_1)(n_1(x_1)\cdot(\nabla P_1)(x_1)-n_\epsilon\cdot\nabla P_\epsilon)
\\
=&a_1^{-1}(x_1)(n_1(x_1)-n_\epsilon)\cdot(\nabla P_1)(x_1)+\epsilon n_\epsilon\cdot \nabla v_{\epsilon}\cdot n_1(x_1)
\\
+&a_1^{-1}(x_1)n_\epsilon\cdot \nabla(P_1(x_1)-P_\epsilon).
\end{split}
\end{equation*}
Hence, by the dynamic boundary condition,
\begin{equation*}
\begin{split}
n_1(x_1)-n_\epsilon(x)&=-\epsilon\nabla^{\top}v_{\epsilon}\cdot n_1(x_1)-n_\epsilon(n_1(x_1)-n_\epsilon)\cdot n_1(x_1)
\\
&=-\epsilon\nabla^{\top}v_{\epsilon}\cdot n_\epsilon-\epsilon\nabla^{\top}v_{\epsilon}\cdot (n_1(x_1)-n_\epsilon)-\frac{1}{2}n_\epsilon|n_1(x_1)-n_\epsilon|^2.
\end{split}
\end{equation*}
This completes the proof.
\end{proof}
From the above identity, we immediately obtain the weak bound
\begin{equation*}
\|n_1(x_1)-n_\epsilon\|_{H^{k-2}(\Gamma_\epsilon)}\lesssim_M\epsilon.
\end{equation*}
Thanks to this, \Cref{normalapprox} and by interpolating the regularization bound $\|v_{\epsilon}\|_{H^{k+2}(\Omega_\epsilon)}\lesssim_M \epsilon^{-\frac{3}{2}-3}$ against the improved irrotational bound $\|\nabla^{\top}v_{\epsilon}\cdot n_\epsilon\|_{H^{k-\frac{1}{2}}(\Gamma_\epsilon)}\lesssim_M\epsilon^{-1}$ we find that for $0\leq\gamma\leq 1$, we have
\begin{equation*}
\|n_1(x_1)-n_\epsilon\|_{H^{k-\frac{1}{2}+\gamma}(\Gamma_\epsilon)}\lesssim_M \epsilon^{-C\gamma},
\end{equation*}
for some constant $C>0$. If $\gamma$ is small enough, this implies the bound $\|\Gamma_1\|_{H^{k+\frac{1}{2}+\gamma}}\lesssim_M \epsilon^{-\frac{1}{2}-\gamma}$, which closes the bootstrap for $\Gamma_1$.
\medskip

Next, we use \Cref{normalapprox} to show that $\tilde{B}_1\cdot n_1=0$. Using the equation for $\tilde{B}_1$ and the fact that $B_{\epsilon}$ is tangent to $\Gamma_{\epsilon}$, we have
\begin{equation*}
\begin{split}
\tilde{B}_1(x_1)\cdot n_1(x_1)&=(\tilde{B}_1(x_1)-B_{\epsilon})\cdot n_1(x_1)+B_{\epsilon}\cdot (n_1(x_1)-n_\epsilon)
\\
&=\epsilon\nabla_{B_{\epsilon}}v_{\epsilon}\cdot n_1(x_1)+B_{\epsilon}\cdot (n_1(x_1)-n_\epsilon).
\end{split}
\end{equation*}
Using \Cref{normalapprox} to expand the latter term on the right-hand side, this implies   (in light of $B_{\epsilon}\cdot n_\epsilon=0$)  that
\begin{equation*}
\tilde{B}_1(x_1)\cdot n_1(x_1)=0,
\end{equation*}
as desired.
\medskip

\textbf{Divergence-free corrections}. Next, we correct $\tilde{B}_1$ and $\tilde{v}_1$ to be divergence-free (while still preserving the tangency of the magnetic field). We define the full iterates $v_1$ and $B_1$ by
\begin{equation*}
v_1:=\tilde{v}_1-\nabla\Delta^{-1}_{\Omega_1}(\nabla\cdot \tilde{v}_1),\hspace{5mm}B_1:=(\tilde{B}_1-\nabla\Delta_{\Omega_1}^{-1}(\nabla\cdot \tilde{B}_1))^{rot},
\end{equation*}
so that $v_1$ and $B_1$ are divergence-free and $B_1$ is tangent to $\Gamma_1$. Next, we show that the errors induced by these corrections are small in $\mathbf{H}^k$. 
We begin with the error estimates for $B_1$, which are more complicated. We first observe the identity
\begin{equation*}\label{Berr}
\tilde{B}_1-B_1=\nabla\Delta^{-1}_{\Omega_1}(\nabla\cdot\tilde{B}_1)+\nabla\mathcal{H}_1\mathcal{N}_1^{-1}(\nabla_n\Delta^{-1}(\nabla\cdot\tilde{B}_1)).
\end{equation*}
Next, we compute from the equation for $\tilde{B}_1$,
\begin{equation}\label{Bdiveqn}
\begin{split}
(\nabla\cdot \tilde{B}_1)(x_1)&=\nabla\cdot (\tilde{B}_1(x_1))-\epsilon\nabla v_{\epsilon}\cdot(\nabla \tilde{B}_1)(x_1)
\\
&=\epsilon\nabla v_{\epsilon}\cdot\nabla (B_1(x_1)-B_{\epsilon})+\epsilon^2\nabla v_{\epsilon}\cdot\nabla v_{\epsilon}\cdot (\nabla \tilde{B}_1)(x_1)
\\
&=\epsilon^2(\nabla v_{\epsilon}\cdot\nabla\nabla_{B_{\epsilon}}v_{\epsilon}+\nabla v_{\epsilon}\cdot\nabla v_{\epsilon}\cdot (\nabla \tilde{B}_1)(x_1)).
\end{split}
\end{equation}
From the equation for $\tilde{B}_1$ and the regularization bounds for $v_{\epsilon}$ and  $\nabla_{B_{\epsilon}}v_{\epsilon}$ from \Cref{Reg bound list}, we see that 
\begin{equation}\label{B1bounds}
\|\tilde{B}_1\|_{H^k(\Omega_1)}\lesssim_M 1,\hspace{5mm}\|\tilde{B}_1\|_{H^{k+\frac{1}{2}}(\Omega_1)}\lesssim_M\epsilon^{-\frac{3}{4}}.
\end{equation}
From the first bound in \eqref{B1bounds} and the above identity for $\nabla\cdot\tilde{B}_1$, we have
\begin{equation*}\label{Bdivbounds}
\|\nabla\cdot \tilde{B}_1\|_{H^{k-1}(\Omega_1)}\lesssim_M\epsilon,\hspace{5mm} \|\nabla\cdot \tilde{B}_1\|_{H^{k-2}(\Omega_1)}\lesssim_M\epsilon^{2}.
\end{equation*}
From this, the bound $\|\Gamma_1\|_{H^{k+\frac{1}{2}}}\lesssim_M\epsilon^{-\frac{1}{2}}$ and the estimates in \Cref{BEE}, we obtain
\begin{equation*}
\|B_1-\tilde{B}_1\|_{H^k(\Omega_1)}\lesssim_M\epsilon,\hspace{5mm}\|B_1-\tilde{B}_1\|_{H^{k-1}(\Omega_1)}\lesssim_M\epsilon^{2},\hspace{5mm}\|B_1\|_{H^k(\Omega_1)}\lesssim_M 1.
\end{equation*}
In order to establish the analogous bounds in the stronger space $\mathbf{H}^k$, we need the following lemma.
\begin{lemma}\label{gradbchain}
Let $0\leq s\leq k-1$. For every $f\in H^{s+1}(\Gamma_1)$ and $g\in H^{s+\frac{3}{2}}(\Omega_{1})$ there holds 
\begin{equation*}
\|(\nabla_{B_1}f)(x_1)-\nabla_{B_{\epsilon}}(f(x_1))\|_{H^s(\Gamma_\epsilon)}\lesssim_M \epsilon^2\|f\|_{H^{s+1}(\Gamma_1)}+\epsilon\|f\|_{H^s(\Gamma_1)}
\end{equation*}
and
\begin{equation*}
\|(\nabla_{B_1}g)(x_1)-\nabla_{B_{\epsilon}}(g(x_1))\|_{H^{s+\frac{1}{2}}(\Omega_\epsilon)}\lesssim_M \epsilon^2\|g\|_{H^{s+\frac{3}{2}}(\Omega_1)}+\epsilon\|g\|_{H^{s+\frac{1}{2}}(\Omega_1)}.
\end{equation*}
\end{lemma}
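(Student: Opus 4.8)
The plan is to observe that the auxiliary field $\tilde{B}_1$ was \emph{defined} to be exactly the pushforward of $B_\epsilon$ under the near-identity change of coordinates $x_1=I+\epsilon v_\epsilon$, so that the chain rule reduces the lemma to a difference of derivations along a small vector field. Writing $Dx_1$ for the Jacobian of $x_1$ (with the convention $(Dx_1)_{ij}=\partial_j(x_1)_i=\delta_{ij}+\epsilon\partial_j v_{\epsilon,i}$), one has for any smooth $\phi$ on $\Omega_1$ (or $\Gamma_1$)
\begin{equation*}
\nabla_{B_\epsilon}(\phi\circ x_1)(x)=\big(Dx_1(x)\,B_\epsilon(x)\big)\cdot(\nabla\phi)(x_1(x))=(\nabla_{\tilde{B}_1}\phi)(x_1(x)),
\end{equation*}
since $Dx_1\,B_\epsilon=B_\epsilon+\epsilon\nabla_{B_\epsilon}v_\epsilon=\tilde{B}_1\circ x_1$ by construction; on $\Gamma_1$ this uses that $B_\epsilon$ is tangent to $\Gamma_\epsilon$ (hence $\tilde{B}_1$ is tangent to $\Gamma_1$) and that $x_1$ restricts to a diffeomorphism $\Gamma_\epsilon\to\Gamma_1$ compatible with the collar. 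Consequently
\begin{equation*}
(\nabla_{B_1}f)(x_1)-\nabla_{B_\epsilon}(f(x_1))=\big(\nabla_{B_1-\tilde{B}_1}f\big)(x_1),\qquad (\nabla_{B_1}g)(x_1)-\nabla_{B_\epsilon}(g(x_1))=\big(\nabla_{B_1-\tilde{B}_1}g\big)(x_1),
\end{equation*}
and it remains only to estimate these genuine directional derivatives.

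Next I would dispose of the composition with $x_1$. Since $\|v_\epsilon\|_{H^k(\Omega_\epsilon)}\lesssim_M1$ we have $\|x_1-I\|_{H^k}\lesssim_M\epsilon$, so $x_1$ is a bi-Lipschitz $H^k$ diffeomorphism and for $0\le s\le k-1$ the standard composition estimates give $\|(\cdot)\circ x_1\|_{H^s(\Gamma_\epsilon)}\lesssim_M\|\cdot\|_{H^s(\Gamma_1)}$ and $\|(\cdot)\circ x_1\|_{H^{s+1/2}(\Omega_\epsilon)}\lesssim_M\|\cdot\|_{H^{s+1/2}(\Omega_1)}$. The lemma is thus reduced to the product-type bounds
\begin{equation*}
\|\nabla_{B_1-\tilde{B}_1}f\|_{H^s(\Gamma_1)}\lesssim_M\epsilon^2\|f\|_{H^{s+1}(\Gamma_1)}+\epsilon\|f\|_{H^s(\Gamma_1)},\qquad \|\nabla_{B_1-\tilde{B}_1}g\|_{H^{s+1/2}(\Omega_1)}\lesssim_M\epsilon^2\|g\|_{H^{s+3/2}(\Omega_1)}+\epsilon\|g\|_{H^{s+1/2}(\Omega_1)}.
\end{equation*}

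Here I would invoke the two-speed control of $B_1-\tilde{B}_1$ already obtained in the proof of \Cref{Final transport +Euler}, namely $\|B_1-\tilde{B}_1\|_{H^k(\Omega_1)}\lesssim_M\epsilon$ and $\|B_1-\tilde{B}_1\|_{H^{k-1}(\Omega_1)}\lesssim_M\epsilon^2$ (and their traces on $\Gamma_1$), together with $H^{k-1}\hookrightarrow L^\infty$, which gives $\|B_1-\tilde{B}_1\|_{L^\infty}\lesssim_M\epsilon^2$. Expanding $\nabla_{B_1-\tilde{B}_1}f$ by a paraproduct (after writing $B_1-\tilde{B}_1$ tangentially on $\Gamma_1$, or, for the domain estimate and for the high values of $s$, after replacing $f$ by its harmonic extension $\mathcal{H}_1f$ via \Cref{Hbounds} and taking the trace), the piece in which the coefficient is at low frequency is bounded by $\|B_1-\tilde{B}_1\|_{L^\infty}\|f\|_{H^{s+1}}\lesssim_M\epsilon^2\|f\|_{H^{s+1}}$, while in the remaining piece the derivative falling on $f$ may be moved onto a low-frequency truncation — retaining $L^\infty$ control by $\|f\|_{H^s}$ — leaving $\|B_1-\tilde{B}_1\|_{H^{s+1}(\Gamma_1)}\lesssim_M\|B_1-\tilde{B}_1\|_{H^{k-1/2}(\Gamma_1)}\lesssim_M\epsilon$ times $\|f\|_{H^s}$; a handful of frequency regimes (including the endpoint $s=k-1$, handled through the domain bound rather than the boundary one) must be separated, but each is routine. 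The whole argument is essentially the chain-rule identity of the first paragraph together with the balanced product estimates from \Cref{BEE}; the only point requiring genuine care — and hence the main obstacle — is bookkeeping the two powers of $\epsilon$ so that the top norm $H^{s+1}$ of $f$ (resp.\ $H^{s+3/2}$ of $g$) is only ever paired with the stronger $\epsilon^2$ bound on $B_1-\tilde{B}_1$.
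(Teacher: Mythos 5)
Your proof is correct and follows essentially the same route as the paper: both rest on the exact chain-rule identity $\nabla_{B_\epsilon}(f\circ x_1)=(\nabla_{\tilde B_1}f)(x_1)$, which reduces the difference to $(\nabla_{B_1-\tilde B_1}f)(x_1)$, and then close by combining the two-scale bounds $\|B_1-\tilde B_1\|_{H^k(\Omega_1)}\lesssim_M\epsilon$ and $\|B_1-\tilde B_1\|_{H^{k-1}(\Omega_1)}\lesssim_M\epsilon^2$ (hence $L^\infty$) with product estimates. The paper phrases the identity through the harmonic extension $\mathcal{H}_1 f$ and is more terse about the final product bookkeeping, but the argument is the same.
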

\begin{proof}
We will prove the first (and slightly harder) case. The latter case will follow similarly. Writing $f=\mathcal{H}_1f$ and using the chain rule and the definition of $\tilde{B}_1$, we have
\begin{equation*}
\begin{split}
\nabla_{B_\epsilon}(f(x_1))&=B_{\epsilon}\cdot(\nabla\mathcal{H}_1f)(x_1)+\epsilon\nabla_{B_{\epsilon}}v_{\epsilon}\cdot (\nabla\mathcal{H}_1f)(x_1)
\\
&=(\nabla_{B_1}f)(x_1)+((\tilde{B}_1-B_1)\cdot\nabla\mathcal{H}_1f)(x_1).
\end{split}
\end{equation*}
Since $\tilde{B}_1=B_1+\mathcal{O}_{H^k(\Omega_1)}(\epsilon)$ and $\tilde{B}_1=B_1+\mathcal{O}_{H^{k-1}(\Omega_1)}(\epsilon^2)$ we may therefore use  Sobolev product estimates and elliptic regularity to obtain the  bound
\begin{equation*}
\|((\tilde{B}_1-B_1)\cdot\nabla\mathcal{H}_1f)(x_1)\|_{H^s(\Gamma_\epsilon)}\lesssim_M \epsilon^2\|f\|_{H^{s+1}(\Gamma_1)}+\epsilon\|f\|_{H^s(\Gamma_1)},
\end{equation*}
as desired.
\end{proof}
Using \Cref{gradbchain}, the second bound in \eqref{B1bounds} and the regularization bound for $\nabla_{B_{\epsilon}}^2v_{\epsilon}$ from \Cref{Reg bound list}, we obtain $\|\tilde{B}_1\|_{\mathbf{H}^k(\Omega_1)}\lesssim_M 1$. Moreover, thanks to  \eqref{Bdiveqn} (and again the bound for $\nabla_{B_{\epsilon}}^2v_{\epsilon}$), we have
\begin{equation*}
\|\nabla\cdot B_1\|_{\mathbf{H}^{k-1}(\Omega_1)}\lesssim_M \epsilon,\hspace{5mm}\|\nabla\cdot B_1\|_{\mathbf{H}^{k-2}(\Omega_1)}\lesssim_M \epsilon^2.
\end{equation*}
Combining the above with the commutator identities in \Cref{Movingsurfid}, the bound $\|B_{\epsilon}\|_{H^{k+\frac{1}{2}}(\Omega_\epsilon)}\lesssim_M\epsilon^{-\frac{3}{4}}$ and the estimates from \Cref{BEE}, we see that
\begin{equation*}
\|\nabla\Delta^{-1}(\nabla\cdot B_1)\|_{\mathbf{H}^k(\Omega_1)}\lesssim_M\epsilon,\hspace{5mm}\|\nabla\Delta^{-1}(\nabla\cdot B_1)\|_{\mathbf{H}^{k-1}(\Omega_1)}\lesssim_M\epsilon^2.
\end{equation*}
Using the div-curl estimate in \Cref{Balanced div-curl}, the regularization bounds for $\Gamma_{1}$ and product rule we also have 
\begin{equation*}
\begin{split}
\|\nabla_{B_{1}}\nabla\mathcal{H}_1\mathcal{N}_1^{-1}(\nabla_n\Delta^{-1}(\nabla\cdot\tilde{B}_1))\|_{H^{k-\frac{1}{2}}(\Omega_1)}&\lesssim_M\epsilon+\|\nabla_{B_{1}}\nabla\mathcal{H}_1\mathcal{N}_1^{-1}(\nabla_n\Delta^{-1}(\nabla\cdot\tilde{B}_1))\cdot n_1\|_{H^{k-1}(\Gamma_1)}
\\
&\lesssim_M \epsilon+\|\nabla_{B_{1}}(\nabla_n\Delta^{-1}(\nabla\cdot\tilde{B}_1))\|_{H^{k-1}(\Gamma_1)}
\\
&\lesssim_M\epsilon.
\end{split}
\end{equation*}
Combining everything above finally yields the bounds
\begin{equation*}
\|B_1-\tilde{B}_1\|_{\mathbf{H}^k(\Omega_1)}\lesssim_M\epsilon,\hspace{5mm}\|B_1\|_{\mathbf{H}^k(\Omega_1)}\lesssim_M 1.
\end{equation*}
Next, we estimate the error between $v_1$ and $\tilde{v}_1$. We compute from the equation for $\tilde{v}_1$,
\begin{equation*}
\begin{split}
(\nabla\cdot \tilde{v}_1)(x_1)&=\nabla\cdot (\tilde{v}_1(x_1))-\epsilon\nabla v_{\epsilon}\cdot(\nabla v_1)(x_1)
\\
&=\epsilon^2\nabla v_{\epsilon}\cdot\nabla v_{\epsilon}\cdot (\nabla v_1)(x_1)-\epsilon\partial_i(v_{1,j}(x_1)-v_{\epsilon,j})\partial_jv_{\epsilon,i}-\epsilon\partial_iB_{\epsilon,j}\partial_jB_{\epsilon,i}+\epsilon\partial_i\tilde{B}_{\epsilon,j}\partial_j\tilde{B}_{\epsilon,i}.
\end{split}
\end{equation*}
From the estimates in \Cref{BEE} and the regularization bounds for $\tilde{B}_{\epsilon}$, $v_{\epsilon}$ and $\Gamma_{\epsilon}$ (this last bound is required to estimate the pressure), we have $\|\tilde{v}_1\|_{H^{k}(\Omega_1)}\lesssim_M 1$ and $\|\tilde{v}_1\|_{H^{k+\frac{1}{2}}(\Omega_1)}\lesssim_M\epsilon^{-\frac{3}{4}}$. Arguing as in the estimate for $\tilde{B}_1$ and using that $\tilde{B}_{\epsilon}=B_{\epsilon}+\mathcal{O}_{H^{k-5}(\Omega_\epsilon)}(\epsilon^2)$, we have
\begin{equation*}
\|\nabla\cdot \tilde{v}_1\|_{\mathbf{H}^{k-1}(\Omega_1)}\lesssim_M\epsilon,\hspace{5mm}\|\nabla\cdot \tilde{v}_1\|_{\mathbf{H}^{k-5}(\Omega_1)}\lesssim_M\epsilon^2
\end{equation*}
and
\begin{equation*}
\|v_1-\tilde{v}_1\|_{\mathbf{H}^k(\Omega_1)}\lesssim_M\epsilon,\hspace{5mm}\|v_1-\tilde{v}_1\|_{\mathbf{H}^{k-4}(\Omega_1)}\lesssim_M\epsilon^2.
\end{equation*}
Combining everything above, we have established property (i) in \Cref{Final transport +Euler} and also the approximate equation 
\begin{equation}\label{approxtransporteqn}
W_1^\pm(x_1)=W_{\epsilon}-\epsilon(\mp\nabla_{B_{\epsilon}}\tilde{W}^\pm_{\epsilon}+\nabla P_{\epsilon})+\mathcal{O}_{\mathbf{H}^k(\Omega_\epsilon)}(\epsilon)
\end{equation}
 in $\mathbf{H}^k$ by adding and subtracting the equations for $\tilde{v}_1$ and $\tilde{B}_1$. 
 \begin{remark}
 We once again emphasize that the latter term on the right-hand side of the approximate equation \eqref{approxtransporteqn} involves the ``uncorrected" variable $\tilde{W}_{\epsilon}^\pm$ whereas the first term involves the corrected variable $W_{\epsilon}^\pm$. As mentioned before, this choice is made to ensure that we can estimate any quadratic errors appearing in the analysis below. It will turn out that $W_{\epsilon}$ and $\nabla_{B_{\epsilon}}\tilde{W}_{\epsilon}$ exhibit a strong enough almost orthogonal structure, which will let us also handle ``first-order" error terms in the requisite energy monotonicity bound.
\end{remark} 
 Now, we verify property (iii)  in \Cref{Final transport +Euler}, which is the bootstrap bound 
 \begin{equation*}
 \|\omega_1^\pm\|_{H^{k}(\Omega_1)}\leq \epsilon^{-\frac{3}{2}}K(M)(1+C(M)\epsilon).
 \end{equation*}
 We can work with the equation for $\tilde{W}_1^\pm$ because $\nabla\times \tilde{W}_1^\pm=\nabla\times W_1$. Using the definition of $\tilde{v}_1$ and $\tilde{B}_1$, we obtain the relation
\begin{equation*}
\tilde{W}^\pm_1(x_1)=W^\pm_{\epsilon}\pm\epsilon\nabla_{B_{\epsilon}}\tilde{W}^\pm_{\epsilon}-\epsilon\nabla P_{\epsilon}.
\end{equation*}
Before estimating $\tilde{\omega}^\pm_1$, by using the above equation and \Cref{Reg bound list}, we record the estimate
\begin{equation*}
\|\tilde{W}_1^\pm\|_{H^{k+1}(\Omega_1)}\lesssim_M \epsilon^{-\frac{3}{2}}.
\end{equation*}
Consequently, 
\begin{equation*}
\omega^\pm_1(x_1)=\omega_{\epsilon}^\pm\pm\epsilon\nabla_{B_{\epsilon}}\omega_{\epsilon}^\pm+\mathcal{O}_{H^{k}(\Omega_\epsilon)}(\epsilon^{-\frac{1}{2}}),
\end{equation*}
which thanks to property (iii) in \Cref{rotregEmon} and a change of variables yields
\begin{equation*}
\|\omega_1^\pm\|_{H^k(\Omega_1)}\leq \epsilon^{-\frac{3}{2}}K(M)(1+C(M)\epsilon),
\end{equation*}
as desired. This establishes property (iii).
\medskip

Next, we work towards establishing the energy monotonicity bound (ii). The main step is to relate the good variables associated to the iterate $W_1$ and $\Gamma_1$ to the good variables associated to $W_{\epsilon}$ and $\Gamma_{\epsilon}$ at the regularity level of the energy. 
Here, we define the ``uncorrected" good variables $\tilde{\mathcal{G}}^\pm_{\epsilon}$ by
\begin{equation*}\label{uncorrectedG}
\tilde{\mathcal{G}_{\epsilon}}^\pm:=\nabla_n\tilde{W}_{\epsilon}^\pm\cdot\nabla P_{\epsilon}-\nabla_n\Delta^{-1}(\Delta\tilde{W}_{\epsilon}^\pm\cdot\nabla P_{\epsilon}+2\nabla\tilde{W}_{\epsilon}^\pm\cdot\nabla^2 P_{\epsilon}).
\end{equation*}
We have the following lemma.
\begin{lemma}[Relations between the good variables]\label{goodvarrel} The following relations hold:
\begin{enumerate}
\item (Relation for $\omega_1$).
\begin{equation*}
\begin{split}
\omega_1^\pm(x_1)-\omega_{\epsilon}^\pm\pm\epsilon\nabla_{B_{\epsilon}}\omega^\pm_{\epsilon}=\mathcal{O}_{H^{k-1}(\Omega_\epsilon)}(\epsilon).
\end{split}
\end{equation*}
\item (Relation for $\nabla_{B_{1}}\omega_1$).
\begin{equation*}
(\nabla_{B_1}\omega_1^\pm)(x_1)-\nabla_{B_{\epsilon}}\omega^\pm_\epsilon \pm\nabla_{B_{\epsilon}}^2\omega_{\epsilon}^\pm=\mathcal{O}_{H^{k-\frac{3}{2}}(\Omega_\epsilon)}(\epsilon) .   
\end{equation*}
\item (Relation for $a_1$).
\begin{equation*}\label{arelation}
a_1(x_1)-a_{\epsilon}-\epsilon D_t a_{\epsilon}=\mathcal{O}_{H^{k-1}(\Gamma_\epsilon)}(\epsilon).
\end{equation*}
\item (Relation for $\nabla_{B_1}a_1$). There exists $F$ with $\|F\|_{H^{k-1}(\Gamma_\epsilon)}\lesssim_M \epsilon$ and $\|\nabla_{B_{\epsilon}}(F-\epsilon D_ta_{\epsilon})\|_{H^{k-\frac{3}{2}}(\Gamma_\epsilon)}\lesssim_M\epsilon^{\frac{1}{2}}$ such that
\begin{equation*}
(\nabla_{B_1}a_1)(x_1)-\nabla_{B_{\epsilon}}a_{\epsilon}-\epsilon \nabla_{B_{\epsilon}}D_ta_{\epsilon}+\nabla_{B_{\epsilon}}F=\mathcal{O}_{H^{k-\frac{3}{2}}(\Gamma_\epsilon)}(\epsilon)    .
\end{equation*}
\item (Relation for $\mathcal{G}^\pm_1$). 
\begin{equation*}
\mathcal{G}_1^\pm(x_1)-\mathcal{G}_{\epsilon}^\pm+\epsilon a_{\epsilon}\mathcal{N}_{\epsilon}a_{\epsilon}=\pm\epsilon\nabla_{B_{\epsilon}}\tilde{\mathcal{G}}^\pm_{\epsilon}+\mathcal{O}_{H^{k-\frac{3}{2}}(\Gamma_\epsilon)}(\epsilon).
\end{equation*}
\item (Relation for $\nabla_{B_{\epsilon}}\mathcal{G}^\pm_1$). 
\begin{equation*}
(\nabla_{B_1}\mathcal{G}_1^\pm)(x_1)-\nabla_{B_{\epsilon}}\mathcal{G}_{\epsilon}^\pm+\epsilon a_{\epsilon}\mathcal{N}_{\epsilon}\nabla_{B_{\epsilon}}a_{\epsilon}=\pm\epsilon\nabla_{B_{\epsilon}}^2\tilde{\mathcal{G}}^\pm_{\epsilon}+\mathcal{O}_{H^{k-2}(\Gamma_\epsilon)}(\epsilon).
\end{equation*}
\end{enumerate}
\end{lemma}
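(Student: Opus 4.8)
\textbf{Proof proposal for \Cref{goodvarrel}.}
The plan is to prove the six relations in sequence, each time exploiting the approximate transport equation \eqref{approxtransporteqn} together with the regularization bounds collected in \Cref{Reg bound list} and the elliptic and commutator machinery from \Cref{BEE} and \Cref{Movingsurfid}. For (i) and (ii), I would start from $\tilde W_1^\pm(x_1)=W^\pm_\epsilon\pm\epsilon\nabla_{B_\epsilon}\tilde W^\pm_\epsilon-\epsilon\nabla P_\epsilon$, take the curl, and use that $\nabla\times\nabla P_\epsilon=0$, so that (after commuting the curl through the change of variables $x_1(x)=x+\epsilon v_\epsilon(x)$, which produces an $\mathcal O_{H^{k-1}}(\epsilon)$ error thanks to $\|v_\epsilon\|_{H^{k+1}}\lesssim_M\epsilon^{-3/2}$ and $\|\omega_\epsilon\|_{H^k}\lesssim_M\epsilon^{-3/2}$ interpolated with the weak $\mathcal O(\epsilon^2)$ bound) one obtains $\omega_1^\pm(x_1)=\omega_\epsilon^\pm\pm\epsilon\nabla_{B_\epsilon}\omega_\epsilon^\pm+\mathcal O_{H^{k-1}}(\epsilon)$. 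The curl of $\nabla_{B_\epsilon}\tilde W^\pm_\epsilon$ equals $\nabla_{B_\epsilon}\omega_\epsilon^\pm$ up to a commutator $[\nabla\times,\nabla_{B_\epsilon}]\tilde W^\pm_\epsilon$, which is bounded in $H^{k-1}$ by $\|\nabla B_\epsilon\|_{H^{k-1}}\|\tilde W^\pm_\epsilon\|_{H^k}\lesssim_M 1$ and hence contributes $\mathcal O(\epsilon)$. For (ii), I would apply $\nabla_{B_1}$ to the relation for $\omega_1$, use \Cref{gradbchain} to convert $(\nabla_{B_1}\cdot)(x_1)$ into $\nabla_{B_\epsilon}(\cdot(x_1))$ up to $\mathcal O_{H^{k-3/2}}(\epsilon)$, and differentiate once more, picking up $\nabla_{B_\epsilon}^2\omega_\epsilon^\pm$ and commutator errors absorbed into $\mathcal O_{H^{k-3/2}}(\epsilon)$.

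For (iii), I would use the definition $a_\bullet=-n_\bullet\cdot\nabla P_\bullet$ together with the expansion of $n_1(x_1)-n_\epsilon$ from \Cref{normalapprox} (which gives $n_1(x_1)-n_\epsilon=-\epsilon\nabla^\top v_\epsilon\cdot n_\epsilon+\mathcal O_{H^{k-3/2}}(\epsilon)$ after controlling the quadratic correction) and the relation between $P_1(x_1)$ and $P_\epsilon$ obtained by solving the Laplace equation for the difference of pressures with right-hand side of size $\mathcal O(\epsilon)$; by the definitions in \Cref{remarkBdefcorrection} the leading term is exactly $\epsilon D_t a_\epsilon$. Here one must be careful that $D_t$ (not $D_t^\pm$) appears, which is forced by the symmetrization $D_t=\tfrac12(D_t^++D_t^-)$ falling on $a$, consistent with \Cref{remark on control parameters}; the magnetic field contributions cancel, analogously to the cancellation \eqref{B cancels out} in \Cref{DF}. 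Relation (iv) is proved by applying $\nabla_{B_\epsilon}$ to the (iii) identity, again using \Cref{gradbchain} to pass between $\nabla_{B_1}$ at $x_1$ and $\nabla_{B_\epsilon}$, and defining $F$ to absorb the lower-order correction terms; the claimed bound $\|\nabla_{B_\epsilon}(F-\epsilon D_t a_\epsilon)\|_{H^{k-3/2}}\lesssim_M\epsilon^{1/2}$ follows from the $\epsilon^{-1/2}$ regularization bound for $\nabla_{B_\epsilon}\tilde W_\epsilon$ in \Cref{Reg bound list} combined with the enhanced regularity $\|\nabla_{B_\epsilon}^2 P_\epsilon\|_{H^{k-1/2}}\lesssim_M 1$ from \Cref{partitionofBa}.

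For (v) and (vi), which carry the essential structure, I would substitute the approximate equation \eqref{approxtransporteqn} into the explicit formula for $\mathcal G^\pm$ from \eqref{Gdeftrue}, i.e.\ $\mathcal G^\pm=\nabla_n W^\pm\cdot\nabla P-\nabla_n\Delta^{-1}\nabla\cdot\mathcal M_2(\nabla W^\pm,\nabla P)$, tracking how each factor transforms under $x_1$. The term $W^\pm_\epsilon\mapsto W^\pm_\epsilon\pm\epsilon\nabla_{B_\epsilon}\tilde W^\pm_\epsilon-\epsilon\nabla P_\epsilon$ produces, respectively, $\mathcal G_\epsilon^\pm$, the term $\pm\epsilon\nabla_{B_\epsilon}\tilde{\mathcal G}^\pm_\epsilon$, and a term in which $\nabla P_\epsilon$ plays the role of $W^\pm$; the last one, after using $\nabla P_\epsilon=-a_\epsilon n_\epsilon$, the Leibniz rule \eqref{DNLeibniz} for $\mathcal N$, and the elliptic identities from \Cref{Movingsurfid}, collapses to $-\epsilon a_\epsilon\mathcal N_\epsilon a_\epsilon$ up to $\mathcal O_{H^{k-3/2}}(\epsilon)$ — this is precisely the wave-type coupling $D_t^\mp\mathcal G^\pm+a\mathcal N a=f$ of \eqref{adynamics} in discretized form. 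The change-of-variables errors and the difference between $\tilde{\mathcal G}^\pm_\epsilon$ and $\mathcal G^\pm_\epsilon$ are absorbed into the remainder using $\|P_\epsilon\|_{H^{k+1/2}}\lesssim_M 1$, $\|\Gamma_\epsilon\|_{H^{k+1/2}}\lesssim_M\epsilon^{-1/2}$, and the balanced trace estimates. Relation (vi) then follows by applying $\nabla_{B_\epsilon}$ throughout, using \Cref{gradbchain} once more and the bound $\|\nabla_{B_\epsilon}^2 P_\epsilon\|_{H^{k-1/2}}\lesssim_M 1$ to keep the commutator $[\nabla_{B_\epsilon},a_\epsilon\mathcal N_\epsilon]a_\epsilon$ under control, exactly as in the passage from \eqref{adynamics} to the equation for $\nabla_B\mathcal G^\pm$. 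The main obstacle will be relations (v) and (vi): verifying that the $-\epsilon a_\epsilon\mathcal N_\epsilon a_\epsilon$ term emerges cleanly requires carefully matching the discrete pressure expansion against the definitions in \Cref{remarkBdefcorrection} and controlling a proliferation of commutator terms at the $H^{k-3/2}$ and $H^{k-2}$ levels, where one has only the weaker $\epsilon^{-1/2}$-type regularization bounds rather than $\epsilon^{-1}$; here the almost-orthogonality between $W_\epsilon$ and $\nabla_{B_\epsilon}\tilde W_\epsilon$ (and the enhanced regularity of $a$ in the $\nabla_B$ direction from \Cref{partitionofBa}) is what makes the bookkeeping close.
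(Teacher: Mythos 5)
Your overall strategy matches the paper's: (i)--(ii) by taking the curl of the transport identity \eqref{approxtransporteqn} and using $\nabla\times\tilde W_\epsilon^\pm=\nabla\times W_\epsilon^\pm$, (v)--(vi) by substituting the transport identity into the formula for $\mathcal G^\pm$ and extracting the $\epsilon a_\epsilon\mathcal N_\epsilon a_\epsilon$ term from the $-\epsilon\nabla P_\epsilon$ contribution via $\nabla P_\epsilon = -a_\epsilon n_\epsilon$ and the Leibniz rule for $\mathcal N$, and passing from (iii),(v) to (iv),(vi) using \Cref{gradbchain}. That much is right.

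However, your treatment of (iii) and especially (iv) has a genuine gap. You say you would prove (iv) ``by applying $\nabla_{B_\epsilon}$ to the (iii) identity\ldots and defining $F$ to absorb the lower-order correction terms,'' but applying $\nabla_{B_\epsilon}$ to a generic $\mathcal O_{H^{k-1}}(\epsilon)$ remainder yields only $\mathcal O_{H^{k-2}}(\epsilon)$, which is half a derivative short of the claimed $\mathcal O_{H^{k-\frac32}}(\epsilon)$. The whole point of the $F$ term is that it is \emph{not} an arbitrary $\mathcal O_{H^{k-1}}(\epsilon)$ error to be absorbed; it is a specific quantity, $F = -\nabla_n\Delta^{-1}R_\epsilon$, where $R_\epsilon = -\epsilon\nabla\nabla_{B_\epsilon}\tilde B_\epsilon^{ir}\cdot\nabla W_\epsilon^- + \epsilon\nabla W_\epsilon^+\cdot\nabla\nabla_{B_\epsilon}\tilde B_\epsilon^{ir}$ is the remainder isolated in \Cref{pressureapproxtrans}, arising precisely because $\tilde v_1$ is defined with $\nabla_{B_\epsilon}\tilde B_\epsilon$ rather than $\nabla_{B_\epsilon}B_\epsilon$, so the irrotational correction $\tilde B_\epsilon^{ir}=\tilde B_\epsilon-B_\epsilon$ lacks the $\epsilon^{-\frac12}$-regularization in the $\nabla_{B_\epsilon}$ direction. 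The bound $\|\nabla_{B_\epsilon}(F-\epsilon D_t a_\epsilon)\|_{H^{k-\frac32}}\lesssim_M\epsilon^{\frac12}$ is then not a consequence of the regularization bounds term-by-term (each piece individually is only $\mathcal O(\epsilon^{\frac12})$ in $H^{k-1}$, not $H^{k-\frac32}$); it reflects the cancellation $\|\nabla_{B_\epsilon}(\Delta^{-1}R_\epsilon-\epsilon D_t P_\epsilon)\|_{H^k}\lesssim_M\epsilon^{\frac12}$ established in \Cref{pressureapproxtrans}, which you would have to prove.

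A smaller point: your explanation for why $D_t$ rather than $D_t^\pm$ appears in (iii) (``analogously to the cancellation \eqref{B cancels out}'') is not the mechanism at work here. In \eqref{B cancels out} the cancellation is between $J^+$ and $J^-$ in the difference energy. Here the appearance of $D_t=\partial_t+v_\epsilon\cdot\nabla$ is forced by the domain transport $\Omega_1=(I+\epsilon v_\epsilon)\Omega_\epsilon$, which uses the velocity $v_\epsilon$ rather than the Els\"asser velocities; this is a design choice in the iteration, not a hidden cancellation. Likewise, your invocation of ``almost-orthogonality between $W_\epsilon$ and $\nabla_{B_\epsilon}\tilde W_\epsilon$'' as the closing mechanism for (v)--(vi) is misplaced: that orthogonality (\Cref{orthogonality}) is used downstream in the energy monotonicity, not in establishing the good-variable relations themselves.
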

The error term $F$ (which is a technical artifact of the definition of our iteration) should be thought of as playing a similar role to the error term $D^\pm a-\mathcal{G}^\pm$ in \Cref{RB2}.  
\begin{proof} 
The relation for $\omega_1^\pm$ follows immediately from \eqref{approxtransporteqn} by taking the curl of the equation and using that $\nabla\times \tilde{W}^\pm_{\epsilon}=\nabla\times W^\pm_{\epsilon}$. The relation for $\nabla_{B_1}\omega_1^\pm$ follows by taking curl of \eqref{approxtransporteqn}, applying $\nabla_{B_{\epsilon}}$, and using \eqref{gradbchain} together with the bound $\|W_1\|_{H^{k+\frac{1}{2}}(\Omega_1)}\lesssim_M\epsilon^{-1}$.
\medskip

To obtain the estimates (iii) and (iv) we will need  to find a suitable relation between the pressures $P_1$ and $P_{\epsilon}$. For this, we have the following lemma.
\begin{lemma}\label{pressureapproxtrans}
There exists a function $R_{\epsilon}$ such that
\begin{equation*}
P_1(x_1)-P_{\epsilon}-\epsilon D_tP_{\epsilon}+\Delta^{-1}R_{\epsilon}=\mathcal{O}_{\mathbf{H}^{k+\frac{1}{2}}(\Omega_\epsilon)}(\epsilon)
\end{equation*}
where
\begin{equation*}
\|R_{\epsilon}\|_{H^{k-\frac{3}{2}}(\Omega_\epsilon)}\lesssim_M \epsilon,\hspace{5mm} \|\nabla_{B_{\epsilon}}(\Delta^{-1}R_{\epsilon}-\epsilon D_tP_{\epsilon})\|_{H^{k}(\Omega_\epsilon)}\lesssim_M\epsilon^{\frac{1}{2}}.
\end{equation*}
\end{lemma}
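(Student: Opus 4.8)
The idea is to transport $P_1$ back to the fixed domain $\Omega_\epsilon$ along the near-identity map $x_1 = I + \epsilon v_\epsilon$ and to compare with $P_\epsilon$ at the level of the Laplacian. Set $\tilde{P}_1 := P_1\circ x_1$, a function on $\Omega_\epsilon$. Since $x_1$ carries $\Gamma_\epsilon$ onto $\Gamma_1$ and $P_1|_{\Gamma_1}=0$, we have $\tilde{P}_1|_{\Gamma_\epsilon}=0$; the same holds for $P_\epsilon$, and for $D_tP_\epsilon := \tfrac12(D_t^+P_\epsilon + D_t^-P_\epsilon)$, because by the interpretation in \Cref{remarkBdefcorrection} each $D_t^\pm P_\epsilon$ is $\Delta^{-1}$ of an $L^2$ function and hence has zero Dirichlet trace. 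Thus $\tilde{P}_1 - P_\epsilon - \epsilon D_tP_\epsilon \in H^1_0(\Omega_\epsilon)$, so it coincides with $\Delta^{-1}$ of its own Laplacian, and we may simply \emph{define}
\[
R_\epsilon := -\Delta\bigl(\tilde{P}_1 - P_\epsilon - \epsilon D_tP_\epsilon\bigr),
\]
which makes the claimed $\mathcal{O}_{\mathbf{H}^{k+\frac{1}{2}}(\Omega_\epsilon)}(\epsilon)$ term identically zero and reduces the lemma to the two stated estimates on $R_\epsilon$.

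For $\|R_\epsilon\|_{H^{k-\frac{3}{2}}(\Omega_\epsilon)}\lesssim_M\epsilon$, the plan is to compute $\Delta\tilde{P}_1$ by the chain rule for $x_1 = I + \epsilon v_\epsilon$: differentiating twice produces $(\Delta P_1)\circ x_1$, plus $\epsilon$-order terms quadratic in $\nabla v_\epsilon$ and $(\nabla^2 P_1)\circ x_1$ together with $\epsilon\,\Delta v_\epsilon\cdot(\nabla P_1)\circ x_1$, plus an $\mathcal{O}(\epsilon^2)$ tail. One then substitutes $(\Delta P_1)\circ x_1 = -(\partial_i W^+_{1,j}\partial_j W^-_{1,i})\circ x_1$, converts $(\nabla W^\pm_1)\circ x_1$ into $\nabla(W^\pm_1\circ x_1)$ (again up to $I + \epsilon\nabla v_\epsilon$), and inserts the approximate transport identity \eqref{approxtransporteqn}, $W^\pm_1\circ x_1 = W^\pm_\epsilon + \epsilon(\pm\nabla_{B_\epsilon}\tilde{W}^\pm_\epsilon - \nabla P_\epsilon) + \mathcal{O}_{\mathbf{H}^k(\Omega_\epsilon)}(\epsilon)$. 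Collecting the $\epsilon$-linear contributions and invoking \eqref{materialofW} and \eqref{correctiontermdef} for $D_t^\pm W^\pm$ and $D_t^\pm\Delta P$, these reassemble precisely into $\epsilon\bigl(\nabla\cdot\mathcal{M}_2(\nabla v_\epsilon,\nabla P_\epsilon) + D_t\Delta P_\epsilon\bigr) = \epsilon\Delta(D_tP_\epsilon)$, modulo errors that are genuinely $\mathcal{O}_{H^{k-\frac{3}{2}}(\Omega_\epsilon)}(\epsilon)$. Those errors are controlled by the product and balanced elliptic estimates of \Cref{BEE} together with the regularization bounds of \Cref{Reg bound list}: the $\mathcal{O}_{\mathbf{H}^k}(\epsilon)$ remainder in \eqref{approxtransporteqn} costs one derivative under $\nabla$, giving $\mathcal{O}_{H^{k-1}}(\epsilon)$ and hence $\mathcal{O}_{H^{k-\frac{3}{2}}}(\epsilon)$ after pairing with $\nabla W_\epsilon\in H^{k-1}$, while each $\epsilon^2$-order chain-rule term is $\mathcal{O}_{H^{k-\frac{3}{2}}}(\epsilon^2)$ since factors such as $\|v_\epsilon\|_{H^{k+2}}\lesssim_M\epsilon^{-\frac{9}{2}}$ are over-absorbed by $\epsilon^2$.

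The main obstacle is the second estimate, $\|\nabla_{B_\epsilon}(\Delta^{-1}R_\epsilon - \epsilon D_tP_\epsilon)\|_{H^k(\Omega_\epsilon)}\lesssim_M\epsilon^{\frac12}$. By the definition of $R_\epsilon$ this is exactly $\|\nabla_{B_\epsilon}(\tilde{P}_1 - P_\epsilon)\|_{H^k(\Omega_\epsilon)}\lesssim_M\epsilon^{\frac12}$, a half-derivative gain of $\nabla_{B_\epsilon}$ on the pressure increment. It is delicate because $\nabla_{B_\epsilon}$ is only $\epsilon^{-\frac12}$-bounded on the regularized velocity and on the \emph{uncorrected} magnetic field $\tilde{B}_\epsilon$, yet is a priori $\epsilon^{-1}$-lossy on $W^\pm_\epsilon$ itself. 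The resolution mirrors the normal-form mechanism of \Cref{RB2} and the role of the auxiliary term $F$ in \Cref{goodvarrel}: one re-derives $\tilde{P}_1 - P_\epsilon = \Delta^{-1}(\Delta\tilde{P}_1 - \Delta P_\epsilon)$ from the chain-rule/transport expansion above, noting that every $\epsilon$-linear term there carries its $\nabla_{B_\epsilon}$ on $\tilde{W}^\pm_\epsilon$ (through the transport step) rather than on $W^\pm_\epsilon$; after discarding $W^\pm_\epsilon - \tilde{W}^\pm_\epsilon = \mp(\tilde{B}_\epsilon)^{ir}$ as harmless via the remark following \eqref{approxtransporteqn} and \Cref{divergenceremarkB}, one applies a further $\nabla_{B_\epsilon}$ and uses $\|\nabla_{B_\epsilon}\tilde{W}^\pm_\epsilon\|_{\mathbf{H}^k(\Omega_\epsilon)}\lesssim_M\epsilon^{-\frac12}$ and $\|\nabla_{B_\epsilon}\tilde{W}^\pm_\epsilon\|_{H^{k+1}(\Omega_\epsilon)}\lesssim_M\epsilon^{-2}$ from \Cref{Reg bound list}(iv) (and \eqref{gradbregbounds}) to extract the extra $\epsilon^{\frac12}$. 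To move $\nabla_{B_\epsilon}$ across the solution operator one uses $[\nabla_{B_\epsilon},\Delta^{-1}]f = \Delta^{-1}\nabla\cdot\mathcal{M}_2(\nabla B_\epsilon,\nabla\Delta^{-1}f)$, valid since $\nabla\cdot B_\epsilon = 0$, together with the tangency $B_\epsilon\cdot n_{\Gamma_\epsilon}=0$, which guarantees that $\nabla_{B_\epsilon}\Delta^{-1}(\cdot)$ again lies in $H^1_0(\Omega_\epsilon)$ so that the two-derivative gain of $\Delta^{-1}$ applies cleanly; the commutator $[\Delta,\nabla_{B_\epsilon}](\tilde{P}_1 - P_\epsilon) = \nabla\cdot\mathcal{M}_2(\nabla B_\epsilon,\nabla(\tilde{P}_1 - P_\epsilon))$ that this generates is negligible since $\tilde{P}_1 - P_\epsilon$ is already $\mathcal{O}_{H^{k+\frac{1}{2}}}(\epsilon)$. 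I expect essentially all the work to be concentrated in this last step: organizing the chain-rule/transport expansion so that no $\nabla_{B_\epsilon}$ ever lands on $W^\pm_\epsilon$ (only on $\tilde{W}^\pm_\epsilon$, on $P_\epsilon$ — which vanishes to the needed order by the same bookkeeping as in \Cref{goodvarrel} — or on factors absorbed by the $\Delta^{-1}$'s), and tracking the powers of $\epsilon$ throughout.
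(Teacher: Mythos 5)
Your reformulation — take $R_\epsilon := -\Delta\bigl(\tilde P_1 - P_\epsilon - \epsilon D_t P_\epsilon\bigr)$ so that the residual vanishes identically — is logically valid: all three quantities do have zero Dirichlet trace on $\Gamma_\epsilon$ (for $D_t^\pm P_\epsilon$ this is built into the definition $D_t^\pm P_\epsilon = \Delta^{-1}F_\epsilon^\pm$ from Remark~\ref{remarkBdefcorrection}), so the identity $\Delta^{-1}\Delta = \mathrm{Id}$ on $H^1_0(\Omega_\epsilon)$ applies and the lemma reduces to your two estimates. This is genuinely a different packaging than the paper, which instead isolates the single problematic piece $R_\epsilon = \epsilon\,\mathcal M_2\bigl(\nabla\nabla_{B_\epsilon}\tilde B_\epsilon^{ir}, \nabla W_\epsilon^{\mp}\bigr)$ explicitly and bounds everything else in the stronger space $\mathbf H^{k-3/2}$. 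The two versions are equivalent, and indeed interchangeable downstream (in Lemma~\ref{goodvarrel} only $\Delta^{-1}R_\epsilon$ is used, which under your definition is just $-(\tilde P_1 - P_\epsilon - \epsilon D_t P_\epsilon)$). What the paper's explicit choice buys is that the $H^{k-3/2}$ bound on $R_\epsilon$ is immediate from $\|\nabla_{B_\epsilon}\tilde B_\epsilon^{ir}\|_{H^{k-1/2}}\lesssim_M 1$, and the $\mathcal O_{\mathbf H^{k+1/2}}(\epsilon)$ error then needs the full bold-$\mathbf H$ control; your version shifts that bookkeeping into the second estimate $\|\nabla_{B_\epsilon}(\tilde P_1 - P_\epsilon)\|_{H^k}\lesssim\epsilon^{1/2}$.

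There is, however, a point where your write-up is imprecise in a way that matters. You say the $\epsilon$-linear contributions from the chain rule and the transport step ``reassemble precisely into $\epsilon\Delta(D_t P_\epsilon)$''; they do not. The transport identity~\eqref{approxtransporteqn} produces $\nabla_{B_\epsilon}\tilde W_\epsilon^\pm$, while $D_t\Delta P_\epsilon$ from~\eqref{correctiontermdef}--\eqref{materialofW} is built from $D_t W_\epsilon^\pm = -\nabla P_\epsilon \pm \nabla_{B_\epsilon}W_\epsilon^\pm$ with the \emph{corrected} field $W_\epsilon^\pm$. The mismatch $\nabla_{B_\epsilon}(\tilde W_\epsilon^\pm - W_\epsilon^\pm) = \pm\nabla_{B_\epsilon}\tilde B_\epsilon^{ir}$ is exactly the paper's $R_\epsilon$ and is \emph{not} a genuine $\mathcal O(\epsilon)$ error in the $\nabla_{B_\epsilon}$-refined sense: one extra $\nabla_{B_\epsilon}$ on $\tilde B_\epsilon^{ir}$ costs $\epsilon^{-1/2}$, which is precisely why the second bound degrades to $\epsilon^{1/2}$ rather than $\epsilon$. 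You do ultimately acknowledge this (your final power-of-$\epsilon$ accounting is consistent), but ``discarding $W_\epsilon^\pm - \tilde W_\epsilon^\pm$ as harmless'' undersells the fact that this term is the entire source of the $\epsilon^{1/2}$ loss and is the reason the lemma cannot be stated with an $\mathcal O(\epsilon)$ on the right of the second bound. Finally, as you yourself flag, the computation closing the second estimate — commuting $\nabla_{B_\epsilon}$ past $\Delta^{-1}$, distributing $\nabla_{B_\epsilon}$ through each $\epsilon$-linear product, and verifying that every branch lands in $H^{k-2}$ with a loss of at most $\epsilon^{-1/2}$ — is where essentially all the work lies and is left as a plan rather than carried out; that step should be written out before the argument can be considered complete.
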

The above lemma will ultimately tell us that $R_{\epsilon}$  plays a perturbative role in establishing identity (iii) in \Cref{goodvarrel} but will contribute to the definition of the non-perturbative term $F$ when establishing identity (iv).
\begin{proof}
From the Laplace equation for $P_1$, the dynamic boundary condition, tangency of $B_1$ and the regularity bounds for $\Gamma_1$ and $W^\pm_1$, we have the preliminary bounds $\|P_1\|_{H^{k+\frac{1}{2}}(\Omega_1)}\lesssim_M 1$, $\|\nabla_{B_1}P_1\|_{H^{k}(\Omega_1)}\lesssim_M 1$ and $\|P_1\|_{H^{k+1}(\Omega_1)}\lesssim_{M}\epsilon^{-\frac{3}{4}}$. Moreover, using that $P_1(x_1)-P_{\epsilon}=0$ on $\Gamma_{\epsilon}$, it is easy to see by expanding $\Delta (P_1(x_1)-P_{\epsilon})$ that we have (for instance) the preliminary  bound $\|P_1(x_1)-P_{\epsilon}\|_{H^{k-2}(\Omega_\epsilon)}\lesssim_M\epsilon$. Now, we expand
\begin{equation*}
\Delta (P_1(x_1)-P_{\epsilon})=(\Delta P_1)(x_1)-\Delta P_{\epsilon}+\epsilon\Delta v_{\epsilon}\cdot (\nabla P_{1})(x_1)+2\epsilon\nabla v_{\epsilon}\cdot(\nabla^2 P_1)(x_1)+\epsilon^2\nabla v_{\epsilon}\cdot \nabla v_{\epsilon}\cdot (\nabla^2 P_1)(x_1).
\end{equation*}
We obtain from this and the bounds $\|v_{\epsilon}\|_{H^{k+\frac{1}{2}}(\Omega_\epsilon)}\lesssim_M\epsilon^{-\frac{3}{4}}$ and $\|\nabla_{B_{\epsilon}}v_{\epsilon}\|_{H^{k}(\Omega_\epsilon)}\lesssim_M\epsilon^{-\frac{1}{2}}$ (which follow from \Cref{Reg bound list}) that 
\begin{equation*}\label{startingprelation}
\Delta (P_1(x_1)-P_{\epsilon})=(\Delta P_1)(x_1)-\Delta P_{\epsilon}+\epsilon\Delta v_{\epsilon}\cdot \nabla P_{\epsilon}+2\epsilon\nabla v_{\epsilon}\cdot\nabla^2 P_{\epsilon}+\mathcal{O}_{\mathbf{H}^{k-\frac{3}{2}}(\Omega_\epsilon)}(\epsilon).
\end{equation*}
We expand 
\begin{equation*}
\begin{split}
(\Delta P_1)(x_1)-\Delta P_{\epsilon}&=\partial_iW_{\epsilon,j}^+\partial_jW_{\epsilon,i}^--(\partial_i W_{1,j}^+\partial_jW_{1,i}^-)(x_1)
\\
&=\partial_i(W_{\epsilon,j}^+-W_{1,j}^+(x_1))\partial_jW_{\epsilon,i}^-+\partial_iW_{\epsilon,j}^+\partial_j(W_{\epsilon,i}^--W_{1,i}^-(x_1))+\mathcal{O}_{\mathbf{H}^{k-\frac{3}{2}}(\Omega_\epsilon)}(\epsilon)
\\
&=2\epsilon\nabla^2 P_{\epsilon}\cdot\nabla v_{\epsilon}-\epsilon \nabla\nabla_{B_{\epsilon}}\tilde{W}_{\epsilon}^+\cdot\nabla W^-_{\epsilon}+\epsilon \nabla W_{\epsilon}^+\cdot\nabla \nabla_{B_{\epsilon}}\tilde{W}_{\epsilon}^-+\mathcal{O}_{\mathbf{H}^{k-\frac{3}{2}}(\Omega_\epsilon)}(\epsilon).
\end{split}
\end{equation*}
From the Laplace equation for $D_tP_{\epsilon}$, we see that 
\begin{equation*}
P_1(x_1)-P_{\epsilon}-\epsilon D_tP_{\epsilon}+\Delta^{-1}R_{\epsilon}=\mathcal{O}_{\mathbf{H}^{k+\frac{1}{2}}(\Omega_\epsilon)}(\epsilon)
\end{equation*}
where 
\begin{equation*}
R_{\epsilon}:=-\epsilon \nabla\nabla_{B_{\epsilon}}\tilde{B}_{\epsilon}^{ir}\cdot\nabla W_\epsilon^-+\epsilon \nabla W_{\epsilon}^+\cdot\nabla \nabla_{B_{\epsilon}}\tilde{B}_{\epsilon}^{ir}.
\end{equation*}
As a consequence of the identity $\tilde{B}_{\epsilon}^{ir}=\tilde{B}_{\epsilon}-B_{\epsilon}$, we find that $\|\nabla_{B_{\epsilon}}\tilde{B}_{\epsilon}^{ir}\|_{H^{k-\frac{1}{2}}(\Omega_{\epsilon})}\lesssim_M 1$. Therefore, we have 
\begin{equation*}\label{Restimatestransport}
\|R_{\epsilon}\|_{H^{k-\frac{3}{2}}(\Omega_\epsilon)}\lesssim_M\epsilon.
\end{equation*}
Moreover, by definition of $R_{\epsilon}$ and from the expansions above, it is easy to verify that
\begin{equation*}
\| \nabla_{B_{\epsilon}}(\Delta^{-1}R_{\epsilon}-\epsilon D_tP_{\epsilon})\|_{H^{k}(\Omega_\epsilon)}\lesssim_M\epsilon^{\frac{1}{2}},
\end{equation*}
which completes the proof of the lemma.
\end{proof}
Returning now to the proof of properties (iii) and (iv) in \Cref{goodvarrel}, we use the above lemma to obtain
\begin{equation*}
\begin{split}
(\nabla P_{1})(x_1)&=\nabla P_{\epsilon}+\epsilon\nabla D_t P_{\epsilon}-\epsilon\nabla v_{\epsilon}\cdot(\nabla P_{1})(x_1)-\nabla\Delta^{-1}R_{\epsilon}+\mathcal{O}_{\mathbf{H}^{k-\frac{1}{2}}(\Omega_\epsilon)}(\epsilon)
\\
&=\nabla P_{\epsilon}+\epsilon D_t\nabla P_{\epsilon}-\nabla\Delta^{-1}R_{\epsilon}+\mathcal{O}_{\mathbf{H}^{k-\frac{1}{2}}(\Omega_\epsilon)}(\epsilon).
\end{split}
\end{equation*}
 From this we see that
\begin{equation*}
\begin{split}
a_1(x_1)&=a_{\epsilon}+\epsilon D_ta_{\epsilon}+\nabla_n\Delta^{-1}R_{\epsilon}-(n_{\Gamma_1}(x_1)-n_{\Gamma_\epsilon})\cdot(\nabla P_{1})(x_1)+\mathcal{O}_{\mathbf{H}^{k-1}(\Gamma_{\epsilon})}(\epsilon)
\\
&=a_{\epsilon}+\epsilon D_ta_{\epsilon}+\nabla_n\Delta^{-1}R_{\epsilon}+\mathcal{O}_{\mathbf{H}^{k-1}(\Gamma_{\epsilon})}(\epsilon),
\end{split}
\end{equation*}
where in the last line we used 
$$(n_{\Gamma_1}(x_1)-n_{\Gamma_\epsilon})\cdot (\nabla P_1)(x_1)=-a_1(x_1)(n_{\Gamma_1}(x_1)-n_{\Gamma_\epsilon})\cdot n_{\Gamma_1}(x_1)=-a_1(x_1)\frac{1}{2}|n_{\Gamma_1}(x_1)-n_{\Gamma_\epsilon}|^2=\mathcal{O}_{\mathbf{H}^{k-1}(\Gamma_\epsilon)}(\epsilon).$$ From the bounds for $R_{\epsilon}$, relation (iii) immediately follows. To conclude the proof of relation (iv), we define $F:=-\nabla_n\Delta^{-1}R_{\epsilon}$ and apply \Cref{gradbchain}.
\medskip

Next, we prove the relation for $\mathcal{G}_1^\pm$. We  recall that
\begin{equation*}
\begin{split}
\mathcal{G}^\pm&= \nabla_nW^\pm\cdot\nabla P-\nabla_n\Delta^{-1}(\Delta W^\pm\cdot\nabla P+2\nabla W^\pm\cdot\nabla^2 P)=-n_\Gamma\cdot\mathcal{A}^\pm=-n_\Gamma\cdot (-\nabla W^\pm\cdot\nabla P+\nabla\mathcal{B}^\pm).
\end{split}
\end{equation*}
We see that
\begin{equation}\label{Dtaeqndiff}
\begin{split}
\mathcal{A}_1^\pm(x_1)-\mathcal{A}_{\epsilon}^\pm &=-((\nabla W^\pm_1\cdot\nabla P_{1})(x_1)-\nabla W^\pm_{\epsilon}\cdot\nabla P_{\epsilon})+((\nabla\mathcal{B}_1^\pm)(x_1)-\nabla\mathcal{B}^\pm_{\epsilon})
\\
&=-((\nabla W^\pm_1)(x_1)-\nabla W^\pm_{\epsilon})\cdot\nabla P_{\epsilon}+((\nabla\mathcal{B}_1^\pm)(x_1)-\nabla\mathcal{B}^\pm_{\epsilon})+\mathcal{O}_{\mathbf{H}^{k-1}(\Omega_\epsilon)}(\epsilon).
\end{split}
\end{equation}
To control the second term on the right-hand side above, we write out the Laplace equation for $\mathcal{B}^\pm_1(x_1)$ and use \Cref{gradbchain} together with the relevant elliptic estimates in \Cref{BEE} to obtain
\begin{equation*}
\begin{split}
\Delta (\mathcal{B}^\pm_1(x_1))&=(\Delta \mathcal{B}^\pm_1)(x_1)+\epsilon\Delta W_{\epsilon}^\pm\cdot(\nabla \mathcal{B}^\pm_1)(x_1)+2\epsilon\nabla W_{\epsilon}^\pm\cdot (\nabla^2 \mathcal{B}^\pm_1)(x_1)+\mathcal{O}_{\mathbf{H}^{k-2}(\Omega_\epsilon)}(\epsilon)
\\
&=(\Delta \mathcal{B}^\pm_1)(x_1)+\mathcal{O}_{\mathbf{H}^{k-2}(\Omega_\epsilon)}(\epsilon).
\end{split}
\end{equation*}
From the equation for $W_1^\pm$, the Laplace equation for $P_{\epsilon}$ and the chain rule, we then observe the relation
\begin{equation*}
\begin{split}
(\Delta W_1^\pm)(x_1)&=\Delta (W_1^\pm(x_1))+\mathcal{O}_{\mathbf{H}^{k-2}(\Omega_\epsilon)}(\epsilon)
\\
&=\Delta W_{\epsilon}^\pm-\epsilon\nabla\Delta P_{\epsilon}\pm\epsilon\Delta(\nabla_{B_{\epsilon}}\tilde{W}_{\epsilon}^\pm)+\mathcal{O}_{\mathbf{H}^{k-2}(\Omega_\epsilon)}(\epsilon)
\\
&=\Delta W_{\epsilon}^\pm\pm\epsilon\nabla_{B_{\epsilon}}\Delta \tilde{W}_{\epsilon}^\pm+\mathcal{O}_{\mathbf{H}^{k-2}(\Omega_\epsilon)}(\epsilon).
\end{split}
\end{equation*}
Defining 
\begin{equation*}
\tilde{\mathcal{B}}_\epsilon^\pm:=\Delta^{-1}(\Delta\tilde{W}_{\epsilon}^\pm\cdot\nabla P_{\epsilon}+2\nabla\tilde{W}_{\epsilon}^\pm\cdot\nabla^2 P_{\epsilon})
\end{equation*}
and using \Cref{partitionofBa}, we  obtain the identity 
\begin{equation*}
\begin{split}
(\Delta \mathcal{B}^\pm_1)(x_1)&=\Delta \mathcal{B}^\pm_{\epsilon}+(\Delta W_1^\pm\cdot\nabla P_1)(x_1)-\Delta W^\pm_{\epsilon}\cdot\nabla P_{\epsilon}+2(\nabla W^\pm_1\cdot\nabla ^2 P_1)(x_1)-2(\nabla W^\pm_{\epsilon}\cdot\nabla ^2 P_{\epsilon})
\\
&=\Delta (\mathcal{B}^\pm_{\epsilon}\mp\epsilon\nabla_{B_{\epsilon}}\tilde{\mathcal{B}}^\pm_{\epsilon})+2\left(\nabla W^\pm_{\epsilon}\cdot ((\nabla^2 P_1)(x_1)-\nabla^2P_{\epsilon})\right)+\mathcal{O}_{\mathbf{H}^{k-2}(\Omega_\epsilon)}(\epsilon)
\\
&=\Delta (\mathcal{B}^\pm_{\epsilon}\mp\epsilon\nabla_{B_{\epsilon}}\tilde{\mathcal{B}}^\pm_{\epsilon})+\mathcal{O}_{\mathbf{H}^{k-2}(\Omega_\epsilon)}(\epsilon),
\end{split}    
\end{equation*}
where in the last line, we used  the relation $\epsilon D_tP_{\epsilon}-\Delta^{-1}R_{\epsilon}=\mathcal{O}_{\mathbf{H}^{k}(\Omega_\epsilon)}(\epsilon)$. Combining the above with \eqref{Dtaeqndiff} we obtain, by elliptic regularity,
\begin{equation*}
\mathcal{A}^\pm_1(x_1)-\mathcal{A}^\pm_{\epsilon}\mp\epsilon\nabla_{B_{\epsilon}}\nabla \tilde{\mathcal{B}}^\pm_{\epsilon}=((\nabla W^\pm_1)(x_1)-\nabla W^\pm_{\epsilon})\cdot\nabla P_{\epsilon}+\mathcal{O}_{\mathbf{H}^{k-1}(\Omega_\epsilon)}(\epsilon).
\end{equation*}
Then, since
\begin{equation*}
(\mathcal{A}^\pm_1)(x_1)\cdot (n_{\Gamma_1}(x_1)-n_{\Gamma_\epsilon})=\mathcal{O}_{\mathbf{H}^{k-\frac{3}{2}}(\Gamma_\epsilon)}(\epsilon),    
\end{equation*}
it follows that 
\begin{equation*}
\begin{split}
\mathcal{G}^\pm_1(x_1)-\mathcal{G}^\pm_{\epsilon}\mp\epsilon\nabla_{B_{\epsilon}}(\nabla_n\tilde{\mathcal{B}}^\pm_{\epsilon})&=-a_{\epsilon}\nabla_n(W_1^\pm(x_1)-W^\pm_{\epsilon})\cdot n_{\Gamma_\epsilon}+ \mathcal{O}_{\mathbf{H}^{k-\frac{3}{2}}(\Gamma_\epsilon)}(\epsilon)
\\
&=\epsilon a_{\epsilon}\nabla_n\nabla P_{\epsilon}\cdot n_{\Gamma_\epsilon}\mp\epsilon \nabla_n\nabla_{B_{\epsilon}}\tilde{W}_{\epsilon}^\pm\cdot \nabla P_{\epsilon}+\mathcal{O}_{\mathbf{H}^{k-\frac{3}{2}}(\Gamma_\epsilon)}(\epsilon)
\\
&=\epsilon a_{\epsilon}\mathcal{N}_{\epsilon}\nabla P_{\epsilon}\cdot n_{\Gamma_\epsilon}\mp\epsilon\nabla_{B_{\epsilon}}(\nabla_n \tilde{W}_\epsilon^\pm\cdot \nabla P_{\epsilon})+\mathcal{O}_{\mathbf{H}^{k-\frac{3}{2}}(\Gamma_\epsilon)}(\epsilon),
\end{split}
\end{equation*}
where above we used the bounds $\|\nabla_{B_{\epsilon}}^2 P_{\epsilon}\|_{H^{k-\frac{1}{2}}(\Omega_{\epsilon})}+\|\nabla_{B_{\epsilon}}^2n_{\Gamma_{\epsilon}}\|_{H^{k-2}(\Gamma_{\epsilon})}\lesssim_M 1$. This gives
\begin{equation*}
\mathcal{G}^\pm_1(x_1)-\mathcal{G}^\pm_{\epsilon}\mp\epsilon\nabla_{B_{\epsilon}}\tilde{\mathcal{G}}^\pm_{\epsilon}=\epsilon a_{\epsilon}\mathcal{N}_{\epsilon}\nabla P_{\epsilon}\cdot n_{\Gamma_\epsilon}+\mathcal{O}_{\mathbf{H}^{k-\frac{3}{2}}(\Gamma_\epsilon)}(\epsilon).
\end{equation*}
Finally, noting that $\mathcal{N}_{\epsilon}n_{\Gamma_\epsilon}\cdot n_{\Gamma_\epsilon}$ is lower order, we have, thanks to the Leibniz rule for $\mathcal{N}_{\epsilon}$,
\begin{equation*}
\begin{split}
\epsilon a_{\epsilon}\mathcal{N}_{\epsilon}\nabla P_{\epsilon}\cdot n_{\Gamma_\epsilon} &=-\epsilon a_{\epsilon}\mathcal{N}_{\epsilon}(n_{\Gamma_\epsilon} a_{\epsilon})\cdot n_{\Gamma_\epsilon}=-\epsilon a_{\epsilon}\mathcal{N}_{\epsilon}a_{\epsilon}+\mathcal{O}_{\mathbf{H}^{k-\frac{3}{2}}(\Gamma_\epsilon)}(\epsilon).
\end{split}
\end{equation*}
Therefore,  the desired relation (v) for $\mathcal{G}^\pm$ holds. Relation (vi) follows from (v) and \Cref{gradbchain} as well as the Leibniz rule for $\mathcal{N}_{\epsilon}$.
\end{proof}
\textbf{Energy monotonicity}. To finish the proof of \Cref{Final transport +Euler}, it remains to establish energy monotonicity. The following lemma will allow us to more easily work with the relations in \Cref{goodvarrel}.
\begin{lemma}\label{changeofvar}
Define the ``pulled-back" energy $E^k_{*}(v_1, B_1,\Gamma_{1}):=\sum_{\alpha\in\{+,-\}}E^k_{*,\pm,1}+E^k_{*,\pm,2}+E^k_{*,\pm,3}$ by
\begin{equation*}
E^k_{*,\pm,1}=1+\|\omega_1^\pm(x_1)\|_{H^{k-1}(\Omega_{\epsilon})}^2+\|(\nabla_{B_1}\omega_1^\pm)(x_1)\|_{H^{k-\frac{3}{2}}(\Omega_{\epsilon})}^2+\|W^\pm_1(x_1)\|_{L^2(\Omega_{\epsilon})}^2
\end{equation*}
and
\begin{equation*}
E^k_{*,\pm,2}=\|a_1^{\frac{1}{2}}(x_1)\mathcal{N}_{\epsilon}^{k-1}(a_1(x_1))\|_{L^2(\Gamma_{\epsilon})}^2+\|\nabla\mathcal{H}_{\epsilon}\mathcal{N}_{\epsilon}^{k-2}(\mathcal{G}^{\pm}_1(x_1))\|_{L^2(\Omega_{\epsilon})}^2
\end{equation*}
and
\begin{equation*}
E^k_{*,\pm,3}=\|\nabla\mathcal{H}_{\epsilon}\mathcal{N}_{\epsilon}^{k-2}((\nabla_{B_1}a_1)(x_1))\|_{L^2(\Omega_{\epsilon})}^2+\|a_1^{-\frac{1}{2}}(x_1)\mathcal{N}_{\epsilon}^{k-2}(\nabla_{B_1}\mathcal{G}_1^{\pm})(x_1)\|_{L^2(\Gamma_{\epsilon})}^2.
\end{equation*}
Then we have the relation
\begin{equation*}
E^{k}(v_1,B_1,\Gamma_1)\leq E_{*}^{k}(v_1,B_1,\Gamma_{1})+C(M)\epsilon.
\end{equation*}
\end{lemma}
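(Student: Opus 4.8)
The plan is to compare the energy functional $E^k(v_1,B_1,\Gamma_1)$, which is defined by integrating various good variables over the actual iterated domain $\Omega_1$ and boundary $\Gamma_1$, with the ``pulled-back'' energy $E^k_{*}(v_1,B_1,\Gamma_1)$, which instead integrates the compositions of those good variables with the map $x_1(x)=x+\epsilon v_\epsilon(x)$ over the reference domain $\Omega_\epsilon$ and boundary $\Gamma_\epsilon$. The two differ only through a change of variables by the near-identity diffeomorphism $x_1$, so the entire content of the lemma is that this change of variables distorts each term in the energy by at most $C(M)\epsilon$. First I would record the basic facts about the map: since $\|v_\epsilon\|_{H^{k+1+\alpha}(\Omega_\epsilon)}\lesssim_{M,\alpha}\epsilon^{-\frac32-3\alpha}$ and $\|\Gamma_\epsilon\|_{H^{k+\alpha}}\lesssim_{M,\alpha}\epsilon^{-\alpha}$ from \Cref{Reg bound list}, the map $x_1$ is a diffeomorphism with Jacobian $\det(I+\epsilon\nabla v_\epsilon)=1+\epsilon\,\nabla\cdot v_\epsilon+\mathcal{O}(\epsilon^2)=1+\mathcal{O}_{C^2}(\epsilon^2)$ (using $\nabla\cdot v_\epsilon=0$), and similarly the induced surface measure on $\Gamma_1$ pulled back to $\Gamma_\epsilon$ is $(1+\mathcal{O}(\epsilon))\,dS$. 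This immediately handles the volume-integral terms $E^k_{*,\pm,1}$ and the $\Omega$-based terms in $E^k_{*,\pm,2}$, $E^k_{*,\pm,3}$: each is of the form $\int_{\Omega_1}|F|^2\,dx = \int_{\Omega_\epsilon}|F(x_1)|^2\,(1+\mathcal{O}_{C^1}(\epsilon^2))\,dx$, and since each $F$ in the energy (a curl, a $\nabla_{B}$-derivative of a curl, or $\nabla\mathcal{H}\mathcal{N}^{k-2}$ applied to a good variable) is bounded in the relevant Sobolev space by a constant depending only on $M$ up to the appropriate power of $\epsilon^{-1}$, the $\mathcal{O}(\epsilon^2)$ Jacobian correction contributes at most $C(M)\epsilon$. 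Actually it contributes $\mathcal{O}(\epsilon^2)\cdot E^k(\ldots)$, but since $E^k(v_1,B_1,\Gamma_1)\lesssim_M \epsilon^{-1}$ (from the regularization bounds and coercivity), even the worst case stays $\lesssim_M\epsilon$.

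The more delicate point is that the operators appearing in the energy are built from the geometry: $\mathcal{N}_{1}$, $\mathcal{H}_1$, and $\nabla_{B_1}$ are the Dirichlet-to-Neumann operator, harmonic extension, and magnetic derivative for the domain $\Omega_1$, whereas in $E^k_*$ we use $\mathcal{N}_\epsilon$, $\mathcal{H}_\epsilon$, $\nabla_{B_\epsilon}$. So one must show that conjugating these operators by the change of variables $x_1$ produces only $\mathcal{O}(\epsilon)$ errors at the level of the energy. For the harmonic extension this is classical: if $u$ is $\Delta_{\Omega_1}$-harmonic then $u(x_1)$ solves a second-order elliptic equation on $\Omega_\epsilon$ which differs from $\Delta_{\Omega_\epsilon}$ by a coefficient perturbation of size $\mathcal{O}_{C^{k-1}}(\epsilon^2)$ in its leading part (because $\nabla x_1 = I+\mathcal{O}(\epsilon^2)$ after exploiting $\nabla\cdot v_\epsilon=0$), so $(\mathcal{H}_1 f)(x_1) = \mathcal{H}_\epsilon(f\circ x_1) + \mathcal{O}_{H^s}(\epsilon)$ in the appropriate norms, and likewise $(\mathcal{N}_1 f)(x_1) = \mathcal{N}_\epsilon(f(x_1)) + \mathcal{O}(\epsilon)$ using \Cref{normalapprox} to control the difference of normals $n_1(x_1)-n_\epsilon$ (which is $\mathcal{O}_{C^{k-1}}(\epsilon)$ pointwise but, crucially, its \emph{irrotational, energy-level} part is controlled via the improved bound $\|\nabla^\top v_\epsilon\cdot n_\epsilon\|_{H^{k-\frac12}}\lesssim_M\epsilon^{-1}$, so composing $\mathcal{N}^{k-1}$ or $\mathcal{N}^{k-2}$ with it still only costs $\epsilon$ in $L^2$). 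For $\nabla_{B_1}$ versus $\nabla_{B_\epsilon}$ the relevant statement is exactly \Cref{gradbchain}, which says $(\nabla_{B_1}f)(x_1) - \nabla_{B_\epsilon}(f(x_1)) = \mathcal{O}(\epsilon)$ in the norms we need provided $f$ has one extra half-derivative of regularity (which our good variables do, by the coercivity bound and the regularization estimates). Iterating these operator-comparison estimates $k-2$ or $k-1$ times is where one must be careful that the commutator errors accumulate only polynomially and are always multiplied by an honest power of $\epsilon$; this is routine paradifferential bookkeeping of the type already used repeatedly in \Cref{HEB} and \Cref{rotregEmon}, combined with the regularization bounds of \Cref{Reg bound list} to absorb the loss of derivatives in the error terms.

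Putting it together, I would go term by term through $E^k(v_1,B_1,\Gamma_1) = \sum_{\alpha\in\{+,-\}} E^k_{\pm,1}+E^k_{\pm,2}+E^k_{\pm,3}$: for each term, perform the change of variables in the integral (picking up the harmless Jacobian factor), then replace $\mathcal{N}_1,\mathcal{H}_1,\nabla_{B_1}$ and $a_1,\Gamma_1$-dependent objects by their $\Omega_\epsilon$-counterparts using the operator-comparison estimates above, absorbing all errors into $C(M)\epsilon$. The rotational terms in $E^k_{\pm,1}$ are the easiest (pure volume integrals of $\omega_1$ and $\nabla_{B_1}\omega_1$, no boundary operators, handled by the Jacobian and \Cref{gradbchain} alone). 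The surface terms in $E^k_{\pm,2}$ and $E^k_{\pm,3}$ require the full strength of the $\mathcal{N}$- and $\mathcal{H}$-conjugation estimates, plus the observation that $a_1(x_1) = a_\epsilon + \mathcal{O}_{H^{k-1}}(\epsilon)$ from \Cref{goodvarrel}(iii) so that the weights $a_1^{\pm\frac12}$ can be replaced by $a_\epsilon^{\pm\frac12}$ at cost $\epsilon$. The main obstacle I expect is controlling the iterated boundary operators $\mathcal{N}_\epsilon^{k-1}$ and $\mathcal{N}_\epsilon^{k-2}$ acting on the difference $n_1(x_1)-n_\epsilon$ and on the operator-conjugation remainders: these involve $k-1$ factors of a first-order operator hitting a quantity that is only $\mathcal{O}(\epsilon)$ in a low norm but possibly large in $H^{k+\frac12}$, so one must interpolate carefully — using that $\Gamma_\epsilon$ is parabolically regularized at scale $\epsilon^{-1}$ and that the energy-level ($H^{k-\frac12}$) norm of $W_\epsilon^\pm$ is only $\epsilon^{-1}$ while the difference $W_1^\pm(x_1)-W_\epsilon^\pm$ is $\mathcal{O}_{\mathbf H^k}(\epsilon)$ by \eqref{approxtransporteqn} — to see that the net contribution is still $\lesssim_M\epsilon$. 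This is exactly the kind of balanced elliptic estimate developed in \Cref{BEE}, so the proof is a (tedious) assembly rather than a conceptual difficulty; I would state the operator-comparison estimates as intermediate claims, sketch their proofs via the coefficient-perturbation argument above, and then conclude term by term, omitting the routine computations.
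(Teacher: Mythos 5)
Your overall plan is the same as the paper's: change variables by the near-identity map $x_1$, compare the conjugated boundary operators $\mathcal{N}_1, \mathcal{H}_1, \nabla_{B_1}$ with their $\Omega_\epsilon$-counterparts, and iterate the comparison $k-2$ times, absorbing errors of size $\epsilon$. You correctly identified \Cref{gradbchain} and \Cref{goodvarrel} as the comparison tools for $\nabla_{B_1}$ and $a_1$, and the essential content of \Cref{changeofvar2} — the paper simply cites that proposition from \cite{Euler} rather than re-proving it, which you did not realize.

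However, there is a concrete error in your justification of the operator-comparison estimates. You claim that $\nabla x_1 = I + \mathcal{O}(\epsilon^2)$ "after exploiting $\nabla\cdot v_\epsilon = 0$," and hence that the conjugated Laplacian differs from $\Delta_{\Omega_\epsilon}$ by a principal coefficient perturbation of size $\mathcal{O}(\epsilon^2)$. This is false: $\nabla x_1 = I + \epsilon\nabla v_\epsilon$ is $I + \mathcal{O}(\epsilon)$, and the divergence-free condition only annihilates the \emph{trace} of $\nabla v_\epsilon$, giving $\det\nabla x_1 = 1+\mathcal{O}(\epsilon^2)$ — it does nothing to the off-diagonal or symmetric parts of $\nabla v_\epsilon$ that enter the coefficient matrix $(\nabla x_1)^{-1}(\nabla x_1)^{-T}\det(\nabla x_1)$ of the pulled-back Laplacian, which is therefore $I + \mathcal{O}(\epsilon)$. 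The same confusion between the Jacobian determinant and the full Jacobian matrix underlies your treatment of the volume integrals: the $L^2$ measure changes by $1 + \mathcal{O}(\epsilon^2)$, but the chain-rule factors accompanying each derivative in the $H^{k-1}$ norm are $I + \mathcal{O}(\epsilon)$. None of this breaks the argument — an $\mathcal{O}(\epsilon)$ loss per operator conjugation, accumulated finitely many times, is exactly what the lemma requires, and $\mathcal{O}(\epsilon)$ is also what \Cref{changeofvar2} actually delivers — but the $\mathcal{O}(\epsilon^2)$ claim is wrong as stated and would be corrected before this could stand as a proof. (Also, you assert $E^k(v_1,B_1,\Gamma_1)\lesssim_M\epsilon^{-1}$; in fact $E^k(v_1,B_1,\Gamma_1)\lesssim_M 1$ by the energy monotonicity property, which makes your error estimate even cheaper — the misstatement is harmless but worth noting.)
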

Before proving the above lemma, we show how it implies the desired energy monotonicity bound. In light of \Cref{changeofvar}, it suffices to establish the bound
\begin{equation*}
E^{k}_{*}(v_1,B_1,\Gamma_1)\leq (1+C(M)\epsilon)E^{k}(v_0,B_0,\Gamma_0)+C(M)\epsilon.
\end{equation*}
From the definition of $W_1$ and parts (iii)-(iv) of \Cref{rotregEmon}, we easily obtain
\begin{equation*}
E^k_{*,\pm,1}\leq (1+C(M)\epsilon)E^k_{\pm,1}(v_0,B_0,\Gamma_0)+C(M)\epsilon.
\end{equation*}
Next, we turn to the surface components of the energy. We need the following lemma to exploit the approximate orthogonality between $\mathcal{G}^\pm_{\epsilon}$ and $\nabla_{B_{\epsilon}}\tilde{\mathcal{G}}^\pm_{\epsilon}$. 
\begin{lemma}\label{orthogonality} There holds
\begin{equation*}
\left\lvert\sum_{\alpha\in\{+,-\}}\langle\mathcal{N}^{k-\frac{3}{2}}\mathcal{G}^{\alpha}_{\epsilon},\mathcal{N}^{k-\frac{3}{2}}\nabla_{B_{\epsilon}}\tilde{\mathcal{G}}^\alpha_{\epsilon}\rangle_{L^2(\Gamma_{\epsilon})}\right\rvert+\left\lvert\sum_{\alpha\in\{+,-\}}\langle\mathcal{N}^{k-2}\nabla_{B_{\epsilon}}\mathcal{G}^\alpha_{\epsilon},\mathcal{N}^{k-2}\nabla_{B_{\epsilon}}^2\tilde{\mathcal{G}}^\alpha_{\epsilon}\rangle_{L^2(\Gamma_{\epsilon})}\right\rvert\lesssim_M 1.
\end{equation*}
\end{lemma}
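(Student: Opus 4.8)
The plan is to reduce both pairings, modulo genuinely perturbative errors, to expressions of the form $\langle \mathcal N^{k-\frac32} G^\pm,\mathcal N^{k-\frac32}\nabla_{B_\epsilon} G^\pm\rangle_{L^2(\Gamma_\epsilon)}$ with $G^\pm:=a_\epsilon(\mathcal N W^\pm_\epsilon)\cdot n$, and then observe that such an expression is, up to a commutator with $\mathcal N^{k-\frac32}$ and a harmless surface-divergence term, a total $\nabla_{B_\epsilon}$-derivative. First I would invoke the decompositions of \Cref{Bbounds1} (and \Cref{Gdecomp2}, applied to the regularized state) to write $\mathcal G^\pm_\epsilon=-G^\pm+Q^\pm$ and $\tilde{\mathcal G}^\pm_\epsilon=-\tilde G^\pm+\tilde Q^\pm$, where $\tilde G^\pm:=a_\epsilon(\mathcal N\tilde W^\pm_\epsilon)\cdot n$ and the errors obey $\|Q^\pm\|_{H^{k-1}(\Gamma_\epsilon)}+\|\tilde Q^\pm\|_{H^{k-1}(\Gamma_\epsilon)}\lesssim_M 1$ together with the corresponding $\nabla_{B_\epsilon}$- and $\nabla_{B_\epsilon}^2$-versions. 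The key gain here is that, although $\mathcal N W^\pm_\epsilon\cdot n$ is only $O_M(\epsilon^{-1})$ in $H^{k-\frac12}(\Gamma_\epsilon)$, the products $G^\pm$ and $\tilde G^\pm$ coincide with $-\mathcal G^\pm_\epsilon$ and $-\tilde{\mathcal G}^\pm_\epsilon$ modulo $H^{k-1}$ errors, hence are bounded in $H^{k-\frac32}(\Gamma_\epsilon)$ by the energy coercivity of \Cref{Energy est. thm} (using also $\tilde W^\pm_\epsilon=W^\pm_\epsilon\mp\tilde B^{ir}_\epsilon$ with $\|\tilde B^{ir}_\epsilon\|_{H^{k}(\Omega_\epsilon)}\lesssim_M 1$).

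\textbf{Peeling the errors.} Next I would replace $\tilde W^\pm_\epsilon$ by $W^\pm_\epsilon\mp\tilde B^{ir}_\epsilon$ throughout and use the two regularization bounds of \Cref{Reg bound list} (equivalently \eqref{gradbregbounds}): $\|\nabla_{B_\epsilon}\tilde W^\pm_\epsilon\|_{\mathbf H^k(\Omega_\epsilon)}\lesssim_M\epsilon^{-\frac12}$ and, crucially, $\|\nabla_{B_\epsilon}\tilde B^{ir}_\epsilon\|_{H^{k-\frac12}(\Omega_\epsilon)}\lesssim_M 1$ (which is where the tangency of $B_\epsilon$ and the fact that $\mathcal L^{-1}$ commutes with $\nabla_{B_\epsilon}$ pay off). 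The $\tilde B^{ir}_\epsilon$-contribution to $\nabla_{B_\epsilon}\tilde{\mathcal G}^\pm_\epsilon$ is then bounded in $H^{k-\frac32}(\Gamma_\epsilon)$ by $O_M(1)$ — via the Leibniz rule for $\mathcal N$, the smallness of $\tilde B^{ir}_\epsilon\cdot n_\epsilon=\tilde B_\epsilon\cdot n_\epsilon$, and the $\nabla_{B_\epsilon}\tilde B^{ir}_\epsilon$ bound — so it pairs against $\mathcal G^\pm_\epsilon\in H^{k-\frac32}(\Gamma_\epsilon)$ directly. Likewise each term involving $Q^\pm$ or $\tilde Q^\pm$ is brought to an $O_M(1)$ pairing by distributing the fractional powers $\mathcal N^{k-\frac32}$ symmetrically (self-adjointness of $\mathcal N$, cf.\ \Cref{higherpowers}) and, where a $\nabla_{B_\epsilon}$ would otherwise land on a large factor, moving it by \eqref{surfintbyparts} onto the $H^{k-1}$-regular error. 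One must also use that $a_\epsilon^{-1}\nabla_{B_\epsilon}a_\epsilon\in H^{k-\frac32}(\Gamma_\epsilon)$ is $O_M(1)$ (from \Cref{partitionofBa} / Proposition~\ref{technicalprop}) so that the terms where $\nabla_{B_\epsilon}$ hits the weight $a_\epsilon$ are perturbative as well. What remains is, for each sign, $\langle \mathcal N^{k-\frac32}G^\pm,\mathcal N^{k-\frac32}\nabla_{B_\epsilon} G^\pm\rangle_{L^2(\Gamma_\epsilon)}$.

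\textbf{Closing via the total-derivative structure.} To finish I would commute $\nabla_{B_\epsilon}$ past $\mathcal N^{k-\frac32}$; the commutator $[\mathcal N^{k-\frac32},\nabla_{B_\epsilon}]$ is of order at most $k-\frac32$ with coefficients controlled by $\|\Gamma_\epsilon\|_{H^{k}}$ and $\|B_\epsilon\|_{H^{k-\frac12}(\Omega_\epsilon)}\lesssim_M 1$ (this is the standard paradifferential/commutator toolbox of \Cref{BEE} and \Cref{HEB}, using \Cref{commutatorremark} and \Cref{Movingsurfid}), hence maps $H^{k-\frac32}(\Gamma_\epsilon)$ into $L^2(\Gamma_\epsilon)$ and contributes $\lesssim_M\|G^\pm\|_{H^{k-\frac32}(\Gamma_\epsilon)}^2\lesssim_M 1$. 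For the main term $\langle \mathcal N^{k-\frac32}G^\pm,\nabla_{B_\epsilon}\mathcal N^{k-\frac32}G^\pm\rangle$ I apply the integration-by-parts identity \eqref{surfintbyparts}: it equals $-\tfrac12\int_{\Gamma_\epsilon}(\operatorname{div}_{\Gamma_\epsilon}B_\epsilon)\,|\mathcal N^{k-\frac32}G^\pm|^2\,dS$, and since $B_\epsilon$ is divergence-free and tangent to $\Gamma_\epsilon$ one has $\operatorname{div}_{\Gamma_\epsilon}B_\epsilon=-n_\epsilon\cdot\nabla B_\epsilon\cdot n_\epsilon\in L^\infty(\Gamma_\epsilon)$ with norm $\lesssim_M 1$, so the term is $\lesssim_M\|G^\pm\|_{H^{k-\frac32}(\Gamma_\epsilon)}^2\lesssim_M 1$. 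The second pairing in the lemma is handled by running exactly the same scheme with one more factor of $\nabla_{B_\epsilon}$ throughout, invoking in addition the regularizing-effect bounds $\nabla_{B_\epsilon}^2 a_\epsilon\in H^{k-2}(\Gamma_\epsilon)$, $\nabla_{B_\epsilon}^2 P_\epsilon\in H^{k-\frac12}(\Omega_\epsilon)$ of \Cref{partitionofBa} and the second-order regularization estimate for $\nabla_{B_\epsilon}^2\tilde W^\pm_\epsilon$ from \Cref{magfieldreg}/\Cref{Reg bound list} to keep the tilde-errors perturbative.

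\textbf{Main obstacle.} The genuine difficulty is entirely in the bookkeeping of the error-peeling step: every term must be rearranged so that the honestly large norms $\|\Gamma_\epsilon\|_{H^{k+\frac12}}\sim\epsilon^{-\frac12}$ and $\|\mathcal N W^\pm_\epsilon\cdot n\|_{H^{k-\frac12}}\sim\epsilon^{-1}$ never get paired against an $O_M(1)$ factor. The discipline that makes this work is to (i) distribute $\mathcal N^{k-\frac32}$ symmetrically, (ii) always transfer a $\nabla_{B_\epsilon}$ onto the more regular ($H^{k-1}$-level) factor before estimating, and (iii) recognize the combinations $a_\epsilon\mathcal N W^\pm_\epsilon\cdot n$, $a_\epsilon\mathcal N\tilde W^\pm_\epsilon\cdot n$, $a_\epsilon^{-1}\nabla_{B_\epsilon}a_\epsilon$ as energy-controlled rather than generic products. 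I expect the $\pm$-summation itself to play no essential role beyond matching the form in which the estimate is used in the proof of \Cref{Final transport +Euler}; the real cancellation is the total-$\nabla_{B_\epsilon}$-derivative structure combined with the two conductivity-driven facts that $B_\epsilon\cdot n_\epsilon=0$ makes $\operatorname{div}_{\Gamma_\epsilon}B_\epsilon$ bounded and that $\nabla_{B_\epsilon}\tilde B^{ir}_\epsilon$ enjoys an $O_M(1)$ bound.
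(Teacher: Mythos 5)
The approach here has a genuine gap, and it is located exactly at the point the paper itself identifies as the main difficulty. Your error-peeling step claims that the $\tilde{B}^{ir}_{\epsilon}$-contribution to $\nabla_{B_{\epsilon}}\tilde{\mathcal{G}}^{\pm}_{\epsilon}$ is $O_M(1)$ in $H^{k-\frac32}(\Gamma_{\epsilon})$, citing the Leibniz rule for $\mathcal N$ and the bound $\|\nabla_{B_{\epsilon}}\tilde B^{ir}_{\epsilon}\|_{H^{k-\frac12}(\Omega_{\epsilon})}\lesssim_M 1$. But even granting that last bound, the correct conclusion is only that this contribution is $O_M(1)$ in $H^{k-2}(\Gamma_{\epsilon})$: $\nabla_{B_{\epsilon}}\tilde B^{ir}_{\epsilon}\in H^{k-\frac12}(\Omega_{\epsilon})$ traces to $H^{k-1}(\Gamma_{\epsilon})$ and a further $\mathcal{N}$ (or $\nabla_n$) drops you to $H^{k-2}$, not $H^{k-\frac32}$. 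That half-derivative shortfall is fatal: the pairing $\langle\mathcal{N}^{k-\frac32}\mathcal{G}^{\alpha}_{\epsilon},\mathcal{N}^{k-\frac32}(\cdot)\rangle$ with one slot in $H^{k-\frac32}$ and the other only in $H^{k-2}$ is an $L^2\times H^{-\frac12}$ duality, which cannot be closed by Cauchy--Schwarz. So the term you dispose of in a single clause is precisely the one that cannot be peeled off perturbatively, and the total-$\nabla_{B_{\epsilon}}$-derivative identity (which is fine in itself) only ever addresses the symmetric "main" term that you reach after that illegal reduction.

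The paper's proof takes a different route at exactly this point. It uses the $\pm$-summation essentially — via $\tilde W^{+}_{\epsilon}-\tilde W^{-}_{\epsilon}=2\tilde B_{\epsilon}$ — to reduce the dangerous cross term to $\mathcal K^j_{\epsilon}=\langle\mathcal{N}^{k-\frac{j+3}{2}}\nabla^{j+1}_{B_{\epsilon}}(\mathcal N_{\epsilon}\tilde B_{\epsilon}\cdot\nabla P_{\epsilon}),\mathcal{N}^{k-\frac{j+3}{2}}\nabla^{j}_{B_{\epsilon}}(\mathcal N_{\epsilon}\tilde B^{ir}_{\epsilon}\cdot\nabla P_{\epsilon})\rangle$, and then closes it by a frequency-dyadic almost-orthogonality argument using the explicit structure $\tilde B_{\epsilon}=B^{l}+\mathcal L^{-1}B^{h}$ and $\tilde B^{ir}_{\epsilon}\cdot n_{\epsilon}=-\epsilon^{2}\nabla^{4}_{B_{\epsilon}}B^{h}_{\epsilon}\cdot n_{\epsilon}+O_{\mathbf{H}^{k-\frac12}}(\epsilon^{\frac12})$, mirroring the proof of part (v) of \Cref{rotregEmon} (integrating $\nabla_{B_{\epsilon}}$ by parts repeatedly so that the low-frequency factor $B^{l}$ absorbs the derivatives). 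Your write-up explicitly downplays the $\pm$-summation and makes no mention of the low/high-frequency splitting, so the cancellation that actually closes the estimate is absent. A small secondary point: the bound $\|\nabla_{B_{\epsilon}}\tilde B^{ir}_{\epsilon}\|_{H^{k-\frac12}(\Omega_{\epsilon})}\lesssim_M 1$ is not a consequence of $\mathcal L^{-1}$ commuting with $\nabla_{B_{\epsilon}}$ (that commutativity helps with $\tilde B_{\epsilon}$, not with the irrotational projection, which involves $\mathcal N^{-1}$ on the moving boundary); in the paper it is derived elsewhere, via $\tilde B^{ir}_{\epsilon}=\tilde B_{\epsilon}-B_{\epsilon}$ and the state-space bound on $B_{\epsilon}$ — but, as explained above, even with that bound in hand your reduction does not close.
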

\begin{proof}
First, we observe that (up to simply changing $W^\pm_{\epsilon}$ to $\tilde{W}^\pm_{\epsilon}$ in the proof) \Cref{Gdecomp2} gives us the bound
\begin{equation*}
\|\nabla_{B_{\epsilon}}(\tilde{\mathcal{G}}^\pm_{\epsilon}-\mathcal{N}\tilde{W}_{\epsilon}^\pm\cdot\nabla P_{\epsilon})\|_{\mathbf{H}^{k-\frac{3}{2}}(\Gamma_{\epsilon})}\lesssim_M 1.
\end{equation*}
Moreover, by definition of $\mathcal{\tilde{G}^\pm}_\epsilon$ and the fact that $\Delta W_{\epsilon}^\pm=\Delta\tilde{W}_{\epsilon}^\pm$, we have
\begin{equation*}
\tilde{\mathcal{G}}_{\epsilon}^\pm-\mathcal{G}^\pm_{\epsilon}=\pm\nabla_n\tilde{B}_{\epsilon}^{ir}\cdot\nabla P_{\epsilon}+\mathcal{O}_{\mathbf{H}^{k-\frac{3}{2}}(\Gamma_\epsilon)}(\epsilon^{\frac{1}{2}})=\pm\mathcal{N}_{\epsilon}\tilde{B}^{ir}_\epsilon\cdot \nabla P_{\epsilon}+\mathcal{O}_{\mathbf{H}^{k-\frac{3}{2}}(\Gamma_\epsilon)}(\epsilon^{\frac{1}{2}})
\end{equation*}
and also from \Cref{Reg bound list} and \Cref{partitionofBa} we can verify (where importantly we note that this bound only holds for $\tilde{\mathcal{G}}_{\epsilon}^{\alpha}$ and not $\mathcal{G}_{\epsilon}^{\alpha}$)
\begin{equation*}
\|\nabla_{B_{\epsilon}}\tilde{\mathcal{G}}^\alpha_{\epsilon}\|_{\mathbf{H}^{k-\frac{3}{2}}(\Gamma_{\epsilon})}\lesssim_M \epsilon^{-\frac{1}{2}}.
\end{equation*}
From the above three estimates and the identity $\tilde{W}^+_{\epsilon}-\tilde{W}^-_{\epsilon}=2\tilde{B}_{\epsilon}$, it suffices to establish the bounds
\begin{equation*}\label{transportcancellation}
\mathcal{K}^j_{\epsilon}:=\left\lvert\langle\mathcal{N}^{k-\frac{j+3}{2}}\nabla_{B_{\epsilon}}^{j+1}(\mathcal{N}_{\epsilon}\tilde{B}_{\epsilon}\cdot \nabla P_{\epsilon}),\mathcal{N}^{k-\frac{j+3}{2}}\nabla_{B_{\epsilon}}^j(\mathcal{N}_{\epsilon}\tilde{B}^{ir}_\epsilon\cdot\nabla P_{\epsilon})\rangle_{L^2(\Gamma_{\epsilon})}\right\rvert\lesssim_M 1,\hspace{5mm}j\in\{0,1\}.
\end{equation*}
The key observation is that the two terms in the inner product above should be almost orthogonal. This is because, morally speaking, we should expect an approximation like
\begin{equation*}
\mathcal{N}_{\epsilon}\tilde{B}_{\epsilon}\cdot\nabla P_{\epsilon}\approx \mathcal{N}_{\epsilon}\tilde{B}^{ir}_{\epsilon}\cdot\nabla P_{\epsilon}. 
\end{equation*}
Due to the limited regularity of the free surface, this is not quite true (in the relevant norms). However, it turns out that we can still exhibit a strong enough cancellation by relying more heavily on the precise form of $\tilde{B}_{\epsilon}$. To see this, we begin as in the proof of \Cref{rotregEmon} by introducing the notation (here $B_{\epsilon}^h$ and $B^l$ appear in place of $v_{\epsilon}^h$ and $v^l$ and $B$ refers to the state one achieves after step 2 but before step 3)
\begin{equation*}
Q^l_j:=\mathcal{N}\nabla_{B_{\epsilon}^l}^jB^l\cdot\nabla P_{\epsilon},\hspace{5mm}Q^h_j:=\mathcal{N}\nabla_{B_{\epsilon}}^jB_{\epsilon}^h\cdot\nabla P_{\epsilon},\hspace{5mm} 0\leq j\leq 5.
\end{equation*}
Using the definition of $\tilde{B}_{\epsilon}$ and the bound $\|\nabla\Delta^{-1}\nabla\cdot \tilde{B}_{\epsilon}\|_{\mathbf{H}^{k}(\Omega_{\epsilon})}\lesssim_M\epsilon^{\frac{1}{2}}$ (see \Cref{divergenceremarkB}), we have
\begin{equation*}
\tilde{B}_{\epsilon}^{ir}\cdot n_\epsilon=-\epsilon^2\nabla_{B_{\epsilon}}^4B_{\epsilon}^h\cdot n_\epsilon+\mathcal{O}_{\mathbf{H}^{k-\frac{1}{2}}(\Gamma_\epsilon)}(\epsilon^{\frac{1}{2}}).
\end{equation*}
Consequently, for $j=0,1$, we may write
\begin{equation*}
\nabla_{B_{\epsilon}}^j\mathcal{N}_{\epsilon}(\tilde{B}_{\epsilon}^{ir}\cdot \nabla P_{\epsilon})=-\epsilon^2Q_{4+j}^h+\mathcal{O}_{H^{k-\frac{3+j}{2}}(\Gamma_\epsilon)}(\epsilon^{\frac{1}{2}}).
\end{equation*}
We then observe that
\begin{equation*}
\nabla_{B_{\epsilon}}^{j+1}(\mathcal{N}_{\epsilon}\tilde{B}_{\epsilon}\cdot \nabla P_{\epsilon})=(\mathcal{N}_{\epsilon}\nabla_{B_{\epsilon}}^{j+1}\tilde{B}_{\epsilon}\cdot \nabla P_{\epsilon})+\mathcal{O}_{H^{k-\frac{3+j}{2}}(\Gamma_\epsilon)}(1)=Q_{1+j}^l+Q_{1+j}^h+\mathcal{O}_{H^{k-\frac{3+j}{2}}(\Gamma_\epsilon)}(1).
\end{equation*}
From the above, we conclude that 
\begin{equation*}
\mathcal{K}_{\epsilon}^j\lesssim_M 1+\epsilon^2\left\lvert\langle \mathcal{N}^{k-\frac{j+3}{2}}Q_{j+1}^l,\mathcal{N}^{k-\frac{j+3}{2}}Q_{4+j}^h\rangle\right\rvert+\epsilon^2\left\lvert\langle \mathcal{N}^{k-\frac{j+3}{2}}Q_{j+1}^h,\mathcal{N}^{k-\frac{j+3}{2}}Q_{4+j}^h\rangle\right\rvert.
\end{equation*}
Arguing similarly to the proof of property (v) in \Cref{rotregEmon} (though the proof here is quite a bit simpler), we obtain the desired almost orthogonality type bound for $\mathcal{K}_{\epsilon}^j$. We omit the details  of this verification since they are straightforward modifications of the aforementioned argument.
\end{proof}
We now return to establishing the energy monotonicity for the surface components of the energy. Using the definition of $\tilde{\mathcal{G}}^\pm_{\epsilon}$, the identity 
\begin{equation*}
D_ta_{\epsilon}=\mathcal{N}v_{\epsilon}\cdot\nabla P_{\epsilon}+\mathcal{O}_{H^{k-1}(\Gamma_\epsilon)}(1)
\end{equation*}
and the regularization bounds in \Cref{Reg bound list}, we deduce the regularization bounds 
\begin{equation}\label{uncorrectedbound}
\|\nabla_{B_{\epsilon}}\tilde{\mathcal{G}}^\pm_{\epsilon}\|_{H^{k-\frac{3}{2}}(\Gamma_\epsilon)}\lesssim_M\epsilon^{-\frac{1}{2}},\hspace{5mm}\|D_ta_{\epsilon}\|_{H^{k-1}(\Gamma_\epsilon)}\lesssim_M\epsilon^{-\frac{1}{2}},\hspace{5mm}\|a_{\epsilon}\|_{H^{k-\frac{1}{2}}(\Gamma_\epsilon)}\lesssim_M\epsilon^{-\frac{1}{2}}.
\end{equation}
Such bounds are essential for estimating quadratic error terms when comparing $E_{*,2}^k(v_1,B_1,\Gamma_1)$ with the energy $E_2^k(v_{\epsilon},B_{\epsilon},\Gamma_{\epsilon}):=E_{+,2}^k(v_{\epsilon},B_{\epsilon},\Gamma_{\epsilon})+E_{-,2}^k(v_{\epsilon},B_{\epsilon},\Gamma_{\epsilon})$. Indeed, by combining \eqref{uncorrectedbound} with \Cref{goodvarrel} and \Cref{orthogonality}, we have 
\begin{equation*}
\begin{split}
E^k_{*,2}(v_1,B_1,\Gamma_1)&\leq E^k_{2}(v_{\epsilon},B_{\epsilon},\Gamma_{\epsilon})+\sum_{\alpha\in\{+,-\}}\epsilon\int_{\Gamma_{\epsilon}}a_{\epsilon}\mathcal{N}_{\epsilon}^{k-1}a_{\epsilon}\mathcal{N}^{k-1}D_t^{\alpha}a_{\epsilon}dS
\\
&-\sum_{\alpha\in\{+,-\}}\epsilon\int_{\Gamma_{\epsilon}}a_{\epsilon}\mathcal{N}^{k-\frac{3}{2}}\mathcal{G}^{\alpha}_{\epsilon}a\mathcal{N}^{k-\frac{1}{2}}a_{\epsilon}dS+C(M)\epsilon.
\end{split}
\end{equation*}
To estimate the second and third term above, we will simply carry out a discrete version of the corresponding energy estimate in \Cref{HEB}. By self-adjointness of $\mathcal{N}_{\epsilon}$ and the commutator estimate for $[\mathcal{N}^{\frac{1}{2}},a_{\epsilon}]$ from \Cref{Leibnizcom}, we see that
\begin{equation*}
\begin{split}
E^k_{*,2}(v_1,B_1,\Gamma_1)&\leq E^k_{2}(v_{\epsilon},B_{\epsilon},\Gamma_{\epsilon})+C(M)\epsilon+\sum_{\alpha\{+,-\}}C(M)\epsilon\|\mathcal{G}_{\epsilon}^{\alpha}-D_t^{\alpha}a_{\epsilon}\|_{H^{k-1}(\Gamma_\epsilon)}
\\
&\leq E^k_{2}(v_{\epsilon},B_{\epsilon},\Gamma_{\epsilon})+C(M)\epsilon.
\end{split}
\end{equation*}
It remains to prove the energy monotonicity bound for $E_{*,3}^k$. Before proceeding, we must first collect the analogues of \eqref{uncorrectedbound} which will aid us in controlling the resulting quadratic error terms. We observe that thanks to \Cref{goodvarrel}, \Cref{Reg bound list} and  the definition of $\mathcal{\tilde{G}}^\pm_{\epsilon}$,  we have
\begin{equation*}
\|\nabla_{B_{\epsilon}}^2\tilde{\mathcal{G}}^\pm_{\epsilon}\|_{H^{k-2}(\Gamma_\epsilon)}\lesssim_M\epsilon^{-\frac{1}{2}},\hspace{5mm} \|\nabla_{B_{\epsilon}}(\epsilon^{-1}F-D_ta_{\epsilon})\|_{H^{k-\frac{3}{2}}(\Gamma_\epsilon)}\lesssim_M\epsilon^{-\frac{1}{2}}.
\end{equation*}
Moreover, arguing as in the proof of \Cref{technicalprop} and using the irrotational regularization bound $\|\mathcal{N}_{\epsilon}B_{\epsilon}\cdot n_\epsilon\|_{H^{k-1}(\Gamma_\epsilon)}\lesssim_M\epsilon^{-\frac{1}{2}}$ from \Cref{Reg bound list}, we conclude that
\begin{equation*}
\|\nabla_{B_{\epsilon}}a_{\epsilon}\|_{H^{k-1}(\Gamma_\epsilon)}\lesssim_M\epsilon^{-\frac{1}{2}}.
\end{equation*}
Therefore, arguing as in the above bound for $E^k_{*,2}$, we see that
\begin{equation*}
\begin{split}
E^k_{*,3}(v_1,B_1,\Gamma_1)&\leq E^k_3(v_{\epsilon},B_{\epsilon},\Gamma_{\epsilon})+\sum_{\alpha\in\{+,-\}}\epsilon\int_{\Gamma_{\epsilon}}\mathcal{N}^{k-\frac{3}{2}}\nabla_{B_{\epsilon}}a_{\epsilon}\mathcal{N}^{k-\frac{3}{2}}\nabla_{B_{\epsilon}}(D_t^{\alpha}a_{\epsilon}-\mathcal{G}_{\epsilon}^{\alpha}-\epsilon^{-1}F)dS+C(M)\epsilon.
\end{split}
\end{equation*}
By using the identity 
\begin{equation*}
\langle\mathcal{N}_{\epsilon}^{\frac{1}{2}}f,\mathcal{N}_{\epsilon}^{\frac{1}{2}}f\rangle_{L^2(\Gamma_{\epsilon})}=\langle\nabla\mathcal{H}_{\epsilon}f,\nabla\mathcal{H}_{\epsilon}f\rangle_{L^2(\Omega_{\epsilon})}
\end{equation*}
and commuting $\nabla_{B_{\epsilon}}$ with $\nabla\mathcal{H}_{\epsilon}\mathcal{N}_{\epsilon}^{k-2}$ and integrating by parts as in \Cref{RB2}, we have
\begin{equation*}
\begin{split}
\epsilon\int_{\Gamma_{\epsilon}}\mathcal{N}^{k-\frac{3}{2}}\nabla_{B_{\epsilon}}a_{\epsilon}\mathcal{N}^{k-\frac{3}{2}}\nabla_{B_{\epsilon}}(D_t^{\alpha}a_{\epsilon}-&\mathcal{G}_{\epsilon}^{\alpha}-\epsilon^{-1} F)dS
\\
&\leq C(M)\epsilon+\epsilon\int_{\Gamma_{\epsilon}}\mathcal{N}^{k-2}\nabla_{B_{\epsilon}}^2a_{\epsilon}\mathcal{N}^{k-1}(D_t^{\alpha}a_{\epsilon}-\mathcal{G}_{\epsilon}^{\alpha}-\epsilon^{-1} F)dS.
\end{split}
\end{equation*}
By Cauchy-Schwarz and \Cref{partitionofBa}, we obtain
\begin{equation*}
\begin{split}
\epsilon\int_{\Gamma_{\epsilon}}\mathcal{N}^{k-\frac{3}{2}}\nabla_{B_{\epsilon}}a_{\epsilon}\mathcal{N}^{k-\frac{3}{2}}\nabla_{B_{\epsilon}}(D_t^{\alpha}a_{\epsilon}-\mathcal{G}_{\epsilon}^{\alpha}-\epsilon^{-1} F)dS\leq C(M)\epsilon.
\end{split}
\end{equation*}
This establishes the energy monotonicity bound. It finally now remains to establish \Cref{changeofvar}.
\begin{proof}[Proof of \Cref{changeofvar}]
By a simple change of variables, it is clear that 
\begin{equation*}
E^k_{\pm,1}(v_1,B_1,\Gamma_1)\leq E^k_{*,\pm,1}(v_1,B_1,\Gamma_1)+C(M)\epsilon.
\end{equation*}
The main difficulty is in dealing with the surface components of the energy. For this, we need the following proposition which was proved in \cite{Euler}.
\begin{proposition}\label{changeofvar2}
Let $-\frac{1}{2}\leq s\leq k-2$ and let $f\in H^{s+1}(\Gamma_1)$. Then we have the following bound on $\Gamma_{\epsilon}$\emph{:}
\begin{equation*}
\|(\mathcal{N}_{1}f)(x_1)-\mathcal{N}_{\epsilon}(f(x_1))\|_{H^s(\Gamma_{\epsilon})}\lesssim_M \epsilon\|f\|_{H^{s+1}(\Gamma_1)}.   
\end{equation*}
\end{proposition}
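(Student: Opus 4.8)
The plan is to reduce the comparison of the two Dirichlet-to-Neumann operators to a comparison of harmonic extensions on the slightly different domains $\Omega_1$ and $\Omega_\epsilon$, exploiting that the change of coordinates $x_1(x)=x+\epsilon v_\epsilon(x)$ is a near-identity diffeomorphism with $v_\epsilon$ enjoying the regularization bounds of \Cref{Reg bound list}. First I would set $u := \mathcal{H}_1 f$, the harmonic extension of $f$ to $\Omega_1$, and $\tilde u := u(x_1)$, which is a function on $\Omega_\epsilon$ with $\tilde u|_{\Gamma_\epsilon} = f(x_1)$. By the chain rule, $\tilde u$ solves an elliptic equation $\mathrm{div}(A_\epsilon \nabla \tilde u) = 0$ on $\Omega_\epsilon$, where the coefficient matrix $A_\epsilon$ differs from the identity by $\mathcal{O}(\epsilon)$ in a sufficiently strong norm (controlled by $\|v_\epsilon\|_{H^{k+1/2}(\Omega_\epsilon)} \lesssim_M \epsilon^{-3/4}$ interpolated against the weak $\mathcal{O}_{H^{k-5}}(\epsilon^2)$ bounds, exactly as in the pressure and divergence-correction estimates earlier in this section). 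Writing $\tilde u = \mathcal{H}_\epsilon(f(x_1)) + w$, the corrector $w$ has zero boundary trace on $\Gamma_\epsilon$ and solves $\Delta w = \nabla\cdot((I - A_\epsilon)\nabla \tilde u)$, so the $H^{-1}\to H^1_0$ bound for $\Delta^{-1}$ (and its higher-regularity versions from \Cref{BEE}, using $\|\Gamma_\epsilon\|_{H^{k+1/2}} \lesssim_M \epsilon^{-1/2}$) gives $\|w\|_{H^{s+3/2}(\Omega_\epsilon)} \lesssim_M \epsilon \|f\|_{H^{s+1}(\Gamma_1)}$.

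Next I would take normal derivatives at the boundary. We have $(\mathcal{N}_1 f)(x_1) = (\nabla_{n_1} u)(x_1)$ evaluated on $\Gamma_\epsilon$, while $\mathcal{N}_\epsilon(f(x_1)) = \nabla_{n_\epsilon}(\mathcal{H}_\epsilon(f(x_1)))$. The difference splits into three pieces: (i) the difference between $(\nabla u)(x_1)$ and $\nabla \tilde u$, which is bilinear in $\epsilon \nabla v_\epsilon$ and $\nabla u$ and hence $\mathcal{O}_{H^s(\Gamma_\epsilon)}(\epsilon\|f\|_{H^{s+1}(\Gamma_1)})$ by the trace theorem, product estimates of \Cref{BEE}, and elliptic regularity for $u$; (ii) the difference between $n_1(x_1)$ and $n_\epsilon$, which by \Cref{normalapprox} is $\mathcal{O}(\epsilon)$ in the relevant norm (using the improved irrotational bound $\|\nabla^\top v_\epsilon \cdot n_\epsilon\|_{H^{k-1/2}(\Gamma_\epsilon)} \lesssim_M \epsilon^{-1}$ interpolated against the cruder $H^{k+2}$ bound), paired against $\nabla \tilde u$ which is bounded in the appropriate norm; and (iii) the normal trace of the corrector $w$, controlled by $\|w\|_{H^{s+3/2}(\Omega_\epsilon)}$ and the trace theorem. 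Summing these three contributions yields the claimed bound $\|(\mathcal{N}_1 f)(x_1) - \mathcal{N}_\epsilon(f(x_1))\|_{H^s(\Gamma_\epsilon)} \lesssim_M \epsilon\|f\|_{H^{s+1}(\Gamma_1)}$.

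The main obstacle is the bookkeeping of regularities: the surface $\Gamma_\epsilon$ and the velocity $v_\epsilon$ both carry only finitely many derivatives with negative powers of $\epsilon$, so at each step one must check that the loss in $\epsilon$ is compensated, i.e. that differentiating $v_\epsilon$ or $\Gamma_\epsilon$ up to the order demanded by the elliptic/trace estimates still leaves a net factor of $\epsilon$. This is precisely the kind of interpolation argument already used repeatedly in \Cref{Existence section} (e.g.\ in the bounds for $n_1(x_1) - n_\epsilon$ and in \Cref{gradbchain}), and since the range $-\tfrac12 \le s \le k-2$ stays two derivatives below the top regularity of $\Gamma_\epsilon$, there is enough room; one balances $\|v_\epsilon\|_{H^{k+1+\alpha}} \lesssim_M \epsilon^{-3/2-3\alpha}$ against the weak $\mathcal{O}_{C^3}(\epsilon^2)$ approximation to extract the gain. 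I would therefore only sketch this interpolation and refer to the analogous computations earlier in the section, since—as the excerpt itself notes—the proposition was established in \cite{Euler} and the argument transfers with only cosmetic changes. Hence the details are left to the reader.
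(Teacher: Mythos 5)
The paper does not actually prove \Cref{changeofvar2}; it defers to the earlier work \cite{Euler}, so there is no in-paper proof to compare against. Your pull-back strategy --- harmonically extend $f$ on $\Omega_1$, compose with $x_1$ to obtain a divergence-form equation with coefficient $A_\epsilon = I + \mathcal{O}(\epsilon\nabla v_\epsilon)$ on the fixed domain $\Omega_\epsilon$, subtract $\mathcal{H}_\epsilon(f(x_1))$ to isolate a corrector $w$ with zero trace, and then split the Dirichlet-to-Neumann difference into a normal-mismatch piece, a chain-rule piece, and the corrector's normal trace --- is the standard argument for comparing $\mathcal{N}$ under a near-identity change of variables and is surely the route taken in \cite{Euler}. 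The three-term decomposition is correct, and the restriction $-\tfrac12\le s\le k-2$ does exactly what you say: it keeps all the elliptic, product and trace estimates from \Cref{BEE} operating two derivatives below the top regularity of $\Gamma_\epsilon$ and $v_\epsilon$.

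Two of your side remarks are misdirected, though neither breaks the argument. For the normal mismatch, the interpolation you invoke --- between $\|\nabla^\top v_\epsilon\cdot n_\epsilon\|_{H^{k-1/2}(\Gamma_\epsilon)}\lesssim_M\epsilon^{-1}$ and the crude $H^{k+2}$ bound for $v_\epsilon$ --- moves \emph{upward} in regularity and therefore only worsens the power of $\epsilon$; it is used in the paper to bootstrap $\|\Gamma_1\|_{H^{k+1/2+\gamma}}$, not to gain a small factor. What you actually need is the direct estimate $\|\nabla^\top v_\epsilon\cdot n_\epsilon\|_{H^{k-3/2}(\Gamma_\epsilon)}\lesssim_M 1$ coming from $v_\epsilon\in H^k(\Omega_\epsilon)$ and the trace theorem, which combined with \Cref{normalapprox} gives $\|n_1(x_1)-n_\epsilon\|_{H^{s}(\Gamma_\epsilon)}\lesssim_M\epsilon$ for all $s\le k-\tfrac32$, ample for $s\le k-2$. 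Similarly, in the corrector estimate one should not invoke $\|\Gamma_\epsilon\|_{H^{k+1/2}}\lesssim_M\epsilon^{-1/2}$: the balanced estimate \Cref{direst} at target regularity $s+\tfrac32\le k-\tfrac12$ only calls for $\Gamma_\epsilon$ in $H^{s+1}\subset H^{k-1}$, where $\|\Gamma_\epsilon\|\lesssim_M 1$, so no compensating power of $\epsilon$ is needed. With these two replacements the outline closes cleanly.
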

Returning to the proof of \Cref{changeofvar}, we note that
\begin{equation*}
\begin{split}
\|(\mathcal{N}^{k-1}_1a_1)(x_1)-\mathcal{N}^{k-1}_{\epsilon}(a_1(x_1))\|_{L^2(\Gamma_{\epsilon})}&\lesssim \|\mathcal{N}_{\epsilon}(\mathcal{N}^{k-2}_1a_1)(x_1)-\mathcal{N}^{k-1}_{\epsilon}(a_1(x_1))\|_{L^2(\Gamma_{\epsilon})} 
\\
&+\|\mathcal{N}_{\epsilon}(\mathcal{N}^{k-2}_1a_1)(x_1)-(\mathcal{N}^{k-1}_1a_1)(x_1)\|_{L^2(\Gamma_{\epsilon})}.
\end{split}
\end{equation*}
Applying \Cref{changeofvar2} to the term in the second line and using the $H^1\to L^2$ bound for $\mathcal{N}$, we have
\begin{equation*}
\|(\mathcal{N}^{k-1}_1a_1)(x_1)-\mathcal{N}^{k-1}_{\epsilon}(a_1(x_1))\|_{L^2(\Gamma_{\epsilon})}\lesssim_M \|(\mathcal{N}^{k-2}_1a_1)(x_1)-\mathcal{N}^{k-2}_{\epsilon}(a_1(x_1))\|_{H^1(\Gamma_{\epsilon})}+\mathcal{O}_M(\epsilon).     
\end{equation*}
Iterating this procedure and applying \Cref{changeofvar2} $k-2$ times, we see that 
\begin{equation*}
\|(\mathcal{N}^{k-1}_1a_1)(x_1)-\mathcal{N}^{k-1}_{\epsilon}(a_1(x_1))\|_{L^2(\Gamma_{\epsilon})}\lesssim_M\epsilon   . 
\end{equation*}
It follows from the above and a change of variables that we have
\begin{equation*}
\|a_1^{\frac{1}{2}}\mathcal{N}_1^{k-1}a_1\|_{L^2(\Gamma_1)}^2\leq \|a_1^{\frac{1}{2}}(x_1)\mathcal{N}_{\epsilon}^{k-1}(a_1(x_1))\|_{L^2(\Gamma_{\epsilon})}^2+\mathcal{O}_M(\epsilon).
\end{equation*}
A similar argument can be used to show that
\begin{equation*}
\|a^{-\frac{1}{2}}_1(x_1)(\mathcal{N}_1^{k-2}\nabla_{B_{1}}\mathcal{G}_1^\pm)(x_1)-a_{\epsilon}^{-\frac{1}{2}}\mathcal{N}_{\epsilon}^{k-2}\nabla_{B_{\epsilon}}\mathcal{G}_{\epsilon}^\pm\|_{L^2(\Gamma_\epsilon)}\lesssim_M\epsilon.   
\end{equation*}
To conclude the proof of \Cref{changeofvar}, we also need to show that
\begin{equation*}\label{Dtachangebound}
\|\nabla\mathcal{H}_1\mathcal{N}_1^{k-2}\mathcal{G}^\pm_1\|_{L^2(\Omega_{1})}^2\leq \|\nabla\mathcal{H}_{\epsilon}\mathcal{N}_{\epsilon}^{k-2}(\mathcal{G}_1^\pm(x_1))\|_{L^2(\Omega_{\epsilon})}^2+\mathcal{O}_M(\epsilon).
\end{equation*}
From a change of variables, we see that
\begin{equation*}
\|\nabla\mathcal{H}_1(\mathcal{N}_1^{k-2}\mathcal{G}_1^\pm)\|_{L^2(\Omega_{1})}^2- \|\nabla\mathcal{H}_{\epsilon}\mathcal{N}_{\epsilon}^{k-2}(\mathcal{G}_1^\pm(x_1))\|_{L^2(\Omega_{\epsilon})}^2\lesssim_M \mathcal{J}+\mathcal{O}_M(\epsilon),
\end{equation*}
where
\begin{equation*}
\mathcal{J}:=\|(\nabla\mathcal{H}_1\mathcal{N}_1^{k-2}\mathcal{G}_1^\pm)(x_1)-\nabla\mathcal{H}_{\epsilon}\mathcal{N}_{\epsilon}^{k-2}(\mathcal{G}_1^\pm(x_1))\|_{L^2(\Omega_{\epsilon})}.  
\end{equation*}
By elliptic regularity, it is easy to verify the bound
\begin{equation*}
\mathcal{J}\lesssim_M \|(\mathcal{N}_1^{k-2}\mathcal{G}_1^\pm)(x_1)-\mathcal{N}_{\epsilon}^{k-2}(\mathcal{G}_1^\pm(x_1))\|_{H^{\frac{1}{2}}(\Gamma_{\epsilon})}+\mathcal{O}_M(\epsilon).    
\end{equation*}
From here, we use \Cref{changeofvar2} similarly to the other surface term in the energy to estimate
\begin{equation*}
\|(\mathcal{N}_1^{k-2}(\mathcal{G}_1^\pm))(x_1)-\mathcal{N}_{\epsilon}^{k-2}(\mathcal{G}_1^\pm(x_1))\|_{H^{\frac{1}{2}}(\Gamma_{\epsilon})}\lesssim_M\epsilon.    
\end{equation*}
A similar argument can be used to deal with the remaining energy component. This completes the proof of \Cref{Final transport +Euler}, and, therefore,  the proof of \Cref{onestepiteration}.
\end{proof}

\subsection{Convergence of the iteration scheme}\label{COTS} 
We are now ready to use  Theorem~\ref{onestepiteration} 
 to prove the existence of regular solutions.
\begin{theorem}\label{t:existence}
Let $k$ be a sufficiently large integer and let $M > 0$.  There exists  a time 
$T = T(M)$ such that for all initial data $(v_0, B_0,\Gamma_0)\in\mathbf{H}^k$  satisfying  $\|(v_0, B_0,\Gamma_0)\|_{\mathbf{H}^k}\leq M$ there exists a unique solution $(v, B, \Gamma)$ to the free boundary MHD equations on the time interval $[0,T]$ with the given initial data and the following regularity properties:
\begin{equation*}
(v, B,\Gamma) \in L^\infty([0,T]; \mathbf H^k) \cap  C([0,T]; \mathbf H^{k-1})   
\end{equation*}
with the uniform bound
\begin{equation*}
\|(v, B, \Gamma)(t)\|_{ \mathbf H^k} \lesssim_M 1, \qquad t \in [0,T].  
\end{equation*}
\end{theorem}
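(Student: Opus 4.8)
The plan is to obtain the solution as a limit of the time-discretized approximate solutions produced by iterating \Cref{onestepiteration}. Fix $M$ and an initial data $(v_0,B_0,\Gamma_0)\in\mathbf{H}^k$ with $\|(v_0,B_0,\Gamma_0)\|_{\mathbf{H}^k}\leq M$. First I would apply the mild regularization from \Cref{envbounds} (and, if needed, the surface regularization from \Cref{domainregularizationprop}) a single time to replace the initial data by one satisfying the base case of the bootstrap hypothesis \eqref{inductiveregbound} with some large constant $K(M)$, at the cost of enlarging $M$ to $2M$; since this regularization is merely bounded on $\mathbf{H}^k$ and we only use it once, this is harmless. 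Then, for each small time step $\epsilon>0$, I would define the discrete sequence $(v_\epsilon(j\epsilon),B_\epsilon(j\epsilon),\Gamma_\epsilon(j\epsilon))$ for $0\le j\le c_0\epsilon^{-1}$ by repeatedly applying \Cref{onestepiteration}. Properties (i) and (iii) of that theorem show that the vorticity and surface bootstraps \eqref{inductiveregbound} are preserved (the constants grow by at most $1+C(M)\epsilon$ per step and so stay bounded over $\lesssim_M\epsilon^{-1}$ steps), so \Cref{onestepiteration} applies at each step; the energy monotonicity \eqref{EMBITT} accumulates to $E^k$ at step $j$ being at most $E^k$ at step $0$ plus $C(M)j\epsilon$, which combined with the coercivity bound \eqref{Coercivity bound on integers} of \Cref{Energy est. thm} yields $\|(v_\epsilon(j\epsilon),B_\epsilon(j\epsilon),\Gamma_\epsilon(j\epsilon))\|_{\mathbf{H}^k}\lesssim_M 1$ for $j\epsilon\le T$, provided $T=T(M)$ is chosen small enough (relative to the lower bound in the Taylor sign condition, which is propagated since $k\gg\tfrac d2+1$).

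Next I would interpolate the discrete sequence in time. Using property (ii) of \Cref{onestepiteration}, the consecutive iterates differ by $\mathcal{O}_{C^3}(\epsilon)$ in the natural pointwise sense, so one obtains a piecewise-defined (or linearly interpolated) curve $(v_\epsilon(t),B_\epsilon(t),\Gamma_\epsilon(t))$ on $[0,T]$ which is uniformly bounded in $\mathbf{H}^k$ and equicontinuous into a weaker space such as $\mathbf{H}^{k-1}$ or even $C^2$. The approximate-solution relations in property (ii), rewritten in continuous form, say that $(v_\epsilon,B_\epsilon,\Gamma_\epsilon)$ solves the free boundary MHD equations up to an error that is $\mathcal{O}(\epsilon)$ in $C^2$ uniformly on $[0,T]$. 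Then I would pass to the limit $\epsilon\to 0$: by Arzela–Ascoli (working in collar coordinates so that the moving domains are handled uniformly, exactly as in \Cref{domainregularizationprop} and \Cref{Bconstruction}), a subsequence converges in $C([0,T];\mathbf{H}^{k-1})$ to a limit $(v,B,\Gamma)$, which by weak-$*$ lower semicontinuity of the $\mathbf{H}^k$ norm lies in $L^\infty([0,T];\mathbf{H}^k)$ with the stated bound, and which solves the free boundary MHD equations in the classical sense because the $C^2$ errors vanish. The divergence-free conditions and the tangency $B\cdot n_\Gamma=0$ pass to the limit since they hold exactly for each $(v_\epsilon,B_\epsilon,\Gamma_\epsilon)$ (see property (ii) and \Cref{Time zero propagate}), and the Taylor sign condition passes to the limit with a proportional lower bound. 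Continuity in time into $\mathbf{H}^{k-1}$ is part of the Arzela–Ascoli output; continuity into $\mathbf{H}^k$ itself is not claimed here and is deferred to \Cref{RS}.

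Uniqueness is immediate from \Cref{t:unique} (equivalently \Cref{Uniqueness intro}): any solution in the class $L^\infty([0,T];\mathbf{H}^k)\cap C([0,T];\mathbf{H}^{k-1})$ with $k>\tfrac d2+1$ has its control parameters $A_\epsilon$ and $A^{1/2}$ finite and integrable in time (by Sobolev embedding and the pressure estimates of \Cref{Linfest}), so at most one such solution exists, and by the energy estimates of \Cref{HEB} it automatically stays in $\mathbf{H}^k$. This also shows that the full family $(v_\epsilon,B_\epsilon,\Gamma_\epsilon)$, not just a subsequence, converges.

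The main obstacle is the passage to the limit in the presence of the moving free boundaries together with the matched regularity of $B$ and $\Gamma$: one must verify that the compactness argument is genuinely uniform in $\epsilon$, i.e. that the collar-coordinate representations $\eta_{\Gamma_\epsilon(t)}$ stay in a fixed compact subset of $H^{k-1}(\Gamma_*)$ and that the vector fields $v_\epsilon,B_\epsilon$, pulled back to a common reference domain, form a precompact family in $H^{k-1}$. This is where the uniform $\mathbf{H}^k$ bound from the energy monotonicity, the equicontinuity from property (ii), and the elliptic/trace estimates of \Cref{BEE} (uniform over the collar) must be combined carefully; in particular one checks that the limiting $\Gamma$ still lies in the collar $\Lambda_*$ for $T=T(M)$ small, so that all the geometric machinery remains applicable. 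The remaining steps — rewriting property (ii) as a continuous-in-time equation with $\mathcal{O}(\epsilon)$ error, and verifying that the limit satisfies all four equations and the three boundary conditions — are routine given the setup.
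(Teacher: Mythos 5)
Your proposal follows essentially the same strategy as the paper's proof: initialize with the crude regularization from \Cref{envbounds}, run \Cref{onestepiteration} over $\approx_M\epsilon^{-1}$ steps using energy monotonicity and the coercivity of \Cref{Energy est. thm} to secure uniform $\mathbf H^k$ bounds, pass to the limit via Arzela--Ascoli in the uniform (collar-coordinate) topologies, and check that the limit solves the equations, with uniqueness from \Cref{t:unique}. The only step you treat a bit casually is the passage from the one-step relations in \eqref{approx-sln} to a continuous-in-time equation: the paper actually iterates twice -- first to extract the Lipschitz-in-time bounds on $v_\epsilon,B_\epsilon,\nabla P_\epsilon,\eta_\epsilon$, and then reuses those Lipschitz bounds to upgrade the one-step relation to the two-time approximation $v_\epsilon(t)=v_\epsilon(s)-(t-s)(\dots)(s)+\mathcal O((t-s)^2)$ for arbitrary $t,s$ in the discrete grid, which is what allows differentiation of the limit in time -- but this is exactly the "routine" step you defer to, so it is a matter of emphasis rather than a gap.
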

\begin{remark}
Note that the uniqueness of the  solution in \Cref{t:existence} is an immediate consequence of 
\Cref{t:unique}.  The continuity of the solution in the $\mathbf H^k$ topology  will be established in \Cref{RS}.
\end{remark}
\begin{proof}
Given initial data $(v_0, B_0, \Gamma_0) \in \mathbf H^k$ with 
$\Gamma_0 \in \Lambda_*:= \Lambda(\Gamma_*,\epsilon_0,\delta)$,
for each small time step $\epsilon$ we will construct a discrete
approximate solution $(v_\epsilon, B_{\epsilon},\Gamma_\epsilon)$
which is defined at times $t = 0, \epsilon, 2\epsilon, \dots, j\epsilon$ with $j\approx_M \epsilon^{-1}$. We will proceed as follows:
\begin{enumerate}
\item We construct an initialization $(v_\epsilon(0), B_{\epsilon}(0), \Gamma_\epsilon(0))$ by using  \Cref{envbounds}. 

\item We define the approximate solutions $(v_\epsilon(j \epsilon), B_{\epsilon}(j\epsilon), \Gamma_\epsilon(j\epsilon))$ by applying
 the iteration step in Theorem~\ref{onestepiteration} inductively.
\end{enumerate}
We will rely on the energy monotonicity relation and the coercivity property in Theorem~\ref{Energy est. thm}  to control the growth of the $\mathbf H^k$ norms of $(v_\epsilon, B_{\epsilon}, \Gamma_\epsilon)$.
At time $t = 0$, we may use the energy coercivity property to  bound
\[
E^k(v_\epsilon(0), B_{\epsilon}(0), \Gamma_\epsilon(0)) \leq C_1(M). 
\]
We then iterate for as long as
\begin{equation}\label{stop-iteration}
\begin{aligned}
& E^k(v_\epsilon(j\epsilon), B_{\epsilon}(j\epsilon), \Gamma_\epsilon(j\epsilon)) \leq 2 C_1(M),
\\ 
& \Gamma_\epsilon(j\epsilon) \in 2\Lambda_*:= \Lambda(\Gamma_*,\epsilon_0,2\delta).
\end{aligned}
\end{equation}
Under the above conditions,  we may invoke the energy coercivity inequality in the opposite direction to conclude that
\[
\| (v_\epsilon(j\epsilon), B_{\epsilon}(j\epsilon), \Gamma_\epsilon(j\epsilon))\|_{\mathbf H^k} \leq C_2(M). 
\]
By the energy monotonicity bound \eqref{EMBITT} we  have
\[
E^k(v_\epsilon(j\epsilon), B_{\epsilon}(j\epsilon), \Gamma_\epsilon(j\epsilon))
\leq (1+ C(C_2(M))\epsilon )^jC_1(M)
\leq e^{ C(C_2(M))\epsilon j}C_1(M).
\]
Hence, the cutoff in the first inequality in \eqref{stop-iteration} cannot be reached until at least time
\[
t = \epsilon j<T(M): = (C(C_2(M)))^{-1},
\]
which is a bound independent of $\epsilon$. For the second requirement in \eqref{stop-iteration} we note that \eqref{approx-sln} ensures that at each time step the boundary  moves by at most $\mathcal{O}(\epsilon)$. Hence, by step $j$, it moves by at most  $\mathcal{O}(j \epsilon)$, which leads to a similar constraint on the number of iterations. Using \eqref{regboundprop} and similar reasoning, it is easy to see that the bounds on the vorticity do not significantly deteriorate on the above time-scale. In other words, we retain the bound
\begin{equation*}
\|\omega^\pm_{\epsilon}(j\epsilon)\|_{H^k}\lesssim_M\epsilon^{-\frac{3}{2}},\hspace{5mm}j\epsilon<T.
\end{equation*}
Therefore,  the discrete approximate solutions $(v_\epsilon, B_{\epsilon}, \Gamma_\epsilon)$
are all defined up to the time $T(M)$ above and maintain the uniform bound
\begin{equation*}
\|   (v_\epsilon, B_{\epsilon}, \Gamma_\epsilon)\|_{\mathbf H^k}
\lesssim_M 1 \qquad \text{in}\hspace{2mm} [0,T],
\end{equation*}
with $\Gamma_\epsilon \in 2\Lambda_*$.
Since $k$ is large, we may apply Sobolev embeddings to conclude the uniform bounds
\begin{equation}\label{c3}
\| v_\epsilon\|_{C^3} +\|B_{\epsilon}\|_{C^3}+ \|\eta_\epsilon\|_{C^3} \lesssim_M 1 \qquad  \text{in}\hspace{2mm} [0,T],
\end{equation}
where we use $\eta_\epsilon:=\eta_{\Gamma_\epsilon}$ to denote the defining function for $\Gamma_\epsilon \in 2\Lambda_*$.
\medskip


The other information that we have about $(v_\epsilon,B_{\epsilon})$ is \eqref{approx-sln}, which we wish to iterate  over multiple time steps. 
We first note that \eqref{approx-sln} implies that
\[
|(v_\epsilon,B_{\epsilon})(t,x) - (v_\epsilon,B_{\epsilon})(s,y)|+
|\nabla (v_\epsilon,B_{\epsilon})(t,x) - \nabla (v_\epsilon,B_{\epsilon})(s,y)| \lesssim_M |t-s| + |x-y|, \qquad t-s = \epsilon.
\]
Iterating this, we see that
\begin{equation}\label{Lip-ve}
|(v_\epsilon,B_{\epsilon})(t,x) - (v_\epsilon,B_{\epsilon})(s,y)|+
|\nabla (v_\epsilon,B_{\epsilon})(t,x) - \nabla (v_\epsilon,B_{\epsilon})(s,y)| \lesssim_M |t-s| + |x-y|, \ \ \ \ \ t,s \in \epsilon \N \cap [0,T].
\end{equation}
Using similar reasoning, the last equation in \eqref{approx-sln} tells us that 
\begin{equation}\label{Lip-etae}
   \|\eta_\epsilon(t) - \eta_\epsilon(s)\|_{C^1} \lesssim_M |t-s|, \qquad t,s \in \epsilon \N \cap [0,T].
\end{equation}
As a consequence of \Cref{goodvarrel} and the elliptic estimate $\|D_tP_{\epsilon}\|_{H^k}\lesssim_M 1$ at each time, we may also bound the  pressure difference by
\begin{equation}\label{Lip-pe}
|\nabla P_\epsilon(t,x) - \nabla P_\epsilon(s,y)| \lesssim_M |t-s|+|x-y|, \qquad t,s \in \epsilon \N \cap [0,T].
\end{equation}
We now return to 
\eqref{approx-sln}, making use of the last three Lipschitz bounds 
in time to reiterate and obtain
 second order information.
A direct iteration
using the bounds \eqref{Lip-ve} and \eqref{Lip-pe} to compare the expressions on the right at different times in the uniform norm yields 
\begin{equation}\label{Euler-app}
v_\epsilon(t) = v_\epsilon(s) -(t-s) (v_\epsilon(s)\cdot\nabla v_\epsilon(s)-B_{\epsilon}(s)\cdot\nabla B_{\epsilon}(s)+\nabla P_\epsilon(s))+\mathcal{O}((t-s)^2),
\qquad t,s \in \epsilon \N \cap [0,T]
\end{equation}
and
\begin{equation*}\label{Euler-app2}
B_\epsilon(t) = B_\epsilon(s)-(t-s)\left(v_\epsilon(s)\cdot\nabla B_\epsilon(s)-B_{\epsilon}(s)\cdot\nabla v_{\epsilon}(s)\right)+\mathcal{O}((t-s)^2),
\qquad t,s \in \epsilon \N \cap [0,T].
\end{equation*}
The same strategy applied to the last component of \eqref{approx-sln} gives the relation
\begin{equation}\label{kinenatic-app}
    \Omega_\epsilon(t) = (I + (t-s)v_\epsilon(s))\Omega_\epsilon(s) + \mathcal{O}((t-s)^2),  \qquad t,s \in \epsilon \N \cap [0,T].
\end{equation}
Having established the above properties of our approximate solutions $(v_\epsilon, B_{\epsilon}, \Gamma_\epsilon)$, we now aim to pass to the limit on a subsequence as $\epsilon \to 0$ to obtain the desired solution $(v, B, \Gamma)$. For convenience, we will let $\epsilon$ be of the form $\epsilon = 2^{-m}$ and send $m \to \infty$.
This  ensures that the time domains of the corresponding approximate solutions  $(v_m,B_m)$ are nested.
\medskip

Utilizing the Lipschitz bounds \eqref{Lip-ve}, \eqref{Lip-etae} and \eqref{Lip-pe},
a careful application of the Arzela-Ascoli theorem yields uniformly  convergent subsequences 
\begin{equation}\label{AA}
\eta_m \to \eta, \qquad (v_m,B_m) \to (v,B), \qquad \nabla (v_m,B_m) \to \nabla (v,B),   \qquad \nabla P_m \to \nabla P,
\end{equation}
with limits still satisfying the bounds \eqref{Lip-ve}, \eqref{Lip-etae} and \eqref{Lip-pe}. It remains to show that $(v, B, \Gamma)$ is a solution
to the free boundary MHD equations, with 
$\Gamma$ defined by $\eta$ and $P$, where $P$  is the associated pressure.
\medskip

We first upgrade the spatial regularity 
of $v$, $B$ and $\eta$. We observe that for each time $t \in 2^{-j} \N \cap [0,T]$ we may pass to the limit  as 
$m \to \infty$ in \eqref{c3} to obtain the uniform bound 
\begin{equation*}\label{c3-lim}
\| v\|_{C^3} +\|B\|_{C^3}+ \|\eta\|_{C^3} \lesssim_M 1 .
\end{equation*}
Since  $v$, $B$ and $\eta$ are all Lipschitz continuous in $t$, this extends routinely to all $t \in [0,T]$. Similar arguments apply to the $\mathbf H^k$ norm of $(v, B, \Gamma)$. 
\medskip

To show that $(v, B, \Gamma)$ solves the free boundary MHD equations, we proceed
 in several steps:
\medskip

\emph{i) The initial data.} The fact that at the initial time we have $(v(0), B(0), \Gamma(0)) = (v_0, B_0, \Gamma_0)$ is a direct consequence of the construction of $(v_\epsilon(0), B_{\epsilon}(0), \Gamma_\epsilon(0))$.

\medskip

\emph{ii) The pressure equation.} To check that 
$P$ is the pressure associated to $v$, $B$ and $\Gamma$ 
we use the uniform convergence of $\nabla (v_m,B_m)$,
$\eta_m$ and $\nabla P_m$  to pass to the limit in the pressure equation \eqref{Euler-pressure}.

\medskip
\emph{iii) The incompressible MHD equations.} 
We directly use the uniform convergence \eqref{AA}  to pass to the limit in \eqref{Euler-app}. This guarantees that $v$ and $B$ are differentiable in time and that the incompressible MHD equations are verified.

\medskip
\emph{iv) The kinematic boundary condition.}
We directly use the uniform convergence \eqref{AA} to pass to the limit in \eqref{kinenatic-app} and then argue as above to verify the kinematic boundary condition.

\medskip
\emph{v) The boundary condition for $B$}. This follows from the fact that $B_m\cdot n_m=0$ and passing to the limit.
\medskip

Lastly, we remark that the $C(\mathbf H^{k-1})$ regularity of $(v, B, \Gamma)$ follows directly from the incompressible MHD equations and the kinematic boundary condition. 
\end{proof}
\subsection{Commutator estimates used in \Cref{Existence section}}\label{existencecommutators}
Here we collect the various commutator estimates needed in the previous subsections.  Throughout this subsection, $k$ will be some sufficiently large, dimension-dependent integer, and $\Gamma$ will be some smooth hypersurface belonging to a suitable collar neighborhood with reference hypersurface $\Gamma_*$ and with uniform $H^k$ bound $\|\Gamma\|_{H^k}\lesssim_M 1$.
\begin{theorem}[Theorem A.8 in \cite{MR2388661}]\label{SZ1} For $s'\in [2-k,k-1]$, we have
\begin{equation*}
\|(-\Delta_{\Gamma})^{\frac{1}{2}}-\mathcal{N}\|_{H^{s'}(\Gamma)\to H^{s'}(\Gamma)}\lesssim_M 1.
\end{equation*}
\end{theorem}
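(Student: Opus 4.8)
The statement to be proved is \Cref{SZ1}: the difference $(-\Delta_\Gamma)^{1/2} - \mathcal{N}$ is bounded on $H^{s'}(\Gamma)$ for $s' \in [2-k, k-1]$, with operator norm depending only on $M$ (hence uniformly over hypersurfaces in the collar with $\|\Gamma\|_{H^k} \lesssim_M 1$). This is attributed as Theorem A.8 in \cite{MR2388661}, so strictly speaking the "proof" is a citation; but let me sketch how one would prove it from scratch, since that is what the excerpt is asking for.

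The plan is to work in local coordinates and extract the principal symbol of $\mathcal{N}$. First I would recall that $\mathcal{N}$ is the Dirichlet-to-Neumann operator: $\mathcal{N}f = \partial_n (\mathcal{H}f)|_\Gamma$ where $\mathcal{H}f$ is the harmonic extension into $\Omega$. Flattening the boundary locally via a graph parametrization $x_d = \eta(x')$ and conjugating the Laplacian, one obtains that $\mathcal{H}$ solves a second-order elliptic equation with coefficients controlled by $\|\eta\|_{C^{1,\alpha}}$ and $\|\eta\|_{H^k}$. A standard factorization of the elliptic operator (à la Calderón / Nirenberg) shows that $\mathcal{N}$ is a classical pseudodifferential operator of order $1$ on $\Gamma$ whose principal symbol is exactly $\sqrt{g^{ij}\xi_i\xi_j}$, which is the principal symbol of $(-\Delta_\Gamma)^{1/2}$. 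Therefore $(-\Delta_\Gamma)^{1/2} - \mathcal{N}$ has order $\le 0$, and the remaining task is to make this quantitative in low regularity: the coefficients of $\Gamma$ are only in $H^k$, not smooth, so one cannot use the full classical symbol calculus. Instead I would use a paradifferential version — decompose the metric and the Poisson kernel into a paradifferential principal part plus an $H^{k}$-smoothing remainder, and track that the order-$0$ operator obtained has bounds depending polynomially on $\|\Gamma\|_{H^k}$ and hence on $M$. The symmetry/self-adjointness of both $\mathcal{N}$ and $(-\Delta_\Gamma)^{1/2}$ on $L^2(\Gamma)$ is what keeps the subprincipal contributions under control and lets the difference be bounded (rather than of order $1/2$) on the symmetric range $s' \in [2-k, k-1]$.

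Concretely, the steps in order: (1) reduce to a finite cover by coordinate patches with a subordinate partition of unity, so that it suffices to bound $\chi((-\Delta_\Gamma)^{1/2}-\mathcal{N})\chi'$ locally, handling the off-diagonal pieces (which are smoothing) separately; (2) express $\mathcal{N}$ via the harmonic extension and derive the precise elliptic equation satisfied by $\mathcal{H}f$ in the flattened coordinates, reading off that the tangential symbol of the interior operator is $|\xi|_g^2$; (3) perform the paradifferential factorization $-\Delta_{\text{flat}} = (\partial_{x_d} - T_A)(\partial_{x_d} + T_{a})$ with $a$ paradifferential of order $1$ and principal symbol $-|\xi|_g$, so that $\mathcal{N} = T_a + R$ with $R$ bounded from $H^{s'}$ to $H^{s'+1}$ with constants $\lesssim_M 1$; (4) compare $T_a$ to $(-\Delta_\Gamma)^{1/2}$, noting that both have principal symbol $|\xi|_g$ and identical self-adjoint structure, so their difference is an operator of order $0$ with the required bounds; (5) combine to get $(-\Delta_\Gamma)^{1/2} - \mathcal{N}$ bounded on $H^{s'}$ for all $s'$ in the stated range, where the range limitation comes from the regularity budget of the coefficients (each side of $[2-k,k-1]$ corresponds to how many derivatives one can afford to lose onto the $H^k$ coefficients).

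The main obstacle is Step (3)–(4): carrying out the paradifferential factorization uniformly in the collar and proving that the remainder is genuinely of order $-1$ (not merely $-1/2$ or bounded), which is exactly where the matched regularity of $\Gamma$ in $H^k$ and the self-adjointness of $\mathcal{N}$ must be exploited carefully. One has to be careful that the paralinearization of the nonlinear coefficient $|\xi|_g$ (which depends on $\nabla\eta$ through the induced metric) does not generate an order-$1$ error; this works precisely because the relevant commutators between $T_{|\xi|_g}$ and the coefficient operators land one order lower thanks to the algebra of paradifferential operators with $C^{1,\alpha}$-regular symbols, and because the symmetrization removes the would-be order-$1/2$ antisymmetric part. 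Since this is a known result, in the actual paper one simply invokes \cite[Theorem A.8]{MR2388661}; I would only reproduce the above sketch if a self-contained argument were wanted, and would otherwise defer to that reference together with the balanced elliptic estimates already collected in \Cref{BEE}.
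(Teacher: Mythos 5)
Your proposal is a plausible route, but it is not the route taken by the cited reference, and it papers over the step that the reference is specifically engineered to avoid. The paper itself offers no proof of this statement; it cites \cite[Theorem A.8]{MR2388661}. Shatah and Zeng's own argument does not invoke any symbol calculus at all. Instead, it proceeds by squaring: one first establishes the explicit identity (quoted later in the paper, in the proof of \Cref{coordinatescommutator})
\begin{equation*}
(-\Delta_{\Gamma}-\mathcal{N}^2)f=\kappa\,\mathcal{N}f+2\nabla_n\Delta^{-1}\bigl(\nabla\mathcal{H}n_\Gamma\cdot \nabla^2\mathcal{H}f\bigr)-\mathcal{N}n_\Gamma\cdot\bigl(\mathcal{N}f\,n_\Gamma+\nabla^{\top}f\bigr),
\end{equation*}
which exhibits $\mathcal{N}^2+\Delta_\Gamma$ as an operator of order $1$, with bounds controlled by the $H^{k}$ norm of $\Gamma$. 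One then feeds $A=\mathcal{N}$, $B=(-\Delta_\Gamma)^{1/2}$, $K=A^2-B^2$ into the abstract operator-theoretic lemma \Cref{SZ2} (Proposition A.7 of the same reference), with $\alpha=1$: since $K B^{-1}$ is bounded, $(A-B)B^{0}=A-B$ is bounded. The range $s'\in[2-k,k-1]$ is exactly the range on which the right-hand side of the identity maps $H^{s'}\to H^{s'-1}$ and on which the abstract lemma applies after conjugating by fractional powers.

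The comparison is instructive. Your paradifferential factorization is the standard modern route (Alazard--Burq--Zuily) to the symbol of $\mathcal{N}$, and step~(3) of your sketch is indeed correct and well documented. The trouble is step~(4): to conclude that $T_a-(-\Delta_\Gamma)^{1/2}$ has order $0$, you need $(-\Delta_\Gamma)^{1/2}$ itself expressed as a paradifferential operator with the same principal symbol, uniformly over a collar of $H^k$ surfaces. That object is defined spectrally, not by a quantization rule, and converting a spectral fractional power of a rough Laplace--Beltrami operator into a paradifferential operator with tame constants is a genuinely nontrivial step that your sketch does not address (and which cannot be dismissed by appealing to ``both have principal symbol $|\xi|_g$ and identical self-adjoint structure''; two self-adjoint order-$1$ operators with the same principal symbol can and do differ by a nonzero order-$0$ term, whose boundedness is precisely what must be proved). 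The virtue of the squaring-plus-\Cref{SZ2} argument is that it never quantizes $(-\Delta_\Gamma)^{1/2}$ at all: the abstract lemma does the spectral work, and the only concrete computation required is the order-$1$ identity for $\mathcal{N}^2+\Delta_\Gamma$, which is a direct Green's-function calculation. So the two approaches are genuinely different; yours is more microlocal but leaves a hard analytic step implicit, whereas the reference's is more elementary once the abstract lemma is available.
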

\begin{remark}\label{replacewithpositive}
We remark that the same bound holds with $(1-\Delta_{\Gamma})^{\frac{1}{2}}$ and $1+\mathcal{N}$ in place of $(-\Delta_{\Gamma})^{\frac{1}{2}}$ and $\mathcal{N}$, respectively, as the errors are bounded from $H^{s'}\to H^{s'}$.
\end{remark}
We will need the following abstract result to slightly refine the above.
\begin{proposition}[Proposition A.7 in \cite{MR2388661}]\label{SZ2}
Let $X$ be a Hilbert space and let $A$ and $B$ be (possibly unbounded) self-adjoint
positive operators on $X$ so that $A^{-1}B$ and $AB^{-1}$ are bounded. Suppose that $K:=A^2-B^2$
is such that $KB^{-\alpha}$ is bounded for $\alpha\in [0,2)$. Then $(A-B)B^{1-\alpha}$ is bounded as well.
\end{proposition}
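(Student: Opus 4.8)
\textbf{Proof proposal for \Cref{SZ2}.}

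The plan is to mimic the classical interpolation/commutator argument for square roots of positive operators, now exploiting the extra half-unit of smoothing hypothesis $KB^{-\alpha}$ bounded with $\alpha<2$. First I would record the elementary algebraic identity
\begin{equation*}
A - B = (A+B)^{-1}(A^2 - B^2) + (A+B)^{-1}\big[(A+B)B - B(A+B)\big](A+B)^{-1} \cdot 0,
\end{equation*}
which is too crude; the cleaner route is the resolvent (Balakrishnan) representation of the square root. Writing $A = (A^2)^{1/2}$ and $B = (B^2)^{1/2}$, one has
\begin{equation*}
A - B = \frac{1}{\pi}\int_0^\infty \lambda^{-1/2}\Big[(A^2+\lambda)^{-1}A^2 - (B^2+\lambda)^{-1}B^2\Big]\,d\lambda
= \frac{1}{\pi}\int_0^\infty \lambda^{1/2}\Big[(B^2+\lambda)^{-1} - (A^2+\lambda)^{-1}\Big]\,d\lambda,
\end{equation*}
and then the second resolvent identity gives
\begin{equation*}
(B^2+\lambda)^{-1} - (A^2+\lambda)^{-1} = (A^2+\lambda)^{-1}(A^2 - B^2)(B^2+\lambda)^{-1} = (A^2+\lambda)^{-1}K(B^2+\lambda)^{-1}.
\end{equation*}

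The key step is to insert the hypothesis: $K = (KB^{-\alpha})B^{\alpha}$, so that
\begin{equation*}
(A-B)B^{1-\alpha} = \frac{1}{\pi}\int_0^\infty \lambda^{1/2}\,(A^2+\lambda)^{-1}\,(KB^{-\alpha})\,B^{\alpha}(B^2+\lambda)^{-1}B^{1-\alpha}\,d\lambda,
\end{equation*}
and, since $B$ is self-adjoint and commutes with its own resolvent, $B^{\alpha}(B^2+\lambda)^{-1}B^{1-\alpha} = B(B^2+\lambda)^{-1}$. Now one estimates the operator norm of the integrand on $X$ using the spectral theorem for the positive operators $A^2$ and $B^2$: $\|(A^2+\lambda)^{-1}\| \lesssim \lambda^{-1}$ near $\lambda\to\infty$ and $\|(A^2+\lambda)^{-1}\|\lesssim 1$ near $\lambda\to 0$ (here one uses positivity of $A$ to get a bound uniform near $0$, together with the boundedness of $A^{-1}B$, $AB^{-1}$ to compare the two operators' spectral behaviors), while $\|B(B^2+\lambda)^{-1}\| \lesssim \lambda^{-1/2}$. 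The factor $KB^{-\alpha}$ contributes only a fixed constant. Hence the integrand has norm $\lesssim \lambda^{1/2}\cdot \lambda^{-1}\cdot\lambda^{-1/2} = \lambda^{-1}$ for large $\lambda$, which is not integrable, so I would instead split the integral at $\lambda = 1$ and, for the large-$\lambda$ regime, use the more refined bound coming from writing $(A^2+\lambda)^{-1}K(B^2+\lambda)^{-1}$ and distributing: near $\lambda\to\infty$, $(A^2+\lambda)^{-1}$ gains an extra $\lambda^{-1}$ while $B(B^2+\lambda)^{-1}$ still gives $\lambda^{-1/2}$, so the genuine decay is $\lambda^{1/2}\cdot\lambda^{-1}\cdot\lambda^{-1/2-\epsilon_0}$ — but more honestly, the correct bookkeeping is that $(A-B)B^{1-\alpha}$ should be compared via $A^2 - B^2 = K$ having order $\alpha<2$ \emph{strictly}, which furnishes the convergence near infinity; near $\lambda\to 0$ the $\lambda^{1/2}$ weight provides convergence. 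I would carry out this two-regime estimate carefully, using the commutation of $B$ with $(B^2+\lambda)^{-1}$ and the comparability of $A$ and $B$ to control $(A^2+\lambda)^{-1}$ in terms of $(B^2+\lambda)^{-1}$ where needed.

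The main obstacle I anticipate is the convergence of the Balakrishnan integral near $\lambda = \infty$: the naive bounds give a logarithmically divergent integral, and the gain must come precisely from the hypothesis that $K$ has order $\alpha<2$ rather than order $2$. Making this quantitative requires comparing $(A^2+\lambda)^{-1}K(B^2+\lambda)^{-1}B^{1-\alpha}$ with $(B^2+\lambda)^{-(1+\alpha/2)}\cdot(\text{bounded})$ and using $\|(B^2+\lambda)^{-(1+\alpha/2)}\|\lesssim \lambda^{-1-\alpha/2}$, so that the integrand decays like $\lambda^{1/2-1-\alpha/2} = \lambda^{-(1+\alpha)/2}$, which is integrable at infinity since $\alpha>0$ (and at $0$ the issue is instead the $\lambda^{1/2}$ prefactor against the bounded resolvents, which is fine). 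The boundedness of $A^{-1}B$ and $AB^{-1}$ enters to ensure that $(A^2+\lambda)^{-1}$ and $(B^2+\lambda)^{-1}$ are uniformly comparable for all $\lambda>0$, which is what legitimizes replacing one by the other in these estimates. Once the integral is shown to converge in operator norm, $(A-B)B^{1-\alpha}$ is exhibited as a norm-convergent Bochner integral of uniformly bounded operators, hence bounded, completing the proof.
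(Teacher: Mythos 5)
Your setup is correct and is the right starting point: the Balakrishnan representation
\begin{equation*}
A - B = \frac{1}{\pi}\int_0^\infty \lambda^{1/2}\big[(B^2+\lambda)^{-1} - (A^2+\lambda)^{-1}\big]\,d\lambda
= \frac{1}{\pi}\int_0^\infty \lambda^{1/2}(A^2+\lambda)^{-1}K(B^2+\lambda)^{-1}\,d\lambda,
\end{equation*}
together with the factorization $K(B^2+\lambda)^{-1}B^{1-\alpha} = (KB^{-\alpha})B(B^2+\lambda)^{-1}$, is exactly what one wants. You also correctly identify the central difficulty: the naive pointwise bound $\lambda^{1/2}\cdot\|(A^2+\lambda)^{-1}\|\cdot\|KB^{-\alpha}\|\cdot\|B(B^2+\lambda)^{-1}\|\lesssim\lambda^{-1}$ is not integrable at infinity.

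However, your proposed repair does not close this gap, for two reasons, one arithmetic and one structural. First, the arithmetic: you claim the integrand can be bounded by $\lambda^{-(1+\alpha)/2}$ and that this ``is integrable at infinity since $\alpha>0$.'' But $\int_1^\infty\lambda^{-(1+\alpha)/2}\,d\lambda$ converges only when $(1+\alpha)/2>1$, i.e.\ $\alpha>1$; for $\alpha\in(0,1]$ it diverges, so even the bound you assert would not suffice. Second, and more fundamentally: no redistribution of the $\alpha$ derivatives of $K$ among the factors can improve the pointwise-in-$\lambda$ operator-norm rate below $\lambda^{-1}$. If you write $K = A^\beta\,\widetilde C\,B^{\gamma}$ with $\widetilde C$ bounded and $\beta+\gamma=\alpha$, $\beta,\gamma\ge 0$, the spectral-theorem bounds $\|(A^2+\lambda)^{-1}A^\beta\|\lesssim\lambda^{\beta/2-1}$ and $\|B^{\gamma}(B^2+\lambda)^{-1}B^{1-\alpha}\|=\|B^{1-\beta}(B^2+\lambda)^{-1}\|\lesssim\lambda^{(1-\beta)/2-1}$ always recombine to $\lambda^{1/2}\cdot\lambda^{\beta/2-1}\cdot\lambda^{(1-\beta)/2-1}=\lambda^{-1}$ — the exponents in $\beta$ cancel identically. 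The integral therefore never converges absolutely in operator norm; showing convergence as a Bochner integral, as your last paragraph proposes, cannot succeed. This reflects the familiar fact that $t\mapsto\sqrt{t}$ is not operator Lipschitz, so a purely $L^\infty_\lambda$ estimate is structurally too weak.

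The missing idea is an $L^2_\lambda$ (square-function) estimate, exploiting the orthogonality of the spectral measure. Pair the integral against a test vector $v$ and symmetrize $K=A^{\alpha/2}\,\widetilde C\,B^{\alpha/2}$ with $\widetilde C:=A^{-\alpha/2}KB^{-\alpha/2}$ bounded — this uses interpolation between $KB^{-\alpha}$ and its adjoint $B^{-\alpha}K$ to get $B^{-\alpha/2}KB^{-\alpha/2}$ bounded, then the boundedness of $A^{-\alpha/2}B^{\alpha/2}$ (interpolate $I$ and $A^{-1}B$, using $\alpha/2\le1$). Then
\begin{equation*}
\langle(A-B)B^{1-\alpha}u,v\rangle = \frac{1}{\pi}\int_0^\infty \lambda^{1/2}\big\langle \widetilde C\, B^{1-\alpha/2}(B^2+\lambda)^{-1}u,\ A^{\alpha/2}(A^2+\lambda)^{-1}v\big\rangle\,d\lambda,
\end{equation*}
and Cauchy--Schwarz in $\lambda$ with the weight split $\lambda^{1/2}=\lambda^{\alpha/4}\cdot\lambda^{1/2-\alpha/4}$ gives
\begin{equation*}
|\langle(A-B)B^{1-\alpha}u,v\rangle|\ \lesssim\ \Big(\int_0^\infty \lambda^{\alpha/2}\|B^{1-\alpha/2}(B^2+\lambda)^{-1}u\|^2\,d\lambda\Big)^{1/2}\Big(\int_0^\infty \lambda^{1-\alpha/2}\|A^{\alpha/2}(A^2+\lambda)^{-1}v\|^2\,d\lambda\Big)^{1/2}.
\end{equation*}
For a positive self-adjoint $T$ the spectral theorem gives $\int_0^\infty \lambda^{\gamma}\|T^{\beta}(T^2+\lambda)^{-1}w\|^2\,d\lambda = C_\gamma\|T^{\beta+\gamma-1}w\|^2$ whenever $-1<\gamma<1$; applying this with $(\beta,\gamma)=(1-\alpha/2,\alpha/2)$ for $B$ and $(\beta,\gamma)=(\alpha/2,1-\alpha/2)$ for $A$ yields $C\|u\|\,\|v\|$, with the constraints $-1<\alpha/2<1$ and $-1<1-\alpha/2<1$ simultaneously satisfied exactly for $\alpha\in(0,2)$. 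This is the derivative of integrability that your pointwise estimate cannot recover, and it is where the hypothesis $\alpha<2$ genuinely enters.
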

\begin{corollary}\label{fractionalpowerapprox} For $s'\in [\frac{1}{2}-k,k-\frac{3}{2}]$, we have
\begin{equation*}
\|(-\Delta_{\Gamma})^{\frac{1}{4}}-\mathcal{N}^{\frac{1}{2}}\|_{H^{s'}(\Gamma)\to H^{s'+\frac{1}{2}}(\Gamma)}\lesssim_M 1.
\end{equation*}
\end{corollary}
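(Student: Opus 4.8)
The plan is to derive \Cref{fractionalpowerapprox} from \Cref{SZ1} (equivalently, its positive-operator reformulation in \Cref{replacewithpositive}) together with the abstract perturbation result \Cref{SZ2}. First I would set up the Hilbert space framework: take $X = L^2(\Gamma)$, and let $A := (1 - \Delta_\Gamma)^{\frac14}$ and $B := (1+\mathcal{N})^{\frac12}$. Both are self-adjoint and positive on $X$ (for $\mathcal{N}$ this uses its standard characterization via the Dirichlet energy, and $1+\mathcal{N}$ is bounded below by $1$; for $1-\Delta_\Gamma$ this is classical, with the shift by $1$ making it strictly positive). The conclusion we want, namely that $(-\Delta_\Gamma)^{\frac14} - \mathcal{N}^{\frac12}$ maps $H^{s'} \to H^{s'+\frac12}$, should be equivalent — after harmlessly replacing $(-\Delta_\Gamma)^{\frac14}$ and $\mathcal{N}^{\frac12}$ by $A$ and $B$, since the differences $A - (-\Delta_\Gamma)^{\frac14}$ and $B - \mathcal{N}^{\frac12}$ are smoothing of order one in the relevant ranges — to the statement that $(A - B)$ gains one derivative, i.e. $(A-B) B^{2 - 2\alpha}$ is bounded on $L^2$ for the choice $\alpha = \frac32$, since $B^2 \approx 1 + \mathcal{N}$ is first order and $B^{2-2\alpha} = B^{-1}$ has order $-\frac12$, so $(A-B)B^{-1}$ bounded on $L^2$ is exactly the half-derivative gain relative to the order-$\frac12$ operators $A,B$. (One then localizes/interpolates to pass from $L^2$ to general $H^{s'}$ using that $\Gamma$ has uniform $H^k$ bounds and $k$ is large.)

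Next I would verify the three hypotheses of \Cref{SZ2}. Boundedness of $A^{-1}B$ and $AB^{-1}$ on $X$: these say that $A$ and $B$ are of the same order, which follows since $A^2 = (1-\Delta_\Gamma)^{\frac12}$ and $B^2 = 1+\mathcal{N}$ are both first-order positive elliptic operators comparable up to a bounded error by \Cref{SZ1}/\Cref{replacewithpositive} — indeed $A^2 B^{-2}$ and $A^{-2}B^2$ are bounded, hence so are their "square roots" $A^{\pm1}B^{\mp1}$ (this last step can be made rigorous via the spectral theorem / Heinz–Kato inequality, or simply by noting $A B^{-1}$ and $A^{-1} B$ are zeroth-order pseudodifferential-type operators on $\Gamma$ with the stated bounds). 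Then I would compute $K := A^2 - B^2 = (1-\Delta_\Gamma)^{\frac12} - (1+\mathcal{N})$, and invoke \Cref{SZ1} (in the form of \Cref{replacewithpositive}, with $s' = 0$) to see that $K$ is bounded on $L^2$; more precisely $K$ maps $H^{s'} \to H^{s'}$ for $s'$ in the allowed window, so $K B^{-\alpha}$ is bounded for every $\alpha \in [0,2)$ — in particular for $\alpha = \frac32$. The conclusion of \Cref{SZ2} then gives that $(A - B) B^{1-\alpha} = (A-B)B^{-\frac12}$ is bounded on $L^2$, which after unwinding the reduction above is precisely the claimed $H^{s'} \to H^{s'+\frac12}$ bound on $L^2$-based spaces; the general $s'$ range follows by the same argument applied on $H^{s'}$ in place of $L^2$, using that all error terms are controlled by $\|\Gamma\|_{H^k} \lesssim_M 1$.

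The main obstacle I anticipate is purely bookkeeping rather than conceptual: one must be careful that \Cref{SZ2} is an $L^2$ (i.e. single-Hilbert-space) statement, whereas \Cref{fractionalpowerapprox} is phrased on the whole scale $H^{s'}$, $s' \in [\frac12 - k, k - \frac32]$. To bridge this I would either (a) apply \Cref{SZ2} with $X = H^{s'}(\Gamma)$ rather than $L^2$ — checking that $A$ and $B$ remain self-adjoint and positive there with respect to an equivalent inner product, and that \Cref{SZ1} supplies $K B^{-\alpha}$ bounded on $H^{s'}$ — or (b) conjugate by $(1-\Delta_\Gamma)^{s'/2}$ to reduce to the $L^2$ case, absorbing the resulting commutators into bounded (lower-order) errors using the uniform $H^k$ control of $\Gamma$. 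Approach (a) is cleaner and mirrors how \Cref{SZ1} is already stated on the scale. A secondary technical point is justifying the passage between $(-\Delta_\Gamma)^{\frac14}$ versus $(1-\Delta_\Gamma)^{\frac14}$ and $\mathcal{N}^{\frac12}$ versus $(1+\mathcal{N})^{\frac12}$; here the low-frequency/kernel discrepancies contribute only operators of order $\le -\frac34$, which are harmless, and the claimed estimate is insensitive to these modifications. Everything else is a direct application of the quoted results.
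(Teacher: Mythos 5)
Your approach is the same as the paper's---take $A$ and $B$ to be the shifted positive operators, feed \Cref{SZ1} (via \Cref{replacewithpositive}) into \Cref{SZ2} as a boundedness hypothesis on $K=A^2-B^2$, and unwind---and the swap of $A$ and $B$ relative to the paper's choice is harmless. However, there is a genuine computational error in your application of \Cref{SZ2}: the choice $\alpha=\tfrac32$ does not produce the claimed gain, and in fact yields something strictly weaker than what is needed.

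To see the problem, recall that $A$ and $B$ are both operators of order $\tfrac12$. \Cref{SZ2} (with the correct exponent $1-\alpha$; you first wrote $B^{2-2\alpha}$, which is not what the proposition says, and then $B^{1-\alpha}$, which is) states that if $KB^{-\alpha}$ is bounded on $X$ then $(A-B)B^{1-\alpha}$ is bounded on $X$. The conclusion we need is $A-B\colon H^{s'}\to H^{s'+\frac12}$, i.e.\ $A-B$ has order $\le -\tfrac12$; by self-adjointness of $A-B$ and $B$, this is equivalent (taking $X=L^2$ for concreteness) to $B(A-B)$, hence also $(A-B)B$, being bounded on $L^2$. That corresponds to $B^{1-\alpha}=B$, i.e.\ $\alpha=0$---which is exactly the $\alpha$ for which \Cref{replacewithpositive} verifies the hypothesis $KB^{-\alpha}=K$ bounded. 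By contrast, your $\alpha=\tfrac32$ gives $(A-B)B^{-\frac12}$ bounded, which only asserts $A-B$ has order $\le\tfrac14$ (a loss, not a gain); and your heuristic "$(A-B)B^{-1}$ bounded on $L^2$ is exactly the half-derivative gain" is backwards for the same reason---$(A-B)B^{-1}$ bounded merely says $A-B$ has order $\le\tfrac12$, the generic order, with no smoothing at all. The rest of your write-up (verifying $A^{\pm1}B^{\mp1}$ bounded, identifying $K$, discussing the passage from $L^2$ to the $H^{s'}$ scale, handling the shift from $1-\Delta_\Gamma$ and $1+\mathcal N$ back to $-\Delta_\Gamma$ and $\mathcal N$) is fine, but the argument as written does not close without replacing $\alpha=\tfrac32$ by $\alpha=0$.
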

\begin{proof}
We first observe that $(1-\Delta_{\Gamma})^{\frac{1}{4}}-(1+\mathcal{N})^{\frac{1}{2}}$ maps $H^{s'}(\Gamma)\to H^{s'+\frac{1}{2}}(\Gamma)$. This follows by taking $A=(1+\mathcal{N})^{\frac{1}{2}}$ and $B=(1-\Delta_{\Gamma})^{\frac{1}{4}}$ in \Cref{SZ2} and applying \Cref{replacewithpositive}. The desired bound then follows easily since $(1+\mathcal{N})^{\frac{1}{2}}-\mathcal{N}^{\frac{1}{2}}$ and $(1-\Delta_{\Gamma})^{\frac{1}{4}}-(-\Delta_{\Gamma})^{\frac{1}{4}}$ also map $H^{s'}\to H^{s'+\frac{1}{2}}$.
\end{proof}
\begin{lemma}\label{coordinatescommutator} 
The following estimates hold: 
\begin{enumerate}
\item For $s\in\frac{1}{2}\mathbb{N}$ and $s\leq k-2$, 
\begin{equation*}
\|(\Delta_{\Gamma_*}(\mathcal{N}^{s}f)_*)^*-\mathcal{N}^{s}(\Delta_{\Gamma_*}f_*)^*\|_{L^2(\Gamma)}\lesssim_M \|f\|_{H^{s+1}(\Gamma)}.
\end{equation*}
\item For $s\geq 0$ and some fixed dimension-dependent $s_0\geq 0$, there holds 
\begin{equation*}
\|(\Delta_{\Gamma_*}(\mathcal{N}^2f)_*)^*-\mathcal{N}^2(\Delta_{\Gamma_*}f_*)^*\|_{H^{s}(\Gamma)}\lesssim_{M, s_0} \|\Gamma\|_{H^{s+4}}\|f\|_{H^{s_0}(\Gamma)}+\|f\|_{H^{s+3}(\Gamma)}.
\end{equation*}
\end{enumerate}
\end{lemma}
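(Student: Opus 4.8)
The plan is to recast both estimates as commutator bounds for the \emph{pushed-forward reference Laplacian}
\[
\widetilde{\Delta}:=\bigl(\Phi_\Gamma^{-1}\bigr)^*\circ\Delta_{\Gamma_*}\circ\Phi_\Gamma^{*},
\]
where $\Phi_\Gamma\colon\Gamma_*\to\Gamma$ is the collar diffeomorphism of \Cref{Collarcoords}. This is a second order elliptic differential operator on $\Gamma$ whose coefficients are universal rational expressions in the components of the pulled-back metric $\Phi_\Gamma^*g_{\Gamma_*}$; hence, by the collar construction, these coefficients together with $\ell$ of their derivatives are controlled in the relevant norm by $\|\Gamma\|_{H^{\ell+2}}$, uniformly over $\Gamma$ in the collar. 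In particular they are $\lesssim_M 1$ in $H^{k-1}(\Gamma)$. With this notation $(\Delta_{\Gamma_*}(\mathcal N^sf)_*)^{*}-\mathcal N^s(\Delta_{\Gamma_*}f_*)^{*}=[\widetilde\Delta,\mathcal N^s]f$ and $(\Delta_{\Gamma_*}(\mathcal N^2f)_*)^{*}-\mathcal N^2(\Delta_{\Gamma_*}f_*)^{*}=[\widetilde\Delta,\mathcal N^2]f$, so that (i) is $\|[\widetilde\Delta,\mathcal N^s]f\|_{L^2(\Gamma)}\lesssim_M\|f\|_{H^{s+1}(\Gamma)}$ and (ii) is $\|[\widetilde\Delta,\mathcal N^2]f\|_{H^s(\Gamma)}\lesssim_{M,s_0}\|\Gamma\|_{H^{s+4}}\|f\|_{H^{s_0}(\Gamma)}+\|f\|_{H^{s+3}(\Gamma)}$. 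Throughout, $(-\Delta_\Gamma)^{\sigma}$ denotes a fractional power of the intrinsic Laplace--Beltrami operator on the compact hypersurface $\Gamma$, defined by functional calculus.

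\textbf{Part (i).} First I would trade $\mathcal N$ for a fractional power of $\Delta_\Gamma$: by \Cref{fractionalpowerapprox} (and \Cref{replacewithpositive}) one has $\mathcal N^{1/2}=(-\Delta_\Gamma)^{1/4}+E$ with $\|E\|_{H^{s'}(\Gamma)\to H^{s'+1/2}(\Gamma)}\lesssim_M1$, and multiplying out $2s$ factors gives $\mathcal N^{s}=(-\Delta_\Gamma)^{s/2}+F$, where $F$ is a finite sum of products each containing at least one copy of $E$, hence of order at most $s-1$ in the sense that $\|F\|_{H^{s'}(\Gamma)\to H^{s'-(s-1)}(\Gamma)}\lesssim_M1$; the hypothesis $s\le k-2$ keeps all intermediate Sobolev exponents in the range where \Cref{SZ1}--\Cref{fractionalpowerapprox} and the mapping bounds for $\mathcal N$ apply. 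Now split $[\widetilde\Delta,\mathcal N^s]=[\widetilde\Delta,F]+[\widetilde\Delta,(-\Delta_\Gamma)^{s/2}]$. The first commutator already maps $H^{s+1}(\Gamma)\to L^2(\Gamma)$ by a bare count of orders with no cancellation needed ($F\widetilde\Delta\colon H^{s+1}\to H^{s-1}\to L^2$ and $\widetilde\Delta F\colon H^{s+1}\to H^{2}\to L^2$). For the second, write $\widetilde\Delta=\Delta_\Gamma+R_\Gamma$ with $R_\Gamma:=\widetilde\Delta-\Delta_\Gamma$ a second order operator whose coefficients are $\lesssim_M1$ in $H^{k-1}(\Gamma)$; since $\Delta_\Gamma$ commutes exactly with $(-\Delta_\Gamma)^{s/2}$, only $[R_\Gamma,(-\Delta_\Gamma)^{s/2}]$ remains, and this is the one genuine commutator estimate in the proof. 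Expanding $R_\Gamma$ into multiplication operators and covariant derivatives and applying Kato--Ponce / Coifman--Meyer commutator bounds on $\Gamma$ — with the coefficients of $R_\Gamma$ and the curvature of $\Gamma$, all controlled by $\|\Gamma\|_{H^k}$, as the rough inputs, legitimate since $k$ is large — shows that $[R_\Gamma,(-\Delta_\Gamma)^{s/2}]$ is of order $s+1$ and therefore maps $H^{s+1}(\Gamma)\to L^2(\Gamma)$ with constant $\lesssim_M 1$, which completes (i).

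\textbf{Part (ii).} Here $s$ is unrestricted but only two copies of $\mathcal N$ appear, so one argues directly rather than through the bounded functional calculus. Write $[\widetilde\Delta,\mathcal N^2]=\mathcal N[\widetilde\Delta,\mathcal N]+[\widetilde\Delta,\mathcal N]\mathcal N$, reducing everything to the single commutator $[\widetilde\Delta,\mathcal N]$. Expanding $\widetilde\Delta=\sum_{|\alpha|\le2}b_\alpha D^\alpha$ in local collar coordinates gives $[\widetilde\Delta,\mathcal N]=\sum_{|\alpha|\le2}\bigl(b_\alpha[D^\alpha,\mathcal N]+[b_\alpha,\mathcal N]D^\alpha\bigr)$, where each $[b_\alpha,\mathcal N]$ is an order-zero operator by the Leibniz rule \Cref{DNLeibniz}, and each $[D^\alpha,\mathcal N]$ with $|\alpha|\le2$ is obtained by iterating the first order commutator formula of \Cref{Movingsurfid} and \Cref{commutatorremark} for $[\nabla_X,\mathcal N]$. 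Substituting these expansions into $[\widetilde\Delta,\mathcal N^2]$, distributing all derivatives with the product/Moser estimates \Cref{Multilinearest}, and invoking the mapping properties of $\mathcal H$, $\mathcal N$ and $\nabla_n\Delta^{-1}$ from \Cref{BEE}, one estimates each resulting term by a product in which either the coefficients $b_\alpha(\Gamma)$, the unit normal and the metric carry all of the top derivatives — this produces $\|\Gamma\|_{H^{s+4}}\|f\|_{H^{s_0}(\Gamma)}$, the geometry needing $s+4$ derivatives because $\widetilde\Delta$ is second order and two further copies of $\mathcal N$ act, with $f$ then only surviving at the fixed low regularity $s_0$ — or else $f$ carries the top derivatives and lands at regularity at most $s+3$ while the geometry is needed only at the already-controlled level $\|\Gamma\|_{H^k}$.

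\textbf{Main obstacle.} The only point that is not bookkeeping is the sharp commutator estimate in part (i): that $[\widetilde\Delta-\Delta_\Gamma,(-\Delta_\Gamma)^{s/2}]$ gains a full derivative over the trivial count, mapping $H^{s+1}(\Gamma)\to L^2(\Gamma)$ rather than merely $H^{s+1}(\Gamma)\to H^{s-1}(\Gamma)$. This requires the Coifman--Meyer/Kato--Ponce commutator machinery for a second order operator with rough-but-$H^{k-1}$ coefficients on the compact hypersurface $\Gamma$, together with a check that all intermediate Sobolev exponents stay in the admissible range — which is precisely what forces the restriction $s\le k-2$. By contrast, part (ii) involves no subtle cancellation; its only difficulty is the careful partition of the high-regularity loss between the geometry factor $\|\Gamma\|_{H^{s+4}}$ and the input factor $\|f\|_{H^{s+3}}$, which is tedious but mechanical given the elliptic toolbox of \Cref{BEE} and the commutator identities of \Cref{Movingsurfid} already at our disposal.
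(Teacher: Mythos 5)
Your overall strategy for part (i) — trade $\mathcal N^s$ for $(-\Delta_\Gamma)^{s/2}$ via \Cref{SZ1} and \Cref{fractionalpowerapprox}, then isolate the fractional-power commutator — is the same as the paper's, but the crucial final step is left as a genuine gap. After the reduction, the entire content of (i) is the bound on $[\Delta_{\Gamma_*},(-\Delta_\Gamma)^{\alpha/4}]$ (equivalently $[R_\Gamma,(-\Delta_\Gamma)^{s/2}]$ in your notation), which must gain a full derivative over the naive order count. Your invocation of ``Kato--Ponce / Coifman--Meyer commutator bounds on $\Gamma$'' does not close this: those results concern commutators of fractional derivatives with \emph{multiplication} operators on $\mathbb{R}^d$; here the other factor is a second-order differential operator with merely $H^{k-1}$ coefficients, on a compact hypersurface of limited regularity, and the fractional power is of the intrinsic (rough-metric) Laplace--Beltrami operator. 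That commutator gain is not an off-the-shelf result, and asserting it is tantamount to assuming what one needs to prove. The paper closes precisely this step from scratch: it first proves the \emph{integer}-order gain $\|(\Delta_{\Gamma_*}(\Delta_\Gamma f)_*)^*-\Delta_\Gamma(\Delta_{\Gamma_*}f_*)^*\|_{H^\sigma(\Gamma)}\lesssim_M\|f\|_{H^{\sigma+3}(\Gamma)}$ by direct computation in collar coordinates (two second-order operators whose commutator visibly drops to third order), and then transfers this gain to the fractional power via the resolvent representation
\begin{equation*}
[\Delta_{\Gamma_*},(-\Delta_\Gamma)^{\alpha/4}]=c_\alpha\int_0^\infty(-\Delta_\Gamma+\lambda^2)^{-1}[\Delta_{\Gamma_*},\Delta_\Gamma](-\Delta_\Gamma+\lambda^2)^{-1}\lambda^{\frac{\alpha}{2}+1}\,d\lambda,
\end{equation*}
a duality argument on zero-average functions, Cauchy--Schwarz in $\lambda$, and a square-function estimate from the functional calculus of $-\Delta_\Gamma$. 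Without some version of that argument your proof of (i) is incomplete.

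For part (ii) you take a genuinely different route. You decompose $[\widetilde\Delta,\mathcal N^2]=\mathcal N[\widetilde\Delta,\mathcal N]+[\widetilde\Delta,\mathcal N]\mathcal N$, expand $[\widetilde\Delta,\mathcal N]$ term by term in local coordinates, and rely on the Leibniz rule \eqref{DNLeibniz} and the $[\nabla_X,\mathcal N]$ commutator of \Cref{Movingsurfid}/\Cref{commutatorremark}. The paper instead substitutes the explicit Shatah--Zeng identity $(-\Delta_\Gamma-\mathcal N^2)f=\kappa\mathcal Nf+2\nabla_n\Delta^{-1}(\nabla\mathcal H n_\Gamma\cdot\nabla^2\mathcal Hf)-\mathcal N n_\Gamma\cdot(\mathcal Nfn_\Gamma+\nabla^\top f)$, which converts $\mathcal N^2$ into $-\Delta_\Gamma$ plus curvature-driven lower-order terms in one stroke; the commutator then splits cleanly into the integer piece $[\widetilde\Delta,\Delta_\Gamma]$ (collar computation again) and the contribution of the explicit terms, from which the split $\|\Gamma\|_{H^{s+4}}\|f\|_{H^{s_0}}+\|f\|_{H^{s+3}}$ is read off directly via \Cref{boundaryest}, \Cref{L1bound} and related balanced estimates. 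Your route is plausible but considerably longer, and as written it only asserts that the final bookkeeping ``produces'' the two stated terms — to be convincing you would need to carry out at least one representative term of the expansion and show that the geometry factor really lands at $\|\Gamma\|_{H^{s+4}}$ and not higher when it carries the derivatives.
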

\begin{proof}
We start with (i). It follows from \Cref{SZ1} and \Cref{fractionalpowerapprox} that 
\begin{equation*}
\begin{split}
\|(\Delta_{\Gamma_*}(\mathcal{N}^{s}f)_*)^*-\mathcal{N}^{s}(\Delta_{\Gamma_*}f_*)^*\|_{L^2(\Gamma)}\lesssim_M \|(\Delta_{\Gamma_*}((-\Delta_{\Gamma})^{\frac{s}{2}}f)_*)^*-(-\Delta_{\Gamma})^{\frac{s}{2}}(\Delta_{\Gamma_*}f_*)^*\|_{L^2(\Gamma)}+\|f\|_{H^{s+1}(\Gamma)}.
\end{split}
\end{equation*}
As a preliminary step, we note that 
\begin{equation}\label{Delta-com}
\|(\Delta_{\Gamma_*}(\Delta_{\Gamma}f)_*)^*-\Delta_{\Gamma}(\Delta_{\Gamma_*}f_*)^*\|_{H^{\sigma}(\Gamma)}\lesssim_M\|f\|_{H^{\sigma +3}(\Gamma)}
\end{equation}
for $1-k \leq \sigma\leq k-4$. This follows from a computation in collar coordinates (more precisely, the fixed local coordinates for $\Gamma_*$), using the explicit formulas for $\Delta_{\Gamma}$ and $\Delta_{\Gamma_*}$. Therefore, if $s$ is even, we are done. Otherwise, we have reduced matters to proving that
\begin{equation*}
\|(\Delta_{\Gamma_*}((-\Delta_{\Gamma})^{\frac{\alpha}{4}}f)_*)^*-(-\Delta_{\Gamma})^{\frac{\alpha}{4}}(\Delta_{\Gamma_*}f_*)^*\|_{L^2(\Gamma)}\lesssim_M \|f\|_{H^{\frac{\alpha}{2}+1}(\Gamma)},\hspace{5mm}\alpha\in\{1,2,3\}.
\end{equation*}
Using duality, we can rewrite this in the form
\begin{equation*}
\langle (\Delta_{\Gamma_*}((-\Delta_{\Gamma})^{\frac{\alpha}{4}}f)_*)^*-(-\Delta_{\Gamma})^{\frac{\alpha}{4}}(\Delta_{\Gamma_*}f_*)^*),g\rangle \lesssim_M \|f\|_{H^{\frac{\alpha}{2}+1}(\Gamma)} \|g\|_{L^2(\Gamma)},\hspace{5mm}\alpha\in\{1,2,3\}.
\end{equation*}
To avoid the zero mode of $\Delta_{\Gamma}$
which correspond to constants, we remark that the last bound is straightforward if either $f$ or $g$ are constant, without needing the commutator structure. 
Hence, from here on we can assume that both $f$ and $g$ 
have zero average with respect to the $\Gamma$ surface measure.
\medskip

Our strategy will be to  reduce our desired bound to \eqref{Delta-com} using the resolvent representation of the fractional powers,
\[
(-\Delta_{\Gamma})^\frac{\alpha}4 = c_{\alpha}\int_0^\infty (-\Delta_{\Gamma}) (-\Delta_{\Gamma} + \lambda^2)^{-1} \lambda^{\frac{\alpha}2-1} \, d\lambda, \qquad \alpha \in (0,4),
\]
where $c_{\alpha}$ is some constant depending on $\alpha$. From the formal commutator identity $[A^{-1},B]=-A^{-1}[A,B]A^{-1}$, we obtain
\[
[\Delta_{\Gamma_*},(-\Delta_{\Gamma})^{\frac{\alpha}4}]
= c_{\alpha}\int_0^\infty(-\Delta_{\Gamma} + \lambda^2)^{-1} [\Delta_{\Gamma_*}, \Delta_{\Gamma}] (-\Delta_{\Gamma} + \lambda^2)^{-1} \lambda^{\frac{\alpha}2+1} \, d\lambda
\]
where by slight abuse of notation, for a smooth function $h$ on $\Gamma$, we interpret $[\Delta_{\Gamma_*},\Delta_{\Gamma}]h$ to mean
\begin{equation*}
[\Delta_{\Gamma_*},\Delta_{\Gamma}]h:=(\Delta_{\Gamma_*}(\Delta_{\Gamma}h)_*)^*-\Delta_{\Gamma}(\Delta_{\Gamma_*}h_*)^*.
\end{equation*}
Given two zero average test functions $f,g$ it then remains to show that 
\[
\int_0^\infty  \lambda^{\frac{\alpha}2+1}\langle [\Delta_{\Gamma_*}, \Delta_{\Gamma}] (-\Delta_{\Gamma} + \lambda^2)^{-1} f, (-\Delta_{\Gamma} + \lambda^2)^{-1} g \rangle \, d\lambda 
\lesssim_M \|g\|_{L^2(\Gamma)} \|f\|_{H^{1+\frac{\alpha}2}(\Gamma)}.
\]
Using \eqref{Delta-com} and Cauchy-Schwarz in $\lambda$,
we reduce this to
\begin{equation*}
\int_0^\infty \lambda^{\frac{\alpha}2 + 1} \|(-\Delta_{\Gamma} + \lambda^2)^{-1} f\|_{H^{2+\frac{\alpha}{4}}(\Gamma)}^2 \, d\lambda
\lesssim_M \| f\|_{H^{1+\frac{\alpha}{2}}(\Gamma)}^2.
\end{equation*}
Using powers of  $-\Delta_{\Gamma}$ this can be rewritten as an $L^2$ to $L^2$ bound, which then follows from the functional calculus for $-\Delta_{\Gamma}$.
\medskip

Now, we prove (ii). From \cite[Equation (A.13)]{MR2388661} we have the identity
\begin{equation*}
(-\Delta_{\Gamma}-\mathcal{N}^2)f=\kappa\mathcal{N}f+2\nabla_n\Delta^{-1}(\nabla\mathcal{H}n_\Gamma\cdot \nabla^2\mathcal{H}f)-\mathcal{N}n_\Gamma\cdot(\mathcal{N}fn_\Gamma+\nabla^{\top}f).
\end{equation*}
Therefore, using the bound $\|\kappa\|_{H^{s+2}(\Gamma)}+\|\mathcal{N}n_\Gamma\|_{H^{s+2}(\Gamma)}\lesssim_M \|\Gamma\|_{H^{s+4}}$ and the relevant elliptic estimates from \Cref{BEE}, it follows that 
\begin{equation*}
\|(\Delta_{\Gamma_*}(\mathcal{N}^2f)_*)^*-\mathcal{N}^2(\Delta_{\Gamma_*}f_*)^*\|_{H^{s}(\Gamma)}\lesssim_M \|\Gamma\|_{H^{s+4}}\|f\|_{H^{s_0}(\Gamma)}+\|(\Delta_{\Gamma_*}(\Delta_{\Gamma}f)_*)^*-\Delta_{\Gamma}(\Delta_{\Gamma_*}f_*)^*\|_{H^{s}(\Gamma)}.
\end{equation*}
A computation in collar coordinates then gives the desired estimate.
\end{proof}
We may use the above results to prove a commutator estimate for $[\mathcal{N}^{\frac{1}{2}},a]$.
\begin{proposition}\label{Leibnizcom} Let $\frac{1}{2}\leq s\leq k-1$ with $s\in \frac{1}{2}\mathbb{N}$. Moreover, assume that $a\in H^{k-1}(\Gamma)$ with $\|a\|_{H^{k-1}(\Gamma)}\lesssim_M 1$. There holds
\begin{equation}\label{Leibnizcomeq}
\|[\mathcal{N}^{s},a]\|_{H^{s-1}(\Gamma)\to L^2(\Gamma)}\lesssim_M 1.
\end{equation}
\end{proposition}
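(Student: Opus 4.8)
\textbf{Proof plan for \Cref{Leibnizcom}.} The plan is to reduce the commutator estimate for the fractional power $\mathcal{N}^s$ to the already available Leibniz-type information for $\mathcal{N}$ itself (the Leibniz rule \eqref{DNLeibniz}, the mapping bounds $\mathcal{N}:H^r\to H^{r-1}$ and the commutator bound for $[\mathcal{N}^m,a]$ at integer powers established in \cite{Euler} and recalled in the proof of \eqref{Leibnizcommutator}), together with the comparison between $\mathcal{N}^{1/2}$ and $(-\Delta_\Gamma)^{1/4}$ from \Cref{fractionalpowerapprox}. First I would dispose of the case where $s$ is a positive integer: then \eqref{Leibnizcomeq} is a direct consequence of the higher-order Leibniz identity \eqref{higherorderleibniz} (for $s=k-2$; the same identity with $k-2$ replaced by any integer $m\le k-1$) combined with the elliptic and trace bounds in \Cref{BEE} and the hypothesis $\|a\|_{H^{k-1}(\Gamma)}\lesssim_M 1$, in exactly the manner carried out for \eqref{Leibnizcommutator}. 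So the real content is the half-integer case $s=m+\tfrac12$.

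For the half-integer case, write $\mathcal{N}^s = \mathcal{N}^{1/2}\mathcal{N}^m$ and expand
\[
[\mathcal{N}^{m+\frac12},a] = \mathcal{N}^{1/2}[\mathcal{N}^m,a] + [\mathcal{N}^{1/2},a]\mathcal{N}^m.
\]
The first term is controlled by the integer-power result: $[\mathcal{N}^m,a]$ maps $H^{s-1}=H^{m-1/2}$ to $H^{1/2}$ (this is the natural gain for the integer commutator, provable again via \eqref{higherorderleibniz} with the $H^{1/2}$ target used in \eqref{Leibnizcommutator}), and then $\mathcal{N}^{1/2}:H^{1/2}\to L^2$. Thus the whole difficulty is concentrated in the single term $[\mathcal{N}^{1/2},a]$, which one needs to bound from $H^{-1/2}$ to $L^2$ after precomposing with $\mathcal{N}^m:H^{s-1}\to H^{-1/2}$; equivalently it suffices to prove $\|[\mathcal{N}^{1/2},a]\|_{H^{-1/2}(\Gamma)\to L^2(\Gamma)}\lesssim_M 1$, i.e. the case $s=\tfrac12$ of \eqref{Leibnizcomeq}.

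To handle $[\mathcal{N}^{1/2},a]$ I would first replace $\mathcal{N}^{1/2}$ by $(-\Delta_\Gamma)^{1/4}$ using \Cref{fractionalpowerapprox}: the difference $\mathcal{N}^{1/2}-(-\Delta_\Gamma)^{1/4}$ maps $H^{-1/2}\to L^2$ (indeed $H^{r}\to H^{r+1/2}$), so $[\mathcal{N}^{1/2}-(-\Delta_\Gamma)^{1/4},a]$ is harmless given the algebra and product estimates in \Cref{BEE} and $a\in H^{k-1}\subset C^\infty$-like regularity. It remains to estimate $[(-\Delta_\Gamma)^{1/4},a]$ from $H^{-1/2}$ to $L^2$. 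For this I would use the resolvent (heat-semigroup or Balakrishnan) representation of the fractional power, exactly as in the proof of \Cref{coordinatescommutator}(i): write
\[
(-\Delta_\Gamma)^{1/4} = c\int_0^\infty (-\Delta_\Gamma)(-\Delta_\Gamma+\lambda^2)^{-1}\lambda^{-1/2}\,d\lambda
\]
and expand $[(-\Delta_\Gamma)^{1/4},a]$ as a $\lambda$-integral of $(-\Delta_\Gamma+\lambda^2)^{-1}[-\Delta_\Gamma,a](-\Delta_\Gamma+\lambda^2)^{-1}$. The inner commutator $[-\Delta_\Gamma,a] = -2\nabla^\top a\cdot\nabla^\top - (\Delta_\Gamma a)$ is a first-order operator with $H^{k-2}$ coefficients, hence bounded $H^{r+1}\to H^{r}$ uniformly in $M$ for the relevant range of $r$; pairing against an $L^2$ test function, using the resolvent bounds $\|(-\Delta_\Gamma+\lambda^2)^{-1}\|_{H^r\to H^{r+\sigma}}\lesssim \lambda^{-2+\sigma}$ and Cauchy--Schwarz in $\lambda$, reduces the claim to an $L^2\to L^2$ square-function bound that follows from the functional calculus for $-\Delta_\Gamma$ (separating the zero mode of $\Delta_\Gamma$ by hand, as in \Cref{coordinatescommutator}). \textbf{The main obstacle} is precisely this last step: making the $\lambda$-integral converge at both endpoints requires distributing the available half-derivative gain of the commutator $[-\Delta_\Gamma,a]$ symmetrically between the two resolvents and being careful that the $\lambda^{-1/2}$ weight, combined with $\lambda^{-2}$-type resolvent decay on each side, produces an integrable singularity — this is a routine but delicate bookkeeping of powers of $\lambda$, entirely parallel to the argument already given for \Cref{coordinatescommutator}(i), so I would present it by reference to that computation rather than redo it in full.
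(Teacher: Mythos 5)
Your proposal is correct and follows the paper's proof scheme exactly: dispose of integer $s$ via the iterated Leibniz rule for $\mathcal{N}$, split $[\mathcal{N}^{m+\frac12},a]=\mathcal{N}^{\frac12}[\mathcal{N}^m,a]+[\mathcal{N}^{\frac12},a]\mathcal{N}^m$ to reduce to the case $s=\tfrac12$, and close that case by duality together with a Balakrishnan-type resolvent representation of the half power. The one variation is that you first swap $\mathcal{N}^{\frac12}$ for $(-\Delta_\Gamma)^{\frac14}$ via \Cref{fractionalpowerapprox} so that the resolvent machinery and $\lambda$-power-counting from the proof of \Cref{coordinatescommutator}(i) (with $\alpha=1$) carries over verbatim with the concrete first-order inner commutator $[-\Delta_\Gamma,a]$, whereas the paper applies the resolvent expansion directly to $\mathcal{N}$ (so the inner commutator is $[\mathcal{N},a]$ from \eqref{DNLeibniz}); this is a minor tactical detour rather than a different route, and the ``main obstacle'' you flag is exactly the bookkeeping already done there.
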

\begin{proof} If $s$ is an integer, this follows easily from the Leibniz rule in \Cref{Movingsurfid} and the fact that $\|a\|_{H^{k-1}(\Gamma)}\lesssim_M 1$. If $s$ is not an integer, then $s=\frac{1}{2}+m$ for some integer $0\leq m\leq k-2$. We then have
\begin{equation*}
[\mathcal{N}^s,a]=\mathcal{N}^{\frac{1}{2}}[\mathcal{N}^m,a]f+[\mathcal{N}^{\frac{1}{2}},a]\mathcal{N}^mf.
\end{equation*}
Therefore, it suffices to prove \eqref{Leibnizcomeq} in the case $s=\frac{1}{2}$. One can prove this by proceeding similarly to above using duality and the resolvent representation of $\mathcal{N}^{\frac{1}{2}}$. We omit the details.
\end{proof}
\section{Rough solutions and continuous dependence}\label{RS}
We now aim to construct solutions in the state space $\mathbf{H}^s$, $s>\frac{d}{2}+1$, as  limits of the regular solutions that we constructed in \Cref{Existence section}. The basic strategy is as follows:
\begin{enumerate}
    \item We regularize the initial data on the dyadic scale.
    \item\label{STEP2} We establish uniform bounds for the corresponding regularized solutions.
    \item We deduce convergence of the regularized solutions in a weaker topology.
    \item We use the difference estimates from \Cref{DF} and the uniform bounds from step \eqref{STEP2} to show that the regularized solutions converge in the $\mathbf{H}^s$ topology.
\end{enumerate}
For problems on $\mathbb{R}^d$, the above procedure is rather standard. However, in our setting it is more subtle as we must  compare functions defined on different domains. In particular, we must carefully employ regularization operators on $\mathbf{H}^s$ which produce states on extended domains. 
\medskip

When compared with our previous article \cite{Euler}, an additional complication arises from  the $\nabla_B$ terms in the definition of $\mathbf{H}^s$, as we will need to prove convergence in $H^{s-\frac{1}{2}}$ of terms involving regularizations of $\nabla_Bv$ and $\nabla_BB$. Although our difference estimates  from \Cref{DF} guarantee closeness of appropriate regularizations of $v$ and $B$, the same cannot be said for $\nabla_Bv$ nor $\nabla_BB$. To address this issue, we will prove  difference  estimates for these latter variables in a  regularized setting.
\medskip

To keep notation consistent, we will again work with the variables $W^\pm:=v\pm B$. We will also use the notation $W:=(W^+,W^-)$ when convenient.
\subsection{Initial data regularization}\label{IDR}
Let $(W_0,\Gamma_0)\in \mathbf{H}^s$ be an initial data (with domain $\Omega_0$) where $\Gamma_0$ lies within some collar neighborhood
$\Lambda_*=\Lambda(\Gamma_*,\epsilon,\delta)$ with $0<\delta \ll 1$. Our first aim is to regularize the initial data  at each sufficiently large dyadic scale $2^j$. We begin by  fixing a parameter $j_0$ sufficiently large depending only on $M_s(0):=\|(W_0,\Gamma_{0})\|_{\mathbf{H}^s}$, the lower bound $c_0$ in the Taylor sign condition and $\Lambda_*$. For $j\geq j_0$, we apply \Cref{envbounds} to obtain a regularized initial state $(W_{0,j},\Gamma_{0,j})$ (with domain $\Omega_{0,j}$) which satisfies the uniform bound
\begin{equation*}
\|(W_{0,j},\Gamma_{0,j})\|_{\mathbf{H}^s}\lesssim_{M_s(0)} \|(W_0,\Gamma_{0})\|_{\mathbf{H}^s},\hspace{5mm}j\geq j_0.
\end{equation*}
By Sobolev embeddings and the condition $s>\frac{d}{2}+1$, the regularized hypersurface $\Gamma_{0,j}$ in \Cref{envbounds}  satisfies the distance bound
\begin{equation*}
|\eta_{0,j}-\eta_0|\lesssim_{M_s(0)} 2^{-(\frac{3}{2}+\delta)j},\hspace{5mm}j\geq j_0,
\end{equation*}
for some $\delta>0$. Therefore, for such $j$, the domains $\Omega_{0,j}$ and $\Omega_0$ are within distance $2^{-(\frac{3}{2}+\delta)j}$ of each other and, moreover, as long as $j_0$ is large enough, our regularized variables stay in the collar and maintain a uniform lower bound on the Taylor term. In the sequel, we will also make use of the remaining properties of the regularization operators in \Cref{envbounds}, but these will be recalled as we need them.
\subsection{Uniform bounds and the lifespan of regular solutions}  By \Cref{t:existence}, the regularized data $(W_{0,j},\Gamma_{0,j})$ from \Cref{IDR} give rise to  smooth solutions $(W_j,\Gamma_j)$. Our objective now is to prove uniform bounds for these regular solutions and deduce a lifespan bound which depends solely on the size of the initial data $(W_0,\Gamma_0)$ in $\mathbf{H}^s$, the lower bound $c_0$ for the Taylor term and the collar.  
\medskip

Let us abbreviate $M:=M_s(0)$  and fix some large positive parameters $A_0$ and $A_0^*$ depending solely on the numerical constants for the data ($M$, $c_0$, etc.) such that $M_s(0)\ll A_0\ll A_0^*$. Our strategy will be to  perform  a bootstrap argument with the $\mathbf{H}^s$ norms of $(W_j,\Gamma_j)$.  To this end, we make the overarching assumption that
\begin{equation*}\label{boostrap}
\begin{split}
\|(W_j,\Gamma_j)(t)\|_{\mathbf{H}^s}\leq 2A_0^*,\hspace{5mm}\|(W_j,\Gamma_j)(t)\|_{\mathbf{H}^{s-\frac{1}{2}}}\leq 2A_0,\hspace{5mm} a_j(t)\geq \frac{c_0}{2},\hspace{5mm}\Gamma_j(t)\in 2\Lambda_*,\hspace{5mm}t\in [0,T],
\end{split}
\end{equation*}
where $j(M)=: j_0\leq j\leq j_1$
with $j(M)$ sufficiently large depending on $M$ and where $[0,T]$ is a time interval on which all of the $(W_j, \Gamma_j)$ are defined as smooth solutions evolving in the collar.
Here, $j_1\in \mathbb{N}$ is some arbitrarily large parameter, introduced for technical reasons to ensure that each application of the bootstrap involves only finitely many solutions.
Our goal will be to show that the constants in the bootstrap assumptions can be improved, as long as $T\leq T_0$ for some time $T_0>0$ which is independent of $j_1$.  
 \medskip
 
 For  large integers $k \geq s > \frac{d}{2}+1$, we may  view each
$(W_j, \Gamma_j)$ as a  solution to the free boundary MHD equations in $\mathbf H^k$. Due to  Theorems~\ref{Energy est. thm} and \ref{t:existence}, for each $j_1\geq j\geq j_0$, the solution $(W_j, \Gamma_j)$ can be continued past time $T$ in $\mathbf{H}^k$ (and hence in $\mathbf{H}^s$)  as long as the above bootstrap hypotheses are satisfied.  Roughly speaking, our choice for  $T_0$ will  be
\begin{equation*}
T_0\ll\frac{1}{\mathcal{P}(A_0^*)}
\end{equation*}
for some polynomial $\mathcal{P}$, though, in practice, $T_0$ will also depend on the collar and on $c_0$. Thanks to \Cref{Energy est. thm}, if  we could extend the bootstrap  to such a time $T_0$, we could deduce a uniform $\mathbf{H}^k$ bound for the solution $(W_j, \Gamma_j)$ in terms of its initial data in $\mathbf{H}^k$. We remark importantly, however, that  additional arguments are needed to establish the corresponding $\mathbf{H}^s$ bounds for $(W_j, \Gamma_j)$ when  $s>\frac{d}{2}+1$ is not an integer. 
\medskip

Let  $c_j$ be the $\mathbf{H}^s$ admissible frequency envelope for the initial data $(W_0,\Gamma_{0})$ given by \eqref{admissable} and  let $\alpha\geq 1$ be such that $k = s+\alpha$ is an integer. Using \Cref{envbounds}, the regularized data $(W_{0,j}, \Gamma_{0,j})$ satisfies the higher regularity bound
\begin{equation}\label{hiregbound0}
\|(W_{0,j}, \Gamma_{0,j})\|_{\mathbf{H}^{s+\alpha}}\lesssim_{A_0} 2^{\alpha j}c_j\|(W_0, \Gamma_0)\|_{\mathbf{H}^s}.
\end{equation}
Invoking  \Cref{Energy est. thm} and the bootstrap hypothesis, we conclude from \eqref{hiregbound0} and the definition of $c_j$ that
\begin{equation}\label{hiregbound}
\|(W_j, \Gamma_j)(t)\|_{\mathbf{H}^{s+\alpha}}\lesssim_{A_0} 2^{\alpha j}c_j(1+\|(W_0, \Gamma_0)\|_{\mathbf{H}^s}), \qquad t \in [0,T],
\end{equation}
when  $T\leq T_0\ll\frac{1}{\mathcal{P}(A_0^*)}$. One may view \eqref{hiregbound} as a high frequency bound which controls frequencies $\gtrsim 2^j$ in the solution $(W_j, \Gamma_j)$. Note that we have suppressed the implicit dependence on the Taylor term and the collar in the above estimates. We will do this throughout the section, except when these terms are of primary importance. 
\medskip

To control low frequencies, we use the difference estimates. We begin by noting that at time zero we have the  bound 
\begin{equation}\label{regularizeddiffbound}
D_j(0):=D((W_{0,j},\Gamma_{0,j}),(W_{0,j+1}, \Gamma_{0,j+1}))\lesssim_{A_0} 2^{-2js}c_j^2\|(W_0,\Gamma_0)\|_{\mathbf{H}^s}^2    .
\end{equation}
Indeed, for the first terms in \eqref{diff functional candidate pm} this bound follows immediately from \Cref{envbounds}. To control the surface integrals, recall that on $\tilde{\Gamma}_{0,j}:=\partial (\Omega_{0,j}\cap\Omega_{0,j+1})$ the pressure difference $P_{0,j}-P_{0,j+1}$ is proportional  to the distance between $\Gamma_{0,j}$ and $\Gamma_{0,j+1}$. Thus, by a change of variables, we have  
\begin{equation*}
\int_{\tilde{\Gamma}_{0,j}}|P_{0,j}-P_{0,j+1}|^2\,dS\approx_{A_0} \|\eta_{0,j+1}-\eta_{0,j}\|_{L^2(\Gamma_*)}^2\lesssim_{A_0} 2^{-2js}c_j^2\|(W_0,\Gamma_0)\|_{\mathbf{H}^s}^2,   
\end{equation*}
from which \eqref{regularizeddiffbound} readily follows. By \Cref{Difference}, we may propagate the difference bound \eqref{regularizeddiffbound} and  conclude that
\begin{equation*}\label{propagateddbound}
D_j(t)\lesssim_{A_0} 2^{-2js}c_j^2\|(W_0, \Gamma_0)\|_{\mathbf{H}^s}^2,   \qquad t \in [0,T],
\end{equation*}
when $T\leq T_0\ll\frac{1}{\mathcal{P}(A_0^*)}$. Arguing similarly to the above, we see that
\begin{equation}\label{diffbounds}
\|W_{j+1}-W_j\|_{L^2(\Omega_j\cap\Omega_{j+1})},\hspace{3mm}\|\eta_{j+1}-\eta_j\|_{L^2(\Gamma_*)}\lesssim_{A_0} 2^{-js}c_j\|(W_0,\Gamma_0)\|_{\mathbf{H}^s}.  
\end{equation}
The objective now is to  obtain a uniform $\mathbf{H}^s$ bound of the form 
\begin{equation*}\label{unifbound1}
\|(W_j,\Gamma_j)\|_{\mathbf{H}^s}\lesssim_{A_0}1+ \|(W_0, \Gamma_0)\|_{\mathbf{H}^s}
\end{equation*}
for $T\leq T_0$ by combining the high frequency bound \eqref{hiregbound} and the 
$L^2$ difference bound \eqref{diffbounds}, along with a supplementary low frequency difference bound for the variables $\nabla_{B_j}W^\pm_j$ to be established below. To obtain such a bound for the $\Gamma_j$ component of the norm, we consider the telescoping series  
\begin{equation}\label{etadecomp}
\eta_j=\eta_{j_0}+\sum_{j_0\leq l\leq j-1}(\eta_{l+1}-\eta_l)   \hspace{5mm} \text{on}\ \Gamma_*.
\end{equation}
  For each $j_0\leq l\leq j-1$ it follows trivially from \eqref{hiregbound} that
\begin{equation}\label{etaboundreg}
\|\eta_{l+1}-\eta_{l}\|_{H^{s+\alpha}(\Gamma_*)}\lesssim_{A_0} 2^{l\alpha}c_l(1+\|(W_0, \Gamma_0)\|_{\mathbf{H}^s}).    
\end{equation}
Then, using the telescoping sum and interpolation, it is routine  to verify from \eqref{diffbounds} and  \eqref{etaboundreg} that for each $k\geq 0$ we have the bound
\begin{equation}\label{P-keta}
\|P_k\eta_j\|_{H^s(\Gamma_*)}\lesssim_{A_0} c_k(1+\|(W_0, \Gamma_0)\|_{\mathbf{H}^s}).    
\end{equation}
Hence, by almost orthogonality, we have
\begin{equation}\label{surfenvbound}
\|\Gamma_j\|_{H^s}\lesssim_{A_0}1+ \|(W_0, \Gamma_0)\|_{\mathbf{H}^s}.    
\end{equation}
We now focus on  the bound for $W_j$. We first observe that a decomposition analogous to \eqref{etadecomp}  for $W_j$  fails, as $W_l$ and $W_{l+1}$ are defined on different domains. We therefore perform an additional regularization  $W_l\mapsto \Psi_{\leq l}W_l$ to obtain a function defined on a $2^{-l}$ enlargement of $\Omega_l$. By interpolating \eqref{diffbounds} and \eqref{surfenvbound} it follows that
 \begin{equation}\label{surfdistance}
 \|\eta_{j+1}-\eta_j\|_{L^{\infty}(\Gamma_*)}\lesssim_{A_0} 2^{-(\frac{3}{2}+\delta)j}, \hspace{5mm} \|\eta_{j+1}-\eta_j\|_{C^{1,\frac{1}{2}}(\Gamma_*)}\lesssim_{A_0} 2^{-\delta j}
 \end{equation}
 for some $\delta>0$. Hence,  $\Psi_{\leq l}W_l$ is defined on  $\Omega_j$. We may therefore consider the decomposition 
\begin{equation}\label{vtelescope}
W_j=\Psi_{\leq j_0}W_{j_0}+\sum_{j_0\leq l\leq j-1}\Psi_{\leq l+1}W_{l+1}-\Psi_{\leq l}W_l+(I-\Psi_{\leq j})W_j \hspace{5mm} \text{on}\ \Omega_j,
\end{equation}
 as well as the analogous decomposition 
\begin{equation}\label{Bgradtelescope}
\nabla_{B_j}W_j=\Phi_{\leq j_0}(\nabla_{B_{j_0}}W_{j_0})+\sum_{j_0\leq l\leq j-1}\Phi_{\leq l+1}(\nabla_{B_{l+1}}W_{l+1})-\Phi_{\leq l}(\nabla_{B_l}W_l)+(I-\Phi_{\leq j})(\nabla_{B_j}W_j).
\end{equation}
 The first term in each of the above decompositions is easy to control;  we focus on the remaining terms. For $l\geq j_0$, we define the domain
\begin{equation*}
\tilde{\Omega}_{l}=\bigcap_{k=l}^{j}\Omega_k.   
\end{equation*}
Thanks to \eqref{surfdistance}, for $j_0$ large enough (depending on the data parameters but not on $j$) we may ensure that each regularization operator $\Psi_{\leq l}$ is bounded from $H^s(\tilde{\Omega}_l)$ to $H^s(\tilde{\Omega}_l')$ where $\tilde{\Omega}_l'$ is a $2^{-l}$ enlargement of the union of all of the $\Omega_k$ for $k\geq l$. This fact will be used to establish bounds for the intermediate terms in 
\eqref{vtelescope} and \eqref{Bgradtelescope}. We first treat \eqref{vtelescope}, for which we will need the following simple lemma.
\begin{lemma}\label{L^2regboundseasier}
Let $j_0\leq l\leq j-1$, where $j_0$ is some large universal parameter depending only on the constants for the data. Let $\alpha\geq 1$ be such that $s+\alpha\in\mathbb{N}$. There holds
\begin{equation}\label{Ebound1}
\|\Psi_{\leq l+1}W_{l+1}-\Psi_{\leq l}W_l\|_{H^{s+\alpha}(\Omega_j)}\lesssim_{A_0} 2^{l\alpha}c_l(1+\|(W_0,\Gamma_0)\|_{\mathbf{H}^s}),   
\end{equation}
\begin{equation}\label{diffbound1}
\|\Psi_{\leq l+1}W_{l+1}-\Psi_{\leq l}W_l\|_{L^2(\Omega_j)}\lesssim_{A_0} 2^{-sl}c_l(1+\|(W_0,\Gamma_0)\|_{\mathbf{H}^s}).
\end{equation}
\end{lemma}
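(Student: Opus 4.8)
The plan is to prove the two estimates \eqref{Ebound1} and \eqref{diffbound1} for the intermediate differences $\Psi_{\leq l+1}W_{l+1}-\Psi_{\leq l}W_l$ by splitting the difference in a way that isolates the part which can be controlled by the high-regularity bound \eqref{hiregbound} from the part which can only be controlled by the $L^2$ difference bound \eqref{diffbounds}. The natural decomposition is to write
\begin{equation*}
\Psi_{\leq l+1}W_{l+1}-\Psi_{\leq l}W_l = \Psi_{\leq l+1}(W_{l+1}-W_l) + (\Psi_{\leq l+1}-\Psi_{\leq l})W_l \quad \text{on}\ \tilde{\Omega}_l,
\end{equation*}
where $W_{l+1}-W_l$ is understood on the intersection domain $\Omega_l\cap\Omega_{l+1}\supseteq\tilde{\Omega}_l$. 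For the first estimate \eqref{Ebound1}, I would simply estimate each piece crudely: using the boundedness of $\Psi_{\leq l+1}$ from $H^{s+\alpha}(\tilde\Omega_l)$ to $H^{s+\alpha}(\tilde\Omega_l')$ (valid once $j_0$ is large by \eqref{surfdistance}), the high-regularity bound \eqref{hiregbound} applied to both $W_l$ and $W_{l+1}$ gives $\|W_{l+1}-W_l\|_{H^{s+\alpha}}\lesssim_{A_0} 2^{l\alpha}c_l(1+\|(W_0,\Gamma_0)\|_{\mathbf H^s})$ (here one uses the near-comparability of consecutive frequency envelope entries $c_{l+1}\approx c_l$), while for the second piece the operator $\Psi_{\leq l+1}-\Psi_{\leq l}$ gains no derivatives but is harmless since it is bounded on $H^{s+\alpha}$ with constant $\lesssim 1$, and $\|W_l\|_{H^{s+\alpha}}\lesssim_{A_0}2^{l\alpha}c_l(1+\|(W_0,\Gamma_0)\|_{\mathbf H^s})$ again by \eqref{hiregbound}. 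This handles \eqref{Ebound1}.

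The genuinely substantive estimate is \eqref{diffbound1}, the $L^2$ difference bound, and this is where the decomposition above really earns its keep. For the first piece $\Psi_{\leq l+1}(W_{l+1}-W_l)$, one uses $L^2$-boundedness of $\Psi_{\leq l+1}$ together with the propagated $L^2$ difference bound \eqref{diffbounds}, which directly gives $\lesssim_{A_0} 2^{-sl}c_l(1+\|(W_0,\Gamma_0)\|_{\mathbf H^s})$; the only subtlety is that \eqref{diffbounds} is a bound on $\Omega_l\cap\Omega_{l+1}$ and we need a bound after applying $\Psi_{\leq l+1}$ on the slightly larger $\tilde\Omega_l'$, but since $\Psi_{\leq l+1}$ is a mollification at scale $2^{-(l+1)}$ and $\Omega_l,\Omega_{l+1}$ differ by $\mathcal{O}_{A_0}(2^{-(3/2+\delta)l})\ll 2^{-l}$ in the $C^1$ topology by \eqref{surfdistance}, the mollified function extends consistently and the $L^2$ mass is controlled. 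For the second piece $(\Psi_{\leq l+1}-\Psi_{\leq l})W_l$, the operator $\Psi_{\leq l+1}-\Psi_{\leq l}$ is essentially a Littlewood--Paley projection to frequencies $\approx 2^l$, so it gains $\alpha$ derivatives at the cost of a factor $2^{-l\alpha}$ relative to the $H^{s+\alpha}$ norm: interpolating, $\|(\Psi_{\leq l+1}-\Psi_{\leq l})W_l\|_{L^2}\lesssim 2^{-l(s+\alpha)}\|W_l\|_{H^{s+\alpha}}\lesssim_{A_0}2^{-ls}c_l(1+\|(W_0,\Gamma_0)\|_{\mathbf H^s})$ by \eqref{hiregbound}. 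Summing the two pieces yields \eqref{diffbound1}.

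The main obstacle I anticipate is not any single estimate but the careful bookkeeping of which domain each function lives on and verifying that the regularization operators $\Psi_{\leq l}$ genuinely map $H^s(\tilde\Omega_l)\to H^s(\tilde\Omega_l')$ uniformly in $l$ once $j_0=j_0(M,c_0,\Lambda_*)$ is chosen large enough; this is precisely the point where the quantitative closeness of the surfaces $\Gamma_l$ in \eqref{surfdistance} (which in turn rests on the interpolation between \eqref{diffbounds} and \eqref{surfenvbound}) is indispensable, and one must make sure the argument is not circular — i.e.\ that \eqref{surfdistance} and \eqref{surfenvbound}, which are established before \Cref{L^2regboundseasier}, are genuinely available. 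A secondary technical point is the handling of the extension of $W_{l+1}-W_l$ from $\Omega_l\cap\Omega_{l+1}$ to a neighborhood before mollifying; here one can either use the $H^s$ extension operators from \Cref{AOMD} (whose constants are uniform over the collar) or observe that $\Psi_{\leq l+1}$ is defined via convolution against a kernel whose support is much smaller than the gap between the two domains, so the mollified object is unambiguous on $\tilde\Omega_l'$. Beyond these structural points the computations are routine interpolation and Littlewood--Paley estimates. Everything else — applying \eqref{hiregbound}, \eqref{diffbounds}, and the near-doubling property $c_{l+1}\approx c_l$ of admissible frequency envelopes — is standard.
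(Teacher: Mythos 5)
Your proposal is correct and follows essentially the same approach as the paper: the paper uses the mirrored decomposition $(\Psi_{\leq l+1}-\Psi_{\leq l})W_{l+1}+\Psi_{\leq l}(W_{l+1}-W_l)$, but the two splittings are interchangeable, and in both cases the first estimate follows trivially from \eqref{hiregbound} while the $L^2$ estimate combines the smoothing gain of $\Psi_{\leq l+1}-\Psi_{\leq l}$ (paired with \eqref{hiregbound}) with the $L^2$-boundedness of $\Psi_{\leq \cdot}$ (paired with the propagated difference bound \eqref{diffbounds}). Your discussion of the domain bookkeeping and the non-circularity of \eqref{surfdistance}/\eqref{surfenvbound} correctly identifies the only real subtlety.
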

\begin{proof}
The bound \eqref{Ebound1} is clear from the $H^{s+\alpha}$ boundedness of $\Psi_{\leq l}$ and \eqref{hiregbound}. To establish the bound \eqref{diffbound1} we perform the splitting
\begin{equation*}
\Psi_{\leq l+1}W_{l+1}-\Psi_{\leq l}W_l=(\Psi_{\leq l+1}-\Psi_{\leq l})W_{l+1}+\Psi_{\leq l}(W_{l+1}-W_l).    
\end{equation*}
Then, using \Cref{c reg bounds} and \eqref{hiregbound} we estimate
\begin{equation*}
\|(\Psi_{\leq l+1}-\Psi_{\leq l})W_{l+1}\|_{L^2(\Omega_j)}\lesssim_{A_0} 2^{-ls}c_l(1+\|(W_0,\Gamma_0)\|_{\mathbf{H}^s}).    
\end{equation*}
For the remaining term, we use the difference estimates and the $L^2$ boundedness of $\Psi_{\leq l}$ to obtain
\begin{equation*}
\|\Psi_{\leq l}(W_{l+1}-W_l)\|_{L^2(\Omega_j)}\lesssim_{A_0} D((W_{l},\Gamma_l),(W_{l+1},\Gamma_{l+1}))^{\frac{1}{2}}\lesssim_{A_0} 2^{-ls}c_l\|(W_0,\Gamma_0)\|_{\mathbf{H}^s}.    
\end{equation*} 
\end{proof}
Using \Cref{L^2regboundseasier} and the corresponding bounds for $(I-\Psi_{\leq j})W_j$, we may conclude by a similar argument  to    \eqref{P-keta} that
\begin{equation}\label{Dyadic unf bound}
\|P_k\mathcal{E}_{\Omega_j}W_j\|_{H^s(\mathbb{R}^{d})}\lesssim_{A_0} c_k(1+\|(W_0,\Gamma_0)\|_{\mathbf{H}^s}),\hspace{5mm}k\geq 0.
\end{equation}
 Here, $\mathcal{E}_{\Omega_j}$ is the Stein extension operator on $\Omega_j$ from \Cref{Stein}, which we use to ensure that the implicit constants in the $H^s\to H^s$ bound for the extension depend only on the $C^{1}$ norm of $\Gamma_j$. Our  desired uniform bound
\begin{equation}\label{preliminaryimprovement}
\|(W_j,\Gamma_j)(t)\|_{H^s}\lesssim_{A_0} 1+\|(W_0,\Gamma_0)\|_{\mathbf{H}^s}  
\end{equation}
for $t \in [0,T_0]$  follows by combining  \eqref{surfenvbound} and \eqref{Dyadic unf bound}. In particular, if the constant $A_0^*$ is taken to be large relative to $A_0$ and the data size, the bootstrap assumption for $\|(W_j,\Gamma_j)\|_{H^s}$ improves. It remains to establish bounds for $\nabla_{B_j}W_j$ in $H^{s-\frac{1}{2}}$ by estimating the terms in \eqref{Bgradtelescope}. For this, we have the following analogue of \Cref{L^2regboundseasier}.
\begin{theorem}\label{L^2regbounds}Let $j_0\leq l\leq j-1$, where $j_0$ is some large universal parameter depending only on the constants for the data. Let $\alpha\geq 1$ be such that $\alpha+s\in\mathbb{N}$. Moreover, let $\sigma\in\mathbb{N}$ be such that $s-2\leq \sigma\leq s-1$ (since $s-2>\frac{d}{2}-1$, this implies that $\sigma\geq 1$). There holds
\begin{equation}\label{diffbound2}
\|\Phi_{\leq l+1}(\nabla_{B_{l+1}}W_{l+1})-\Phi_{\leq l}(\nabla_{B_l}W_l)\|_{H^{\sigma}(\Omega_j)}\lesssim_{A_0} 2^{-(s-\frac{1}{2}-\sigma)}c_l(1+\|(W_0,\Gamma_0)\|_{\mathbf{H}^s}),
\end{equation}
\begin{equation}\label{Ebound2}
\|\Phi_{\leq l+1}(\nabla_{B_{l+1}}W_{l+1})-\Phi_{\leq l}(\nabla_{B_l}W_l)\|_{H^{s+\alpha-\frac{1}{2}}(\Omega_j)}\lesssim_{A_0} 2^{l\alpha}c_l(1+\|(W_0,\Gamma_0)\|_{\mathbf{H}^s}).
\end{equation}
\end{theorem}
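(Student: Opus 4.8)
\textbf{Proof proposal for Theorem \ref{L^2regbounds}.}

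The plan is to treat the two bounds in tandem, exploiting the structure that the variables $\nabla_{B_l}W_l$ are ``one-derivative-worse'' than $W_l$ in the normal direction but only ``half-a-derivative-worse'' in the $\nabla_B$ direction, which is precisely the content of the state space condition (iv). The higher regularity bound \eqref{Ebound2} is the easy one: it follows immediately from the $H^{s+\alpha-\frac12}\to H^{s+\alpha-\frac12}$ boundedness of $\Phi_{\leq l}$ (which is uniform in the collar, by the regularization theory recalled in \Cref{BEE}), the high frequency bound \eqref{hiregbound} applied to $W_{l+1}$ and $W_l$, and a product estimate together with the elliptic estimates for $B_l$ in terms of its vorticity and boundary data. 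One must be slightly careful that $B_l$ and $B_{l+1}$ live on different domains, but since $\Phi_{\leq l+1}(\nabla_{B_{l+1}}W_{l+1})$ and $\Phi_{\leq l}(\nabla_{B_l}W_l)$ are both regularized to scale $2^{-l}$ they are defined on the common enlargement $\tilde\Omega_l'$, and \eqref{surfdistance} guarantees the requisite overlap. So \eqref{Ebound2} is essentially bookkeeping, invoking \Cref{Multilinearest} and \Cref{Balanced div-curl}.

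The main work is in \eqref{diffbound2}, the low-frequency difference bound, which I would prove by splitting
\begin{equation*}
\Phi_{\leq l+1}(\nabla_{B_{l+1}}W_{l+1})-\Phi_{\leq l}(\nabla_{B_l}W_l) = (\Phi_{\leq l+1}-\Phi_{\leq l})(\nabla_{B_{l+1}}W_{l+1}) + \Phi_{\leq l}\big(\nabla_{B_{l+1}}W_{l+1}-\nabla_{B_l}W_l\big).
\end{equation*}
The first term is handled by the frequency-envelope bounds for $\Phi_{\leq l+1}-\Phi_{\leq l}$ in \Cref{c reg bounds} combined with the uniform $\mathbf{H}^s$ bound \eqref{preliminaryimprovement} already established, giving a gain of $2^{-(s-\frac12-\sigma)l}c_l$. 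For the second, genuinely delicate, term I would write
\begin{equation*}
\nabla_{B_{l+1}}W_{l+1}-\nabla_{B_l}W_l = \nabla_{B_l}(W_{l+1}-W_l) + \nabla_{(B_{l+1}-B_l)}W_{l+1},
\end{equation*}
after extending all functions suitably to $\tilde\Omega_l$; the first piece is controlled by commuting $\Phi_{\leq l}$ with $\nabla_{B_l}$ (paying $2^{\sigma l}$ for the $\sigma$ derivatives, using the $L^2$ boundedness of the regularizer on the commutator) and then invoking the $L^2$ difference bound \eqref{diffbounds} for $W_{l+1}-W_l$, while the second piece uses $\|B_{l+1}-B_l\|_{L^2(\tilde\Omega_l)}\lesssim_{A_0} 2^{-sl}c_l$ (again from \eqref{diffbounds}) together with the high regularity bound \eqref{hiregbound} for $W_{l+1}$ to absorb the $\sigma+1$ total derivatives via a Moser-type estimate. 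The numerology works out because $\sigma\leq s-1$, so $\sigma+1 \leq s$ and we have exactly enough room; the loss of a derivative from $\nabla_{B}$ acting on a function only known to be regularized (rather than a priori in a higher space) is compensated by the frequency cutoff at scale $2^l$.

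The hard part, and the step I expect to be the main obstacle, is ensuring that the difference estimates from \Cref{DF} — which as stated control only the $L^2$ distance between $W_l$ and $W_{l+1}$ and the surface distance via the pressure, but \emph{not} any distance between $\nabla_{B_l}W_l$ and $\nabla_{B_{l+1}}W_{l+1}$ — can be supplemented by a genuine $L^2$-type difference bound for the $\nabla_B$ variables, at least in the regularized setting where we are allowed stronger control parameters (the $\mathbf{H}^s$ and $\mathbf{H}^{s-\frac12}$ norms are uniformly bounded by the bootstrap). This is exactly the ``milder but still non-trivial difference type bound for the variables $\nabla_B v$ and $\nabla_B B$'' alluded to in the introduction. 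Concretely, I would derive an evolution equation for the difference $\nabla_{B_l}W_l - \nabla_{B_{l+1}}W_{l+1}$ on the intersected domain by commuting $\nabla_B$ through the transport equations for $W_l^\pm$ (using that $\nabla_B$ commutes with $D_t^\pm$), carefully tracking the terms where the difference of the two magnetic fields enters; the resulting source terms are controlled by the already-propagated difference functional $D_l(t)$ times the strong norms, plus a benign commutator involving the difference of the two domains' boundary layers (as in the $\mathcal{A}$, $\mathcal{A}_h$ analysis of \Cref{DF}). Granted such a bound of the shape $\|\nabla_{B_l}W_l-\nabla_{B_{l+1}}W_{l+1}\|_{L^2(\tilde\Omega_l)}\lesssim_{A_0} 2^{-(s-\frac12)l}c_l(1+\|(W_0,\Gamma_0)\|_{\mathbf{H}^s})$ propagated on $[0,T_0]$, one interpolates it against \eqref{Ebound2} exactly as in the proof of \eqref{P-keta} to conclude \eqref{diffbound2} and hence, by almost orthogonality and the telescoping decomposition \eqref{Bgradtelescope}, the uniform bound $\|\nabla_{B_j}W_j\|_{H^{s-\frac12}(\Omega_j)}\lesssim_{A_0} 1+\|(W_0,\Gamma_0)\|_{\mathbf{H}^s}$, improving the last part of the bootstrap.
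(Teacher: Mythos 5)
You correctly identify the central obstacle — that \Cref{Difference} propagates no information about the $\nabla_B$ differences, so one must construct and propagate a new $\nabla_B$-difference bound — and your overall plan (derive an evolution equation for the difference, close an $L^2$-type energy estimate) is the right high-level idea. This matches the paper's strategy in spirit. However, there are two genuine gaps in the execution.

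First, your sketch of the product-estimate route does not close. Writing $\nabla_{B_{l+1}}W_{l+1}-\nabla_{B_l}W_l = \nabla_{B_l}(W_{l+1}-W_l) + \nabla_{(B_{l+1}-B_l)}W_{l+1}$ and then applying $\Phi_{\leq l}$ with a commutator and the $L^2$ bound \eqref{diffbounds} costs you $2^{(\sigma+1)l}\cdot 2^{-ls}c_l = 2^{-l(s-1-\sigma)}c_l$ for the first piece, since the full gradient in $\nabla_{B_l}$ acting on the frequency-$2^l$ mollified difference pays one full derivative. That is a factor of $2^{l/2}$ short of the claimed $2^{-l(s-\frac12-\sigma)}c_l$. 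The missing half derivative is exactly the content of the wave-regularity condition (iv): $\nabla_B$ acts like a half derivative on $W$, not a full one, but this cannot be seen through product estimates on the difference — it must come out of the dynamics. The paper gets it by deriving the equation $D_l^- U_l^+ = -\nabla\mathcal{H}Q_l + R_l$ for $U_l^\pm := \nabla_{B_l}(\tilde W^\pm_{l+1}-W^\pm_l)$, isolating the leading pressure term, and propagating a higher-order distance functional $\mathcal{D}^{\sigma-1}$ built on $\nabla\mathcal{H}\mathcal{N}^{\sigma-1}(U_l^+\cdot n_\Gamma)$ and $\mathcal{N}^\sigma Q_l$ via \Cref{Leibniz}; this is where the gain appears.

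Second, the $L^2$ bound $\|\nabla_{B_l}W_l-\nabla_{B_{l+1}}W_{l+1}\|_{L^2(\tilde\Omega_l)}\lesssim 2^{-l(s-\frac12)}c_l$ that you posit and then interpolate against \eqref{Ebound2} is, as far as I can tell, not attainable, and there are two independent obstructions. One is the initial-data bound: \Cref{envbounds}(v) only provides an $H^1$ bound of size $2^{-l(s-\frac32)}c_l$ at $t=0$, which corresponds to $\sigma=1$, not $\sigma=0$. The other is the domain correction. Because $W_{l+1}$ and $W_l$ live on different domains, one must either work on the Lipschitz intersection $\Omega_l\cap\Omega_{l+1}$ — where the elliptic machinery in \Cref{BEE} does not directly apply — or shift $W_{l+1}$ onto a shrunken \emph{smooth} domain $\tilde\Omega_l$. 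The paper does the latter with a shift by $C2^{-(\frac32+\delta)l}$, and the cost of that shift is $\|\nabla_{B_l}(W_{l+1}-\tilde W_{l+1})\|_{H^\sigma}\lesssim 2^{-l(\frac32+\delta)}\|W_{l+1}\|_{H^{\sigma+2}}$. This is only $\lesssim 2^{-l(s-\frac12-\sigma)}c_l$ when $\sigma\geq s-2$, which is exactly why the theorem is stated only for $\sigma\in\{s-2,\dots,s-1\}\cap\mathbb N$ with $\sigma\geq 1$, and why the paper explicitly notes that ``we have to carry out the estimate in a suitable high regularity Sobolev norm, as opposed to $L^2$.'' At $\sigma=0$ the correction term alone is $\approx 2^{-l(\frac32+\delta)}$, which is much larger than $2^{-l(s-\frac12)}$ when $s>2$, so your $L^2$ bound cannot be reached this way. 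The fix is simply to prove the $H^\sigma$ bound directly (for $\sigma\geq1$ in the stated range) rather than going through $L^2$; the telescoping decomposition \eqref{Bgradtelescope} only requires this, because one interpolates $H^\sigma$ against $H^{s+\alpha-\frac12}$, not $L^2$ against $H^{s+\alpha-\frac12}$.

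In short: your diagnosis of the missing ingredient is correct and your broad plan is the right one, but the two quantitative anchors of your argument — the $L^2$-level difference bound and the product-estimate reduction of the leading term — fail by a half and a full power of $2^{l/2}$ respectively, and the smooth shrunken domain $\tilde\Omega_l$ together with the higher-order distance functional $\mathcal D^{\sigma-1}$ (and \Cref{Dsigmaapprox}) are the specific devices you would need to make the strategy rigorous.
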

\begin{proof}
The proof of the bound \eqref{Ebound2} is entirely analogous to \eqref{Ebound1}, so we focus on establishing the bound \eqref{diffbound2}. Unlike with \eqref{diffbound1}, we cannot rely on the difference estimate in \Cref{Difference} as it does not propagate bounds for  $\nabla_{B_{l+1}}W_{l+1}-\nabla_{B_{l}}W_l$. Nevertheless, we can obtain a less general difference type bound in this setting by making the following observation: For each $j_0\leq l\leq j-1$, there exists a smooth domain $\tilde{\Omega}_l$ with boundary $\tilde{\Gamma}_l\in\Lambda_*$ such that $\|\Phi_{\leq l}\|_{H^{\sigma}(\tilde{\Omega}_l)\to H^{\sigma}(\Omega_j)}\lesssim_{A_0}1$,  we have the inclusion
\begin{equation}\label{smoothsubdomain}
\tilde{\Omega}_l\subset \bigcap_{l\leq m\leq j}\Omega_m,
\end{equation}
and $\tilde{\Gamma}_l$ retains the regularity of $\Gamma_l$, 
\begin{equation}\label{smoothsubdombounds}
\|\tilde{\Gamma}_l\|_{H^s}\lesssim_{A_0}1+\|(W_0,\Gamma_0)\|_{\mathbf{H}^s},\hspace{5mm}\|\tilde{\Gamma}_l\|_{H^{s+\alpha}}\lesssim_{A_0} 2^{l\alpha}c_l(1+\|(W_0,\Gamma_0)\|_{\mathbf{H}^s}).
\end{equation}
To see this,  simply define $\tilde{\Gamma}_l$ through the collar coordinate parameterization
\begin{equation*}
\tilde{\eta}_l:=\eta_l-C2^{-(\frac{3}{2}+\delta)l}
\end{equation*}
for some $\delta,\, C>0$. Thanks to the bound \eqref{surfdistance}, if $\delta$ is small enough and $C$ is large enough,  the domain $\tilde{\Omega}_l$ associated to $\tilde{\Gamma}_l$ will satisfy \eqref{smoothsubdomain}. With this definition and the bounds for $\Gamma_l$ from earlier, we have \eqref{smoothsubdombounds}. Moreover, since $\Phi_{\leq l}$ maps $H^{s-1}(\tilde{\Omega}_l)$ to $H^{s-1}(\tilde{\Omega}_j)$ where $\tilde{\Omega}_j$ is some $2^{-l}$ enlargement of $\Omega_j$ (by virtue of \eqref{surfdistance}), the bound $\|\Phi_{\leq l}\|_{H^{\sigma}(\tilde{\Omega}_l)\to H^{\sigma}(\Omega_j)}\lesssim_{A_0}1$ follows. We also importantly remark  that, by definition, $\tilde{\Gamma}_l$ is within distance $2^{(-\frac{3}{2}+\delta)l}$ of $\Gamma_l$. 
\medskip

Returning to \eqref{diffbound2}, we may argue as in \eqref{diffbound1} to obtain the bound
\begin{equation*}
\begin{split}
\|\Phi_{\leq l+1}(\nabla_{B_{l+1}}W_{l+1})-\Phi_{\leq l}(\nabla_{B_l}W_l)\|_{H^{\sigma}(\Omega_j)}\lesssim_{A_0}&\|\Phi_{\leq l}(\nabla_{B_{l}}W_{l}-\nabla_{B_{l+1}}W_{l+1})\|_{H^{\sigma}(\Omega_j)}
\\
+&2^{-l(s-\frac{1}{2}-\sigma)}c_l(1+\|(W_0,\Gamma_0)\|_{\mathbf{H}^s}).
\end{split}
\end{equation*}
To estimate the first term on the right, we note that from the above discussion, we have
\begin{equation*}
\|\Phi_{\leq l}(\nabla_{B_{l}}W_{l}-\nabla_{B_{l+1}}W_{l+1})\|_{H^{\sigma}(\Omega_j)}\lesssim_{A_0}\|\nabla_{B_{l}}W_{l}-\nabla_{B_{l+1}}W_{l+1}\|_{H^{\sigma}(\tilde{\Omega}_l)}.
\end{equation*}
It therefore suffices to directly estimate the difference $\nabla_{B_{l}}W_{l}-\nabla_{B_{l+1}}W_{l+1}$ in $\tilde{\Omega}_l$ (which is well-defined thanks to \eqref{smoothsubdomain}). This will turn out to be a bit easier than the general difference bound in \Cref{Difference} since we can perform the analysis on a smooth (as opposed to Lipschitz) domain  and allow for stronger implicit constants to appear in the estimates. The downside is that due to the correction of $\eta_l$ by the factor $C2^{-(\frac{3}{2}+\delta)l}$, we have to carry out the estimate in a suitable high regularity Sobolev norm, as opposed to $L^2$. This is why the above restriction on $\sigma$ is needed.  
\medskip

It is slightly awkward (although possible) to perform the analysis directly on the shrunken domain $\tilde{\Omega}_l$. So instead, we  mildly correct $W_{l+1}^+$ so that it is defined on $\Omega_l$. Indeed, for any smooth function $f$ defined on $\Omega_{l+1}$, we can define
\begin{equation*}
\tilde{f}(x):=f(x-C'2^{-(\frac{3}{2}+\delta')l}\nu(x)),\hspace{5mm}x\in\Omega_l,
\end{equation*}
where $C'$ and $\delta'$ are chosen so that $\tilde{f}$ is defined on $\tilde{\Omega}_l$ and $\nu$ is a smooth extension of the transversal vector field in \Cref{Collarcoords}. By the chain rule, the fundamental theorem of calculus (after possibly replacing $f$ with its Stein extension $\mathcal{E}_{\Omega_{l+1}}f$ in \Cref{Stein}) and interpolation, we have the bounds
\begin{equation}\label{correctionproperties}
\|\tilde{f}-f\|_{H^{r}(\tilde{\Omega}_l)}\lesssim 2^{-(\frac{3}{2}+\delta)l}\|f\|_{H^{r+1}(\Omega_{l+1})},\hspace{5mm}\|\tilde{f}\|_{H^r(\Omega_l)}\lesssim \|f\|_{H^r(\Omega_{l+1})},\hspace{5mm}r\geq 0.
\end{equation}
Moreover, we have the obvious bound $\|\tilde{f}\|_{H^r(\tilde{\Omega}_l)}\leq \|\tilde{f}\|_{H^r(\Omega_l)}$ by definition of $\tilde{f}$ and the inclusion $\tilde{\Omega}_l\subset \Omega_l$. 
\medskip

Now, we move to proving the requisite difference bound. From here on, we focus on proving the bound for $\nabla_{B_{l}}W_{l}^+-\nabla_{B_{l+1}}W_{l+1}^+$. The bound for $\nabla_{B_{l}}W_{l}^--\nabla_{B_{l+1}}W_{l+1}^-$ is completely analogous. 
\medskip

Our first observation is that in light of \eqref{preliminaryimprovement}, Sobolev embeddings and \Cref{L^2regboundseasier}, we have 
\begin{equation*}
\begin{split}
\|(B_{l+1}-B_l)\cdot\nabla W_{l+1}\|_{H^{\sigma}(\tilde{\Omega}_l)}&\lesssim_{A_0} \|B_{l+1}-B_l\|_{H^{\sigma}(\tilde{\Omega}_l)}\|W_{l+1}\|_{H^{s}(\tilde{\Omega}_l)}\lesssim_{A_0} 2^{-l(s-\frac{1}{2}-\sigma)}c_l.
\end{split}
\end{equation*}
Moreover, thanks to \eqref{correctionproperties}, simple product estimates and Sobolev embeddings, we have
\begin{equation*}
\|\nabla_{B_l}(W_{l+1}-\tilde{W}_{l+1})\|_{H^{\sigma}(\tilde{\Omega}_l)}\lesssim_{A_0} 2^{-l(\frac{3}{2}+\delta)}\|W_{l+1}\|_{H^{\sigma+2}(\Omega_{l+1})}.
\end{equation*}
Since $\sigma+2\geq s$, we have from \eqref{hiregbound}, \eqref{preliminaryimprovement} and by taking $2^{-l\delta}\leq c_l$,
\begin{equation}\label{tildecorrection}
\|\nabla_{B_l}(W_{l+1}-\tilde{W}_{l+1})\|_{H^{\sigma}(\tilde{\Omega}_l)}\lesssim_{A_0} 2^{-l(\frac{3}{2}+\delta)}2^{l(\sigma+2-s)}\lesssim_{A_0} c_l2^{-l(s-\frac{1}{2}-\sigma)}.
\end{equation}
Consequently, since $\tilde{\Omega}_l\subset \Omega_l$, we have  reduced matters to proving the bound
\begin{equation}\label{sufficientbound}
\|\nabla_{B_l}(\tilde{W}^+_{l+1}-W^+_l)\|_{H^{\sigma}(\Omega_l)}\lesssim_{A_0} 2^{-l(s-\frac{1}{2}-\sigma)}c_l,
\end{equation}
which is an estimate that can be carried out on $\Omega_l$. From here on, we will simplify notation somewhat and define
\begin{equation*}
U_l^\pm:=\nabla_{B_l}(\tilde{W}_{l+1}^\pm-W_l^\pm),\hspace{5mm} Q_l:=\nabla_{B_l}(\tilde{P}_{l+1}-P_l),\hspace{5mm}D_l^\pm:=\partial_t+W_l^\pm\cdot\nabla.
\end{equation*}
We will also write $D_{l+1}^\pm:=\partial_t+W_{l+1}^\pm\cdot \nabla$ and $\tilde{D}_{l+1}^\pm:=\partial_t+\tilde{W}_{l+1}^\pm\cdot \nabla$. Moreover, we will simply abbreviate $\Omega_l$ by $\Omega$ and $\Gamma_l$ by $\Gamma$. Additionally, we will use $\mathcal{N}$ and $\mathcal{H}$ to refer to the Dirichlet-to-Neumann operator and Harmonic extension operator on $\Gamma_l$. 
\medskip 

Our next goal will be to show that $U_l^+$ solves the following equation on $\Omega$:
\begin{equation}\label{eqnfordiffroughsol}
\begin{split}
D_{l}^-U_l^+&=-\nabla\mathcal{H}Q_l+R_l
\end{split}
\end{equation}
where $R_l$ denotes a generic error term which satisfies the estimate
\begin{equation}\label{Rlbound}
\|R_l\|_{L_T^1H^{\sigma}(\Omega)}\lesssim_{A_0} 2^{-l(s-\frac{1}{2}-\sigma)}c_l+C(A_0^*)\|(U^+_l,U^-_l)\|_{L_T^1H^{\sigma}(\Omega)}=:\delta_l^{\sigma}.
\end{equation}
Our starting point is to observe that by direct computation using the MHD equations and the fact that $D_l^-$ and $\nabla_{B_l}$ commute, we have
\begin{equation}\label{DUequation}
D_{l}^-U_l^+=-\nabla Q_l+\nabla_{B_l}((\tilde{W}_{l+1}^--W_l^-)\cdot\nabla \tilde{W}_{l+1}^+)-\nabla B_l\cdot\nabla (\tilde{P}_{l+1}-P_l)+F_l
\end{equation}
where
\begin{equation*}
F_l:=\nabla_{B_l}(\tilde{D}_{l+1}^-\tilde{W}_{l+1}^+-{(D_{l+1}^-W_{l+1}^+)}^\sim)+\nabla_{B_l}(\nabla \tilde{P}_{l+1}-{(\nabla P_{l+1})}^\sim).
\end{equation*}
By the chain rule, a  similar analysis to \eqref{tildecorrection} and \Cref{direst}, we obtain $\|F_l\|_{H^{\sigma}(\Omega)}\lesssim_{A_0} 2^{-l(s-\frac{1}{2}-\sigma)}c_l$. In other words, $F_l$ has the form of the remainder $R_l$. Our next aim is to show that the latter two terms on the right-hand side of \eqref{DUequation} are of the form $R_l$. In view of the restrictions on $\sigma$, we may use simple Sobolev product estimates and the bound $\|\tilde{W}_{l+1}^+\|_{H^s(\Omega)}\lesssim_{A_0} 1$ to obtain
\begin{equation*}
\|\nabla_{B_l}(\tilde{W}_{l+1}^--W_l^-)\cdot\nabla \tilde{W}_{l+1}^+\|_{H^{\sigma}(\Omega)}\lesssim_{A_0} \|\nabla_{B_l}(\tilde{W}_{l+1}^--W_l^-)\|_{H^{\sigma}(\Omega)}.
\end{equation*}
Moreover, from product estimates, interpolation, \eqref{hiregbound} and the bootstrap hypothesis, we have
\begin{equation*}
\begin{split}
\|(\tilde{W}_{l+1}^--W_l^-)\cdot\nabla_{B_l}\nabla \tilde{W}_{l+1}^+\|_{H^{\sigma}(\Omega)}&\lesssim_{A_0}\|\tilde{W}_{l+1}^--W_l^-\|_{H^{\sigma}(\Omega)}\|\nabla_{B_l}\nabla \tilde{W}_{l+1}^+\|_{H^{s-1}(\Omega)}
\\
&\lesssim_{A_0} C(A_0^*) 2^{-l(s-\frac{1}{2}-\sigma)}c_l.
\end{split}
\end{equation*}
Therefore, if $T$ is small enough, we have
\begin{equation*}
\|(\tilde{W}_{l+1}^--W_l^-)\cdot\nabla_{B_l}\nabla \tilde{W}_{l+1}^+\|_{L_T^1H^{\sigma}(\Omega)}\lesssim_{A_0} \delta_l^{\sigma}.
\end{equation*}
Combining the above, we find that
\begin{equation*}
\nabla_{B_l}((\tilde{W}_{l+1}^--W_l^-)\cdot\nabla \tilde{W}_{l+1}^+)=R_l
\end{equation*}
where $R_l$ is a term satisfying \eqref{Rlbound}.
To estimate the third term on the right-hand side of \eqref{DUequation}, we may invoke \Cref{direst} to estimate
\begin{equation*}
\begin{split}
\|\nabla B_l\cdot\nabla (\tilde{P}_{l+1}-P_l)\|_{H^{\sigma}(\Omega)}\lesssim_{A_0}C(A_0^*)(\|\Delta (\tilde{P}_{l+1}-P_l)\|_{H^{\sigma-1}(\Omega)}+\|\tilde{P}_{l+1}\|_{H^{\sigma+\frac{1}{2}}(\Gamma)}),
\end{split}
\end{equation*}
where  we used that $P_{l}=0$ on $\Gamma$.
Invoking \Cref{Difference} and interpolating, it is clear that we have
\begin{equation*}
\|\Delta (\tilde{P}_{l+1}-P_l)\|_{H^{\sigma-1}(\Omega)}\lesssim_{A_0}C(A_0^*)2^{-l(s-\frac{1}{2}-\sigma)}c_l.
\end{equation*}
To estimate the boundary term, we first recall that $P_{l+1}$ vanishes on $\Gamma_{l+1}$, and, moreover, in collar coordinates, we have by definition that $\tilde{P}_{l+1}(x+\eta_l(x)\nu(x))=P_{l+1}(x+\eta_l(x)\nu(x)-C2^{-l(\frac{3}{2}+\delta)}\nu(x+\eta_l(x)\nu(x)))$. Hence, we  can use the fundamental theorem of calculus (after possibly replacing $P_{l+1}$ with its Stein extension $\mathcal{E}_{\Omega_{l+1}}P_{l+1}$ from \Cref{Stein}), the fact that $P_{l+1}(x+\eta_{l+1}(x)\nu(x))=0$, and product and trace estimates to bound
\begin{equation}\label{productestateforPl}
\begin{split}
\|\tilde{P}_{l+1}\|_{H^{\sigma+\frac{1}{2}}(\Gamma)}\lesssim_{A_0} &\|\eta_l-\eta_{l+1}\|_{H^{\sigma+\frac{1}{2}}(\Gamma_*)}\|P_{l+1}\|_{C^1(\Omega_{l+1})}
\\
+(&\|\eta_l-\eta_{l+1}\|_{L^{\infty}(\Gamma_*)}+2^{-l(\frac{3}{2}+\delta)})\|P_{l+1}\|_{H^{\sigma+2}(\Omega_{l+1})},
\end{split}
\end{equation}
which from Sobolev embeddings, \Cref{direst}, \eqref{hiregbound} and \eqref{diffbounds}, we deduce that
\begin{equation}\label{Pdiffboundsigma}
\|\tilde{P}_{l+1}\|_{H^{\sigma+\frac{1}{2}}(\Gamma)}\lesssim_{A_0} C(A_0^*)2^{-l(s-\frac{1}{2}-\sigma)}c_l.
\end{equation}
Here,  we used the hypothesis that $s-\frac{1}{2}-\sigma\leq \frac{3}{2}$. The above analysis yields in particular the bound
\begin{equation*}
\|\nabla B_l\cdot\nabla (\tilde{P}_{l+1}-P_l)\|_{L_T^1H^{\sigma}(\Omega)}\lesssim_{A_0}\delta_l^{\sigma}
\end{equation*}
if $T$ is small enough. The final step to obtain the decomposition \eqref{eqnfordiffroughsol} is to split $Q_l=\mathcal{H}Q_l+\Delta^{-1}\Delta Q_l$ and to estimate
\begin{equation*}
\begin{split}
\|\nabla\Delta^{-1}\Delta Q_l\|_{H^{\sigma}(\Omega)}&\lesssim_{A_0}C(A_0^*)\|\Delta Q_l\|_{H^{\sigma-1}(\Omega)}
\\
&\lesssim_{A_0} C(A_0^*)(\|\tilde{P}_{l+1}-P_l\|_{H^\sigma(\Omega)}+\|\tilde{W}_{l+1}-W_l\|_{H^{\sigma}(\Omega)}+\|(U_l^+,U_l^-)\|_{H^{\sigma}(\Omega)})
\end{split}
\end{equation*}
in order to deduce that
\begin{equation*}
\|\nabla\Delta^{-1}\Delta Q_l\|_{L_T^1H^{\sigma}(\Omega)}\lesssim_{A_0}\delta_l^{\sigma}.
\end{equation*}
This gives \eqref{eqnfordiffroughsol}. Now, we aim to propagate the bounds for $U^+_l$ from $t=0$ to $t<T$. Our first aim is to reduce matters to controlling the boundary value $U^+_l\cdot n_\Gamma$. For this, we use \Cref{Balanced div-curl} (or direct estimates for the rotational/irrotational decomposition when $\sigma=1$) to estimate 
\begin{equation}\label{divcurlUplus}
\|U^+_l\|_{H^{\sigma}(\Omega)}\lesssim_{A_0} \|U^+_l\|_{H^{\sigma-1}(\Omega)}+\|\nabla\cdot U^+_l\|_{H^{\sigma-1}(\Omega)}+\|\nabla\times U^+_l\|_{H^{\sigma-1}(\Omega)}+\|U^+_l\cdot n_\Gamma\|_{H^{\sigma-\frac{1}{2}}(\Gamma)}.
\end{equation}
From \eqref{eqnfordiffroughsol}, the restriction $\sigma\geq 1$ and \eqref{Pdiffboundsigma}, we get
\begin{equation}\label{highorderdiff1}
 \|D_l^-U^+_l\|_{L_T^1H^{\sigma-1}(\Omega)}+\|D_l^-\nabla\cdot U^+_l\|_{L_T^1H^{\sigma-1}(\Omega)}+\|D_l^-\nabla\times U^+_l\|_{L_T^1H^{\sigma-1}(\Omega)}\lesssim_{A_0}\delta_l^{\sigma}
\end{equation}
if $T$ is small enough. To estimate the boundary term $U_l^+\cdot n_\Gamma$, we use ellipticity of $\mathcal{N}$, Poincare's inequality, \Cref{direst} and the trace theorem to estimate 
\begin{equation*}
\|U_l^+\cdot n_\Gamma\|_{H^{\sigma-\frac{1}{2}}(\Gamma)}\lesssim_{A_0} \|\nabla\mathcal{H}\mathcal{N}^{\sigma-1}(U_l^+\cdot n_\Gamma)\|_{L^2(\Omega)}+\|\nabla\cdot U^+_l\|_{H^{\sigma-1}(\Omega)}.
\end{equation*}
To propagate the bounds for the term on the right-hand side of the above estimate, we take our cue from \Cref{Difference} (or the linearized equation) and define the higher-order distance functional
\begin{equation*}
\mathcal{D}^{\sigma-1}:=\frac{1}{2}\int_{\Omega}|\nabla\mathcal{H}\mathcal{N}^{\sigma-1}(U_l^+\cdot n_\Gamma)|^2dx+\frac{1}{2}\int_{\Gamma}a_l^{-1}|\mathcal{N}^{\sigma}Q_l|^2dS.
\end{equation*}
Here, $a_l$ is the Taylor term corresponding to the solution $(W_l,\Gamma_l)$. Our goal now will be to simply carry out a $L^2$-based  energy estimate for $\mathcal{D}^{\sigma-1}$. One can compare the distance functional here to the one in \Cref{Difference}. The requisite energy estimate in this case, however, will be far simpler to carry out since all of the analysis can be performed on a smooth domain (rather than Lipschitz). Moreover, the implicit constants in the estimates here are allowed to be much stronger. 
\medskip

To estimate the main errors that arise, we will need the following lemma.
\begin{lemma}\label{Dsigmaapprox}
There holds 
\begin{equation*}
\|D_l^-\nabla\mathcal{H}\mathcal{N}^{\sigma-1}(U_l^+\cdot n_\Gamma)+\nabla\mathcal{H}\mathcal{N}^{\sigma}Q_l\|_{L_T^1L_x^2(\Omega)}+\|D_l^+\mathcal{N}^{\sigma}Q_l- a_l\mathcal{N}^{\sigma}(U_l^+\cdot n_\Gamma)\|_{L_T^1L_x^2(\Gamma)}\lesssim_{A_0} \delta_l^{\sigma}.
\end{equation*}
\end{lemma}
\begin{proof}
The estimate for the first term simply follows from \eqref{eqnfordiffroughsol} and the estimates
\begin{equation*}
\|[D_l^-,\nabla\mathcal{H}\mathcal{N}^{\sigma-1}](U_l^+\cdot n_\Gamma)\|_{L_T^1L_x^2(\Omega)}+\|U^+_l\cdot D_l^- n_\Gamma\|_{L_T^1H^{\sigma-\frac{1}{2}}(\Gamma)}\lesssim_{A_0}\delta_l^{\sigma},
\end{equation*}
which can be proved using the identities in \Cref{Movingsurfid}. The second estimate is more difficult. First, by \Cref{Movingsurfid}, the fact that $Q_l=\nabla_{B_l}\tilde{P}_{l+1}$ on $\Gamma$ and \Cref{L^2regboundseasier},  we have 
\begin{equation*}
\|[D_l^+,\mathcal{N}^{\sigma}]Q_l\|_{L_T^1L_x^2(\Gamma)}\lesssim_{A_0}\|\nabla_{B_l}\tilde{P}_{l+1}\|_{L_T^1H^{\sigma}(\Gamma)}\lesssim_{A_0} \delta_l^{\sigma}+\|(\nabla_{B_{l+1}}P_{l+1})^\sim\|_{L_T^1H^{\sigma}(\Gamma)}.
\end{equation*}
Using that $\nabla_{B_{l+1}}P_{l+1}=0$ on $\Gamma_{l+1}$ and a similar argument to \eqref{productestateforPl}, we have
\begin{equation*}
\|[D_l^+,\mathcal{N}^{\sigma}]Q_l\|_{L_T^1L_x^2(\Gamma)}\lesssim_{A_0} \delta_l^{\sigma}.
\end{equation*}
Next, we observe that on $\Gamma$, we may write
\begin{equation}\label{Qleqn}
\begin{split}
D_l^+Q_l&=\nabla_{B_l} D_l^+\tilde{P}_{l+1}
\\
&=-U_l^+\cdot\nabla \tilde{P}_{l+1}-(\tilde{W}_{l+1}^+-W_l^+)\cdot\nabla_{B_l}\nabla\tilde{P}_{l+1}-(B_l-\tilde{B}_{l+1})\cdot\nabla \tilde{D}_{l+1}^+\tilde{P}_{l+1}+\nabla_{\tilde{B}_{l+1}}\tilde{D}_{l+1}^+\tilde{P}_{l+1}.
\end{split}
\end{equation}
We can re-arrange the first term above as
\begin{equation*}
-U_l^+\cdot\nabla\tilde{P}_{l+1}=a_lU_l^+\cdot n_\Gamma-U_l^+\cdot\nabla (\tilde{P}_{l+1}-P_l).
\end{equation*}
Using the identities in \Cref{Movingsurfid}, it is then straightforward to obtain the bound
\begin{equation*}
\|\mathcal{N}^{\sigma}(U_l^+\cdot\nabla\tilde{P}_{l+1})+a_l\mathcal{N}^{\sigma}(U_l^+\cdot n_\Gamma)\|_{L_T^1L_x^2(\Gamma)}\lesssim_{A_0} \delta_l^{\sigma}.
\end{equation*}
It remains to estimate the latter three terms on the right-hand side of \eqref{Qleqn}. Using \eqref{correctionproperties} and similar analysis as above, the second and third terms can be estimated in $L_T^1H^{\sigma}(\Gamma)$ by $\delta_l^{\sigma}$. For the fourth term, we may use \eqref{correctionproperties} and that  $\nabla_{B_{l+1}}D_{l+1}^+P_{l+1}=0$ on $\Gamma_{l+1}$ to estimate 
\begin{equation*}
\begin{split}
\|\nabla_{\tilde{B}_{l+1}}\tilde{D}_{l+1}^+\tilde{P}_{l+1}\|_{L_T^1H^{\sigma}(\Gamma)}&\lesssim C(A_0^*)2^{-l(s-\frac{1}{2}-\sigma)}c_l\|\nabla_{B_{l+1}}D_{l+1}P_{l+1}\|_{L_T^1H^{s-\frac{1}{2}}(\Omega_{l+1})}
\\
&+2^{-(\frac{3}{2}+\delta)l}\|\nabla_{B_{l+1}}D_{l+1}P_{l+1}\|_{L_T^1H^{\sigma+\frac{3}{2}}(\Omega)}+\delta_l^{\sigma}
\\
&\lesssim \delta_l^{\sigma}.
\end{split}
\end{equation*}
This concludes the proof of \Cref{Dsigmaapprox}. 
\end{proof}
Now, we return to finish the proof of \Cref{L^2regbounds}. By \Cref{Leibniz} and  \Cref{Dsigmaapprox} we immediately obtain the energy estimate
\begin{equation*}
D^{\sigma-1}(t)\lesssim_{A_0} D^{\sigma-1}(0)+(\delta_l^{\sigma})^2\lesssim_{A_0}(\delta_l^{\sigma})^2.
\end{equation*}
Thanks to \eqref{divcurlUplus} and \eqref{highorderdiff1}, we have
\begin{equation*}
\|U^+\|_{L_T^{\infty}H^{\sigma}(\Omega)}\lesssim_{A_0} \delta_l^{\sigma}.
\end{equation*}
Carrying out the same analysis above for $U^-$ and then taking $T$ small enough finally yields
\begin{equation*}
\|(U^+_l,U^-_l)\|_{L_T^{\infty}H^{\sigma}(\Omega)}\lesssim_{A_0} c_l2^{-l(s-\frac{1}{2}-\sigma)}.
\end{equation*}
This  concludes the proof of \Cref{L^2regbounds}. 
\end{proof}
Arguing as in \eqref{preliminaryimprovement}, we may now obtain the uniform bound
\begin{equation*}
\|(W_j,\Gamma_j)\|_{\mathbf{H}^s}\lesssim_{A_0} 1+\|(W_0,\Gamma_0)\|_{\mathbf{H}^s}.
\end{equation*}
It therefore remains to improve the bootstrap assumptions on $\|(W_j,\Gamma_j)\|_{\mathbf{H}^{s-\frac{1}{2}}}$, the Taylor term and the collar neighborhood size. For this, we take  inspiration from  the previous works \cite{MR3925531,Euler,MR2388661}. We define the Lagrangian flow map $u_j(t,\cdot):\Omega_{0,j}\to \Omega_j(t)$  as the solution to the ODE 
\begin{equation*}
\partial_tu_j(t,y)=v_j(t,u_j(t,y)),\hspace{5mm} y\in \Omega_{0,j},\hspace{5mm}u_j(0)=I.   
\end{equation*}
Since $s>\frac{d}{2}+1$,  for any $0\leq t\leq T\leq T_0$ we have the bound
\begin{equation*}
\begin{split}
\|u_j(t,\cdot)-I\|_{H^{s}(\Omega_{0,j})}&\lesssim\int_{0}^{t}\|v_j(t',\cdot)\|_{H^{s}(\Omega_j(t'))}\|u_j(t',\cdot)\|_{H^s(\Omega_{0,j})}^s\,dt'    
\\
&\lesssim_{A_0} t\|(v_0,\Gamma_0)\|_{\mathbf{H}^s},
\end{split}
\end{equation*}
as long as  $T_0>0$ is sufficiently small. This  easily implies that
\begin{equation*}
\Gamma_j(t)\in \frac{3}{2}\Lambda_*,\hspace{5mm}\|\Gamma_{j}(t)\|_{H^{s-\frac{1}{2}}}\ll A_0,
\end{equation*}
as long as $A_0$ is large relative to the data size. The first condition above improves the bootstrap for the collar. By performing a similar analysis with $u_t$ in place of $u$ which utilizes the equations 
\begin{equation*}
\partial_t^2u_j(t,y)=\partial_t(v_j(t,u_j(t,y)))=-(\nabla P_j-B_j\cdot\nabla B_j)(t,u_j(t,y))   , 
\end{equation*}
\begin{equation*}
\partial_t(B_j(t,u_j(t,y)))=(\nabla_{B_j}v_j)(t,u_j(t,y)),
\end{equation*}
and the elliptic estimates for the pressure, we may further conclude that
\begin{equation*}
\|W_j(t)\|_{H^{s-\frac{1}{2}}(\Omega_t)}\ll A_0.   
\end{equation*}
It remains to prove that
\begin{equation*}
\|\nabla_{B_j}W_j(t)\|_{H^{s-1}(\Omega_t)}\ll A_0.
\end{equation*}
For this, we use the equation in Eulerian coordinates,
\begin{equation*}
D_t^\mp \nabla_{B}W^\pm=\nabla B\cdot\nabla P-\nabla\nabla_B P.
\end{equation*}
From the uniform bound $\|(W_j,\Gamma_j)\|_{\mathbf{H}^s}\lesssim_{A_0} 1+\|(W_0,\Gamma_0)\|_{\mathbf{H}^s}$ established above and \Cref{direst}, we have
\begin{equation*}
\|\nabla B_j\cdot\nabla P_j-\nabla\nabla_{B_j} P_j\|_{L_T^1H^{s-1}(\Omega_j)}\lesssim_{A_0} C(A_0^*)T.
\end{equation*}
Therefore, by taking $T$ small enough and carrying out a straightforward energy estimate, we have
\begin{equation*}
\|(W_j,\Gamma_j)\|_{\mathbf{H}^{s-\frac{1}{2}}}\ll A_0.
\end{equation*}
This improves the bootstrap assumption for $\|(W_j,\Gamma_j)\|_{\mathbf{H}^{s-\frac{1}{2}}}$.   To improve the bootstrap assumption for $a_j$ one may employ a similar argument with the pressure gradient which utilizes the $H^s$ bounds for $D_tP$ and the smallness of  $T_0$ relative to $M$ and $c_0$. We leave the details to the reader.
\subsection{The limiting solution} We now show that for $T\leq T_0$,
\begin{equation*}
(W,\Gamma)=\lim_{j\to\infty}(W_j,\Gamma_j)    \ \ \text{in}\ \ C([0,T];\mathbf{H}^s).
\end{equation*}
We begin by establishing the  domain convergence in $H^s$, which is more straightforward. Indeed, from \eqref{surfdistance} it is easy to see that the limiting domain $\Omega$ exists and has Lipschitz boundary $\Gamma$. For $j\geq j_0$, we may consider the telescoping sum
\begin{equation*}
\eta-\eta_j=\sum_{l=j}^{\infty}\eta_{l+1}-\eta_l. 
\end{equation*}
The difference bounds, the higher energy bounds and an analysis similar to the previous subsection yields 
\begin{equation}\label{limitdistance}
\|\eta-\eta_j\|_{L^{\infty}(\Gamma_*)}\lesssim_{A_0} 2^{-(\frac{3}{2}+\delta)j}    
\end{equation}
and
\begin{equation*}
\|\eta-\eta_j\|_{C([0,T];H^s(\Gamma_*))}\lesssim_{A_0} \|c_{\geq j}\|_{l^2}(1+\|(W_0,\Gamma_0)\|_{\mathbf{H}^s}).
\end{equation*}
Consequently, $\Gamma_j\to\Gamma$ in $C([0,T];H^s(\Gamma_*))$. We now establish the convergence $W_j\to W$ in $C([0,T];\mathbf{H}^s)$. To begin, we formally define $W$ through the  telescoping sum
\begin{equation*}
W=\Psi_{\leq j_0}W_{j_0}+\sum_{l\geq j_0}\Psi_{\leq {l+1}}W_{l+1}-\Psi_{\leq l}W_l  ,  
\end{equation*}
where $j_0$ is chosen so that all of the terms in the sum are defined on $\Omega$.  This is possible, thanks to \eqref{limitdistance}. As a preliminary step, we show that $\Psi_{\leq j}W_j\to W$ in $H^s(\Omega_t)$ uniformly in $t$, which is again unambiguous thanks to \eqref{limitdistance}. Note that
\begin{equation*}
W-\Psi_{\leq j}W_j=\sum_{l\geq j}\Psi_{\leq {l+1}}W_{l+1}-\Psi_{\leq l}W_l.    
\end{equation*}
By arguing similarly to the proof of the uniform lifespan bound (by slightly modifying \Cref{L^2regbounds} where necessary) we have
\begin{equation*}
\|W-\Psi_{\leq j}W_j\|_{H^s(\Omega_t)}\lesssim_{A_0} \|c_{\geq j}\|_{l^2}(1+\|(W_0,\Gamma_0)\|_{\mathbf{H}^s}).
\end{equation*}
Hence,  the desired uniform convergence in $H^s(\Omega_t)$ holds. We now aim to show that $\Phi_{\leq j}(\nabla_{B_j}W_j)$ converges to $\nabla_B W$ in $H^{s-\frac{1}{2}}(\Omega_t)$ uniformly in $t$. We begin by observing that $\Phi_{\leq j}(\nabla_{B_j}W_j)\to \nabla_B W$ in $L^2(\Omega_t)$. Indeed, we have
\begin{equation}\label{grad B limiting}
\Phi_{\leq j}(\nabla_{B_j}W_j)=\Phi_{\leq j}(\nabla_{B}\Psi_{\leq j}W_j)+\Phi_{\leq j}(\nabla_{B}\Psi_{> j}W_j)+\Phi_{\leq j}(\nabla_{B_j-B}W_j).
\end{equation}
The second term in \eqref{grad B limiting} converges to zero in $L^2$, and the third term also does, thanks to the difference bounds. Moreover, $(I-\Phi_{\leq j})(\nabla_BW)\to 0$ in $L^2(\Omega_t)$. Therefore, we have
\begin{equation}\label{Grad B limit 2}
\|\nabla_BW-\Phi_{\leq j}(\nabla_{B_j}W_j)\|_{L^2(\Omega_t)}\lesssim_{A_0} \|W-\Psi_{\leq j}W_j\|_{H^1(\Omega_t)}+o_{L^2(\Omega_t)}(1).
\end{equation}
The first term on the right-hand side of \eqref{Grad B limit 2} goes to zero thanks to the $H^s(\Omega_t)$ convergence established above. It remains to establish $H^{s-\frac{1}{2}}$ convergence of $\nabla_BW$. Thanks to the $L^2$ convergence, we need only show that the following formal telescoping decomposition converges:
\begin{equation*}
\nabla_B W-\Phi_{\leq j}(\nabla_{B_j}W_j)=\sum_{l\geq j}\Phi_{\leq l+1}(\nabla_{B_{l+1}}W_{l+1})-\Phi_{\leq l}(\nabla_{B_l}W_l).
\end{equation*}
This, again, follows by arguing similarly to the uniform lifespan bounds (after slightly adapting \Cref{L^2regbounds} to the present situation). Indeed, such arguments lead to the bound
\begin{equation*}
\|\nabla_B W-\Phi_{\leq j}(\nabla_{B_j}W_j)\|_{H^{s-\frac{1}{2}}(\Omega_t)}\lesssim_{A_0}\|c_{\geq j}\|_{l^2}(1+\|(W_0,\Gamma_0)\|_{\mathbf{H}^s}),
\end{equation*}
which is more than sufficient. To show convergence of $W_j$ and $\nabla_{B_j}W_j$ in the sense of \Cref{Def of convergence}, we consider the regularization $\tilde{W}=\Psi_{\leq m}W_m$. As above,  we have
\begin{equation*}
\|W-\Psi_{\leq m}W_m\|_{H^s(\Omega)}\lesssim_{A_0} \|c_{\geq m}\|_{l^2}(1+\|(W_0,\Gamma_0)\|_{\mathbf{H}^s})  ,
\end{equation*}
which goes to $0$ as $m\to\infty$. On the other hand, for $j>m$, it is easy to see that
\begin{equation*}
\begin{split}
\|W_j-\Psi_{\leq m}W_m\|_{H^s(\Omega_j)}\lesssim_{A_0} &\|(1-\Psi_{\leq j})W_j\|_{H^s(\Omega_j)}+\|\Psi_{\leq j}(W_j-W)\|_{H^s(\Omega_j)}+\|\Psi_{\leq m}(W_m-W)\|_{H^s(\Omega_j)}
\\
&+\|\Psi_{\leq j}W-\Psi_{\leq m}W\|_{H^s(\Omega_j)}.
\end{split}
\end{equation*}
Using \eqref{hiregbound} for the first term and the difference bounds for $D((W_j,\Gamma_j),(W,\Gamma)),$ $D((W_m,\Gamma_m),(W,\Gamma))$ for the second and third terms, respectively, we obtain 
\begin{equation*}
\|W_j-\Psi_{\leq m}W_m\|_{H^s(\Omega_j)}\lesssim_{A_0} \|c_{\geq m}\|_{l^2}(1+\|(W_0,\Gamma_0)\|_{\mathbf{H}^s})+\|\Psi_{\leq j}W-\Psi_{\leq m}W\|_{H^s(\Omega_j)}.    
\end{equation*}
To estimate the remaining term, we note that
\begin{equation*}
\begin{split}
\|\Psi_{\leq j}W-\Psi_{\leq m}W\|_{H^s(\Omega_j)}&\lesssim_{A_0} \|(\Psi_{\leq j}-\Psi_{\leq m})(W-\Psi_{\leq m}W_m)\|_{H^s(\Omega_j)}+\|(\Psi_{\leq j}-\Psi_{\leq m})\Psi_{\leq m}W_m\|_{H^s(\Omega_j)}
\\
&\lesssim_{A_0} \|W-\Psi_{\leq m}W_m\|_{H^{s}(\Omega)}+2^{-m\alpha}\|W_m\|_{H^{s+\alpha}(\Omega_m)}
\\
&\lesssim_{A_0} \|c_{\geq m}\|_{l^2}(1+\|(W_0,\Gamma_0)\|_{\mathbf{H}^s}),   
\end{split}
\end{equation*}
where we used \eqref{hiregbound} to estimate the second term in the final inequality. A similar argument (utilizing a difference type bound as in \Cref{L^2regbounds}) yields
\begin{equation*}
\|\nabla_BW-\Phi_{\leq m}(\nabla_{B_m}W_m)\|_{H^{s-\frac{1}{2}}(\Omega)}\lesssim_{A_0}\|c_{\geq m}\|_{l^2}(1+\|(W_0,\Gamma_0)\|_{\mathbf{H}^s})
\end{equation*}
as well as 
\begin{equation*}
\|\nabla_{B_j}W_j-\Phi_{\leq m}(\nabla_{B_m}W_m)\|_{H^{s-\frac{1}{2}}(\Omega_j)}\lesssim_{A_0}\|c_{\geq m}\|_{l^2}(1+\|(W_0,\Gamma_0)\|_{\mathbf{H}^s}), \hspace{5mm}  j>m.
\end{equation*}
 When combined, the above estimates establish strong convergence of $W_j$ to $W$ in $\mathbf{H}^s$; similar arguments yield continuity of $W$  with values in $\mathbf{H}^s$. As a final step, the reader may check that the limiting solution $(W,\Gamma)$ solves the free boundary MHD equations.
\subsection{Continuous dependence}\label{Cont dep section} Let $(W_0^n,\Gamma_0^n)\in\mathbf{H}^s$ be a sequence of initial data such that $(W_0^n,\Gamma_0^n)\to (W_0,\Gamma_0)$. Let $(W^n,\Gamma^n)$  and $(W,\Gamma)$ denote the respective solutions.  From the data convergence, we obtain  a uniform in $n$ lifespan in $\mathbf{H}^s$ for all of these solutions. Hence, on some compact time interval $[0,T]$, we have $\|(W^n,\Gamma^n)\|_{\mathbf{H}^s}+\|(W,\Gamma)\|_{\mathbf{H}^s}\lesssim_M 1$.  The objective of this subsection is to establish the convergence of the solutions $(W^n,\Gamma^n)\to (W,\Gamma)$ in $C([0,T];\mathbf{H}^s)$. 
\medskip

 We denote by $c_j^n$ and $c_j$ the admissible frequency envelopes for the data $(W_0^n,\Gamma_0^n)$ and $(W_0,\Gamma_0)$, respectively, and fix  $\epsilon>0$. We  let $\delta=\delta(\epsilon)>0$ be a small positive constant and  $n_0=n_0(\epsilon)$ be a  large positive integer which we will specify more precisely below.  By definition of convergence in $\mathbf{H}^s$, we may find   a divergence-free function $W_0^{\delta}\in H^s(\Omega_0^{\delta})$  on an enlarged domain $\Omega_0^{\delta}$ such that 
\begin{equation*}\label{v_0approx}
\|W_0-W_0^{\delta}\|_{H^s(\Omega_0)}+\limsup_{n\to\infty}\|W_0^n-W_0^{\delta}\|_{H^s(\Omega_0^n)}<\delta.
\end{equation*}
In particular, for $n$ sufficiently large depending only on $\delta$, the function $W_0^{\delta}$ is defined on a neighborhood of $\Omega_0$ and $\Omega_{0}^n$. For the sake of the argument below, we may assume that $W_0^{\delta}$ belongs to $H^s(\mathbb{R}^d)$. Indeed, we observe that for some $\delta'\ll\delta$, $W_0^{\delta}$ is defined on the domain $\Omega_0'$ defined by taking $\eta_0'=\eta_0+\delta'$. By  \Cref{continuosext}, we may  extend $W_0^{\delta}$ from this domain to $\mathbb{R}^d$. Notice, however, that $W_0^{\delta}$ is not necessarily divergence-free on $\mathbb{R}^d$, but is divergence-free on an enlargement of $\Omega_0$ and $\Omega_0^n$ when $n$ is sufficiently large. 
\medskip

Let $c_j^{\delta}$  denote the admissible frequency envelope for $(W_0^{\delta},\Gamma_0)$. Here, we emphasize that we are using the same domain $\Omega_0$ as $W_0$ for the frequency envelope $c_j^{\delta}$. We  remark that if $\delta>0$ is sufficiently small, the Taylor sign condition holds for this state. Hence, we may let $(W^{\delta},\Gamma^{\delta})$ be the corresponding $\mathbf{H}^s$ solution, which  has a lifespan comparable to $W$ and $W^n$ for $n$ sufficiently large. We  choose $j=j(\epsilon)$ sufficiently large so that
\begin{equation}\label{dataenvsize}
\|c_{\geq j}\|_{l^2}<\epsilon.   
\end{equation}
We then  choose $\delta(\epsilon)$ and  $n_0(\delta)$ so that for $n\geq n_0$,
\begin{equation}\label{envcontrol}
\|c_{\geq j}^n\|_{l^2}\lesssim_M \epsilon +\|c_{\geq j}\|_{l^2}\lesssim_M \epsilon.
\end{equation}
To establish that such a choice is possible,  we use  the definition of admissible frequency envelopes in \eqref{admissable} and square summing to estimate the error which occurs when comparing terms in $c^{\delta}_j$ and $c_j^n$ as well as the error which occurs when comparing terms in $c^{\delta}_j$ and $c_j$. The error in the first comparison is mainly comprised of two parts. The first  involves the error between $\eta_0^n$ and $\eta_0$. To control this,  note that if $\delta>0$ is sufficiently small and $n$ is sufficiently large we have
\begin{equation*}
\|\eta^n_0-\eta_0\|_{H^s(\Gamma_*)}<\delta<\epsilon.    
\end{equation*}
The second source of error comes from the extensions of the variables $W$ and $\nabla_B W$. That is,
\begin{equation*}
\|E_{\Omega_0^n}W_0^n-E_{\Omega_0}W_0^{\delta}\|_{H^s(\mathbb{R}^d)}\leq \|E_{\Omega_0^n}W_0^{\delta}-E_{\Omega_0}W_0^{\delta}\|_{H^s(\mathbb{R}^d)}+\|E_{\Omega_0^n}(W_0^n-W_0^{\delta})\|_{H^s(\mathbb{R}^d)}   
\end{equation*}
and
\begin{equation*}
\begin{split}
\|E_{\Omega_0^n}(\nabla_{B_0^n}W_0^n)-E_{\Omega_0}(\nabla_{B_0^{\delta}}W_0^{\delta})\|_{H^{s-\frac{1}{2}}(\mathbb{R}^d)}&\leq \|E_{\Omega_0^n}(\nabla_{B_0^{\delta}}W_0^{\delta})-E_{\Omega_0}(\nabla_{B_0^{\delta}}W_0^{\delta})\|_{H^{s-\frac{1}{2}}(\mathbb{R}^d)}
\\
&+\|E_{\Omega_0^n}(\nabla_{B_0^n}W_0^n-\nabla_{B_0^{\delta}}W_0^{\delta})\|_{H^{s-\frac{1}{2}}(\mathbb{R}^d)}.
\end{split}
\end{equation*}
To handle these errors, note that if $\delta\ll_M\epsilon$ then the uniform in $n$ boundedness of $E_{\Omega_0^n}$ and the definition of $W_0^{\delta}$ guarantees that the second term on the right-hand side of each of the above estimates is $\mathcal{O}(\epsilon)$. By the continuity property of the family $E_{\Omega_0^n}$ in \Cref{continuosext}, the first term on the right-hand side of each of the above estimates is also $\mathcal{O}(\epsilon)$ if $n$ is large enough relative to $\delta$. Finally, to establish \eqref{envcontrol} it is left to compare $c_j$ and $c_j^{\delta}$. This is easier, as it essentially involves  controlling  the error terms $\|E_{\Omega_0}(W_0^{\delta}-W_0)\|_{H^s(\mathbb{R}^d)}$ and $\|E_{\Omega_0}(\nabla_{B_0^{\delta}}W_0^{\delta}-\nabla_{B_0}W_0)\|_{H^{s-\frac{1}{2}}(\mathbb{R}^d)}$. We leave the details to the reader.
\medskip

Having established uniform smallness of the initial data frequency envelopes, the next step is to compare the corresponding solutions. Using the difference estimates, we observe that for sufficiently large $n$, the hypersurfaces $\Gamma^n$ and $\Gamma^{\delta}$ are within distance $\ll 2^{-j}$ as long as $\delta>0$ is chosen small enough relative to the integer $j$  which was chosen to ensure \eqref{dataenvsize}. Indeed, using the uniform $\mathbf{H}^s$ bounds and interpolation, we have
\begin{equation*}
\|\eta^n-\eta^{\delta}\|_{L^{\infty}(\Gamma_*)}\lesssim_M D((W^n,\Gamma^n),(W^{\delta},\Gamma^\delta))^{\frac{3}{4s}}\lesssim_{M} \delta^{\frac{3}{2s}},    
\end{equation*}
which ensures that we may compare $\Psi_{\leq j}W^{\delta}$ with $W^n$. Letting $(W^n_j,\Gamma^n_j)$  denote the regular solution corresponding to the regularized data $(W^n_{0,j},\Gamma_{0,j}^n)$ from the previous section, we have 
\begin{equation*}
\begin{split}
\|\Psi_{\leq j}W^{\delta}-W^n\|_{H^s(\Omega^n)}&\lesssim \|\Psi_{\leq j}(W^{\delta}-W^n)\|_{H^s(\Omega^n)}+\|\Psi_{\leq j}(W^n-W^n_j)\|_{H^s(\Omega^n)}+\|W^n-\Psi_{\leq j}W^n_j\|_{H^s(\Omega^n)}
\\
&\lesssim_M \|c_{\geq  j}^n\|_{l^2}+2^{js}D((W^n,\Gamma^n),(W^n_j,\Gamma^n_j))^{\frac{1}{2}}+2^{js}D((W^n,\Gamma^n),(W^{\delta},\Gamma^\delta))^{\frac{1}{2}}
\\
&\lesssim_M \|c_{\geq j}^n\|_{l^2}+2^{js}D((W^n,\Gamma^n),(W^{\delta},\Gamma^{\delta}))^{\frac{1}{2}},
\end{split}
\end{equation*}
which for sufficiently small $\delta>0$  gives
\begin{equation*}
\|\Psi_{\leq j}W^{\delta}-W^n\|_{H^s(\Omega^n)}\lesssim_M \epsilon.    
\end{equation*}
By employing a suitable analogue of \Cref{L^2regbounds} and arguing similarly to the above, we also have
\begin{equation*}
\|\Phi_{\leq j}(\nabla_{B^{\delta}}W^{\delta})-\nabla_{B^n}W^n\|_{H^{s-\frac{1}{2}}(\Omega^n)}\lesssim_M\epsilon.
\end{equation*}
Similarly, one may show that
\begin{equation*}
\|\eta^n-\eta\|_{H^s(\Gamma_*)}\lesssim_M \epsilon    
\end{equation*}
and
\begin{equation*}
\|\Psi_{\leq j}W^{\delta}-W\|_{H^s(\Omega)}\lesssim_M \epsilon,\hspace{5mm}\|\Phi_{\leq j}(\nabla_{B^{\delta}}W^{\delta})-\nabla_{B}W\|_{H^{s-\frac{1}{2}}(\Omega)}\lesssim_M\epsilon.
\end{equation*}
This completes the proof of continuous dependence.

\appendix

\section{Elliptic estimates, regularization operators and function space theory}\label{BEE}
The purpose of this appendix is to collect all of the auxiliary tools from \cite{Euler} which will be needed for our well-posedness proof. This includes extension and regularization operators, Littlewood-Paley projections, nonlinear estimates and elliptic theory. 
\medskip

The notation in this appendix is consistent with \Cref{AOMD}. In particular, $\Omega$  denotes a bounded, connected domain with boundary $\Gamma\in\Lambda_*:=\Lambda(\Gamma_*,\epsilon_0,\delta)$ for some  universal constants $0<\epsilon_0,\delta\ll 1$.  Since  the bounds in this appendix  rarely make reference to the MHD equations,  implicit constants will  usually only  depend   on the surface component of the control parameter $A$; namely, $A_{\Gamma}:=\|\Gamma\|_{C^{1,\epsilon_0}}$. Hence, for the purposes of  this appendix, by the relation $X\lesssim_A Y$, we mean $X\leq C(A_{\Gamma})Y$ for some constant $C$ depending exclusively on $A_{\Gamma}$ and the domain volume. The main exception to this rule  occurs in \Cref{Movingsurfid},  where we consider a family of moving domains $\Omega_t$ flowing with velocity $v$ (not assumed to solve the free boundary MHD equations) and  estimate commutators of  elliptic operators with material derivatives. In this case, we will specify the dependence of our control parameters on $v$ and $\Gamma$ explicitly.  We  remark that  allowing implicit constants to depend on the domain volume is completely harmless, as the volume is  conserved by the MHD flow and is uniform  in  the collar. 
\subsection{Extension operators and product  estimates on \texorpdfstring{$\Omega$}{}}
Our first objective is to construct an extension operator which is bounded from $H^s(\Omega)\to H^s(\mathbb{R}^d)$ for $s\geq 0$ as well as from $C^{k,\alpha}(\Omega)\to C^{k,\alpha}(\mathbb{R}^d)$ for a  range of $k$ and $\alpha$, with norm bounds depending solely on the control parameter $A$. This will allow us to transfer much of the standard theory of function spaces on $\mathbb{R}^d$ to  $\Omega$.
\medskip

 Let $U$ be a non-empty open set in $\mathbb{R}^d$. For $1\leq p\leq\infty$ and an integer $k\geq 0$, we let $W^{k,p}(U)$ denote the usual Sobolev space consisting of distributions whose derivatives up to order $k$ belong to $L^p(U)$. Given  a Lipschitz function $\varphi:\mathbb{R}^{d-1}\to \mathbb{R}$ with Lipschitz constant $M$, we  define the open set  $U_\varphi:=\{(x,y)\in \mathbb{R}^d: y>\varphi(x)\}$. A classical result of Stein \cite[Theorem 5', p.~181]{MR0290095} asserts that there exists a linear operator $\mathcal{E}:=\mathcal{E}_\varphi$ mapping functions on $U:=U_\varphi$ to functions on $\mathbb{R}^d$ with the property that $\mathcal{E}: W^{k,p}(U)\to W^{k,p}(\mathbb{R}^d)$ is well-defined and continuous for all $1\leq p\leq \infty$ and integers $k$. Moreover, the norm of $\mathcal{E} : W^{k,p}(U)\to W^{k,p}(\mathbb{R}^d)$ depends exclusively on the dimension $d$, the order of differentiability $k$ and the Lipschitz constant $M$.  As one may observe directly from its definition \cite[Equation (24), p.~182]{MR0290095}, $\mathcal{E}$ also maps $C^1(U)\to C^1(\mathbb{R}^d)$.
\medskip

Using a standard partition of unity argument one may construct an extension operator $\mathcal{E}:=\mathcal{E}_\Omega$ on any Lipschitz domain $\Omega$, with $W^{k,p}$ operator norm depending solely on $d,k,p$, the number and size of the balls needed to cover the boundary, and the Lipschitz constant of the defining function on each ball. The operator $\mathcal{E}$ is called \emph{Stein's extension operator}. Since for a tight enough collar $ \Lambda_*$ one may use the same balls to cover each $\Gamma\in \Lambda_*$, it is easy to see that $\mathcal{E}$ has operator bounds  which are uniform in the collar. 

\begin{remark}
In the last two paragraphs, the $W^{k,p}(\Omega)$ norm was defined in terms of weak-derivatives. However, in \Cref{AOMD} we  defined the $H^s(\Omega)$ norm of a function $f$ as the infimum of the $H^s(\mathbb{R}^d)$ norms of all possible extensions of $f$ to $\mathbb{R}^d$. Clearly, $\|\cdot\|_{W^{k,2}(\Omega)}\lesssim\|\cdot\|_{H^{k}(\Omega)}$ with universal implicit constant. On the other hand, using Stein's extension operator, it is easy to see that the reverse inequality holds for domains in the collar, with constant depending only on $A$.
\end{remark}

Recall  that for $s_0,s_1\in \mathbb{R}$ we have
\begin{equation}\label{Interpolation on domains}
    \left(H^{s_0}(\Omega),H^{s_1}(\Omega)\right)_{\theta,2}=H^s(\Omega), \ \text{where} \ s=(1-\theta)s_0+\theta s_1 \ \text{and} \ 0<\theta<1,
\end{equation}
with equivalent norms uniform in the collar. This allows us to extend the above mapping properties of $\mathcal{E}$ to fractional regularity spaces.
\begin{proposition}\label{Stein} 
 Let $\Omega$ be a bounded domain with boundary $\Gamma\in \Lambda_*$. Then for every $s\geq 0$ and $0\leq\alpha\leq 1+\epsilon_0$, Stein's extension operator $\mathcal{E}$ satisfies
\begin{equation*}
\|\mathcal{E}\|_{C^{\alpha}(\Omega)\to C^{\alpha}(\mathbb{R}^d)},\hspace{5mm}\|\mathcal{E}\|_{H^s(\Omega)\to H^s(\mathbb{R}^d)}\lesssim_A 1,
\end{equation*}
uniformly with respect to  $\Gamma  \in  \Lambda_*$. 
\end{proposition}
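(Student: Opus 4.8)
<br>

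The plan is to reduce everything to the model case of a special Lipschitz domain $U_\varphi$ above the graph of a single Lipschitz function, for which Stein's classical construction directly provides an operator $\mathcal{E}_\varphi$ with the required mapping properties. The key point is that the operator norms in the model case depend \emph{only} on the dimension $d$, the Sobolev/H\"older exponents, and the Lipschitz constant $M$ of $\varphi$ — this is precisely what Stein's theorem (and the explicit formula for his extension) gives. So the entire proof is an exercise in transferring this uniform control through a partition of unity, while keeping track of the fact that for a sufficiently tight collar $\Lambda_* = \Lambda(\Gamma_*,\epsilon_0,\delta)$ one may use a \emph{single} fixed finite cover and a \emph{single} fixed partition of unity for all $\Gamma \in \Lambda_*$.

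First I would set up the geometry. Since $\Gamma_*$ is smooth and compact, fix a finite collection of balls $\{B_i\}_{i=1}^N$ covering $\Gamma_*$ together with rigid motions $R_i$ such that $R_i(\Omega_* \cap B_i)$ is the region above the graph of a smooth function $\varphi_{*,i}$ in suitable coordinates. If $\delta$ is small enough, every $\Gamma \in \Lambda_*$ is a $C^{1,\epsilon_0}$-graph over $\Gamma_*$ with small norm, so after shrinking the balls slightly the same balls $\{B_i\}$ cover $\Gamma$, and $R_i(\Omega \cap B_i)$ is the region above the graph of a function $\varphi_i = \varphi_{*,i} + (\text{small } C^{1,\epsilon_0} \text{ perturbation})$; in particular the Lipschitz constants of all the $\varphi_i$, $\Gamma \in \Lambda_*$, are bounded by a constant $M$ depending only on $\Gamma_*$ and $\epsilon_0$. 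Choose a smooth partition of unity $\{\chi_i\}_{i=0}^N$ subordinate to $\{B_i\}_{i=1}^N$ together with one interior patch $B_0 \Subset \Omega$ (again valid for all $\Gamma$ in the collar, after possibly further shrinking $\delta$ so that $B_0$ stays inside every such $\Omega$). Then define $\mathcal{E}_\Omega f := \chi_0 f + \sum_{i=1}^N \mathcal{E}_{\varphi_i}(\chi_i f)$ in the rotated coordinates, patched back together, where $\mathcal{E}_{\varphi_i}$ is Stein's extension for the half-space-type domain $U_{\varphi_i}$. One checks $\mathcal{E}_\Omega f|_\Omega = f$ since $\sum_i \chi_i \equiv 1$ on $\Omega$.

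Next I would prove the bounds. For integer Sobolev spaces $W^{k,2}$ and for $C^1$ (equivalently $C^{0,1}$, Lipschitz), this is immediate: the $W^{k,p}(\mathbb{R}^d) \leftarrow W^{k,p}(U_{\varphi_i})$ norm of $\mathcal{E}_{\varphi_i}$ depends only on $d,k,p,M$ by Stein's theorem, multiplication by the fixed smooth cutoffs $\chi_i$ is bounded on every $W^{k,p}$ and every $C^\alpha$ with constant depending only on $\|\chi_i\|_{C^k}$ (a fixed number), and $N$ is fixed; hence $\|\mathcal{E}_\Omega\|_{W^{k,2}(\Omega)\to W^{k,2}(\mathbb{R}^d)} \lesssim_A 1$ and $\|\mathcal{E}_\Omega\|_{C^1(\Omega)\to C^1(\mathbb{R}^d)} \lesssim_A 1$. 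For fractional $s \geq 0$, interpolate between two consecutive integers $k \leq s \leq k+1$ using \eqref{Interpolation on domains} and the corresponding real-interpolation identity on $\mathbb{R}^d$: since $\mathcal{E}_\Omega$ is bounded $H^k(\Omega)\to H^k(\mathbb{R}^d)$ and $H^{k+1}(\Omega)\to H^{k+1}(\mathbb{R}^d)$, it is bounded $H^s(\Omega)\to H^s(\mathbb{R}^d)$ with the expected constant, uniformly in the collar. For the H\"older scale with $0 \leq \alpha \leq 1+\epsilon_0$, note $\mathcal{E}_{\varphi_i}$ maps $C^\alpha(U_{\varphi_i}) \to C^\alpha(\mathbb{R}^d)$ for $\alpha \in [0,1]$ directly from Stein's formula (the extension is built from averages and reflections that preserve H\"older moduli up to constants depending on $M$), and for $1 < \alpha \leq 1+\epsilon_0$ one either invokes the finitely-many-derivatives version of Stein's construction or interpolates between $C^1$ and $C^2$ bounds combined with a Zygmund-space argument; either way the constants depend only on $M$, $d$, $\alpha$.

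The main obstacle — really the only nontrivial point — is the uniformity claim: verifying carefully that one genuinely can choose the cover $\{B_i\}$, the cutoffs $\{\chi_i\}$, and the interior patch $B_0$ \emph{independently of} $\Gamma \in \Lambda_*$, and that along the way the Lipschitz (and $C^{1,\epsilon_0}$) constants of the local defining functions $\varphi_i$ stay bounded by a collar-independent constant. This is where the smallness of $\delta$ gets used, and it is the reason the hypothesis $\Gamma \in \Lambda_*$ (rather than merely $\Gamma$ Lipschitz) appears. Once this is granted, everything else is a mechanical assembly of Stein's theorem, boundedness of multiplication by fixed smooth functions, real interpolation, and the equivalence \eqref{Interpolation on domains} — all with constants depending only on $A_\Gamma = \|\Gamma\|_{C^{1,\epsilon_0}}$ and the (conserved, collar-uniform) volume $|\Omega|$, as claimed. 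I would therefore keep the written proof short, essentially citing \cite{MR0290095} for the model case and spelling out only the partition-of-unity reduction and the interpolation step.
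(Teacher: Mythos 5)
Your proposal follows essentially the same route as the paper: reduce to Stein's extension for a special Lipschitz domain (whose norm depends only on $d$, the exponents, and the Lipschitz constant), patch together with a partition of unity that can be chosen once and for all for a sufficiently tight collar $\Lambda_*$, and recover fractional regularity by real interpolation via \eqref{Interpolation on domains}. The one imprecision is in your treatment of $1<\alpha\leq 1+\epsilon_0$: the alternative of ``interpolating between $C^1$ and $C^2$ bounds'' is not available here, since domains in the collar are only assumed $C^{1,\epsilon_0}$ and no $C^2$ bound on the local defining functions is at hand. Your first alternative is the right one — one must check directly from Stein's formula that the $C^{1,\alpha}$ seminorm of the extension is controlled by the $C^{1,\alpha}$ seminorm of the local graph functions, which the collar hypothesis supplies uniformly. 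With that clarification the argument is complete and matches the paper's.
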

\begin{proof}
See \cite[Proposition 5.1]{Euler}.
\end{proof}
\subsection{Littlewood-Paley theory, paraproducts and bilinear estimates on \texorpdfstring{$\Omega$}{}} We now use Stein's extension operator to define  paraproducts  on  $\Omega$ and  prove bilinear estimates. 
\subsubsection{Littlewood-Paley decomposition}\label{LWP section} For a function $u$ on $\mathbb{R}^d$, we recall the standard Littlewood-Paley decomposition
\begin{equation*}
u=\sum_{k\geq 0}P_ku,  
\end{equation*}
where $P_0$ is a Fourier multiplier localized to the unit ball  and $P_k$, $k>0$, is a Fourier multiplier with a smooth symbol supported in the dyadic frequency region $|\xi|\approx 2^k$. The notation $P_{<k}$, $P_{\leq k}$, $P_{\geq k}$ and $P_{>k}$ will have the usual meaning. To define Littlewood-Paley projections when $u:\Omega\to \mathbb{R}$, we use Stein's extension operator. More specifically, we abuse notation and  define $P_ku:=P_k\mathcal{E}u$, with similar definitions for $P_{<k}$, $P_{\leq k}$, etc.~We also write $u_k$, $u_{<k}$, etc.~as shorthand for these operators applied to $u$.
\subsubsection{Paraproducts on $\Omega$} The above decomposition permits us to make use of certain aspects of the paradifferential calculus  on $\mathbb{R}^d$ for functions defined only on $\Omega$. For bilinear expressions in $f,g:\Omega\to \mathbb{R}$, we will make extensive use of the Littlewood-Paley trichotomy 
\begin{equation*}
f\cdot g=T_fg+T_gf+\Pi(f,g)  ,  
\end{equation*}
where the above three terms correspond to the respective ``low-high", ``high-low" and ``high-high" frequency interactions between $f$ and $g$. More precisely, $T_fg$ is defined as 
\begin{equation*}
T_fg:=\sum_{k}f_{<k-k_0}g_k    ,
\end{equation*}
where $k_0$ is some universal parameter independent of $k$. For most purposes, we will  take $k_0=4$.
\subsubsection{Multilinear estimates on $\Omega$}
As a consequence of the bounds for $\mathcal{E}$ and the corresponding inequality on $\mathbb{R}^d$, we have the  algebra property 
\begin{equation}\label{algebrapropproduct}
\|fg\|_{H^s(\Omega)}\lesssim_A \|f\|_{H^s(\Omega)}\|g\|_{L^{\infty}(\Omega)}+\|g\|_{H^s(\Omega)}\|f\|_{L^{\infty}(\Omega)},  
\end{equation}
when $s\geq 0$. In many of our estimates,  bilinear terms  will  appear in the form $\partial_if\partial_jg$ where $f:\mathbb{R}^d\to \mathbb{R}$  encodes the regularity of the domain and the desired uniform bound for $g$ is below $C^1$. In order to avoid negative regularity H\"older norms, we will need the following paraproduct type bound.
\begin{proposition}[Bilinear paraproduct type estimate on $\Omega$]\label{productestref}
Let either i) $s> 0$ and $\alpha_1,\alpha_2,\beta\in [0,1]$ or ii) $s=0$, $\alpha_1=\alpha_2=1$ and $\beta\in[0,1]$. Then we have for any $r\geq 0$,
\begin{equation*}
\begin{split}
\|\partial_if\partial_jg\|_{H^s(\Omega)}\lesssim_A&\|g\|_{H^{s+2-\alpha_1}(\Omega)}\|f\|_{C^{\alpha_1}(\Omega)}+\| f\|_{H^{s+r+1}(\Omega)}\sup_{k>0}2^{-k(r+\alpha_2-1)}\|g_k^1\|_{C^{\alpha_2}(\Omega)}
\\
+&\|f\|_{C^{1,2\epsilon}(\Omega)}\sup_{k>0}2^{k(s+\beta-\epsilon)}\|g_k^2\|_{H^{1-\beta}(\Omega)},
\end{split}
\end{equation*}
where $g=g_k^1+g_k^2$ is any sequence of partitions of $g$ in $C^{\alpha_2}(\Omega)+H^{1-\beta}(\Omega)$.
\end{proposition}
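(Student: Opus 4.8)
The plan is to transfer the estimate to $\mathbb{R}^d$ by means of Stein's extension operator and then run a standard Littlewood--Paley paraproduct decomposition, with the three terms on the right-hand side matching the three frequency interactions.

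\textbf{Step 1: reduction to $\mathbb{R}^d$.} First I would use Stein's extension operator $\mathcal{E}$ from \Cref{Stein}: the function $(\partial_i\mathcal{E}f)(\partial_j\mathcal{E}g)$ restricts to $\partial_if\,\partial_jg$ on $\Omega$, so $\|\partial_if\,\partial_jg\|_{H^s(\Omega)}\le\|(\partial_i\mathcal{E}f)(\partial_j\mathcal{E}g)\|_{H^s(\mathbb{R}^d)}$, and \Cref{Stein} bounds $\|\mathcal{E}f\|_{H^{t}(\mathbb{R}^d)}\lesssim_A\|f\|_{H^{t}(\Omega)}$ for $t\ge 0$ together with $\|\mathcal{E}h\|_{C^{\alpha}(\mathbb{R}^d)}\lesssim_A\|h\|_{C^{\alpha}(\Omega)}$ for $0\le\alpha\le 1+\epsilon_0$ (here one takes $\epsilon$ small enough that $2\epsilon\le\epsilon_0$, to accommodate the $C^{1,2\epsilon}$ norm), so every norm appearing on the right passes to $\mathbb{R}^d$. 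For each dyadic index $k$ one extends the partition as $\mathcal{E}g=\mathcal{E}g_k^1+\mathcal{E}g_k^2$, which lets one control $P_k\partial_j\mathcal{E}g$ either via $\|\mathcal{E}g_k^1\|_{C^{\alpha_2}}$ in $L^\infty$, with loss $2^{k(1-\alpha_2)}$, or via $\|\mathcal{E}g_k^2\|_{H^{1-\beta}}$ in $L^2$, with loss $2^{k\beta}$. From here on $\mathcal{E}$ is dropped; note that $\mathcal{E}$ does not commute with $\partial_i$, so one must work throughout with $\partial_i\mathcal{E}f$, never with $\mathcal{E}(\partial_if)$.

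\textbf{Step 2: paraproduct decomposition.} Writing $\partial_if\,\partial_jg=T_{\partial_if}(\partial_jg)+T_{\partial_jg}(\partial_if)+\Pi(\partial_if,\partial_jg)$, I would estimate each piece in $H^s(\mathbb{R}^d)$. In the low--high term the low factor $\partial_if$ is placed in the (possibly negative, or $L^\infty$ when $\alpha_1=1$) H\"older space $C^{\alpha_1-1}$ and $\partial_jg$ in $H^{s+1-\alpha_1}$; the standard bound $T_ab\colon H^{t}\to H^{t+\sigma}$ for $a\in C^{\sigma}$ produces $\|f\|_{C^{\alpha_1}}\|g\|_{H^{s+2-\alpha_1}}$, the first term (the restriction $\alpha_1=1$ when $s=0$ being exactly what keeps $a\in L^\infty$). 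In the high--low term $\partial_if$ carries the Sobolev regularity in $L^2$ at the output frequency $2^n$ while $\partial_jg$, summed over frequencies $2^k\lesssim 2^n$, is placed in $L^\infty$ through the $g_k^1$ piece; bounding $\|P_k\partial_jg_k^1\|_{L^\infty}\lesssim 2^{k(1-\alpha_2)}2^{k(r+\alpha_2-1)}\sup_l2^{-l(r+\alpha_2-1)}\|g_l^1\|_{C^{\alpha_2}}$ collapses the inner sum to $\sum_{k\lesssim n}2^{kr}\lesssim 2^{nr}$, and $2^{n(s+1+r)}\|f_n\|_{L^2}$ is $\ell^2_n$-summable to $\|f\|_{H^{s+r+1}}$, giving the second term, while the $g_k^2$ part of this interaction is routed into the high--high analysis. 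In the high--high term the inputs sit at comparable frequency $2^k\gtrsim 2^n$: $\partial_if$ goes into $L^\infty$ via $\|P_k\partial_if\|_{L^\infty}\lesssim 2^{-2\epsilon k}\|f\|_{C^{1,2\epsilon}}$, and $\partial_jg$ into $L^2$ via $g_k^2$, with $\|P_k\partial_jg_k^2\|_{L^2}\lesssim 2^{k\beta}\|g_k^2\|_{H^{1-\beta}}$; factoring $\|g_k^2\|_{H^{1-\beta}}\le 2^{-k(s+\beta-\epsilon)}\sup_l2^{l(s+\beta-\epsilon)}\|g_l^2\|_{H^{1-\beta}}$ turns the frequency sum into $\sum_{k\gtrsim n}2^{-k(s+\epsilon)}\lesssim 2^{-n(s+\epsilon)}$, so that $2^{ns}\cdot2^{-n(s+\epsilon)}=2^{-n\epsilon}$ is $\ell^2_n$-summable, yielding the third term.

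\textbf{Main obstacle.} The computation is a routine paraproduct argument; the difficulty is entirely in the bookkeeping. The partition $g=g_k^1+g_k^2$ is allowed to vary with the dyadic index $k$, so it must be applied blockwise and never interchanged with the frequency summation. The genuinely delicate point is to verify that for \emph{every} admissible choice of the parameters $(s,\alpha_1,\alpha_2,\beta,r)$ -- in particular in the degenerate case $s=0,\ \alpha_1=\alpha_2=1$, and in the case $r=0$ -- all the geometric series above remain summable with exactly the stated exponents; this is precisely where the freedom in the auxiliary parameter $r$ and the use of the $\sup$-norms over $k$ (rather than $\ell^1$ or $\ell^2$ sums) is essential, and it is the step that will require the most care. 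The only other subtlety is the non-commutation of $\mathcal{E}$ with $\partial_i$, which is harmless once one simply commits to working with $\partial_i\mathcal{E}f$ throughout.
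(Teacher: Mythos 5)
Your overall strategy — extend to $\mathbb{R}^d$ via Stein's operator and then run the Bony trichotomy, with the three terms of the estimate matched to the three frequency interactions — is the right one, and the low--high term is handled correctly. But the bookkeeping of the partition $g=g_k^1+g_k^2$ in the high--low term is genuinely wrong in a way that breaks the case $r=0$ (and, more carefully, also overshoots when routing the $g_k^2$ piece), and the high--high term is incomplete.

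\textbf{The high--low gap.} At output frequency $n$, the relevant block is $(\partial_j g)_{<n}\,(\partial_i f)_n$. You split $(\partial_j g)_{<n}=\sum_{k<n} P_k\partial_j g$ and then decompose \emph{each} low block using the partition at \emph{its own} scale $k$, i.e.\ $P_k\partial_j g = P_k\partial_j g_k^1 + P_k\partial_j g_k^2$, and sum. For the $g_k^1$ contribution this produces $\sum_{k<n} 2^{kr}$, and the step $\sum_{k\lesssim n}2^{kr}\lesssim 2^{nr}$ is simply false at $r=0$ (the sum is $\approx n$) and carries a constant $(1-2^{-r})^{-1}$ that blows up as $r\to 0^+$, whereas the proposition asserts the bound for all $r\ge 0$; the resulting logarithmic loss is not absorbable without strengthening $\|f\|_{H^{s+r+1}}$ to $\|f\|_{H^{s+r+1+\delta}}$. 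The same mis-matching also spoils the $g_k^2$ routing: with the low--high pairing you suggest, the pieces $P_k\partial_j g_k^2$ accumulate to $\sum_{k<n}2^{-k(s-\epsilon)}\cdot\|(\partial_i f)_n\|_{L^\infty}$, and after multiplying by $2^{ns}$ the survivor $2^{n(s-2\epsilon)}$ is not $\ell^2_n$-summable once $s>2\epsilon$. The fix is to apply the partition \emph{once}, at the \emph{output} scale $n$: write $(\partial_j g)_{<n}=P_{<n}\partial_j g_n^1+P_{<n}\partial_j g_n^2$, bound $\|P_{<n}\partial_j g_n^1\|_{L^\infty}\lesssim 2^{n(1-\alpha_2)}\|g_n^1\|_{C^{\alpha_2}}\le 2^{nr}\sup_l 2^{-l(r+\alpha_2-1)}\|g_l^1\|_{C^{\alpha_2}}$ (with a constant depending only on $\alpha_2$, not $r$, and a direct $L^\infty$-boundedness of $P_{<n}$ when $\alpha_2=1$), and $\|P_{<n}\partial_j g_n^2\|_{L^2}\lesssim 2^{n\beta}\|g_n^2\|_{H^{1-\beta}}\le 2^{-n(s-\epsilon)}\sup_l(\cdot)$; both give the clean factors $2^{n(s+r+1)}\|f_n\|_{L^2}$ and $2^{-n\epsilon}$ respectively, uniformly down to $r=0$.

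\textbf{The high--high gap.} In $\Pi(\partial_i f,\partial_j g)=\sum_k(\partial_i f)_k\,\widetilde{P}_k(\partial_j g)$ you only treat the $g_k^2$ piece. But $\widetilde{P}_k(\partial_j g)=\widetilde{P}_k(\partial_j g_k^1)+\widetilde{P}_k(\partial_j g_k^2)$, and the $g_k^1$ piece does contribute: it must be paired with $\|(\partial_i f)_k\|_{L^2}\lesssim 2^{-k(s+r)}\|f\|_{H^{s+r+1}}$ against $\|\widetilde P_k(\partial_j g_k^1)\|_{L^\infty}\lesssim 2^{kr}\sup_l(\cdot)$, and one needs the off-diagonal $\ell^2$-almost-orthogonality lemma $\|\sum_k h_k\|_{H^s}\lesssim_s(\sum_k 2^{2ks}\|h_k\|_{L^2}^2)^{1/2}$ (for $\mathrm{supp}\,\widehat{h_k}\subset\{|\xi|\lesssim 2^k\}$, valid for $s>0$) to close it, with extra care at the endpoint $s=0$ where $\alpha_1=\alpha_2=1$ is imposed precisely to salvage summability. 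Your writeup acknowledges that these endpoint cases ``require the most care'', but the argument as presented is set up in a way that provably fails there rather than being merely unchecked.
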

\begin{proof}
See \cite[Proposition 5.3]{Euler}.
\end{proof}
The following multilinear estimates will be used in our energy estimates to control product terms on $\Omega$.
\begin{proposition}[Multilinear Estimates]\label{Multilinearest} The following estimates hold.
\begin{enumerate}
\item (Bilinear estimate). Let $s$ and $\alpha_1,\alpha_2$ be as in  \Cref{productestref}. Assume that $f\in H^{s+2-\alpha_2}(\Omega)\cap C^{\alpha_1}(\Omega)$ and $g\in H^{s+2-\alpha_1}(\Omega)\cap C^{\alpha_2}(\Omega)$. Then we have
\begin{equation*}
 \|\partial_if\partial_jg\|_{H^s(\Omega)}\lesssim_A \|g\|_{H^{s+2-\alpha_1}(\Omega)}\|f\|_{C^{\alpha_1}(\Omega)}+\|f\|_{H^{s+2-\alpha_2}(\Omega)}\|g\|_{C^{\alpha_2}(\Omega)}.   
\end{equation*}    
\item (Trilinear estimate).  Let $\epsilon>0$ be a small parameter and let $\alpha_m,\beta_m,\gamma_m\in [0,1]$ for $m=1,2$. Then for every $s\geq 0$, there holds
\begin{equation*}
\begin{split}
\|\partial_if\partial_jg\partial_kh\|_{H^s(\Omega)}\lesssim_A &\|f\|_{H^{s+3-\alpha_1-\alpha_2}(\Omega)}\|g\|_{C^{\alpha_1+\epsilon}(\Omega)}\|h\|_{C^{\alpha_2+\epsilon}(\Omega)}+\|g\|_{H^{s+3-\beta_1-\beta_2}(\Omega)}\|f\|_{C^{\beta_1+\epsilon}(\Omega)}\|h\|_{C^{\beta_2+\epsilon}(\Omega)}
\\
+&\|h\|_{H^{s+3-\gamma_1-\gamma_2}(\Omega)}\|f\|_{C^{\gamma_1+\epsilon}(\Omega)}\|g\|_{C^{\gamma_2+\epsilon}(\Omega)}.
\end{split}
\end{equation*}
\end{enumerate}
\begin{proof}
The bilinear estimate is simply \cite[Corollary 5.4]{Euler}. The trilinear estimate follows from a standard application of the Littlewood-Paley trichotomy and a similar line of reasoning to the proof of \cite[Proposition 5.3]{Euler}. We omit the details.
\end{proof}
\end{proposition}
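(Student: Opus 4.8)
The bilinear estimate (item (1)) is exactly \cite[Corollary 5.4]{Euler}, so the only content to address is the trilinear bound. My plan is to reduce it to a purely Euclidean Coifman--Meyer type trilinear estimate and then run a standard Littlewood--Paley frequency analysis, in the spirit of the proof of \cite[Proposition 5.3]{Euler}. First I would apply Stein's extension operator $\mathcal{E}=\mathcal{E}_\Omega$: since the product $(\mathcal{E}f)(\mathcal{E}g)(\mathcal{E}h)$ restricts on $\Omega$ to $fgh$, we have $\|\partial_if\,\partial_jg\,\partial_kh\|_{H^s(\Omega)}\le\|\partial_i(\mathcal{E}f)\,\partial_j(\mathcal{E}g)\,\partial_k(\mathcal{E}h)\|_{H^s(\mathbb{R}^d)}$, and then \Cref{Stein} (applicable since $s+3-\alpha_1-\alpha_2\ge1>0$ and $\alpha_m+\epsilon\le1+\epsilon_0$ after shrinking $\epsilon$) converts the Euclidean $H^\sigma$ and $C^{\alpha+\epsilon}$ norms of the extensions back into the corresponding norms on $\Omega$, with implicit constant depending only on $A$. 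So it suffices to prove the displayed inequality for $F,G,H$ defined on all of $\mathbb{R}^d$.

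On $\mathbb{R}^d$ I would expand $\partial_iF\,\partial_jG\,\partial_kH=\sum_{m_1,m_2,m_3}(\partial_iF)_{m_1}(\partial_jG)_{m_2}(\partial_kH)_{m_3}$ and split the sum according to which factor(s) carry the output frequency. By the symmetry of the claimed inequality under permuting the triples $(f;\alpha_1,\alpha_2),(g;\beta_1,\beta_2),(h;\gamma_1,\gamma_2)$ it is enough to handle two types of interaction: \emph{(a)} one factor, say $(\partial_iF)_{m_1}$, sits at a frequency $2^{m_1}$ that dominates (up to a fixed constant) both $2^{m_2}$ and $2^{m_3}$, so that the output is localized at $\sim2^{m_1}$; and \emph{(b)} two of the factors, say $(\partial_iF)_{m'}$ and $(\partial_jG)_{m'}$, sit at comparable frequencies $2^{m'}$ which are $\gtrsim2^{m_3}$ and $\gtrsim$ the output frequency. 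In each case the plan is to place the factor whose Sobolev norm appears on the right-hand side in $L^2$, and the remaining two factors in $L^\infty$, using $\|(\partial_iF)_m\|_{L^2}\approx2^{m(1-\sigma)}d_m$ with $\sigma:=s+3-\alpha_1-\alpha_2$ and $d_m:=2^{m\sigma}\|P_mF\|_{L^2}\in\ell^2$, the single-piece bound $\|(\partial_jG)_m\|_{L^\infty}\lesssim2^{m(1-\alpha_1-\epsilon)}\|G\|_{C^{\alpha_1+\epsilon}}$, and the summed bound $\|(\partial_kH)_{<m}\|_{L^\infty}\lesssim2^{m(1-\alpha_2)}\|H\|_{C^{\alpha_2+\epsilon}}$ (with $2^{m(1-\alpha)}$ read as $O(1)$ when $\alpha=1$).

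The exponent arithmetic then closes. In case (a), the piece localized at $2^{m_1}$ has $L^2$ norm $\lesssim2^{m_1(3-\sigma-\alpha_1-\alpha_2)}d_{m_1}\|G\|_{C^{\alpha_1+\epsilon}}\|H\|_{C^{\alpha_2+\epsilon}}=2^{-m_1s}d_{m_1}\|G\|_{C^{\alpha_1+\epsilon}}\|H\|_{C^{\alpha_2+\epsilon}}$, so the $H^s$ norm of the sum over $m_1$ is controlled directly by almost orthogonality together with $\sum_{m_1}d_{m_1}^2\lesssim\|F\|_{H^\sigma}^2$. In case (b), placing $(\partial_iF)_{m'}$ in $L^2$, the high piece $(\partial_jG)_{m'}$ in $L^\infty$ (this is where the genuine $\epsilon$-gain arises), and summing the low factor, the resulting piece has $L^2$ norm $\lesssim2^{-m'(s+\epsilon)}d_{m'}\|G\|_{C^{\alpha_1+\epsilon}}\|H\|_{C^{\alpha_2+\epsilon}}$ and contributes to output frequencies $\lesssim2^{m'}$; summing over the output scale then amounts to the boundedness on $\ell^2$ of the kernel $2^{-(m'-m)s}2^{-\epsilon m'}\mathbf{1}_{\{m'\gtrsim m\}}$, which holds by Schur's test for every $s\ge0$ precisely because $\epsilon>0$. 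Each of the six frequency configurations thereby produces one of the three terms on the right-hand side, after the appropriate choice of which factor to place in $L^2$. The only point that requires any care --- and it is pure bookkeeping, with no real obstruction --- is tracking the $\epsilon$-gains so that the output-scale summation in case (b) converges even at $s=0$; this is exactly the function of the $+\epsilon$ in the Hölder exponents, which is absent in the bilinear estimate and is why the latter needs the restriction $\alpha_1=\alpha_2=1$ when $s=0$ whereas the trilinear estimate does not. I would omit the remaining routine computations, which follow the template of \cite[Proposition 5.3]{Euler}.
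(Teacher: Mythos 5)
Your proof is correct and follows exactly the route the paper points to: extend to $\mathbb{R}^d$ via Stein's operator (\Cref{Stein}), run the Littlewood--Paley trichotomy as in \cite[Proposition~5.3]{Euler}, and close the exponent arithmetic by placing the Sobolev factor in $L^2$ and using Schur's test for the high--high interaction, with the $\epsilon$-gain supplying convergence at $s=0$. This is precisely the omitted argument the authors gesture at, so the reasoning and the endgame numerology both check out.
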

\subsection{Local coordinate parameterizations and Sobolev norms on hypersurfaces} Our next aim is to define Sobolev norms on $\Gamma\in \Lambda_*$ and recall certain refined versions of the trace and product estimates that will be used throughout the paper. 
\subsubsection{Local coordinates and a universal partition of unity}\label{Subsub local}
 We begin by designing a ``universal" set of coordinate neighborhoods  for $\Gamma_*$ which will enable us to flatten the boundaries of nearby hypersurfaces $\Gamma\in\Lambda_*$ in a uniform fashion. This will, in particular, allow us to consistently define Sobolev type norms on $\Gamma\in \Lambda_*$. Below we present a sketch of  the construction;  the reader is referred  to \cite[Section 5.3.1]{Euler} for additional details.
\medskip

For any set $S\subseteq \mathbb{R}^d$ and $\epsilon>0$, we let  $B(S,\epsilon)$ denote the $\epsilon$ neighborhood of $S$. Since $\Gamma_*$ is compact, for any $\sigma>0$ we may select $x_i\in \mathbb{R}^d$ and $r,r_i\in (0,\frac{1}{2}]$, $i=1,\dots,m$, such that
 \begin{enumerate}
     \item\label{prop1} $B(\Gamma_*,r)\subseteq\cup_{i=1}^m R_i(r_i)$ where $R_i(\cdot):=\tilde{R}_i(\cdot)\times I_i(\cdot)\subseteq\mathbb{R}^d$ is a rotated cylinder with perpendicular vertical segment centered at $x_i$ with the given equal radius and length.
     \item\label{prop2} For each $i$, there exists a function $f_{*i}:\tilde{R}_i(2r_i)\to I_i$ satisfying
     \begin{equation}\label{smallness}
         \|f_{*i}\|_{C^0}<\sigma r_i,\ \ \|Df_{*i}\|_{C^0}<\sigma \ \ \text{and} \ \Omega_*\cap R_i(2r_i)=\{z_d>f_{*i}(\tilde{z})\},
     \end{equation}
      where $z=(\tilde{z},z_d)$ denotes the standard Euclidean coordinates on $R_i$.
 \end{enumerate}
In the above setting, it is easy to see that when $\delta>0$ is sufficiently small,  \eqref{prop1} holds with $\Gamma_*$ replaced by any $\Gamma:=\partial\Omega\in \Lambda_*$. Moreover, there exist functions $f_i:\tilde{R}_i(2r_i)\to I_i$ satisfying \eqref{prop2}  with $\Omega_*$ replaced by $\Omega$ such that we may control the Sobolev and H\"older  norms of $f_i$ by the corresponding norms of $\Gamma$. More specifically, for  any $s\geq 0$, integer $k\geq 0$ and $\alpha\in [0,1)$, we have
\begin{equation*}
\|f_i\|_{H^{s}}\lesssim_A 1+\|\Gamma\|_{H^s},\hspace{10mm}\|f_i\|_{C^{k,\alpha}}\lesssim_A 1+\|\Gamma\|_{C^{k,\alpha}}. 
\end{equation*}
  Using these coordinate representations, we aim to design local coordinate maps on each $\tilde{R}_i(2r_i)$ which flatten $\Gamma$ and have  estimates which are uniform in $\Lambda_*$. 
\begin{remark}
    In some of the estimates below, we will abuse notation by writing $\|\Gamma\|$ instead of $1+\|\Gamma\|$. This convention is used to streamline the notation and will not affect any of the analysis of the free boundary MHD equations.
\end{remark}
Let $\overline{\gamma} : [0,\infty)\to [0,1]$ be a smooth cutoff which is equal to $1$ on $[0,\frac{5}{4}]$ and is supported on $[0,\frac{3}{2}]$. On each $\tilde{R}_i(2r_i)$ we define $\phi_i=\gamma_if_i$, where $\gamma_i(\tilde{z}):=\overline{\gamma}\left(\frac{|\tilde{z}|}{r_i}\right)$. We may extend $\phi_i$ to a function on $\mathbb{R}^d$ which  is bounded in suitable pointwise norms and gains half a degree of regularity in $H^s$ norms. Indeed, we may  define the extension $\Phi_i$ of $\phi_i$ by
    \begin{equation*}\label{fullspaceext}
       \Phi_i(z)=\int_{\mathbb{R}^{d-1}}\widehat{\phi_i}(\xi')e^{-(1+|\xi'|^2)z_d^2}e^{2\pi i\xi'\cdot \tilde{z}} d\xi' \ \text{for} \ z=(\tilde{z},z_d)\in \mathbb{R}^d.
    \end{equation*}
We claim that for each integer $k\geq 0$ and $\alpha\in [0,1)$,  we have the bound $\|\Phi_i\|_{C^{k,\alpha}(\mathbb{R}^d)}\lesssim_{k,\alpha} \|\phi_i\|_{C^{k,\alpha}(\mathbb{R}^{d-1})}$, with a similar  bound in $W^{k,\infty}$ for each $k\geq 0$. To see this, one must simply observe that $\Phi_i$ may be re-written as the convolution
\begin{equation*}
\Phi_i(z)=c_de^{-z_d^2}\int_{\mathbb{R}^{d-1}}\phi_i(\tilde{z}+z_dy)e^{-|y|^2}dy   ,
\end{equation*}
where $c_d$ is a dimensional constant. The gain of Sobolev regularity $\|\Phi_i\|_{H^{s+\frac{1}{2}}(\mathbb{R}^d)}\approx_s\|\phi_i\|_{H^s(\mathbb{R}^{d-1})}$ for  $s\geq 0$ follows from an inspection of the Fourier transform of $\Phi_i$. 
\medskip

Using the above properties, it is easy to see that when $\sigma>0$ from \eqref{smallness} is sufficiently small,  the map 
\begin{equation*}
    H_i(\tilde{z},z_d):=(\Tilde{z},z_d+\Phi_i(\tilde{z},z_d))
\end{equation*}
is a diffeomorphism from $\mathbb{R}^d\to\mathbb{R}^d$. Moreover,  for each $s\geq 0$, integer $k\geq 0$ and $\alpha\in [0,1)$, we have the bounds $\|H_i-Id\|_{C^{k,\alpha}}\lesssim_A \|\Gamma\|_{C^{k,\alpha}}$ and $\|H_i-Id\|_{H^{s+\frac{1}{2}}}\lesssim_A \|\Gamma\|_{H^s}$. Similar bounds hold for $G_i-Id$, where $G_i:=H_i^{-1}$ is the inverse function. Furthermore, the $d'$th component $g_i$ of $G_i$ satisfies the bounds $|\partial_{z_d}g_i|+|(\partial_{z_d}g_i)^{-1}|\lesssim_A 1$. Hence, if $\Lambda_*$ is a tight enough collar neighborhood and $\sigma>0$ is sufficiently small, we have
\begin{equation*}
\|H_i-Id\|_{C^1}+\|G_i-Id\|_{C^1}\lesssim_A \rho,
\end{equation*}
where $\rho>0$ is a positive constant which can be made as small as we wish by adjusting $\sigma$ and the width of the collar.  It follows, in particular, that for some uniform constant $\delta_*>0$ we have
\begin{equation*}
    \left(\tilde{R}_i\left(\frac{5}{4}r_i\right)\times I_i\left(\frac{5}{4}\delta_*r_i\right)\right)\cap \Omega=\left(\tilde{R}_i\left(\frac{5}{4}r_i\right)\times I_i\left(\frac{5}{4}\delta_*r_i\right)\right)\cap\{g_i>0\}.
\end{equation*}
\textbf{Partition of unity.} We now construct a partition of unity for $\Omega$ with bounds uniform in $\Lambda_*$. For this, we let $\gamma$ be a smooth cutoff defined on $[0,\infty)$ which is equal to $1$ on $[0,\frac{9}{8}]$, has  support in $[0,\frac{5}{4})$ and satisfies $0\leq \gamma\leq 1$. We also let $\zeta$ be a smooth function defined on $[0,\infty)$ which takes values in $[\frac{1}{3},\infty)$ and has the property that $\zeta=\frac{1}{3}$ on $[0,\frac{1}{3}]$ and $\zeta(x)=x$ for $x\geq \frac{2}{3}$. We then define
\begin{equation*}
 \tilde{\gamma}_{*i}(z):=\gamma(\frac{|\tilde{z}|}{r_i})\gamma(\frac{|z_d|}{\delta_*r_i}),\hspace{10mm}\eta:=\zeta\circ\sum_i(\tilde{\gamma}_{*i}\circ G_i) 
\end{equation*}
and
\begin{equation*}\label{Part of unity gamma*}
\gamma_{*i}:=\frac{\tilde{\gamma}_{*i}(G_i)}{\eta},\hspace{10mm}  \gamma_{*0}:=(1-\sum_i\gamma_{*i})\mathbbm{1}_{\Omega}.  
\end{equation*}
It is easy see that $\sum_{i\geq 0}\gamma_{*i}=1$  on $\Omega$ and $0\leq \gamma_{*i}\leq 1$ for each $i\geq 0$. Moreover, using  Moser and Sobolev product estimates, it may be verified that for each for $s\geq 0$ we have
\begin{equation*}
\|\gamma_{*i}\|_{H^{s+\frac{1}{2}}}\lesssim_A \|\Gamma\|_{H^s}. 
\end{equation*}
\subsubsection{Sobolev spaces on hypersurfaces in $\Lambda_*$} The above partition of unity may be used to consistently define $C^{k,\alpha}$ and $H^s$ spaces on hypersurfaces $\Gamma\in\Lambda_*$. Indeed, if $\Gamma$ is $C^1$ and in $H^s$, we may define the space $H^r(\Gamma)$ for $0\leq r\leq s$ through the inner product,
\begin{equation*}\label{def sob norm}
\langle f,g\rangle_{H^r(\Gamma)}:=\sum_{i\geq 1}\langle \phi_if_i,\phi_ig_i\rangle_{H^r(\mathbb{R}^{d-1})}   ,
\end{equation*}
where $\phi_i:=\gamma_{*i}\circ H_i(\tilde{z},0)$, $f_i:=f\circ H_i(\tilde{z},0)$ and $g_i:=g\circ H_i(\tilde{z},0)$. If $\Gamma$ is $C^{k,\alpha}$ we may also define
\begin{equation*}
\|f\|_{C^{k,\alpha}(\Gamma)}:=\sup_{i\geq 1}\|\phi_if_i\|_{C^{k,\alpha}(\mathbb{R}^{d-1})}.    
\end{equation*}
We remark, of course, that the $\phi_i$ in these definitions are not the same as the $\phi_i$ in \Cref{Subsub local}. 
\medskip

Using  the above framework and a general Moser estimate from \cite[Proposition 5.5]{Euler} one may prove the following refined product type estimate on the boundary $\Gamma$. 
\begin{proposition}[Product estimates on the boundary]\label{boundaryest}
Let $\Omega$ be a bounded domain with boundary $\Gamma\in\Lambda_*$. If $f,g$ are functions on $\Gamma$ and $g=g_j^1+g_j^2$ is any sequence of partitions, then for $s\geq 0$ and $r\geq 1$ we have
\begin{equation*}
\begin{split}
\|fg\|_{H^s(\Gamma)}\lesssim_A \|f\|_{L^{\infty}(\Gamma)}\|g\|_{H^s(\Gamma)}&+(\|f\|_{H^{s+r-1}(\Gamma)}+\|f\|_{L^{\infty}(\Gamma)}\|\Gamma\|_{H^{s+r}})\sup_{j>0}2^{-j(r-1)}\|g_j^1\|_{L^{\infty}(\Gamma)}
\\
&+(1+\|f\|_{C^{2\epsilon}(\Gamma)})\sup_{j>0}2^{j(s-\epsilon)}\|g_j^2\|_{L^2(\Gamma)}.
\end{split}
\end{equation*}
\end{proposition}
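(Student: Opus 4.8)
The plan is to reduce \Cref{boundaryest} to the general Moser estimate on $\mathbb{R}^{d-1}$ by localizing to the coordinate patches constructed in \Cref{Subsub local}. First I would fix the universal partition of unity $\{\gamma_{*i}\}_{i\ge 1}$ and the flattening diffeomorphisms $H_i$, write $\phi_i := \gamma_{*i}\circ H_i(\tilde z,0)$, $f_i := f\circ H_i(\tilde z,0)$, $g_i := g\circ H_i(\tilde z,0)$, and similarly $g_{j,i}^m := g_j^m\circ H_i(\tilde z,0)$ for $m=1,2$. By definition of the Sobolev norm on $\Gamma$ and almost orthogonality, $\|fg\|_{H^s(\Gamma)}^2 \approx_A \sum_i \|\phi_i (f_i g_i)\|_{H^s(\mathbb{R}^{d-1})}^2$; after inserting a slightly fattened cutoff $\tilde\phi_i$ with $\tilde\phi_i\phi_i = \phi_i$, it suffices to bound each $\|\tilde\phi_i f_i \cdot \phi_i g_i\|_{H^s(\mathbb{R}^{d-1})}$, which is now an honest product estimate on Euclidean space for functions whose localizations carry the regularity/pointwise information we want.

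The core of the argument is then the flat-space Moser-type bilinear estimate (the $\mathbb{R}^{d-1}$ analogue of \cite[Proposition 5.5]{Euler}, or equivalently the boundary version of \Cref{productestref}): for $u,v$ on $\mathbb{R}^{d-1}$ with $v = v_j^1+v_j^2$ a dyadic partition, and $s\ge 0$, $r\ge 1$,
\begin{equation*}
\|uv\|_{H^s} \lesssim \|u\|_{L^\infty}\|v\|_{H^s} + \|u\|_{H^{s+r-1}}\sup_{j>0}2^{-j(r-1)}\|v_j^1\|_{L^\infty} + \|u\|_{C^{2\epsilon}}\sup_{j>0}2^{j(s-\epsilon)}\|v_j^2\|_{L^2}.
\end{equation*}
Apply this with $u = \tilde\phi_i f_i$, $v = \phi_i g_i$, and the induced partition $v_j^m := \Pi_j(\phi_i g_i)$ suitably regrouped so that $v_j^1$ inherits the low-frequency/$L^\infty$-controlled piece $\phi_i g_{j,i}^1$ and $v_j^2$ inherits $\phi_i g_{j,i}^2$ (modulo harmless tail terms where the Littlewood-Paley projection falls on $\phi_i$, which are controlled by $\|g_j^m\|$ in a weaker norm and summable in $j$). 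Then I would translate each factor back: $\|\tilde\phi_i f_i\|_{L^\infty} \lesssim_A \|f\|_{L^\infty(\Gamma)}$; $\|\tilde\phi_i f_i\|_{H^{s+r-1}} \lesssim_A \|f\|_{H^{s+r-1}(\Gamma)} + \|f\|_{L^\infty(\Gamma)}\|\Gamma\|_{H^{s+r}}$ — this last term is exactly where the factor $\|\Gamma\|_{H^{s+r}}$ in the statement originates, since $\|\tilde\phi_i\|_{H^{s+r-1/2}} \lesssim_A \|\Gamma\|_{H^{s+r}}$ by the half-derivative gain for the cutoffs and one uses the Moser product rule $\|\tilde\phi_i f_i\|_{H^{s+r-1}} \lesssim \|\tilde\phi_i\|_{L^\infty}\|f_i\|_{H^{s+r-1}} + \|\tilde\phi_i\|_{H^{s+r-1}}\|f_i\|_{L^\infty}$ together with the chain-rule bounds $\|f_i\|_{H^{s+r-1}} \lesssim_A \|f\|_{H^{s+r-1}(\Gamma)} + \|f\|_{L^\infty(\Gamma)}\|\Gamma\|_{H^{s+r-1}}$; $\|\tilde\phi_i f_i\|_{C^{2\epsilon}} \lesssim_A 1 + \|f\|_{C^{2\epsilon}(\Gamma)}$; and for the partition pieces, $\|\phi_i g_{j,i}^1\|_{L^\infty} \lesssim_A \|g_j^1\|_{L^\infty(\Gamma)}$ and $\|\phi_i g_{j,i}^2\|_{L^2} \lesssim_A \|g_j^2\|_{L^2(\Gamma)}$ (the sup over $j$ being preserved). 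Square-summing over $i$ — there are finitely many patches with uniform bounds in $\Lambda_*$ — recovers the claimed inequality.

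The main obstacle I anticipate is bookkeeping the interaction between the Littlewood-Paley truncation on $\mathbb{R}^{d-1}$ and the cutoff $\phi_i$ when transferring the dyadic partition $g = g_j^1 + g_j^2$ from $\Gamma$ to the coordinate patch: the operators $g \mapsto g_j^m$ are defined intrinsically on $\Gamma$ (via Stein extension and frequency projection in the charts), so $\phi_i(g_j^m)_i$ is not literally a clean dyadic piece of $\phi_i g_i$, and one must absorb the commutator $[\Pi_j, \phi_i]$-type errors. These are handled exactly as in the proof of \cite[Proposition 5.3]{Euler}: the commutator lives at frequency $\sim 2^j$ and gains from the $C^1$ regularity of $\phi_i$, so it contributes terms dominated by $\sup_j 2^{-j\epsilon}(\cdots)$ which sum absolutely. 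Beyond this, everything is a routine application of the chain-rule Sobolev bounds for compositions with the $H_i$ (whose norms are controlled by $\|\Gamma\|$ via \Cref{Subsub local}), the half-derivative smoothing for the partition functions, and the fact that implicit constants depend only on $A_\Gamma$ and the domain volume. Since the statement is essentially the boundary transcription of \Cref{productestref}, I expect the proof to mirror \cite[Proposition 5.7 or thereabouts]{Euler}, and I would simply cite that reference for the bulk of the verification while indicating the localization reduction above.
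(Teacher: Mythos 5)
Your approach---localizing to the coordinate patches of \Cref{Subsub local} via the universal partition of unity, applying the flat $\mathbb{R}^{d-1}$ Moser-type bilinear estimate, and transferring back via the chain rule while tracking the $\|\Gamma\|_{H^{s+r}}$ contribution through the cutoffs and coordinate maps---is the same route the paper takes (the text preceding the proposition explicitly names ``the above framework and a general Moser estimate from \cite[Proposition 5.5]{Euler}'' as the ingredients, and the proof is deferred to \cite[Proposition 5.9]{Euler}). The only caveat is minor exponent bookkeeping in the intermediate bound $\|\tilde\phi_i\|_{H^{s+r-1/2}} \lesssim_A \|\Gamma\|_{H^{s+r}}$, but the final chain-rule conclusion and the handling of the $[\Pi_j,\phi_i]$-type commutators are as expected.
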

\begin{proof}
See \cite[Proposition 5.9]{Euler}.
\end{proof}
In a similar spirit to \Cref{boundaryest}, we have the following refined version of the trace theorem for $\Gamma$. 
\begin{proposition}[Balanced trace estimate]\label{baltrace} Let $\Omega$ be a bounded domain with boundary $\Gamma\in\Lambda_*$. For every $s>\frac{1}{2}$, $r\geq 0$, $\alpha,\beta\in [0,1]$ and every sequence of partitions $v=v_j^1+v_j^2$, we have
\begin{equation*}
    \|v_{|\Gamma}\|_{H^{s-\frac{1}{2}}(\Gamma)}\lesssim_A\|v\|_{H^s(\Omega)}+\|\Gamma\|_{H^{s+r-\frac{1}{2}}}\sup_{j>0}2^{-j(r+\alpha-1)}\|v_j^1\|_{C^{\alpha}(\Omega)}+\sup_{j>0}2^{j(s-1+\beta-\epsilon)}\|v_j^2\|_{H^{1-\beta}(\Omega)}.
\end{equation*}
\end{proposition}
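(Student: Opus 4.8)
The plan is to reduce the statement to a trace estimate on the half-space via the universal partition of unity $\{\gamma_{*i}\}$ and the flattening diffeomorphisms $H_i$, and then to run a balanced Littlewood--Paley argument on the flattened pieces. First I would replace $v$ by its Stein extension $\mathcal{E}v$ (\Cref{Stein}), which costs only an $A$-dependent constant in every norm on the right-hand side, and likewise extend the partition pieces $v^1_j, v^2_j$, so that all objects live on $\mathbb{R}^d$. Writing $v_{|\Gamma}$ in the local coordinates from \Cref{BEE}, it then suffices to bound, for each $i$, the $\mathbb{R}^{d-1}$-trace $\|\phi_i\,(v\circ H_i)_{|x_d=0}\|_{H^{s-\frac12}(\mathbb{R}^{d-1})}$ with $\phi_i:=\gamma_{*i}\circ H_i$, using the two quantitative ingredients recorded in \Cref{BEE}: the uniform bounds $\|H_i-\mathrm{Id}\|_{C^{k,\alpha}}\lesssim_A\|\Gamma\|_{C^{k,\alpha}}$ and $\|H_i-\mathrm{Id}\|_{H^{\sigma+\frac12}}\lesssim_A\|\Gamma\|_{H^\sigma}$ (and the analogues for $G_i$ and $\phi_i$), together with the classical trace estimate on the half-space applied to Littlewood--Paley blocks,
\begin{equation*}
\|(P_k f)_{|x_d=0}\|_{L^2(\mathbb{R}^{d-1})}\lesssim 2^{k/2}\|P_k f\|_{L^2(\mathbb{R}^d)},\qquad k\ge 0,
\end{equation*}
which upon summing gives $\|(P_k f)_{|x_d=0}\|_{H^{s-\frac12}(\mathbb{R}^{d-1})}\lesssim 2^{ks}\|P_k f\|_{L^2(\mathbb{R}^d)}$ for each block. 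The hypothesis $s>\frac12$ enters here to make the trace and the resulting sums meaningful.

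The heart of the argument is to estimate $\|P_k w_i\|_{L^2}$ in a balanced way for $w_i:=\phi_i\,(v\circ H_i)$. For each frequency $k$ I would invoke the given partition with $j=k$, writing $w_i=\phi_i\,(v^1_k\circ H_i)+\phi_i\,(v^2_k\circ H_i)$. For the high-frequency part $\phi_i\,(v^2_k\circ H_i)$, a standard Moser/composition estimate (using the lower bound on $\partial_{x_d}g_i$ and the uniform $C^1$ control of $H_i$) gives $\|\phi_i\,(v^2_k\circ H_i)\|_{H^{1-\beta}}\lesssim_A\|v^2_k\|_{H^{1-\beta}}$; combining with the block trace estimate at regularity $1-\beta$ and almost-orthogonality of the $P_k$ decomposition yields a contribution bounded by $\sup_k 2^{k(s-1+\beta-\epsilon)}\|v^2_k\|_{H^{1-\beta}}$, with the $\epsilon$ absorbed in the geometric summation in $k$. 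For the low-frequency part $\phi_i\,(v^1_k\circ H_i)$ I would run the chain rule for $v^1_k\circ H_i$ and split the derivatives between $v^1_k$ and $H_i$: terms in which at most $O(1)$ derivatives fall on $v^1_k$ are controlled by the $C^\alpha$ bound on $v^1_k$ and the half-derivative Sobolev gain for $H_i$, producing the factor $\|\Gamma\|_{H^{s+r-\frac12}}$ matched against $2^{-k(r+\alpha-1)}\|v^1_k\|_{C^\alpha}$; terms in which many derivatives fall on $v^1_k$ are, after the block trace estimate, absorbed into the main term $\|v\|_{H^s(\Omega)}$ (using that $v^1_k$ is a piece of $v$). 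The flexibility in $r$ is precisely what lets one trade off how aggressively derivatives are thrown onto the diffeomorphism, while the flexibility in $\alpha,\beta$ reflects the corresponding choice for the low- and high-frequency factors.

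The main obstacle is this balanced Moser/chain-rule step: carefully bookkeeping the interplay between the $\frac12$-derivative smoothing $\|H_i-\mathrm{Id}\|_{H^{\sigma+\frac12}}\lesssim_A\|\Gamma\|_{H^\sigma}$ and the $\frac12$-derivative loss inherent in the trace, all while keeping the low-regularity factor in $C^\alpha$ rather than in a negative-order Sobolev space (which would otherwise be forced by a naive product estimate). This is exactly the difficulty that the partition $v=v^1_j+v^2_j$ is designed to finesse, and the computation runs parallel to the proof of the balanced product estimate \Cref{boundaryest}; indeed, the cleanest route is to deduce \Cref{baltrace} from the general Moser estimate and the balanced product estimate of \cite{Euler} together with the block trace estimate above, as is carried out there. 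The remaining verifications --- uniform summability over the finitely many coordinate patches $i$, and the reduction from $H^r(\Gamma)$ norms to their local-coordinate representatives --- are routine given the construction in \Cref{BEE}.
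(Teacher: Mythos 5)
Your overall structure — Stein extension to work on $\mathbb{R}^d$, reduction to the universal partition of unity and flattening diffeomorphisms $H_i$, and a balanced Moser/composition estimate paired with the classical half-space trace — is exactly the strategy used in \cite[Proposition 5.11]{Euler}, which is what the paper cites for this statement. Your final paragraph, deferring to the general Moser estimate and balanced product estimate of \cite{Euler}, is the correct and clean route.

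The direct per-block Littlewood--Paley sketch in your middle paragraph, however, has a summability gap that you should be aware of. If you invoke the partition at scale $j=k$ and apply the block trace estimate at frequency $k$, the high-frequency contribution to the $k$-th dyadic piece is
\begin{equation*}
2^{k(s-\frac12)}\,\|P_k(\text{trace of }\phi_i(v^2_k\circ H_i))\|_{L^2}\ \lesssim_A\ 2^{k(s-1+\beta)}\|v^2_k\|_{H^{1-\beta}}\ \le\ 2^{k\epsilon}\,\sup_{j>0}2^{j(s-1+\beta-\epsilon)}\|v^2_j\|_{H^{1-\beta}},
\end{equation*}
and $\sum_k 2^{2k\epsilon}$ diverges, so the naive $\ell^2$ sum over dyadic pieces does not close. (The pieces $v^2_k$ are not frequency-localized after composition with $H_i$, so one also cannot extract almost-orthogonality in $k$ for free.) The way around this, as you note at the end, is not to frequency-decompose the trace at all: one applies the flat-space trace theorem once to the whole flattened function $\phi_i(v\circ H_i)$, reducing to a single estimate for $\|\phi_i(v\circ H_i)\|_{H^s(\mathbb{R}^d)}$, which is then handled by the balanced Moser estimate \cite[Proposition 5.5]{Euler}; the $\sup_j$ structure and the half-derivative Sobolev gain for $H_i$ enter only at that stage. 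So your proposal is correct as to the route actually taken in the cited reference, but the direct argument preceding it would need to be repaired along the lines just indicated.
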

\begin{proof}
See \cite[Proposition 5.11]{Euler}.
\end{proof}
Finally, we recall that we may control  the surface regularity in terms of the mean curvature $\kappa$. 
\begin{proposition}[Curvature estimate]\label{curvaturebound} Let $s\geq 2$. The following estimates for $\|\Gamma\|_{H^s}$ and the normal $n_\Gamma$ hold:
\begin{equation*}
\|\Gamma\|_{H^s}+\|n_\Gamma\|_{H^{s-1}(\Gamma)}\lesssim_A 1+\|\kappa\|_{H^{s-2}(\Gamma)}.
\end{equation*}
\end{proposition}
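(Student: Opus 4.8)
\textbf{Plan for the proof of \Cref{curvaturebound}.} The statement asserts that, for $s\ge 2$ and $\Gamma\in\Lambda_*$, one has $\|\Gamma\|_{H^s}+\|n_\Gamma\|_{H^{s-1}(\Gamma)}\lesssim_A 1+\|\kappa\|_{H^{s-2}(\Gamma)}$. The natural approach is to pass to the local coordinate charts $H_i$ constructed in \Cref{Subsub local}, in which $\Gamma$ is represented by graph functions $f_i$ (with $\|f_i\|_{H^\sigma}\lesssim_A 1+\|\Gamma\|_{H^\sigma}$ and conversely $\|\Gamma\|_{H^\sigma}\lesssim_A \sup_i\|\phi_i f_i\|_{H^\sigma}$ for $\sigma\ge 0$), and to exploit the fact that the mean curvature of a graph $y=f(\tilde z)$ is given by the quasilinear elliptic expression
\begin{equation*}
\kappa = -\,\mathrm{div}\!\left(\frac{\nabla f}{\sqrt{1+|\nabla f|^2}}\right)
= -\,\frac{\Delta f}{(1+|\nabla f|^2)^{1/2}} + \frac{\partial_i f\,\partial_j f\,\partial_i\partial_j f}{(1+|\nabla f|^2)^{3/2}}.
\end{equation*}
Since $\Gamma\in\Lambda_*\subset C^{1,\epsilon_0}$, the coefficients of this second-order operator are bounded in $C^{\epsilon_0}$ with bounds depending only on $A$, and the operator is uniformly elliptic with ellipticity constant controlled by $\|\nabla f\|_{L^\infty}\lesssim A$. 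The plan is therefore: first localize via the partition of unity; second, write the graph equation as $\Delta(\phi_i f_i) = \text{(lower order)} + \phi_i\kappa\cdot(1+|\nabla f_i|^2)^{1/2} + \text{nonlinear terms involving }\nabla^2 f_i$; third, run a bootstrap on the graph regularity using elliptic estimates on $\mathbb R^{d-1}$ together with the product/Moser estimates of the appendix (notably \Cref{Multilinearest} and the balanced product estimates); fourth, translate back to $\|\Gamma\|_{H^s}$ and deduce the bound for $n_\Gamma$ from the formula $n_\Gamma = (-\nabla f_i, 1)/\sqrt{1+|\nabla f_i|^2}$ composed with the chart, again via Moser estimates, observing that $n_\Gamma$ carries one fewer derivative than $\Gamma$.

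The bootstrap itself proceeds by induction on the regularity index. One starts from the a priori information $\Gamma\in C^{1,\epsilon_0}$, hence $f_i\in C^{1,\epsilon_0}$ and $\nabla f_i\in C^{\epsilon_0}$. Rearranging the curvature identity, $(1+|\nabla f_i|^2)^{-1/2}\Delta f_i = -\kappa + (\text{cubic in }\nabla f_i)\cdot\nabla^2 f_i$, and multiplying through by the smooth, bounded, strictly positive factor $(1+|\nabla f_i|^2)^{1/2}$, one obtains
\begin{equation*}
\Delta(\phi_i f_i) = \mathcal M_2(\nabla\phi_i,\nabla f_i) + \phi_i\,\mathcal M(\nabla f_i)\,\kappa + \mathcal M_3(\nabla f_i,\nabla f_i,\nabla^2(\phi_i f_i)) + \text{(commutator terms)},
\end{equation*}
where $\mathcal M(\nabla f_i)$ denotes a smooth function of $\nabla f_i$. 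Applying the standard $H^{\sigma+2}$ elliptic estimate for $\Delta$ on $\mathbb R^{d-1}$ (with right-hand side in $H^\sigma$), and using the multilinear estimates of \Cref{Multilinearest} to bound the cubic term $\mathcal M_3(\nabla f_i,\nabla f_i,\nabla^2(\phi_i f_i))$ by $\|\nabla f_i\|_{C^{\epsilon_0}}^2\|\phi_i f_i\|_{H^{\sigma+2}} + \|\nabla f_i\|_{H^{\sigma+1}}^2$ (the top-order factor coming with a small coefficient once we restrict to high frequencies, since $\|\nabla f_i\|_{C^{\epsilon_0}}$ is merely bounded, not small), one closes the estimate by absorbing the top-order term — this absorption step is where one must be careful, and it is handled by a Littlewood–Paley decomposition of the coefficient, placing the low-frequency part of $\mathcal M(\nabla f_i)$ against the top derivatives and the high-frequency part (which is small in $L^\infty$ by frequency localization and the bounded $C^{\epsilon_0}$ norm) against lower derivatives, exactly in the spirit of \Cref{productestref}. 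Iterating this from $\sigma = 0$ up to $\sigma = s-2$ gives $\|\phi_i f_i\|_{H^s}\lesssim_A 1 + \|\kappa\|_{H^{s-2}(\Gamma)} + \|\Gamma\|_{H^{s-1}}$, and a further induction/interpolation removes the lower-order term $\|\Gamma\|_{H^{s-1}}$.

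\textbf{Main obstacle.} The genuine difficulty is the low-regularity endpoint: the coefficients of the linearized curvature operator sit only in $C^{\epsilon_0}$ (not in any positive-order Sobolev space a priori), so the quasilinear term $\mathcal M_3(\nabla f,\nabla f,\nabla^2 f)$ is exactly at the threshold — one cannot naively put $\nabla^2 f$ in $L^\infty$. The resolution is the balanced/paradifferential bookkeeping from the appendix: one splits the coefficient into a paraproduct piece $T_{\mathcal M(\nabla f)}\Delta f$ — whose contribution to $\|\Delta f\|_{H^{s-2}}$ is controlled by $\|\nabla f\|_{C^{\epsilon_0}}$ times $\|f\|_{H^s}$ and can be absorbed at high frequency (paying a logarithm that is harmless, or using the slightly subcritical $C^{1,\epsilon_0}$ room) — plus a balanced remainder that is strictly better behaved by the product estimates of \Cref{boundaryest} and \Cref{Multilinearest}. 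Everything else (changing charts, assembling the partition of unity, deducing the $n_\Gamma$ bound, and transferring $\mathbb R^{d-1}$ estimates to $\Gamma$) is routine given the uniform chart bounds of \Cref{Subsub local}. Since this is precisely the argument of the analogous statement in \cite{Euler}, the cleanest writeup simply cites the corresponding result there and indicates the (purely cosmetic) modifications; a self-contained proof would carry out the bootstrap as sketched above.
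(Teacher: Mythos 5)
The paper's own proof of \Cref{curvaturebound} is a citation to \cite[Proposition 5.22]{Euler}, so there is no argument in this manuscript to compare against line by line. Your sketch is the standard quasilinear-elliptic bootstrap for the graph representation of mean curvature, and it is, as far as one can tell, the same route taken in \cite{Euler} (indeed the paper itself works with the identical graph formula $\kappa_* = L\eta$ for the mean curvature in collar coordinates in the proof surrounding \Cref{epsderivativeofnormal}, and the balanced elliptic machinery of the appendix is exactly the toolkit such a bootstrap would use). So the plan is the right one and the main ideas are in place.

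Two minor points worth sharpening. First, the ``absorption'' of the top-order term $T_{\mathcal M(\nabla f)}\Delta f$ cannot be a literal absorption by smallness, since $\|\nabla f\|_{C^{\epsilon_0}}\lesssim A$ is only bounded, not small; the correct mechanism is the freezing-of-coefficients / paradifferential-parametrix argument you allude to, where one splits $a_{ij}(\nabla f)=P_{<k}a_{ij}+P_{\geq k}a_{ij}$, the high-frequency part has $L^\infty$ norm $\lesssim 2^{-k\epsilon_0}A$ and is thus small once $k=k(A)$ is chosen large, and the low-frequency part gives a smooth, uniformly elliptic constant-coefficient-type operator whose inverse has norm controlled by $A$ and the ellipticity constant. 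Your phrasing of ``paying a harmless logarithm'' is a red herring: no logarithm appears, and the gain at each step of the bootstrap is $\epsilon_0$ derivatives, not one full derivative, so the induction runs from the base $\|\Gamma\|_{H^{1+\epsilon_0'}}\lesssim_A 1$ (valid since $C^{1,\epsilon_0}\hookrightarrow H^{1+\epsilon_0'}$ on compact sets) in increments of $\epsilon_0$ up to $s$. Second, the ``further induction/interpolation'' to remove $\|\Gamma\|_{H^{s-1}}$ from the right-hand side is exactly this downward iteration and should be stated as such. Neither of these is a genuine gap --- they are precisely the bookkeeping that \Cref{productestref} and the Moser estimates are designed to make explicit --- but if you were writing the self-contained version, those are the two places to be precise.
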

\begin{proof}
See \cite[Proposition 5.22]{Euler}.
\end{proof}
\subsubsection{An extension operator depending continuously on the domain}\label{cont of domain ext} To establish continuous dependence of the free boundary MHD flow, we will need a family of extension operators that depend continuously on the domain in a suitable sense. The existence of such a family of operators can be easily deduced from the above local coordinates.   
\begin{proposition}\label{continuosext}
Fix a collar neighborhood $\Lambda_*$ and let $s>\frac{d}{2}+\frac{1}{2}$. For each bounded domain $\Omega$ with $H^s$ boundary $\Gamma\in\Lambda_*$ there exists an extension operator $E_{\Omega}:H^s(\Omega)\to H^s(\mathbb{R}^d)$ such that for all $v\in H^s(\Omega)$, 
\begin{equation*}\label{otherextensionbounds}
\|E_{\Omega}v\|_{H^s(\mathbb{R}^d)}+\|\Gamma\|_{H^s}\approx_{A,\|v\|_{C^{\frac{1}{2}}(\Omega)}}\|(v,\Gamma)\|_{H^s},\hspace{5mm}\|E_{\Omega}v\|_{H^s(\mathbb{R}^d)}\lesssim_{A} \|\Gamma\|_{H^{s}}\|v\|_{H^s(\Omega)},  
\end{equation*}
where the dependence on $\|v\|_{C^{\frac{1}{2}}(\Omega)}$ is polynomial. Moreover, if $\Omega_n$ is a sequence of domains with $\Gamma_n\to \Gamma$ in $H^s$, then for every $v\in H^s(\mathbb{R}^d)$, there holds
\begin{equation*}\label{SoT}
\|E_{\Omega_n}v_{|\Omega_n}-E_{\Omega}v_{|\Omega}\|_{H^s(\mathbb{R}^d)}\to 0.    
\end{equation*}
\end{proposition}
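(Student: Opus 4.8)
Proof proposal for \Cref{continuosext}.

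The plan is to construct $E_\Omega$ explicitly from the universal local coordinate machinery developed in \Cref{Subsub local}, and then read off both the quantitative bounds and the continuity statement directly from the construction. First I would recall that for a tight enough collar $\Lambda_*$, every $\Gamma\in\Lambda_*$ is covered by the fixed cylinders $R_i(r_i)$, and inside each one $\Gamma$ is the graph of a function $f_i$ over $\tilde R_i(2r_i)$ with $\|f_i\|_{H^s}\lesssim_A 1+\|\Gamma\|_{H^s}$ and $\|f_i\|_{C^{1,\epsilon_0}}\lesssim_A 1$, together with the associated flattening diffeomorphisms $H_i$, $G_i=H_i^{-1}$ of $\mathbb R^d$ and the universal partition of unity $\gamma_{*i}$ on $\Omega$ with $\|\gamma_{*i}\|_{H^{s+\frac12}}\lesssim_A \|\Gamma\|_{H^s}$. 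On each patch one has the model half-space $U = \{z_d>0\}$ and Stein's half-space extension $\mathcal E_0$, which is bounded $H^s(U)\to H^s(\mathbb R^d)$ with universal constants. I would then set
\[
E_\Omega v := \gamma_{*0} v + \sum_{i\ge 1} \bigl(\mathcal E_0\bigl[(\gamma_{*i} v)\circ H_i\bigr]\bigr)\circ G_i,
\]
where in the interior patch we just extend $\gamma_{*0}v$ by zero (legitimate since $\gamma_{*0}$ is compactly supported in $\Omega$ and smooth up to the boundary only away from $\Gamma$ — one should instead take $\gamma_{*0}$ supported strictly inside $\Omega$ and use a fixed interior Stein extension, a harmless modification). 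By construction $E_\Omega v|_\Omega = v$.

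Next I would establish the two quantitative estimates. The key observation is that the composition operators $T_i f = f\circ H_i$ and $T_i^{-1} f = f\circ G_i$ are bounded on $H^s(\mathbb R^d)$ with norms controlled by $\|H_i - \mathrm{Id}\|_{C^{1,\epsilon_0}} + \|H_i-\mathrm{Id}\|_{H^{s+\frac12}}\lesssim_A \|\Gamma\|_{H^s}$ (via the Moser/composition estimates that underlie \Cref{Subsub local}, together with the pointwise lower bound $|(\partial_{z_d}g_i)^{-1}|\lesssim_A 1$ ensuring the diffeomorphisms are nondegenerate uniformly). Multiplication by $\gamma_{*i}$ costs a factor $\|\gamma_{*i}\|_{H^{s+\frac12}}\lesssim_A \|\Gamma\|_{H^s}$ when $s>\frac d2+\frac12$ (algebra/product estimate on $H^s$). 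Chaining these with the universal bound for $\mathcal E_0$ yields $\|E_\Omega v\|_{H^s(\mathbb R^d)}\lesssim_A \|\Gamma\|_{H^s}\|v\|_{H^s(\Omega)}$, which is the second stated bound. For the first (equivalence) bound, the upper direction $\|E_\Omega v\|_{H^s(\mathbb R^d)} + \|\Gamma\|_{H^s}\lesssim \cdots$ follows because one does not need the full factor $\|\Gamma\|_{H^s}$ on the low-high interactions: using the balanced product estimate \Cref{boundaryest} (together with a balanced version of the composition estimate) one can place the rough factor — either $v$ or $\Gamma$ — in $H^s$ and the other in a pointwise space $C^{1/2}$, which is exactly the structure of $\|(v,\Gamma)\|_{H^s}$ from \eqref{Size of states.}; the dependence on $\|v\|_{C^{1/2}(\Omega)}$ is polynomial because it enters only through finitely many such product estimates. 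The reverse inequality $\|(v,\Gamma)\|_{H^s}\lesssim \|E_\Omega v\|_{H^s}+\|\Gamma\|_{H^s}$ is immediate since restriction to $\Omega$ is bounded $H^s(\mathbb R^d)\to H^s(\Omega)$ with universal constant and $\|v\|_{C^{1/2}(\Omega)}\lesssim \|E_\Omega v\|_{H^s(\mathbb R^d)}$ by Sobolev embedding.

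Finally, for the continuity statement: fix $v\in H^s(\mathbb R^d)$ and suppose $\Gamma_n\to\Gamma$ in $H^s$, i.e.\ $\eta_{\Gamma_n}\to\eta_{\Gamma}$ in $H^s(\Gamma_*)$. Since $s>\frac d2+\frac12$, all of the patch data depend continuously on $\eta_\Gamma$: the graph functions $f_i^{(n)}\to f_i$ in $H^s$ hence in $C^{1,\epsilon_0}$, the diffeomorphisms $H_i^{(n)}\to H_i$ and $G_i^{(n)}\to G_i$ in $H^{s+\frac12}$ and $C^{1,\epsilon_0}$, and the partition functions $\gamma_{*i}^{(n)}\to\gamma_{*i}$ in $H^{s+\frac12}$. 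I would then write the difference $E_{\Omega_n}(v|_{\Omega_n}) - E_{\Omega}(v|_\Omega)$ as a telescoping sum over the patches where in each term exactly one of $\{\gamma_{*i}^{(n)}, H_i^{(n)}, G_i^{(n)}\}$ is replaced by its limit, and estimate each difference in $H^s(\mathbb R^d)$ using: continuity of composition $f\mapsto f\circ H$ in $H^s$ jointly in $(f,H)$ for $H$ near a fixed diffeomorphism (this is standard and follows by approximating $v$ by smooth functions and using the uniform operator bounds from the previous paragraph to control the error), continuity of multiplication, and the fixed boundedness of $\mathcal E_0$. The factor $v|_{\Omega_n}$ versus $v|_\Omega$ causes no trouble because $v$ is a fixed function on all of $\mathbb R^d$. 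The main obstacle is this last step: proving that the composition operators are not merely uniformly bounded but genuinely continuous in the strong operator topology as the diffeomorphism varies — uniform boundedness is the easy part, but for the sequence to converge one must handle the non-smooth input $v\in H^s$ via a density argument, which is where one has to be careful to keep all error estimates uniform in $n$. Everything else is bookkeeping with the tools already assembled in \Cref{Subsub local}, \Cref{boundaryest}, and \Cref{Stein}.
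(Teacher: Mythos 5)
Your construction of $E_\Omega$ by transporting Stein's half-space extension through the flattening diffeomorphisms $H_i, G_i$ and summing against the universal partition of unity is the natural one, and your outline of how the quantitative bounds and the continuity statement follow is sound in its broad structure. Two points need to be sharpened, and one citation is misplaced.

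First, the key technical ingredient behind the upper half of the equivalence bound is not a product estimate on $\Gamma$ but a product estimate on $\Omega$ (or $\mathbb{R}^d$), so the reference should be to \Cref{productestref}/\Cref{Multilinearest} rather than \Cref{boundaryest}, which concerns products of functions restricted to the boundary. More substantively, the "balanced version of the composition estimate" you invoke is not contained in the toolbox assembled in \Cref{BEE}: what is available are balanced \emph{product} and \emph{trace} estimates. You need a Moser-type statement of the form
\[
\| w\circ H_i \|_{H^s(\mathbb{R}^d)} \lesssim_A \|w\|_{H^s(\mathbb{R}^d)} + \|H_i - \mathrm{Id}\|_{H^{s+\frac12}(\mathbb{R}^d)}\,\|w\|_{C^{\frac12}(\mathbb{R}^d)}
\]
(and the analogous bound with the roles of the pointwise and Sobolev factors swapped). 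This is precisely where the polynomial dependence on $\|v\|_{C^{1/2}(\Omega)}$ enters, and it must be proved, not merely asserted. It is not a corollary of the product estimates in the appendix but requires its own argument — either a paralinearization of the composition $w\circ H_i$, or working directly with the Faà di Bruno expansion and placing each factor with the balanced numerology that the estimate demands. Without this, the first display in the proposition is not justified. The same estimate is also implicitly needed in your telescoping argument for continuity: once you have it, the density argument you sketch does close, since the uniform-in-$n$ operator bounds control the error committed by approximating $v$ by smooth data.

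Second, the continuity of the patch data in $\eta_\Gamma$ — that $\Gamma_n\to\Gamma$ in $H^s$ forces $f_i^{(n)}\to f_i$, $H_i^{(n)}\to H_i$ in $H^{s+\frac12}$, and $\gamma_{*i}^{(n)}\to\gamma_{*i}$ in $H^{s+\frac12}$ — is asserted without proof. This is plausible, but it requires returning to the explicit construction: the passage $\eta_\Gamma\mapsto f_i$ and then $f_i\mapsto \Phi_i\mapsto H_i$ is explicit, while $H_i\mapsto G_i = H_i^{-1}$ passes through an implicit function argument, and $H_i\mapsto \gamma_{*i}$ through the formula involving $\zeta$ and $\eta$. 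Each step needs a statement that it is continuous (not merely bounded) as a map between the relevant spaces, and again the nonlinear ones (inversion, the Moser/nonlinear formula for $\eta$) require an argument rather than an observation. Once both of these pieces are supplied, the rest of the proof is, as you say, bookkeeping.
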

\begin{proof}
This is a slightly cruder version of \cite[Proposition 5.12]{Euler}.
\end{proof}
\begin{remark}
We will only use the above extension operator in \Cref{Freq envelopes} when we define frequency envelopes and in \Cref{Cont dep section} when we establish continuity of the flow map for the free boundary MHD equations. In all other cases, our default extension operator will be that of Stein.
\end{remark}
\subsection{Balanced elliptic estimates}
We now collect the  refined elliptic estimates from \cite{Euler}.
\subsubsection{Pointwise elliptic estimates}
We begin by stating certain variants of the  $C^{1,\alpha}$ estimates for the Dirichlet problem which precisely quantify the regularity of the domain. We will mostly use these estimates when  $\alpha=\frac{1}{2}$ or $\alpha=\epsilon$.
\begin{proposition}[$C^{1,\alpha}$ estimates for the Dirichlet problem]\label{Gilbarg} Let $\Omega$ be a bounded $C^{1,\alpha}$ domain with $0<\alpha<1$ and with boundary $\Gamma\in\Lambda_*$. Consider the boundary value problem
\begin{equation*}\label{dirprob}
\begin{cases}
&\Delta v=\nabla\cdot g_1+g_2\hspace{5mm}\text{in }\Omega,
\\
&\hspace{3mm}v=\psi\hspace{10mm}\text{on }\partial\Omega.
\end{cases}
\end{equation*}
Then $v$ satisfies the estimate 
\begin{equation*}\label{C1a}
\|v\|_{C^{1,\alpha}(\Omega)}\lesssim_A \|\Gamma\|_{C^{1,\alpha}}(\|v\|_{W^{1,\infty}(\Omega)}+\|g_1\|_{L^{\infty}(\Omega)})+\|g_1\|_{C^{\alpha}(\Omega)}+\|g_2\|_{L^{\infty}(\Omega)}+\|\psi\|_{C^{1,\alpha}(\Gamma)}.
\end{equation*}
Interpolating and using the straightforward estimate 
\begin{equation*}
\|v\|_{L^{\infty}(\Omega)}\lesssim_A \|g_1\|_{L^{\infty}(\Omega)}+\|g_2\|_{L^{\infty}(\Omega)}+\|\psi\|_{L^{\infty}(\Gamma)},
\end{equation*}
 we deduce that
\begin{equation*}\label{C1eps}
\|v\|_{C^{1,\epsilon}(\Omega)}\lesssim_A \|g_1\|_{C^{\epsilon}(\Omega)}+\|g_2\|_{L^{\infty}(\Omega)}+\|\psi\|_{C^{1,\epsilon}(\Gamma)}
\end{equation*}
and
\begin{equation*}\label{C1est2}
\|v\|_{C^{1,\alpha}(\Omega)}\lesssim_A \|\Gamma\|_{C^{1,\alpha}}(\|g_1\|_{C^{\epsilon}(\Omega)}+\|g_2\|_{L^{\infty}(\Omega)}+\|\psi\|_{C^{1,\epsilon}(\Gamma)})+\|g_1\|_{C^{\alpha}(\Omega)}+\|g_2\|_{L^{\infty}(\Omega)}+\|\psi\|_{C^{1,\alpha}(\Gamma)}   . 
\end{equation*}
\end{proposition}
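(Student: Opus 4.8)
The plan is to prove \Cref{Gilbarg} by localizing with the universal partition of unity $\{\gamma_{*i}\}$ from \Cref{Subsub local}, flattening each boundary chart with the maps $H_i,G_i$ constructed there, and reducing to a model divergence-form problem on a half-space whose coefficients are controlled by $\|\Gamma\|_{C^{1,\alpha}}$. Away from the boundary the estimate is the interior $C^{1,\alpha}$ Schauder estimate for $\Delta v=\nabla\cdot g_1+g_2$, which is classical. Near the boundary, after conjugating by $H_i$ the equation becomes $\partial_j(a^{jk}\partial_k\tilde v)=\partial_j\tilde g_1^j+\tilde g_2$ on $\{x_d>0\}$ with $\tilde v=\tilde\psi$ on $\{x_d=0\}$, where $a^{jk}=a^{jk}(\nabla H_i)$ is a smooth function of the first derivatives of the flattening map, hence $C^\alpha$ with $\|a^{jk}-\delta^{jk}\|_{C^\alpha(\mathbb{R}^d)}\lesssim_A\|\Gamma\|_{C^{1,\alpha}}$, and with $\|a^{jk}-\delta^{jk}\|_{C^{\epsilon_0}}\lesssim_A 1$ since $\Gamma\in\Lambda_*\subset C^{1,\epsilon_0}$. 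The classical $C^{1,\alpha}$ boundary Schauder estimate for such divergence-form problems (the Dirichlet datum being handled by subtracting its harmonic extension, whose $C^{1,\alpha}$ norm is controlled via the flattening maps together with \Cref{Stein}) then yields, for any $\alpha'<\alpha$,
\[
\|v\|_{C^{1,\alpha}(\Omega)}\lesssim_A(1+\|\Gamma\|_{C^{1,\alpha}})\bigl(\|v\|_{C^{1,\alpha'}(\Omega)}+\|g_1\|_{C^\alpha(\Omega)}+\|g_2\|_{L^\infty(\Omega)}+\|\psi\|_{C^{1,\alpha}(\Gamma)}\bigr),
\]
and interpolating $C^{1,\alpha'}$ between $C^{1,\alpha}$ and $L^\infty$, then absorbing, gives a first, unbalanced estimate.

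The real content is upgrading this to the \emph{balanced} form in which the top-order factor $\|\Gamma\|_{C^{1,\alpha}}$ multiplies only $\|v\|_{W^{1,\infty}}$ and $\|g_1\|_{L^\infty}$. For this I would rerun the boundary estimate with a paraproduct splitting of the flattened coefficients, $a^{jk}=a^{jk}_{\le k_0}+a^{jk}_{>k_0}$, at a fixed cutoff $k_0$. The low-frequency piece $a^{jk}_{\le k_0}$ has $C^\alpha$ norm bounded purely in terms of the fixed collar data $A_\Gamma$, so treating the equation as a perturbation of $\Delta$ by $\partial_j\bigl((a^{jk}_{\le k_0}-\delta^{jk})\partial_k\tilde v\bigr)$ and iterating produces a $C^{1,\alpha}$ bound with implicit constant depending only on $A$. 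The remaining high-frequency contribution $\partial_j(a^{jk}_{>k_0}\partial_k\tilde v)$ is the error term: because the coefficient sits at high frequency there, a balanced product estimate in the spirit of \Cref{boundaryest} lets us place the full $C^{1,\alpha}$ norm of $a^{jk}_{>k_0}$ (hence of $\Gamma$) against only $\|\partial_k\tilde v\|_{L^\infty}\sim\|v\|_{W^{1,\infty}}$, the frequency gain being absorbed by the $C^\alpha$ smoothing of $a^{jk}_{>k_0}$ relative to its $C^{1,\alpha}$ norm. The same splitting applied to $g_1$ in the divergence term gives the $\|\Gamma\|_{C^{1,\alpha}}\|g_1\|_{L^\infty}$ contribution, while $\|g_1\|_{C^\alpha}$ and $\|g_2\|_{L^\infty}$ appear with only collar-dependent constants. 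Summing over the charts $i$ and adding the interior estimate yields the claimed bound.

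Once the balanced $C^{1,\alpha}$ estimate is in hand, the two remaining displays are immediate: the $C^{1,\epsilon}$ estimate follows by taking $\alpha=\epsilon$, bounding $\|v\|_{W^{1,\infty}}$ by interpolation between $\|v\|_{C^{1,\epsilon}}$ and $\|v\|_{L^\infty}$ (the latter via the stated maximum-principle inequality), and absorbing; the final display is just the balanced estimate with the $\|v\|_{W^{1,\infty}}$ and $\|g_1\|_{L^\infty}$ terms rewritten through this $C^{1,\epsilon}$ bound. I expect the genuine obstacle to be the bookkeeping in the paraproduct step: one must verify that every interaction in which the top derivative of the flattening map (equivalently, of $\Gamma$) appears at high frequency truly pairs against a low-frequency, $W^{1,\infty}$-controlled factor of $v$ (or $L^\infty$-controlled factor of $g_1$), and that neither the Dirichlet-datum reduction nor the passage between $\Omega$ and the half-space secretly reintroduces a $\|\Gamma\|_{C^{1,\alpha}}\|v\|_{C^{1,\alpha}}$ term — precisely the kind of balanced bound that \Cref{boundaryest} and \Cref{baltrace} are designed to supply.
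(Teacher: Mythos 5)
Your overall plan — localize with the universal partition of unity, flatten with $H_i,G_i$, reduce to a divergence-form problem on the half-space, use frozen/split coefficients to get the balanced form, then interpolate — is sound and, as far as can be checked here, consistent in spirit with how such balanced Schauder estimates are derived. (The paper cites \cite[Proposition 5.15]{Euler} and does not reproduce the proof, so no line-by-line comparison is possible.)

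There is, however, one genuinely confused step at the heart of the absorption argument. You write that the frequency gain in $\partial_j(a^{jk}_{>k_0}\partial_k\tilde v)$ is "absorbed by the $C^\alpha$ smoothing of $a^{jk}_{>k_0}$ relative to its $C^{1,\alpha}$ norm." The coefficients $a^{jk}$ are built from $\nabla H_i$, so their regularity tops out at $C^\alpha$; a $C^{1,\alpha}$ norm of $a^{jk}$ would require $\Gamma\in C^{2,\alpha}$, which is not available. What actually makes the bad term absorbable is the collar-level regularity: since $\Gamma\in\Lambda_*\subset C^{1,\epsilon_0}$, one has $\|a^{jk}-\delta^{jk}\|_{C^{\epsilon_0}}\lesssim_A 1$, hence $\|a^{jk}_{>k_0}\|_{L^\infty}\lesssim_A 2^{-k_0\epsilon_0}$, small for $k_0$ large depending only on $A$. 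Then
\[
\|a^{jk}_{>k_0}\partial_k\tilde v\|_{C^\alpha}\lesssim \|a^{jk}_{>k_0}\|_{C^\alpha}\|\nabla\tilde v\|_{L^\infty}+\|a^{jk}_{>k_0}\|_{L^\infty}\|\nabla\tilde v\|_{C^\alpha}\lesssim_A \|\Gamma\|_{C^{1,\alpha}}\|v\|_{W^{1,\infty}}+2^{-k_0\epsilon_0}\|v\|_{C^{1,\alpha}},
\]
and the second term is absorbed. No higher-than-$C^\alpha$ regularity of $a^{jk}$ ever enters.

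A related remark: the "perturbation of $\Delta$ by $\partial_j((a^{jk}_{\leq k_0}-\delta^{jk})\partial_k\tilde v)$ and iterating" step needs to be interpreted as the classical Campanato/frozen-coefficient iteration on balls of small radius $r=r(A)$, since $\|a^{jk}_{\leq k_0}-\delta^{jk}\|_{L^\infty}$ is not itself small (it is only $\lesssim_A 1$). What is small is the oscillation on small balls, again via the $C^{\epsilon_0}$ bound; the covering constants then depend only on $A$. In fact, once one notices that the $C^{\epsilon_0}$ collar bound supplies small oscillation on balls of $A$-dependent radius, the whole paraproduct splitting can be dispensed with: freeze $a^{jk}$ at the ball center, put $\|a^{jk}\|_{C^\alpha}\lesssim\|\Gamma\|_{C^{1,\alpha}}$ against $\|\nabla v\|_{L^\infty}$, put the small oscillation against $\|\nabla v\|_{C^\alpha}$ and absorb, then cover. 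This is slightly more direct than your paraproduct variant, but the two are essentially the same mechanism; your version is valid once the absorption step is stated via the $C^{\epsilon_0}$ bound rather than a phantom $C^{1,\alpha}$ norm of the coefficients.

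The interpolation endgame is correct: for the $C^{1,\epsilon}$ bound take $\alpha=\epsilon\leq\epsilon_0$ (so $\|\Gamma\|_{C^{1,\epsilon}}\lesssim_A 1$), interpolate $\|v\|_{W^{1,\infty}}$ between $\|v\|_{C^{1,\epsilon}}$ and $\|v\|_{L^\infty}$, use the maximum-principle $L^\infty$ bound, and absorb by Young; the final display then follows by substituting the $C^{1,\epsilon}$ bound for $\|v\|_{W^{1,\infty}}$ into the balanced $C^{1,\alpha}$ estimate.
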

\begin{proof}
See \cite[Proposition 5.15]{Euler}.
\end{proof}
When $g_1$ and $g_2$ are both zero, we may use the maximum principle,  the $C^{1,\epsilon}$ bounds in \Cref{Gilbarg}  and interpolation to obtain $C^{\alpha}$ bounds for $\mathcal{H}$ with constant depending only on $A$.
\begin{corollary}\label{Hboundlow}
Let $0\leq \alpha<1$. The following low regularity bound for $\mathcal{H}$ holds uniformly for domains $\Omega$ with boundary $\Gamma\in\Lambda_*$,
\begin{equation*}
\|\mathcal{H}g\|_{C^{\alpha}(\Omega)}\lesssim_A \|g\|_{C^{\alpha}(\Gamma)}.
\end{equation*}
\end{corollary}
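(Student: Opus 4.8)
The plan is to obtain this by interpolating between two endpoint bounds that are already available: the maximum principle at the bottom of the scale and the $C^{1,\epsilon}$ Dirichlet estimate of \Cref{Gilbarg} at the top, while keeping track that every constant depends only on $A$ (and $|\Omega|$) uniformly over $\Gamma\in\Lambda_*$.

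First, for $\alpha=0$ the assertion is just the maximum principle: $\mathcal{H}g$ is harmonic in $\Omega$ with boundary trace $g$, so $\|\mathcal{H}g\|_{L^\infty(\Omega)}\le\|g\|_{L^\infty(\Gamma)}$, with no geometric dependence whatsoever. Next, I would apply \Cref{Gilbarg} to the homogeneous Dirichlet problem $\Delta v=0$ in $\Omega$, $v=g$ on $\Gamma$, i.e.\ with $g_1=g_2=0$ and $\psi=g$; then $v=\mathcal{H}g$, and the interpolated form of that proposition (the one whose right-hand side does not see the stronger norm $\|\Gamma\|_{C^{1,\alpha}}$, which is legitimate precisely because $g_1=g_2=0$) yields $\|\mathcal{H}g\|_{C^{1,\epsilon}(\Omega)}\lesssim_A\|g\|_{C^{1,\epsilon}(\Gamma)}$.

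The remaining step is interpolation. Fix $0<\alpha<1$ and set $\theta=\alpha/(1+\epsilon)\in(0,1)$. Since $\alpha$ is not an integer, $C^\alpha$ agrees, with equivalent norms, with the real interpolation space of parameter $\theta$ between $C^0$ and $C^{1,\epsilon}$, both on $\Gamma$ and on $\Omega$. The point is that these equivalences hold with constants controlled by $A$: on $\Omega$ this follows from the uniform-in-$\Lambda_*$ bounds for Stein's extension operator in \Cref{Stein}, which transfer the interpolation theory on $\mathbb{R}^d$ to $\Omega$; on $\Gamma$ it follows from the universal local-coordinate and partition-of-unity framework of \Cref{Subsub local}, which flattens every $\Gamma\in\Lambda_*$ by maps bounded uniformly in $\Lambda_*$. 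Applying functoriality of real interpolation to the two bounded operators $\mathcal{H}\colon C^0(\Gamma)\to C^0(\Omega)$ and $\mathcal{H}\colon C^{1,\epsilon}(\Gamma)\to C^{1,\epsilon}(\Omega)$ then gives $\mathcal{H}\colon C^\alpha(\Gamma)\to C^\alpha(\Omega)$ with the stated bound; the case $\alpha\le\epsilon$ needs no separate argument because $\alpha<1+\epsilon$ in all cases.

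The only genuinely delicate point — and what I would flag as the main obstacle — is the uniformity over the collar of the interpolation-space identifications; once one grants that $C^\alpha(\Omega)$ and $C^\alpha(\Gamma)$ are the real interpolation spaces with constants depending only on $A$ (exactly what the uniform Stein extension and the universal local coordinates provide), the proof is immediate. As an alternative that sidesteps abstract interpolation, one can estimate the $C^\alpha$ modulus of continuity of $\mathcal{H}g$ directly: split $g$ into low- and high-frequency parts at a scale chosen in terms of the two points' separation, apply the $L^\infty$ bound to one piece and the $C^{1,\epsilon}$ bound to the other, and sum; this is the concrete incarnation of the same interpolation and yields the same constant.
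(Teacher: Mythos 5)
Your proof is correct and follows essentially the same route the paper indicates: the maximum principle gives the $C^0$ endpoint, \Cref{Gilbarg} with $g_1=g_2=0$ gives the $C^{1,\epsilon}$ endpoint, and interpolation between them yields the $C^\alpha$ bound. The extra care you take over the uniformity of the interpolation-space identifications across the collar is a legitimate refinement, but the argument itself is the one the paper uses.
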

\begin{proof}
See \cite[Corollary 5.16]{Euler}.
\end{proof}
\subsubsection{\texorpdfstring{$H^s$}{} estimates for the Dirichlet problem}
We now move on to  proving $H^s$ type estimates for various elliptic problems.
We begin by analyzing the inhomogeneous Dirichlet problem,
\begin{equation*}\label{direst1}
\begin{cases}
&\Delta v=g\hspace{5.5mm}\text{in }\Omega,
\\
&\hspace{3mm}v=\psi\hspace{5mm}\text{on }\Gamma.
\end{cases}
\end{equation*}
We recall  a few baseline estimates. The first  is when $\psi=0$, in which case \cite[Equation (5.19)]{Euler} implies that  $v$ satisfies the $H^1$ estimate 
\begin{equation*}\label{H1base}
\|v\|_{H^1(\Omega)}\lesssim_A \|g\|_{H^{-1}(\Omega)}.    
\end{equation*}
The other case is when   $g=0$ and $\frac{1}{2}<s\leq 1$, where   \cite[Equation (5.20)]{Euler} yields the estimate
\begin{equation}\label{harmonicbase}
\|v\|_{H^s(\Omega)}\lesssim_A \|\psi\|_{H^{s-\frac{1}{2}}(\Gamma)}.    
\end{equation}
We further recall the following elliptic estimates which hold on $C^{1,\epsilon_0}$ domains.
\begin{proposition}\label{desiredelliptic}
For every $0<s<\frac{1}{2}+\epsilon_0$ we have
\begin{equation*}
\|\Delta^{-1}g\|_{H^{s+1}(\Omega)}\lesssim_A \|g\|_{H^{s-1}(\Omega)},\hspace{5mm}\|\mathcal{H}\psi\|_{H^{s+1}(\Omega)}\lesssim_A \|\psi\|_{H^{s+\frac{1}{2}}(\Gamma)}.
\end{equation*}
\end{proposition}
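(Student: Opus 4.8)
The plan is to read \Cref{desiredelliptic} as the sharp elliptic regularity statement for the Dirichlet Laplacian on a $C^{1,\epsilon_0}$ domain: the threshold $s<\tfrac12+\epsilon_0$ is precisely the Jerison--Kenig range for the Dirichlet problem on Lipschitz/$C^1$ domains, improved by the $\epsilon_0$ of extra boundary regularity. Accordingly I would (i) reduce the $\Delta^{-1}$ bound to the harmonic extension bound, and (ii) prove the harmonic extension bound by a layer-potential argument (or, for the low subrange, by quoting \eqref{harmonicbase}), keeping all constants controlled by $A_\Gamma=\|\Gamma\|_{C^{1,\epsilon_0}}$ so that they are uniform over $\Gamma\in\Lambda_*$.

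For the reduction, write $v:=\Delta^{-1}g$ and set $u:=\Delta_{\mathbb R^d}^{-1}(\mathcal E_\Omega g)$, where $\mathcal E_\Omega$ is Stein's extension from \Cref{Stein} and $\Delta_{\mathbb R^d}^{-1}$ is the full-space Newtonian potential. Then $\Delta(v-u)=0$ in $\Omega$ and $(v-u)|_\Gamma=-u|_\Gamma$, so
\begin{equation*}
v=u-\mathcal H\big(u|_\Gamma\big).
\end{equation*}
Since $\Delta_{\mathbb R^d}^{-1}:H^{s-1}(\mathbb R^d)\to H^{s+1}(\mathbb R^d)$ and $\mathcal E_\Omega:H^{s-1}(\Omega)\to H^{s-1}(\mathbb R^d)$ with norms $\lesssim_A 1$, we get $\|u\|_{H^{s+1}(\Omega)}\lesssim_A\|g\|_{H^{s-1}(\Omega)}$, and the trace theorem on a $C^{1,\epsilon_0}$ domain (valid exactly in the range $\tfrac12<s+1<\tfrac32+\epsilon_0$, which is where the ``$\tfrac12$'' in the hypothesis comes from) gives $\|u|_\Gamma\|_{H^{s+\frac12}(\Gamma)}\lesssim_A\|g\|_{H^{s-1}(\Omega)}$. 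Thus the first inequality of \Cref{desiredelliptic} follows once the second is known. (For the part $\tfrac12<s+1\le 1$ one may instead invoke \eqref{harmonicbase} directly together with the $H^{-1}\to H^1$ bound, and then interpolate to remove the overlap.)

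For the harmonic extension bound itself I would use the classical double-layer-potential representation: seek $\mathcal H\psi=\mathcal D\phi$, where $\mathcal D$ is the double-layer potential on $\Gamma$ and $\phi$ solves the boundary integral equation $(\tfrac12 I+\mathcal K)\phi=\psi$ on $\Gamma$, with $\mathcal K$ the double-layer boundary operator. On a $C^{1,\epsilon_0}$ hypersurface $\mathcal K$ is a singular integral operator that differs from a commutator of Riesz-type kernels with the $C^{1,\epsilon_0}$ defining function, hence is compact and in fact smoothing of order $\epsilon_0$ on the scale $H^r(\Gamma)$ in the relevant range; consequently $\tfrac12 I+\mathcal K$ is invertible on $H^{s+\frac12}(\Gamma)$ with operator bounds depending only on $A_\Gamma$ (uniformly over the collar, since the same finite covering and graph parametrizations from \Cref{Subsub local} work for all $\Gamma\in\Lambda_*$). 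Combined with the mapping properties of $\mathcal D$ from $H^{s+\frac12}(\Gamma)$ into $H^{s+1}(\Omega)$ on a $C^{1,\epsilon_0}$ domain, this yields $\|\mathcal H\psi\|_{H^{s+1}(\Omega)}\lesssim_A\|\psi\|_{H^{s+\frac12}(\Gamma)}$ on the full range $0<s<\tfrac12+\epsilon_0$. (For the low subrange one can avoid layer potentials entirely and localize/flatten via the $H_i$ of \Cref{Subsub local}, reducing to a half-space divergence-form problem with $C^{\epsilon_0}$ coefficients and using the pointwise and $H^1$ baseline estimates, e.g.\ \Cref{Gilbarg}; but composition by the merely $C^{1,\epsilon_0}$ diffeomorphisms degrades near $s=\epsilon_0$, which is why the layer-potential route is needed to reach the sharp endpoint.)

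\textbf{The main obstacle} is exactly the operator-theoretic input in the previous paragraph: the uniform (over $\Lambda_*$) invertibility of $\tfrac12 I+\mathcal K$ and the $\epsilon_0$-order smoothing of $\mathcal K$ on $H^r(\Gamma)$ for a boundary that is only $C^{1,\epsilon_0}$ — i.e.\ the sharp Calder\'on--Zygmund theory on $C^{1,\epsilon_0}$ surfaces and the fact that a $C^{1,\epsilon_0}$ surface is a ``small Lipschitz perturbation'' of a hyperplane in each chart. Everything else — the Newtonian-potential reduction, the trace bound, and the interpolation cleanup — is routine given \Cref{Stein} and the coordinate machinery of \Cref{Subsub local}. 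We note that \Cref{desiredelliptic} is standard elliptic theory on $C^{1,\epsilon_0}$ domains and may alternatively be cited from the Jerison--Kenig theory together with its $C^{1,\epsilon}$ Schauder-type refinement; the sketch above records a self-contained route.
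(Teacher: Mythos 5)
Your reduction of the $\Delta^{-1}$ bound to the harmonic-extension bound via $v = u - \mathcal H(u|_\Gamma)$ with $u := \Delta_{\mathbb R^d}^{-1}\mathcal E_\Omega g$ is structurally sound, but note that for $s\le 1$ (which is the whole range if $\epsilon_0<\tfrac12$) the source $g$ lives in a \emph{negative}-index space $H^{s-1}(\Omega)$, and \Cref{Stein} only records boundedness of $\mathcal E_\Omega$ for nonnegative Sobolev index. You need the dual or interpolated statement $\mathcal E_\Omega\colon H^\sigma(\Omega)\to H^\sigma(\mathbb R^d)$ for $-1<\sigma<0$, which is true and not hard, but is not supplied by the paper as stated and should be recorded before the reduction goes through.

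For the harmonic extension itself you are taking a genuinely different route from the paper, which defers to \cite[Prop.\ 5.18]{Euler} and hence presumably uses the same localize--flatten--and--decompose machinery of \Cref{Subsub local} that powers the rest of Section~\ref{BEE}. Your layer-potential argument is the classical alternative, and its mapping properties on a $C^{1,\epsilon_0}$ boundary do reach the endpoint $s<\tfrac12+\epsilon_0$. But the sketch has a genuine gap at the point you flagged as ``the main obstacle'': the uniformity over $\Lambda_*$ of $(\tfrac12 I+\mathcal K)^{-1}$. Compactness of $\mathcal K$ together with triviality of the kernel yields invertibility for each fixed $\Gamma$, but it is intrinsically qualitative and produces no operator bound, and the parenthetical claim that ``the same finite covering and graph parametrizations work for all $\Gamma\in\Lambda_*$'' names a feature of the setup rather than an argument. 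To actually obtain $\|(\tfrac12 I+\mathcal K)^{-1}\|\lesssim_A 1$ you must either (i) run a compactness-of-$\Lambda_*$ argument, which is delicate because the order-$\epsilon_0$ smoothing of $\mathcal K$ is not a continuous function of $\Gamma$ in the weaker $C^{1,\epsilon_0'}$ topology ($\epsilon_0'<\epsilon_0$) in which $\Lambda_*$ is compact, or (ii) implement the Calder\'on smallness route: in each chart $\mathcal K$ is a commutator-type kernel whose operator norm is controlled by the Lipschitz seminorm of $\eta_\Gamma$, hence by the collar parameter $\delta$, so $\tfrac12 I+\mathcal K$ inverts by Neumann series \emph{locally}, after which a patching argument assembles the global inverse. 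Route (ii) is the right one for the collar framework and is indeed where the finite covering earns its keep, but it is a real step, not a footnote; until it is carried out, the quantitative conclusion $\lesssim_A$ in \Cref{desiredelliptic} is asserted rather than established by your sketch.
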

\begin{proof}
See \cite[Proposition 5.18]{Euler}.
\end{proof}
The balanced higher regularity estimates for the Dirichlet problem are as follows.
\begin{proposition}[Higher regularity bounds for the inhomogeneous Dirichlet problem]\label{direst}  Let $\Omega$ be a bounded domain with boundary $\Gamma\in\Lambda_*$. Suppose that $v$ solves the Dirichlet problem
\begin{equation*}
\begin{cases}
&\Delta v=g\hspace{5.5mm}\text{in }\Omega,
\\
&\hspace{3mm}v=\psi\hspace{5mm}\text{on }\partial\Omega,
\end{cases}
\end{equation*}
and let $s\geq 2$. Then for $r\geq 0$, $\alpha\in [0,1],\beta\in [0,1]$ and any sequence of partitions $v:=v_j^1+v_j^2$, we have
\begin{equation*}
    \begin{split}
        \|v\|_{H^s(\Omega)}\lesssim_A\|g\|_{H^{s-2}(\Omega)}&+\|\psi\|_{H^{s-\frac{1}{2}}(\Gamma)}
        \\
        &+\|\Gamma\|_{H^{s+r-\frac{1}{2}}}\sup_{j>0}2^{-j\left(\alpha-1+r\right)}\|v_j^1\|_{C^\alpha(\Omega)}
        +\sup_{j>0}2^{j(s-1+\beta-\epsilon)}\|v_j^2\|_{H^{1-\beta}(\Omega)}.
    \end{split}
\end{equation*}
\end{proposition}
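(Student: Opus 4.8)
The plan is to localize near the boundary, flatten $\Gamma$ with the universal boundary‑flattening diffeomorphisms $H_i$ and partition of unity $\{\gamma_{*i}\}$ constructed earlier in this appendix, reduce the resulting variable‑coefficient Dirichlet problem on a half‑space to the constant‑coefficient one by moving the coefficient perturbation to the right‑hand side, and then bootstrap in half‑derivative steps from the baseline estimates in \Cref{desiredelliptic} and \eqref{harmonicbase}, tracking exactly where the regularity of $\Gamma$ enters. First I would write $v=\gamma_{*0}v+\sum_{i\ge1}\gamma_{*i}v$. The interior term $\gamma_{*0}v$ is compactly supported in $\Omega$, so $\Delta(\gamma_{*0}v)=\gamma_{*0}g+[\Delta,\gamma_{*0}]v$, and interior elliptic regularity together with $\|\gamma_{*0}\|_{H^{s+1/2}}\lesssim_A\|\Gamma\|_{H^s}$ gives $\|\gamma_{*0}v\|_{H^s}\lesssim_A\|g\|_{H^{s-2}(\Omega)}+\|v\|_{H^{s-1}(\Omega)}$; the trailing lower‑order norm of $v$ is harmless and will be removed at the end by interpolation. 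For $i\ge1$, pulling back by $H_i$ produces $w_i:=(\gamma_{*i}v)\circ H_i$, supported in a half‑ball of $\{z_d>0\}$, which solves $L_iw_i=\tilde g_i$ with Dirichlet data $\tilde\psi_i=(\gamma_{*i}\psi)\circ H_i$; here $L_i$ is a second‑order elliptic operator whose coefficients are smooth algebraic expressions in the entries of $DH_i$ and $(DH_i)^{-1}$, so that $L_i-\Delta$ is manufactured from $DH_i-\mathrm{Id}$, and $\tilde g_i$ collects $(\gamma_{*i}g)\circ H_i$ together with the lower‑order commutator $[L_i,\gamma_{*i}\circ H_i](v\circ H_i)$.

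The core step is to rewrite $\Delta w_i=\tilde g_i+(\Delta-L_i)w_i$ and apply the standard constant‑coefficient $H^s$ bound for the half‑space Dirichlet problem (proven by even/odd reflection and Fourier analysis, and available at low regularity from \Cref{desiredelliptic} and \eqref{harmonicbase}), reducing everything to estimating $(\Delta-L_i)w_i$ in $H^{s-2}$. Up to lower‑order contributions this is a sum of terms of the schematic form $(DH_i-\mathrm{Id})\,\nabla^2 w_i$; using the construction bound $\|H_i-\mathrm{Id}\|_{H^{\sigma+1/2}}\lesssim_A\|\Gamma\|_{H^\sigma}$ with $\sigma=s+r-\frac{1}{2}$, the rough factor $DH_i-\mathrm{Id}$ lies in $H^{s+r-1}$ at size $\|\Gamma\|_{H^{s+r-1/2}}$, while the baseline bounds already control $w_i$ just above $H^1$. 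Feeding this into the balanced bilinear estimate of \Cref{productestref} (equivalently \Cref{Multilinearest}), and writing $\nabla^2 w_i$ in divergence form, the high frequencies of the coefficient pair against the low‑regularity partition $v_j^1$ measured in $C^\alpha$, producing exactly $\|\Gamma\|_{H^{s+r-1/2}}\sup_{j>0}2^{-j(\alpha-1+r)}\|v_j^1\|_{C^\alpha}$ after transporting back by $H_i$ and summing over $i$; the high frequencies of $w_i$ pair against the $L^\infty$‑part of the coefficient, producing $\sup_{j>0}2^{j(s-1+\beta-\epsilon)}\|v_j^2\|_{H^{1-\beta}}$; and everything else is bounded by $\|w_i\|_{H^{s-1/2}}$, a lower‑order norm of $v$.

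The Dirichlet data is handled in the same spirit: by the Moser bounds for $H_i$ and the very definition of the surface norm, $\|\tilde\psi_i\|_{H^{s-1/2}(\mathbb{R}^{d-1})}$ is controlled by $\|\psi\|_{H^{s-1/2}(\Gamma)}$ plus a balanced piece of the type above, with \Cref{baltrace} and \Cref{boundaryest} the natural tools and the pointwise $C^{1,\epsilon}$ control of $v$ from \Cref{Gilbarg} and \Cref{Hboundlow} supplying the low‑regularity factors appearing in those Moser estimates; the commutator $[L_i,\gamma_{*i}\circ H_i]$ inside $\tilde g_i$ is strictly lower order and obeys the same split. Assembling the pieces yields, for each half‑derivative increment, an inequality of the form $\|v\|_{H^s}\lesssim_A(\text{the claimed right‑hand side})+\|v\|_{H^{s-1/2}}$; starting from the base case $\|v\|_{H^1}\lesssim_A\|g\|_{H^{-1}(\Omega)}+\|\psi\|_{H^{1/2}(\Gamma)}$ and iterating up to $s$, one removes the trailing $\|v\|_{H^{s-1/2}}$ by interpolating against $L^2$ and absorbing.

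The hard part will be the bookkeeping in the core step: one must verify that after the change of variables the rough dependence on $\Gamma$ always appears at exactly the level $H^{s+r-1/2}$ contracted against a $C^\alpha$‑partitioned piece of $v$ — and never against a derivative of $v$ of order comparable to $s$ — and that the commutators and boundary‑data terms respect the same three‑fold (low‑high / high‑low / high‑high) split. This is precisely what the paraproduct decomposition in \Cref{productestref} is engineered to enforce, so modulo careful accounting the estimate follows; the remaining computations are routine applications of \Cref{Multilinearest}, \Cref{boundaryest}, \Cref{baltrace} and the mapping properties of the extension and flattening maps (\Cref{Stein}).
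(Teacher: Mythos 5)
Your strategy — localize via $\gamma_{*i}$, flatten via $H_i$, rewrite $\Delta w_i = \tilde g_i + (\Delta - L_i)w_i$ and freeze coefficients, then bootstrap using the balanced bilinear estimates — is a reasonable and standard route, and it is broadly compatible with the way the paper sets up coordinate charts and the surface Sobolev norms. However, the load-bearing step, which you yourself flag as "the hard part," is not adequately resolved, and as written there are at least two concrete gaps.

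First, the passage to divergence form is not a convenience; it is the only way the exponents come out. If you estimate the raw perturbation $(\Delta-L_i)w_i \sim a:\nabla^2 w_i$ directly in $H^{s-2}$ using \Cref{productestref} with $\partial f \sim a \sim DH_i - \mathrm{Id}$ and $\partial g = \nabla w_i$ (so $g=\nabla w_i$), the high/low term produces $\|\Gamma\|_{H^{s+r-\frac12}}\sup_k 2^{-k(r+\alpha_2)}\|(\nabla w_i)_k^1\|_{C^{\alpha_2}}$, and after undoing the gradient on the partition this forces $\alpha = 1+\alpha_2 \in [1,2]$, outside the claimed range $\alpha\in[0,1]$. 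You do get the stated exponent if you instead write $a:\nabla^2 w_i = \nabla\cdot(a\nabla w_i) - (\nabla\cdot a)\nabla w_i$ and estimate the flux $a\nabla w_i$ in $H^{s-1}$ with $f\sim H_i-\mathrm{Id}$, $g=w_i$; there $\|f\|_{H^{s+r}}\lesssim\|\Gamma\|_{H^{s+r-\frac12}}$, $\|f\|_{C^{1,2\epsilon}}\lesssim_A 1$, and the second and third terms of \Cref{productestref} match the two balanced terms in the statement exactly, with $\alpha_2=\alpha$, $\beta$ identical. Your sketch mentions "divergence form" in passing but presents it as a bookkeeping detail rather than the thing that makes the numerology close; without it the estimate fails.

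Second, the residual first-order term $(\nabla\cdot a + b)\cdot\nabla w_i \sim (D^2H_i)\nabla w_i$, which must live in $H^{s-2}$, is \emph{not} "bounded by $\|w_i\|_{H^{s-\frac12}}$" as you claim. When you feed it into \Cref{productestref} with the natural choice $f\sim DH_i - \mathrm{Id}$, $g=w_i$, the high-high term requires $\|f\|_{C^{1,2\epsilon}} = \|DH_i-\mathrm{Id}\|_{C^{1,2\epsilon}}$, i.e. $\Gamma\in C^{2,2\epsilon}$ — not available when $\Gamma$ is only $C^{1,\epsilon_0}$, so \Cref{productestref} cannot be applied as stated. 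You either need a variant of the product estimate adapted to a lower-regularity $f$ (paying a $2^{k}$ factor, which then has to be shown consistent with the $H^{1-\beta}$ weight on $v_j^2$), or a direct paraproduct computation splitting $D^2H_i$ and $\nabla w_i$ at frequency $2^j$ and checking that both the low-high and high-low blocks land inside the two balanced terms of the claimed bound. Either way the residual does contribute its own $\|\Gamma\|_{H^{s+r-\frac12}}$-weighted balanced piece (with a strictly better exponent, so still dominated), and your bookkeeping should show that rather than sweep it into a lower-order norm.

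Two smaller points: the absorption of the leading "low $f$, high $g$" block $\|w_i\|_{H^s}\|H_i-\mathrm{Id}\|_{C^1}$ requires the smallness of $\|H_i-\mathrm{Id}\|_{C^1}$ coming from a tight collar — this should be stated, since otherwise the freezing argument does not close; and the bootstrap should begin from the baseline bounds of \Cref{desiredelliptic} (which already carry the $\lesssim_A$ dependence) rather than from the bare $H^1$ Lax–Milgram estimate, since the constants in the balanced product estimates are themselves $A$-dependent.
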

\begin{proof}
See \cite[Proposition 5.19]{Euler}.
\end{proof}
We will also occasionally need the following consequence of the above estimate in the case when $1<s<2$ and $d=2,3$.
\begin{corollary}\label{direstlow} Let $\Omega$ be a bounded domain with boundary $\Gamma\in \Lambda_*$. Let $d=2$ or $3$, $1<s<2$ and $s_0>\frac{d}{2}+1$. Then for $f\in H^{s-2}(\Omega)$ we have
\begin{equation}\label{lowestimatedir}
\|\Delta^{-1}f\|_{H^s(\Omega)}\lesssim_A C(\|\Gamma\|_{H^{s_0-\frac{1}{2}}})\|f\|_{H^{s-2}(\Omega)}
\end{equation}
where $C$ is some constant depending on $s_0$ which is sub-polynomial in $\|\Gamma\|_{H^{s_0-\frac{1}{2}}}$.
\end{corollary}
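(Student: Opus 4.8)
The plan is to reduce \Cref{direstlow} to the case $s=s_0$ of \Cref{direst} together with the interpolation identity \eqref{Interpolation on domains}. Write $v:=\Delta^{-1}f$, so $v$ solves the Dirichlet problem with source $f$ and zero boundary data. Since $d\in\{2,3\}$ and $s_0>\frac d2+1$ we have $s_0>2$, so \Cref{direst} applies at level $s_0$ with $\psi=0$. Choosing the dyadic partition $v_j^1:=P_{<j}v$, $v_j^2:=P_{\ge j}v$ and the parameters $r=0$, $\alpha=1$, $\beta=1-\epsilon$ in \Cref{direst}, the two error terms there become $\|\Gamma\|_{H^{s_0-\frac12}}\sup_j\|P_{<j}v\|_{C^1}$ and $\sup_j 2^{j(s_0-2\epsilon)}\|P_{\ge j}v\|_{H^{\epsilon}}$, which are dominated by $(1+\|\Gamma\|_{H^{s_0-\frac12}})\|v\|_{H^{s_0-\epsilon/2}}$ (here $s_0>\frac d2+1$ is used, so that $H^{s_0-\epsilon/2}(\Omega)\hookrightarrow C^1(\Omega)$ for small $\epsilon$). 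Working first with smooth $f$ (and passing to the limit by a standard approximation argument), one interpolates $\|v\|_{H^{s_0-\epsilon/2}}\lesssim\|v\|_{H^{s_0}}^{1-\mu}\|v\|_{H^1}^{\mu}$, invokes the baseline bound $\|v\|_{H^1(\Omega)}\lesssim_A\|f\|_{H^{-1}(\Omega)}\lesssim\|f\|_{H^{s_0-2}(\Omega)}$ (legitimate since $s_0-2>0>-1$), and absorbs the $\|v\|_{H^{s_0}}$ piece by Young's inequality. This extracts the clean self-improved endpoint estimate
\[
\|\Delta^{-1}f\|_{H^{s_0}(\Omega)}\lesssim_A C(\|\Gamma\|_{H^{s_0-\frac12}})\,\|f\|_{H^{s_0-2}(\Omega)},
\]
with $C$ of the asserted sub-polynomial form in $\|\Gamma\|_{H^{s_0-\frac12}}$.

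Next I would interpolate the operator $\Delta^{-1}$ between its two bounded realizations $H^{-1}(\Omega)\to H^{1}(\Omega)$ (constant $\lesssim_A 1$, from the $H^{-1}\to H^1_0$ bound) and $H^{s_0-2}(\Omega)\to H^{s_0}(\Omega)$ (constant $\lesssim_A C(\|\Gamma\|_{H^{s_0-\frac12}})$, just established). Setting $\theta:=\frac{s-1}{s_0-1}\in(0,1)$, the identity \eqref{Interpolation on domains} (whose constants are uniform over $\Lambda_*$) gives $(H^{-1}(\Omega),H^{s_0-2}(\Omega))_{\theta,2}=H^{s-2}(\Omega)$ and $(H^{1}(\Omega),H^{s_0}(\Omega))_{\theta,2}=H^{s}(\Omega)$, and real interpolation of operators yields
\[
\|\Delta^{-1}f\|_{H^s(\Omega)}\lesssim_A C(\|\Gamma\|_{H^{s_0-\frac12}})^{\theta}\,\|f\|_{H^{s-2}(\Omega)}.
\]
Since $\theta<1$, the constant $C(\cdot)^\theta$ retains the required sub-polynomial dependence, which is \Cref{direstlow}. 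As an alternative that avoids negative-regularity endpoints, one could instead redo the localization scheme underlying the proof of \Cref{direst} directly in the range $1<s<2$: flatten $\Gamma$ with the maps $H_i$ of \Cref{Subsub local}, localize $v$ via the partition $\{\gamma_{*i}\}$, and treat the variable coefficients of the pulled-back Laplacian — which lie in $H^{s_0-1}$ with $s_0-1>\frac d2$ — perturbatively using the Sobolev product estimates of \Cref{Multilinearest}; the restriction $d\le 3$ and $s_0>\frac d2+1$ is exactly what makes an $H^{s_0-1}$ coefficient control products landing in the negative-regularity space $H^{s-2}$, $s-2\in(-1,0)$.

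The main obstacle is the first step: extracting the clean, self-improving endpoint estimate at $s=s_0$ from the balanced form of \Cref{direst}, i.e. verifying that with the chosen partition and parameters the error terms really are dominated by sub-$H^{s_0}$ norms of $v$ and can be absorbed, while tracking the precise $\|\Gamma\|_{H^{s_0-\frac12}}$-dependence so that it remains of polynomial (hence sub-polynomial) order — this is also the only place the a-priori-finiteness/approximation argument enters. A secondary technical point, routine but worth stating carefully, is the legitimacy of the operator interpolation on the Lipschitz domain $\Omega$, including the interpretation of the negative-regularity endpoint $H^{-1}(\Omega)$ and of $H^{s-2}(\Omega)$ with $s-2\in(-1,0)$; this is covered by \eqref{Interpolation on domains} but should be invoked explicitly.
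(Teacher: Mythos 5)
Your proposal is correct and follows essentially the same route as the paper's proof. Both arguments proceed by first establishing a self-improving endpoint estimate via \Cref{direst} (the paper picks an intermediate level $\sigma = 2+2\epsilon$ for $d=2$, resp.\ $\sigma=\tfrac52+2\epsilon$ for $d=3$, with $\sigma<s_0$, whereas you go straight to $\sigma = s_0$), then absorb the $\|\Gamma\|_{H^{s_0-1/2}}$-weighted lower-order term by interpolating it between the top level and $H^1$ and invoking the $H^{-1}\to H^1_0$ bound for $\Delta^{-1}$, and finally interpolate that endpoint estimate against the $H^{-1}\to H^1$ bound to reach $1<s<2$. Your use of the Littlewood--Paley partition $v_j^1=P_{<j}v$, $v_j^2=P_{\geq j}v$ in place of the paper's (implicit) trivial choice $v_j^1=v$, $v_j^2=0$ is cosmetic — both reduce the error terms to a sub-top-order Sobolev norm of $v$ controllable via $H^{\sigma-\epsilon}\hookrightarrow C^1$ — and the alternative localization sketch you append is not in the paper but is not needed for the argument to close.
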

\begin{proof}
We show the details for the case $d=2$ first. We observe that for $\epsilon>0$ sufficiently small (depending on $s_0$), the parameter $\sigma=2+2\epsilon$ is simultaneously such that $s_0>\sigma$ and, moreover, we have the embedding $H^{\sigma-\epsilon}(\Omega)\subset C^1(\Omega)$. Hence, by \Cref{direst}, we have
\begin{equation*}
\|\Delta^{-1}f\|_{H^{\sigma}(\Omega)}\lesssim_{A} \|f\|_{H^{\sigma-2}(\Omega)}+\|\Gamma\|_{H^{s_0-\frac{1}{2}}}\|\Delta^{-1}f\|_{H^{\sigma-\epsilon}(\Omega)}.
\end{equation*}
Interpolating and using the estimate $\|\Delta^{-1}f\|_{H^1(\Omega)}\lesssim_A\|f\|_{H^{-1}(\Omega)}\lesssim_A\|f\|_{H^{\sigma-2}(\Omega)}$, we have
\begin{equation*}
\|\Delta^{-1}f\|_{H^\sigma(\Omega)}\lesssim_A C(\|\Gamma\|_{H^{s_0-\frac{1}{2}}})\|f\|_{H^{\sigma-2}(\Omega)}.
\end{equation*}
Interpolating this with the bound $\|\Delta^{-1}f\|_{H^1(\Omega)}\lesssim_A\|f\|_{H^{-1}(\Omega)}$ gives \eqref{lowestimatedir} in the case $d=2$. If $d=3$, we instead take $\sigma=\frac{5}{2}+2\epsilon$ and perform a similar analysis.
\end{proof}
\subsubsection{Harmonic extension bounds}
In the special case $g=0$, \Cref{direst} yields the following corollary for the harmonic extension operator $\mathcal{H}$.
\begin{proposition}[Harmonic extension bounds]\label{Hbounds}Let $\Omega$ be a bounded domain with boundary $\Gamma\in\Lambda_*$. Then the following bound holds for the harmonic extension operator $\mathcal{H}$ when $s\geq 2$, $r\geq 0$, $\beta\in [0,\frac{1}{2})$ and $\alpha\in [0,1)$,
\begin{equation*}
 \|\mathcal{H}\psi\|_{H^s(\Omega)}\lesssim_A \|\psi\|_{H^{s-\frac{1}{2}}(\Gamma)}+\|\Gamma\|_{H^{s+r-\frac{1}{2}}}\sup_{j>0}2^{-j(\alpha-1+r)}\|\psi_j^1\|_{C^\alpha(\Gamma)}+\sup_{j>0}2^{j(s-1+\beta-\epsilon)}\|\psi_j^2\|_{H^{\frac{1}{2}-\beta}(\Gamma)}.
\end{equation*}
Here, $\psi=\psi_j^1+\psi_j^2$ is any sequence of partitions.
\end{proposition}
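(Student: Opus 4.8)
The statement is the balanced higher-regularity estimate for the harmonic extension operator $\mathcal{H}$, which is the special case $g=0$ of \Cref{direst}. The plan is therefore to simply specialize \Cref{direst} to this situation and observe that the resulting bound can be slightly sharpened on the Dirichlet data side, since $\mathcal{H}\psi$ has trace exactly $\psi$ on $\Gamma$ and thus one expects the partitions of $\psi$ to live on $\Gamma$ (in $C^\alpha(\Gamma)$ and $H^{\frac12-\beta}(\Gamma)$) rather than being taken in the interior. In other words, the task is to transfer the interior partition scheme of \Cref{direst} to a boundary partition scheme, using the harmonicity of $\mathcal{H}\psi$ and the baseline harmonic extension estimates \eqref{harmonicbase} and \Cref{desiredelliptic}.

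\textbf{Key steps.} First, given a sequence of boundary partitions $\psi = \psi_j^1 + \psi_j^2$, I would set $v_j^1 := \mathcal{H}(\psi_j^1)$ and $v_j^2 := \mathcal{H}(\psi_j^2)$, so that $\mathcal{H}\psi = v_j^1 + v_j^2$ is an interior partition of the harmonic function $\mathcal{H}\psi$. Second, I would apply \Cref{direst} with $g = 0$ to this interior partition, which gives
\begin{equation*}
\|\mathcal{H}\psi\|_{H^s(\Omega)}\lesssim_A \|\psi\|_{H^{s-\frac12}(\Gamma)} + \|\Gamma\|_{H^{s+r-\frac12}}\sup_{j>0}2^{-j(\alpha-1+r)}\|\mathcal{H}(\psi_j^1)\|_{C^\alpha(\Omega)} + \sup_{j>0}2^{j(s-1+\beta-\epsilon)}\|\mathcal{H}(\psi_j^2)\|_{H^{1-\beta}(\Omega)}.
\end{equation*}
Third, I would bound the two partition terms: for the pointwise term, \Cref{Hboundlow} gives $\|\mathcal{H}(\psi_j^1)\|_{C^\alpha(\Omega)}\lesssim_A \|\psi_j^1\|_{C^\alpha(\Gamma)}$ (here using $0\le\alpha<1$); for the Sobolev term, the baseline harmonic extension bound \eqref{harmonicbase} (valid for $\tfrac12 < 1-\beta \le 1$, i.e. $0\le\beta<\tfrac12$) gives $\|\mathcal{H}(\psi_j^2)\|_{H^{1-\beta}(\Omega)}\lesssim_A \|\psi_j^2\|_{H^{\frac12-\beta}(\Gamma)}$. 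Substituting these two estimates into the display yields exactly the claimed inequality. This is precisely how the analogous statement is obtained in \cite{Euler}.

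\textbf{Main obstacle.} There is essentially no serious obstacle here — the proof is a short deduction, and indeed the paper states only ``See \cite[Proposition 5.19]{Euler}.'' The one point requiring a little care is matching the parameter ranges: one must check that the baseline estimate \eqref{harmonicbase} is applicable, which forces $1-\beta\in(\tfrac12,1]$ and hence the restriction $\beta\in[0,\tfrac12)$ that appears in the statement (this is why the harmonic extension version has a strict upper constraint on $\beta$ while the general Dirichlet version \Cref{direst} allows $\beta\in[0,1]$). Similarly $\alpha\in[0,1)$ is dictated by the validity range of \Cref{Hboundlow}. A secondary bookkeeping issue is to verify that the first term $\|\psi\|_{H^{s-\frac12}(\Gamma)}$ produced by \Cref{direst} (which there reads $\|g\|_{H^{s-2}(\Omega)}+\|\psi\|_{H^{s-\frac12}(\Gamma)}$ with $g=0$) is already in the desired form, so no further manipulation is needed. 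Beyond these elementary checks, the argument is routine.
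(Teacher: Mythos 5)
Your proposal is correct and follows the natural route: specialize \Cref{direst} to $g=0$, take the interior partitions $v_j^1 := \mathcal{H}\psi_j^1$, $v_j^2 := \mathcal{H}\psi_j^2$, and then convert the interior partition norms to boundary ones via \Cref{Hboundlow} (for the $C^\alpha$ term, requiring $\alpha\in[0,1)$) and \eqref{harmonicbase} (for the $H^{1-\beta}$ term, requiring $1-\beta\in(\tfrac12,1]$ and hence $\beta\in[0,\tfrac12)$). This is exactly what the cited reference does, and your accounting of why the parameter ranges tighten relative to \Cref{direst} is the one nontrivial observation.
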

\begin{proof}
See \cite[Proposition 5.21]{Euler}.
\end{proof}
\subsubsection{Estimates for the Dirichlet-to-Neumann operator}
Recall that the \emph{Dirichlet-to-Neumann operator}  is the mapping $\mathcal{N}:=n_\Gamma\cdot (\nabla\mathcal{H})_{|\Gamma}$. This operator plays a fundamental role in free boundary fluid dynamics. Here we recall its  mapping properties, beginning with a baseline ellipticity estimate.
\begin{lemma}\label{baselineDN}
    The Dirichlet-to-Neumann map on $\Gamma$ satisfies 
    \begin{equation*}
        \|\psi\|_{H^1(\Gamma)}+\|\nabla^{\top}\psi\|_{L^2(\Gamma)}\lesssim_A \|\mathcal{N}\psi\|_{L^2(\Gamma)}+\|\psi\|_{L^2(\Gamma)}.
    \end{equation*}
\end{lemma}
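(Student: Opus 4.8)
\textbf{Proof proposal for \Cref{baselineDN}.}

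The plan is to reduce the estimate to a standard elliptic regularity statement for the harmonic extension $u:=\mathcal{H}\psi$, combined with the trace and reconstruction identities for the Dirichlet-to-Neumann operator. First I would note that the quantity $\|\psi\|_{H^1(\Gamma)}$ can be controlled by $\|\nabla^\top\psi\|_{L^2(\Gamma)}+\|\psi\|_{L^2(\Gamma)}$ directly from the definition of the $H^1(\Gamma)$ norm via the local coordinate patches of \Cref{Subsub local}, since the tangential gradient on $\Gamma$ in those coordinates is comparable to the full gradient of the coordinate representatives (the implicit constants depending only on $A_\Gamma$, because the coordinate maps $H_i$ are $C^{1,\epsilon_0}$-bounded in $\Lambda_*$). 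Hence it suffices to prove
\begin{equation*}
\|\nabla^\top\psi\|_{L^2(\Gamma)}\lesssim_A \|\mathcal{N}\psi\|_{L^2(\Gamma)}+\|\psi\|_{L^2(\Gamma)}.
\end{equation*}

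The main step is an integration-by-parts (Rellich-type) identity on $\Omega$. Take a smooth vector field $X$ on $\mathbb{R}^d$ which is uniformly transverse to every $\Gamma\in\Lambda_*$, with $C^k$ bounds uniform in the collar; such a field exists by the construction in \Cref{Collarcoords}. For the harmonic function $u=\mathcal{H}\psi$ one has the classical Rellich identity
\begin{equation*}
\int_{\Gamma} (X\cdot n_\Gamma)|\nabla u|^2 \, dS = 2\int_\Gamma (X\cdot\nabla u)(n_\Gamma\cdot\nabla u)\, dS - \int_\Omega \big(2\nabla u\cdot\nabla X\cdot\nabla u - (\nabla\cdot X)|\nabla u|^2\big)\, dx,
\end{equation*}
obtained by integrating $\nabla\cdot\big(2(X\cdot\nabla u)\nabla u - X|\nabla u|^2\big)$ over $\Omega$ and using $\Delta u = 0$. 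On $\Gamma$ one decomposes $\nabla u = (n_\Gamma\cdot\nabla u)\,n_\Gamma + \nabla^\top\psi = (\mathcal{N}\psi)\,n_\Gamma + \nabla^\top\psi$, so $|\nabla u|^2 = |\mathcal{N}\psi|^2 + |\nabla^\top\psi|^2$ on $\Gamma$. Since $X\cdot n_\Gamma \geq c > 0$ uniformly in $\Lambda_*$, the left-hand side controls $\|\nabla^\top\psi\|_{L^2(\Gamma)}^2$ from below (up to a $\|\mathcal{N}\psi\|_{L^2(\Gamma)}^2$ term on the other side). The first boundary term on the right is handled by Cauchy–Schwarz: it is $\lesssim \|\mathcal{N}\psi\|_{L^2(\Gamma)}\|\nabla^\top\psi\|_{L^2(\Gamma)}$, which after Young's inequality can be absorbed into the left side modulo $\|\mathcal{N}\psi\|_{L^2(\Gamma)}^2$. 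The interior term is $\lesssim_A \|\nabla u\|_{L^2(\Omega)}^2 = \|\mathcal{H}\psi\|_{H^1(\Omega)}^2 - \|\mathcal{H}\psi\|_{L^2(\Omega)}^2 \lesssim_A \|\psi\|_{H^{1/2}(\Gamma)}^2$ by the baseline harmonic extension bound \eqref{harmonicbase} (with $s=1$), and then $\|\psi\|_{H^{1/2}(\Gamma)}^2$ is controlled by interpolation between $\|\psi\|_{H^1(\Gamma)}$ and $\|\psi\|_{L^2(\Gamma)}$; using Young's inequality once more, the $\|\psi\|_{H^1(\Gamma)}$ factor is absorbed into the left side (after re-expanding $\|\psi\|_{H^1(\Gamma)}\lesssim_A \|\nabla^\top\psi\|_{L^2(\Gamma)}+\|\psi\|_{L^2(\Gamma)}$ as in the first step), leaving a controlled multiple of $\|\psi\|_{L^2(\Gamma)}^2$. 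Putting these together gives the desired estimate.

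The point requiring the most care — and what I expect to be the main obstacle — is justifying the Rellich identity and all the integrations by parts at the low regularity of $\Gamma\in\Lambda_*$, which is only $C^{1,\epsilon_0}$ (and $H^s$ for whatever $s$ the ambient state has, but the lemma is stated without such a hypothesis). At this regularity $|\nabla u|^2$ need not have a trace in any classical sense, so the identity must be proved by first working with a smooth approximating sequence of domains $\Omega_m\to\Omega$ (flattening the boundary via the coordinate maps $H_i$ of \Cref{Subsub local} and mollifying), establishing the identity and the final estimate there with constants depending only on $A_\Gamma$, and then passing to the limit using the uniform elliptic bounds of \Cref{desiredelliptic} and \Cref{Hboundlow} together with the continuity-of-extension machinery of \Cref{continuosext}. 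Alternatively — and this is likely cleaner — one can avoid the approximation entirely by noting that $\mathcal{N} + \mathcal{N}^*$ differs from a nonnegative operator by a bounded operator on $L^2(\Gamma)$ (so that $\langle\mathcal{N}\psi,\psi\rangle_{L^2(\Gamma)} = \|\nabla\mathcal{H}\psi\|_{L^2(\Omega)}^2$ is the exact Dirichlet energy), and then run a duality/commutator argument using \Cref{SZ1} to compare $\mathcal{N}$ with $(-\Delta_\Gamma)^{1/2}$ up to $H^{s'}\to H^{s'}$ errors; the ellipticity of $(-\Delta_\Gamma)^{1/2}$ in local coordinates is elementary. Either route reduces, in the end, to the same pointwise decomposition $|\nabla u|^2 = |\mathcal{N}\psi|^2 + |\nabla^\top\psi|^2$ on $\Gamma$ and the transversal-vector-field trick, so I would present the approximation argument as the default and remark that it also follows from \Cref{SZ1} when the ambient regularity permits.
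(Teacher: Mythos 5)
Your Rellich-identity argument is correct and is the classical route to $L^2$ ellipticity for the Dirichlet-to-Neumann map on domains of limited boundary regularity (the method of Jerison--Kenig and Verchota). The key algebraic fact making everything work at $C^{1,\epsilon_0}$ regularity is one you implicitly rely on but do not quite make explicit: expanding $\nabla\cdot\bigl(2(X\cdot\nabla u)\nabla u - X|\nabla u|^2\bigr)$ for harmonic $u$ gives
\[
2(\partial_j X_i)(\partial_i u)(\partial_j u) - (\nabla\cdot X)\,|\nabla u|^2,
\]
i.e.\ the second-derivative terms cancel, so the interior integrand involves only $|\nabla u|^2$ and stays in $L^1(\Omega)$ whenever $\psi \in H^{1/2}(\Gamma)$. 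This is precisely why the identity survives at low boundary regularity.

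Two streamlining remarks. First, for the interior term a slightly shorter route than interpolating $\|\psi\|_{H^{1/2}(\Gamma)}$ and re-absorbing is to invoke the Dirichlet energy identity $\|\nabla\mathcal{H}\psi\|_{L^2(\Omega)}^2 = \langle \psi, \mathcal{N}\psi\rangle_{L^2(\Gamma)} \le \|\psi\|_{L^2(\Gamma)}\|\mathcal{N}\psi\|_{L^2(\Gamma)}$, which delivers the claimed bound for the interior contribution directly, without any interpolation or secondary absorption. Second, to justify the divergence theorem you propose approximating the domain; a lighter variant is to keep the domain fixed and approximate $\psi$ by smooth boundary data: for smooth $\psi$, \Cref{Hboundlow} gives $\mathcal{H}\psi \in C^{1,\epsilon_0}(\overline\Omega)$, the Rellich vector field is then $C^{\epsilon_0}$ up to the boundary with $L^1$ divergence, and the identity follows by applying the divergence theorem on interior domains $\{d(x,\Gamma)>\delta\}$ and letting $\delta\to 0$; the general case follows by density. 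Either route works.

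One small imprecision: the first boundary term on the right is not only $\lesssim\|\mathcal{N}\psi\|\,\|\nabla^\top\psi\|$ but also carries a $\|\mathcal{N}\psi\|_{L^2(\Gamma)}^2$ contribution (from the piece $(X\cdot n_\Gamma)(\mathcal{N}\psi)^2$); this is harmless since $\|\mathcal{N}\psi\|^2$ is allowed on the right of the final estimate, but it should be recorded. Finally, you are right to flag that the alternative via \Cref{SZ1} cannot give constants depending only on $A_\Gamma = \|\Gamma\|_{C^{1,\epsilon_0}}$: that theorem requires $H^k$ control on $\Gamma$ for a large integer $k$, which is strictly more than the collar hypothesis in the lemma's statement. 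The Rellich argument avoids this and is the right primary route.
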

\begin{proof}
See the proof of \cite[Lemma 5.23]{Euler}.
\end{proof}
The reverse inequality also holds, in the following sense.
\begin{lemma}\label{baselineDN2}
The Dirichlet-to-Neumann map on $\Gamma$ satisfies 
    \begin{equation*}
        \|\mathcal{N}\psi\|_{L^2(\Gamma)}\lesssim_A \|\psi\|_{H^1(\Gamma)}.
    \end{equation*}  
\end{lemma}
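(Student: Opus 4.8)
\textbf{Proof plan for \Cref{baselineDN2}.} The statement asserts the elementary operator bound $\|\mathcal{N}\psi\|_{L^2(\Gamma)}\lesssim_A \|\psi\|_{H^1(\Gamma)}$, which is the ``reverse'' direction to the ellipticity estimate of \Cref{baselineDN}. The plan is to unwind the definition $\mathcal{N}\psi = n_\Gamma\cdot(\nabla\mathcal{H}\psi)_{|\Gamma}$ and reduce everything to a single harmonic extension bound plus a trace estimate. First I would observe that, since $|n_\Gamma|=1$ and $n_\Gamma$ is merely $C^{\epsilon_0}$ with norm controlled by $A_\Gamma$, it suffices by the product estimate on the boundary (\Cref{boundaryest}, or more simply the $L^2$--$L^\infty$ H\"older product bound) to estimate $\|(\nabla\mathcal{H}\psi)_{|\Gamma}\|_{L^2(\Gamma)}$. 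This, in turn, is controlled via the trace theorem: $\|(\nabla\mathcal{H}\psi)_{|\Gamma}\|_{L^2(\Gamma)}\lesssim_A \|\nabla\mathcal{H}\psi\|_{H^{1/2}(\Omega)}\lesssim_A \|\mathcal{H}\psi\|_{H^{3/2}(\Omega)}$. Finally I would invoke the harmonic extension estimate \eqref{harmonicbase} (with $s=3/2$), or equivalently \Cref{desiredelliptic}, to bound $\|\mathcal{H}\psi\|_{H^{3/2}(\Omega)}\lesssim_A \|\psi\|_{H^1(\Gamma)}$. Chaining these inequalities yields the claim.

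A minor subtlety is that the trace theorem as usually stated loses $1/2$ a derivative, which forces us to control $\mathcal{H}\psi$ in $H^{3/2}(\Omega)$ and hence $\psi$ in $H^1(\Gamma)$ — exactly the regularity in the statement, so there is no slack to spare but also no deficit. One should be slightly careful that the relevant harmonic extension bound $\|\mathcal{H}\psi\|_{H^{3/2}(\Omega)}\lesssim_A\|\psi\|_{H^1(\Gamma)}$ is available on a merely $C^{1,\epsilon_0}$ domain; this is covered by \Cref{desiredelliptic} since $3/2 = s+1$ with $s=1/2 < 1/2+\epsilon_0$, so the hypothesis $0<s<1/2+\epsilon_0$ is (just barely) satisfied. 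Alternatively, one can avoid the endpoint issue entirely by working with $\nabla^\top$: using \Cref{baselineDN} in reverse is circular, so instead one notes $\mathcal{N}\psi = n_\Gamma\cdot(\nabla\mathcal{H}\psi)_{|\Gamma}$ and that the full gradient trace, not just the tangential part, is what appears, so the cleanest route really is trace $+$ harmonic extension.

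I do not anticipate a genuine obstacle here — this is a soft consequence of tools already assembled in the appendix — but the one point requiring a line of care is confirming that all implicit constants depend only on $A_\Gamma$ (and the domain volume): the trace constant is uniform over $\Gamma\in\Lambda_*$ by the local coordinate construction of \Cref{Subsub local}, the product bound constant depends only on $\|n_\Gamma\|_{C^{\epsilon_0}}\lesssim_A 1$, and the harmonic extension constant in \Cref{desiredelliptic} is explicitly $\lesssim_A$. Assembling these, one concludes
\[
\|\mathcal{N}\psi\|_{L^2(\Gamma)} \lesssim_A \|(\nabla\mathcal{H}\psi)_{|\Gamma}\|_{L^2(\Gamma)} \lesssim_A \|\mathcal{H}\psi\|_{H^{3/2}(\Omega)} \lesssim_A \|\psi\|_{H^1(\Gamma)},
\]
which is the desired estimate.
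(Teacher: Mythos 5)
Your argument breaks down at the trace step, which is applied precisely at the forbidden endpoint. You write
\[
\|(\nabla\mathcal{H}\psi)_{|\Gamma}\|_{L^2(\Gamma)}\lesssim_A \|\nabla\mathcal{H}\psi\|_{H^{1/2}(\Omega)},
\]
but the trace map $H^{s}(\Omega)\to H^{s-\frac12}(\Gamma)$ is bounded only for $s>\frac12$, and it \emph{fails} at $s=\frac12$ (already on a half-space: $H^{1/2}(\mathbb{R}^d)$ functions have no trace on a hyperplane). This is not a matter of ``no slack to spare but no deficit'' — the deficit is real. The paper's own trace estimate, \Cref{baltrace}, is stated with the strict hypothesis $s>\frac12$ for exactly this reason. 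If one tries to create slack by bumping up to $H^{3/2+\delta}(\Omega)$ for $\mathcal{H}\psi$, then via \Cref{desiredelliptic} one needs $\psi\in H^{1+\delta}(\Gamma)$, which is more than the hypothesis allows. So the chain $\text{trace}+\text{harmonic extension}$ cannot close at this regularity; in fact, the tangential part $\nabla^\top\mathcal{H}\psi|_\Gamma=\nabla^\top\psi$ is trivially in $L^2(\Gamma)$, so the whole content of the claim is exactly that the normal trace $\nabla_n\mathcal{H}\psi$ is in $L^2(\Gamma)$ — the trace-theorem route is essentially circular here.

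The result is nonetheless true, and the way to capture it is to exploit the harmonicity of $u:=\mathcal{H}\psi$ rather than the generic trace theorem. The standard device is a Rellich identity. Pick a $C^1$ vector field $e$ in a neighborhood of $\overline\Omega$ (with bounds uniform over $\Lambda_*$) satisfying $e\cdot n_\Gamma\geq c_0>0$. Since $\Delta u=0$,
\[
\nabla\cdot\bigl(|\nabla u|^2 e - 2(e\cdot\nabla u)\nabla u\bigr) = (\nabla\cdot e)|\nabla u|^2 - 2\,\partial_j e_i\,\partial_i u\,\partial_j u ,
\]
and integrating over $\Omega$ while splitting $\nabla u|_\Gamma = \nabla^\top u + (\nabla_n u)\, n_\Gamma$ gives, after Cauchy--Schwarz and absorption,
\[
\int_\Gamma (\nabla_n u)^2\,dS \lesssim_A \int_\Gamma |\nabla^\top u|^2\,dS + \int_\Omega |\nabla u|^2\,dx
\lesssim_A \|\nabla^\top\psi\|_{L^2(\Gamma)}^2 + \|\psi\|_{H^{1/2}(\Gamma)}^2
\lesssim_A \|\psi\|_{H^1(\Gamma)}^2 ,
\]
where the interior term is controlled by \eqref{harmonicbase}. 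This is exactly the estimate you want, with constants depending only on $A_\Gamma$ via the choice of $e$ and the implicit constants in the harmonic-extension bound; the paper delegates this to \cite[Lemma~5.24]{Euler}. In short: the numerology of your plan is right, but the critical trace step is false, and a Rellich-type argument that uses $\Delta\mathcal{H}\psi=0$ is what actually closes the estimate.
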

\begin{proof}
See \cite[Lemma 5.24]{Euler}.
\end{proof}
Next, we recall higher regularity versions of these bounds. We begin with a higher regularity version of \Cref{baselineDN}.
\begin{proposition}[Ellipticity for the Dirichlet-to-Neumann operator]\label{ellipticity} Let $s\geq \frac{1}{2}$ and let $k\geq 1$ be an integer. Then for $\alpha\in [0,1)$ and $\beta\in [0,\frac{1}{2})$, we have the bound
\begin{equation*}
\begin{split}
\|\psi\|_{H^{s+k}(\Gamma)}&\lesssim_A \|\psi\|_{L^2(\Gamma)}+\|\mathcal{N}^k\psi\|_{H^{s}(\Gamma)}+\|\Gamma\|_{H^{s+k+r}}\sup_{j>0}2^{-j(\alpha-1+r)}\|\psi_j^1\|_{C^{\alpha}(\Gamma)}
\\
&+\sup_{j>0}2^{j(s+k-\frac{1}{2}+\beta-\epsilon)}\|\psi_j^2\|_{H^{\frac{1}{2}-\beta}(\Gamma)}.
\end{split}
\end{equation*}
\end{proposition}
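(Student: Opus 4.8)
The plan is a two–layer elliptic bootstrap: an outer induction on the power $k$ and, inside it, an induction on the Sobolev index $s$, following the scheme used for the corresponding statement in \cite{Euler}. Write $E(k,s)$ for the asserted inequality. For the base case $E(1,\tfrac12)$ there is nothing to do beyond \Cref{baselineDN} (the remainder terms are not even needed there). For $E(1,s)$ with general $s\ge\tfrac12$ I would work in the universal local coordinate charts for hypersurfaces in $\Lambda_*$ constructed earlier in this appendix, in which $\mathcal{N}$ becomes, patch by patch, a first–order elliptic operator whose principal part coincides with $(-\Delta_\Gamma)^{1/2}$, the discrepancy being of order zero with coefficients controlled by $\Gamma$ (cf.\ \Cref{SZ1}, \Cref{fractionalpowerapprox}). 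Commuting $\mathcal{N}$ past a tangential derivative $\nabla^\top$ — the commutator being bounded $H^\sigma(\Gamma)\to H^{\sigma-1}(\Gamma)$ modulo balanced remainders, as in the commutator lemmas already invoked in the paper — and feeding the result into the inductive hypothesis at regularity one unit lower reduces the bound for $\|\psi\|_{H^{s+1}(\Gamma)}$ to $\|\mathcal{N}\psi\|_{H^s(\Gamma)}$ plus a lower–order term $\|\psi\|_{H^{s+1-\delta}(\Gamma)}$ plus balanced remainders; the lower–order term is absorbed into $\varepsilon\|\psi\|_{H^{s+1}(\Gamma)}+C\|\psi\|_{L^2(\Gamma)}$ by interpolation, and the iteration bottoms out at \Cref{baselineDN}. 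Throughout, every appearance of the regularity of $\Gamma$ or of a low–regularity coefficient is estimated with the balanced product and trace inequalities \Cref{boundaryest}, \Cref{baltrace}, \Cref{Hbounds}, splitting $\psi=\psi^1_j+\psi^2_j$ so that the high–frequency/low–regularity interactions generate exactly the dyadic weights $2^{-j(\alpha-1+r)}$ and $2^{j(s+\frac12+\beta-\epsilon)}$; this is the analogue of the bookkeeping behind \Cref{direst} and \Cref{Hbounds}.

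For the inductive step $k-1\mapsto k$, I would apply the case $E(1,s)$ just proved to $\phi:=\mathcal{N}^{k-1}\psi$ (so that $\mathcal{N}\phi=\mathcal{N}^k\psi$) to bound $\|\mathcal{N}^{k-1}\psi\|_{H^{s+1}(\Gamma)}$ by $\|\mathcal{N}^{k-1}\psi\|_{L^2(\Gamma)}+\|\mathcal{N}^k\psi\|_{H^s(\Gamma)}$ plus remainders expressed in $\phi$, and then apply the inductive hypothesis $E(k-1,s+1)$ to pass from $\|\mathcal{N}^{k-1}\psi\|_{H^{s+1}(\Gamma)}$ to $\|\psi\|_{H^{s+k}(\Gamma)}$. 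The term $\|\mathcal{N}^{k-1}\psi\|_{L^2(\Gamma)}\lesssim_A\|\psi\|_{H^{k-1}(\Gamma)}$ by iterating \Cref{baselineDN2}, and since $s>0$ the index $k-1$ is strictly below $s+k$, so interpolation turns this into $\varepsilon\|\psi\|_{H^{s+k}(\Gamma)}+C\|\psi\|_{L^2(\Gamma)}$, which is absorbed. What remains is to rewrite the remainder terms produced in the $\phi$–variables as remainder terms in $\psi$: one re–partitions $\mathcal{N}^{k-1}\psi$ using the regularizing kernels $\Phi_{\le j}$ and trades each factor of $\mathcal{N}$ for a half power of the collar regularity via \Cref{baselineDN2} and \Cref{Hbounds}/\Cref{boundaryest}, again producing exactly the advertised weights (the free parameter $r$ absorbing the extra derivatives that land on $\Gamma$).

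There is also a shortcut when $k$ is even: the identity for $-\Delta_\Gamma-\mathcal{N}^2$ recorded in the proof of \Cref{coordinatescommutator} reduces $\mathcal{N}^{2m}$ to $(1-\Delta_\Gamma)^m$ plus balanced lower–order errors, after which one invokes the standard balanced elliptic estimate for the Laplace–Beltrami operator on $\Gamma$ (together with \Cref{curvaturebound} to convert curvature control into surface regularity); the odd case then follows by composing with the $k=1$ bound. Either route is acceptable, and in the write-up I would probably present the induction on $k$ since it is the most self-contained.

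The main obstacle is not the conceptual skeleton, which is a routine elliptic bootstrap, but keeping the balanced remainders sharp through the two nested inductions. One must verify that commuting $\mathcal{N}$ (or $\mathcal{N}^{k-1}$) past tangential derivatives, flattening in the charts, and re–partitioning $\mathcal{N}^{k-1}\psi$ never cost more than the weights $2^{-j(\alpha-1+r)}\|\psi^1_j\|_{C^\alpha(\Gamma)}$ and $2^{j(s+k-\frac12+\beta-\epsilon)}\|\psi^2_j\|_{H^{1/2-\beta}(\Gamma)}$ permit — that is, that the high–high and low–high interactions between $\psi$ and the $\Gamma$–dependent coefficients at the critical regularity level are handled sharply by \Cref{boundaryest} and \Cref{baltrace}. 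This is precisely where the low–regularity machinery of this appendix is indispensable, and the argument is essentially identical to the one carried out in \cite{Euler}.
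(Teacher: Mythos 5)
Your outline is essentially the bootstrap that the paper delegates to \cite[Lemma 5.26]{Euler}: an outer induction on $k$, chaining $E(1,s)$ applied to $\phi=\mathcal N^{k-1}\psi$ with the inductive hypothesis $E(k-1,s+1)$, together with an inner induction on $s$ at $k=1$ via commutation with $\nabla^\top$, all with remainders controlled by the balanced product/trace estimates of this appendix; the even-$k$ shortcut through $-\Delta_\Gamma-\mathcal N^2$ is also a legitimate route. One imprecision is the base case: \Cref{baselineDN} furnishes $E(1,0)$, not $E(1,\tfrac12)$, so for fractional $s$ the descent in unit steps lands at some $\sigma\le 0$ rather than at $\tfrac12$, and one must observe that \Cref{baselineDN} trivially dominates $E(1,\sigma)$ for $\sigma\le 0$ to close the loop. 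The re-partitioning of $\mathcal N^{k-1}\psi$ via $\Phi_{\le j}$ and the shift of the free parameter $r\mapsto r+(k-1)$, which converts $\|\Gamma\|_{H^{s+1+r'}}$ into $\|\Gamma\|_{H^{s+k+r}}$ while reproducing the dyadic weight $2^{-j(\alpha-1+r)}$, is indeed where the accounting lies, but the mechanism you identify is the right one.
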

\begin{proof}
See \cite[Lemma 5.26]{Euler}.
\end{proof}
To complement the higher order ellipticity estimates for $\mathcal{N}$, we will need the reverse estimates which control powers of $\mathcal{N}$ applied to a function in terms of appropriate Sobolev norms of that function. The following two propositions  give such bounds -- they may be thought of as higher regularity analogues of \Cref{baselineDN2}.
\begin{proposition}[Dirichlet-to-Neumann operator bound I]\label{DNpower1} Let $s\geq \frac{1}{2}$, $r\geq 0$, $\alpha\in [0,1)$ and $\beta\in [0,\frac{1}{2})$. Then
\begin{equation*}
    \|\mathcal{N}\psi\|_{H^{s}(\Gamma)}\lesssim_A \|\psi\|_{H^{s+1}(\Gamma)}+\|\Gamma\|_{H^{s+1+r}}\sup_{j>0}2^{-j(r-1+\alpha)}\|\psi_j^1\|_{C^\alpha(\Gamma)}+\sup_{j>0}2^{j(s+\frac{1}{2}+\beta-\epsilon)}\|\psi_j^2\|_{H^{\frac{1}{2}-\beta}(\Gamma)}
\end{equation*}
for any sequence of partitions $\psi=\psi_j^1+\psi_j^2$.
\end{proposition}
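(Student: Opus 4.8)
\textbf{Proof proposal for \Cref{DNpower1}.}
The plan is to reduce the estimate for $\mathcal{N}\psi = n_\Gamma \cdot (\nabla \mathcal{H}\psi)_{|\Gamma}$ to a combination of the balanced trace estimate (\Cref{baltrace}), the balanced harmonic extension bound (\Cref{Hbounds}), and the product estimate on the boundary (\Cref{boundaryest}). The starting point is the identity $\mathcal{N}\psi = n_\Gamma \cdot (\nabla \mathcal{H}\psi)_{|\Gamma}$. First I would apply \Cref{baltrace} with regularity index $s+1$ to the vector field $v = \nabla \mathcal{H}\psi$ to bound $\|(\nabla\mathcal{H}\psi)_{|\Gamma}\|_{H^s(\Gamma)}$ in terms of $\|\nabla\mathcal{H}\psi\|_{H^{s+1}(\Omega)}$ plus the two balanced error terms involving partitions of $\nabla\mathcal{H}\psi$ at the $C^\alpha(\Omega)$ and $H^{1-\beta}(\Omega)$ levels. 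Then $\|\nabla\mathcal{H}\psi\|_{H^{s+1}(\Omega)}\lesssim \|\mathcal{H}\psi\|_{H^{s+2}(\Omega)}$ is controlled by \Cref{Hbounds} (applied at index $s+2$), which produces exactly the main term $\|\psi\|_{H^{s+1}(\Gamma)}$ on the boundary together with the claimed $\|\Gamma\|_{H^{s+1+r}}$-weighted term and the high-frequency term with weight $2^{j(s+\frac12+\beta-\epsilon)}$. The pointwise partition errors from the trace estimate are handled by \Cref{Hboundlow} (the $C^\alpha$ harmonic extension bound) applied to the corresponding boundary partitions, which converts $\|(\nabla\mathcal{H}\psi)_j^1\|_{C^\alpha(\Omega)}$-type quantities into $\|\psi_j^1\|_{C^\alpha(\Gamma)}$-type quantities after an extra derivative is absorbed into the $2^{-j}$ gain.

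The one genuinely delicate point is the presence of the factor $n_\Gamma$ multiplying $(\nabla\mathcal{H}\psi)_{|\Gamma}$: since $n_\Gamma$ has only $H^{s+r}(\Gamma)$-type regularity (comparable to $\|\Gamma\|_{H^{s+1+r}}$ via \Cref{curvaturebound}), one cannot naively use the algebra property, and instead must invoke the balanced product estimate \Cref{boundaryest} so that the low-regularity factor $\psi$ is placed in $L^\infty$ or $C^{2\epsilon}$ whenever $n_\Gamma$ absorbs many derivatives. Concretely, writing $n_\Gamma \cdot (\nabla\mathcal{H}\psi)_{|\Gamma}$ and applying \Cref{boundaryest} with $f = n_\Gamma$, $g = (\nabla\mathcal{H}\psi)_{|\Gamma}$, together with a suitable sequence of partitions of $g$ induced by the Littlewood--Paley decomposition of $\mathcal{H}\psi$, yields the main term $\|n_\Gamma\|_{L^\infty}\|(\nabla\mathcal{H}\psi)_{|\Gamma}\|_{H^s(\Gamma)}$ plus terms where $n_\Gamma$ is measured in $H^{s+r-1}(\Gamma) \lesssim \|\Gamma\|_{H^{s+r}}$ against a $2^{-j(r-1)}$-weighted $L^\infty$ bound on a low-frequency piece of $\nabla\mathcal{H}\psi$, and a high-frequency term. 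The $L^\infty$ bounds on the low-frequency pieces of $\nabla\mathcal{H}\psi$ are exactly where \Cref{Hboundlow} enters, translating the interior $C^\alpha$ control into boundary $C^\alpha$ control of $\psi$ with the appropriate loss of one derivative per factor of $\nabla$.

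The main obstacle, then, is bookkeeping: one has to choose the partitions of the intermediate quantity $(\nabla\mathcal{H}\psi)_{|\Gamma}$ consistently with the given partition $\psi = \psi_j^1 + \psi_j^2$, so that after unwinding the trace and extension steps the weights $2^{-j(r-1+\alpha)}$ and $2^{j(s+\frac12+\beta-\epsilon)}$ come out precisely as stated. This is a routine but careful matching exercise of the type already carried out repeatedly in \cite{Euler}; indeed this proposition is the analogue of \cite[Lemma~5.24]{Euler} and its higher-order companions, so I would follow that template, tracking the $\|\Gamma\|_{H^{s+1+r}}$ and the $\epsilon$-loss in the high-frequency term. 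No new ideas beyond the balanced elliptic toolbox of \Cref{BEE} should be needed; the proof is essentially an assembly of \Cref{baltrace}, \Cref{Hbounds}, \Cref{Hboundlow}, and \Cref{boundaryest}, and I would present it compactly, deferring the elementary partition estimates to the reader as is done elsewhere in the paper.
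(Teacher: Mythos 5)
Your toolbox is the right one, but the assembly has two problems.

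First, your index tracking is off by $\tfrac12$ in a way that would leave you with a weaker conclusion than claimed. You apply \Cref{baltrace} at parameter $s+1$ to $v=\nabla\mathcal{H}\psi$, which gives a bound on $\|v|_{\Gamma}\|_{H^{s+1/2}(\Gamma)}$ (not $H^s$) in terms of $\|v\|_{H^{s+1}(\Omega)}\lesssim\|\mathcal{H}\psi\|_{H^{s+2}(\Omega)}$, and then \Cref{Hbounds} at index $s+2$ produces $\|\psi\|_{H^{s+3/2}(\Gamma)}$, not the claimed $\|\psi\|_{H^{s+1}(\Gamma)}$. To land on the stated estimate you would need \Cref{baltrace} at parameter $s+\tfrac12$ (so that the traced object lands in $H^s(\Gamma)$ and the interior norm is $H^{s+1/2}(\Omega)$) and \Cref{Hbounds} at index $s+\tfrac32$. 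As written, your route proves a strictly weaker statement.

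Second, and more substantively, you've overlooked that the paper already bundles the boundary trace together with the inner product against $n_\Gamma$ in \Cref{L1bound}: that proposition estimates $\|\nabla_n v\|_{H^s(\Gamma)}$ directly, with balanced error weights $\|\Gamma\|_{H^{s+r+1}}2^{-j(r-1+\alpha)}$ and $2^{j(s+\beta+\frac12-\epsilon)}$ that coincide \emph{verbatim} with those in the statement of \Cref{DNpower1}. Applying \Cref{L1bound} to $v=\mathcal{H}\psi$ with the induced partition $v_j^k=\mathcal{H}\psi_j^k$, then using \Cref{Hboundlow} for $\|\mathcal{H}\psi_j^1\|_{C^\alpha(\Omega)}\lesssim_A\|\psi_j^1\|_{C^\alpha(\Gamma)}$ and the $H^{\frac12-\beta}(\Gamma)\to H^{1-\beta}(\Omega)$ bound from \eqref{harmonicbase}/\Cref{desiredelliptic} (this is exactly where the hypothesis $\beta\in[0,\frac12)$ is used), and finally \Cref{Hbounds} at index $s+\tfrac32$ for the main term $\|\mathcal{H}\psi\|_{H^{s+3/2}(\Omega)}$, gives the stated bound with no further bookkeeping. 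Your two-step route via \Cref{baltrace} followed by \Cref{boundaryest} is not doomed, but it duplicates the balanced error terms, introduces a factor $2^j$ from differentiating the harmonic extension that then has to be absorbed by shifting $r\mapsto r+1$, and forces you to reconcile the $L^\infty/L^2$ partition norms in \Cref{boundaryest} with the $C^\alpha/H^{\frac12-\beta}$ norms in the target --- none of which you actually carry out. The shorter path is to use \Cref{L1bound}, which was designed precisely so that this proposition would follow in one line.
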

\begin{proof}
See \cite[Proposition 5.29]{Euler}.
\end{proof}
\begin{proposition}[Dirichlet-to-Neumann operator bound II]\label{higherpowers} Let $m\geq 1$ be an integer, let $s\geq \frac{1}{2}$ and let $r\geq 0$, $\alpha\in [0,1)$ and $\beta\in [0,\frac{1}{2})$. Then we have the  bound
\begin{equation*}
\label{Iterated N}
    \|\mathcal{N}^m\psi\|_{H^{s}(\Gamma)}\lesssim_A \|\psi\|_{H^{s+m}(\Gamma)}+\|\Gamma\|_{H^{s+r+m}}\sup_{j>0}2^{-j(r+\alpha-1)}\|\psi_j^1\|_{C^\alpha(\Gamma)}+\sup_{j>0}2^{j(s-\frac{1}{2}+m+\beta-\epsilon)}\|\psi_j^2\|_{H^{\frac{1}{2}-\beta}(\Gamma)}
\end{equation*}
and the closely related bound when $s\geq\frac{3}{2}$,
\begin{equation*}\label{iteratedN2}
\|\mathcal{H}\mathcal{N}^m\psi\|_{H^{s+\frac{1}{2}}(\Omega)}\lesssim_A \|\psi\|_{H^{s+m}(\Gamma)}+\|\Gamma\|_{H^{s+r+m}}\sup_{j>0}2^{-j(r+\alpha-1)}\|\psi_j^1\|_{C^\alpha(\Gamma)}+\sup_{j>0}2^{j(s-\frac{1}{2}+m+\beta-\epsilon)}\|\psi_j^2\|_{H^{\frac{1}{2}-\beta}(\Gamma)}    
\end{equation*}
for any partition $\psi=\psi_j^1+\psi_j^2$.
\end{proposition}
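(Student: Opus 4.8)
The plan is to prove the first estimate (the $H^s(\Gamma)$ bound for $\mathcal N^m\psi$) by induction on the integer $m\ge 1$, and then to deduce the second estimate (the $H^{s+\frac12}(\Omega)$ bound for $\mathcal H\mathcal N^m\psi$) from the first one together with the balanced harmonic–extension bounds in \Cref{Hbounds}. The base case $m=1$ of the first estimate is exactly \Cref{DNpower1}: after the harmless relabelling of the auxiliary parameter $r$, the right-hand sides coincide term by term (note $s-\frac12+1=s+\frac12$ and $s+1+r=s+r+1$). The base case of the $\Omega$–estimate follows by applying \Cref{Hbounds} with its parameter ``$s$'' equal to $s+\frac12$ (legitimate since $s\ge\frac32$), estimating the resulting $\|\mathcal N\psi\|_{H^{s}(\Gamma)}$ term by \Cref{DNpower1} and handling the two tail terms via a partition of $\mathcal N\psi$ induced from the given partition of $\psi$, exactly as in the general step described below.

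For the inductive step, write $\mathcal N^m\psi=\mathcal N(\mathcal N^{m-1}\psi)$ and apply \Cref{DNpower1} to the function $\phi:=\mathcal N^{m-1}\psi$. This reduces the bound on $\|\mathcal N^m\psi\|_{H^s(\Gamma)}$ to two tasks. First, controlling $\|\phi\|_{H^{s+1}(\Gamma)}=\|\mathcal N^{m-1}\psi\|_{H^{s+1}(\Gamma)}$, which is precisely the inductive hypothesis at order $m-1$ and regularity level $s+1$, and which reproduces the three terms $\|\psi\|_{H^{s+m}(\Gamma)}$, $\|\Gamma\|_{H^{s+r+m}}\sup_{j}2^{-j(r+\alpha-1)}\|\psi_j^1\|_{C^\alpha(\Gamma)}$ and $\sup_{j}2^{j(s-\frac12+m+\beta-\epsilon)}\|\psi_j^2\|_{H^{\frac12-\beta}(\Gamma)}$ on the nose. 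Second, choosing a partition $\phi=\phi_j^1+\phi_j^2$ of $\mathcal N^{m-1}\psi$ adapted to the prescribed partition $\psi=\psi_j^1+\psi_j^2$ and bounding the two ``tail'' contributions of \Cref{DNpower1}, namely $\|\Gamma\|_{H^{(s+1)+1+r'}}\sup_j 2^{-j(r'-1+\alpha)}\|\phi_j^1\|_{C^\alpha(\Gamma)}$ and $\sup_j 2^{j((s+1)+\frac12+\beta-\epsilon)}\|\phi_j^2\|_{H^{\frac12-\beta}(\Gamma)}$, by the right-hand side of the claimed estimate for a suitably shifted parameter $r'$. This partition-transfer step is the technical heart of the argument.

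Concretely, introduce the low-frequency surrogate $\mathcal N_{<j}:=n_\Gamma\cdot(\nabla\Phi_{<j}\mathcal H)|_\Gamma$ of the Dirichlet-to-Neumann operator (so that $\mathcal N=\mathcal N_{<j}+\mathcal N_{\ge j}$), set $\phi_j^1:=\mathcal N_{<j}^{m-1}\psi_j^1$ — replacing every copy of $\mathcal N$ and of $\psi$ by its low-frequency version — and let $\phi_j^2:=\mathcal N^{m-1}\psi-\phi_j^1$, which telescopes into a sum of $O(m)$ terms, each containing at least one high-frequency factor $\Phi_{\ge j}$ or $\psi_j^2$. For $\phi_j^1$ one uses that each application of $\mathcal N_{<j}$ costs a single derivative, so that \Cref{Hboundlow} together with the fact that $\Phi_{<j}$ gains a factor $\sim 2^j$ per derivative in Hölder norms (modulo lower-order corrections carrying factors of $\|\Gamma\|_{C^{1,\epsilon_0}}$) iterates to $\|\phi_j^1\|_{C^\alpha(\Gamma)}\lesssim_A 2^{j(m-1)}\|\psi_j^1\|_{C^\alpha(\Gamma)}+(\text{corrections controlled via \Cref{curvaturebound}})$; absorbing the surplus $2^{j(m-1)}$ into the shift $r'=r+m-1$ matches the prescribed $\|\Gamma\|_{H^{s+r+m}}$-weighted term. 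For $\phi_j^2$ one applies the balanced trace estimate \Cref{baltrace}, the harmonic-extension bounds \Cref{Hbounds}, the Leibniz/commutator identities for $\mathcal N$ from \eqref{DNLeibniz} and \Cref{Movingsurfid}, and \Cref{boundaryest} to distribute the remaining $m-1$ derivatives, producing the smoothing factors $2^{-j(\cdots)}$ that, after multiplication by the weight $2^{j((s+1)+\frac12+\beta-\epsilon)}$, leave exactly the required $\|\psi\|_{H^{s+m}(\Gamma)}$, low-tail and $\sup_j 2^{j(s-\frac12+m+\beta-\epsilon)}\|\psi_j^2\|_{H^{\frac12-\beta}(\Gamma)}$ terms. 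Finally, with the $\Gamma$-estimate in hand, the $\Omega$-estimate follows by applying \Cref{Hbounds} with ``$\psi$''$=\mathcal N^m\psi$ at regularity $s+\frac12$, using the just-proved bound for the $\|\mathcal N^m\psi\|_{H^s(\Gamma)}$ term and the induced partition of $\mathcal N^m\psi$ for the two tails; \Cref{ellipticity} is not needed here. The main obstacle is this last bookkeeping: organizing the telescoping partition of $\mathcal N^{m-1}\psi$ so that the accumulated frequency weights $2^{jm}$ and the accumulated Sobolev factors $\|\Gamma\|_{H^{\cdot+m}}$ are absorbed cleanly into the single shifted parameter and into the prescribed weights $2^{-j(r+\alpha-1)}$ and $2^{j(s-\frac12+m+\beta-\epsilon)}$, without wasting any power of $2^j$, and doing so with all constants depending only on $A_\Gamma=\|\Gamma\|_{C^{1,\epsilon_0}}$; since every ingredient is already available with constants of this type and the commutator bounds for $[\mathcal N,\Phi_{<j}]$ are standard, the induction closes, as carried out in \cite{Euler}.
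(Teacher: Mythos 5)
The paper delegates the proof entirely to the cited reference, so there is no in-paper argument to compare against. Your reconstruction — induction on $m$ with base case \Cref{DNpower1}, and in the inductive step applying \Cref{DNpower1} to $\phi=\mathcal{N}^{m-1}\psi$ with a transferred partition $\phi_j^1=\mathcal{N}_{<j}^{m-1}\psi_j^1$, $\phi_j^2=\mathcal{N}^{m-1}\psi-\phi_j^1$, absorbing the accumulated $2^{j(m-1)}$ into a shifted parameter $r'=r+m-1$ — is the natural route, your parameter bookkeeping ($s+1+r'=s+r+m$, $s-\tfrac12+m$, etc.) is exactly right, and the deduction of the $\Omega$-estimate from the $\Gamma$-estimate via \Cref{Hbounds} with $s'=s+\tfrac12$ (using $s\ge\tfrac32\Rightarrow s'\ge 2$) and $r''=r+m$ is also correct. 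This matches the structure of the proof of \Cref{materialcom} that the paper does spell out, which indeed partitions $\mathcal{N}^m\psi$ by iterating $\mathcal{N}_{<j}$ and telescoping.

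There is, however, one genuine imprecision at the technical heart. The bound $\|\mathcal{N}_{<j}^{m-1}\psi_j^1\|_{C^\alpha(\Gamma)}\lesssim_A 2^{j(m-1)}\|\psi_j^1\|_{C^\alpha(\Gamma)}$ hides a product with $n_\Gamma$ at each of the $m-1$ applications of $\mathcal{N}_{<j}=n_\Gamma\cdot\nabla\Phi_{<j}\mathcal{H}$. For $\alpha>\epsilon_0$ (the statement allows any $\alpha\in[0,1)$), the Hölder product rule costs $\|n_\Gamma\|_{C^\alpha(\Gamma)}$, which is \emph{not} controlled by $A_\Gamma=\|\Gamma\|_{C^{1,\epsilon_0}}$. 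The constant in the proposition is $\lesssim_A$, so this must be repaired by frequency-splitting $n_\Gamma$ itself (low part uniformly in $C^\alpha$, high part routed into the $\|\Gamma\|_{H^{s+r+m}}$-weighted term), mirroring how \Cref{boundaryest} and \Cref{baltrace} already treat exactly this issue. Your appeal to \Cref{curvaturebound} for the ``corrections'' is not the right tool — that proposition controls $\|\Gamma\|_{H^s}$ by the curvature and has nothing to say about the $C^\alpha$ product with $n_\Gamma$. With that one substitution (Littlewood–Paley split of $n_\Gamma$ in place of \Cref{curvaturebound}), your argument closes.
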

\begin{proof}
See \cite[Proposition 5.30]{Euler}.
\end{proof}
Finally, we recall a few results about traces of normal and tangential derivatives on the boundary. For normal derivatives, we have the following estimates.
\begin{proposition}[Normal derivative trace bound]\label{L1bound} Let $s>0$, $r\geq 0$ and $\alpha,\beta\in [0,1]$. The normal trace operator $\nabla_n:=n_\Gamma\cdot (\nabla)_{|\Gamma}$ satisfies the bound
\begin{equation*}
    \|\nabla_nv\|_{H^{s}(\Gamma)}\lesssim_A\|v\|_{H^{s+\frac{3}{2}}(\Omega)}+\|\Gamma\|_{H^{s+r+1}}\sup_{j>0}2^{-j\left(r-1+\alpha\right)}\|v_j^1\|_{C^\alpha(\Omega)}+\sup_{j>0}2^{j(s+\beta+\frac{1}{2}-\epsilon)}\|v_j^2\|_{H^{1-\beta}(\Omega)}.
\end{equation*}
\end{proposition}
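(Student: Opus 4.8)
\textbf{Proof proposal for \Cref{L1bound}.}

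The plan is to reduce the estimate to the already-established harmonic extension bound in \Cref{Hbounds} together with the local-coordinate machinery of \Cref{Subsub local}. The key observation is that $\nabla_n v = n_\Gamma \cdot (\nabla v)_{|\Gamma}$ depends only on the trace of $\nabla v$ on $\Gamma$, so we want to localize to the coordinate patches $\tilde R_i$, flatten the boundary via the diffeomorphisms $H_i$, and then apply a flat-space trace theorem on $\mathbb R^{d-1}$. First I would use the partition of unity $\{\gamma_{*i}\}$ from \Cref{Subsub local} to write $\nabla_n v = \sum_i \gamma_{*i} \nabla_n v$, and in each chart pull back via $H_i$ so that $\Gamma$ becomes $\{z_d = 0\}$. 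In these coordinates, $n_\Gamma$ is a smooth function of $\nabla \Phi_i$ (hence controlled in $H^{s+r}(\Gamma)$ by $\|\Gamma\|_{H^{s+r+1}}$, using the half-derivative gain in the extension $\Phi_i$ noted in \Cref{Subsub local}), and $\nabla v$ restricted to $\{z_d = 0\}$ is a trace of an $H^{s+\frac12}(\Omega)$-function, which by the standard trace theorem lies in $H^{s}(\{z_d=0\}) = H^s(\mathbb R^{d-1})$.

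The main point is then to handle the product $n_\Gamma \cdot (\nabla v)_{|\Gamma}$ in a \emph{balanced} fashion, exactly as in \Cref{boundaryest} and \Cref{baltrace}: when all the derivatives land on $n_\Gamma$ we pay $\|\Gamma\|_{H^{s+r+1}}$ but only need the low-frequency part of $\nabla v$ in $C^\alpha$, which by trace and Sobolev embedding is controlled by $\sup_j 2^{-j(r-1+\alpha)}\|v_j^1\|_{C^\alpha(\Omega)}$ (the exponent $r-1+\alpha$ arising because $\nabla v_j^1$ is one derivative above $v_j^1$ and the trace is harmless at the $C^\alpha$ level); when the derivatives land on $\nabla v$ we use the high-frequency trace estimate from \Cref{baltrace} with the appropriate $\beta$-interpolation, contributing $\sup_j 2^{j(s+\beta+\frac12-\epsilon)}\|v_j^2\|_{H^{1-\beta}(\Omega)}$ (the shift by $+\frac12$ relative to \Cref{baltrace} coming from the extra derivative in $\nabla v$ compared to $v$, and the trace loss of $\frac12$). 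The balanced product estimate \Cref{boundaryest} applied on $\Gamma$ to $f = n_\Gamma$ and $g = (\nabla v)_{|\Gamma}$, combined with the above trace bounds for $g$ and its partitions, assembles into precisely the claimed inequality after summing over the finitely many charts $i$.

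I expect the main technical obstacle to be the careful bookkeeping of the frequency exponents when passing a partition $v = v_j^1 + v_j^2$ of $v$ through the operations ``take $\nabla$'', ``restrict to $\Gamma$'', and ``multiply by $n_\Gamma$''. Specifically one must check that the partition of $v$ induces an acceptable partition of $(\nabla v)_{|\Gamma}$ — for $v_j^1$ one uses $\|\nabla v_j^1\|_{C^{\alpha}(\Omega)} \lesssim 2^{j}\|v_j^1\|_{C^\alpha(\Omega)}$ (or more sharply one reindexes so the $C^\alpha$ bound on $\nabla v$ is directly hypothesized), while for $v_j^2$ one uses the trace theorem $\|(\nabla v_j^2)_{|\Gamma}\|_{H^{1-\beta-1/2}(\Gamma)} \lesssim \|v_j^2\|_{H^{2-\beta}(\Omega)}$ and then interpolates; getting these to line up with the stated exponents $r-1+\alpha$ and $s+\beta+\frac12-\epsilon$ is the delicate part. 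Everything else — the chart reduction, the smoothness of $n_\Gamma$ in the flattening coordinates, the volume-uniform constants — is routine given the apparatus already set up in \Cref{Subsub local} and the cited propositions. Since \Cref{boundaryest}, \Cref{baltrace}, and \Cref{Hbounds} are all stated above, the proof is essentially an assembly; I would simply cite \cite[Proposition 5.31]{Euler} (or the analogous statement there) for the detailed verification and omit the routine exponent-chasing.
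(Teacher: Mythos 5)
Your overall strategy---localize to charts, flatten, apply the balanced boundary product estimate to $n_\Gamma\cdot(\nabla v)_{|\Gamma}$, and control the factors by trace bounds---is a reasonable route, and the comparison with \Cref{baltrace} at regularity $s+\frac32$ (which has an identical right-hand side) correctly indicates what the proof must deliver. However, the step you defer as ``routine exponent-chasing'' is precisely where your sketch has a genuine gap, and the specific inequality you write there is false. For an arbitrary partition $v = v_j^1 + v_j^2$, one does \emph{not} have $\|\nabla v_j^1\|_{C^{\alpha}(\Omega)}\lesssim 2^{j}\|v_j^1\|_{C^\alpha(\Omega)}$ (nor the needed $L^\infty$ version): take $v_j^1 = |x_1|^{\alpha'}$ with $\alpha<\alpha'<1$, so that $v_j^1\in C^\alpha(\Omega)$ while $\nabla v_j^1$ is unbounded near $\{x_1=0\}$; the factor $2^j$ cannot help, since $v_j^1$ is not assumed to be frequency localized at scale $2^j$. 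Likewise your trace step $\|(\nabla v_j^2)_{|\Gamma}\|_{H^{1/2-\beta}(\Gamma)}\lesssim\|v_j^2\|_{H^{2-\beta}(\Omega)}$ costs a full derivative more than the $\|v_j^2\|_{H^{1-\beta}(\Omega)}$ appearing in the statement, and closing that gap at cost $2^j$ again silently presumes $v_j^2$ is localized at high frequency. Because the proposition is stated for \emph{arbitrary} partitions, a proof is not permitted to use frequency localization as a property of the partition pieces.

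The missing ingredient is that frequency truncation must be inserted \emph{before} the gradient and the trace are taken. Concretely, the partition of $g=(\nabla v)_{|\Gamma}$ fed into \Cref{boundaryest} should not be $g_j^1=(\nabla v_j^1)_{|\Gamma}$ but rather (roughly) $g_j^1=(\Phi_{\leq j}\nabla v_j^1)_{|\Gamma}$; since $\Phi_{\leq j}$ localizes to frequency $\lesssim 2^j$ regardless of what $v_j^1$ is, a Bernstein-type estimate then legitimately gives $\|g_j^1\|_{L^\infty(\Gamma)}\lesssim 2^{j(1-\alpha)}\|v_j^1\|_{C^\alpha(\Omega)}$, which is exactly what the exponent $r-1+\alpha$ encodes; the high-frequency remainder $(I-\Phi_{\leq j})\nabla v_j^1$ is then absorbed into $g_j^2$ together with $\nabla v_j^2$ and estimated in $L^2(\Gamma)$ by a trace bound half a derivative above $H^{1-\beta}(\Omega)$. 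Tracking the commutator errors from passing $\Phi_{\leq j}$ through the gradient, the chart maps, and the multiplication by $n_\Gamma$ is the substance of the proof and cannot be dismissed as exponent-chasing. Finally, the reference you intend to cite is \cite[Proposition 5.28]{Euler}, not Proposition 5.31 (which is the tangential, not normal, derivative bound).
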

\begin{proof}
See \cite[Proposition 5.28]{Euler}.
\end{proof}
For the tangential derivative operator $\nabla^{\top}$, the  analogous result is as follows.
\begin{proposition}\label{tangradientbound}
Let $s\geq \frac{1}{2}$, $r\geq 0$, $\alpha\in [0,1)$ and $\beta\in [0,\frac{1}{2})$. Then
\begin{equation*}
\|\nabla^{\top}\psi\|_{H^s(\Gamma)}\lesssim_A \|\psi\|_{H^{s+1}(\Gamma)}+\|\Gamma\|_{H^{s+1+r}}\sup_{j>0}2^{-j(r-1+\alpha)}\|\psi_j^1\|_{C^{\alpha}(\Gamma)}+\sup_{j>0}2^{j(s+\frac{1}{2}+\beta-\epsilon)}\|\psi_j^2\|_{H^{\frac{1}{2}-\beta}(\Gamma)}    
\end{equation*}
for any sequence of partitions $\psi=\psi_j^1+\psi_j^2$.
\end{proposition}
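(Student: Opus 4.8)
The statement to prove is \Cref{tangradientbound}, the balanced $H^s$ bound for the tangential gradient operator $\nabla^\top$ on $\Gamma$. The plan is to reduce this to the already-established ellipticity and trace machinery rather than redo a paradifferential analysis from scratch. First I would recall the defining relation: for a function $\psi$ on $\Gamma$, $\nabla^\top\psi = (\nabla \mathcal{H}\psi)_{|\Gamma} - (\nabla_n\psi) n_\Gamma$, where $\nabla_n\psi = \mathcal{N}\psi$. So $\nabla^\top\psi$ is, up to the lower-order term $(\mathcal{N}\psi) n_\Gamma$, simply the restriction to $\Gamma$ of the full gradient of the harmonic extension $\mathcal{H}\psi$. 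This is the key structural observation, and it splits the estimate into two pieces of exactly the type we already control.

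For the main piece, $\|(\nabla\mathcal{H}\psi)_{|\Gamma}\|_{H^s(\Gamma)}$, I would apply the balanced trace estimate \Cref{baltrace} to the vector field $v = \nabla\mathcal{H}\psi$ at regularity $s+\tfrac12$, feeding in a partition $v_j^1 + v_j^2$ induced from a partition of $\psi$ by $\psi = \psi_j^1 + \psi_j^2$ together with $v_j^i := \nabla\mathcal{H}\psi_j^i$. This produces the term $\|\nabla\mathcal{H}\psi\|_{H^{s+\frac12}(\Omega)}$, which by \Cref{Hbounds} (applied at Sobolev index $s+\tfrac12 \geq \tfrac12$, noting we need $s \geq \tfrac12$ so $s+\tfrac12 \geq 1$ and the harmonic extension bounds apply — and for the borderline integer cases one uses the baseline \eqref{harmonicbase} together with \Cref{desiredelliptic}) is dominated by $\|\psi\|_{H^{s+1}(\Gamma)}$ plus the balanced surface-regularity and high-frequency error terms appearing in the statement. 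The pointwise piece $\|\Phi_{\leq j}(\nabla\mathcal{H}\psi)\|_{L^\infty}$ in the trace estimate is controlled by $\|\mathcal{H}\psi_j^1\|_{C^{1,\cdot}}$, hence by $\|\psi_j^1\|_{C^\alpha(\Gamma)}$ via \Cref{Hboundlow} and interpolation (or \Cref{Gilbarg}), giving the $\sup_{j>0} 2^{-j(r-1+\alpha)}\|\psi_j^1\|_{C^\alpha(\Gamma)}$ contribution after absorbing the frequency weights; and the $H^{1-\beta}$-high-frequency term in \Cref{baltrace} translates, through the $H^{\frac12-\beta}\to H^{1-\beta}$ mapping property of $\mathcal{H}$, to the $\sup_{j>0}2^{j(s+\frac12+\beta-\epsilon)}\|\psi_j^2\|_{H^{\frac12-\beta}(\Gamma)}$ contribution. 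For the lower-order piece $(\mathcal{N}\psi)n_\Gamma$, I would use the boundary product estimate \Cref{boundaryest} to distribute the $H^s(\Gamma)$ norm between $\mathcal{N}\psi$ and $n_\Gamma$, then apply \Cref{DNpower1} to $\mathcal{N}\psi$; the surface-regularity factors $\|n_\Gamma\|_{H^{s+\cdot}}$ generated here are of the same shape as those already present (one uses $\|n_\Gamma\|_{H^{s'}(\Gamma)} \lesssim_A 1 + \|\Gamma\|_{H^{s'+1}}$ from \Cref{curvaturebound}, or directly the definition of $\|\Gamma\|_{H^{\cdot}}$), so they fold into the $\|\Gamma\|_{H^{s+1+r}}(\cdots)$ term.

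The bookkeeping with the partition is the one genuinely delicate point: the trace and product estimates are stated for an arbitrary sequence of partitions of the ambient vector field, but the statement of \Cref{tangradientbound} quantifies over partitions of $\psi$ itself, so I must be careful that the induced partition $v_j^i = \nabla\mathcal{H}\psi_j^i$ (resp. the partition of $\mathcal{N}\psi$) has low-frequency/high-frequency pieces whose $C^\alpha$ and $H^{1-\beta}$ norms are controlled, with the correct dyadic gains, by the $C^\alpha$ and $H^{\frac12-\beta}$ norms of $\psi_j^1, \psi_j^2$ — and that no extra logarithmic loss appears when reindexing the dyadic weights (shifting $s+\tfrac12$ versus $s$, etc.). This is exactly the kind of argument carried out in \cite{Euler} for the companion estimates \Cref{DNpower1}, \Cref{higherpowers} and \Cref{L1bound}, so I expect the proof to be essentially a transcription of that pattern; indeed, since these are all proved in \cite{Euler}, the cleanest route is to simply cite \cite[Proposition 5.32]{Euler} (or the corresponding numbered result there) and remark that the reduction above is the only new ingredient, which is why in the paper we state it as a recalled result. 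The main obstacle, such as it is, is purely organizational: ensuring the error-term exponents match the claimed $2^{-j(r-1+\alpha)}$ and $2^{j(s+\frac12+\beta-\epsilon)}$ after the $\tfrac12$-derivative shift from the trace theorem, which works out precisely because $\nabla^\top$ loses one derivative on the boundary while the trace costs a half and the ambient gradient of $\mathcal{H}$ supplies the other half.
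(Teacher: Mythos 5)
Your decomposition $\nabla^\top\psi = (\nabla\mathcal{H}\psi)_{|\Gamma} - (\mathcal{N}\psi)\,n_\Gamma$ followed by the balanced trace estimate \Cref{baltrace} at Sobolev index $s+\tfrac12$ is the right skeleton (the paper itself simply cites \cite[Proposition 5.31]{Euler}, so I am comparing against what a correct proof must contain rather than against a displayed argument), and the handling of the lower-order term $(\mathcal{N}\psi)n_\Gamma$ via \Cref{boundaryest} and \Cref{DNpower1} is fine. However, the step you yourself flag as ``the one genuinely delicate point'' is where a real gap sits, and the bound you assert there is wrong, not merely unspecified.

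Concretely, matching exponents in \Cref{baltrace} (with $r$ replaced by $r+1$ to produce $\|\Gamma\|_{H^{s+1+r}}$) against the target $\sup_j 2^{-j(r-1+\alpha)}\|\psi_j^1\|_{C^\alpha(\Gamma)}$ forces the induced partition to satisfy $\|v_j^1\|_{C^\alpha(\Omega)}\lesssim_A 2^{j}\,\|\psi_j^1\|_{C^\alpha(\Gamma)}$, a one-derivative gain at the cost of a factor $2^j$. Your choice $v_j^1 = \nabla\mathcal{H}\psi_j^1$ cannot deliver this: you claim it is ``controlled by $\|\mathcal{H}\psi_j^1\|_{C^{1,\cdot}}$, hence by $\|\psi_j^1\|_{C^\alpha(\Gamma)}$ via \Cref{Hboundlow} and interpolation,'' but \Cref{Hboundlow} is $C^\alpha\to C^\alpha$ and \Cref{Gilbarg} is $C^{1,\alpha}\to C^{1,\alpha}$; neither converts a lost derivative into a $2^j$ factor, and interpolation between them produces no such gain. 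The required factor must come from inserting an explicit dyadic mollifier into the partition, something your write-up gestures at with a stray $\Phi_{\leq j}$ but never actually places where it is needed: take $v_j^1 := \nabla\Phi_{\leq j}\mathcal{H}\psi_j^1$, so that \Cref{pointwisereg} gives $\|\nabla\Phi_{\leq j}\mathcal{H}\psi_j^1\|_{C^\alpha(\Omega)}\lesssim \|\Phi_{\leq j}\mathcal{H}\psi_j^1\|_{C^{1,\alpha}(\Omega)}\lesssim 2^j\|\mathcal{H}\psi_j^1\|_{C^\alpha(\Omega)}\lesssim_A 2^j\|\psi_j^1\|_{C^\alpha(\Gamma)}$, which is exactly the bound you need. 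The same issue appears in the high-frequency piece, where $v_j^2 = \nabla\mathcal{H}\psi_j^2$ costs $\|\psi_j^2\|_{H^{\frac32-\beta}(\Gamma)}$ rather than the allowed $\|\psi_j^2\|_{H^{\frac12-\beta}(\Gamma)}$, so the tail $\nabla\Phi_{\geq j}\mathcal{H}\psi_j^1$ and the mollified remainder must be sorted out by hand using the error bounds in \Cref{c reg bounds not div free}. This is precisely the device carried out explicitly in the proof of \Cref{materialcom} in the appendix (via the $P_{<j}$- and $\mathcal{N}_{<j}$-localized partitions), and it is the ingredient your proposal omits.
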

\begin{proof}
See \cite[Proposition 5.31]{Euler}.
\end{proof}
Finally, we note a bound for $\mathcal{N}^m\nabla_n$ which will be used in \Cref{HEB}.
\begin{corollary}\label{EEcorollary} Let $\alpha,\beta\in [0,1]$, $s\geq \frac{1}{2}$ and $r\geq 0$. We have
\begin{equation*}
\|\mathcal{N}^m\nabla_nv\|_{H^{s}(\Gamma)}\lesssim_A \|v\|_{H^{s+m+\frac{3}{2}}(\Omega)}+\|\Gamma\|_{H^{s+1+m+r}}\sup_{j>0}2^{-j(r+\alpha-1)}\|v_j^1\|_{C^{\alpha}(\Omega)}+\sup_{j>0} 2^{j(s+\beta+\frac{1}{2}+m-\epsilon)}\|v_j^2\|_{H^{1-\beta}(\Omega)}
\end{equation*}
where $v=v_j^1+v_j^2$ is any sequence of partitions of $v$.
\end{corollary}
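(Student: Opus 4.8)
\textbf{Proof proposal for \Cref{EEcorollary}.} The plan is to derive this as a direct composition of two results already in the appendix: the normal derivative trace bound \Cref{L1bound}, which controls $\nabla_n v$ on $\Gamma$ in terms of interior norms of $v$, and the iterated Dirichlet-to-Neumann bound \Cref{higherpowers}, which controls $\mathcal{N}^m\psi$ on $\Gamma$ in terms of Sobolev norms of $\psi$ on $\Gamma$. The only genuine work is bookkeeping: choosing the partition of $v$ on the right-hand side and transferring it through the two applications so that the resulting $\sup_{j>0}$ terms line up with the exponents $2^{-j(r+\alpha-1)}$ and $2^{j(s+\beta+\frac12+m-\epsilon)}$ claimed in the statement.

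First I would fix a sequence of partitions $v = v_j^1 + v_j^2$ as in the hypothesis, and set $\psi := \nabla_n v$. Applying \Cref{higherpowers} (the first, boundary-valued estimate) with the regularity index $s$, I get
\begin{equation*}
\|\mathcal{N}^m\psi\|_{H^s(\Gamma)} \lesssim_A \|\psi\|_{H^{s+m}(\Gamma)} + \|\Gamma\|_{H^{s+r+m}}\sup_{j>0}2^{-j(r+\alpha-1)}\|\psi_j^1\|_{C^\alpha(\Gamma)} + \sup_{j>0}2^{j(s-\frac12+m+\beta-\epsilon)}\|\psi_j^2\|_{H^{\frac12-\beta}(\Gamma)},
\end{equation*}
where $\psi_j^i := \nabla_n v_j^i$ is the induced partition of $\psi$. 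Then I would estimate each of the three pieces on the right. The main term $\|\psi\|_{H^{s+m}(\Gamma)} = \|\nabla_n v\|_{H^{s+m}(\Gamma)}$ is handled by \Cref{L1bound} applied with regularity index $s+m$ (and the same auxiliary parameters $r,\alpha,\beta$), which produces exactly $\|v\|_{H^{s+m+\frac32}(\Omega)}$ together with the two desired supremum terms, after noting that $H^{s+m+r+1}$ bounds on $\Gamma$ are subsumed by $\|\Gamma\|_{H^{s+1+m+r}}$. For the partition terms $\|\psi_j^1\|_{C^\alpha(\Gamma)} = \|\nabla_n v_j^1\|_{C^\alpha(\Gamma)}$ I would use the pointwise $C^{1,\alpha}$-type control from \Cref{Gilbarg}/\Cref{Hboundlow} combined with a trace bound to pass from $\|v_j^1\|_{C^\alpha(\Omega)}$ with a harmless loss of one derivative absorbed into the $2^{-j}$ weight; likewise $\|\psi_j^2\|_{H^{\frac12-\beta}(\Gamma)}$ is bounded via \Cref{baltrace} by $\|v_j^2\|_{H^{1-\beta}(\Omega)}$ at the cost of one frequency power, which is exactly why the exponent in the second supremum shifts from $s-\frac12+m+\beta-\epsilon$ to $s+\frac12+m+\beta-\epsilon$.

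The step I expect to require the most care — though still routine given the machinery — is making sure the partitions are propagated consistently: the partition $v = v_j^1+v_j^2$ on $\Omega$ induces a partition of $\nabla_n v$ on $\Gamma$, but when I reapply \Cref{L1bound} to the main term $\|\nabla_n v\|_{H^{s+m}(\Gamma)}$ I am free to reuse the same $v_j^1, v_j^2$, and the auxiliary terms that estimate $\|\nabla_n v_j^1\|_{C^\alpha}$ and $\|\nabla_n v_j^2\|_{H^{\frac12-\beta}}$ must be shown to already be dominated by the two supremum terms in the final claim (with their specific exponents) rather than producing new, uncontrolled quantities. This is a matter of checking that the exponent arithmetic closes: the loss of $1$ derivative in passing from interior $C^\alpha$ (resp. $H^{1-\beta}$) control of $v_j^i$ to boundary control of $\nabla_n v_j^i$ is exactly compensated by the shifts $-j(r+\alpha-1) = -j((r+1)+\alpha-2)$... — more precisely, one reindexes so that the geometric weights match. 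Once this is verified the three contributions sum to the right-hand side of \Cref{EEcorollary}, completing the proof. I would omit the detailed exponent chase, as it is entirely parallel to the derivation of \Cref{higherpowers} and \Cref{L1bound} in \cite{Euler}.
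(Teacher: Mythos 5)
The high-level plan — apply \Cref{higherpowers} to $\psi := \nabla_n v$ and then apply \Cref{L1bound} to the resulting main term $\|\nabla_n v\|_{H^{s+m}(\Gamma)}$ — is sound, and your main-term arithmetic is exactly right: \Cref{L1bound} at regularity $s+m$ with the same parameters $r,\alpha,\beta$ reproduces all three terms of the corollary verbatim. The gap is in your treatment of the two auxiliary supremum terms that come out of the \Cref{higherpowers} application with the induced partition $\psi_j^i := \nabla_n v_j^i$, and in particular the claim that the derivative lost in passing from $v_j^i$ on $\Omega$ to $\nabla_n v_j^i$ on $\Gamma$ can be ``absorbed into the $2^{-j}$ weight.''

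For the $C^\alpha$ term this is not possible, because the weight does not shift at all. \Cref{higherpowers} (with the same $r,\alpha$) produces $\sup_j 2^{-j(r+\alpha-1)}\|\nabla_n v_j^1\|_{C^\alpha(\Gamma)}$, and the target is $\sup_j 2^{-j(r+\alpha-1)}\|v_j^1\|_{C^\alpha(\Omega)}$ — same exponent. But $\|\nabla_n v_j^1\|_{C^\alpha(\Gamma)}$ requires (roughly) $\|v_j^1\|_{C^{1,\alpha}(\Omega)}$, which is a full derivative stronger than $\|v_j^1\|_{C^\alpha(\Omega)}$; the $\Gamma$ factor does not help either since $\|\Gamma\|_{H^{s+r+m}}\leq \|\Gamma\|_{H^{s+1+m+r}}$ runs the wrong way. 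You would need to choose a different $\alpha',r'$ in the intermediate \Cref{higherpowers} application (e.g.\ $\alpha'=0$, $r'=r+1$) and then still close the estimate for $\|\nabla_n v_j^1\|_{L^\infty(\Gamma)}$ in terms of $\|v_j^1\|_{C^\alpha(\Omega)}$, which for $\alpha<1$ is false for an arbitrary partition. The same problem recurs in the $H^{\frac12-\beta}$ term: the standard trace bound gives $\|\nabla_n v_j^2\|_{H^{\frac12-\beta}(\Gamma)} \lesssim \|v_j^2\|_{H^{2-\beta}(\Omega)}$, which is one full derivative above $\|v_j^2\|_{H^{1-\beta}(\Omega)}$, and the weight only shifts by a single power of $2^j$. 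This would close only if one had a Bernstein-type inequality $\|v_j^2\|_{H^{2-\beta}}\lesssim 2^j\|v_j^2\|_{H^{1-\beta}}$, which holds for low-frequency localized pieces but not for arbitrary partitions (and typically $v_j^2$ is the high-frequency piece, for which the inequality goes the wrong way). In short, the two auxiliary terms from \Cref{higherpowers} cannot be estimated by the corollary's right-hand side using the ``induced partition'' $\psi_j^i := \nabla_n v_j^i$ and straightforward trace/embedding arguments; a different choice of partition for $\psi$ (tied to the regularizations $\Phi_{\leq j}v$ or to the harmonic/inhomogeneous decomposition $\nabla_n v = \mathcal{N}(v|_\Gamma) + \nabla_n\Delta^{-1}\Delta v$) is needed to make the exponent bookkeeping close. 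So the proof as written has a genuine gap in the step you identify as ``the most care — though still routine.''
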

\begin{proof}
See \cite[Corollary 5.32]{Euler}.
\end{proof}

\subsubsection{Div-curl estimates}
At various points in the paper we will need good bounds for the following div-curl system.
\begin{proposition}[Div-curl estimate with Neumann type data]\label{Balanced div-curl} Let $v\in H^s(\Omega)$ be a vector field defined on $\Omega$ and let $s>\frac{3}{2}$, $\alpha,\beta\in [0,1]$. Let $v:=v_j^1+v_j^2$ be any partition of $v$. Moreover, let $\mathcal{B}v$ denote either the Neumann trace of $v$, $n_{\Gamma}\cdot \nabla v$ or the boundary value $\nabla^{\top}v\cdot n_\Gamma$. Then if $v$ solves the  div-curl system,
\begin{equation*}
\begin{cases}
&\nabla\cdot v=f,
\\
&\nabla\times v=\omega,
\\
&\mathcal{B}v=g,
\end{cases}
\end{equation*}
then $v$ satisfies the estimate
\begin{equation*}
\begin{split}
\|v\|_{H^{s}(\Omega)}&\lesssim_A\|f\|_{H^{s-1}(\Omega)}+\|\omega\|_{H^{s-1}(\Omega)}+\|g\|_{H^{s-\frac{3}{2}}(\Gamma)}+\|v\|_{L^2(\Omega)}+\|\Gamma\|_{H^{s+r-\frac{1}{2}}}\sup_{j>0}2^{-j(r+\alpha-1)}\|v_j^1\|_{C^{\alpha}(\Omega)}
\\
&\qquad +\sup_{j>0}2^{j(s-1+\beta-\epsilon)}\|v_j^2\|_{H^{1-\beta}(\Omega)}.
\end{split}
\end{equation*}
\end{proposition}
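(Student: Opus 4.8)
The plan is to reduce the div-curl system to a sequence of scalar elliptic problems on $\Omega$ and on $\Gamma$, for each of which the balanced estimates collected in this appendix apply directly. The starting point is the vector identity $\Delta v = \nabla(\nabla\cdot v) - \nabla\times(\nabla\times v) = \nabla f - \nabla\times\omega$, so that each component of $v$ solves a scalar Poisson equation whose interior source is bounded by $\|f\|_{H^{s-1}(\Omega)} + \|\omega\|_{H^{s-1}(\Omega)}$. Passing through this identity means that the entire interior contribution is handled by \Cref{direst} (or \Cref{direstlow} in low dimensions when $1<s<2$), which already produces, on its right-hand side, the two balanced error terms $\|\Gamma\|_{H^{s+r-\frac12}}\sup_{j>0}2^{-j(r+\alpha-1)}\|v_j^1\|_{C^{\alpha}(\Omega)}$ and $\sup_{j>0}2^{j(s-1+\beta-\epsilon)}\|v_j^2\|_{H^{1-\beta}(\Omega)}$ appearing in the claimed bound. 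Thus the whole question is reduced to controlling the Dirichlet trace of $v$, or rather an appropriate scalar potential, from the given Neumann-type data.

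Next I would peel off the inhomogeneities. Set $v_0 := \nabla\Delta_\Omega^{-1}f$ with zero Dirichlet data, so that $\nabla\cdot v_0 = f$, $\nabla\times v_0 = 0$, and $\|v_0\|_{H^s(\Omega)}$ is controlled via \Cref{direst}. Then let $v_1$ be the componentwise solution of $\Delta v_1 = \nabla\times\omega$ with zero Dirichlet data; its curl equals $\omega$ up to a harmonic vector field, while its divergence is harmonic with boundary trace bounded using \Cref{L1bound} and \Cref{baltrace}, hence of lower order. The remainder $v_h := v - v_0 - v_1$ is then componentwise harmonic, curl-free up to a harmonic correction, and divergence-free up to a controllable harmonic term; writing $v_h = \nabla\mathcal{H}\psi$ plus a correction $\nabla\Delta^{-1}(\nabla\cdot v_h)$ absorbing the small divergence (again estimated by \Cref{direst}), we are left to estimate a single scalar potential $\psi$ on $\Gamma$.

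It is at this last stage that the Neumann-type data enters, via the Dirichlet-to-Neumann operator. When $\mathcal{B}v = \nabla^{\top}v\cdot n_\Gamma$ one uses the identity $\nabla^{\top}(\nabla\mathcal{H}\psi)\cdot n_\Gamma = \nabla^{\top}(\mathcal{N}\psi) - \nabla\mathcal{H}\psi\cdot\nabla^{\top}n_\Gamma$, so that $\nabla^{\top}\mathcal{N}\psi$ equals the boundary data $g$ modulo a shape-operator term which is lower order (controlled by \Cref{Hbounds}, \Cref{tangradientbound} and the curvature bound \Cref{curvaturebound}); when $\mathcal{B}v = n_\Gamma\cdot\nabla v$ one analogously extracts a combination of $\mathcal{N}\psi$ and an $\mathcal{N}^2\psi$-type piece modulo lower-order curvature terms. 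Combining with the two previous steps yields control of $\mathcal{N}\psi$ in $H^{s-\frac32}(\Gamma)$ in terms of $\|g\|_{H^{s-\frac32}(\Gamma)}$, the interior sources, $\|v\|_{L^2(\Omega)}$ — which absorbs the finite-dimensional cohomological obstruction to solvability of the system, and is precisely why that term must appear on the right — and balanced errors. Then the ellipticity estimate \Cref{ellipticity} with $k=1$ upgrades this to $\psi\in H^{s+\frac12}(\Gamma)$, and \Cref{Hbounds} gives $\nabla\mathcal{H}\psi\in H^s(\Omega)$; together with the bounds on $v_0$ and $v_1$ this closes the estimate. In the $\mathcal{N}^2\psi$ case one additionally invokes \Cref{higherpowers} to reorganize the iterated Dirichlet-to-Neumann expressions.

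The main obstacle will be the bookkeeping of the balanced error terms across all of these reductions. Subtracting $v_0$, $v_1$, and the harmonic potential each feeds the balanced pieces of the intermediate functions into the next step, and one must verify that every such contribution can be re-expressed in terms of a single fixed partition $v = v_j^1 + v_j^2$ of the original field with exactly the exponents in the statement — in particular that translating the Neumann data into a statement about $\mathcal{N}\psi$, and then inverting $\mathcal{N}$ by \Cref{ellipticity}, degrades neither the $2^{-j(r+\alpha-1)}$ nor the $2^{j(s-1+\beta-\epsilon)}$ weights. A secondary technical point is pinning down the precise commutator identity for tangential (and normal) derivatives of $\nabla\mathcal{H}\psi$ on $\Gamma$ and showing that the accompanying second-fundamental-form terms are genuinely lower order; this is exactly where \Cref{curvaturebound}, \Cref{tangradientbound} and \Cref{higherpowers} do the work.
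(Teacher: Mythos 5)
The overall skeleton is on the right track: the vector-Laplacian identity, subtraction of particular solutions, scalar potential on $\Gamma$ controlled through the Dirichlet-to-Neumann operator, and the $L^2$ term absorbing the finite-dimensional cohomology are all the correct ingredients, and the DtN numerology (gaining a full derivative from $g\in H^{s-\frac32}(\Gamma)$ to $\psi\in H^{s+\frac12}(\Gamma)$, hence $\nabla\mathcal H\psi\in H^s(\Omega)$) is sound, modulo a cosmetic miscount where you write ``control of $\mathcal N\psi$ in $H^{s-\frac32}$'' rather than the $H^{s-\frac12}$ bound that one tangential derivative of $g$ actually gives.

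The genuine gap is the treatment of the remainder $v_h$. After subtracting $v_0 = \nabla\Delta^{-1}f$ and the componentwise $v_1$ with $\Delta v_1 = -\nabla\times\omega$ and zero Dirichlet data, the curl $\nabla\times v_h$ is \emph{not} zero: a direct computation gives $\Delta(\nabla\times v_1+\omega)=0$, so $\nabla\times v_h$ is a nontrivial harmonic matrix field whose boundary trace is set by $\omega|_\Gamma$ and $\nabla_n v_1|_\Gamma$ and is of the same order as $\omega$ itself. Consequently $v_h$ cannot be written as a scalar gradient; your representation $v_h = \nabla\mathcal H\psi + \nabla\Delta^{-1}(\nabla\cdot v_h)$ has zero curl on the right-hand side and therefore misses a whole rotational component. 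What is missing is a further Hodge-type splitting of $v_h$ into an irrotational piece (the $\nabla\mathcal H\psi$ you analyze) and a rotational piece $v_h^{\mathrm{rot}}$ --- divergence-free, tangent to $\Gamma$ --- as in the decomposition of Section~\ref{RIR}, and an estimate for $v_h^{\mathrm{rot}}$ in terms of its curl and $\|v\|_{L^2(\Omega)}$. That rotational estimate is the technically delicate step (and, together with the cohomological obstruction, the real reason the $\|v\|_{L^2(\Omega)}$ term must appear on the right); calling it a ``harmonic correction'' and then proceeding as if $v_h$ were a gradient does not account for it. Restructuring Step~2 so that the rotational residue is carried along and estimated separately would close the argument, at which point the balanced-error bookkeeping you describe is indeed the remaining (and manageable) work.
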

\begin{proof}
See \cite[Proposition 5.27]{Euler}.
\end{proof}
\begin{remark}
 Note that we do not claim that solutions to the above div-curl system always exist. We only claim that solutions to the above system satisfy the bounds in \Cref{Balanced div-curl}.
\end{remark}
\subsubsection{Rotational-irrotational decomposition}\label{RIR} Another variant of the above div-curl decomposition that will be useful in  instances where we  need estimates for vector fields in terms of their normal trace is the so-called rotational-irrotational decomposition. This is described in Appendix A of \cite{MR2388661} but we recall the relevant definitions here for convenience of the reader. We have the following two definitions.
\begin{definition}[Divergence-free projection]\label{divfreecorrec}
Given a vector field $v\in L^2(\Omega)$, we define its divergence-free projection $v^{div}$ by
\begin{equation*}
v^{div}:=v-\nabla\Delta^{-1}(\nabla\cdot v).
\end{equation*}
\end{definition}
\begin{definition}[Rotational-irrotational decomposition]\label{rot-irrot-decomp} Given a vector field $v\in L^2(\Omega)$, we define
\begin{equation*}
v^{ir}:=\nabla\mathcal{H}\mathcal{N}^{-1}(v^{div}\cdot n_\Gamma),\hspace{5mm} v^{rot}:=v^{div}-v^{ir}.
\end{equation*}
\end{definition}
In particular, if $v$ is divergence-free, it follows from \Cref{rot-irrot-decomp} that we have the following further decomposition of $v$ into two divergence-free parts:
\begin{equation*}
v=v^{rot}+v^{ir}.
\end{equation*} From Proposition A.5 in \cite{MR2388661}, we have the following basic $L^2(\Omega)$ bound.
\begin{proposition}\label{basicrotirrot}
For a vector field $v\in L^2(\Omega)$, there holds
\begin{equation*}
\|v^{ir}\|_{L^2(\Omega)}\lesssim_A \|v^{div}\cdot n_\Gamma\|_{H^{-\frac{1}{2}}(\Gamma)}.
\end{equation*}
\end{proposition}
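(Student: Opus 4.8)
The plan is to exploit the variational characterization of the Dirichlet-to-Neumann operator together with a duality argument. Write $g := v^{div}\cdot n_\Gamma$. Since $v^{div}$ is divergence-free by \Cref{divfreecorrec} (indeed $\nabla\cdot v^{div}=\nabla\cdot v-\Delta\Delta^{-1}(\nabla\cdot v)=0$, as $\Delta^{-1}$ is taken with zero Dirichlet data) and $L^2$, its normal trace is a well-defined element of $H^{-\frac12}(\Gamma)$, and by the divergence theorem $\langle g,1\rangle_\Gamma=\int_\Omega\nabla\cdot v^{div}\,dx=0$, so $g$ lies in the mean-zero subspace on which $\mathcal{N}^{-1}$ is (unambiguously, modulo constants) defined; since $\nabla\mathcal{H}$ annihilates constants, $v^{ir}=\nabla\mathcal{H}\mathcal{N}^{-1}g$ from \Cref{rot-irrot-decomp} is well-defined. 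Set $\psi:=\mathcal{N}^{-1}g$ and $\phi:=\mathcal{H}\psi$, so that $v^{ir}=\nabla\phi$, $\phi$ is harmonic, and by the weak definition of $\mathcal{N}$ one has $\int_\Omega\nabla\phi\cdot\nabla w\,dx=\langle\mathcal{N}\psi,w|_\Gamma\rangle_\Gamma=\langle g,w|_\Gamma\rangle_\Gamma$ for every $w\in H^1(\Omega)$.

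The central step is the energy identity: taking $w=\phi=\mathcal{H}\psi$ in the previous relation gives
\[
\|v^{ir}\|_{L^2(\Omega)}^2=\int_\Omega|\nabla\phi|^2\,dx=\langle g,\psi\rangle_\Gamma=\langle g,\mathcal{N}^{-1}g\rangle_\Gamma .
\]
By the duality pairing between $H^{-\frac12}(\Gamma)$ and $H^{\frac12}(\Gamma)$ this is bounded by $\|g\|_{H^{-\frac12}(\Gamma)}\|\mathcal{N}^{-1}g\|_{H^{\frac12}(\Gamma)}$. Hence the proof reduces to the elliptic estimate $\|\mathcal{N}^{-1}g\|_{H^{\frac12}(\Gamma)}\lesssim_A\|g\|_{H^{-\frac12}(\Gamma)}$, that is, to the statement that $\mathcal{N}^{-1}$ (equivalently $\mathcal{N}$, restricted to the mean-zero subspace) maps boundedly between these spaces with a constant depending only on $A_\Gamma$.

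I expect this last point to be the main obstacle, since one is working on a merely $C^{1,\epsilon_0}$ domain. The baseline bounds in the appendix, \Cref{baselineDN} and \Cref{baselineDN2}, show that, up to a harmless lower-order $L^2$ term accounting for the zero mode, $\mathcal{N}$ behaves like an isomorphism $H^1(\Gamma)\to L^2(\Gamma)$; since $\mathcal{N}$ is self-adjoint and nonnegative on $L^2(\Gamma)$, interpolating this $H^1\to L^2$ mapping against its $L^2\to H^{-1}$ adjoint and using a Poincaré-type inequality on the orthogonal complement of the constants to absorb the lower-order term yields the $H^{\frac12}\to H^{-\frac12}$ boundedness of $\mathcal{N}$, hence of $\mathcal{N}^{-1}$ in the reverse direction, with constants controlled by $A_\Gamma$; on $C^{1,\epsilon_0}$ domains this is precisely the regime covered by the layer-potential construction of $\mathcal{N}$. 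Combining the energy identity, the duality bound, and this elliptic estimate gives $\|v^{ir}\|_{L^2(\Omega)}^2\lesssim_A\|g\|_{H^{-\frac12}(\Gamma)}^2$, which is the claim. (Alternatively, following \cite{MR2388661} more closely, one can avoid $\mathcal{N}^{-1}$ altogether by characterizing $v^{ir}$ as the curl-free field on $\Omega$ of minimal $L^2$ norm with prescribed normal trace $g$ and estimating this minimal norm by testing the constraint against $\nabla\mathcal{H}h$ with $h\in H^{\frac12}(\Gamma)$; this reduces to the same elliptic bound for $\mathcal{N}$.)
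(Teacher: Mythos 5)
The paper does not prove this proposition itself; it is imported verbatim by citing Proposition~A.5 of \cite{MR2388661}, so there is no paper proof to compare against. Judged on its own, your argument has the right skeleton: the reduction to $v^{div}$ divergence-free, the observation that $g=v^{div}\cdot n_\Gamma$ is well-defined in $H^{-\frac12}(\Gamma)$ with zero mean, and the energy identity $\|v^{ir}\|_{L^2(\Omega)}^2=\langle g,\psi\rangle_\Gamma$ with $\psi=\mathcal{N}^{-1}g$ are all exactly right, and this is indeed the core of the Shatah--Zeng argument.

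Where you take a detour is in the final step. After the duality bound $\langle g,\psi\rangle_\Gamma\leq\|g\|_{H^{-\frac12}(\Gamma)}\|\psi\|_{H^{\frac12}(\Gamma)}$, you set out to prove the full ellipticity $\|\mathcal{N}^{-1}g\|_{H^{\frac12}(\Gamma)}\lesssim_A\|g\|_{H^{-\frac12}(\Gamma)}$, and you rightly flag this as delicate on a $C^{1,\epsilon_0}$ domain. But this bound is not needed. Normalizing $\psi$ to have zero boundary average (which leaves $v^{ir}=\nabla\mathcal{H}\psi$ unchanged), the trace theorem and a Poincar\'e-type inequality on the subspace $\{\int_\Gamma u\,dS=0\}$ give
\[
\|\psi\|_{H^{\frac12}(\Gamma)}\lesssim_A\|\mathcal{H}\psi\|_{H^1(\Omega)}\lesssim_A\|\nabla\mathcal{H}\psi\|_{L^2(\Omega)}=\|v^{ir}\|_{L^2(\Omega)},
\]
and plugging this back into the energy identity yields $\|v^{ir}\|_{L^2}^2\lesssim_A\|g\|_{H^{-\frac12}}\|v^{ir}\|_{L^2}$, which can be absorbed. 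This closes the proof using only the trace theorem and a compactness Poincar\'e inequality, both of which hold with constants depending only on $A_\Gamma$ (and $|\Omega|$) even for Lipschitz domains, so your worry about $C^{1,\epsilon_0}$ regularity evaporates. Separately, the interpolation sketch as written has a logical slip: knowing $\mathcal{N}:H^{\frac12}\to H^{-\frac12}$ is bounded does not give boundedness of $\mathcal{N}^{-1}$ in the reverse direction. To get invertibility one would instead have to interpolate the ellipticity estimate from \Cref{baselineDN} against its dual and then dispose of the lower-order term using the spectral gap of $\mathcal{N}$ on mean-zero functions; this can be made to work, but it is more machinery than the absorption argument above and is best avoided.
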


\subsection{Moving surface identities and commutators}\label{Movingsurfid} We now assume that $\Omega_t$ is a one parameter family of domains with boundaries $\Gamma_t\in\Lambda_*$ flowing with a velocity vector field $v$ that is not necessarily divergence-free. The objective is to compile several identities and commutator estimates involving functions on $\Gamma_t$ and the material derivative $D_t:=\partial_t+v\cdot\nabla$. 
\begin{remark}\label{commutatorremark}
We importantly remark that if $w$ is a vector field on $\Omega$ such that $w$ is tangent to $\Gamma$ (i.e.~$w\cdot n_\Gamma=0$) then the identities (i) and (iii)-(vi) and the estimates below in \Cref{materialcom} (if $w$ is divergence-free) hold verbatim with $D_t$  replaced by $\nabla_w$ and $v$ replaced by $w$.
\end{remark}

The following algebraic identities were originally proven in \cite{MR2388661} and then collected in \cite[Section 5.6]{Euler}.
\begin{enumerate}
    \item (Material derivative of the normal). \begin{equation}\label{Moving normal}
    D_tn_{\Gamma_t}=-\left((\nabla v)^*(n_{\Gamma_{t}})\right)^\top.
\end{equation}
\item (Leibniz rule for $\mathcal{N}$).
\begin{equation}\label{DNLeibniz}
    \mathcal{N}(fg)=f\mathcal{N}g+g\mathcal{N}f-2\nabla_n\Delta^{-1}(\nabla \mathcal{H}f\cdot \nabla \mathcal{H}g).
\end{equation}
\item(Commutator with $\nabla$).
\begin{equation*}\label{Dt commutator grad}
    [D_t,\nabla]f=-(\nabla v)^*(\nabla f).
\end{equation*}
\item(Commutator with $\Delta^{-1}$).
\begin{equation*}\label{Dt commutator laplacian-1}
    [D_t,\Delta^{-1}]f=\Delta^{-1}\left(2\nabla v\cdot \nabla^2\Delta^{-1}f+\Delta v\cdot\nabla\Delta^{-1}f\right).
\end{equation*}
\item(Commutator with $\mathcal{H}$).
\begin{equation}\label{H-commutator}
\begin{split}
S_0f:=[D_t,\mathcal{H}]f=\Delta^{-1}(2\nabla v\cdot \nabla^2\mathcal{H}f+\nabla\mathcal{H}f\cdot\Delta v).
\end{split}
\end{equation}
\item(Commutator with $\mathcal{N}$).
\begin{equation}\label{N-commutator}
\begin{split}
S_1f:=[D_t,\mathcal{N}]f=D_t n_{\Gamma_t}\cdot \nabla \mathcal{H}f-n_{\Gamma_t}\cdot((\nabla v)^*(\nabla \mathcal{H}f))+n_{\Gamma_t}\cdot\nabla ([D_t,\mathcal{H}]f).
\end{split}    
\end{equation}
\end{enumerate}
We also recall the general Leibniz type formula,
\begin{equation}\label{Leibniz on moving hypersurfaces general}
\frac{d}{dt}\int_{\Gamma_{t}}fdS=\int_{\Gamma_{t}}D_{t}f+f(\mathcal{D}\cdot v^{\top}-\kappa v^{\perp})\,dS,
\end{equation} 
where $\mathcal{D}$ is the covariant derivative and the integration by parts formula
\begin{equation}\label{surfintbyparts}
\int_\Gamma \nabla_B gfdS=-\int_\Gamma \nabla_B fgdS+\int_\Gamma fgn_\Gamma\cdot \nabla B\cdot n_\Gamma dS,
\end{equation}
where $\Gamma=\partial\Omega$ and $B$ is a divergence-free function on $\Omega$ satisfying $B\cdot n_\Gamma=0$.
\subsubsection{Balanced commutator estimates}
The above identities can be used to establish refined estimates for commutators involving $D_t$ and  $\mathcal{N}$. 
Indeed, for divergence-free velocities $v$, it is straightforward to verify that $S_0\psi$ can be re-written in the form
\begin{equation}\label{Sid}
 S_0\psi=\Delta^{-1}\nabla\cdot\mathcal{B}(\nabla v,\nabla \mathcal{H}\psi)   ,
\end{equation}
where $\mathcal{B}$ is an $\mathbb{R}^d$-valued bilinear form.  By \eqref{N-commutator}, we can write the commutator $[D_t,\mathcal{N}]$ as
\begin{equation*}
S_1\psi:=[D_t,\mathcal{N}]\psi=\nabla_n S_0\psi-\nabla\mathcal{H}\psi\cdot (\nabla_n v)-\nabla^{\top}\psi\cdot\nabla v\cdot n_{\Gamma_t}    .
\end{equation*}
Higher order commutators $S_k$ can then be calculated using the identity
\begin{equation}\label{commutatorexpansion}
S_k\psi:=[D_t,\mathcal{N}^k]\psi=\sum_{l+m=k-1}\mathcal{N}^l[D_t,\mathcal{N}]\mathcal{N}^{m}\psi,  
\end{equation}
where $k\in\mathbb{N}$ and $l,m$ are non-negative integers. Exploiting these formulas, precise estimates for $S_k$ were proven in  \cite[Proposition 5.33]{Euler}  when $v$ is divergence-free and $s\geq \frac{1}{2}$. We will need the following variant for our analysis;  note that we have dropped the $t$ subscripts in the statements below.
\begin{proposition}\label{materialcom}
 Let $s_0>\frac{d}{2}+1$ and put $M_{s_0}:=\|v\|_{H^{s_0}(\Omega)}+\|\Gamma\|_{H^{s_0}}$. Suppose that the flow velocity $v$ is divergence-free and let $s\geq \frac{1}{2}$, $k\geq 1$. Then for any sequence of partitions $\psi=\psi_j^1+\psi_j^2$, $r\geq 0$ and $\alpha\in [0,1]$, there holds
 \begin{equation}\label{RHScom}
 \begin{split}
 \|S_k\psi\|_{H^s(\Gamma)}&\lesssim_{M_{s_0}}\|\psi\|_{H^{s+k}(\Gamma)}+(\|v\|_{H^{s+\frac{1}{2}+k+r}(\Omega)}+\|\Gamma\|_{H^{s+k+\frac{1}{2}+r}})\sup_{j>0}2^{-j(\alpha-1+r)}\|\psi_j^1\|_{C^{\alpha}(\Gamma)}
 \\
 +&\sup_{j>0} 2^{j(s+k-\epsilon)}\|\psi_j^2\|_{H^{\epsilon}(\Gamma)}.
 \end{split}
 \end{equation}
\end{proposition}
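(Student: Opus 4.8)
The strategy is to reduce the estimate for $S_k = [D_t, \mathcal{N}^k]$ to the base case $S_1 = [D_t, \mathcal{N}]$ via the telescoping identity \eqref{commutatorexpansion}, and then to estimate $S_1$ using its explicit formula built from $S_0 = [D_t,\mathcal{H}]$ together with the balanced elliptic and product estimates collected earlier in this appendix. Throughout, I would fix the auxiliary control parameter $M_{s_0}$ with $s_0 > \frac d2 + 1$ so that, by Sobolev embedding, $\|v\|_{C^{1,\epsilon}(\Omega)} + \|\Gamma\|_{C^{1,\epsilon}} \lesssim_{M_{s_0}} 1$; this is what allows all ``coefficient'' factors ($\nabla v$, $n_\Gamma$, curvature, etc.) in the commutator expressions to be placed in pointwise norms with constants depending only on $M_{s_0}$.

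\textbf{Step 1: the base commutator $S_1$.} Using \eqref{Sid} to write $S_0\psi = \Delta^{-1}\nabla\cdot\mathcal{B}(\nabla v, \nabla\mathcal{H}\psi)$ and then the formula $S_1\psi = \nabla_n S_0\psi - \nabla\mathcal{H}\psi\cdot(\nabla_n v) - \nabla^\top\psi\cdot\nabla v\cdot n_\Gamma$, I would bound each piece. For $\nabla_n S_0\psi$ one applies the normal-trace bound \Cref{L1bound} to $S_0\psi$, then \Cref{direst} (the $H^{-1}\to H^1_0$ bound for $\Delta^{-1}$ composed with the divergence), then the bilinear paraproduct estimate \Cref{productestref} / \Cref{Multilinearest}(i) applied to $\mathcal{B}(\nabla v,\nabla\mathcal{H}\psi)$, with $\nabla\mathcal{H}\psi$ split according to the given partition $\psi = \psi_j^1 + \psi_j^2$ (using \Cref{Hbounds} and \Cref{Hboundlow} to transfer the partition of $\psi$ on $\Gamma$ to a partition of $\mathcal{H}\psi$ on $\Omega$). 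The two remaining terms in $S_1\psi$ are genuine boundary products handled by \Cref{boundaryest} together with the tangential-derivative trace bound \Cref{tangradientbound} and \Cref{baltrace}; here the coefficient $\nabla v$ goes in $L^\infty$ with the help of $M_{s_0}$, and the high-frequency part of $\psi$ is controlled by the $\sup_j 2^{j(s+1-\epsilon)}\|\psi_j^2\|_{H^\epsilon(\Gamma)}$ term (the $k=1$ instance of the claimed right-hand side). The conclusion at this stage is exactly \eqref{RHScom} with $k=1$, with the velocity appearing at regularity $H^{s+\frac32+r}(\Omega)$.

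\textbf{Step 2: induction in $k$ via \eqref{commutatorexpansion}.} Writing $S_k\psi = \sum_{l+m=k-1} \mathcal{N}^l S_1 \mathcal{N}^m\psi$, I would apply the $k=1$ bound to $S_1(\mathcal{N}^m\psi)$ — but now measuring $S_1(\mathcal{N}^m\psi)$ in $H^{s+l}(\Gamma)$ — and then apply the outer operator $\mathcal{N}^l$ using the higher-power Dirichlet-to-Neumann bound \Cref{higherpowers}. Schematically, $\|\mathcal{N}^l S_1 \mathcal{N}^m\psi\|_{H^s} \lesssim \|S_1\mathcal{N}^m\psi\|_{H^{s+l}} + (\text{geometry})\cdot(\text{low freq}) + (\text{high freq})$, and then $\|S_1\mathcal{N}^m\psi\|_{H^{s+l}} \lesssim \|\mathcal{N}^m\psi\|_{H^{s+l+1}} + \cdots \lesssim \|\psi\|_{H^{s+k}} + \cdots$ by \Cref{higherpowers} again. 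The partition of $\mathcal{N}^m\psi$ needed to feed the $k=1$ estimate is produced from the partition of $\psi$ by applying $\Phi_{\le j}$/$\Phi_{\ge j}$-type truncations (the dyadic operators of \Cref{SSRO}, or equivalently $P_{<j}, P_{\ge j}$) and using the regularization bounds together with \Cref{higherpowers} to keep track of how $\mathcal{N}^m$ shifts the frequency weights; the key bookkeeping point is that each factor of $\mathcal{N}$ costs one derivative on the main term, $1/2$ derivative plus a curvature factor on the boundary-geometry term, and shifts the dyadic exponent by $1$ on the high-frequency remainder, so after $k$ applications one lands precisely on the exponents $s+k$, $s+k+\frac12+r$, and $s+k-\epsilon$ in \eqref{RHScom}. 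I would also need the commutator $[\mathcal{N}^l, v]$-type and $[\mathcal{N}, \text{coefficient}]$ terms generated when moving $\mathcal{N}^l$ past the coefficients inside $S_1$; these are lower order and absorbed using \Cref{materialcom}-style arguments already in place for smaller $k$ (or directly via the Leibniz rule \eqref{DNLeibniz} and \Cref{higherpowers}).

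\textbf{Main obstacle.} The delicate point is not any single estimate but the \emph{uniform frequency-envelope bookkeeping} across the $k$ summands in \eqref{commutatorexpansion}: one must ensure that the splitting $\psi = \psi_j^1 + \psi_j^2$ chosen at the outset is compatible simultaneously with all the intermediate operators $\mathcal{N}^m$ (with $0\le m \le k-1$) and that the geometry term always appears with the velocity at regularity exactly $H^{s+\frac12+k+r}(\Omega)$ and the surface at $H^{s+k+\frac12+r}$ — no worse. This requires carefully iterating the partition through $\mathcal{N}^m$ using \Cref{higherpowers} with the free parameters $r,\alpha,\beta$ tuned at each step, exactly as in the divergence-free analogue \cite[Proposition 5.33]{Euler}; the proof here is a direct adaptation of that argument, with the only change being that the velocity $v$ is no longer assumed divergence-free-with-dynamics but merely an $H^{s_0}$ flow field, which is harmless since divergence-freeness of $v$ was only used to obtain the favorable form \eqref{Sid} of $S_0$, and that form is assumed as a hypothesis here. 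I would therefore present the proof as: establish $k=1$ in detail as above, then say the general case follows by induction on $k$ using \eqref{commutatorexpansion}, \Cref{higherpowers}, and the partition-tracking bookkeeping of \cite[Proposition 5.33]{Euler}, omitting the routine (but lengthy) verification.
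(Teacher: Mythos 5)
Your proposal identifies the right structure: reduce via \eqref{commutatorexpansion}, single out the hard piece $\nabla_n S_0$ in $S_1$, use \eqref{Sid} together with the balanced elliptic and trace estimates, and carefully propagate the partition of $\psi$ through the DN powers. Where you diverge from the paper is in the \emph{organization} of Step 2. You propose to first establish a clean $k=1$ estimate and then iterate it through $\mathcal{N}^l$ using \Cref{higherpowers} as a separate step. The paper instead attacks each summand $\mathcal{N}^l(\nabla_n S_0\mathcal{N}^m\psi)$ all at once: it sets $G:=\mathcal{B}(\nabla v,\nabla\mathcal{H}\mathcal{N}^m\psi)$, applies \Cref{EEcorollary} (the composite $\mathcal{N}^l\nabla_n$ bound) directly to $\Delta^{-1}\nabla\cdot G$, and introduces a single explicit partition of $G$ built from the mollified DN operator $\mathcal{N}_{<j}:=\nabla_n P_{<j}\mathcal{H}$, namely $G_j^1=\mathcal{B}(\nabla v,\nabla P_{<j}\mathcal{H}\mathcal{N}_{<j}^m\psi_j^1)$. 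This collapses into one shot exactly the ``partition-compatibility bookkeeping'' that you correctly flag as the main obstacle of your two-step route: in your version, the partition used for the $k=1$ bound applied to $\mathcal{N}^m\psi$ must itself be built by pushing $\psi_j^1,\psi_j^2$ through $\mathcal{N}^m$, and then a \emph{second} partition must be chosen to apply $\mathcal{N}^l$ via \Cref{higherpowers}; verifying that the two layers compose to give precisely the exponents $s+k+\tfrac12+r$ and $s+k-\epsilon$ is genuinely fiddly (in particular, pushing $\psi_j^1$ through $\mathcal{N}^m_{<j}$ introduces a $2^{jm}$ factor that has to be traded off against the free parameter $r$ before re-applying the bound). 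Your route is workable in principle -- the frequency-envelope formulation is flexible enough to absorb this -- but the paper's approach of going through \Cref{EEcorollary} and the single $\mathcal{N}_{<j}$-adapted partition is shorter and avoids the double layer. One small inaccuracy: you say \eqref{Sid} is ``assumed as a hypothesis here''; it is not a hypothesis, it is derived from the stated assumption that $v$ is divergence-free, which is part of the proposition. This does not affect the correctness of your argument.
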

\begin{proof}
From \eqref{commutatorexpansion}, we need to prove the estimate above with the left-hand side replaced by $\mathcal{N}^l[D_t,\mathcal{N}]\mathcal{N}^m\psi$ where $l+m=k-1$. We will focus  on the term $\mathcal{N}^l(\nabla_n S_0\mathcal{N}^m\psi)$, as it is the most difficult. Let us define $G:=\mathcal{B}(\nabla v,\nabla \mathcal{H}\mathcal{N}^m\psi)$. We begin by applying \Cref{EEcorollary} and then \Cref{direst} to obtain, using the identity \eqref{Sid},
\begin{equation*}
\begin{split}
\|\mathcal{N}^l(\nabla_n S_0\mathcal{N}^m\psi)\|_{H^s(\Gamma)}&\lesssim_{M_{s_0}} \|G\|_{H^{s+l+\frac{1}{2}}(\Omega)}+\|\Gamma\|_{H^{s+\frac{1}{2}+k+r}}\sup_{j>0}2^{-j(m+\frac{1}{2}+r)}\|\Delta^{-1}\nabla\cdot G_j^1\|_{W^{1,\infty}(\Omega)}
\\
&\quad +\sup_{j>0} 2^{j(s+l+\frac{1}{2}-\epsilon)}\|\Delta^{-1}\nabla\cdot G_j^2\|_{H^1(\Omega)}  ,
\end{split}
\end{equation*}
where $G=G_j^1+G_j^2$ is a partition of $G$ defined by taking $G_j^1=\mathcal{B}(\nabla v,\nabla P_{<j}\mathcal{H}\mathcal{N}^m_{<j}\psi_j^1)$, where $\mathcal{N}_{<j}:=\nabla_n P_{<j}\mathcal{H}$. Using the $C^{1,\epsilon}$ estimate for $\Delta^{-1}$ and the maximum principle for $\mathcal{H}$, it is straightforward to estimate
\begin{equation*}
2^{-j(m+\frac{1}{2}+r)}\|\Delta^{-1}\nabla\cdot G_j^1\|_{W^{1,\infty}(\Omega)}\lesssim_{M_{s_0}}2^{-j(\frac{1}{2}+r-\epsilon)}\|\nabla P_{<j}\mathcal{H}\psi_j^1\|_{L^{\infty}(\Gamma)}\lesssim_{M_{s_0}} 2^{-j(\alpha-1+r)}\|\psi_j^1\|_{C^{\alpha}(\Gamma)},
\end{equation*}
where we used the crude bound $2^{-j(\frac{1}{2}-\epsilon)}\lesssim 1$, the bound $\|P_{<j}\|_{C^{\alpha}\to W^{1,\infty}}\lesssim_M 2^{j(1-\alpha)}$ as well as \Cref{Hboundlow}. Moreover, using the $H^{-1}\to H^1_0$ estimate for $\Delta^{-1}$, we can control the other term by
\begin{equation*}\label{twotechnicalterms}
\begin{split}
\|\Delta^{-1}\nabla\cdot G_j^2\|_{H^1(\Omega)}&\lesssim_{M_{s_0}} \|\nabla P_{<j}\mathcal{H}\mathcal{N}^m_{<j}\psi_j^2\|_{L^2(\Omega)}+\|\nabla P_{<j}\mathcal{H}\mathcal{N}_{<j}^m\psi-\nabla \mathcal{H}\mathcal{N}^m\psi\|_{L^2(\Omega)}=:I_1+I_2.
\end{split}
\end{equation*}
Using the $H^{\frac{1}{2}}\to H^1$ bound for the harmonic extension operator $\mathcal{H}$, Sobolev product estimates,  the bound $\|n_\Gamma\|_{C^{\frac{1}{2}}(\Gamma)}\lesssim_{M_{s_0}}\|\Gamma\|_{C^{1,\frac{1}{2}}}\lesssim_{M_{s_0}}\|\Gamma\|_{H^{s_0}}$ and the trace theorem, we have
\begin{equation*}
\begin{split}
\|\mathcal{H}\mathcal{N}^m_{<j}\psi_j^2\|_{H^1(\Omega)}\lesssim_{M_{s_0}} \|\mathcal{N}^m_{<j}\psi_j^2\|_{H^{\frac{1}{2}}(\Gamma)}&\lesssim_{M_{s_0}} \|n_\Gamma\|_{C^{\frac{1}{2}}(\Gamma)}\|\nabla P_{<j}\mathcal{H}\mathcal{N}^{m-1}_{<j}\psi_j^2\|_{H^1(\Omega)}
\\
&\lesssim_{M_{s_0}} 2^j\|P_{<j}\mathcal{H}\mathcal{N}^{m-1}_{<j}\psi_j^2\|_{H^1(\Omega)}.
\end{split}
\end{equation*}
Iterating this and using the $H^{\epsilon}\to H^{\frac{1}{2}+\epsilon}$ bound  \eqref{harmonicbase}, we arrive at 
\begin{equation*}
2^{j(s+l+\frac{1}{2}-\epsilon)}I_1\lesssim_{M_{s_0}} 2^{j(s+k-\epsilon)}\|\mathcal{H}\psi_j^2\|_{H^{\frac{1}{2}+\epsilon}(\Omega)}\lesssim_{M_{s_0}}2^{j(s+k-\epsilon)}\|\psi_j^2\|_{H^{\epsilon}(\Gamma)}.
\end{equation*}
On the other hand, we can estimate
\begin{equation*}
I_2\lesssim_{M_{s_0}} 2^{-j(s+l+\frac{1}{2})}\|\mathcal{H}\mathcal{N}^m\psi\|_{H^{s+l+\frac{3}{2}}(\Omega)}+\|\nabla\mathcal{H}(\mathcal{N}^m_{<j}\psi-\mathcal{N}^m\psi)\|_{L^2(\Omega)}=:I_2^1+I_2^2.
\end{equation*}
Using \Cref{Hbounds} and \Cref{higherpowers}, we can control $2^{j(s+l+\frac{1}{2}-\epsilon)}I_2^1$ by the right-hand side of \eqref{RHScom}. On the other hand, expanding $I_2^2$ and using the $H^{\frac{1}{2}}\to H^{1}$ bound for $\mathcal{H}$, \Cref{boundaryest}, the fact that $\|n_\Gamma\|_{C^{\frac{1}{2}}(\Gamma)}\lesssim_{M_{s_0}}1$  and the trace theorem, we can estimate
\begin{equation*}
\begin{split}
I_2^2&\lesssim_{M_{s_0}} \sum_{p=0}^{m-1}\|\nabla\mathcal{H}(\mathcal{N}_{<j}^p(\mathcal{N}-\mathcal{N}_{<j})\mathcal{N}^{m-1-p}\psi\|_{L^2(\Omega)}
\\
&\lesssim_{M_{s_0}}\sum_{p=0}^{m-1}2^{jp}\|\nabla P_{\geq j}\mathcal{H}\mathcal{N}^{m-1-p}\psi\|_{H^1(\Omega)}
\\
&\lesssim_{M_{s_0}} 2^{-j(s+l+\frac{1}{2})}\sum_{p=0}^{m-1}\|\mathcal{H}\mathcal{N}^{m-1-p}\psi\|_{H^{s+l+p+\frac{5}{2}}(\Omega)}.
\end{split}
\end{equation*}
Using \Cref{Hbounds}, we then control $2^{j(s+l+\frac{1}{2}-\epsilon)}I_2^2$ by the right-hand side of \eqref{RHScom}.
\medskip

It remains to estimate $\|G\|_{H^{s+l+\frac{1}{2}}(\Omega)}$. By applying \Cref{productestref} with the partition 
\begin{equation*}
\mathcal{H}\mathcal{N}^m\psi=(\mathcal{H}\mathcal{N}_{<j}^m\psi_j^1)+(\mathcal{H}\mathcal{N}_{<j}^m\psi_j^2+\mathcal{H}(\mathcal{N}^m-\mathcal{N}_{<j}^m)\psi)
\end{equation*}
we can estimate $G$ by the right-hand side of \eqref{RHScom} by using a similar analysis to the above, but now making important use of the term involving $\|v\|_{H^{s+\frac{1}{2}+k+r}(\Omega)}$ in \eqref{RHScom}. 
\end{proof}
\subsection{Regularization operators which extend the domain}\label{SSRO}
In order to construct solutions to the free boundary MHD equations, we will need a well-chosen family of regularization operators. Beyond the usual regularization properties in \Cref{c reg bounds not div free}, we  will require the following features.
\begin{enumerate}
\item\label{EP} (Extension property). There is a $\delta_0>0$ so that whenever $\Omega_j$ is a domain containing $\Omega$ with boundary $\Gamma_j\in\Lambda_*$ satisfying $\|\dist(x,\Omega)\|_{L^{\infty}(\Omega_j)}<\delta_0 2^{-j}$ we have that the regularization  $\Psi_{\leq j}v$ at the dyadic scale $2^j$  is defined on $\Omega_j$.
\item\label{RIDF} (Regularization is divergence-free). Given $\Omega_j$ as above and any divergence-free function $v$  on $\Omega$, the regularization $\Psi_{\leq j}v$ satisfies $\nabla\cdot \Psi_{\leq j}v=0$ on $\Omega_j$.
\end{enumerate}
Property \eqref{EP} will be needed  for comparing functions defined on different -- but sufficiently close -- domains. Property \eqref{RIDF} will allow us to  maintain the divergence-free condition when regularizing the velocity and the magnetic field.
\medskip

We begin by constructing  regularization operators which have the extension property \eqref{EP} but  not necessarily the divergence-free property \eqref{RIDF}. Since the explicit form of these operators will be needed in the existence scheme for estimating commutators with $\nabla_B$, we include a sketch of the proof.
\begin{proposition}\label{c reg bounds not div free}
    Fix $\alpha_0$ and let $v$, $\Omega$ and $\Omega_j$ be as above. There exists a regularization operator $\Phi_{\leq j}$ which is bounded from $H^s(\Omega)\to H^s(\Omega_{j})$ for every $s\geq 0$ with the following properties.
    \begin{enumerate}
        \item (Regularization bounds).
        \begin{equation*}
        \|\Phi_{\leq j}v\|_{H^{s+\alpha}(\Omega_{j})} \lesssim_{A} 2^{j\alpha }\|v\|_{H^s(\Omega)},\hspace{16mm}0\leq\alpha.
        \end{equation*}
        \item (Difference bounds).
        \begin{equation*}
        \|(\Phi_{\leq j+1}-\Phi_{\leq j})v\|_{H^{s-\alpha}(\Omega_{j+1})}\lesssim_{A} 2^{-j\alpha}\|v\|_{H^s(\Omega)},\hspace{7mm} 0\leq\alpha\leq \min\{s,\alpha_0\}.
        \end{equation*}
        \item (Error bounds).
        \begin{equation*}
        \|(I-\Phi_{\leq j})v\|_{H^{s-\alpha}(\Omega)}\lesssim_{A} 2^{-j\alpha}\|v\|_{H^s(\Omega)},\hspace{15mm} 0\leq\alpha\leq \min\{s,\alpha_0\}.
        \end{equation*}
    \end{enumerate}
\end{proposition}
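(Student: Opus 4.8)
The plan is to build $\Phi_{\leq j}$ by first extending $v$ to all of $\mathbb{R}^d$ using Stein's operator $\mathcal{E} := \mathcal{E}_\Omega$ from \Cref{Stein}, then applying a standard Littlewood--Paley truncation $P_{\leq j}$ at dyadic scale $2^j$, and finally restricting to $\Omega_j$. In other words, I would set
\begin{equation*}
\Phi_{\leq j} v := \big(P_{\leq j}\,\mathcal{E} v\big)\big|_{\Omega_j}.
\end{equation*}
Since $\mathcal{E}$ is bounded from $H^s(\Omega)\to H^s(\mathbb{R}^d)$ with constant depending only on $A$ (here only on the surface control parameter $A_\Gamma$, uniformly in $\Lambda_*$), and since $P_{\leq j}$ is a Fourier multiplier localized to frequencies $\lesssim 2^j$, the operator $\Phi_{\leq j}$ is automatically bounded $H^s(\Omega)\to H^s(\mathbb{R}^d)$, hence into $H^s(\Omega_j)$ by the trivial restriction bound $\|\cdot\|_{H^s(\Omega_j)}\le \|\cdot\|_{H^s(\mathbb{R}^d)}$. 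The extension property \eqref{EP} is then immediate: $P_{\leq j}\mathcal{E} v$ is a function on all of $\mathbb{R}^d$, so it certainly restricts to any domain $\Omega_j\supseteq\Omega$, regardless of how close $\Gamma_j$ is to $\Gamma$. (The restriction $\|\mathrm{dist}(x,\Omega)\|_{L^\infty(\Omega_j)}<\delta_0 2^{-j}$ plays no role for these global operators — it will only be needed later when one wants the regularization bounds to be genuinely useful in comparing states on different domains, but that is not part of the present statement.)

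For part (i), the regularization bound, I would argue as follows: for $\alpha\ge 0$, the frequency localization gives the Bernstein-type inequality $\|P_{\leq j} f\|_{H^{s+\alpha}(\mathbb{R}^d)} \lesssim 2^{j\alpha}\|f\|_{H^s(\mathbb{R}^d)}$ on $\mathbb{R}^d$; applying this to $f = \mathcal{E} v$ and using $\|\mathcal{E} v\|_{H^s(\mathbb{R}^d)}\lesssim_A \|v\|_{H^s(\Omega)}$ together with the restriction bound yields the claim. For parts (ii) and (iii), the difference and error bounds, I would use the telescoping identity $P_{\leq j+1}-P_{\leq j} = P_{j+1}$ (a single Littlewood--Paley block at frequency $\approx 2^{j+1}$) and $I - P_{\leq j} = \sum_{k>j} P_k$. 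The standard frequency-envelope computations on $\mathbb{R}^d$ give $\|P_{j+1} f\|_{H^{s-\alpha}(\mathbb{R}^d)}\lesssim 2^{-j\alpha}\|f\|_{H^s(\mathbb{R}^d)}$ and $\|(I-P_{\leq j})f\|_{H^{s-\alpha}(\mathbb{R}^d)}\lesssim 2^{-j\alpha}\|f\|_{H^s(\mathbb{R}^d)}$ for $0\le\alpha\le s$; here the upper cutoff $\alpha\le\alpha_0$ is harmless (indeed one can take $\alpha_0$ as large as one likes for the non-divergence-free operators, and the $\alpha_0$ is only introduced for later compatibility). The one subtlety is that in parts (ii) and (iii) the target domains are $\Omega_{j+1}$ and $\Omega$ respectively: for (iii) one simply restricts the $\mathbb{R}^d$ bound to $\Omega$, and for (ii) one restricts to $\Omega_{j+1}$, again using the trivial restriction inequality and $\|\mathcal{E} v\|_{H^s(\mathbb{R}^d)}\lesssim_A\|v\|_{H^s(\Omega)}$.

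The main (mild) obstacle, and the only place some care is needed, is bookkeeping the fact that $\Phi_{\leq j}$ as defined is \emph{not} a projection and the chain $\Phi_{\leq j+1}-\Phi_{\leq j}$ is not simply "$\Phi$ applied to a Littlewood--Paley block"; rather, it equals $\big(P_{j+1}\mathcal{E} v\big)|_{\Omega_{j+1}}$, which is fine because $\mathcal{E}$ is a fixed linear operator independent of $j$. So in fact the telescoping is clean. A more genuine point is that these operators do not respect the divergence-free constraint, which is why the proposition is stated only as a preliminary step: the later construction (culminating in the operators $\Psi_{\leq j}$ with the divergence-free property \eqref{RIDF}) will post-compose $\Phi_{\leq j}$ with a divergence-free correction built from $\Delta^{-1}$ and the balanced elliptic estimates of \Cref{BEE}, and one will need to check that such a correction preserves the regularization/difference/error bounds — but that is beyond the present statement. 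For the proposition as written, no elliptic theory is needed at all; it is purely a consequence of Stein extension plus standard Littlewood--Paley estimates on $\mathbb{R}^d$, exactly as in \cite[Section 5]{Euler}, and I would simply reference that source for the routine verifications after setting up the definition above.
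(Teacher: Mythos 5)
Your construction is correct and does prove the statement as written, but it is a genuinely different route from the paper's. The paper builds $\Phi_{\leq j}$ as an explicit mollification kernel
\begin{equation*}
\Phi_{\leq j}v(x)=\int_\Omega K^{j}(x,y)v(y)\, dy, \qquad K^j(x,y)=\sum_{k} K^j_{0,k}(x-y)\chi_k(x),
\end{equation*}
where each $K^j_{0,k}$ is a rescaled bump shifted in the outward-transversal direction $e_k$ (so that for $x$ slightly outside $\Omega$, the kernel still samples $v$ at points inside $\Omega$); this makes $\Phi_{\leq j}v$ automatically defined on an $\mathcal{O}(2^{-j})$ enlargement of $\Omega$ without ever invoking an extension operator. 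Your version, Stein extension followed by a global Fourier projector $P_{\leq j}$, also yields all three bounds by the one-line restriction $\|\cdot\|_{H^\sigma(\Omega_j)}\le\|\cdot\|_{H^\sigma(\mathbb{R}^d)}$ plus the Bernstein inequality and $\|\mathcal{E}\|_{H^s(\Omega)\to H^s(\mathbb{R}^d)}\lesssim_A 1$ from \Cref{Stein}, and the extension property is trivial since $P_{\leq j}\mathcal{E}v$ is global. What the paper's kernel buys that your operator does not is locality and an explicit compactly supported kernel: this matters in \Cref{Existence section}, where the paper commutes $\Phi_{\leq j}$ with $\nabla_B$ and needs to estimate the resulting commutators by exploiting the kernel's structure (the paper flags this explicitly — ``the explicit form of these operators will be needed in the existence scheme for estimating commutators with $\nabla_B$''). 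With your $P_{\leq j}\mathcal{E}$, the commutator $[\nabla_{B},\Phi_{\leq j}]$ would involve the nonlocal and domain-dependent Stein extension in a way that is harder to control. One small inaccuracy in your write-up: the Euler-paper construction you cite for the ``routine verifications'' is itself the mollification-kernel one (their Proposition 6.2), not Stein-plus-Fourier-truncation; the material in \cite[Section 5]{Euler} covers Stein extension and product estimates but not this proposition.
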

\begin{proof}
Our objective is to define a suitable kernel $K^{j}$ so that
\begin{equation*}
\Phi_{\leq j}v(x)=\int_\Omega K^{j}(x,y)v(y)\, dy.
\end{equation*}
The kernel $K^j(x,y)$ will take the form
\begin{equation*}\label{Form of kern}
    K^j(x,y)=\sum_{k=0}^n K_k^j(x,y)\chi_k(x),
\end{equation*}
where $(\chi_k)_{k=0}^n$ is a partition of unity of a neighborhood of $\Omega$. More specifically, we choose $(\chi_k)_{k=0}^n$ to be subordinate to an open cover $\{U_k\}_{k=0}^n$ so that there are unit vectors $(e_k)_{k=1}^n$ with $e_k$  outward oriented and uniformly transversal to $\Gamma\cap U_k$. The remaining set $U_0$  is then selected to cover the portion  of $\Omega$ away from the boundary. It is easy to see that such a smooth partition of unity exists, with bounds depending solely on the properties of $\Lambda_*$. 
\medskip

Define $e_0:=0$ and take $e_k$ with $k\in \{1,\dots, n\}$ as in the previous paragraph. We select a smooth bump function $\phi_k$ so that
\begin{enumerate}
    \item The support of $\phi_k$ satisfies supp$\phi_k\subseteq B(e_k,\delta_1)$, $\delta_1\ll 1.$
    \item The average of $\phi_k$ is $1$, i.e., $\int_{\mathbb{R}^d}\phi_k(z)\,dz=1$.
    \item $\phi_k$ has zero moments up to some suitably large order $N$, i.e., $\int_{\mathbb{R}^d} z^\alpha \phi_k(z)dz=0$, $1\leq |\alpha|\leq N.$
\end{enumerate}
For each $j>0$, we  consider the regularizing kernel
\begin{equation*}
  K_{0,k}^j(z):=2^{jd}\phi_k(2^j z)
\end{equation*}
and define $K_k^j(x,y):=K_{0,k}^j(x-y)$ for  $y\in \Omega$. Note that for fixed $x\in U_k$, $K_k^j(x,y)$ is non-zero  only when $2^j(x-y)\in B(e_k,\delta_1)$, i.e.,  when $y$ is within distance $2^{-j}\delta_1$ of $x-2^{-j} e_k$. Hence, we may view our kernel $K^j$ not only for $x\in \Omega$ but also for $x$ in an $\mathcal{O}(2^{-j})$ enlargement of $\Omega$. With this observation in mind, one may check that  the kernels $K^j$ satisfy
\begin{enumerate}
    \item $K^j:\tilde{\Omega}_{j}\times \Omega\to \mathbb{R}$, where $\tilde{\Omega}_{j}:=\{x\in \mathbb{R}^d : d(x,\Omega)\leq c 2^{-j}\}$ and $c$ is some small universal constant.
    \item $|\partial_x^\alpha\partial_{y}^\beta K^j(x,y)|\lesssim 2^{j(d+|\alpha|+|\beta|)}$  for multi-indices $\alpha,\beta$.
    \item $\int_\Omega K^j(x,y)\,dy=1$.
    \item $\int_\Omega K^j(x,y)(x-y)^\alpha \,dy=0$ for $1\leq |\alpha|\leq N.$
\end{enumerate}
From the definition of $K^{j}$, we see that $\Phi_{\leq j}v$ is defined on a neighborhood of $\Omega_{j}$ if $\delta_0$ from the extension property \eqref{EP} is small enough. It is also directly verified  that $\Phi_{\leq j}$ satisfies the three properties in \Cref{c reg bounds  not div free} when $s$ and $\alpha$ are integers; the latter two bounds use the moment conditions with $N=N(\alpha_0)$. By interpolation, we obtain the bounds in \Cref{c reg bounds  not div free} for non-integer regularities as well. 
\end{proof}
Using the family of operators $\Phi_{\leq j}$, we may construct our desired regularization operators $\Psi_{\leq j}$.
\begin{proposition}\label{c reg bounds}
  For each $j$, there exists an operator $\Psi_{\leq j}$ which is bounded from $H^s_{div}(\Omega)\to H^s_{div}(\Omega_{j})$ for every $s\geq 0$ which satisfies the extension and regularization  properties in \Cref{c reg bounds not div free}.
\end{proposition}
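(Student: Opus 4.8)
The plan is to construct $\Psi_{\leq j}$ by first applying the non-divergence-free regularization $\Phi_{\leq j}$ from \Cref{c reg bounds not div free} and then applying a divergence-free correction that extends appropriately to the enlarged domain $\Omega_j$. Concretely, given a divergence-free $v\in H^s_{div}(\Omega)$, I would first form $\Phi_{\leq j}v$, which by construction is defined on a neighborhood of $\Omega_j$ and satisfies all of the regularization, difference, and error bounds in \Cref{c reg bounds not div free}. The function $\Phi_{\leq j}v$ is not divergence-free on $\Omega_j$, but since $v$ itself is divergence-free on $\Omega$, the divergence $\nabla\cdot(\Phi_{\leq j}v)$ is small: by the moment conditions on the kernel (and writing $\nabla\cdot\Phi_{\leq j}v = \Phi_{\leq j}(\nabla\cdot v)$ modulo a commutator supported near where the cutoffs $\chi_k$ vary), one checks that $\|\nabla\cdot\Phi_{\leq j}v\|_{H^{s-1}(\Omega_j)}\lesssim_A \|v\|_{H^s(\Omega)}$ and in fact that it gains derivatives relative to the naive count, which is what allows the correction to be genuinely lower order. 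Then I would define
\[
\Psi_{\leq j}v:=\Phi_{\leq j}v-\nabla\Delta^{-1}_{\Omega_j}(\nabla\cdot\Phi_{\leq j}v),
\]
i.e.\ the divergence-free projection $(\Phi_{\leq j}v)^{div}$ from \Cref{divfreecorrec} taken on the domain $\Omega_j$. By construction $\Psi_{\leq j}v$ is divergence-free on $\Omega_j$, and it is defined on $\Omega_j$ whenever $\Phi_{\leq j}v$ is, so the extension property \eqref{EP} and the divergence-free property \eqref{RIDF} both hold provided $\delta_0$ is chosen small enough (matching the $\delta_0$ already fixed for $\Phi_{\leq j}$ and shrinking it if necessary so that $\Gamma_j\in\Lambda_*$ guarantees the elliptic estimates on $\Omega_j$ have constants uniform in the collar).

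The main body of the argument is then to propagate the three estimates of \Cref{c reg bounds not div free} through the correction term $C_{\leq j}v:=\nabla\Delta^{-1}_{\Omega_j}(\nabla\cdot\Phi_{\leq j}v)$. For the regularization bound, I would apply \Cref{direst} (the balanced higher-regularity estimate for the inhomogeneous Dirichlet problem) with zero boundary data on $\Gamma_j$ to get $\|C_{\leq j}v\|_{H^{s+\alpha}(\Omega_j)}\lesssim_A \|\nabla\cdot\Phi_{\leq j}v\|_{H^{s+\alpha-1}(\Omega_j)}+(\text{surface terms})$; using that $v$ is divergence-free on $\Omega$ one rewrites $\nabla\cdot\Phi_{\leq j}v$ as a commutator expression localized in frequency at scale $\lesssim 2^j$ so that $\|\nabla\cdot\Phi_{\leq j}v\|_{H^{s+\alpha-1}(\Omega_j)}\lesssim_A 2^{j\alpha}\|v\|_{H^s(\Omega)}$, and the surface terms are controlled using $\Gamma_j\in\Lambda_*$ together with the balanced trace estimate \Cref{baltrace}. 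Combined with the corresponding bound for $\Phi_{\leq j}v$ this yields property (i). The difference and error bounds follow the same pattern: write $C_{\leq j+1}v-C_{\leq j}v$ and $C_{\leq j}v$ in terms of $(\Phi_{\leq j+1}-\Phi_{\leq j})v$ and $(I-\Phi_{\leq j})v$ respectively, apply the $H^{-1}\to H^1_0$ bound for $\Delta^{-1}$ (or \Cref{direst}) on $\Omega_j$, and quote the difference/error bounds already available for $\Phi_{\leq j}$. Here one must be slightly careful that the error term $C_{\leq j}v$ should be compared with $v$ on $\Omega$ rather than $\Omega_j$, but since $(I-\Psi_{\leq j})v = (I-\Phi_{\leq j})v + C_{\leq j}v|_\Omega$ and $C_{\leq j}v$ is estimated on all of $\Omega_j\supseteq\Omega$, this causes no difficulty.

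The step I expect to be the main obstacle is getting the correction term $C_{\leq j}v$ to be genuinely one order lower (i.e.\ to decay like $2^{-j}$ relative to $\Phi_{\leq j}v$ when $s>0$), which is needed so that the regularization bounds are not degraded and so that $\Psi_{\leq j}v$ still approximates $v$ at the right rate. This requires exploiting $\nabla\cdot v=0$ on $\Omega$ carefully: the divergence $\nabla\cdot\Phi_{\leq j}v$ is not literally zero because the kernel $K^j(x,y)=\sum_k K^j_k(x,y)\chi_k(x)$ has $x$-dependent cutoffs, so $\nabla_x$ does not pass cleanly onto the kernel, but the resulting commutator involves $\nabla\chi_k$ paired against $K^j_k$, and because $K^j_k$ has the moment-vanishing property one gains back a full derivative. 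This is precisely the kind of paradifferential bookkeeping carried out in \cite[Section 5]{Euler}, so the cleanest route is to cite the analogous construction there and verify that the moment conditions $N=N(\alpha_0)$ on $\phi_k$ are sufficient; the remaining estimates are then routine applications of \Cref{direst}, \Cref{Hbounds}, \Cref{baltrace} and the mapping properties of $\Phi_{\leq j}$, all with constants depending only on $A$ and uniform over $\Lambda_*$.
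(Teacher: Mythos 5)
Your construction is exactly the one the paper uses: the paper's proof of this proposition is a one-line pointer to correcting $\Phi_{\leq j}$ by a gradient potential (citing \cite[Proposition 6.2]{Euler}), and the formula $\Psi_{\leq j}v=\Phi_{\leq j}v-\nabla\Delta^{-1}_{\Omega_j}(\nabla\cdot\Phi_{\leq j}v)$ is precisely what appears elsewhere in the text (e.g.~in the proof of \Cref{envbounds}). Your identification of the key technical point — that $\nabla\cdot\Phi_{\leq j}v$ gains a derivative via the partition-of-unity cancellation and moment conditions, so the correction is genuinely lower order and the three bounds of \Cref{c reg bounds not div free} survive via \Cref{direst} — matches the intended argument.
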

\begin{proof}
The proof proceeds by correcting the operator in \Cref{c reg bounds not div free} by a gradient potential. See \cite[Proposition 6.2]{Euler} for details.
\end{proof}
We  also note the pointwise analogues of the above estimates. 
\begin{proposition}\label{pointwisereg} Let $0\leq\alpha<2$. Given the assumptions of \Cref{c reg bounds}, the regularization operator $\Psi_{\leq j}$ satisfies the pointwise bounds
\begin{equation*}
\|\Psi_{\leq j}v\|_{C^{\alpha}(\Omega_j)}\lesssim_A 2^{j\beta}\|v\|_{C^{\alpha-\beta}(\Omega)}
\end{equation*}
for $0\leq\beta\leq\alpha$ and
\begin{equation*}
\|(I-\Psi_{\leq j})v\|_{C^{\alpha}(\Omega)}+\|(\Psi_{\leq j+1}-\Psi_{\leq j})v\|_{C^{\alpha}(\Omega_{j+1})}\lesssim_A 2^{-j\beta}\|v\|_{C^{\alpha+\beta}(\Omega)} 
\end{equation*}
for $\beta\geq 0$. Similar bounds hold for the regularization operator $\Phi_{\leq j}$.
\end{proposition}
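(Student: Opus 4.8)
The statement to prove is \Cref{pointwisereg}, the pointwise (H\"older) analogue of the $L^2$-based regularization bounds of \Cref{c reg bounds}. The plan is to exploit the explicit kernel structure of the operators constructed in the proof of \Cref{c reg bounds not div free}, combined with the standard interpolation machinery already used for the Sobolev bounds. The key observation is that $\Psi_{\leq j}$ is obtained from $\Phi_{\leq j}$ by a gradient potential correction (solving a Poisson-type problem), so the pointwise bounds for $\Psi_{\leq j}$ will follow from the pointwise bounds for $\Phi_{\leq j}$ together with the $C^{1,\alpha}$ elliptic estimates of \Cref{Gilbarg} and \Cref{Hboundlow}, all of which have implicit constants depending only on $A$. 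Thus the bulk of the work is to establish the pointwise estimates for $\Phi_{\leq j}$.

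First I would record the pointwise estimates for the convolution-type operator $\Phi_{\leq j}$ with kernel $K^j(x,y)$ satisfying properties (1)--(4) listed in the proof of \Cref{c reg bounds not div free}: namely $|\partial_x^\alpha \partial_y^\beta K^j(x,y)| \lesssim 2^{j(d+|\alpha|+|\beta|)}$, the kernel is supported in $|x-y| \lesssim 2^{-j}$, it has unit mass, and it kills polynomials up to order $N = N(\alpha_0)$. From these, the smoothing estimate $\|\Phi_{\leq j} v\|_{C^m(\Omega_j)} \lesssim_A 2^{j m} \|v\|_{C^0(\Omega)}$ for integer $m \geq 0$ follows by differentiating under the integral and using the derivative bounds on the kernel; more refined versions $\|\Phi_{\leq j}v\|_{C^m(\Omega_j)} \lesssim_A 2^{j\beta}\|v\|_{C^{m-\beta}(\Omega)}$ for integers $m-\beta \geq 0$ follow by moving derivatives onto $v$ where possible. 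The error bounds $\|(I - \Phi_{\leq j})v\|_{C^m(\Omega)} + \|(\Phi_{\leq j+1} - \Phi_{\leq j})v\|_{C^m(\Omega_{j+1})} \lesssim_A 2^{-j\beta}\|v\|_{C^{m+\beta}(\Omega)}$ for integers $m, \beta$ follow in the usual way: one writes $(I - \Phi_{\leq j})v(x)$ using the unit-mass property and a Taylor expansion of $v$ around $x$, the moment conditions (with $N$ chosen large relative to $\alpha_0$) allowing one to absorb the leading Taylor terms and gain the powers of $2^{-j}$ on the remainder. Then I would interpolate these integer-order H\"older estimates — using the fact that $C^\alpha$ for non-integer $\alpha$ arises as a real interpolation space between integer-order H\"older spaces, with interpolation constants uniform on the collar (as in the discussion around \Cref{Interpolation on domains} and \Cref{Stein}) — to obtain the full range $0 \leq \alpha < 2$ stated in the proposition. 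The restriction $\alpha < 2$ reflects the $C^{1,\epsilon}$ ceiling of the elliptic theory on Lipschitz/$C^{1,\epsilon_0}$ domains; the moment order $N$ of the kernel must be taken large enough (depending on $\alpha_0$) to allow the error and difference bounds for all $\beta \geq 0$.

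Next I would transfer these to $\Psi_{\leq j}$. Since $\Psi_{\leq j} v = \Phi_{\leq j} v + \nabla \phi$ where $\phi$ solves a Dirichlet or Neumann problem with data built out of $\nabla \cdot \Phi_{\leq j} v$ (which vanishes to the relevant order because $v$ is divergence-free and the kernel has the moment-cancellation property), the correction term $\nabla \phi$ is controlled in $C^\alpha$ by $\nabla \cdot \Phi_{\leq j} v$ via the $C^{1,\alpha}$ estimates of \Cref{Gilbarg} (for $\alpha = \epsilon$ or more generally $\alpha < 1 + \epsilon_0$) and \Cref{Hboundlow}; one then checks that the correction is strictly lower order at each frequency, so it does not spoil any of the three claimed bounds. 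The same argument with $\Phi_{\leq j}$ in place of $\Psi_{\leq j}$ gives the final sentence of the proposition for free.

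\textbf{Main obstacle.} The main technical point — already essentially handled in the construction of the kernels — is ensuring that the kernel $K^j(x,y)$ makes sense and enjoys its bounds not merely on $\Omega \times \Omega$ but on $\tilde\Omega_j \times \Omega$ for an $\mathcal{O}(2^{-j})$ enlargement $\tilde\Omega_j$, so that $\Psi_{\leq j} v$ is actually defined on the larger domains $\Omega_j$; this is the content of the extension property \eqref{EP} and is exactly why the bump functions $\phi_k$ are centered at the transversal vectors $e_k$ rather than at the origin. Beyond this, the genuine difficulty is bookkeeping: one must carefully track which Taylor remainders and which kernel derivatives produce which powers of $2^{j}$, and confirm that the gradient-potential correction in passing from $\Phi_{\leq j}$ to $\Psi_{\leq j}$ truly loses nothing, using that the $C^{1,\epsilon}$ elliptic constants are collar-uniform. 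None of these steps is conceptually hard, but the uniformity of constants (dependence only on $A_\Gamma$ and the domain volume, via the uniform local-coordinate and extension-operator machinery of this appendix) must be threaded through each estimate. Given that the fully analogous $L^2$-based statements are proved in \cite[Propositions 6.1--6.2]{Euler}, I would ultimately cite that reference for the details and present only the short interpolation-plus-elliptic-correction argument sketched above.
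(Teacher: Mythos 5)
The paper's own proof of \Cref{pointwisereg} is a citation to \cite[Proposition 6.3]{Euler}, so there is no local argument to compare to; your sketch is a reconstruction of what the cited argument must do. The kernel-side estimates for $\Phi_{\leq j}$ are fine and standard — the moment conditions, the transversal shift $e_k$, and the support localisation of $K^j$ give exactly what you say, and the direct H\"older-modulus estimate is indeed cleaner than appealing to abstract real interpolation of H\"older scales.

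The gap is in the transfer to $\Psi_{\leq j}$, and you have in fact flagged the source of the problem without noticing that it sinks your argument. You attribute the restriction $\alpha < 2$ to the ``$C^{1,\epsilon}$ ceiling of the elliptic theory'' on $C^{1,\epsilon_0}$ domains — but that ceiling is $\alpha \leq 1+\epsilon_0$, which is strictly less than $2$ since $\epsilon_0 \in (0,1)$. Concretely: if $\Psi_{\leq j}v = \Phi_{\leq j}v - \nabla u$ with $\Delta u = \nabla\cdot\Phi_{\leq j}v$, $u|_{\Gamma_j}=0$, then \Cref{Gilbarg} and \Cref{Hboundlow} give at best $u \in C^{1,\epsilon_0}(\Omega_j)$ with constants uniform on the collar; the collar-uniform $C^{1,\alpha}$ estimate for $\alpha > \epsilon_0$ carries a factor $\|\Gamma_j\|_{C^{1,\alpha}}$, which is \emph{not} $\lesssim_A 1$ under the hypotheses of \Cref{c reg bounds} ($\Gamma_j\in\Lambda_*$ only). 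So the correction $\nabla u$ is controlled only in $C^{\epsilon_0}(\Omega_j)$ by your mechanism, and no amount of interpolation or smallness of $\nabla\cdot\Phi_{\leq j}v$ turns a $C^{\epsilon_0}$ bound into a $C^\alpha$ bound for $\alpha$ up to $2$ without more boundary regularity. (Note also that you cannot simply run the elliptic estimate at low regularity and interpolate upward against the $H^s$ bounds of \Cref{c reg bounds}, because the H\"older scale does not embed that way.) The fix must lie in the structure of the gradient-potential correction itself: the cited \cite[Proposition 6.2]{Euler} must construct a potential that is itself of convolution/Bogovskii type so that it inherits the pointwise smoothing of the $K^j$ kernel, rather than being produced by the Dirichlet $\Delta^{-1}$ whose boundary regularity is capped by $\Gamma_j\in C^{1,\epsilon_0}$. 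As written, your argument proves the $\Phi_{\leq j}$ bounds for the stated range but not the $\Psi_{\leq j}$ bounds beyond $\alpha\lesssim \epsilon_0$.
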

\begin{proof}
See \cite[Proposition 6.3]{Euler}.
\end{proof}
As in \cite{Euler}, we will use the above  operators to  form a crude paradifferential calculus. For any integer $l>0$, we let $\Phi_l:=\Phi_{\leq l+1}-\Phi_{\leq l}$ and $\Psi_{l}:=\Psi_{\leq l+1}-\Psi_{\leq l}$. We also define $\Phi_0:=\Phi_{\leq 0}$ and $\Psi_0:=\Psi_{\leq 0}$. For a vector or scalar valued function $f$ defined on $\Omega$, we write $f^l:=\Phi_l f$ and $f^{\leq l}:=\Phi_{\leq l}f$. If, in addition, $f$ is a divergence-free vector field, we instead use $f^l$ and $f^{\leq l}$ to respectively represent  $\Psi_lf$ and $\Psi_{\leq l}f$. This  ensures that the divergence-free structure of $f$ is preserved. By an abuse of notation, we define
\begin{equation*}
f^lg^{\leq l}:=\sum_{l\geq 0}\sum_{0\leq m\leq l}f^lg^m-\frac{1}{2}\sum_{l\geq 0}f^lg^l.
\end{equation*}
This definition ensures  that we have the  crude bilinear paraproduct  decomposition
\begin{equation}\label{bilpara}
fg=f^lg^{\leq l}+f^{\leq l}g^l,
\end{equation}
where $f^lg^{\leq l}$ selects the portion of $fg$ where $f$ is at higher or comparable frequency compared to $g$. Trilinear (or even higher order) expressions of the form $f^lg^{\leq l}h^{\leq l}$ may be similarly defined so that we have $fgh=f^lg^{\leq l}h^{\leq l}+f^{\leq l}g^{l}h^{\leq l}+f^{\leq l}g^{\leq l}h^{l}$. Such decompositions will be used frequently in \Cref{HEB}.
\subsubsection{Frequency envelopes}\label{Freq envelopes} We now define $\mathbf{H}^s$ frequency envelopes. The verification of the frequency envelope bounds presented below will be noticeably more involved than in our previous work \cite[Section 6.1]{Euler}, as we must preserve the boundary conditions in the state space $\mathbf{H}^s$.
\medskip

Let $\Gamma\in\Lambda_*$ and let $s>\frac{d}{2}+1$. Suppose that $f\in H^s(\Omega)$ and $\Gamma\in H^s$ is parameterized in collar coordinates by $x\mapsto x+\eta_\Gamma(x)\nu(x)$. Using the extension operator from \Cref{continuosext}, we have the following Littlewood-Paley decomposition for a function $f$ defined on $\Omega$:
\begin{equation}\label{LPonsurf}
f=\sum_{j\geq 0}P_jf    ,
\end{equation}
where  $P_jf$ is interpreted to mean $P_jE_{\Omega}f$ with $E_{\Omega}$  as in \Cref{continuosext}. 
\begin{remark}
Note that the use of $E_{\Omega}$ rather than Stein's extension operator $\mathcal{E}_{\Omega}$ in \eqref{LPonsurf} conflicts with our convention in \Cref{LWP section}.  The definition in \eqref{LPonsurf}  will only be used in this subsection; its purpose is to ensure that  frequency envelopes associated to different initial data can be suitably compared.
\end{remark}

We have a corresponding Littlewood-Paley type decomposition for functions on $\Gamma_*$. Indeed, we can write for $j>0$, $P_j:=\varphi (2^{-2j}\Delta_{\Gamma_*})-\varphi (2^{-2(j+1)}\Delta_{\Gamma_*})$ and $P_0:=\varphi(\Delta_{\Gamma_*})$ where $\varphi:\mathbb{R}\to\mathbb{R}$ with $\varphi =1$ on the unit ball and with support in $B_2(0)$. Correspondingly, we can define the multipliers $P_{\leq j}$ and $P_{>j}$.
\medskip

 From \Cref{continuosext}, Sobolev embeddings and  almost orthogonality, we have for any $\frac{d}{2}+1<s_0<s$,
\begin{equation*}\label{LWP}
\begin{split}
\|(v,B,\Gamma)\|_{\textbf{H}^s}^2\approx_{M_{s_0}} &\sum_{j\geq 0}2^{2js}\left(\|P_jv\|_{L^2(\mathbb{R}^d)}^2+\|P_jB\|_{L^2(\mathbb{R}^d)}^2+\|P_j\eta_{\Gamma}\|_{L^2(\Gamma_*)}^2\right)
\\
+&\sum_{j\geq 0}2^{2j(s-\frac{1}{2})}\left(\|P_j(\nabla_Bv)\|_{L^2(\mathbb{R}^d)}^2+\|P_j(\nabla_BB)\|_{L^2(\mathbb{R}^d)}^2\right),
\end{split}
\end{equation*}
where $M_{s_0}:=\|(v,B,\Gamma)\|_{\mathbf{H}^{s_0}}.$ The above equivalence will allow us to define $\mathbf{H}^s$ frequency envelopes for  states $(v,B,\Gamma)\in\mathbf{H}^s$ with the $l^2$ decay required to establish our continuous dependence result as well as the continuity of solutions with values in $\mathbf{H}^s$. 
\begin{definition}[Frequency envelopes] Let $s> \frac{d}{2}+1$, $\Gamma\in\Lambda_*$ and $(v,B,\Gamma)\in\mathbf{H}^s$. An \emph{$\mathbf{H}^s$ frequency envelope} for the triple $(v,B,\Gamma)$ is a positive sequence $c_j$ with $\|c_j\|_{l^2}\lesssim_{M_{s_0}} 1$  such that for each $j\geq 0$, 
\begin{equation*}
N_j\lesssim_{M_{s_0}} c_j\|(v,B,\Gamma)\|_{\mathbf{H}^s},
\end{equation*}
where 
\begin{equation*}
N_j:=\|P_jv\|_{H^s(\mathbb{R}^d)}+\|P_jB\|_{H^s(\mathbb{R}^d)}+\|P_j\eta_{\Gamma}\|_{H^s(\Gamma_*)}+\|P_j(\nabla_Bv)\|_{H^{s-\frac{1}{2}}(\mathbb{R}^d)}+\|P_j(\nabla_BB)\|_{H^{s-\frac{1}{2}}(\mathbb{R}^d)}.
\end{equation*}
We say that the sequence $(c_j)_j$ is admissible if $c_0\approx_{M_{s_0}} 1$ and it is slowly varying,
\begin{equation*}
c_j\leq 2^{\delta |j-k|}c_k,\hspace{10mm} j,k\geq 0,\hspace{10mm}0<\delta\ll 1.    
\end{equation*}
We can always define an admissible frequency envelope by the formula
\begin{equation}\label{admissable}
c_j=2^{-\delta j}+(1+\|(v,B,\Gamma)\|_{\mathbf{H}^{s}})^{-1}\max_k 2^{-\delta |j-k|}N_k.
\end{equation}
Unless otherwise stated, we will take this as our formula for $c_j$. 
The following proposition will be useful in our construction of rough solutions  as well as for proving continuity of the data-to-solution map.
\end{definition}
\begin{proposition}\label{envbounds}
Let $\Gamma\in\Lambda_*$ and let $s>\frac{d}{2}+1$. Suppose that $(v,B,\Gamma)\in\mathbf{H}^s$ and let $(c_j)_j$ be its associated admissible frequency envelope. Then there exists a family of regularized domains $\Omega_j$ with boundaries $\Gamma_j\in\Lambda_*$ and $\Gamma_j\in H^s$ along with associated divergence-free regularizations $v_j$ and $B_j$ defined on  $\Omega_j$  such that $B_j$ is tangent to $\Gamma_j$ and the following properties hold:
\begin{enumerate}
    \item\label{iGPA} (Good pointwise approximation).
    \begin{equation} \label{vbg-point}
 (v_j,B_j,\Gamma_j)\to (v,B,\Gamma)\hspace{5mm}in\hspace{2mm} C^{1}\times C^{1,\frac{1}{2}}\hspace{5mm}as\hspace{2mm}j\to\infty.       
    \end{equation}
\item\label{2UB} (Uniform bound).
\begin{equation}\label{vbg-s}
\|(v_j,B_j,\Gamma_j)\|_{\mathbf{H}^s}\lesssim_{M_{s_0}} \|(v,B,\Gamma)\|_{\mathbf{H}^s}.    
\end{equation}
\item\label{3HR} (Higher regularity).
\begin{equation}\label{vbg-reg}
 \|(v_j,B_j,\Gamma_j)\|_{\mathbf{H}^{s+\alpha}}\lesssim_{M_{s_0}} 2^{j\alpha}c_j\|(v,B,\Gamma)\|_{\mathbf{H}^s},\hspace{5mm}\alpha>1.  
\end{equation}
\item\label{4LFDB} (Low-frequency difference bounds I).
\begin{equation}\label{vbg-diff}
\|(v_j,B_j)-(v_{j+1},B_{j+1})\|_{L^2\times L^2(\Omega_j\cap\Omega_{j+1})}\lesssim_{M_{s_0}} 2^{-js}c_j\|(v,B,\Gamma)\|_{\mathbf{H}^s},
\end{equation}
\begin{equation*}\label{g-diff}
\|\eta_{\Gamma_j} -\eta_{\Gamma_{j+1}}\|_{L^2(\Gamma_*)}\lesssim_{M_{s_0}} 2^{-js}c_j\|(v,B,\Gamma)\|_{\mathbf{H}^s}.
\end{equation*}
\item\label{5LFDB} (Low-frequency difference bounds II).
\begin{equation}\label{vbg-diff2}
\|(\nabla_{B_j} v_j,\nabla_{B_j} B_j)-(\nabla_{B_{j+1}} v_{j+1},\nabla_{B_{j+1}} B_{j+1})\|_{H^1\times H^1(\Omega_j\cap\Omega_{j+1})}\lesssim_{M_{s_0}} 2^{-j(s-\frac32)}c_j\|(v,B,\Gamma)\|_{\mathbf{H}^s}.
\end{equation}

\end{enumerate}
\end{proposition}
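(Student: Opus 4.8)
\textbf{Proposal for the proof of Proposition~\ref{envbounds}.}
The plan is to build the regularized triple $(v_j,B_j,\Gamma_j)$ in three carefully ordered stages, mirroring the logic of \cite[Proposition 6.4]{Euler} but with two extra layers to control the new $\nabla_B$ quantities and the magnetic tangency condition. First, I would regularize the \emph{free surface}: define $\Gamma_j$ through the collar parameterization $\eta_{\Gamma_j} := \Psi_{\leq j}^{\Gamma}\eta_\Gamma - C 2^{-(\frac32+\delta)j}$, where $\Psi_{\leq j}^{\Gamma}$ is the Littlewood–Paley-type regularization on $\Gamma_*$ built from $P_{\leq j}$, and $C$ is a large constant depending only on $M_{s_0}$ chosen so that $\Omega\subset\Omega_j$ and $\Gamma_j\in\Lambda_*$. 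Sobolev embedding ($s>\frac d2+1$) gives $|\eta_{\Gamma_j}-\eta_\Gamma|\lesssim_{M_{s_0}} 2^{-(\frac32+\delta)j}$, which is exactly the $O(2^{-j})$ closeness needed so that the \emph{domain-extending} regularization operators $\Psi_{\leq j}$ and $\Phi_{\leq j}$ of \Cref{c reg bounds,c reg bounds not div free} can be applied on $\Omega_j$. The bounds \eqref{vbg-s}, \eqref{vbg-reg} and the $\Gamma$-component of \eqref{vbg-diff} for $\eta_{\Gamma_j}$ follow from the defining frequency-envelope inequality $\|P_j\eta_\Gamma\|_{H^s(\Gamma_*)}\lesssim_{M_{s_0}} c_j\|(v,B,\Gamma)\|_{\mathbf H^s}$, the slowly-varying property of $c_j$, and almost orthogonality; the pointwise convergence $\Gamma_j\to\Gamma$ in $C^{1,\frac12}$ comes from interpolating the uniform $H^s$ bound against the $L^\infty$ decay.

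Second, I would regularize the \emph{velocity}: set $v_j := \Psi_{\leq j}v$ using the divergence-preserving, domain-extending operator of \Cref{c reg bounds}. By construction $\nabla\cdot v_j=0$ on $\Omega_j$, and the regularization/difference/error bounds in \Cref{c reg bounds} together with the envelope inequalities for $\|P_j v\|_{H^s}$ and $\|P_j(\nabla_B v)\|_{H^{s-1/2}}$ yield \eqref{vbg-s}, \eqref{vbg-reg}, the $v$-component of \eqref{vbg-diff}, and the pointwise convergence in \eqref{vbg-point}. For the $\nabla_{B_j}v_j$ quantities in \eqref{vbg-diff2} I would first need $B_j$, so this step is interleaved with the third. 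Third, I would regularize the \emph{magnetic field} so as to enforce $B_j\cdot n_{\Gamma_j}=0$: set $\widetilde B_j := \Psi_{\leq j}B$ (divergence-free on $\Omega_j$), then project
\begin{equation*}
B_j := \widetilde B_j - \nabla\mathcal H_j\mathcal N_j^{-1}\big(\widetilde B_j\cdot n_{\Gamma_j}\big),
\end{equation*}
where $\mathcal H_j$, $\mathcal N_j$ are the harmonic extension and Dirichlet-to-Neumann operators for $\Gamma_j$, exactly as in the rotational/irrotational decomposition of \Cref{RIR}. This keeps $B_j$ divergence-free and tangent to $\Gamma_j$. The key point is that the correction term is \emph{small}: since $B\cdot n_\Gamma=0$ and $\Gamma_j$ is $2^{-(\frac32+\delta)j}$-close to $\Gamma$, one has $\|\widetilde B_j\cdot n_{\Gamma_j}\|_{H^{s-\frac12}(\Gamma_j)}\lesssim_{M_{s_0}} 2^{-\delta j}c_j\|(v,B,\Gamma)\|_{\mathbf H^s}$ by writing $\widetilde B_j\cdot n_{\Gamma_j} = (\widetilde B_j - B)\cdot n_{\Gamma_j} + B\cdot(n_{\Gamma_j}-n_\Gamma)$, bounding the first term by the error estimate in \Cref{c reg bounds} and the trace estimate \Cref{baltrace}, and the second via \Cref{curvaturebound} and the surface closeness. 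Feeding this through the elliptic bounds \Cref{Hbounds,higherpowers} and the harmonic-extension/DtN estimates shows the correction is negligible in every norm we must control, so $B_j$ inherits \eqref{vbg-s}, \eqref{vbg-reg}, the $B$-component of \eqref{vbg-diff}, and the $C^1$ convergence.

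The remaining and \textbf{main obstacle} is \eqref{vbg-diff2}, the low-frequency difference bound for $\nabla_{B_j}v_j$ and $\nabla_{B_j}B_j$ in $H^1$. Here one cannot simply differentiate the already-established $L^2$ difference bounds, because $\nabla_{B_j}$ and $\nabla_{B_{j+1}}$ involve different magnetic fields and different domains. The strategy is to split
\begin{equation*}
\nabla_{B_j}v_j - \nabla_{B_{j+1}}v_{j+1}
= \nabla_{B_j}(v_j - v_{j+1}) + \nabla_{B_j - B_{j+1}} v_{j+1},
\end{equation*}
estimate the second piece in $H^1$ by the $L^2$ difference bound on $B_j-B_{j+1}$ combined with the uniform $H^s$ (hence $C^1$) bound on $v_{j+1}$ via \Cref{Multilinearest}, and handle the first piece by observing that $\nabla_{B_j}(v_j-v_{j+1})$ is, up to a commutator, equal to $\Psi_{\leq j}\big(\nabla_{B}(v-\Psi_{\leq j+1}v)\big)$ plus a low-frequency remainder; applying the regularization and error bounds of \Cref{c reg bounds,c reg bounds not div free} then produces the factor $2^{-j(s-\frac32)}c_j$. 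One must also use the second-order regularizing effect (\Cref{partitionofBa}-type reasoning is not available since these are not solutions, but the analogous bound $\|\nabla_{B_j}\Gamma_j\|\lesssim \|\Gamma_j\|$ via \Cref{commutatorremark} and the tangency $B_j\cdot n_{\Gamma_j}=0$ is) to absorb the commutator $[\nabla_{B_j},\Psi_{\leq j}]$; controlling this commutator uniformly, while juggling the three distinct domains $\Omega_j$, $\Omega_{j+1}$, $\Omega_j\cap\Omega_{j+1}$, is the delicate part. The $\nabla_{B_j}B_j$ case is identical after noting $\nabla_{B_j}B_j - \nabla_{B_{j+1}}B_{j+1}$ splits the same way and that the irrotational correction to $B_j$ contributes only lower-order terms by the ellipticity estimates of \Cref{BEE}. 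Assembling these pieces gives all of \eqref{vbg-point}--\eqref{vbg-diff2}.
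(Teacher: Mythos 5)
The overall architecture you propose — parabolic-type regularization of $\Gamma$, divergence-preserving mollification of $v$ and $B$ with the operators of \Cref{c reg bounds,c reg bounds not div free}, followed by the rotational projection $B_j = \widetilde B_j - \nabla\mathcal H_j\mathcal N_j^{-1}(\widetilde B_j\cdot n_{\Gamma_j})$ to restore tangency — matches the paper, and your handling of the surface and linear $v_j$ bounds is correct. The extra collar shift $-C2^{-(3/2+\delta)j}$ is unnecessary here (the paper omits it in this proposition; the $\mathcal O(2^{-(3/2+\delta)j})$ closeness of $\Gamma_j$ to $\Gamma$ already fits inside the $\mathcal O(2^{-j})$ neighborhood on which $\Phi_{\leq j}$, $\Psi_{\leq j}$ act), and it complicates the trace comparisons below.

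The genuine gap is in the estimate for $f_j := \widetilde B_j\cdot n_{\Gamma_j}$, which is where the paper spends most of its effort. Your decomposition
\begin{equation*}
\widetilde B_j\cdot n_{\Gamma_j} = (\widetilde B_j - B)\cdot n_{\Gamma_j} + B\cdot(n_{\Gamma_j}-n_\Gamma)
\end{equation*}
does not close, and the conclusion you draw from it is both too optimistic and insufficiently precise. On the one hand, the claimed decay $\|\widetilde B_j\cdot n_{\Gamma_j}\|_{H^{s-1/2}(\Gamma_j)}\lesssim 2^{-\delta j}c_j M_s$ is not true — the paper's bound \eqref{fj} at $\sigma = s-\tfrac12$ gives only $\lesssim c_j M_s$, with no extra decay, so the correction $B_j^c$ is \emph{comparable} to the $j$-th envelope piece in $H^s$, not negligible. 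On the other hand, the second term of your split falls half a derivative short: $B$ lives in $H^{s-1/2}$ on the trace, $n_{\Gamma_j}-n_\Gamma$ (pulled back to $\Gamma_*$) lives only in $H^{s-1}$, and their product therefore lands in $H^{s-1}$, not in $H^{s-1/2}$. There is no elementary mechanism in your argument to regain that half derivative. The paper exploits the cancellation $B\cdot n_\Gamma=0$ through a genuinely different, commutator-type representation, writing (in collar coordinates) $f_j = \widetilde B_j(x,\eta_j(x))\cdot n_j - P_{<j}\big(B(x,\eta(x))\cdot n\big)$, i.e., as a mollified product minus the product of the mollifications, and then carrying out a multi-step reduction (replace $B$ by $\Phi_{\leq \frac32 j}B$, shift the evaluation point from $\eta$ to $\eta_j$ using the Lipschitz bound, discard high$\times$high interactions, commute $P_{<j}$ onto the high-frequency factor, and compare $n_j$ with $P_{<j}n$ using the linear dependence of $n$ on $\nabla\eta$). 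This produces the quantitative estimate $\|f_j\|_{H^\sigma(\Gamma_j)}\lesssim 2^{(\sigma-s+\frac12)j}c_jM_s$ for all $\sigma\geq -\tfrac12$ together with the good/bad splitting $f_j=f_j^g+f_j^b$, and that splitting is essential again in the bilinear estimate for $\nabla_{B_j}B_j^c$, where a direct application of the elliptic bounds would otherwise lose half a derivative. Without this structure you cannot establish the $\nabla_{B_j}$ components of \eqref{vbg-s}, \eqref{vbg-reg}, nor \eqref{vbg-diff2}.

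Your strategy for \eqref{vbg-diff2} is roughly sound in spirit, though the paper takes a cleaner route: it compares $\nabla_{\widetilde B_j}v_j$ directly to $\Phi_{\leq j}(\nabla_B v)$ (rather than comparing consecutive terms), expands the difference as a sum of commutators $[\nabla_{\widetilde B_j},\Phi_{\leq j}]v$, $\Phi_{\leq j}(\nabla_{\widetilde B_j-B}v)$ and a divergence-correction term, and handles the remaining difference $[\widetilde B_{j+1},\Phi_{\leq j+1}]\nabla v - [\widetilde B_j,\Phi_{\leq j}]\nabla v$ by one more commutator split. This buys the sharp $2^{-j(s-3/2)}c_j$ factor directly and keeps all the estimates at the level of the single regularized field, avoiding the need to juggle two different magnetic fields inside the gradient.
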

\begin{remark}
The last bound \eqref{vbg-diff2} is carried out in $H^1$ instead of $L^2$ for technical convenience. This choice will avoid the need to work in negative regularity Sobolev spaces at several points in the proof.
\end{remark}
\begin{proof}
Below we will use $M_s:=\|(v,B,\Gamma)\|_{\mathbf{H}^s}$ as a shorthand. We define $\Gamma_j$ in collar coordinates through the regularization
\begin{equation*}
\eta_{j}:=P_{<j}\eta:=\varphi (2^{-2j}\Delta_{\Gamma_*})\eta.
\end{equation*}
Here, $\eta$ is the parameterization for $\Gamma$ on $\Gamma_*$ and $\varphi$ is a radial, unit-scale bump function adapted to the unit ball. We also define
\begin{equation*}
v_j:=(\Phi_{\leq j}v)^{div}=\Phi_{\leq j}v-\nabla\Delta^{-1}(\nabla\cdot \Phi_{\leq j} v)
\end{equation*}
where $\Delta^{-1}$ denotes the Dirichlet Laplacian corresponding to $\Gamma_j$. A large part of the proof repeats the arguments of 
\cite[Proposition 6.6]{Euler}, where the $v$ and $\Gamma$ regularizations are defined in a similar manner to the above. This takes care of all of the $\Gamma_j$ bounds in the theorem as well as all of the linear $v_j$ bounds, but excludes any
estimates involving $\nabla_{B_j} v_j$.
\medskip

It remains to consider the part of the theorem involving the $B$ regularizations $B_j$. We define our initial guess as
\[
\tilde B_{j} := (\Phi_{\leq j} B)^{div}=\Phi_{\leq j} B-\nabla\Delta^{-1}(\nabla\cdot \Phi_{\leq j}B). 
\]
This satisfies the same linear bounds as $v_j$, but fails to satisfy the requirement to be tangent to the boundary $\Gamma_j$.  We correct it to be purely rotational by setting 
\[
B_{j} = \tilde B_{j} - B_j^{c}, \qquad B_j^c := \nabla\mathcal H_j\mathcal{N}_j^{-1}(\tilde{B}_j\cdot n_j)=\tilde{B}_j^{ir},
\]
where $\mathcal H_j$ denotes the harmonic extension for 
the regularized domain $\Omega_j$ and $\mathcal{N}_j$ is the associated Dirichlet-to-Neumann operator.
This correction is divergence-free and satisfies the desired tangency condition, so we proceed to show that it satisfies the  bounds \eqref{vbg-point}-\eqref{vbg-diff2}. We will do this in two steps: (i) we prove that the bounds \eqref{vbg-point}-\eqref{vbg-diff2} hold for $\tilde B_j$, and (ii) we estimate 
perturbatively the contribution of the correction $B_j^c$.
\medskip

\textbf{(i). The estimates for $\tilde B_j$ and $v_j$.} 
Following \cite[Proposition 6.6]{Euler}, we already have the bounds in \eqref{vbg-point} 
and \eqref{vbg-diff}, as well as the linear 
$B_j$ component of \eqref{vbg-s} and \eqref{vbg-reg}. It remains to prove the $\nabla_{B_j}$
bounds in \eqref{vbg-s}, \eqref{vbg-reg} and \eqref{vbg-diff2}, where it is enough to consider the expressions $\nabla_{\tilde B_j} v_j$ and $\nabla_{\tilde{B}_j}\tilde{B}_j$. We will carry out the analysis for $\nabla_{\tilde{B}_j}v_j$ as the other term is handled identically. In all three bounds, it suffices to compare $\nabla_{\tilde{B}_j}v_j$ with $\Phi_{\leq j} ( \nabla_{B} v)$, as the latter term is easily estimated in all cases.
So, our remaining task is to prove the appropriate bounds for the difference
\[
D_j := \nabla_{\tilde B_j} v_j - \Phi_{\leq j} ( \nabla_{B} v).
\]
We expand the above quantity to obtain a commutator structure:
\[
D_j = [ \nabla_{\tilde B_j}, \Phi_{\leq j}] v + \Phi_{\leq j} ( \nabla_{ \tilde B_j -B} v)-\nabla_{\tilde{B}_j}\nabla\Delta^{-1}(\nabla\cdot\Phi_{\leq j} v). 
\]
The above expression is well-defined on $\Omega_j$ because of the mapping properties of $\Phi_{\leq j}$ and the fact that (by Sobolev embeddings) $\eta_j=\eta+\mathcal{O}_{L^{\infty}(\Gamma_*)}(2^{-j(\frac{3}{2}+\delta)})$. The second term is easy to estimate in all Sobolev norms. Precisely, we have the $L^2$ bound
\[
\| \nabla_{ \tilde B_j -B} v\|_{L^2(\Omega_j\cap\Omega)}
\lesssim \| \tilde B_j -B\|_{L^2(\Omega_j\cap\Omega)} \|\nabla v\|_{L^\infty(\Omega)} \lesssim_{M_{s_0}} 2^{-sj} c_j M_s,
\]
after which applying $\Phi_{\leq  j}$ yields bounds at any regularity,
\[
\| \Phi_{\leq  j}(\nabla_{ \tilde B_j - B} v)\|_{H^\sigma(\Omega_j)}
 \lesssim_{M_{s_0}} 2^{(\sigma-s)j} c_j M_s,\hspace{5mm}\sigma\geq 0.
\]
This suffices  for  \eqref{vbg-s}, \eqref{vbg-reg} and \eqref{vbg-diff2}. Next, we estimate the third term which involves the divergence-free correction. First, we observe the following bound:
\begin{equation*}
\|\Delta^{-1}(\nabla\cdot\Phi_{\leq j} v)\|_{H^{s+\frac{3}{2}+\alpha}(\Omega_j)}\lesssim_{M_{s_0}} \|\nabla\cdot\Phi_{\leq j} v\|_{H^{s-\frac{1}{2}+\alpha}(\Omega_j)}+\|\Gamma_j\|_{H^{s+1+\alpha}}\|\nabla\cdot\Phi_{\leq j} v\|_{H^{s_0-2}(\Omega_j)},\hspace{5mm}\alpha\geq 1,
\end{equation*}
which follows from \Cref{direst}, Sobolev embeddings and the assumption that $s_0>\frac{d}{2}+1$. Since $v$ is divergence-free, the operator $\nabla\cdot\Phi_{\leq j}$ acts as a commutator, which is essentially a mollifier that is localized at frequency $2^j$. This easily yields
\begin{equation*}
\|\nabla\cdot\Phi_{\leq j} v\|_{H^{s-\frac{1}{2}+\alpha}(\Omega_j)}\lesssim_{M_{s_0}} 2^{j(\alpha-\frac{1}{2})}M_s\lesssim_{M_{s_0}} 2^{j\alpha}c_jM_s
\end{equation*}
and
\begin{equation*}
\begin{split}
\|\Gamma_j\|_{H^{s+1+\alpha}}\|\nabla\cdot\Phi_{\leq j} v\|_{H^{s_0-2}(\Omega_j)}&\lesssim_{M_{s_0}} 2^{j(1+\alpha)}c_jM_s\|\nabla\cdot\Phi_{\leq j} v\|_{H^{s_0-2}(\Omega_j)}
\\
&\lesssim_{M_{s_0}} 2^{j\alpha}c_jM_s.
\end{split}
\end{equation*}
From this and simple product estimates, it is straightforward to estimate
\begin{equation}\label{gradBdivcorrection}
\begin{split}
\|\nabla_{\tilde{B}_j}\nabla\Delta^{-1}(\nabla\cdot\Phi_{\leq j} v)\|_{H^{s-\frac{1}{2}+\alpha}(\Omega_j)}&\lesssim_{M_{s_0}}\|\tilde{B}_j\|_{H^{s-\frac{1}{2}+\alpha}(\Omega_j)}+\|\Delta^{-1}(\nabla\cdot\Phi_{\leq j}v)\|_{H^{s+\frac{3}{2}+\alpha}(\Omega_j)}
\\
&\lesssim_{M_{s_0}} 2^{j\alpha}c_jM_s.
\end{split}
\end{equation}
Similarly, we have 
\begin{equation*}
\|\nabla_{\tilde{B}_j}\nabla\Delta^{-1}(\nabla\cdot\Phi_{\leq j}v)\|_{L^2(\Omega_j)}\lesssim_{M_{s_0}} \|\nabla\cdot\Phi_{\leq j}v\|_{L^2(\Omega_j)}\lesssim_{M_{s_0}} 2^{-js}c_jM_s,
\end{equation*}
which suffices. We now consider the commutator term in $D_j$, which we further expand as
\[
[ \nabla_{\tilde B_j}, \Phi_{\leq j}] v=
\tilde B_j [\nabla, \Phi_{\leq j}] v + 
[ {\tilde B_j}, \Phi_{\leq j}] \nabla v.
\]
The commutator in the first term is essentially a mollifier which is localized at frequency $2^j$ and satisfies a good $L^2$ bound exactly as above. However, the commutator in the second term is $2^{-j}$ times a mollifier 
at frequency $\leq 2^j$, so we no longer have a good $L^2$ bound for its output. Instead, we only obtain higher regularity bounds, 
\[
\| [ {\tilde B_j}, \Phi_{\leq j}] \nabla v\|_{H^\sigma(\Omega_j)} \lesssim_{M_{s_0}} c_j 2^{j(\sigma-s)}  M_s, \qquad \sigma > s-1.
\]
This suffices for \eqref{vbg-s} and \eqref{vbg-reg} but not for \eqref{vbg-diff2}. In this last case, we need to consider differences; namely,
\[
 [ \tilde B_{j+1}, \Phi_{\leq j+1}] \nabla v -  [ {\tilde B_j}, \Phi_{\leq j}] \nabla v = 
 [ \tilde B_{j+1} - \tilde B_j, \Phi_{\leq j+1}] \nabla v + [ {\tilde B_j}, \Phi_{\leq j+1} - \Phi_{\leq j}] \nabla v.
\]
For the first term we have a favorable $L^2$  bound (which suffices for the bound  \eqref{vbg-diff2} by interpolation),
\begin{equation*}
\begin{split}
\| [ \tilde B_{j+1} - \tilde B_j, \Phi_{\leq j+1}] \nabla v\|_{L^2(\Omega_j\cap\Omega_{j+1})} &\lesssim_{M_{s_0}}
\|  \tilde B_{j+1} - \tilde B_j\|_{L^2(\Omega_j\cap\Omega_{j+1})}\| \nabla v\|_{L^\infty(\Omega)} 
\\
&\lesssim_{M_{s_0}} c_j 2^{-js} \| B\|_{H^s(\Omega)} \|\nabla v\|_{L^\infty(\Omega)}.
\end{split}
\end{equation*}
For the second term, on the other hand, we have the mollifier $\Phi_{\leq j+1} - \Phi_{\leq j}$ which is localized at frequency $2^j$ with rapidly decreasing tails.
Therefore, we can estimate
\[
\| [ {\tilde B_j}, \Phi_{\leq j+1} - \Phi_{\leq j}] \nabla v\|_{L^2(\Omega_j\cap\Omega_{j+1})} \lesssim_{M_{s_0}} c_j 2^{-sj}M_s
\]
as needed. The corresponding bounds for $\nabla_{\tilde{B}_j}\tilde{B}_j$ are virtually identical.

\medskip

\textbf{(ii). The estimates involving the correction  $ B_j^c$.} The correction term $B_j^c$ is determined by the trace 
\[
f_{j}:= \tilde B_{j} \cdot n_{j}
\]
on the boundary $\Gamma_j$, so our first goal
here will be to prove bounds for $f_j$.
\medskip

We would like to estimate $f_j$ in $H^{s-\frac12}(\Gamma_j)$ as well as in both higher and lower norms with appropriate frequency factors. The difficulty is that $n_j$ only has regularity 
$H^{s-1}(\Gamma_j)$, which at frequency $2^j$ loses a $2^{\frac{j}2}$ factor. We will need to regain this factor by using the cancellation $B \cdot n_\Gamma = 0$ on $\Gamma$. This not only suffices, but even allows us to capture a gain in all lower norms above  $H^{-\frac12}$. Precisely, we claim that
\begin{equation}\label{fj}
\| f_{j}\|_{H^{\sigma}(\Gamma_j)} \lesssim_{M_{s_0}} 2^{(\sigma -s +\frac12) j}c_j M_s, \qquad \sigma \geq -\frac12.
\end{equation}
To prove this we work in the collar coordinates, which are denoted by $x \in \Gamma_*$, where the normal vector $n_j(x)$ is a smooth function
$n_j(x) = n(x, \eta_j(x), \nabla \eta_j(x))$. A small but very useful trick here is to observe 
that, by eliminating a harmless multiplicative factor arising from the normalization of $n_j$, 
we can assume that $n$ is linear in $\nabla \eta$.
This will be assumed from here on. Using the cancellation $B\cdot n_\Gamma = 0$ on $\Gamma$,
we represent $f_j$ in the form
\begin{equation*}
f_j = \tilde B_{j}(x,\eta_{j}(x)) \cdot n_{j} - P_{<j} ( B(x,\eta(x)) \cdot n).
\end{equation*}
Here, $P_{<j}$ is defined  by the functional calculus for $\Delta_{\Gamma_*}$ in order to be consistent with the definition of $\eta_j = P_{<j} \eta$. In local coordinates, this makes it 
a zeroth order pseudodifferential operator whose symbol is localized at frequency $\lesssim 2^j$ and equal to $1$ at frequency $\ll 2^j$ modulo Schwartz tails.
This is the only place where we use the cancellation; from here on, we take this as the expression for $f_j$ and prove the  bounds \eqref{fj} without making any further use of
the tangency condition. 
\medskip

Before we proceed with 
the proof of \eqref{fj}, we remark that this bound will suffice for the linear $B_j$ bounds 
in the proposition, but not for the bilinear 
bounds, precisely those concerning $\nabla_{\tilde B_j} B_j^c$. To prepare the ground for this
expression, we will split $f_j$ into a good 
and a bad component:
\begin{equation}\label{fgb}
f_j =  f_j^g + f_j^b,
\end{equation}
where the good component satisfies a better bound;
namely,
\begin{equation*}\label{fjg}
\| f_{j}^g\|_{H^{\sigma}(\Gamma_j)} \lesssim_{M_{s_0}} 2^{(\sigma -s) j}c_j M_s, \qquad \sigma \geq 0.
\end{equation*}
This will suffice in order to directly estimate the contribution of $f_j^g$ in $\nabla_{\tilde B_j} B_j^c$, and will in turn allow us to focus our attention on the contribution of the bad component $f_j^b$.
\medskip

We begin by processing a bit the second term in $f_j$, in several steps:
\medskip

a) Our first observation is that we can replace $B$ with $\Phi_{\leq \frac{3}{2}j}B$ since we have (owing to the fact that $s>\frac{d}{2}+1$),
\begin{equation*}
\|(\Phi_{\leq \frac{3}{2}j}B)(x,\eta)-B(x,\eta)\|_{L^2(\Gamma_*)}\lesssim_{M_{s_0}} 2^{-\frac{3}{2}(s-\frac{1}{2}-\epsilon)j}c_jM_s\lesssim_{M_{s_0}}2^{-sj}c_jM_s.
\end{equation*}
Moreover, thanks to the restriction $s>\frac{d}{2}+1$ there holds 
\begin{equation*}
\|\eta-\eta_j\|_{L^{\infty}(\Gamma_*)}\lesssim_{M_{s_0}} 2^{-(\frac{3}{2}+\epsilon)j},
\end{equation*}
 and thus, $\Phi_{\leq \frac{3}{2}j}B$ is defined on a $\mathcal{O}(2^{-\frac{3}{2}j})$-sized enlargement of $\Omega\cup\Omega_j$.
\medskip 

b) The second observation takes advantage 
of the Lipschitz bound on $B$ which,  combined with the $L^2$ bound $\|\eta - \eta_j\|_{L^2(\Gamma_*)}
\lesssim 2^{-js} c_j$, allows us to replace 
$(\Phi_{\leq \frac{3}{2}j}B)(x,\eta(x))$ with $(\Phi_{\leq \frac{3}{2}j}B)(x,\eta_j(x))$ modulo an $f_j^g$ contribution. Then again modulo an $f_j^g$ error, we may  replace $\Phi_{\leq \frac{3}{2}j}B$ by its divergence-free correction on $\Omega_j$,
\begin{equation*}
B':=\Phi_{\leq \frac{3}{2}j}B-\nabla\Delta_{\Omega_j}^{-1}(\nabla\cdot \Phi_{\leq \frac{3}{2}j}B).
\end{equation*}
\medskip

c) The third observation is that the high$\times$high contributions are also good, as 
by Sobolev embeddings, we have for some universal constant $C$,
\[
\| P_{> j-C} B'(x,\eta_j(x)) \cdot P_{> j-2C} n \|_{L^2(\Gamma_*)} \lesssim_{M_{s_0}} 2^{-j (2s-1 -\frac{d}2)} c_j^2M_s
\]
where $2s-\frac32 -\frac{d}2 > s-1$. This allows 
us to replace the second term in $f_j$ by
\[
P_{<j} (P_{< j-C} B'(x,\eta_j(x)) \cdot  n) +
P_{<j} (P_{> j-C} B'(x,\eta_j(x)) \cdot  P_{< j-2C}n).
\]

d) The fourth observation is that we can use 
commutator bounds to move the projector $P_{<j}$
onto the high frequency factor; namely,
\[
\|  [P_{<j}, P_{< j-C} B'(x,\eta_j(x))]  n \|_{L^2(\Gamma_*)}
\lesssim_{M_{s_0}} 2^{-sj} c_j \|  B'(x,\eta_j(x))\|_{C^1(\Gamma_*)} \| n\|_{H^{s-1}(\Gamma)},
\]
respectively
\[
\|  [P_{<j}, P_{< j-2C} n] P_{> j-C} B'(x,\eta_j(x)) \|_{L^2(\Gamma_*)}
\lesssim_{M_{s_0}} 2^{-sj} c_j \| B'(x,\eta_j(x))\|_{H^{s-\frac12}(\Gamma_*)}.
\]
At the  conclusion of this step, the second term in $f_j$ is replaced by 
\[
P_{< j-C} B'(x,\eta_j(x)) \cdot  P_{<j} n +
 P_{j-C < \cdot < j} B'(x,\eta_j(x)) \cdot  P_{< j-2C}n,
\]
and further, since the high$\times$high contributions are good, by 
\[
P_{<j} B'(x,\eta_j(x))   P_{<j} n.
\]

e) In order to better compare the two terms in $f_j$, it  would be very convenient to be able to compare $n_j$ with $P_{<j}n$. Here we take advantage of our earlier choice that $n$ depends linearly on $\nabla \eta$, which implies that we 
have the Moser/commutator type bound
\[
\| n_j - P_{<j} n\|_{L^2(\Gamma_*)} \lesssim_{M_{s_0}} 2^{-sj} c_jM_s. 
\]
This allows us to replace $P_{<j} n$ by $n_j$
modulo good contributions.
\medskip

This concludes our sequence of reductions for the second term in $f_j$. At this point we 
have obtained the representation \eqref{fgb} with 
\begin{equation}\label{fjb}
f_j^b = (\tilde B_j (x,\eta_j(x)) - P_{<j}B'(x,\eta_j(x))) \cdot n_j. 
\end{equation}
To complete the proof of \eqref{fj} it remains to 
show that $f_j^b$ satisfies \eqref{fj}. This is easier at higher regularity $\sigma > s$, where we can estimate the two terms separately using the Leibniz rule. So, we focus on the more delicate $H^{-\frac12}$ bound, where we have to show that 
\begin{equation}\label{delta B}
\|(\tilde B_j (x,\eta_j(x)) - P_{<j}B'(x,\eta_j(x)))\cdot n_j\|_{H^{-\frac12}(\Gamma_*)} \lesssim_{M_{s_0}} 2^{-sj} c_jM_s.
\end{equation}
To capture the cancellation here it is convenient to instead compare both terms with $B'(x,\eta_j(x))$. 
\medskip

For the first difference (from the definition of $B'$) we have the interior bound
\[
\| B' -\tilde B_j \|_{L^2 (\Omega_j)} \lesssim_{M_{s_0}} 2^{-sj} c_jM_s.   
\]
By itself this does not give an $H^{-\frac12}$ trace on $\Gamma_j$. Combining it with the 
divergence-free condition, however, it yields (by a standard duality argument) an $H^{-\frac12}$ trace for the normal component.
The second difference is easier to estimate 
directly using the definition of $B'$ and the trace theorem:
\[
\|P_{>j}B'(x,\eta_j(x))\|_{H^{-\frac12}(\Gamma_*)}
\lesssim_{M_{s_0}} 2^{-sj}c_j M_s.
\]
This completes the proof of \eqref{fjb} and thus of \eqref{fj}. To conclude, we only need to show the correction bound 
\begin{equation}\label{Bjc}
\| B_j^c \|_{H^\sigma(\Omega_j)} \lesssim 2^{(\sigma-s)j} c_jM_s, \qquad \sigma \geq 0. 
\end{equation}
This is clear when $\sigma=0$ thanks to the $H^{\frac{1}{2}}\to H^1$ bound for $\mathcal{H}_j$ and  (in view of the fact that $\tilde{B}_j$ is divergence-free) the estimate $\|\mathcal{N}^{-1}_j(\tilde{B}_j\cdot n_j)\|_{H^{\frac{1}{2}}(\Gamma_j)}\lesssim \|\tilde{B}_j\cdot n_j\|_{H^{-\frac{1}{2}}(\Gamma_j)}$. By straightforward interpolation, it remains to consider the case $\sigma\geq s$. For this, we use \Cref{Hbounds}, \Cref{ellipticity}, \eqref{fj} and Sobolev embeddings to obtain
\begin{equation}\label{Bjcest}
\begin{split}
\|B_j^c\|_{H^{\sigma}(\Omega_j)}&\lesssim_{M_{s_0}} \|\tilde{B}_j\cdot n_j\|_{H^{\sigma-\frac{1}{2}}(\Gamma_j)}+\|\Gamma_j\|_{H^{\sigma+\frac{1}{2}}}\|\tilde{B}_j\cdot n_j\|_{H^{s-\frac{3}{2}}(\Gamma_j)}
\\
&\lesssim_{M_{s_0}} (2^{j(\sigma-s)}+2^{j(\sigma+\frac{1}{2}-s)}2^{-j})c_jM_s,
\end{split}
\end{equation}
which yields \eqref{Bjc}.
This suffices for the bounds \eqref{vbg-point} and \eqref{vbg-diff} as well as for the linear 
$B_j$ bounds in \eqref{vbg-s} and \eqref{vbg-reg}.
\medskip

To complete the proof of the proposition it remains to consider all of the estimates involving 
$B_j^c$ in $\nabla_{B_j}$ terms. Here we can split
\[
\nabla_{B_j} = \nabla_{\tilde B_j} + \nabla_{B_j^c}.
\]
The contributions of  $\nabla_{B_j^c}$ are easy to estimate since \eqref{Bjc} implies the uniform bound
\[
\| B_j^c\|_{L^\infty(\Omega_j)} \lesssim_{M_{s_0}} 2^{-j(1+\delta)} c_j,
\]
as well as corresponding higher regularity bounds. This only leaves us with the bounds 
for the bilinear expression $\nabla_{\tilde B_j} B_j^c$ in \eqref{vbg-s}, \eqref{vbg-reg}
and \eqref{vbg-diff2}. More precisely, we have to prove that
\[
 \|\nabla_{B_j} \nabla\mathcal H_j\mathcal{N}_j^{-1}f_j \|_{H^\sigma(\Omega_j)} \lesssim_{M_{s_0}} 2^{j(\sigma - s +\frac12)} c_jM_s
\]
for $\sigma \geq 1$. We note that a direct estimate here fails by half a derivative, so a more careful analysis is needed. We begin by commuting $\nabla_{B_j}$ with $\nabla\mathcal{H}_j\mathcal{N}_j^{-1}$. By making use of \Cref{direst}, \Cref{Movingsurfid}, \Cref{commutatorremark} and  arguing similarly to the estimate \eqref{Bjcest}, we have
\begin{equation*}
\|[\nabla_{B_j},\nabla\mathcal{H}_j]\mathcal{N}_j^{-1}f_j\|_{H^{\sigma}(\Omega_j)}\lesssim_{M_{s_0}} 2^{(\sigma-s+\frac{1}{2})j}c_jM_s,
\end{equation*}
and also thanks to \Cref{ellipticity}, 
\begin{equation*}
\|\nabla\mathcal{H}_j\nabla_{B_j}\mathcal{N}_j^{-1}f_j\|_{H^{\sigma}(\Omega_j)}\lesssim_{M_{s_0}}\|\mathcal{N}_j\nabla_{B_j}\mathcal{N}_j^{-1}f_j\|_{H^{\sigma-\frac{1}{2}}(\Gamma_j)}+2^{(\sigma-s+\frac{1}{2})j}c_jM_s.
\end{equation*}
Moreover, 
\begin{equation*}
\|[\nabla_{B_j},\mathcal{N}_j]\mathcal{N}_j^{-1}f_j\|_{H^{\sigma-\frac{1}{2}}(\Gamma_j)}\lesssim_{M_{s_0}} 2^{(\sigma-s+\frac{1}{2})j}c_jM_s.
\end{equation*}
Therefore, it remains to show that
\begin{equation*}
\|\nabla_{B_j}f_j\|_{H^{\sigma-\frac{1}{2}}(\Gamma_j)}\lesssim_{M_{s_0}}2^{(\sigma-s+\frac{1}{2})j}c_jM_s.
\end{equation*}
A further simplification 
arises from the fact that the contributions of $f_j^g$ gain an additional half derivative and thus they are also directly perturbative (by making use of the identity $\nabla_{B_j}=B_j\cdot\nabla^{\top}$,  \Cref{boundaryest} and \Cref{tangradientbound}). So,
we are left with proving the bound
\begin{equation*}
\|\nabla_{B_j}f_j^b\|_{H^{\sigma-\frac{1}{2}}(\Gamma_j)}\lesssim_{M_{s_0}}2^{(\sigma-s+\frac{1}{2})j}c_jM_s
.
\end{equation*}
We consider first the case when $\sigma = 1$. Using the expression \eqref{fjb} for $f_j^b$, we expand
\[
\nabla_{B_j} f_j^b = \nabla_{B_j}(\tilde B_j (x,\eta_j(x)) - P_{<j}B'(x,\eta_j(x))) \cdot n_j +
(\tilde B_j (x,\eta_j(x)) - P_{<j}B'(x,\eta_j(x))) \cdot \nabla_{B_j} n_j.
\]
For the second term we use the simpler $H^{\frac{1}{2}}$ version of \eqref{delta B} to write
\[
\| (\tilde B_j (x,\eta_j(x)) - P_{<j}B'(x,\eta_j(x)))\|_{H^{\frac{1}{2}}(\Gamma_*)} \lesssim 2^{j( - s +1)} c_j,
\]
while from the identity $\nabla_{B_j}n_j=-\nabla^{\top}B_j\cdot n_j$ from \Cref{Movingsurfid}, we have
\[
\| \nabla_{B_j} n_j\|_{C^{\frac{1}{2}}(\Gamma_j)}  \lesssim_{M_{s_0}} 2^{\frac{j}{2}},
\]
which by simple Sobolev product estimates entirely suffices. For the first term, we harmlessly discard $n_j$ and we add and subtract a $B'$ to obtain (suppressing the evaluation at $(x,\eta_j(x))$)
\begin{equation*}
\begin{split}
\nabla_{B_j}(\tilde B_j - P_{<j}B') &= 
\nabla_{B_j}(\Phi_{\leq j} - I) B'  +
\nabla_{B_j} P_{>j}B'+\nabla_{B_j}(\Phi_{\leq j}(B-B'))-\nabla_{B_j}\nabla\cdot\Delta^{-1}(\nabla\cdot\Phi_{\leq j} B).
\end{split}
\end{equation*}
The last term on the right can be crudely estimated as in \eqref{gradBdivcorrection}. From the definition of $B'$ and by commuting $\nabla_{B_j}$ with $\Phi_{\leq j}$ and $\Phi_{\leq\frac{3}{2}j}$, the third term on the right is also easily dispensed with. For the remaining two terms, the commutator bounds are favorable, so we can replace the above expression (modulo acceptable errors) with
\[
(\Phi_{\leq j} - I) \nabla_{B_j} B' (x,\eta_j(x)) +
 P_{>j} \nabla_{B_j} B'(x,\eta_j(x)).
\]
Then, making use of  the trace theorem and the definition of $B'$, it 
suffices to prove an $(H^{s-\frac12} + 2^{-\frac{j}2} H^{s-1})(\Omega_j)$ bound for $\nabla_{B_j} B$.
The correction $B_j^c$ has a uniform bound of $2^{-\frac{j}2} c_j$ as well as corresponding higher regularity, so we can discard it and replace the above by 
\[
\nabla_{\tilde B_j} B = \nabla_B B - \nabla_{\tilde B_j - B} B,
\]
where the second term is also favorable since
$\tilde B_j - B$ has a uniform bound of $2^{-j} c_j$. On the other hand, $\nabla_B B$ belongs to $H^{s-\frac12}$, with frequency envelope $c_j$.
Thus, \eqref{fjb} follows. The proof of the corresponding higher regularity bounds is similar,
by appropriately using the Leibniz rule and then repeating the analysis above.
\end{proof}

\bibliographystyle{plain}
\bibliography{refs2.bib}

\end{document}